\newtheorem{theorem}{Theorem}[section]
\newtheorem{lemma}[theorem]{Lemma}
\newtheorem{proposition}[theorem]{Proposition}
\newtheorem{corollary}[theorem]{Corollary}
\newtheorem{definition}[theorem]{Definition}
\newtheorem{remark}[theorem]{Remark}
\newtheorem*{remark*}{Remark}
\newtheorem*{definition*}{Definition}
\numberwithin{equation}{section}
\newcommand{\R}{\mathbb{R}}
\newcommand{\N}{\mathbb{N}}
\newcommand{\rn}{\mathbb{R}^{n}}
\newcommand{\mint}{\mathop{\int\hskip -1,05em -\, \!\!\!}\nolimits}
\def\A{\mathcal A}
\def\l{\lambda}
\def\L{\Lambda}
\def\d{\delta}
\def \diver{{\rm div}}
\def\wB{\widetilde{B}}
\def\e{\varepsilon}
\def\Ae{\A_\e}
\def\vrho{\varrho}
\def\ve{v_\e}
\def\we{w_\e}
\title[Gradient regularity of quasilinear elliptic operators]{Local and global $C^{1,\beta}$-regularity for uniformly elliptic quasilinear equations of $p$-Laplace and Orlicz-Laplace type}
\begin{document}

\begin{abstract}
We establish gradient H\"older continuity for solutions to quasilinear, uniformly elliptic equations, including  $p$-Laplace and Orlicz-Laplace type operators. We revisit and improve upon the results existing in the literature, proving  gradient regularity both in the interior and up to the boundary, under Dirichlet or Neumann boundary conditions.
 \end{abstract}
 
\subjclass[2020]{35D30, 35J60, 35J62, 35J66, 35J92, 	49N60} 
\keywords{Quasilinear equations, elliptic problems,  $p$-Laplacian, Orlicz-Laplacian, gradient regularity, Dirichlet problems, Neumann problems }

\author[Carlo Alberto Antonini]{Carlo Alberto Antonini}  \address{Carlo Alberto Antonini \\ Istituto Nazionale di Alta Matematica ``Francesco Severi'' (INdAM) and
Dipartimento di Matematica e Informatica ``Ulisse Dini'',
Universit\`a di Firenze,
Viale Morgagni 67/A, 50134
Firenze,
Italy\\ ORCID ID: 0000-0002-7663-1090}
\email{\url{antonini@altamatematica.it}}

\maketitle

\section{Introduction}
In this paper we investigate the regularity of the gradient of scalar-valued solutions
$u : \Omega \to \mathbb{R}$ to a class of quasilinear elliptic equations. A prototypical
example is the Orlicz--Laplace equation
\begin{equation}\label{eq:example}
-\Delta_B u := -\operatorname{div}\!\left( \frac{B'(|Du|)}{|Du|}\, Du \right) = f
\quad \text{in } \Omega.
\end{equation}
where
\begin{itemize}
\item $\Omega \subset \mathbb{R}^n$ is an open set, with $n\in \N$, $n \geq 2$;
\item $f : \Omega \to \mathbb{R}$ is a prescribed function;
\item $B : [0,\infty) \to [0,\infty)$ is a convex function vanishing only at $0$.
\end{itemize}

Any such function $B$ is called a Young (or Orlicz) function and admits the representation
\begin{equation}\label{Bb}
B(t) = \int_0^t b(s)\,ds,
\end{equation}
for some non-decreasing function $b : (0,\infty) \to (0,\infty)$.

A classical example is given by the power-type function
$B(t) = \frac{t^p}{p}$ for some $p>1$, in which case \eqref{eq:example}
reduces to the $p$-Laplace equation
\begin{equation}\label{eq:plapl}
-\Delta_p u := -\operatorname{div}\!\left( |Du|^{p-2} Du \right) = f.
\end{equation}

Throughout the paper we assume additional regularity on $B$, namely
\begin{equation}\label{reg:Byoung}
B \in C^2(0,\infty) \cap C^1([0,\infty)).
\end{equation}
Then for $b \in C^0([0,\infty)) \cap C^1(0,\infty)$ as in \eqref{Bb} we define
\begin{equation}\label{def:a}
a(t) := \frac{b(t)}{t}, \qquad t>0.
\end{equation}
so that $a \in C^1((0,\infty))$, and we further assume that there exist constants
$i_a \le s_a$ such that
\begin{equation}\label{iasa}
\begin{split}
-1 < i_a \le \inf_{t>0} \frac{t\,a'(t)}{a(t)}
\le \sup_{t>0} \frac{t\,a'(t)}{a(t)} \le s_a < \infty.
\end{split}
\end{equation}
\vspace{0.1cm}

In this paper, we shall consider general quasilinear elliptic equations of the form
\begin{equation}\label{eq1}
-\operatorname{div}\big(\mathcal{A}(x,Du)\big) = f \quad \text{in } \Omega,
\end{equation}
where the vector field $\A:\overline{\Omega}\times \R^n\to \rn$ is such that 
\begin{equation}\label{reg:A}
    \A\in C^0(\overline{\Omega}\times \R^n),\quad \A(x,\cdot)\in C^1(\R^n\setminus\{0\})\quad\text{for all $x\in \overline{\Omega}$}.
\end{equation}
Here $\overline\Omega$ denotes the closure of $\Omega$. We assume that $\A(x,\xi)=\{\A^i(x,\xi)\}_{i=1,\dots,n}$ satisfies Orlicz type growth and coercivity assumptions; namely,  there exist  two constants $0<\l\leq \L$ such that
\begin{equation}\label{ass:A}
    \begin{split}
&\sum_{i,j=1}^n\left|\frac{\partial \A^i }{\partial \xi_j}(x,\xi)\right|\leq \L\,a\left(|\xi|\right)
\\ &\sum_{i,j=1}^n \frac{\partial \A^i }{\partial \xi_j}(x,\xi)\,\eta_i\,\eta_j\geq \l\,a(|\xi|)\,|\eta|^2
    \end{split}
\end{equation}
for all  $x\in \overline{\Omega}$, for all  $\xi\in \R^n\setminus \{0\}$ and $\eta\in \R^n$.

We also suppose that there exist  $\alpha\in (0,1)$ and $\L_\mathrm h>0$ such that
\begin{equation}\label{A:hold}
    |\A(x,\xi)-\A(y,\xi)|\leq \L_{\mathrm{h}}\,\big(1+b(|\xi|)\big)\,|x-y|^{\alpha}\quad \text{and}\quad |\A(x,0)|\leq \L\,,
\end{equation}
for all $x,y\in \overline{\Omega}$, and for all $\xi \in \R^n$. By adjusting the constants $\l,\L$, we may also assume that
\begin{equation}\label{ab=1}
    a(1)=1\quad\text{hence}\quad b(1)=1.
\end{equation}

Given the assumptions above, let us briefly recall the definition of weak solution. For the definition of the relevant Sobolev spaces, we refer to Section \ref{sec:spaces}
below.
\vspace{0.1cm}

Let $f\in L^n_{loc}(\Omega)$; we say that $u\in W^{1,B}_{loc}(\Omega)$ is a local weak solution to \eqref{eq1} if  it satisfies
\begin{equation}\label{def:weak}
    \int_\Omega \A(x,Du)\cdot D\varphi\,dx=\int_\Omega f\,\varphi\,dx
\end{equation}
for all test functions $\varphi\in C^\infty_c(\Omega)$. By a standard density argument-- see \cite[Lemma 2.1]{DT71} and \cite[Theorem 4.4.7]{HHbook}-- equation \eqref{def:weak} extends to all test functions $\varphi\in W^{1,B}_c(\Omega)$.

The first main result of this paper concerns the interior gradient H\"older regularity of solutions to \eqref{eq1}. Here and in what follows, $\mathrm{diam}(A)$ will denote the diameter of a set $A$, and $\mathrm{dist}(A,B)$ the distance between two sets $A,B$.
\begin{theorem}[Local $C^{1,\beta}$ regularity]\label{thm:interior}
 Let $u\in W^{1,B}_{loc}(\Omega)$ be a local weak solution to \eqref{eq1} under the assumptions \eqref{iasa}, \eqref{reg:A}-\eqref{A:hold}, \eqref{ab=1} and  
    $$f\in L^d_{loc}(\Omega),\quad d>n.$$
    Then there exists $\beta\in (0,1)$ depending on  $n,\l,\L,i_a,s_a,\alpha,d$ such that
    \begin{equation*}
        u\in C^{1,\beta}_{loc}(\Omega),
    \end{equation*}
    and for every $\Omega''\Subset \Omega'\Subset \Omega$, we have the quantitative estimate
\begin{equation}\label{stimaDuint}
                \begin{split}\|u\|_{C^{1,\beta}(\Omega'')} \leq C\bigg(n&,\l,\L,\L_{\mathrm{h}},i_a,s_a,\alpha,d,\|f\|_{L^d(\Omega')},
                \\
                &\mathrm{dist}(\Omega'',\partial \Omega'), \mathrm{diam}\,\Omega'',\int_{\Omega'}|u|\,dx+ \int_{\Omega'}B(|Du|)\,dx\bigg)\,.
       \end{split}
    \end{equation}
\end{theorem}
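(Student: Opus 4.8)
The plan is to reduce the statement to a family of \emph{a priori} estimates for smooth, uniformly elliptic (non-degenerate) approximating equations, to establish those estimates with constants independent of the regularization parameter, and then to pass to the limit. Fix a ball $B_{2R}\Subset\Omega'$. We replace $\A$ by smooth vector fields $\Ae$ which are uniformly non-degenerate yet still obey the Orlicz structure \eqref{ass:A}--\eqref{A:hold} with the same $i_a,s_a,\alpha$ and uniform $\l,\L,\L_{\mathrm h}$ (for instance built from $a_\e(t):=a(\sqrt{\e^2+t^2})$, which satisfies \eqref{iasa} with the same bounds), and we mollify $f$ to $f_\e\in C^\infty$ with $f_\e\to f$ in $L^d(B_{2R})$. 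Let $\ue$ solve $-\diver(\Ae(x,D\ue))=f_\e$ in $B_{2R}$ with $\ue$ attaining the boundary values of $u$. By classical quasilinear theory (Ladyzhenskaya--Ural'tseva, Schauder) each $\ue$ belongs to $C^{1,\gamma}_{loc}(B_{2R})\cap W^{2,2}_{loc}(B_{2R})$ (indeed it is $C^\infty$ in the interior), and the energy identity for $\ue$, together with the coercivity in \eqref{ass:A} and Sobolev--Poincar\'e, bounds $\int_{B_{2R}}B(|D\ue|)$ uniformly in $\e$ by $\int_{\Omega'}B(|Du|)+\int_{\Omega'}|u|+\|f\|_{L^d(\Omega')}$. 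A routine monotonicity (Minty) argument identifies the local $C^1$-limit of $\ue$ as $u$; hence, once we prove an $\e$-uniform bound on $\|\ue\|_{C^{1,\beta}(B_{R/2})}$ in terms of the data in \eqref{stimaDuint}, Arzel\`a--Ascoli and a covering argument yield the theorem.

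\emph{Uniform Lipschitz bound.} Differentiating the equation for $\ue$ in the direction $x_k$ gives a linear, uniformly elliptic equation for $D_k\ue$ with coefficient matrix $\partial_\xi\Ae(x,D\ue)$ (ellipticity weight comparable to $a_\e(|D\ue|)$), a source bounded by $\L_{\mathrm h}\big(1+b_\e(|D\ue|)\big)R^{\alpha-1}$ coming from the $x$-H\"older continuity \eqref{A:hold}, and a source $D_kf_\e$ treated by duality. Testing with $\zeta^2\,g(|D\ue|^2)\,D_k\ue$ for a suitable non-decreasing $g$, summing over $k$, and using \eqref{iasa} (which controls $t\,b_\e'(t)/b_\e(t)$) to absorb the bad terms — the right-hand side $f_\e\in L^d$ with $d>n$ entering only through a subcritical Sobolev contribution — produces a Caccioppoli inequality for $\sqrt{a_\e(|D\ue|)}\,D\ue$ at every level. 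A De Giorgi/Moser iteration then yields $\|D\ue\|_{L^\infty(B_R)}\le M$ with $M$ depending only on the displayed data, uniformly in $\e$; similarly, De Giorgi's iteration applied to $\ue$ itself bounds $\|\ue\|_{L^\infty(B_R)}$ uniformly in terms of $\int_{\Omega'}|u|$ and $\|f\|_{L^d(\Omega')}$.

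\emph{Gradient H\"older continuity.} On $B_R$ the equation for $D\ue$ is now genuinely uniformly elliptic, with ellipticity ratio controlled by $M$, $i_a$, $s_a$ on sets where $|D\ue|$ is comparable to its supremum. Following DiBenedetto--Tolksdorf--Lieberman, for $r\le R$ set $\mu(r):=\sup_{B_r}|D\ue|$ and let $\omega(r)$ be the oscillation of $D\ue$ on $B_r$, and distinguish two alternatives. In the \textbf{non-degenerate regime} — when $|D\ue|\ge\tfrac12\mu(r)$ throughout $B_r$ — the coefficients of the scalar equation for each $D_k\ue$ are uniformly elliptic and $\alpha$-H\"older in $x$ (the $x$-dependence in \eqref{A:hold} being treated perturbatively), so the De Giorgi--Nash oscillation estimate together with the $L^d$-bound on $f_\e$ give $\omega(\theta r)\le\sigma\,\omega(r)+C\big(\mu(r)(r/R)^\alpha+r^{1-n/d}\|f\|_{L^d(B_{2R})}\big)$ for fixed $\theta,\sigma\in(0,1)$. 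In the \textbf{degenerate regime} — when $|D\ue|<\tfrac12\mu(r)$ on a set of positive measure — a measure-to-pointwise De Giorgi argument applied to $|D\ue|$ (using that $|D\ue|^2$ is a subsolution of a favorable differential inequality) yields $\mu(\theta r)\le\tfrac34\mu(r)+(\text{error from }f\text{ and }R^\alpha)$, whence $\omega(\theta r)\le2\mu(\theta r)$ decays as well. Balancing the two alternatives and iterating gives, for a suitable $\beta=\beta(n,\l,\L,i_a,s_a,\alpha,d)\in(0,1)$,
\[
\omega(r)+\mu(r)\ \le\ C\Big(\tfrac rR\Big)^{\beta}\big(M+R^{\alpha}+\|f\|_{L^d(B_{2R})}\big),\qquad 0<r\le R,
\]
with all constants uniform in $\e$. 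Together with the Lipschitz and $L^\infty$ bounds of the previous step, this is the desired $\e$-uniform $C^{1,\beta}(B_{R/2})$ estimate; letting $\e\to0$ and covering $\Omega''$ by balls of radius comparable to $\mathrm{dist}(\Omega'',\partial\Omega')$ yields \eqref{stimaDuint}.

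The principal difficulty is to carry out the two displayed steps with constants that depend on $B$ only through $i_a,s_a$ and, above all, are \emph{independent of the regularization}: in the degenerate regime $a_\e(|D\ue|)$ may be arbitrarily small, so the linearized operator looks singular and the De Giorgi arguments must be run in the intrinsic geometry dictated by the Orlicz growth rather than the Euclidean one; at the same time, the $x$-H\"older term $\big(1+b(|D\ue|)\big)|x-y|^\alpha$ and the right-hand side $f\in L^d$ must be inserted into both the Caccioppoli iteration and the oscillation estimate without destroying the scaling that produces the decay. The approximation scheme, the Minty identification of the limit, and the final covering are routine.
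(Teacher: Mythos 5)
Your overall architecture (regularize, prove $\e$-uniform estimates, pass to the limit) is not the one the paper uses for the inhomogeneous, non-autonomous equation, and the point where it departs is exactly where it breaks. Your uniform Lipschitz bound and your oscillation alternative both start by ``differentiating the equation for $u_\e$ in the direction $x_k$'' and claim that the $x$-dependence contributes a source ``bounded by $\L_{\mathrm h}(1+b_\e(|Du_\e|))R^{\alpha-1}$ coming from \eqref{A:hold}.'' Under \eqref{A:hold} the field $\A(\cdot,\xi)$ is only $\alpha$-H\"older continuous in $x$ with $\alpha<1$, so $\partial_{x_k}\A$ does not exist; if you mollify in $x$ as well, the best available bound on the derivative is of order $\e^{\alpha-1}$, which blows up, and the same obstruction appears if you run the argument with difference quotients (the increment $\Delta_h^k\A(x,\xi)$ is only $O(|h|^\alpha)$, so dividing by $h$ is not affordable). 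This is precisely the reason the Bernstein/linearization machinery cannot be applied directly to merely H\"older-continuous coefficients, and why the paper instead uses the Giaquinta--Giusti perturbation scheme: it freezes the coefficient at a point and drops $f$, proves the full $C^{1,\alpha_{\mathrm h}}$ theory (Bernstein bound, fundamental alternative, excess decay) only for the \emph{autonomous homogeneous} problem $-\diver\A(x_0,Du_0)=0$ in Section \ref{sec:int0}, and then transfers the decay to $u$ through the comparison estimate of Proposition \ref{prop:comparison} (where \eqref{A:hold} and $f\in L^d$, $d>n$, enter only through the difference $\A(x_0,Du)-\A(x,Du)$ tested against $Du-Du_0$, so no derivative in $x$ is ever taken), a Morrey-type estimate, and a Campanato iteration.

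Your building blocks for the frozen problem (uniformly non-degenerate approximation $a_\e$, Bernstein subsolution argument for $B_\e(|Dv_\e|)$, the two-alternative De Giorgi scheme distinguishing the non-degenerate and degenerate regimes) do match Sections \ref{sec:int0} of the paper, but they are deployed at the wrong level: they must be applied to $-\diver\A(x_0,Dv)=0$, not to the full equation with $x$-dependent coefficients and right-hand side. Two smaller points: the claim that ``the coefficients of the scalar equation for each $D_ku_\e$ are \dots\ $\alpha$-H\"older in $x$'' again presupposes a differentiated equation that is not available; and your final display $\omega(r)+\mu(r)\le C(r/R)^\beta(\cdots)$ cannot be right as stated, since it would force $Du_\e(x_0)=0$ at every center. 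As written, the proposal therefore has a genuine gap that cannot be repaired without replacing the direct differentiation step by a freezing/comparison argument.
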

 We remark that in \eqref{stimaDuint} the dependence on
$\|f\|_{L^d(\Omega')}$ and
$\int_{\Omega'} |u|\,dx + \int_{\Omega'} B(|Du|)\,dx$
is only through an upper bound, as it will be clear from the proof.
\vspace{0.2cm}

We now move onto boundary regularity, and we first consider the Dirichlet boundary value problem. Our results are local in nature, so we consider a bounded domain $\mathcal{U}\subset \R^n$ such that $\partial \Omega\cap \mathcal{U}$ is relatively open in $\partial \Omega$; we study weak solutions to the boundary value problem
\begin{equation}\label{eq:dir1}
\begin{cases}
    -\diver\big(\A(x,Du) \big)=f\quad &\text{in  $\Omega\cap\mathcal{U}$}
    \\
    u=g\quad & \text{on $ \partial\Omega\cap\mathcal{U}$.}
    \end{cases}
\end{equation}
We say that a function $u\in W^{1,B}(\Omega\cap \mathcal{U})$ is a weak solution to \eqref{eq:dir1} if $u=g$ on $\partial \Omega\cap \mathcal{U}$ in the sense of traces, and 
\begin{equation}\label{weak:dir}
    \int_{\Omega\cap\,\mathcal{U}}\A(x,Du)\cdot D\varphi\,dx=\int_{ \Omega\cap\,\mathcal{U}}f\,\varphi\,dx,
\end{equation}
for all test functions $\varphi\in C^\infty(\mathcal{U})$ such that $\varphi=0$ on $\partial \Omega\cap \mathcal{U}$. Again, via a density argument, equation \eqref{weak:dir} extends to test functions $\varphi\in W^{1,B}(\mathcal{U})$ with zero trace on $\partial \Omega\cap \mathcal{U}$.
\vspace{0.2cm}

We now state our boundary  gradient regularity result for the Dirichlet problem \eqref{eq:dir1}.
In what follows, we indicate by $\mathcal{L}_\Omega=(L_\Omega,R_\Omega)$ the Lipschitz characteristic of the Lipschitz domain $\Omega$, which depends on the Lipschitz
constant $L_\Omega$ of the functions which locally describe $\partial \Omega$, and on the radius $R_\Omega$ of their ball domains. For the precise definition, we refer to Section \ref{subsec:domain}. 

Moreover, if $\partial\Omega\cap \mathcal{U}$ is of class $C^{1,\alpha}$, and $\mathcal{U}'\Subset\mathcal{U}$, we denote by $\|\partial \Omega \cap \mathcal{U}\|_{C^{1,\alpha}(\mathcal{U}')}$ the
$C^{1,\alpha}$-norm of the boundary functions locally representing $\partial \Omega\cap \mathcal{U}$, and whose associated coordinate cylinders cover $\partial\Omega\cap \mathcal{U}'$.
Accordingly,  $\|g\|_{C^{1,\alpha}(\partial\Omega\cap \bar{\mathcal{U}}')}$ stands for the $C^{1,\alpha}$-norm of $g$ when restricted to $\partial\Omega\cap \bar{\mathcal{U}}'$. The precise definitions and notations are given in Section \ref{subsec:domain}--see in particular \eqref{normadeom} and \eqref{norm:hg}.

\begin{theorem}[$C^{1,\beta}$ regularity, Dirichlet problems]\label{thm:dir}
    Let $\Omega\subset \R^n$ be a Lipschitz domain with Lipschitz characteristic $\mathcal{L}_\Omega=(L_\Omega,R_\Omega)$, and let $\mathcal{U}$ be a bounded domain of $\R^n$. Let  $u\in W^{1,B}(\Omega\cap \mathcal{U})$ be a weak solution to the Dirichlet problem \eqref{eq:dir1},  under the assumptions \eqref{iasa}, \eqref{reg:A}-\eqref{A:hold} and \eqref{ab=1}. Suppose that
    \begin{equation*}
        f\in L^d(\Omega\cap\mathcal{U})\,,\quad d>n,
    \end{equation*}
that
    \begin{equation*}
    \partial \Omega\cap \mathcal{U}\quad \text{is of class $C^{1,\alpha},$}
    \end{equation*}
    and that
    \begin{equation*}
        g\in C^{1,\alpha}\big(\partial \Omega\cap \mathcal{U}\big).
    \end{equation*}
Then there exists $\beta\in (0,1)$ determined by $n,\l,\L,i_a,s_a,\alpha,d,L_\Omega$, such that
\begin{equation*}
    u\in C^{1,\beta}( \overline{\Omega}\cap \mathcal{U}')\,,
\end{equation*}
 for every $ \mathcal{U}'\Subset \mathcal{U}$, with quantitative estimate
\begin{equation}\label{stimafin:Dir}
        \begin{split}
            \|u\|_{C^{1,\beta}(\bar{\Omega}\cap\,\mathcal{U}')}\leq C\bigg(n&,\l,\L,\L_{\mathrm{h}},i_a,s_a,\alpha,d,\mathcal{L}_\Omega,\|f\|_{L^d(\Omega\cap\,\mathcal{U})},\|g\|_{C^{1,\alpha}(\partial\Omega\cap \bar{\mathcal{U}}')},\mathrm{diam}\,(\mathcal{U}),
           \\
           &\mathrm{dist}(\mathcal{U}',\partial \mathcal{U}), \|\partial\Omega \cap \mathcal{U} \|_{C^{1,\alpha}(\mathcal{U}')}, \int_{\Omega\cap\, \mathcal{U}}|u|\,dx+\int_{\Omega\cap\, \mathcal{U}}B\big( |Du|\big)\,dx\bigg).
        \end{split}
    \end{equation}
\end{theorem}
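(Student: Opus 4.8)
The plan is to localise near $\partial\Omega\cap\overline{\mathcal{U}'}$, flatten the boundary by a $C^{1,\alpha}$ change of variables, reduce to homogeneous boundary data, and then run the same frozen-coefficient Campanato excess-decay scheme that underlies Theorem \ref{thm:interior}, with its interior comparison estimates replaced by their analogues on a half-ball with flat boundary. Concretely, one covers $\partial\Omega\cap\overline{\mathcal{U}'}$ by finitely many coordinate cylinders in which $\partial\Omega$ is the graph of a $C^{1,\alpha}$ function, and in each one straightens the boundary by a $C^{1,\alpha}$ diffeomorphism $\Phi$ carrying $\Omega$ onto a half-ball $B_R^+$ and $\partial\Omega$ onto the flat disc $T_R=B_R\cap\{y_n=0\}$. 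Setting $\tilde u=u\circ\Phi^{-1}$, one checks that $\tilde u$ is a weak solution of $-\diver(\tilde{\A}(y,D\tilde u))=\tilde f$ in $B_R^+$ with $\tilde u=\tilde g$ on $T_R$, where $\tilde f\in L^d$, $\tilde g\in C^{1,\alpha}(T_R)$, and $\tilde{\A}$ still satisfies \eqref{reg:A}, \eqref{ass:A}, \eqref{A:hold}, with constants depending in addition on $L_\Omega$ and $\|\partial\Omega\cap\mathcal{U}\|_{C^{1,\alpha}(\mathcal{U}')}$ — the point being that $D\Phi$ is merely $C^{0,\alpha}$ but uniformly non-degenerate, so the change of variables preserves the H\"older-in-$x$ bound \eqref{A:hold} with the same exponent. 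One then extends $\tilde g$ to $G\in C^{1,\alpha}(\overline{B_R^+})$ and sets $w:=\tilde u-G$: it vanishes on $T_R$ and solves an equation of the same type with vector field $\bar{\A}(y,\eta):=\tilde{\A}(y,\eta+DG(y))$, whose Orlicz growth is governed by $a(|\eta+DG(y)|)$, still subject to the index bounds deduced from \eqref{iasa}, and which inherits \eqref{A:hold} since $DG\in C^{0,\alpha}$.

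The first substantial step is a quantitative a priori bound $\|Dw\|_{L^\infty(B_{R/2}^+)}\le M$ up to the flat boundary. Away from $T_R$ this is contained in estimate \eqref{stimaDuint} of Theorem \ref{thm:interior}; near $T_R$ one uses a boundary gradient estimate exploiting $w=0$ on $T_R$ — tangential difference quotients (legitimate since the flattened coefficients depend H\"older-continuously, hence continuously, on $y'$, the H\"older-in-$x$ term of \eqref{A:hold} entering only as a lower-order forcing with exponent $\alpha>0$), a Caccioppoli inequality for the normal derivative, and a Moser/De Giorgi iteration whose $L^\infty$ gain comes from $f\in L^d$ with $d>n$; the starting quantities are controlled by $\int_{\Omega\cap\mathcal{U}}|u|\,dx+\int_{\Omega\cap\mathcal{U}}B(|Du|)\,dx$. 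This step reproduces near $T_R$ the Orlicz-degeneracy handling already present in the interior proof.

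With $M$ fixed, $\bar{\A}(y,\cdot)$ is uniformly elliptic on $\{|\eta|\le M+1\}$ with bounded coefficients H\"older continuous in $y$, and one runs the Campanato excess-decay iteration up to $T_R$. For $y_0\in\overline{B_{R/4}^+}$ and small $\rho$, compare $w$ on the (possibly half-) ball $B_\rho(y_0)$ with the solution $v$ of the frozen homogeneous problem $-\diver(\bar{\A}(y_0,Dv))=0$ which equals $w$ on the spherical part and vanishes on the flat part. For $v$ one has the decay estimate
\[
\frac{1}{|B_{t\rho}(y_0)|}\int_{B_{t\rho}(y_0)}|Dv-(Dv)_{B_{t\rho}(y_0)}|^2\,dy\;\le\;C\,t^{2\beta_0}\,\frac{1}{|B_{\rho}(y_0)|}\int_{B_{\rho}(y_0)}|Dv-(Dv)_{B_{\rho}(y_0)}|^2\,dy,
\]
the average $(Dv)$ being the full mean in the interior and its tangential part (vanishing normal component) near $T_R$; this follows from the $C^{1,\beta_0}$ regularity up to the flat boundary of the homogeneous problem for the $x$-independent operator $\bar{\A}(y_0,\cdot)$, available from the literature or established as a preliminary step (and, for the special radial model $\A_0(\xi)=a(|\xi|)\xi$, reducible to the interior case by odd reflection, which then commutes with the operator). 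The difference $w-v$ vanishes on $\partial B_\rho(y_0)$ and on $T_R$ and satisfies
\[
-\diver\big(\bar{\A}(y_0,Dw)-\bar{\A}(y_0,Dv)\big)=\diver\big(\bar{\A}(y_0,Dw)-\bar{\A}(y,Dw)\big)+\tilde f;
\]
testing with $w-v$, using the monotonicity of $\bar{\A}(y_0,\cdot)$ on $\{|\eta|\le M+1\}$ (in the $V$-function sense, as in the interior proof), \eqref{A:hold}, and Poincar\'e, one gets $\frac{1}{|B_\rho|}\int_{B_\rho(y_0)}|Dw-Dv|^2\,dy\le C(\rho^{2\alpha}+\rho^{2(1-n/d)})$. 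Feeding the two estimates into the standard Morrey--Campanato iteration gives $\frac{1}{|B_\rho|}\int_{B_\rho(y_0)}|Dw-(Dw)_{B_\rho}|^2\,dy\le C\rho^{2\beta}$ for all $y_0\in\overline{B_{R/8}^+}$ and small $\rho$, with $\beta<\min\{\beta_0,\alpha,1-n/d\}$, hence $Dw\in C^{0,\beta}(\overline{B_{R/8}^+})$ by Campanato's embedding up to the flat boundary. Unwinding the reductions — $D\tilde u=Dw+DG$, then composition with $\Phi\in C^{1,\alpha}$ — yields $C^{1,\beta}$ regularity of $u$ near the boundary patch, and a finite covering of $\overline\Omega\cap\mathcal{U}'$ by such patches together with Theorem \ref{thm:interior} in the interior gives, by a routine patching, $u\in C^{1,\beta}(\overline\Omega\cap\mathcal{U}')$ and the estimate \eqref{stimafin:Dir}, all structural constants being tracked through the charts.

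The hard part will be the interplay between the a priori Lipschitz bound up to the boundary and the excess decay: the Orlicz operator becomes uniformly elliptic only once $|Dw|$ is known to be bounded, so the boundary gradient $L^\infty$ estimate has to be secured first, and it is there that the degenerate/singular structure, the merely H\"older-continuous dependence on $x$, and the flat boundary condition all interact. A secondary but genuine technical point is that, for a general (non-radial) vector field, odd reflection across $T_R$ no longer commutes with the operator and produces coefficients that are discontinuous in the normal direction, so the gradient-H\"older step cannot be reduced to the interior one by reflection and the Campanato scheme must really be carried out in the half-ball, with the boundary regularity of the homogeneous model problem entering as an independent ingredient; keeping track of the dependence on the Orlicz indices $i_a,s_a$, the ellipticity ratio, the H\"older data, and the domain characteristics throughout is the remaining bookkeeping.
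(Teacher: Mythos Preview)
Your overall architecture --- flatten, subtract a $C^{1,\alpha}$ extension of $g$, compare with a frozen homogeneous problem, and run a Campanato iteration up to the flat boundary, then cover and patch with the interior result --- is exactly the paper's. The two substantive divergences are in how you propose to execute the intermediate steps.

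First, your ``Lipschitz-first'' detour is both unjustified and unnecessary. You propose to obtain $\|Dw\|_{L^\infty(B^+_{R/2})}\le M$ for the \emph{full} solution (variable $x$-coefficients, right-hand side $f$) via tangential difference quotients, with the H\"older-in-$x$ variation ``entering only as a lower-order forcing''. But the tangential difference quotient of $\bar\A(y,Dw)$ contains the term $h^{-1}\big[\bar\A(y+he_i,Dw(\cdot+he_i))-\bar\A(y,Dw(\cdot+he_i))\big]$, which by \eqref{A:hold} is only $O(h^{\alpha-1})$ and blows up as $h\to 0$; it cannot be treated as a bounded forcing, so the equation cannot be differentiated at this stage. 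More to the point, the paper never needs an a priori Lipschitz bound on $u$: the comparison between $u$ and the frozen homogeneous $u_0$ is carried out directly at the Orlicz level via the monotonicity inequality \eqref{strong:coer} and the elementary bound \eqref{el:comparison}, with no reference to any $L^\infty$ control of $Du$; gradient boundedness emerges only at the end of the Campanato iteration.

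Second, the ingredient you defer --- ``$C^{1,\beta_0}$ regularity up to the flat boundary of the homogeneous problem for the $x$-independent operator, available from the literature or established as a preliminary step'' --- is precisely Theorem~\ref{thm:gradddir}, and it is the heart of the boundary argument. The paper does \emph{not} obtain it by reflection (which, as you note, fails for non-radial $\A$) nor by a tangential-difference-quotient scheme, but by Krylov-type barrier arguments (Section~\ref{sec:trace}): one bounds and controls the oscillation of $(v_\e-G)/x_n$ via comparison with explicit barriers for the linearised equation in trace form, then combines this normal-direction control with the interior oscillation decay \eqref{fin:oscdec} through the interpolation Lemma~\ref{lemma:interpol}. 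This is the step you should flesh out; once Theorem~\ref{thm:gradddir} is in hand, the rest of your outline (Propositions~\ref{propo:dirhomog}--\ref{prop:comparisondir} and the Campanato iteration in \eqref{tocca}--\eqref{tocca1}) goes through essentially as you describe.
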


We now move onto the Neumann (or co-normal) boundary value problem
\begin{equation}\label{eq:neu1}
    \begin{cases}
        -\diver\big(\A(x,Du) \big)=f\quad & \text{in $ \Omega\cap \mathcal{U}$}
        \\
        \A(x,Du)\cdot \nu=h\quad & \text{on $\partial\Omega\cap\mathcal{U}$,}
    \end{cases}
\end{equation}
where $\nu$ denotes the outer normal of $\partial \Omega$.

We say that   $u\in W^{1,B}(\mathcal{U}\cap \Omega)$ is a weak solution to  \eqref{eq:neu1} if
\begin{equation}
    \int_{ \Omega\cap \mathcal{U}}\A(x,Du)\cdot D\varphi\,dx=\int_{\Omega\cap \mathcal{U}}f\,\varphi\,dx+\int_{ \partial\Omega\cap \mathcal{U}} h\,\varphi\,d\mathcal{H}^{n-1}\,,
\end{equation}
for all test functions $\varphi\in W^{1,B}(\mathcal{U})$. Here $\mathcal{H}^{n-1}$ stands for the $(n-1)$ dimensional Hausdorff measure.  Our gradient regularity result for the co-normal problem  \eqref{eq:neu1} is the following.

\begin{theorem}[$C^{1,\beta}$ regularity, Neumann problems]\label{thm:neu}
    Let $\Omega\subset \R^n$ be a Lipschitz domain, with Lipschitz characteristic $\mathcal{L}_\Omega=(L_\Omega,R_\Omega)$, and let $\mathcal{U}\subset \R^n$ be a bounded domain.
    
    Suppose that $u\in W^{1,B}(\Omega\cap \mathcal{U})$ is a weak solution to the Neumann problem \eqref{eq:neu1} under the assumptions \eqref{iasa}, \eqref{reg:A}-\eqref{A:hold} and \eqref{ab=1}. Assume that 
\[
f\in L^d(\Omega\cap \mathcal{U}),\,\,d>n,
\]
that
 \begin{equation*}
    \partial \Omega\cap \mathcal{U}\quad \text{is of class $C^{1,\alpha},$}
    \end{equation*}
    and that 
    \[
    h\in C^{0,\alpha}(\partial\Omega\cap \mathcal{U}).
    \]
Then there exists $\beta\in (0,1)$ depending on $n,\l,\L,i_a,s_a,\alpha,d,L_\Omega$, such that
\begin{equation*}
    u\in C^{1,\beta}(\overline{\Omega}\cap \mathcal{U'})\,,
\end{equation*}
     for every $\mathcal{U}'\Subset \mathcal{U}$,  with quantitative estimate
\begin{equation}\label{stimafin:Neu}
        \begin{split}
            \|u\|_{C^{1,\beta}(\bar{\Omega}\cap\,\mathcal{U}')}\leq C\bigg(n&,\l,\L,\L_{\mathrm{h}},i_a,s_a,\alpha,d,\mathcal{L}_\Omega,\|f\|_{L^d(\Omega\cap\,\mathcal{U})},\|h\|_{C^{0,\alpha}(\partial\Omega\cap \bar{\mathcal{U}}')},\mathrm{diam}\,(\mathcal{U}),
           \\
           &\mathrm{dist}(\mathcal{U}',\partial \mathcal{U}), \|\partial\Omega \cap \mathcal{U} \|_{C^{1,\alpha}(\mathcal{U}')}, \int_{\Omega\cap\, \mathcal{U}}|u|\,dx+\int_{\Omega\cap\, \mathcal{U}}B\big( |Du|\big)\,dx\bigg).
        \end{split}
    \end{equation}
\end{theorem}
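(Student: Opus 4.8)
The plan is to prove the estimate \eqref{stimafin:Neu} locally near $\partial\Omega$ and patch it with the interior bound of Theorem~\ref{thm:interior}. Fix $x_0\in\partial\Omega\cap\mathcal{U}'$. Since $\partial\Omega\cap\mathcal{U}$ is of class $C^{1,\alpha}$, after a rigid motion one straightens the boundary near $x_0$ by a $C^{1,\alpha}$-diffeomorphism $\Psi$ carrying a half-ball $B_\rho^+=B_\rho\cap\{x_n>0\}$ onto a neighbourhood of $x_0$ in $\Omega$, with $\Psi(\Gamma_\rho)=\partial\Omega$ where $\Gamma_\rho=B_\rho\cap\{x_n=0\}$. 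Setting $\bar u=u\circ\Psi$, $\bar f=(f\circ\Psi)\,|\det D\Psi|$ and
\begin{equation*}
\bar{\A}(x,\xi)=|\det D\Psi(x)|\;D\Psi(x)^{-1}\,\A\big(\Psi(x),\,D\Psi(x)^{-T}\xi\big),
\end{equation*}
one checks --- using that $\det D\Psi$ is bounded away from $0$ and $\infty$, that $D\Psi^{\pm1}$ are bounded and $C^{0,\alpha}$, and that $a,b$ are doubling as a consequence of \eqref{iasa} --- that $\bar{\A}$ still satisfies \eqref{ass:A} and \eqref{A:hold}, with $\alpha$ possibly lowered and with constants controlled by $L_\Omega$ and $\|\partial\Omega\cap\mathcal{U}\|_{C^{1,\alpha}(\mathcal{U}')}$. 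Under $\Psi$ the weak formulation of \eqref{eq:neu1} becomes $-\operatorname{div}(\bar{\A}(x,D\bar u))=\bar f$ in $B_\rho^+$ together with $\bar{\A}(x,D\bar u)\cdot e_n=\bar h$ on $\Gamma_\rho$ in the weak sense, where $\bar h\in C^{0,\alpha}(\Gamma_\rho)$ is the push-forward of $h$ times the tangential Jacobian of $\Psi$. It therefore suffices to prove $\bar u\in C^{1,\beta}(\overline{B_{\rho/4}^+})$ with the corresponding bound.

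The next step is an a priori local Lipschitz bound $\|D\bar u\|_{L^\infty(B_{\rho/2}^+)}\le M$ up to the flat face, obtained by the same Caccioppoli estimates and De Giorgi--Moser iteration used to prove Theorem~\ref{thm:interior} (difference quotients of the equation and monotonicity of $\bar{\A}$ from \eqref{ass:A}), the only new ingredient being the conormal term $\int_{\Gamma_\rho}\bar h\,\varphi$, which is absorbed by a trace--interpolation inequality since $\bar h$ is bounded. Once $M$ is fixed, \eqref{ass:A} becomes uniformly elliptic on the range $0<|\xi|\le 2M$, with ellipticity constants depending also on $M,i_a,s_a$; this is what makes the perturbation below effective. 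We measure the gradient oscillation through the excess
\begin{equation*}
\Phi(x_0,r)=\mint_{B_r^+(x_0)}\big|V(D\bar u)-(V(D\bar u))_{B_r^+}\big|^2\,dx,\qquad V(\xi)=\sqrt{a(|\xi|)}\,\xi,
\end{equation*}
which, on $\{|D\bar u|\le M\}$, is comparable with constants depending on $M$ to $\mint_{B_r^+}|D\bar u-(D\bar u)_{B_r^+}|^2\,dx$.

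The core of the argument is a decay estimate for $\Phi$. On a boundary half-ball $B_r^+\Subset B_{\rho/2}^+$, I would compare $\bar u$ with the solution $v$ of the frozen problem with constant conormal datum, $-\operatorname{div}(\A_0(Dv))=0$ in $B_r^+$, $\A_0(Dv)\cdot e_n=\bar h(x_0)$ on $\Gamma_r$, $v=\bar u$ on the spherical part of $\partial B_r^+$, where $\A_0(\xi)=\bar{\A}(x_0,\xi)$. Picking $\xi^*$ with $\A_0(\xi^*)\cdot e_n=\bar h(x_0)$ and subtracting from $v$ an affine function of gradient $\xi^*$, one reduces to a \emph{homogeneous} conormal problem for the $x$-independent field $\eta\mapsto\A_0(\eta+\xi^*)-\bar h(x_0)e_n$; its even reflection $\widetilde w(x',x_n)=w(x',|x_n|)$ is a genuine weak solution in the full ball $B_r$, and after one further affine substitution and conjugation by $R=\mathrm{diag}(1,\dots,1,-1)$ --- an operation preserving \eqref{ass:A} --- it solves an equation of the type covered by Theorem~\ref{thm:interior} with $f\equiv0$ and $x$-independent coefficients. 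Hence $\widetilde w\in C^{1,\gamma}_{loc}(B_r)$ and, undoing the reflection, $v\in C^{1,\gamma}(\overline{B_{r/2}^+})$ with
\begin{equation*}
\mint_{B_s^+}\big|Dv-(Dv)_{x_0}\big|^2\,dx\le C_0\Big(\tfrac{s}{r}\Big)^{2\gamma}\mint_{B_r^+}\big|Dv-(Dv)_{x_0}\big|^2\,dx\qquad(0<s\le r/2).
\end{equation*}
Testing the equations for $\bar u$ and $v$ against $\bar u-v$ and using the monotonicity of the fields, the comparison error $\mint_{B_r^+}|D\bar u-Dv|^2\,dx$ is controlled by the $x$-oscillation of $\bar{\A}$ on $B_r^+$ (of order $r^{2\alpha}$ by \eqref{A:hold}), by $\|\bar f\|_{L^d(B_r^+)}$ (of order $r^{2(1-n/d)}$, since $d>n$) and, via a trace inequality, by the $C^{0,\alpha}$-oscillation of $\bar h$ on $\Gamma_r$ (again a positive power of $r$), so that $\mint_{B_r^+}|D\bar u-Dv|^2\,dx\le C\,r^{2\sigma}$ for some $\sigma=\sigma(n,\l,\L,i_a,s_a,\alpha,d,M)>0$. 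Combining the two estimates yields, for all $\theta\in(0,\tfrac12]$,
\begin{equation*}
\Phi(x_0,\theta r)\le C_1\,\theta^{2\gamma}\,\Phi(x_0,r)+C_2\,r^{2\sigma},
\end{equation*}
and a standard iteration (Campanato's lemma) gives $\Phi(x_0,r)\le C\,r^{2\beta}$ for some $\beta\in(0,\min\{\gamma,\sigma\})$, uniformly for $x_0$ near $\partial\Omega\cap\mathcal{U}'$; interior balls are handled by the excess decay already obtained in the proof of Theorem~\ref{thm:interior}. By Campanato's characterization, $D\bar u\in C^{0,\beta}(\overline{B_{\rho/8}^+})$, hence $\bar u\in C^{1,\beta}$ up to $\Gamma_\rho$; transporting back through $\Psi\in C^{1,\alpha}$ (lowering $\beta$ to $\min\{\beta,\alpha\}$ if needed) and covering $\overline{\Omega}\cap\mathcal{U}'$ by finitely many such charts and interior balls gives $u\in C^{1,\beta}(\overline{\Omega}\cap\mathcal{U}')$ together with \eqref{stimafin:Neu}, in which $\int_{\Omega\cap\mathcal{U}}|u|\,dx+\int_{\Omega\cap\mathcal{U}}B(|Du|)\,dx$ enters only through $M$ and the size of $\Phi$ at the starting scale.

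I expect the main obstacle to be the a priori Lipschitz bound up to the boundary: differentiating a degenerate or singular operator whose $x$-dependence is merely Hölder forces one into the difference-quotient / fractional-Sobolev framework, and near $\Gamma_\rho$ this must be carried out while simultaneously controlling the conormal boundary term. A secondary, pervasive difficulty is keeping all the Caccioppoli and comparison estimates uniform as $|D\bar u|$ approaches the degeneracy set $\{\xi=0\}$ --- precisely the role played by the $V$-function and the two-sided bound \eqref{iasa}.
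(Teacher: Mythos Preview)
Your overall architecture --- flatten, freeze, compare, iterate --- matches the paper, but the key step where you handle the frozen homogeneous conormal problem has a genuine gap. You claim that after subtracting an affine function with gradient $\xi^*$ (so that the conormal datum becomes zero) the even reflection $\tilde w(x',x_n)=w(x',|x_n|)$ ``is a genuine weak solution in the full ball $B_r$'' of an equation covered by Theorem~\ref{thm:interior}. This fails for general $\A_0$. Writing $R=\mathrm{diag}(1,\dots,1,-1)$, the reflected function solves $-\mathrm{div}(\tilde\A(x,D\tilde w))=0$ only if one takes $\tilde\A(x,\xi)=\A_0(\xi)$ for $x_n>0$ and $\tilde\A(x,\xi)=R\A_0(R\xi)$ for $x_n<0$; unless $\A_0$ satisfies the reflection identity $R\A_0(R\xi)=\A_0(\xi)$ --- which is \emph{not} implied by \eqref{ass:A} (anisotropic operators such as \eqref{anis:Orlicz} violate it, and your shift $\eta\mapsto\A_0(\eta+\xi^*)-\bar h(x_0)e_n$ destroys any such symmetry anyway) --- the resulting vector field is discontinuous across $\{x_n=0\}$ and therefore does not satisfy \eqref{A:hold}. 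So Theorem~\ref{thm:interior} does not apply, and your decay estimate for $Dv$ is unjustified.

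This is precisely why the paper devotes an entire section (Section~\ref{sec:neuhom}, culminating in Theorem~\ref{thm:homneu}) to the boundary $C^{1,\beta_N}$ theory for \eqref{homog:neum} directly on the half-ball, without reflection. The essential new ingredients there are: the boundary oscillation bound $\operatorname*{osc}_{B^0_r}D_n v\le C\operatorname*{osc}_{B^0_r}D'v$ (equation~\eqref{ttt:osc}), derived from $\A^n(Dv)\equiv -h_0$; a weighted Moser iteration for $|D'v|$ (Proposition~\ref{prop:gradneuLinf}); and a boundary version of the fundamental alternative treating the tangential and normal derivatives separately (Lemmas~\ref{lem:alttang}--\ref{lemma:alt2Dn}). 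These replace the interior alternative that your reflection trick would have imported. A secondary issue: you propose to obtain the Lipschitz bound for $\bar u$ itself by differentiating the equation, but with merely H\"older $x$-dependence this does not go through; the paper instead gets $|D\bar u|\in L^\infty$ only \emph{after} the comparison/Morrey/Campanato machinery, using the $L^\infty$ bound \eqref{neu:EST} for the frozen comparison solution $u_0$.
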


We  emphasize that in estimates \eqref{stimafin:Dir} and \eqref{stimafin:Neu}, the dependence on
\[
\|f\|_{L^d(\Omega\cap \mathcal{U})},\,\|\partial\Omega \cap \mathcal{U} \|_{C^{1,\alpha}(\mathcal{U}')},\, 
\|g\|_{C^{1,\alpha}(\partial\Omega\cap \overline{\mathcal{U}}')}, \,\|h\|_{C^{0,\alpha}(\partial\Omega\cap \overline{\mathcal{U}}')},
\, \text{ and }\,  
\int_{\Omega\cap\, \mathcal{U}}\Big\{|u|+B\big( |Du|\big)\Big\}\,dx
\]
enters only through \emph{upper bounds} on these quantities, as it will be clear from the proofs. 
As for the dependence on the geometric parameters 
\(\mathcal{L}_\Omega=(L_\Omega,R_\Omega)\), this enters through an \emph{upper bound on \(L_\Omega\)}
and a \emph{lower bound on \(R_\Omega\)}, in view of the proof
and Definition~\ref{def:Lom},
\vspace{0.2cm}

Finally, by combining the results of Theorems \ref{thm:interior}-\ref{thm:neu},  we will immediately infer the following two corollaries.

\begin{corollary}[Global $C^{1,\beta}$ regularity, Dirichlet problems]\label{cor:dirichlet}
    Let $\Omega$ be a bounded domain of class $C^{1,\alpha}$, with Lipschitz characteristic $\mathcal{L}_\Omega=(L_\Omega,R_\Omega)$. Let $u\in W^{1,B}(\Omega)$ be the weak solution to the Dirichlet problem
    \begin{equation}\label{eq:dir2}
        \begin{cases}
            -\diver\big(\A(x,Du) \big)=f\quad & \text{in $\Omega$}
            \\
            u=g\quad&\text{on $\partial \Omega$,}
        \end{cases}
    \end{equation}
 under the assumptions  \eqref{iasa}, \eqref{reg:A}-\eqref{A:hold}, \eqref{ab=1}, and suppose that 
 \[
 f\in L^d(\Omega),\, d>n\quad\text{and}\quad g\in C^{1,\alpha}(\R^n).
 \]
Then there exists $\beta\in (0,1)$ depending on $n,\l,\L,i_a,s_a,\alpha, d, L_\Omega $ such that $u\in C^{1,\beta}(\overline{\Omega})$, with quantitative estimate
\begin{equation}\label{est:globdirichlet}
   \|u\|_{C^{1,\beta}(\overline{\Omega})}\leq C\bigg(n,\l,\L,\L_\mathrm{h},i_a,s_a,\alpha,d,\mathcal{L}_\Omega,\|\partial\Omega\|_{C^{1,\alpha}}, \|g\|_{C^{1,\alpha}(\rn)}, \|f\|_{L^d(\Omega)} \bigg)\,.
\end{equation}
\end{corollary}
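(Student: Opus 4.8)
The plan is to obtain this global statement by \emph{patching together} the interior estimate of Theorem~\ref{thm:interior} and the boundary estimate of Theorem~\ref{thm:dir} over a finite cover of $\overline{\Omega}$, after first establishing an a priori bound for the energy $\int_{\Omega}\{|u|+B(|Du|)\}\,dx$ in terms of the data; this last step is what allows that quantity (and $\mathrm{diam}\,\Omega$, $|\Omega|$, which are to be absorbed into the geometric data of $\Omega$) to be omitted from the list of parameters in \eqref{est:globdirichlet}.

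First I would prove the energy bound. Since $g\in C^{1,\alpha}(\R^n)$ and $\Omega$ is bounded, $g\in W^{1,\infty}(\Omega)\hookrightarrow W^{1,B}(\Omega)$ with $\|g\|_{W^{1,\infty}(\Omega)}\le\|g\|_{C^{1,\alpha}(\R^n)}$; hence $\varphi:=u-g$ has zero trace on $\partial\Omega$ and is an admissible test function in the weak formulation of \eqref{eq:dir2}. Testing with $\varphi$ gives $\int_\Omega\A(x,Du)\cdot Du\,dx=\int_\Omega\A(x,Du)\cdot Dg\,dx+\int_\Omega f\,(u-g)\,dx$. From \eqref{ass:A}, \eqref{A:hold} and \eqref{iasa} one derives, in the usual way, the coercivity and growth inequalities $\A(x,\xi)\cdot\xi\ge c_1 B(|\xi|)-c_2$ and $|\A(x,\xi)|\le c_3\,(1+b(|\xi|))$, with $c_1,c_2,c_3$ depending on $n,\l,\L,\L_{\mathrm h},i_a,s_a$; inserting these, absorbing $\int_\Omega\A(x,Du)\cdot Dg\,dx$ via Young's inequality in Orlicz form into $\tfrac{c_1}{2}\int_\Omega B(|Du|)\,dx$ plus a term controlled by $\|g\|_{C^{1,\alpha}(\R^n)}$, and estimating $\int_\Omega f\,(u-g)\,dx$ by H\"older's inequality (using $f\in L^d$, $d>n$) together with the Sobolev--Poincar\'e inequality on the zero-trace space $W^{1,B}_0(\Omega)$, I would arrive at $\int_\Omega B(|Du|)\,dx+\int_\Omega|u|\,dx\le C_0$, where $C_0$ depends only on $n,\l,\L,\L_{\mathrm h},i_a,s_a,d$, $\|f\|_{L^d(\Omega)}$, $\|g\|_{C^{1,\alpha}(\R^n)}$ and the geometric data of $\Omega$.

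Next I would run the covering argument. As $\Omega$ is a bounded $C^{1,\alpha}$ domain, $\overline\Omega$ is compact. Around each $x_0\in\partial\Omega$ I take a pair of coordinate neighborhoods $\mathcal U'_{x_0}\Subset\mathcal U_{x_0}$ adapted to the $C^{1,\alpha}$ chart at $x_0$, chosen so that $\mathrm{diam}(\mathcal U_{x_0})$, $\mathrm{dist}(\mathcal U'_{x_0},\partial\mathcal U_{x_0})$ and $\|\partial\Omega\cap\mathcal U_{x_0}\|_{C^{1,\alpha}(\mathcal U'_{x_0})}$ are controlled by $\mathcal L_\Omega$ and $\|\partial\Omega\|_{C^{1,\alpha}}$; since $u\in W^{1,B}(\Omega\cap\mathcal U_{x_0})$ is a weak solution of the localized Dirichlet problem \eqref{eq:dir1} there and $g\in C^{1,\alpha}(\partial\Omega\cap\mathcal U_{x_0})$ with norm $\le\|g\|_{C^{1,\alpha}(\R^n)}$, Theorem~\ref{thm:dir} gives $u\in C^{1,\beta_1}(\overline\Omega\cap\mathcal U'_{x_0})$ with a bound in which the occurrence of $\int_{\Omega\cap\mathcal U_{x_0}}\{|u|+B(|Du|)\}\,dx$ is replaced by $C_0$. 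Around each $x_0\in\Omega$ I take a ball $B_{x_0}\Subset\Omega$ and apply Theorem~\ref{thm:interior}, obtaining $u\in C^{1,\beta_2}(B_{x_0})$ with an analogous bound; here $\beta_1,\beta_2$ depend only on $n,\l,\L,i_a,s_a,\alpha,d,L_\Omega$, so I set $\beta:=\min\{\beta_1,\beta_2\}$. Extracting a finite subcover of $\overline\Omega$, the local bounds combine into $Du\in C^0(\overline\Omega)$ with $\|Du\|_{L^\infty(\overline\Omega)}\le C$, and a standard chaining argument on the connected compact set $\overline\Omega$ --- separating pairs $x,y$ according to whether $|x-y|$ is below or above the Lebesgue number of the subcover --- promotes these to the global estimate \eqref{est:globdirichlet}.

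I expect the main obstacle to be the bookkeeping of constants rather than any new analytic difficulty: one must check that every geometric quantity feeding into Theorem~\ref{thm:dir} on the boundary pieces --- the diameters and distances above, the local $C^{1,\alpha}$-norms of $\partial\Omega$, and the \emph{number} of charts needed to cover $\partial\Omega$ --- as well as $|\Omega|$, the number of interior balls and the Lebesgue number of the cover, is uniformly controlled by $\mathcal L_\Omega$ and $\|\partial\Omega\|_{C^{1,\alpha}}$; this uses compactness of $\overline\Omega$ together with the global $C^{1,\alpha}$ regularity, and it is precisely what legitimizes dropping $\mathrm{diam}\,\Omega$ and the energy integral from \eqref{est:globdirichlet}. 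A secondary point, requiring the Orlicz machinery of the earlier sections, is the a priori energy estimate itself, where the doubling-type bounds encoded in \eqref{iasa} must be combined with the inhomogeneous boundary datum and the merely $L^d$ right-hand side through the appropriate Young and Poincar\'e inequalities.
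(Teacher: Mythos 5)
Your proposal follows essentially the same route as the paper: an a priori energy bound obtained by testing the equation with $u-g$ and absorbing via Orlicz--Young and Poincar\'e inequalities, followed by combining the boundary estimate of Theorem~\ref{thm:dir} near $\partial\Omega$ with the interior estimate of Theorem~\ref{thm:interior} over a finite cover. If anything you are slightly more careful than the paper's displayed computation for \eqref{stima:enudir}, which omits the term $\int_\Omega f\,(u-g)\,dx$; your treatment of it (H\"older plus Sobolev on the zero-trace space, then absorption by Young's inequality, exploiting $i_B>1$) is the correct way to handle it.
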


\begin{corollary}[Global $C^{1,\beta}$ regularity, Neumann problems]\label{cor:neu}
    Let $\Omega$ be a bounded domain of class $C^{1,\alpha}$, with Lipschitz characteristic $\mathcal{L}_\Omega=(L_\Omega,R_\Omega)$. Suppose that  assumptions \eqref{iasa}, \eqref{reg:A}-\eqref{A:hold}, \eqref{ab=1} are in force, and let $u\in W^{1,B}(\Omega)$ be the weak solution to the co-normal problem 
    \begin{equation}\label{eq:neu2}
        \begin{cases}
            -\diver\big(\A(x,Du) \big)=f\quad & \text{in $\Omega$}
\\
            \A(x,Du)\cdot \nu=h\quad & \text{on $\partial\Omega$.}
        \end{cases}
    \end{equation}
satisfying
\begin{equation}\label{zero:mean}
    \int_\Omega u\,dx=0.
\end{equation}
    Assume that $h\in C^{0,\alpha}(\partial \Omega)$, and that $f\in L^d(\Omega)$, $d>n$, with compatibility condition
\begin{equation*}
    \int_\Omega f\,dx+\int_{\partial \Omega} h\,d\mathcal{H}^{n-1}=0.
\end{equation*} 
 Then there exist $\beta\in (0,1)$ depending on $n,\l,\L,i_a,s_a,\alpha,d,L_\Omega$, such that  $u\in C^{1,\beta}(\overline{\Omega})$, with quantitative estimate
\begin{equation}\label{est:neucoroll}
   \|u\|_{C^{1,\beta}(\overline{\Omega})}\leq C\bigg(n,\l,\L,\L_\mathrm{h},i_a,s_a,\alpha,d,\mathcal{L}_\Omega,\|\partial\Omega\|_{C^{1,\alpha}}, \|h\|_{C^{0,\alpha}(\partial \Omega)}, \|f\|_{L^d(\Omega)} \bigg)\,.
\end{equation}
\end{corollary}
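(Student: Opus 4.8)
The plan is to derive Corollary~\ref{cor:neu} from the interior estimate of Theorem~\ref{thm:interior} and the boundary estimate of Theorem~\ref{thm:neu} by a finite covering of $\overline\Omega$, after first establishing an a priori bound on the Orlicz energy of $u$ which replaces the energy-dependent constants in those two theorems by constants of the type allowed in \eqref{est:neucoroll}.

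For the energy bound I would test the weak formulation of \eqref{eq:neu2} with $\varphi=u$ (admissible since $u\in W^{1,B}(\Omega)$). Writing $\A(x,\xi)=\A(x,0)+\int_0^1 D_\xi\A(x,t\xi)\,\xi\,dt$ and using the ellipticity in \eqref{ass:A}, the bound $|\A(x,0)|\le\Lambda$ from \eqref{A:hold}, and $a(t)\,t=b(t)$ together with \eqref{Bb}, one gets $\A(x,\xi)\cdot\xi\ge\lambda\,B(|\xi|)-\Lambda\,|\xi|$, so the left-hand side dominates $\lambda\int_\Omega B(|Du|)\,dx$ up to $\Lambda\int_\Omega|Du|\,dx$. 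On the right-hand side, $\int_\Omega f\,u\,dx$ is controlled by H\"older's inequality and the Sobolev--Poincar\'e inequality $\|u\|_{L^{d'}(\Omega)}\le C\|Du\|_{L^1(\Omega)}$---which uses $\int_\Omega u\,dx=0$, the inclusion $W^{1,B}(\Omega)\subseteq W^{1,1}(\Omega)$ (valid on the bounded domain $\Omega$ since $B$ is superlinear) and $d>n$, hence $d'<n/(n-1)$---while $\int_{\partial\Omega}h\,u\,d\mathcal{H}^{n-1}$ is controlled by the trace inequality $\|u\|_{L^1(\partial\Omega)}\le C\|Du\|_{L^1(\Omega)}$ on the Lipschitz domain $\Omega$. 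Since $B$ is superlinear (a consequence of $i_a>-1$) one has $\|Du\|_{L^1(\Omega)}\le\varepsilon\int_\Omega B(|Du|)\,dx+C_\varepsilon$; choosing $\varepsilon$ small and absorbing then yields $\int_\Omega\{|u|+B(|Du|)\}\,dx\le C_0$, with $C_0$ depending only on $n,\lambda,\Lambda,i_a,s_a,d$, on the geometry of $\Omega$ through $\mathcal{L}_\Omega$ and $\|\partial\Omega\|_{C^{1,\alpha}}$, and on $\|f\|_{L^d(\Omega)}$ and $\|h\|_{C^{0,\alpha}(\partial\Omega)}$.

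Next, since $\Omega$ is a bounded $C^{1,\alpha}$ domain, there is a radius $r_0>0$ quantified by $\mathcal{L}_\Omega$ and $\|\partial\Omega\|_{C^{1,\alpha}}$ such that $\partial\Omega\cap B_{2r_0}(x_0)$ is, in suitable coordinates, a $C^{1,\alpha}$ graph for every $x_0\in\partial\Omega$. Applying Theorem~\ref{thm:neu} with $\mathcal{U}=B_{2r_0}(x_0)$ and $\mathcal{U}'=B_{r_0}(x_0)$---invoking $h\in C^{0,\alpha}(\partial\Omega)$, $f\in L^d(\Omega)$, the bound $\int_{\Omega\cap\mathcal{U}}\{|u|+B(|Du|)\}\,dx\le C_0$ from the previous step, and $\|\partial\Omega\cap\mathcal{U}\|_{C^{1,\alpha}(\mathcal{U}')}\le\|\partial\Omega\|_{C^{1,\alpha}}$---I obtain $u\in C^{1,\beta_1}(\overline\Omega\cap B_{r_0}(x_0))$ with $\beta_1$ and the associated norm bound depending only on the quantities in \eqref{est:neucoroll} (the dependence on $\operatorname{diam}\mathcal{U}=4r_0$ and $\operatorname{dist}(\mathcal{U}',\partial\mathcal{U})=r_0$ being harmless). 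The compact set $K:=\{x\in\overline\Omega:\operatorname{dist}(x,\partial\Omega)\ge r_0\}\Subset\Omega$ I would cover by finitely many balls $B_{r_0/4}(y_j)$ with $B_{r_0/2}(y_j)\Subset\Omega$, and Theorem~\ref{thm:interior} (with $\Omega''=B_{r_0/4}(y_j)$ and $\Omega'=B_{r_0/2}(y_j)$) gives $u\in C^{1,\beta_2}(\overline{B_{r_0/4}(y_j)})$ with a norm bound of the same admissible type. Setting $\beta=\min\{\beta_1,\beta_2\}\in(0,1)$ and extracting a finite subcover of $\overline\Omega$---whose cardinality is controlled by $\operatorname{diam}\Omega/r_0$, hence by the geometry of $\Omega$---a standard patching of the uniform local $C^{1,\beta}$ estimates (for two points lying in a common ball, respectively at mutual distance $\gtrsim r_0$, the latter handled via the uniform $C^1$-bound) yields $u\in C^{1,\beta}(\overline\Omega)$ together with the estimate \eqref{est:neucoroll}.

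The only ingredient not supplied directly by Theorems~\ref{thm:interior} and~\ref{thm:neu} is the a priori energy bound discussed above, and there the main care is to keep the absorption constant under control so that $C_0$ depends only on admissible data---this rests on the Sobolev--Poincar\'e and trace inequalities on $\Omega$ (used through $W^{1,B}\subseteq W^{1,1}$) and on the coercivity $\A(x,\xi)\cdot\xi\gtrsim B(|\xi|)$. Everything else is routine bookkeeping of the constants furnished by those two theorems across a finite cover.
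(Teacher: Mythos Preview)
Your proposal is correct and follows essentially the same route as the paper: both test the weak formulation with $u$, use the coercivity $\A(x,\xi)\cdot\xi\gtrsim B(|\xi|)-C|\xi|$ together with Poincar\'e (from the zero-mean condition), Sobolev, and trace inequalities to obtain an a~priori bound on $\int_\Omega\{|u|+B(|Du|)\}\,dx$, and then feed this into the local estimates of Theorems~\ref{thm:interior} and~\ref{thm:neu}. The only cosmetic difference is that the paper applies Theorem~\ref{thm:neu} once with $\mathcal{U}'\Supset\overline\Omega$ (its proof already contains the interior/boundary covering), whereas you spell out the boundary-plus-interior covering explicitly; both are equivalent.
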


\subsection*{Key features of the operator}
One of the main properties of the operator \eqref{eq1} is 
the so-called \emph{uniform ellipticity} in the sense of Ladyzhenskaya--Ural'tseva, which is ensured by assumption~\eqref{ass:A}.
 More precisely, setting
\begin{equation}\label{def:nxiA}
    \nabla_\xi \A(x,\xi)
    = \big(\nabla_\xi \A(x,\xi)\big)_{i,j=1}^n
    := \frac{\partial \A^i}{\partial \xi_j}(x,\xi),
\end{equation}
and assuming for simplicity that $\nabla_\xi \A(x,\xi)$ is a symmetric matrix for every $x \in \overline{\Omega}$ and $\xi\neq 0$, uniform ellipticity can be expressed as
\begin{equation}\label{def:unifell}
   \sup_{|\xi|\geq 1} \Bigg(\frac{
    \sup_{x \in \overline{\Omega}}
    \bigl(\text{largest eigenvalue of } \nabla_\xi \A(x,\xi)\bigr)
    }{
    \inf_{x \in \overline{\Omega}}
    \bigl(\text{smallest eigenvalue of } \nabla_\xi \A(x,\xi)\bigr)
    }\Bigg)
    < \infty.
\end{equation}

Another important feature is the power-type control of the operator provided by assumption~\eqref{iasa}. In particular, the rightmost inequality in~\eqref{iasa} ensures that the operator grows at most polynomially, whereas the leftmost inequality guarantees that the operator stays away from the $1$-Laplacian, whose analytical behavior is markedly different.

Let us also remark that by now there exists a vast literature on \emph{nonuniformly elliptic operators}, namely operators for which condition~\eqref{def:unifell} fails. Providing a comprehensive list of contributions would be far beyond the scope of this paper; we therefore refer the reader to \cite{M89, M91, CM15, BCM18, BS24, S24, DfM23, DfM25} and the references therein. For problems exhibiting the so-called \emph{nearly linear growth}, corresponding to the case $i_a=-1$ (for instance, the logarithmic Young function $B(t)=t\log(1+t)$), we refer to \cite{DfM231, DfDfP25, DfP24} and the references therein.
\vspace{0.1cm}

We conclude by highlighting a further key feature of our problem, namely that the degeneracy of the operator is confined to the set of critical points $\{Du=0\}$. To illustrate this fact, we observe that the first step of the proof consists in differentiating equation~\eqref{eq1} in the autonomous case $\A(x,\xi)=\A(\xi)$ and $f=0$. In this setting, each partial derivative $D_k u$ is a weak solution of
\begin{equation*}
    -\mathrm{div}\bigl(\nabla_\xi \A(Du)\,D(D_k u)\bigr)=0,
    \qquad k=1,\dots,n.
\end{equation*}
By assumption~\eqref{ass:A}, the coefficient matrix satisfies $\nabla_\xi \A(Du)\approx a(|Du|)\,\mathrm{Id}$, and hence the equation degenerates exclusively at points where $|Du|=0$.

This behavior is in sharp contrast with that of the so-called \emph{orthotropic $p$-Laplace operator}, whose degeneracy occurs separately along each coordinate direction. Indeed, for orthotropic operators the degeneracy takes place independently on each set $\{D_k u = 0\}$, $k=1,\dots,n$. For results in this direction, we refer to \cite{BB18, BB20, BBCV18, BBC24}.

\subsection*{Examples of Young functions and admissible operators} As discussed earlier, power-type functions of the form
 \begin{equation*}
     B(t)=\frac{t^p}{p},\quad p>1
 \end{equation*}
satisfy condition \eqref{iasa}; indeed, in this case one has
\(i_a=s_a=p-2>-1\). Other admissible functions are obtained by multiplying with powers of logarithms, that is
\begin{equation*}
    B(t)=t^p\log^q (c+t),\quad p>1,\,q\in \R,
\end{equation*}
with $c\geq 1$ large enough for $B(t)$ to be convex (this can be checked via elementary computations; in particular, we may take $c=1$ if $q\geq 1$).
More elaborated instances, borrowed from \cite{Ta91}, are
\[
\begin{split}
    &B(t)=t^3\,\big(1+(\log t)^2\big)^{-\frac{1}{2}} \exp\big(\log t\,\arctan(\log t) \big);
    \\
    &B(t)=t^{4+\sin\sqrt{1+(\log t)^2}}.
\end{split}
\]
Regarding admissible operators, the prototypical example is the
Orlicz-Laplace operator \eqref{eq:example}, which arises as the Euler-Lagrange
equation associated with the functional
\begin{equation}\label{F:euclid}
\mathcal{F}(v)
=
\int_\Omega B\bigl(|Dv|\bigr)\,dx
-
\int_\Omega f\,v\,dx,
\qquad v\in W^{1,B}(\Omega),
\end{equation}
and which features rotational invariance.

More generally, one may consider anisotropic operators, which lack rotational
invariance. Specifically, let \(H=H(\xi)\colon \mathbb{R}^n\to[0,\infty)\) be an
anisotropy, i.e., a norm on $\R^n$ of class
\(C^2(\mathbb{R}^n\setminus\{0\})\). Given such a function \(H\), we consider
vector fields of the form\footnote{Here,
\(\nabla_\xi F(\xi)\) denotes the differentiation of \(F(\xi)\) with respect to the
variable \(\xi\). The notation $\nabla_\xi B\big(H(Dv) \big)$ stands for the evaluation $\nabla_\xi B\big(H(\xi) \big)|_{\xi=Dv}$.}
\begin{equation}\label{A:t4mpv}
\mathcal{A}(\xi)
=
\nabla_\xi B\bigl(H(\xi)\bigr)
=
a\bigl(H(\xi)\bigr)\,\frac{1}{2}\nabla_\xi H^2(\xi),
\end{equation}
which lead to the so-called \emph{anisotropic (or Finsler) Orlicz--Laplace}
equation
\begin{equation}\label{anis:Orlicz}
-\Delta^H_B u
:=
-\mathrm{div}\Bigl(\nabla_\xi B\bigl(H(Du)\bigr)\Bigr)
=
f.
\end{equation}

In order for the operator to be admissible, we impose natural 
ellipticity assumptions on the anisotropy \(H\). Specifically, we assume that
\begin{equation}\label{H:elliptic}
\lambda\,|\eta|^2
\le
\frac{1}{2}\nabla_\xi^2 H^2(\xi)\,\eta\cdot\eta
\le
\Lambda\,|\eta|^2,
\qquad
\text{for all }\xi\in\mathbb{R}^n\setminus\{0\},\ \eta\in\mathbb{R}^n,
\end{equation}
for some constants \(0<\lambda\le\Lambda<\infty\).
Under this assumption, the structural condition \eqref{ass:A} is satisfied-- see
\cite[Equation~(3.8)]{ACCFM25}. We also note that equation \eqref{anis:Orlicz} arises as the Euler--Lagrange equation
associated with the variational functional
\begin{equation*}
\mathcal{F}_H(v)
=
\int_\Omega B\bigl(H(Dv)\bigr)\,dx
-
\int_\Omega f\,v\,dx,
\qquad v\in W^{1,B}(\Omega),
\end{equation*}
which reduces to \eqref{F:euclid} when \(H\) coincides with the Euclidean norm.

Anisotropic-type operators of the form~\eqref{anis:Orlicz} have recently attracted considerable attention, motivated by their wide range of applications and by their appearance in physical models. For results in this direction, we refer to \cite{CS09, CFV16, CRS19, CFR20, DPV22, CL22, CL24, EMSV24, FSV24, BMV25, BERV25, V25}; see also \cite{GM23, MM24} for a wider class of equations.

Further admissible operators can be obtained by multiplying the vector
field \eqref{A:t4mpv} by a strictly positive H\"older continuous coefficient.
That is, one may consider equations of the form
\begin{equation*}
-\mathrm{div}\bigl(c(x)\,\nabla_\xi B\bigl(H(Du)\bigr)\bigr)
=
f,
\end{equation*}
where \(c(x)\in C^{0,\alpha}(\overline{\Omega})\) satisfies
\(0<\lambda\le c(x)\le\Lambda\). Another admissible class of nonautonomous operators is given by
\begin{equation*}
    -\mathrm{div}\Big(\big(M(x)\,Du\cdot Du\big)^{\frac{p-2}{2}}\,M(x)Du \Big)=f,\quad p>1,
\end{equation*}
or, more generally, by its Orlicz-type counterpart
\begin{equation*}
     -\mathrm{div}\Big(\psi'\big(M(x)Du\cdot Du \big)\,M(x)Du \Big)=f
\end{equation*}
where $\psi$ is defined by \eqref{def:psi}. Above, the matrix  $M=(M_{ij})\in C^{0,\alpha}(\overline{\Omega})$ satisfies the ellipticity and growth conditions
\begin{equation}\label{M:elliptic}
    M(x)\,\eta\cdot\eta\geq \l\,|\eta|^2,\quad\text{and}\quad \sum_{i,j=1}^n|M_{ij}(x)|\leq \L\quad\text{for all $x\in \overline{\Omega}$, and all $\eta\in \R^n$.}
\end{equation}
\vspace{0.1cm}

We conclude this short section by noting that our results also apply to $u$-dependent operators of the form
\begin{equation}\label{general}
    -\mathrm{div}\big(\A(x,u,Du) \big)=f
\end{equation}
where $\A \colon \overline{\Omega}\times\R\times\R^n \to \R^n$ is continuous, $\A(x,u,\cdot)\in C^1(\R^n\setminus\{0\})$ satisfying the growth and coercivity conditions
\begin{equation*}
    \sum_{i,j=1}^n\left|\frac{\partial \A^i}{\partial \xi_j}(x,u,\xi)\right|
    \le \L\, a(|\xi|),
    \qquad
    \sum_{i,j=1}^n \frac{\partial \A^i}{\partial \xi_j}(x,u,\xi)\,\eta_i\,\eta_j
    \ge \l\, a(|\xi|)\,|\eta|^2,
\end{equation*}
for all $\xi\in\R^n\setminus\{0\}$, $\eta\in\R^n$, for all $(x,u)\in\overline{\Omega}\times\R$, and enjoying the H\"older continuity 
\begin{equation*}
    |\A(x,u,\xi)-\A(y,v,\xi)|
    \le \L_{\mathrm{h}}\bigl(1+b(|\xi|)\bigr)
    \bigl(|x-y|^{\alpha}+|u-v|^{\alpha}\bigr),
    \qquad
    |\A(x,u,0)|\le \L,
\end{equation*}
for all $x,y\in\overline{\Omega}$ and $u,v\in\R$.

Under these assumptions, one first establishes the H\"older continuity of weak solutions $u$. As a consequence, the problem reduces to the case~\eqref{eq1}, allowing us to apply our results. Given the already considerable length of this paper, we decided to omit the proof, and we refer to \cite{S64, T67, L91, G03} for classical results on H\"older regularity in this general setting.

\subsection*{Overview of the literature}
Regularity theory for quasilinear elliptic equations is by now a classical subject. 
Without any claim of exhaustiveness, we briefly outline some of its historical development. 
The modern theory traces back to the seminal works of De~Giorgi, Nash, and Moser 
\cite{DG56,N58,M60}, who established interior H\"older regularity for solutions to the 
linear equation
\begin{equation}\label{divAxdu}
    -\mathrm{div}\big(A(x)\,Du \big)=0
\end{equation}
with bounded and uniformly elliptic, measurable coefficients $A(x)=\{A_{ij}(x)\}$.

Building on these ideas, Ladyzhenskaya and Ural'tseva established a comprehensive
regularity theory for a wide class of quasilinear elliptic equations--see
\cite{LU68}. In particular, they proved interior and boundary gradient regularity
results for nondegenerate quasilinear equations of the type \eqref{general}
with vector field $\A$ differentiable
in the coefficients \((x,u)\), and satisfying coercivity and growth assumptions modeled upon the nondegenerate $p$-Laplace equation:
\begin{equation}\label{nondegenerate}
    -\mathrm{div}\Big(\big(\mu+|Du|\big)^{p-2}Du \Big)=f,\quad \text{for $p>1$ and $\mu>0$.}
\end{equation}
Note that the parameter $\mu>0$ guarantees the non-degeneracy of the operator.

For equations with nondifferentiable, but merely H\"older continuous coefficients,  gradient regularity in the
quadratic case \(p=2\) was later obtained by Giaquinta and Giusti
\cite{GG83,GG84}, using the so-called \emph{perturbation argument}.

Turning to genuinely degenerate equations, interior \(C^{1,\beta}\)-regularity for
solutions to \(p\)-Laplace type equations  was established by
Ural'tseva \cite{U68} and Uhlenbeck \cite{U77} for (vectorial) solutions of
\eqref{eq:plapl} with vanishing right-hand side and superquadratic growth
\(p\ge 2\).

A different proof, based on the so-called \emph{fundamental alternative}, was later
provided by Evans \cite{E82}--see also \cite{L88} for an alternative proof. Building on all these ideas, local \(C^{1,\beta}\)-regularity
was subsequently extended to general \(p\)-Laplace type equations, $p>1$, of the form
\eqref{general} by DiBenedetto, Tolksdorf, Manfredi and Lieberman
\cite{DB83,T84,M86,M88, L93}. Moreover, this regularity is, in general, optimal even in the homogeneous case \(f \equiv 0\)--see \cite{IM89,BBDS26}. We also refer to the recent papers \cite{ATU17,ATU18,WY25} which address the optimal H\"older exponents in the inhomogeneous case $f\not\equiv 0$.

Later developments, based on potential-theoretic techniques, led to sharp
assumptions on the right-hand side \(f\) and on the coefficients ensuring interior
gradient estimates and continuity of solutions to \eqref{eq:plapl}. This line of
research was pursued in a series of papers by Duzaar, Mingione, and Kuusi
\cite{DM10,DM100,KM12,KM13,KM14,KM18}-- see also the survey \cite{KM141}. We further
refer to \cite{Ok16,Ok17,BK17,B25} and the references therein for related results on the
\(p(x)\)-Laplace equation.

Concerning quasilinear equations with Orlicz growth, namely those satisfying
\eqref{iasa} and \eqref{ass:A}, interior H\"older continuity of the gradient was
proved by Lieberman \cite{L91}. Local potential estimates in the case \(i_a\ge 0\)
were obtained by Baroni \cite{B15}-- see also \cite{CKW23} for the
vectorial setting. For equations with generalized Orlicz growth and finer assumptions on the coefficients, we refer to the recent works of
H{\"a}st{\"o}, Lee, and Ok \cite{HO221,HO22,HO23,HLO25}-- see also \cite{BaB25} for results concerning minima of Orlicz multi-phase type functionals.

Overall, while the literature on interior gradient regularity is extensive,
boundary gradient regularity to \eqref{eq1} has received comparatively less attention. H\"older
continuity of the gradient up to the boundary was established in \cite{GG84} for quadratic operators, and in \cite{L88} for general
\(p\)-Laplace type equations-- see also \cite{F07} for the \(p(x)\)-Laplacian, and
\cite{L90,L931, DB94, BO15,BDNS22} and references therein concerning the
parabolic $p$-Laplace operator.

For isotropic operators \eqref{eq:example} 
and zero boundary datum, Cianchi and Maz'ya \cite{CM11,CM14,CM141, CVM15} established 
global Lipschitz regularity under sharp assumptions on $\partial \Omega$ and 
the right-hand side, both for the Dirichlet and Neumann problems. Similar results have been recently obtained in \cite{AC251} for anisotropic 
operators \eqref{anis:Orlicz}, and in \cite{BDMS22,DfP23} for $(p,q)$-growth operators  with isotropic structure.

\subsection*{Main novelties and ideas of the proofs}
Amongst the main contributions of this paper is the  global gradient 
regularity to general quasilinear equations with Orlicz growth, both for the Dirichlet and the Neumann boundary value problems. This is the natural generalization of the results in \cite{L88} to the Orlicz setting. Furthermore, we revisit the proof of the interior gradient regularity,  while also improving the integrability assumption on $f$ of 
\cite[Theorem 1.7]{L91}.
\vspace{0.1cm}

We briefly outline the main steps of the proofs, starting with the interior regularity. A substantial portion of the paper is devoted to establishing \(C^{1,\alpha}\)-regularity for solutions \(v\) of the homogeneous, autonomous problem
\begin{equation}\label{www}
    -\mathrm {div}\big(\A(Dv)\big)=0 \quad \text{in } B_R.
\end{equation}
We first prove boundedness of $Dv$ via the so-called \textit{Bernstein method}. Namely, we show   that the function $B\big(|Dv|\big)$ is a nonnegative subsolution of \eqref{divAxdu}, from which the $L^\infty$-estimate of $Dv$ follows via the weak Harnack inequality.
Then we prove the $C^{1,\alpha}$-regularity via the aforementioned fundamental alternative. We refer to Theorem~\ref{thm:alternative} and Lemmas~\ref{lem:1alt}--\ref{lemma:altfin} for the precise statements and details. 

We stress that our proof of the alternative exploits new De Giorgi-type inequalities (see Lemma \ref{lemma:levquadratic}), which greatly simplify the classical arguments of Evans, DiBenedetto and Tolksdorf \cite{E82, DB83,T84}, all of which heavily relied on the monotonicity of \(a(t)=t^{p-2}\), an assumption that may fail in our more general setting. Additionally, inspired by \cite{L88}, we provide another proof of the fundamental alternative via  Moser-type iterations. We briefly note that, to justify all the computations, an additional regularization procedure will be required--see Proposition~\ref{prop:veregular}  and Remark~\ref{rem:approx}.

Once the fundamental alternative is proven, the  H\"older continuity of $Dv$ follows in a standard way--see the discussion preceding Proposition \ref{exs:Dve}. Not only that, but by following ideas from \cite{L91,DM10}, we also establish the $L^1$-excess decay estimate
\begin{equation*}
      \mint_{B_r}|Dv-(Dv)_{B_r}|\,dx\lesssim  \left( \frac{r}{R}\right)^{\alpha} \mint_{B_R}|Dv-(Dv)_{B_R}|\,dx,\quad 0<r\leq R,
\end{equation*}
which is of independent interest--see Theorem \ref{thm:inthom}. We remark that this was already proven in \cite{B15} in the case $i_a\geq 0$.

Having the $C^{1,\alpha}$- estimates of solutions to \eqref{www} at our disposal, the interior regularity \eqref{stimaDuint} is then obtained via the perturbation method. The argument is nowadays standard, so we refer to the discussion preceding Proposition \ref{prop:ex} and to Section \ref{sec:thmint} for the details.
\vspace{0.1cm}

Moving to the proof of the global regularity Theorems \ref{thm:dir}-\ref{thm:neu},  we start by reducing the problems \eqref{eq:dir1} and \eqref{eq:neu1} to the half ball $B_R^+$ via a flattening argument-see Section \ref{subsec:domain}. 
Then, as in the interior case, we establish $C^{1,\alpha}$-regularity  of solutions to the homogeneous problem
\begin{equation}\label{bdry:eqproblems}
    \begin{cases}
        -\mathrm{div}\big(\A(Dv)\big)=0\quad&\text{in $B^+_R$}
        \\
        v=g\quad &\text{on $B^0_R$}
    \end{cases}
\end{equation}
in the case of Dirichlet problems, or
\begin{equation}\label{bdry:eqneupn}
            \begin{cases}
        -\mathrm{div}\big(\A(Dv)\big)=0\quad&\text{in $B^+_R$}
        \\
        \A(Dv)\cdot e_n+h_0=0\quad &\text{on $B^0_R$}
    \end{cases}
\end{equation}
for Neumann boundary value problems, where $h_0$ is a real constant.

The global regularity of \eqref{bdry:eqproblems}, which is the content of Theorem \ref{thm:gradddir}, is obtained via a careful barrier argument, aimed at establishing bounds and oscillation estimates for the normal derivative \(D_n v\). Once these estimates are proven, the desired result follows from the interior regularity, together with tangential control of the derivatives provided by the Dirichlet datum \(g\), and a suitable interpolation argument.  
Regarding the global $C^{1,\alpha}$-regularity  of the conormal problem \eqref{bdry:eqneupn}, we just remark that it is based on suitable modifications of the Bernstein method and of the fundamental alternative. We also establish the $L^1$-excess decay estimate for the Neumann problem~\eqref{bdry:eqneupn}--see Theorem \ref{thm:homneu}-- which, to the best of our knowledge, is new for this type of boundary value problems. We refer to Section~\ref{sec:neuhom} for details and the proof.

Finally, having established \(C^{1,\alpha}\)-regularity for solutions to \eqref{bdry:eqproblems}-\eqref{bdry:eqneupn}, the gradient H\"older continuity of solutions to \eqref{eq:dir1} and \eqref{eq:neu1} will follow once again via the perturbation argument.

\subsection*{Plan of the paper.}
The rest of the paper is organized as follows: in Section \ref{sec:prel} we introduce auxiliary results concerning Young functions, Orlicz Lebesgue and Sobolev spaces, the vector field $\A$, classes of regular domains, and provide auxiliary lemmas.

 Section \ref{sec:bounded} is devoted to the local and global boundedness of solutions.
 
 In Section \ref{sec:int0} we study the interior gradient regularity of solutions to homogeneous, autonomous equations \eqref{www}.
 
 In Section \ref{sec:trace} we establish bounds and oscillations estimates for $u/x_n$, where $u$ is solution to a uniformly elliptic linear equation in trace form. 
These results will be instrumental to the $C^{1,\alpha}$ proof of the homogeneous Dirichlet problem \eqref{bdry:eqproblems} in Section \ref{sec:dirhomog}. 

Section \ref{sec:neuhom} then deals with the global gradient regularity of the homogeneous Neumann problem \eqref{bdry:eqneupn}. In  the last three sections we provide the proof of the main results. Specifically, in Section \ref{sec:thmint} we prove Theorem \ref{thm:interior}, in Section \ref{sec:pfdir} we provide the proof of Theorem \ref{thm:dir} and Corollary \ref{cor:dirichlet}, and in Section \ref{sec:finNeum} we give the proof of Theorem \ref{thm:neu} and of Corollary \ref{cor:neu}.
Finally, in Appendix \ref{appendixA} we prove the interpolation  Lemma \ref{lemma:interpol}.

\subsection*{Notation} 

We denote points in $\R^n$ by $x=(x',x_n)$, where $x'\in \R^{n-1}$ and $x_n\in \R$. When the context is clear, the notation $x_0'$ will be used either to refer to points in $\R^{n-1}$ or to points in $\R^n$ lying on the hyperplane $\{x_n=0\}$, i.e., we identify $x_0' \equiv (x_0',0)$.  We write $\R^n_+=\{(x',x_n)\in \R^n:\,x_n>0\}$ for the upper half space. We denote by $x\cdot y$ the standard scalar product in $\R^n$, and $|x|=\sqrt{x\cdot x}$ the Euclidean norm of $x$. We denote by $e_i=(0,\dots,1,0,\dots,0)$ the $i$-th canonical unit vector of $\R^n$.
Also, for a given matrix $M=\{M_{ij}\}_{\substack{i=1,\dots,d\\
j=1,\dots,n}}$, we denote by $|M|=\sqrt{\sum_{i,j}|M_{i,j}|^2}$ the Frobenius norm of $M$.

We write $B_R(x_0)$ for the $n$-dimensional ball of radius $R>0$ centered at $x_0\in \R^n$, and when $x_0=0$ we simply write $B_R:=B_R(0)$. Similarly, $B'_R(x'_0)$ denotes the $(n-1)$-dimensional ball of radius $R$ centered at $x'_0\in \R^{n-1}$. We define the upper half-ball
\[
B^+_R(x'_0):=\{(x',x_n)\in \R^n : |x-x'_0|<R, \ x_n>0\},
\]
centered at $x'_0\in \R^{n-1}\times \{0\}$. When $x'_0=0$, we simply write $B^+_R:=B^+_R(0)$.  We also denote the flat part of its boundary by 
\[
B^0_R(x'_0):=\{(x',0)\in \R^n : |x-x'_0|<R\} \subset \R^{n-1}\times \{0\},
\]
so that $B^0_R(x'_0)\cong B'_R(x'_0)$. Accordingly, $B^0_R:=B^0_R(0)\cong B'_R$ when the center is the origin.
Furthermore, since all our estimates are local, we shall always assume that the radii satisfy $R\leq 1$.

Given a measurable set $A\subset \R^n$, we denote by $\partial A$ its boundary, $\bar A$ its closure, and $|A|$ its Lebesgue measure. Also, we write $\chi_A$ for its characteristic function:
\[
\chi_A(x) :=
\begin{cases}
1, & x\in A,\\
0, & x\notin A.
\end{cases}
\]

For a nonnegative Borel measure $\mu$ on $\R^n$, and $0<p\leq \infty$, the space $L^p(A;d\mu)$ denotes the set of $p$-integrable functions $F:A\to \R^N$  with respect to $\mu$, endowed with norm
\[
\|F\|_{L^p(A;d\mu)} := \Big( \int_A |F|^p\,d\mu\Big)^{1/p}.
\]
When $\mu$ is the Lebesgue measure, we simply write $L^p(A)$ and $\|f\|_{L^p(A)}$.
For $\beta\in (0,1)$, the H\"older norm is defined by
\[
\|F\|_{C^{0,\beta}(A)}=\sup_A|F|+\sup_{x\neq y}\frac{|F(x)-F(y)|}{|x-y|^\beta}.
\]
If $A\subset \R^n$ has positive measure and $F\in L^1(A)$, we define its average over $A$ by 
\[
(F)_A := \frac{1}{|A|}\int_A F\,dx = \mint_A F\,dx.
\]
For $a\in \R^d$, we write $\{F=a\} := \{x\in A : F(x)=a\}$ for the level set of $F$.
Similarly, if $F$ is real-valued, the sub- and superlevel sets are denoted by $\{F<a\}$, $\{F>a\}$, etc. We also write $\mathrm{spt}\,F$ for the support of $F$.  If $F = \{F_k\}_{k=1}^N$, we define its oscillation over $A$ by
\[
\operatorname*{osc}_A F := \max_{k=1,\dots,N} \Big( \sup_A F_k - \inf_A F_k \Big).
\]
We let $\rho$ denote a standard, radially symmetric mollifier, and for $\delta>0$ define the scaled kernel $\rho_\delta(x) := \delta^{-n} \rho(x/\delta)$. For a measurable function $F$, the notation $F*\rho_\delta$ denotes the convolution of $F$ with $\rho_\delta$. For a real measurable function $v$, we denote its positive and negative parts by
\[
v_+(x) := \max\{v(x),0\}, \qquad v_-(x) := -\min\{v(x),0\}.
\]
We write the gradient as $Dv = (D_1 v, \dots, D_n v)$, $D'v = (D_1 v, \dots, D_{n-1}v)$ the tangential gradient, and $D^2v$ the hessian matrix.  If $V = \{V^i\}_{i=1}^N : A \to \R^d$, we write its derivative as $DV=(DV)_{ij}=D_j V^i$.

\section{Preliminary results}\label{sec:prel}

\subsection{Young functions} \label{sec:young}
We consider a Young function $B$ satisfying \eqref{Bb} and \eqref{reg:Byoung}, and the function $a$ defined by \eqref{def:a} such that $a\in C^1(0,\infty)$ and \eqref{iasa}, \eqref{ab=1} hold.

Here we derive some elementary yet very useful properties of the functions
$a$, $b$, and $B$. Although these properties are well known to experts, we
include a brief proof for the reader's convenience.
 First, we have 
\begin{equation}\label{mon:iasa}
    \begin{split}
         &t\mapsto \frac{a(t)}{t^\mathfrak{i}} \quad\text{is a nondecreasing function for $\mathfrak{i}\leq i_a$}
        \\
        &t\mapsto \frac{a(t)}{t^\mathfrak{s}} \quad\text{is a nonincreasing function for $\mathfrak{s}\geq s_a$}.
    \end{split}
\end{equation}
In fact, by the leftmost inequality in \eqref{iasa}, for all $t>0$ we have
\begin{equation*}
        \frac{a'(t)}{a(t)} = (\log a(t))' \;\geq\; \frac{\mathfrak{i}}{t} = (\log t^{\mathfrak{i}})',
\end{equation*}
and therefore, integrating this inequality yields \eqref{mon:iasa}$_1$. 
The proof of \eqref{mon:iasa}$_2$ proceeds in the same way by using the rightmost inequality of \eqref{iasa}.
\vspace{0.2cm}

In particular, given two fixed constants $m_0,M_0>0$, from \eqref{mon:iasa} one immediately deduces the following very useful estimate\footnote{We anticipate that inequality \eqref{ultra:utile} will be repeatedly used along the proof of the fundamental alternative; see Theorem \ref{thm:alternative}, Lemma \ref{lemma:levquadratic}, and Lemmas  \ref{lem:alttang}-\ref{lemma:alt2Dn}.}:
\begin{equation}\label{ultra:utile}
    c_0\leq\frac{a(t_2)}{a(t_1)}\leq C_0\,,\quad \text{for all $t_1,t_2>0$ such that $m_0\leq\frac{t_2}{t_1}\leq M_0$}
\end{equation}
where $c_0,C_0>0$ depend on $i_a,s_a,m_0,M_0$.
\vspace{0.2cm}

Next, by setting
\begin{equation}\label{def:isb}
    i_b=i_a+1,\quad\text{and}\quad s_b=s_a+1\,,
\end{equation}
and since $  \frac{b'(t)\,t}{b(t)}=\frac{a'(t)\,t}{a(t)}+1$, from \eqref{iasa} we deduce
\begin{equation}\label{ibsb}
    \begin{split}
            0<i_b\leq\inf_{t>0}\frac{b'(t)\,t}{b(t)}\leq \sup_{t>0} \frac{b'(t)\,t}{b(t)}\leq s_b<\infty\,,
    \end{split}
\end{equation}
Therefore, as in \eqref{mon:iasa}, we infer
\begin{equation}\label{mon:ibsb}
    \begin{split}
        &t\mapsto \frac{b(t)}{t^{\mathfrak{i}_b}} \quad\text{is a nondecreasing (increasing) function for $\mathfrak{i}_b\leq i_b$ ($\mathfrak{i}_b< i_b$)}
        \\
        &t\mapsto \frac{b(t)}{t^{\mathfrak{s}_b}} \quad\text{is a nonincreasing (decreasing) function for $\mathfrak{s}_b\geq s_b$ ($\mathfrak{s}_b> s_b$)}.
    \end{split}
    \end{equation}
In particular, as $i_b>0$, we have that 
\begin{equation*}
    t\mapsto b(t)\quad\text{is strictly increasing, and}\quad\lim_{t\to +\infty}b(t)=+\infty
\end{equation*}

From Equations \eqref{mon:ibsb} we immediately deduce
\begin{equation}\label{b2t}
\begin{split}
    C^{i_b}b(t)\leq b(C\,t)\leq C^{s_b}b(t)\,,\quad &\text{for all $C>1$}
    \\
    c^{s_b}b(t)\leq b(c\,t)\leq c^{i_b}b(t)\quad &\text{for all $c<1$,}
    \end{split}
\end{equation}
which implies the (quasi-)triangle inequality for $b(t)$:
\begin{equation}\label{triangleb}
    b(t+s)\leq 2^{s_b}\left\{b(t)+b(s)\right\},\quad t,s\geq 0.
\end{equation}
In fact, assuming without loss of generality that $t\geq s$ and using the monotonicity of $b(t)$, we get 
\begin{equation*}
b(t+s)\leq b(2t)\stackrel{\eqref{b2t}}{\leq} 2^{s_b} b(t)\leq 2^{s_b}\left\{b(t)+b(s)\right\}    
\end{equation*}
that is \eqref{triangleb}. We now claim that the functions $B(t)$ and $b(t)\,t$ are equivalent, that is
\begin{equation}\label{Bcomeb}
    \left(\frac{1}{1+s_b}\right)b(t)\,t\leq B(t)\leq \left(\frac{1}{1+i_b}\right)b(t)\,t,\quad \text{for $t>0$.}
\end{equation}
Indeed, via integration by parts and since $b(0)=0$, we have
\begin{equation*}
    B(t)=\int_0^tb(s)\,ds=t\,b(t)-\int_0^ts\,b'(s)\,ds\stackrel{\eqref{ibsb}_2}{\geq} t\,b(t)-s_b\,B(t),
\end{equation*}
hence the first inequality in \eqref{Bcomeb} follows. The second inequality of \eqref{Bcomeb} follows similarly via \eqref{ibsb}$_1$. From \eqref{Bcomeb} and \eqref{ab=1}, we also get
\begin{equation}\label{B=1}
   \frac{1}{2+s_a}\leq  B(1)\leq \frac{1}{2+i_a}.
\end{equation}
Let us now set
\begin{equation}\label{def:isB}
    i_B=i_b+1,\quad\text{and}\quad s_B=s_b+1\,.
\end{equation}
Taking advantage of $B'(t)=b(t)$, $i_b>0$ and \eqref{Bcomeb}, it is easy to see that
\begin{equation}\label{iBsB}
    1<i_B\leq \inf_{t>0}\frac{B'(t)t}{B(t)}\leq \sup_{t>0}\frac{B'(t)t}{B(t)}\leq s_B<\infty\,,
\end{equation}
hence, in the same way as \eqref{mon:iasa}, we get
\begin{equation}\label{mon:iBsB}
    \begin{split}
        &t\mapsto \frac{B(t)}{t^{\mathfrak i_B}} \quad\text{is a nondecreasing  function for all $\mathfrak i_B\leq i_B$}
        \\
        &t\mapsto \frac{B(t)}{t^{\mathfrak s_B}} \quad\text{is a nonincreasing function for all $\mathfrak{s}_B\geq s_B$}.
    \end{split}
\end{equation}
As in \eqref{b2t}, from Equations \eqref{mon:iBsB} we immediately infer the so-called $\Delta_2$ and $\nabla_2$-properties: 
\begin{equation}\label{B2t}
\begin{split}
    &C^{i_B}B(t)\leq B(C\,t)\leq C^{s_B}B(t)\,,\quad \text{for all $C>1$}
    \\
    &c^{s_B}B(t)\leq B(c\,t)\leq c^{i_B}B(t)\quad \text{for all $c<1$.}
    \end{split}
\end{equation}
We refer to \cite[Chapter 4]{funcsp} for further details concerning these two properties. From \eqref{B2t}, with the very same proof of \eqref{triangleb}, we obtain the (quasi-)triangle inequality for $B(t)$:
\begin{equation}\label{triangle}
    B(t+s)\leq 2^{s_B}\left\{B(t)+B(s)\right\},\quad t,s>0.
\end{equation}

\noindent Another simple inequality is 
\begin{equation}\label{simple}
    t\leq C(i_a,s_a)\,B(t)+1 ,\quad t>0.
\end{equation}
Indeed, since $i_B=i_a+2>1$ and recalling \eqref{B=1}, from \eqref{mon:iBsB} we have  $c(i_a,s_a)\,t\leq B(1)\,t\leq B(t)$ for $t\geq 1$, while for $t<1$ Equation \eqref{simple} is trivial.
\vspace{0.3cm}

Another elementary, yet very useful inequality,\footnote{Inequality
\eqref{el:comparison} is a versatile tool for proving the so-called
comparison estimates; see
Propositions~\ref{prop:comparison}, \ref{prop:comparisondir}, and
\ref{prop:comparisonNEU} below. It is a generalization of the classical $p$-coercivity estimates for power-type nonlinearities
\cite[Lemma~1]{T84}.} is the following:
\begin{equation}\label{el:comparison}
    B(|\xi-\eta|)\leq C\,\delta\big[B(|\xi|)+B(|\eta|) \big]+C\,\delta^{-1}\,a\big(|\xi|+|\eta| \big)\,|\xi-\eta|^2,
\end{equation}
valid for all $\delta \in (0,1]$, for all $\xi,\eta\in \R^n$, with $C=C(i_a,s_a)$. 
\vspace{0.1cm}

To prove it, we observe that by Young's inequality, we have 
\begin{equation*}
    |\xi-\eta|\leq \frac{\delta}{2}\,\big(|\xi|+|\eta|\big)+\frac{\delta^{-1}}{2}\big(|\xi|+|\eta|\big)^{-1}\, |\xi-\eta|^2.
\end{equation*}
Therefore, from the above inequality and the monotonicity of $b(t)$, we infer
\begin{equation*}
    \begin{split}
        B(|\xi-\eta|)&\stackrel{\eqref{Bcomeb}}{\leq } C\,b(|\xi-\eta|)\,|\xi-\eta|\leq C'\,\delta \,b\big( |\xi|+|\eta|\big)\,\big(|\xi|+|\eta| \big)+C'\,\d^{-1} \frac{b\big( |\xi|+|\eta|\big)}{\big(|\xi|+|\eta|\big)}\,|\xi-\eta|^2
        \\
        &\stackrel{\eqref{Bcomeb},\eqref{triangle}}{\leq} C''\,\delta\,\big( B(|\xi|)+B(|\eta|)\big)\,+C'\,\d^{-1}\,a\big(|\xi|+|\eta|\big)\,|\xi-\eta|^2\,,
    \end{split}
\end{equation*}
where $C,C',C''>0$ depend on $i_a,s_a$, and \eqref{el:comparison} is proven.
\vspace{0.2cm}

Let us now introduce the \emph{complementary Young function} (also called conjugate Young function)
\begin{equation}\label{def:wB}
    \widetilde{B}(t)=\sup\left\{st-B(s):\,s>0 \right\}\,.
\end{equation}
Since $b\in C^0\big([0,\infty)\big)\cap C^1(0,\infty)$ is strictly monotone, $b(0)=0$ and $\lim_{t\to \infty}b(t)=+\infty$, we may write
\begin{equation}\label{wB}
    \wB(t)=\int_0^t b^{-1}(s)\,ds
\end{equation}
where $b^{-1}(s)$ is the inverse function of $b$--see \cite[pp. 10-11]{RR91}.
Observe that
\begin{equation*}
    \frac{(b^{-1})'(s)s}{b^{-1}(s)}\stackrel{b(t)=s}{=}\frac{b(t)}{b'(t)t}
\end{equation*}
so that
\begin{equation*}
    0<\frac{1}{s_b}\leq \inf_{s>0}\frac{(b^{-1})'(s)\,s}{b^{-1}(s)}\leq \sup_{s>0}\frac{(b^{-1})'(s)\,s}{b^{-1}(s)}\leq \frac{1}{i_b}<\infty
\end{equation*}
Coupling this piece of information with the representation formula \eqref{wB}, and arguing as in \eqref{Bcomeb}-\eqref{mon:iBsB}, we deduce 
\begin{equation}\label{wBcome}
    \left(\frac{1}{1+i_B'}\right)b^{-1}(s)\,s\leq \wB(s)\leq \left(\frac{1}{1+s_B'}\right)b^{-1}(s)\,s,\quad s>0,
\end{equation}
and
\begin{equation}
    1<s'_B\leq\inf_{t>0}\frac{\wB'(s)s}{\wB(s)}\leq \sup_{t>0}\frac{\wB'(s)s}{\wB(s)} \leq i'_B<\infty\,,
\end{equation}
where 
\[
i_B'=\frac{i_B}{i_B-1}\quad \text{and}\quad s_B'=\frac{s_B}{s_B-1}
\]
are the H\"older's conjugates of $i_B,s_B$, respectively.

Correspondingly, we may obtain the  same monotonicity properties for $\wB$ as in \eqref{mon:iBsB}, with $s_B',i_B'$ replacing $i_B,s_B$, respectively. In particular, we have
\begin{equation}\label{tB:usef}
\begin{split}
    &c^{i_B'}\wB(t)\leq \wB(c t)\leq c^{s_B'}\wB(t),\quad\text{for all } c\in (0,1)\text{ and $t>0$}
    \\
    &C^{s_B'}\wB(t)\leq \wB(C t)\leq C^{i_B'}\wB(t),\quad\text{for all } C\geq 1 \text{ and $t>0$},
    \end{split}
\end{equation}
and
\begin{equation}\label{tB=1}
    c(i_a,s_a)\leq \widetilde{B}(1)\leq C(i_a,s_a).
\end{equation}
Moreover, by \eqref{Bcomeb} and \eqref{wBcome}, we have that
\begin{equation}\label{come:BwB}
  c(i_a,s_a)\,\wB(t)  \leq B\big(b^{-1}(t) \big)\leq C(i_a,s_a)\,\wB(t),\quad t>0.
\end{equation}

Next, we will usually exploit Young's inequality for Orlicz-functions
\begin{equation}\label{Young}
\begin{split}
st& \leq\delta^{i_B} B(s)+\delta^{-i_B'}\wB(t)\quad\text{and}
\\
    st&\leq \delta^{s_B'}\wB(s)+\delta^{-s_B}\,B\left( t\right),\quad \text{for any $\delta\in (0,1]$,}
    \end{split}
\end{equation}
which readily follow by the definition of $\wB$ in \eqref{def:wB}, together with \eqref{B2t} and \eqref{tB:usef}.

Moreover, by \eqref{wBcome} and \eqref{Bcomeb}, we have
\begin{equation}\label{wBbt}
    c(i_b,s_b)\,B(t)\leq \wB\big(b(t)\big)\leq C(i_b,s_b)\,B(t)\,,\quad t>0.
\end{equation}
We will also repeatedly use the following version of Young's inequality
\begin{equation}\label{young1}
    b(t)\,s\leq \delta\,B(t)+C(i_a,s_a)\,\delta^{-(s_B-1)}\,B(s)\,
\end{equation}
which is a simple consequence of \eqref{Young} and \eqref{wBbt}.
\vspace{0.2cm}

We shall also use the auxiliary function
\begin{equation}\label{def:psi}
    \psi(t)=\int_0^t a(s)^{1/2}\,ds.
\end{equation}
We have that
\begin{equation}\label{ultima}
    \left(\frac{1}{1+s_a/2}\right)\,a(t)^{\frac12}t\leq \psi(t)\leq \left(\frac{1}{1+i_a/2}\right)\,a(t)^{\frac1 2}t.
\end{equation}
 Indeed, integrating by parts, and using \eqref{iasa} and $a(s)^{1/2}s|_{s=0}=b(s)^{1/2}s^{1/2}|_{s=0}=0$, we get
\begin{equation*}
    \begin{split}\psi(t)&=a(s)^{1/2}s\big|_{s=0}^{s=t}-\frac{1}{2}\int_0^t\frac{a'(s)s}{a(s)^{1/2}}\,dx
    \\
    &=a(t)^{1/2}t-\frac{1}{2}\int_0^ta(s)^{1/2}\,\frac{a'(s)s}{a(s)}\,ds
    \\
    &\leq a(t)^{1/2}t-\frac{i_a}{2}\,\psi(t)\quad\text{and}\quad \geq  a(t)^{1/2}t-\frac{s_a}{2}\,\psi(t)\,,
    \end{split}
\end{equation*}
from which \eqref{ultima} follows
\vspace{0.2cm}

Later on, we will need to approximate the functions $a,b,B$ via a sequence of smoother functions. To this end, we exploit the following lemma inspired by \cite[Lemma 3.3]{CM11} and \cite[Lemma 4.5]{CM14}

\begin{lemma}\label{lemma:ae}
    Let $a\in C^1((0,\infty))$ be a function satisfying \eqref{iasa}, and let $b,B$ be as in \eqref{def:a},\eqref{Bb}. Then there exists a sequence of functions $\{a_\e\}_{\e>0}$, $a_\e:[0,\infty)\to [0,\infty)$ such that, for all $\e>0$,
    \begin{equation}\label{ae:nonzero}
        \begin{split}
            &a_\e \in C^\infty([0,\infty))
        \\
    \e\leq & \,a_\e(t)\leq \e^{-1}\quad\text{for all $t\geq 0$,}
        \end{split}
    \end{equation}   
and
    \begin{equation}\label{iaesae}
    \begin{cases}
    \displaystyle
        \inf_{t>0}\frac{a_\e'(t)\,t}{a_\e(t)}\geq \min\{i_a,0\}>-1\\[1ex]
        \\
        \displaystyle
        \sup_{t>0}\frac{a_\e'(t)\,t}{a_\e(t)}\leq \max\{s_a,0\}<\infty,
        \end{cases}
    \end{equation}
Moreover, by setting
\begin{equation}\label{def:bBe}
    b_\e(t)=a_\e(t)\,t,\quad B_\e(t)=\int_0^t b_\e(s)\,ds
\end{equation}
  we have
    \begin{align}
        &\lim_{\e\to 0} a_\e=a\quad\text{uniformly in  $[L,M]$ for every $0<L\leq M$}\label{ae:unif}
        \\
       & \lim_{\e\to 0} b_\e=b\quad\text{uniformly in  $ [0,M]$ for every $M>0$}\label{be:unif}
    \end{align}
    and hence
    \begin{align}
    &\lim_{\e\to 0} a_\e(|\xi|)\,\xi=a(|\xi|)\,\xi\quad\text{uniformly in  $|\xi|\leq M$ for every $M>0$.}\label{aexi:unif}
    \\
        &\lim_{\e\to 0} B_\e=B\quad\text{uniformly in $ [0,M]$ for every $M>0$.}\label{BE:unif}
        \end{align}
Furthermore
\begin{equation}\label{aexi:Cinf}
 \text{the map}\quad \xi\mapsto a_\e(|\xi|)\quad\text{is of class $C^\infty(\R^n)$.}  
\end{equation}
\end{lemma}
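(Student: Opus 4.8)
The plan is to construct $a_\e$ in two stages: first a mollification step that produces a smooth function with controlled logarithmic derivative, and then a truncation/saturation step that forces the two-sided bound $\e \le a_\e \le \e^{-1}$ while only mildly perturbing the elasticity bounds \eqref{iaesae}. For the first stage, I would pass to the logarithmic variable: set $\phi(\tau) = \log a(e^\tau)$ for $\tau \in \R$, so that $\phi'(\tau) = \frac{t\,a'(t)}{a(t)}\big|_{t=e^\tau}$, and by \eqref{iasa} we have $i_a \le \phi'(\tau) \le s_a$ for all $\tau$. Mollifying, $\phi_\e := \phi * \rho_\e$ is smooth and satisfies the \emph{same} pointwise bounds $i_a \le \phi_\e' \le s_a$ (averaging preserves two-sided bounds), and $\phi_\e \to \phi$ locally uniformly since $\phi$ is locally Lipschitz. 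Undoing the change of variables, $\tilde a_\e(t) := \exp\big(\phi_\e(\log t)\big)$ is $C^\infty(0,\infty)$, satisfies $i_a \le \frac{t\,\tilde a_\e'(t)}{\tilde a_\e(t)} \le s_a$, and $\tilde a_\e \to a$ uniformly on compact subsets of $(0,\infty)$. This handles the interior smoothness and \eqref{ae:unif}; note $\tilde a_\e$ is not yet bounded and not yet defined (or smooth) at $t=0$.

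For the second stage, I would saturate $\tilde a_\e$ from above and below. Concretely, replace $\tilde a_\e$ by $a_\e(t) := \min\{\max\{\tilde a_\e(t),\e\},\e^{-1}\}$, and then smooth the two corners by a further small mollification at scale depending on $\e$ — or, more cleanly, perform the clipping already at the level of $\phi_\e$, replacing $\phi_\e$ by a smooth function that equals $\phi_\e$ on the region where $\log\e \le \phi_\e \le \log(1/\e)$ and is constant ($=\log\e$ or $=\log(1/\e)$) outside, with smooth monotone interpolation in between. The key point is that on the clipped (constant) regions one has $a_\e' = 0$, so $\frac{t\,a_\e'(t)}{a_\e(t)} = 0 \in [\min\{i_a,0\},\max\{s_a,0\}]$; on the transition regions the logarithmic derivative of the interpolation can be taken to lie between $0$ and the value of $\phi_\e'$ there (interpolating monotonically toward a constant only decreases the magnitude of the slope, keeping it of the same sign as, and no larger than, $\phi_\e'$), hence again in $[\min\{i_a,0\},\max\{s_a,0\}]$; and on the unclipped region the bound is the original $[i_a,s_a]$. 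This yields \eqref{iaesae}, and since for each fixed $t>0$ we have $\e \le \tilde a_\e(t) \le \e^{-1}$ for $\e$ small (as $\tilde a_\e(t)$ converges to the finite positive number $a(t)$), the clipping is inactive on any fixed compact $[L,M]$ for $\e$ small, so \eqref{ae:unif} survives. Defining $a_\e(0) := \lim_{t\to 0^+} a_\e(t)$, which exists and is finite because $a_\e$ is eventually constant near $0$ (the lower-clip region), gives $a_\e \in C^\infty([0,\infty))$ and \eqref{ae:nonzero}.

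It then remains to deduce the consequences \eqref{be:unif}--\eqref{aexi:Cinf}. For \eqref{be:unif}: write $b_\e(t) = a_\e(t)\,t$; on $[L,M]$ with $L>0$ this follows from \eqref{ae:unif}, while near $t=0$ one uses that by \eqref{iaesae} and \eqref{mon:ibsb}-type monotonicity both $b_\e(t)$ and $b(t)$ are bounded by $C\,t^{\min\{i_a,0\}+1}$ with a constant uniform in $\e$ (using $b_\e(1) = a_\e(1) \to 1$ to normalize), so both are small near $0$ uniformly in $\e$; combining gives uniform convergence on $[0,M]$. Then \eqref{aexi:unif} is immediate since $|a_\e(|\xi|)\xi - a(|\xi|)\xi| = |b_\e(|\xi|) - b(|\xi|)| \cdot \frac{|\xi|}{|\xi|}$ — more precisely $a_\e(|\xi|)\xi$ has modulus $b_\e(|\xi|)$ and direction $\xi/|\xi|$, so the difference is controlled by $|b_\e(|\xi|)-b(|\xi|)|$ plus a term that vanishes; and \eqref{BE:unif} follows by integrating \eqref{be:unif} in $t$ over $[0,M]$. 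Finally \eqref{aexi:Cinf}: the map $\xi \mapsto |\xi|$ is smooth away from the origin, so $\xi \mapsto a_\e(|\xi|)$ is $C^\infty$ on $\R^n\setminus\{0\}$; at the origin, since $a_\e$ is constant ($=$ some value in $[\e,\e^{-1}]$) on a neighborhood $[0,\d_\e]$ of $0$, the composition $a_\e(|\xi|)$ is locally constant near $\xi = 0$, hence $C^\infty$ there as well.

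I expect the \textbf{main obstacle} to be the bookkeeping in the clipping step — ensuring the smooth transition between the active region and the constant plateaus does not violate the sign-specific bound \eqref{iaesae} (this is exactly why the statement replaces $i_a, s_a$ by $\min\{i_a,0\}$ and $\max\{s_a,0\}$: a plateau forces slope $0$, which need not lie in $[i_a,s_a]$ if, e.g., $i_a > 0$). Working throughout in the logarithmic variable $\tau = \log t$, where all the relevant bounds become bounds on $\phi_\e'$ and the clipping becomes a standard ``flatten a Lipschitz function outside a slab'' construction, should keep this manageable.
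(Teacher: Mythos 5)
Your first stage coincides in substance with the paper's: the paper sets $A(s)=a(\mathrm e^s)$ and mollifies, using that the linear inequality $i_a A\le A'\le s_a A$ is preserved by convolution, while you mollify $\log A$ and use that the pointwise bound $i_a\le\phi'\le s_a$ is preserved; both yield a smooth $\hat a_\e$ on $(0,\infty)$ with the same elasticity bounds and local uniform convergence. Your saturation step also captures the right mechanism (the logarithmic derivative gets multiplied by a factor in $[0,1]$, which is exactly why the statement only retains $\min\{i_a,0\}$ and $\max\{s_a,0\}$); the paper achieves this with the increasing M\"obius map $s\mapsto\frac{s+\e}{1+\e s}$ rather than a clip, but that difference is cosmetic (your clipping is cleanest if you post-compose $\phi_\e$ with a smooth nondecreasing $\chi_\e$ satisfying $0\le\chi_\e'\le1$). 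The deductions of \eqref{be:unif}--\eqref{BE:unif} via the uniform bound $b_\e(t)\le C\,t^{1+\min\{i_a,0\}}$ near $0$ also match the paper.

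There is, however, a genuine gap at $t=0$. You justify $a_\e\in C^\infty([0,\infty))$ and \eqref{aexi:Cinf} by asserting that $a_\e$ is constant near $0$ because the lower clip activates there. This is false in general: the clip only activates if $\tilde a_\e(t)$ leaves $[\e,\e^{-1}]$, and nothing forces that near $t=0$. Take $a(t)=\exp\bigl(\tfrac12\sin(\log t)\bigr)$, which satisfies \eqref{iasa} with $i_a=-\tfrac12$, $s_a=\tfrac12$ and takes values in $[\mathrm e^{-1/2},\mathrm e^{1/2}]$; for $\e<\mathrm e^{-1/2}$ no clipping ever occurs, so near $0$ your $a_\e$ equals $\exp(\phi_\e(\log t))$ with $\phi_\e$ a mollified bounded oscillation, which has no limit as $t\to0^+$ — it is not even continuous at $0$, let alone $C^\infty$, and $\xi\mapsto a_\e(|\xi|)$ fails \eqref{aexi:Cinf}. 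More generally, even when $\lim_{t\to0^+}a(t)$ exists in $(\e,\e^{-1})$, the $t$-derivatives of $\exp(\phi_\e(\log t))$ carry factors $\phi_\e^{(k)}(\log t)/t^{k}$ with $\phi_\e^{(k)}$ merely bounded, so smoothness at $0$ is lost. This is exactly the role of the paper's composition with $\sqrt{\e+t^2}$ in \eqref{def:ae}: it evaluates $\hat a_\e$ only at arguments $\ge\sqrt\e>0$, and since $\xi\mapsto\sqrt{\e+|\xi|^2}$ is smooth on all of $\R^n$, both $a_\e\in C^\infty([0,\infty))$ and \eqref{aexi:Cinf} follow; moreover on any fixed $[L,M]$ the shift $\sqrt{\e+t^2}-t\to0$ uniformly, so \eqref{ae:unif} survives. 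You need to incorporate this (or an equivalent) regularization of the argument near the origin; the rest of your plan then goes through.
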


\begin{proof}
Let $A:\R\to [0,\infty)$ be the function defined by
\begin{equation*}
    A(s)=a(\mathrm{e}^s)\quad s\in \R,
\end{equation*}
and note that, by \eqref{iasa}, we have
\begin{equation}\label{IASA}
    i_a\,A(s)\leq A'(s)\leq s_a\,A(s)\,.
\end{equation}
For $\e>0$, we consider the convolution  $A_\e(s)\coloneqq A\ast \rho_\e(s)$, so that \eqref{IASA} and standard properties of convolution yield
\begin{equation}\label{IAESAE}
    i_a\,A_\e(s)\leq A'_\e(s)\leq s_a\,A_\e(s)\,,
\end{equation}
and $A_\e\in C^\infty(\R)$ for all $\e>0$.  Next, define $\hat{a}_\e:(0,\infty)\to [0,\infty)$ as
\begin{equation}\label{def:hatae}
    \hat{a}_\e(t)=A_\e(\log t)\,.
\end{equation}
Then $\hat{a}_\e\in C^\infty((0,\infty))$, and by \eqref{IAESAE},
\begin{equation}\label{temp:iaesae}
    i_a\leq \inf_{t>0}\frac{\hat{a}_\e'(t)\,t}{\hat{a}_\e(t)}\leq \sup_{t>0}\frac{\hat{a}_\e'(t)\,t}{\hat{a}_\e(t)}\leq s_a.
\end{equation}
Additionally, since $A_\e\to A$ locally uniformly in $\R$, we have that $\hat a_\e\to a$ locally uniformly in $(0,\infty)$. Next, for $t\geq 0$ we define
    \begin{equation}\label{def:ae}
        a_\e(t)\coloneqq \frac{\hat{a}_\e\big(\sqrt{\e+t^2}\big)+\e}{1+\e\,\hat{a}_\e\big(\sqrt{\e+t^2}\big)}\,,
    \end{equation}
Clearly $a_\e\in C^\infty([0,\infty))$, and \eqref{ae:nonzero} follows from the fact that the function $[0,\infty)\ni s\mapsto \frac{s+\e}{1+\e\,s}$ is increasing for every $\e\in (0,1)$. We then have
\begin{equation*}
    a_\e'(t)=\frac{(1-\e^2)\,\hat{a}_\e'\big(\sqrt{\e+t^2}\big)\,t}{\big(1+\e\,\hat{a}_\e\big(\sqrt{\e+t^2}\big) \big)^2\sqrt{\e+t^2}}\,,
\end{equation*}
and a straighforward computation shows that
\begin{equation*}
    \frac{a_\e'(t)\,t}{a_\e(t)}=\left(\frac{\hat{a}_\e'\big(\sqrt{\e+t^2}\big) \,\big(\sqrt{\e+t^2}\big) }{\hat{a}_\e\big(\sqrt{\e+t^2}\big)}\right)\,\left[\frac{(1-\e^2)\,\hat{a}_\e\big(\sqrt{\e+t^2}\big)}{\big(1+\hat{a}_\e\big(\sqrt{\e+t^2}\big) \big)\,(\hat{a}_\e\big(\sqrt{\e+t^2}\big)+\e)}\,\frac{t^2}{\e+t^2}\right]\,,
\end{equation*}
hence \eqref{iaesae} immediately follows from \eqref{temp:iaesae} and the fact that the term in the square bracket above is  nonnegative and smaller than 1 for every $\e\in (0,1)$.

Next, \eqref{ae:unif} is a consequence of the local uniform convergence of  $\hat a_\e\to a$ in $(0,\infty)$, so by \eqref{def:bBe} we also have
\begin{equation}\label{p:beunifo}
    \lim_{\e\to 0} b_\e=b\quad\text{uniformly in  $[L,M]$ for every $0<L\leq M$.}
\end{equation}

Then, owing to \eqref{iaesae}, we may exploit \eqref{mon:iasa} (with $a_\e$ in place of $a$), and get
\begin{equation*}
    0\leq b_\e(t)=a_\e(t)\,t\leq  a_\e(1)\,t^{1+\min\{i_a,0\}}\leq 2\,a(1)\,t^{1+\min\{i_a,0\}},\quad t\in (0,1)\,,
\end{equation*}
where in the last inequality we used $a_\e(1)\leq 2\,a(1)$ for $\e>0$ small enough as a consequence of \eqref{ae:unif}. Therefore, recalling that $i_a>-1$, we have $\lim_{t\to 0} b_\e(t)=0$ uniformly in $\e\in (0,1)$, and this piece of information and \eqref{p:beunifo} yield \eqref{be:unif}.

Finally, the map $\xi \mapsto a_\varepsilon(|\xi|)$ is smooth in a neighborhood of the origin thanks to the definition of $a_\varepsilon$ in \eqref{def:ae} and the smoothness of $\hat a_\varepsilon$. This also implies the regularity property \eqref{aexi:Cinf}. The proof is thus complete.
\end{proof}

\begin{remark}[Uniformity in $\e$]\label{remark:importante}
\rm{ By the lower and upper bounds in \eqref{iaesae}, the functions 
$a_\e$, $b_\e$, the Young function $B_\e$, its Young conjugate $\widetilde B_\e$, and 
\(\psi_\e(t) = \int_0^t a_\e(s)^{1/2} \, ds\) 
still satisfy the properties \eqref{mon:iasa}--\eqref{ultima}, 
with the constants 
\[
\text{$i_a$ and $s_a$ replaced by $\min\{i_a,0\}$ and $\max\{s_a,0\}$, }
\]
respectively. Accordingly, in view of \eqref{def:isb} and \eqref{def:isB}, we have that
\[
i_b\,,s_b,\,i_B,\,s_B\,\text{ are replaced by }\min\{i_b,1\},\,\max\{s_b,1\},\,\min\{i_B,2\},\, \max\{s_B,2\},
\]
respectively. The key point is that all the estimates are uniform in $\e \in (0,1)$.
}
\end{remark}

\subsection{Orlicz Lebesgue and Sobolev spaces}\label{sec:spaces}
Let $\Omega$ be an open set of $\R^n$. The Lebesgue-Orlicz space is defined as
\begin{equation*}
    L^B(\Omega)\coloneqq\left\{u:\Omega\to \R\, \text{ measurable:}\, \int_\Omega B(|u|)\,dx<\infty \right\}\,.
\end{equation*}
We endow space with the so-called \textit{Luxemburg norm}
\begin{equation}\label{lux}
    \|u\|_{L^B(\Omega)}=\inf\bigg\{k>0:\int_\Omega B\left( \frac{|u(x)|}{k}\right)\,dx\leq 1 \bigg\}\,.
\end{equation}
In particular, a sequence $u_k\to u$ in $L^B(\Omega)$ if $\lim_{k\to \infty}\|u_k-u\|_{L^B(\Omega)}$.
We also have H\"older's inequality in Orlicz spaces \cite[Theorem 4.7.8]{funcsp}
\begin{equation}\label{holder:orlicz}
    \bigg|\int_\Omega uv\,dx\bigg|\leq \|u\|_{L^B(\Omega)}\|v\|_{L^{\widetilde{B}}(\Omega)}.
\end{equation}
where $\widetilde{B}$ is the Young conjugate. Since by \eqref{iBsB} and \eqref{B2t}, the function $B$ satisfies the $\Delta_2$ condition, when $\Omega$ is bounded, convergence in $L^B(\Omega)$ is equivalent to the so-called \textit{modular convergence}, i.e.,
\begin{equation}\label{equiv:modular}
    u_k\to u\quad\text{in $L^B(\Omega)$} \iff \lim_{k\to\infty}\int_\Omega B(|u_k-u|)\,dx=0.
\end{equation}
See, for instance, \cite[Theorem 4.10.6]{funcsp}. As $B$ satisfies the $\Delta_2$ and $\nabla_2$-conditions by \eqref{B2t}, it is well known that the space $L^{B}(\Omega)$ is a reflexive Banach space--see \cite[Theorem 3.6.6]{HHbook}.  Also, via convolution, it is possible to show that smooth functions are dense in $L^{B}(\Omega)$--see \cite[Lemma 2.1]{DT71} or \cite[Theorem 4.4.7]{HHbook}.

Next, we have
\begin{equation}\label{modular1}
    \int_\Omega B\bigg(\frac{|v|}{\|v\|_{L^B(\Omega)}} \bigg)\,dx=1.
\end{equation}
Indeed, by definition of Luxemburg norm \eqref{lux}  and the continuity of $B(t)$, there holds
\begin{equation*}
     \int_\Omega B\bigg(\frac{|v|}{\|v\|_{L^B(\Omega)}} \bigg)\,dx\leq 1.
\end{equation*}
On the other hand, by \eqref{lux} and \eqref{B2t}, for every $\delta\in (0,1)$
\begin{equation*}
    1\leq \int_\Omega B\bigg(\frac{v}{\|v\|_{L^B(\Omega)}(1-\delta)} \bigg)\,dx\leq \frac{1}{(1-\delta)^{s_B}}\int_\Omega B\bigg(\frac{|v|}{\|v\|_{L^B(\Omega)}} \bigg)\,dx,
\end{equation*}
hence by letting $\delta\to 0$ we deduce \eqref{modular1}.

Next, thanks to \eqref{simple}, for any bounded measurable set $U\subset \R^n$, we have
\begin{equation}\label{int:simple0}
    \int_U |v|\,dx\leq C(i_a,s_a)\int_UB(|v|)\,dx+|U|,
\end{equation}
and taking into account Remark \ref{remark:importante}, the same inequality holds for $B_\e$, that is

\begin{equation}\label{int:simple}
    \int_U |v|\,dx\leq C(i_a,s_a)\,\int_UB_\e(|v|)\,dx+|U|,
\end{equation}
for all $0<\e<\e_0$ small enough. More generally, since \[
B(t)\geq c(i_a,s_a)\,t^{i_B},\quad B_\e(t)\geq c(i_a,s_a)\,t^{\min\{i_a+2,2\}}\quad\text{ for $t\geq 1$}
\] 
thanks to \eqref{B2t}, Remark \ref{remark:importante}, \eqref{B=1},  and \eqref{BE:unif}, we also have
\begin{equation}\label{unif:refl}
\begin{split}
&\int_U|v|^{i_B}\,dx\leq C\,\int_U B\big(|v| \big)\,dx+C\,|U|,
\\
  &  \int_{U} |v|^{\min\{i_B,2\}}\,dx\leq C\,\int_U B_\e(|v|)\,dx+C\,|U|\,,\quad \text{for all $\e\in(0,\e_0)$.}
    \end{split}
\end{equation}
with $C=C(i_a,s_a)>0$.
Next, the Orlicz-Sobolev space is defined as
\begin{equation*}
    W^{1,B}(\Omega)\coloneqq\left\{u\in W^{1,1}(\Omega):\,Du\in L^B(\Omega)\right\}\,,
\end{equation*}
where $Du$ denotes the distributional gradient of $u$.
Accordingly we define the spaces
\[
W^{1,B}_{loc}(\Omega)=\left\{u\in W^{1,1}(K):\,Du\in L^B(K)\quad\text{for all $K\Subset \Omega$}\right\},
\] 
\[
W^{1,B}_c(\Omega)\coloneqq\left\{u\in W^{1,B}(\Omega):\text{$u$ is compactly supported in $\Omega$}\right\},
\]
and
\[
W^{1,B}_0(\Omega)\coloneqq\left\{u\in W^{1,B}(\Omega):\text{$u$ can be extended to a function in $W^{1,B}_c(\R^n)$}\right\}.
\]
By classical extension theorems, such as \cite[Theorem 13.17]{Leo17}, if $\Omega$ is a bounded Lipschitz domain, then the space  $W^{1,B}_0(\Omega)$ is equivalent to the space of functions $u\in W^{1,B}(\Omega)$ such that $u=0$ on $\partial \Omega$ in the sense of traces.

When $B(t)=t^p$, $p\geq 1$, we will simply denote by $W^{1,B}(\Omega)=W^{1,p}(\Omega)$, and analogous definitions hold for $W^{1,p}_{loc}(\Omega)$, $W^{1,p}_c(\Omega)$ and $W^{1,p}_0(\Omega)$.

\begin{remark}\rm{
    Observe that, by \eqref{Bcomeb} and \eqref{ae:nonzero}, we have
    \begin{equation}\label{bBquadratic}
        c_\e\,t\leq b_\e(t)\leq C_\e\,t\quad \text{and}\quad  c_\e\,t^2\leq B_\e(t)\leq C_\e\,t^2
    \end{equation}
    for some constants $c_\e,C_\e>0$ depending on $\e$ as well. Thus, for any bounded open set $\Omega\subset \R^n$, we have $W^{1,B_\e}(\Omega)=W^{1,2}(\Omega)$, and similarly $W^{1,B_\e}_0(\Omega)=W^{1,2}_0(\Omega)$ and $W^{1,B_\e}_{loc}(\Omega)=W^{1,2}_{loc}(\Omega)$.}
\end{remark}

\subsection{The stress field \texorpdfstring{$\mathcal{A}$}{A}}\label{sec:stress}
In this subsection, we collect some properties of the vector field $\A$, 
often referred to as the \emph{stress field}.  

We start with the following elementary lemma, establishing natural coercivity and growth properties of $\A$. These are quite standard in the literature for $p$-Laplace type operators-- see \cite[Section 2]{D98}.

\begin{lemma}\label{lemma:Agrco}
  Assume that $a(\cdot)$ satisfies \eqref{iasa}, \eqref{ab=1}, and that $\A$ fulfills \eqref{reg:A}-\eqref{ass:A}. Then we have
\begin{equation}\label{co:gr}
    \begin{split}
        &\sum_{i=1}^n |\A^i(x,\xi)-\A^i(x,0)|\leq C_1\,b(|\xi|)
        \\
        &\big(\A(x,\xi)-\A(x,0)\big)\cdot \xi\geq c_1\,B(|\xi|)\,,\quad \text{for all $x\in \overline{\Omega}$,  for all $\xi\in \rn$,}
    \end{split}
\end{equation}
where $c_1=c_1(n,\l,i_a, s_a)$ and $C_1=C_1(n,\L,i_a)$ are positive constants. Moreover,
\begin{equation}\label{strong:coer}
    \big(\A(x,\xi)-\A(x,\eta)\big)\cdot (\xi-\eta)\geq c_0\,a\big(|\xi|+|\eta|\big)\,|\xi-\eta|^2.
\end{equation}
with $c_0=c_0(n,\l,\L,i_a,s_a)$.
\end{lemma}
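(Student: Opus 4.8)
The plan is to prove the three inequalities in Lemma~\ref{lemma:Agrco} by integrating the structure conditions \eqref{ass:A} along line segments. For the first two estimates, I would fix $x\in\overline\Omega$ and $\xi\in\R^n\setminus\{0\}$ (the case $\xi=0$ being trivial) and write
\[
\A^i(x,\xi)-\A^i(x,0)=\int_0^1\frac{d}{dt}\A^i(x,t\xi)\,dt=\int_0^1\sum_{j=1}^n\frac{\partial\A^i}{\partial\xi_j}(x,t\xi)\,\xi_j\,dt,
\]
which makes sense since $t\mapsto\A(x,t\xi)$ is $C^1$ on $(0,1]$ and continuous up to $0$ by \eqref{reg:A}. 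Applying the growth bound in \eqref{ass:A}$_1$ and the Cauchy--Schwarz inequality, I get $|\A^i(x,\xi)-\A^i(x,0)|\le\L\int_0^1 a(t|\xi|)\,t|\xi|\,dt=\L\int_0^1 b(t|\xi|)\,dt$; the monotonicity \eqref{mon:ibsb} of $b(t)/t^{i_b}$ together with $i_b>0$ bounds $b(t|\xi|)\le t^{i_b}b(|\xi|)$, so the integral is controlled by $b(|\xi|)/(1+i_b)$, giving the first line of \eqref{co:gr}. For the coercivity inequality I would instead test $\frac{d}{dt}\A(x,t\xi)$ against $\xi$:
\[
\big(\A(x,\xi)-\A(x,0)\big)\cdot\xi=\int_0^1\sum_{i,j}\frac{\partial\A^i}{\partial\xi_j}(x,t\xi)\,\xi_i\,\xi_j\,dt\ge\l\int_0^1 a(t|\xi|)\,|\xi|^2\,dt=\l\int_0^1\frac{b(t|\xi|)}{t}\,dt,
\]
and using $b(t|\xi|)\ge t^{s_b}b(|\xi|)$ (again from \eqref{mon:ibsb}, valid since $t<1$) this is at least $\l\,b(|\xi|)|\xi|/(1+s_b)$, which is comparable to $B(|\xi|)$ by \eqref{Bcomeb}. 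This yields the second line with the stated dependence of the constants.

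For the strong monotonicity estimate \eqref{strong:coer}, the standard device is to integrate $\nabla_\xi\A$ along the segment joining $\eta$ to $\xi$. Writing $\xi_t=\eta+t(\xi-\eta)$ for $t\in[0,1]$, one has
\[
\big(\A(x,\xi)-\A(x,\eta)\big)\cdot(\xi-\eta)=\int_0^1\sum_{i,j}\frac{\partial\A^i}{\partial\xi_j}(x,\xi_t)\,(\xi_i-\eta_i)(\xi_j-\eta_j)\,dt\ge\l\int_0^1 a(|\xi_t|)\,|\xi-\eta|^2\,dt.
\]
The care needed here is that the segment may pass through the origin, where $\nabla_\xi\A$ is not defined; but $\{t:\xi_t=0\}$ has measure zero, so the integral identity still holds by absolute continuity of $t\mapsto\A(x,\xi_t)$ (one may argue on $\R^n\setminus\{0\}$ directly, or first prove it for $\eta\ne0$ and pass to the limit, or invoke \eqref{co:gr} when $\eta=0$). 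It then remains to show $\int_0^1 a(|\xi_t|)\,dt\ge c\,a(|\xi|+|\eta|)$ for a constant $c=c(i_a,s_a)$. This is the step I expect to be the main obstacle, since $a$ need not be monotone: I would split into the two cases coming from \eqref{mon:iasa}. If $i_a\ge0$ then $a$ is nondecreasing and one localizes to $t$ near the endpoint realizing $\max\{|\xi|,|\eta|\}$; if $i_a<0$ the function $a(s)s^{-i_a}$ is nondecreasing, so $a(s)\ge a(\tau)(s/\tau)^{i_a}$ for $s\le\tau$, and one again integrates over a subinterval of $[0,1]$ where $|\xi_t|$ is comparable to $|\xi|+|\eta|$. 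Concretely, since $|\xi_t|\le|\xi|+|\eta|$ always, and on the half of $[0,1]$ closer to the larger of $\xi,\eta$ one has $|\xi_t|\ge\tfrac{1}{2}\max\{|\xi|,|\eta|\}\ge\tfrac14(|\xi|+|\eta|)$, the comparison \eqref{ultra:utile} with $m_0=\tfrac14$, $M_0=1$ gives $a(|\xi_t|)\ge c_0\,a(|\xi|+|\eta|)$ on that half-interval, and integrating produces \eqref{strong:coer} with $c_0=c_0(n,\l,\L,i_a,s_a)$. (The dependence on $\L$ is only nominal here, inherited for bookkeeping uniformity.)

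One subtlety worth flagging in the writeup: to apply \eqref{ultra:utile} one needs the argument bounded below away from zero relative to $|\xi|+|\eta|$, which fails precisely on the portion of the segment near the origin — hence the restriction to the favorable half-interval is essential, not cosmetic. I would present the segment-integration identities carefully (justifying absolute continuity near $0$ via \eqref{reg:A} and the integrability of $a(|\xi_t|)$, which follows from $a(s)s^{-i_a}$ being bounded near $0$ on the relevant range), then carry out the elementary splitting. All the remaining manipulations — conversions between $a$, $b$, $B$ via \eqref{Bcomeb}, and the power-type monotonicity bounds — are routine consequences of the facts established in Section~\ref{sec:young}.
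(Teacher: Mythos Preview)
Your approach to \eqref{co:gr} is essentially the paper's: integrate along $t\mapsto t\xi$ and use the monotonicity of $a$ (equivalently $b$). Note a small slip in your growth bound: Cauchy--Schwarz gives $\bigl|\sum_j\partial_{\xi_j}\A^i(x,t\xi)\,\xi_j\bigr|\le\L\,a(t|\xi|)\,|\xi|$, not $\L\,a(t|\xi|)\,t|\xi|$ --- the argument of $a$ is $t|\xi|$, but the factor coming from $\xi$ is $|\xi|$, with no extra $t$. After this correction the bound $C_1\,b(|\xi|)$ follows just as easily via \eqref{mon:iasa}.

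For \eqref{strong:coer} your half-interval claim is false. Take $\xi=-\eta$ with $|\eta|=1$: then $\xi_t=(1-2t)\eta$, so $|\xi_{1/2}|=0$, and on $[0,\tfrac12]$ the quantity $|\xi_t|=1-2t$ is not bounded below by $\tfrac12|\eta|$. The segment can reach (or approach) the origin even on the ``favoured'' half. The fix is immediate: restrict to the quarter-interval $[0,\tfrac14]$ instead (still assuming $|\eta|\ge|\xi|$), where
\[
|\xi_t|\ge|\eta|-\tfrac14|\xi-\eta|\ge|\eta|-\tfrac14(|\xi|+|\eta|)\ge\tfrac12|\eta|\ge\tfrac14(|\xi|+|\eta|),
\]
and then apply \eqref{ultra:utile} exactly as you intended. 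With this adjustment your argument is actually shorter than the paper's, which splits into the cases $|\xi-\eta|\le|\eta|/2$ (where $|\xi_t|\asymp|\xi|+|\eta|$ for all $t$) and $|\xi-\eta|>|\eta|/2$ (handled via a pointwise power-type lower bound on $a(|\xi_t|)$ and integration of $|t-t_0|^{i_a}$).
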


\begin{proof}
   By the fundamental theorem of calculus
    \begin{equation*}
    \begin{split}
       \big( \A(x,\xi)-\A(x,0)\big)\cdot \xi&=\sum_{i,j=1}^n\int_0^1 \frac{\partial \A^i(t\,\xi)}{\partial \xi_j}dt\,\xi_j\,\xi_i\stackrel{\eqref{ass:Aaut}}{\geq}\l\,\int_0^1 a\big( t|\xi|\big)\,dt\,|\xi|^2 
        \\
        &\stackrel{\eqref{mon:iasa}}{\geq}\l\,\left(\int_0^1  t^{s_a}\,dt\right)\,a(|\xi|)\,|\xi|^2\stackrel{\eqref{Bcomeb}}{\geq} \l\,\left(\frac{1+i_b}{1+s_a}\right) B(|\xi|)\,,
     \end{split}
    \end{equation*}
   for all $x\in \overline{\Omega}$. We also have
    \begin{equation*}
        \begin{split}
            |\A^i(x,\xi)-\A^i(x,0)| & \leq \int_0^1\left| \frac{\partial \A^i(t\,\xi)}{\partial \xi_j}\right|dt\,|\xi|\stackrel{\eqref{ass:A}}{\leq} \L\,\int_0^1 a\big(t\,|\xi| \big)dt\,|\xi|
            \\
            &\stackrel{\eqref{mon:iasa}}{\leq}\L\,\left(\int_0^1 t^{i_a}dt\right)\,a(|\xi|)\,|\xi|=\frac{\L}{1+i_a} b(|\xi|).
        \end{split}
    \end{equation*}
Then we compute
    \begin{equation}\label{temp:coe}
    \begin{split}
        \big(\A(x,\xi)-\A(x,\eta)\big)\cdot (\xi-\eta)&=\sum_{i,j=1}^n\int_0^1 \frac{\partial \A^i}{\partial \xi_j}\big(t\xi+(1-t)\,\eta \big)\,dt\,(\xi_j-\eta_j)(\xi_i-\eta_i)
        \\
        &\geq \l\,\bigg(\int_0^1 a\big(|t\xi+(1-t)\eta| \big)\,dt\bigg)\,|\xi-\eta|^2.
        \end{split}
    \end{equation}
We now claim 
\begin{equation}\label{intadt}
    \int_0^1 a\big(|t\xi+(1-t)\eta| \big)\,dt\geq c\,a\big(|\xi|+|\eta| \big)
\end{equation}
    for some $c=c(n,i_a,s_a)>0$. Without loss of generality, we may assume that $|\eta|\geq |\xi|$, $|\eta|>0$. Let us first consider the case $|\xi-\eta|\leq |\eta|/2$; in this case, we have
    \begin{equation*}
        |\xi|+|\eta|\geq |t\xi+(1-t)\eta|\geq |\eta|-|\xi-\eta|\geq\frac{|\eta|}{2}\geq \frac{|\eta|+|\xi|}{4},
    \end{equation*}
    where in the last inequality we used that $|\eta|\geq |\xi|$. Thus, from \eqref{ultra:utile} we deduce 
    \[
    a\big(|t\xi+(1-t)\eta| \big)\geq c(i_a,s_a)\,a\big(|\xi|+|\eta| \big)\quad\text{for all $t\in [0,1]$,}
    \]
and \eqref{intadt} follows. In the case $|\xi-\eta|> |\eta|/2>0$, we put $t_0=\frac{|\eta|}{|\eta-\xi|}$, so that $t_0\in (0,2)$. Then
\begin{equation*}
\begin{split}
    |t\xi+(1-t)\eta|&\geq \big||\eta|-t|\eta-\xi| \big|=|t_0-t|\,|\eta-\xi|
    \\
    &\geq |t_0-t|\frac{|\eta|}{2}\geq |t_0-t|\frac{|\eta|+|\xi|}{4}.
    \end{split}
\end{equation*}
Therefore, from \eqref{mon:iasa} and since $|t-t_0|\leq 3$ and $i_a>-1$, we deduce
\begin{equation*}
     \int_0^1 a\big(|t\xi+(1-t)\eta| \big)\,dt\geq a\Big( 3\big(|\xi|+|\eta|\big)\Big)\,\int_0^1\frac{|t-t_0|^{i_a}}{3^{i_a}}\,dt\geq c\,a\big( |\xi|+|\eta|\big),
\end{equation*}
    and \eqref{intadt} is proven. From \eqref{temp:coe} and \eqref{intadt}, Equation \eqref{strong:coer} follows.
\end{proof}
We will also need to regularize the stress field $\A$. 
To this end, we state and prove an approximation lemma, 
which combines ideas from \cite[pp.~342]{L91} with those of 
\cite{CM11,CM14}, already used in Lemma~\ref{lemma:ae}. 
The key idea is to construct, through the use of a suitable cut-off function, an approximating vector field $\A_\e(\xi)$ 
that coincides with $a_\e(|\xi|)\,\xi$ for very small and very large values of $|\xi|$, while agreeing with (a regularization of) $\A(\xi)$ elsewhere (see equation \eqref{def:Aedelta}).
\vspace{0.2cm}

For simplicity, we focus on autonomous stress fields $\A(x,\xi)\equiv \A(\xi)$ such that $\A(0)=0$, and satisfying \eqref{ass:A}, that is 
\begin{equation}\label{ass:Aaut}
        \sum_{i,j=1}^n\left|\frac{\partial \A^i(\xi) }{\partial \xi_j}\right|\leq \L\,a\left(|\xi|\right), \quad\text{and}\quad
\sum_{i,j=1}^n \frac{\partial \A^i (\xi) }{\partial \xi_j}\,\eta_i\,\eta_j\geq \l\,a(|\xi|)\,|\eta|^2,
\end{equation}
for all $\eta\in \rn$ and for all $\xi \in \rn\setminus \{0\}$.

\begin{lemma}\label{lemma:Ae}
    Suppose $\A\in C^0(\R^n)\cap C^1(\R^n\setminus\{0\})$ is a vector field satisfying \eqref{ass:Aaut} and $\A(0)=0$, and let $a_\e$ be the function given by Lemma \ref{lemma:ae}. Then there exists a sequence of vector fields $\A_\e:\rn\to \rn$ such that
    \begin{align}
       & \A_\e\in C^\infty(\rn)\quad\text{and} \quad\Ae(0)=0,\nonumber
        \\
       & \A_\e\to \A\quad\text{locally uniformly in $\rn$,}\label{Aeunif}
    \end{align}
    and for all $\xi,\eta\in \rn$ and $\e\in (0,1)$, they fulfill
    \begin{equation}\label{coer:Ae}
    \begin{split}
        \sum_{i,j=1}^n\frac{\partial \Ae^i(\xi)}{\partial \xi_j}\eta_i\,\eta_j\geq c\,a_\e(|\xi|)\,|\eta|^2,
        \qquad\sum_{i,j=1}^n\left|\frac{\partial \Ae^i(\xi)}{\partial \xi_j}\right|\leq C\,a_\e(|\xi|)\,,
        \end{split}
    \end{equation}
    for some positive constants $c,C$ depending on $n,\l,\L, i_a,s_a$. 
    
    In particular, by \eqref{ae:nonzero}, \eqref{iaesae} and Lemma \ref{lemma:Agrco}, for all $\e>0$, and for all $\xi,\eta\in \rn$ they satisfy
    \begin{equation}\label{nonzero:Ae}
    \begin{split}
        \sum_{i,j=1}^n\frac{\partial \Ae^i(\xi)}{\partial \xi_j}\eta_i\,\eta_j\geq c\,\e\,|\eta|^2,\qquad \sum_{i,j=1}^n\left|\frac{\partial \Ae^i(\xi)}{\partial \xi_j}\right|\leq C\,\e^{-1}\,,
        \end{split}
    \end{equation}
\begin{equation}\label{coAe:gr}
    \begin{split}
        \sum_{i=1}^n |\Ae^i(\xi)|\leq C_0\,b_\e(|\xi|),
        \qquad\Ae(\xi)\cdot \xi\geq c_0\,B_\e(|\xi|)\,,
    \end{split}
\end{equation}
where $c_0,C_0>0$ depend on $n,i_a,s_a,\l,\L$.
\end{lemma}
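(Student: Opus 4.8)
The plan is to mollify $\A$, then splice in $a_\e(|\xi|)\,\xi$ near the origin and near infinity via a smooth cut-off, exactly as the remark before the statement announces. First I would set $\widetilde\A_\delta(\xi):=\A*\rho_\delta(\xi)$, which is smooth on $\R^n$; since $\A$ is only $C^1$ away from the origin, the mollified field inherits the ellipticity and growth bounds \eqref{ass:Aaut} only on $|\xi|\gtrsim\delta$, and there one checks that $\sum_{ij}|\partial_j\widetilde\A_\delta^i(\xi)|\le \L\,\mathrm{esssup}_{|\zeta-\xi|\le\delta}a(|\zeta|)\le \L'\,a(|\xi|)$ using \eqref{ultra:utile} (with $m_0,M_0$ comparable to $1$), and similarly from below; for $|\xi|\lesssim\delta$ the convolution is still $C^\infty$ but the structure bounds may degenerate, which is why the cut-off is needed. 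I would also keep track that $\widetilde\A_\delta(0)\to 0$ and that $\widetilde\A_\delta\to\A$ locally uniformly as $\delta\to 0$, both standard properties of convolution together with the continuity of $\A$.

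Next I would fix, for each $\e$, a parameter $\delta=\delta(\e)$ small and two thresholds $0<\mu_\e\ll M_\e<\infty$ (say $\mu_\e\approx\sqrt{\e}$ and $M_\e\approx\e^{-1}$, chosen after $\delta$ so that $\mu_\e\gg\delta$), and a radial cut-off $\zeta_\e\in C^\infty_c(\R^n)$ with $0\le\zeta_\e\le 1$, $\zeta_\e\equiv 1$ on $\{\mu_\e\le|\xi|\le M_\e\}$ and $\zeta_\e\equiv 0$ on $\{|\xi|\le\mu_\e/2\}\cup\{|\xi|\ge 2M_\e\}$, with $|D\zeta_\e|\lesssim 1/\mu_\e$ near the inner ring and $\lesssim 1/M_\e$ near the outer ring. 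Then set
\begin{equation}\label{def:Aedelta}
    \A_\e(\xi):=\zeta_\e(\xi)\,\widetilde\A_{\delta(\e)}(\xi)+\bigl(1-\zeta_\e(\xi)\bigr)\,a_\e(|\xi|)\,\xi,
\end{equation}
which is $C^\infty(\R^n)$ by \eqref{aexi:Cinf} and the smoothness of $\widetilde\A_{\delta(\e)}$, vanishes at $0$, and equals $a_\e(|\xi|)\,\xi$ for $|\xi|\le\mu_\e/2$ and $|\xi|\ge 2M_\e$, so on those regions \eqref{coer:Ae} holds with $a_\e$ by direct computation (the Jacobian of $\xi\mapsto a_\e(|\xi|)\,\xi$ is $a_\e(|\xi|)\,\mathrm{Id}+a_\e'(|\xi|)|\xi|\,\tfrac{\xi\otimes\xi}{|\xi|^2}$, controlled above and below by $a_\e(|\xi|)$ thanks to \eqref{iaesae}). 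On the region $\{\mu_\e/2\le|\xi|\le 2M_\e\}$ one differentiates \eqref{def:Aedelta}: the terms $\zeta_\e D\widetilde\A_\delta$ and $(1-\zeta_\e)\,D_\xi(a_\e(|\xi|)\xi)$ are each bounded by $C\,a_\e(|\xi|)$ by the two structure estimates already established, while the dangerous term $D\zeta_\e\otimes\bigl(\widetilde\A_\delta(\xi)-a_\e(|\xi|)\xi\bigr)$ is supported in the thin inner and outer rings. On the inner ring $|D\zeta_\e|\lesssim 1/\mu_\e$ and $|\widetilde\A_\delta(\xi)-a_\e(|\xi|)\xi|\lesssim b(|\xi|)+b_\e(|\xi|)\approx a_\e(|\xi|)\,|\xi|\lesssim a_\e(|\xi|)\,\mu_\e$, so the product is $\lesssim a_\e(|\xi|)$; symmetrically on the outer ring. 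For the lower bound on that transition zone one writes $\nabla_\xi\A_\e(\xi)\eta\cdot\eta=\zeta_\e\,\nabla\widetilde\A_\delta\eta\cdot\eta+(1-\zeta_\e)\,\nabla_\xi(a_\e(|\xi|)\xi)\eta\cdot\eta+\bigl(D\zeta_\e\cdot\eta\bigr)\bigl(\widetilde\A_\delta(\xi)-a_\e(|\xi|)\xi\bigr)\cdot\eta$; the first two pieces together give $\ge c\,a_\e(|\xi|)|\eta|^2$ (a convex combination of two coercive bounds with comparable coefficients, since $a\approx a_\e$ on the compact ring by \eqref{ae:unif} once $\e$ is small), and the error term is absorbed because on the rings $|\widetilde\A_\delta(\xi)-a_\e(|\xi|)\xi|\cdot|D\zeta_\e|\le \tfrac{c}{2}a_\e(|\xi|)$ provided $\delta(\e)$ is taken small enough that $\widetilde\A_\delta$ is uniformly close to $\A$ there, and $M_\e,1/\mu_\e$ are tuned to beat the gradient of the cut-off. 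This yields \eqref{coer:Ae} with constants depending only on $n,\l,\L,i_a,s_a$.

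Finally, $\A_\e\to\A$ locally uniformly: on any fixed ball $\{|\xi|\le M\}$, for $\e$ small one has $\mu_\e<$ any fixed radius and $M_\e>M$, so $\zeta_\e\equiv 1$ outside a shrinking neighborhood of $0$ where everything tends to $0$, hence $\A_\e=\widetilde\A_{\delta(\e)}\to\A$ uniformly by the convolution estimate; near $0$ both $\A_\e(\xi)$ and $\A(\xi)$ are $O(b(|\xi|))$, giving uniform convergence there too. The estimates \eqref{nonzero:Ae}, \eqref{coAe:gr} are then immediate: \eqref{nonzero:Ae} follows from \eqref{coer:Ae} and the two-sided bound $\e\le a_\e\le\e^{-1}$ in \eqref{ae:nonzero}, and \eqref{coAe:gr} follows from \eqref{coer:Ae} by integrating along the segment from $0$ to $\xi$ exactly as in the proof of Lemma~\ref{lemma:Agrco}, using the monotonicity \eqref{mon:iasa} for $a_\e$ (valid by Remark~\ref{remark:importante}) and the equivalence $B_\e(t)\approx b_\e(t)\,t$ from \eqref{Bcomeb}. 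The main obstacle, and the only place requiring care, is the bookkeeping in the transition rings: one must choose the order of quantifiers correctly, i.e. first $\delta=\delta(\e)$ small (depending on how fast $\widetilde\A_\delta\to\A$), then $\mu_\e$ and $M_\e$ so that $\mu_\e/\delta\to\infty$ and the cut-off gradient terms are dominated, so that all the resulting constants in \eqref{coer:Ae} are genuinely independent of $\e$.
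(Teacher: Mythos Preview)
Your overall strategy---mollify $\A$, then splice in $a_\e(|\xi|)\xi$ near zero and near infinity via a radial cut-off---is exactly the paper's, but there is a real gap in the coercivity estimate on the transition rings. With your cut-off satisfying $|D\zeta_\e|\lesssim 1/\mu_\e$ on the inner ring $\{\mu_\e/2\le|\xi|\le\mu_\e\}$, the cross term $(D\zeta_\e\cdot\eta)\bigl(\widetilde\A_\delta(\xi)-a_\e(|\xi|)\xi\bigr)\cdot\eta$ is bounded only by
\[
\frac{C}{\mu_\e}\,\bigl(b(|\xi|)+b_\e(|\xi|)\bigr)\,|\eta|^2\;\approx\;\frac{|\xi|}{\mu_\e}\,a_\e(|\xi|)\,|\eta|^2\;\approx\;a_\e(|\xi|)\,|\eta|^2,
\]
with an implicit constant of order one that does \emph{not} shrink under any choice of $\delta(\e)$, $\mu_\e$ or $M_\e$. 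Taking $\widetilde\A_\delta$ close to $\A$ does not help, since for a generic (non-radial) $\A$ the quantity $|\A(\xi)-a_\e(|\xi|)\xi|$ is itself of order $b_\e(|\xi|)$; and ``tuning $\mu_\e$'' cannot help either, because the ratio $|\xi|/\mu_\e$ stays bounded away from zero on the ring. Hence your claim that this error can be absorbed into the main coercive term is unjustified, and the lower bound in \eqref{coer:Ae} does not follow.

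The paper's fix is to use a cut-off $\eta_\delta(t)$ whose derivative obeys the pointwise bound $|\eta_\delta'(t)|\le \tfrac{1}{C_\star\,t}$, with $C_\star=C_\star(n,\l,\L,i_a,s_a)$ a fixed large constant; this forces the transition to take place over the logarithmically long interval $[\delta,\,e^{C_\star}\delta]$ (and similarly near infinity). The error term then contributes at most
\[
\frac{1}{C_\star\,|\xi|}\cdot C\,b_\e(|\xi|)\,|\eta|^2=\frac{C}{C_\star}\,a_\e(|\xi|)\,|\eta|^2,
\]
and choosing $C_\star$ large (once and for all, depending only on the structural constants) makes this $\le\tfrac12\min\{\l/4,1,1+i_a\}\,a_\e(|\xi|)|\eta|^2$, uniformly in $|\xi|$ and in $\e$. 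Your argument is repaired by replacing the standard cut-off (gradient $\sim 1/\mu_\e$) with this logarithmic one; the rest of your outline (upper bound, convergence, \eqref{nonzero:Ae}--\eqref{coAe:gr} via integration along rays) is correct.
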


\begin{proof}
    Let us fix \begin{equation}\label{cstar}
        C_\star=\frac{2(2\,C_1+1)}{\min\{\l/4,1,1+i_a\}}\quad\text{and a parameter}\quad 0<\delta_0<(2\rm{e}^{C_\star})^{-1} 
    \end{equation}
 where $C_1=C_1(n,\L,i_a)$ is the constant appearing in \eqref{co:gr}.
 
 For $\delta\in (0,\delta_0)$  we consider a family of functions $\{\eta_\delta\}_{\delta\in (0,\delta_0)}\subset C^1([0,\infty))$ such that
    \begin{equation}\label{eta:d1}
            0\leq \eta_\delta(t)\leq 1\,,\quad
            |\eta'_\delta(t)|\leq \frac{1}{C_\star \,t}\quad t>0\,,
    \end{equation}
    and
    \begin{equation}\label{eta:d2}
    \eta_\delta(t)\equiv
        \begin{cases}
            1\quad\text{for $t\in [0,\delta]\cup \big[\rm{e}^{C_\star}\delta^{-1},+\infty\big)$}
            \\
            \\
            0\quad\text{for $t\in \big[\rm{e}^{C_\star}\delta,\delta^{-1}\big]$.}
        \end{cases}
    \end{equation}
We will show later how to construct such function. Thanks to \eqref{ae:unif}-\eqref{be:unif} and the fact that $a,a_\e$ are strictly positive in $(0,\infty)$, for every $\delta\in (0,\delta_0)$, we may find $\e_\delta>0$ such that
\begin{equation}\label{bounds:delta}
    \frac{1}{2}a_\e(t)\leq a(t)\leq 2\,a_\e(t)\quad\text{for all $t\in \big[\delta,\,\rm{e}^{C_\star}\delta^{-1}\big]$,}
\end{equation}
for every $0<\e<\e_\delta$. Next, consider  
$$\widehat{\A}_\e(\xi)=\A\ast \rho_\e(\xi).$$
Thus $\widehat{\A}_\e\in C^\infty(\rn)$, and $\widehat\A_\e\to \A$  in $C^1_{loc}(\rn\setminus \{0\})\cap C^0_{loc}(\R^n)$. From this piece of information together with \eqref{ass:Aaut}, \eqref{co:gr} with $\A(x,0)= \A(0)= 0$, and the positivity of $b,B$ in $(0,\infty)$, we can find a (possibly smaller) $\e_\delta>0$ such that
\begin{equation}\label{temp:cogr}
\sum_{i,j=1}^n\left|\frac{\partial \widehat\A_\e^i(\xi) }{\partial \xi_j}\right|  \leq 2\L\,a\left(|\xi|\right), \quad\sum_{i,j=1}^n \frac{\partial \widehat\A_\e^i (\xi) }{\partial \xi_j}\,\eta_i\,\eta_j\geq \frac{\l}{2}\,a(|\xi|)\,|\eta|^2,\quad\text{for all  $\,|\xi|\in [\delta,\rm{e}^{C_\star}\delta^{-1}],$}
\end{equation}
and
\begin{equation}\label{temp:cogr1}
\sum_{i=1}^n |\widehat\A_\e^i(\xi)|\leq 2\,C_1\,b(|\xi|),\qquad\text{for all $\xi$ such that $\,|\xi|\in [\delta,\rm{e}^{C_\star}\delta^{-1}],$}
\end{equation}
and for  all $0<\varepsilon<\varepsilon_\delta$. We now define 
\begin{equation}\label{def:Aedelta}
    \A_{\e,\delta}(\xi)=\big(1-\eta_\delta(|\xi|) \big)\,\widehat\A_\e(\xi)+\eta_\delta(|\xi|)\,a_\e(|\xi|)\,\xi\,,\quad \xi\in \rn.
\end{equation}
By the properties of convolution and \eqref{aexi:Cinf}, we have that $\A_{\e,\delta}\in C^\infty(\rn)$. Also, since $b_\e(0)=0$ and $\A\in C^1(\R^n\setminus\{0\})$, by \eqref{eta:d2} and the properties of convolution it follows that
\begin{equation*}
    |\A_{\e,\delta}(0)|=\lim_{|\xi|\to 0}a_\e(|\xi|)|\xi|=\lim_{|\xi|\to 0}b_\e(|\xi|)=0,
\end{equation*}
and
\begin{equation*}
    \lim_{\delta\to 0}\lim_{\e\to 0}\A_{\e,\delta}=\A\quad\text{locally uniformly in $\R^n$.}
\end{equation*}
Let us now verify \eqref{coer:Ae}.
A straighforward computation shows that, for all $i,j=1,\dots,n$, we have
\begin{equation}\label{d:Aed}
\begin{split}
    \frac{\partial \A^i_{\e,\delta}(\xi)}{\partial \xi_j}= & \,\big(1-\eta_\delta(|\xi|)\big)\, \frac{\partial \widehat\A_\e^i(\xi)}{\partial \xi_j}+\eta_\delta(|\xi|)\,a_\e(|\xi|)\,\Bigg\{\frac{a_\e'(|\xi|)\,|\xi|}{a_\e(|\xi|)}\,\frac{\xi_i\,\xi_j}{|\xi|^2}+\delta_{ij} \Bigg\}
    \\
    &+\eta_\delta'(|\xi|)\,\bigg\{-\widehat\A_\e^i(\xi)\,\frac{\xi_j}{|\xi|}+b_\e(|\xi|)\,\frac{\xi_i\,\xi_j}{|\xi|^2}\bigg\}\,,
  \end{split}  
\end{equation}
where $\delta_{ij}$ is the Kronecker delta.
Now observe that, by \eqref{eta:d2}, we have $\A_{\e,\delta}(\xi)\equiv \widehat\A_\e(\xi)$ if $|\xi|\in \big[\rm{e}^{C_\star}\delta,\delta^{-1}\big]$, hence by \eqref{temp:cogr} and \eqref{bounds:delta} we have
\begin{equation*}
\begin{split}
     &\sum_{i,j=1}^n\frac{\partial \A^i_{\e,\delta}(\xi)}{\partial \xi_j}\,\eta_i\,\eta_j\geq \frac{\l}{4} a_\e(|\xi|)\,|\eta|^2,\quad \eta\in \rn
     \\
     &\sum_{i,j=1}^n\left|\frac{\partial \A^i_{\e,\delta}(\xi)}{\partial \xi_j}\right|\leq 4\,\L\, a_\e(|\xi|),\quad\text{for all $\xi$ such that $|\xi|\in \big[\rm{e}^{C_\star}\delta,\delta^{-1}\big]$,}    
     \end{split}
\end{equation*}
and for all $0<\e<\e_\delta$. On the other hand, if $|\xi|\leq \delta$ or $|\xi|\geq \rm{e}^{C_\star}\delta^{-1}$, then by \eqref{eta:d2} we have $\A_{\e,\delta}(\xi)=a_\e(|\xi|)\,\xi$, and thus
\begin{equation*}
    \begin{split}
        &\sum_{i,j=1}^n\frac{\partial \A^i_{\e,\delta}(\xi)}{\partial \xi_j}\,\eta_i\,\eta_j=a_\e(|\xi|)\,\bigg\{\frac{a_\e'(|\xi|)\,|\xi|}{a_\e(|\xi|)}\,\frac{|\xi\cdot \eta|^2}{|\xi|^2}+|\eta|^2 \bigg\}\stackrel{\eqref{iaesae}}{\geq} \min\{1,i_a+1\}\,a_\e(|\xi|)\,|\eta|^2
        \\
         &\sum_{i,j=1}^n\left|\frac{\partial \A^i_{\e,\delta}(\xi)}{\partial \xi_j}\right|=\sum_{i,j=1}^n a_\e(|\xi|)\left|\frac{a_\e'(|\xi|)\,|\xi|}{a_\e(|\xi|)}\,\frac{\xi_i\,\xi_j}{|\xi|^2}+\delta_{ij} \right|\stackrel{\eqref{iaesae}}{\leq} n^2\,\max\{1,1+s_a\}\,a_\e(|\xi|).
    \end{split}
\end{equation*}
for all $0<\e<\e_\delta$. We are left to consider the case $|\xi|\in \big[\delta,\rm{e}^{C_\star}\delta\big]\cup \big[\delta^{-1},\rm{e}^{C_\star}\delta^{-1}\big]$. From \eqref{d:Aed} we compute
\begin{equation*}
    \begin{split}
         \frac{\partial \A^i_{\e,\delta}(\xi)}{\partial \xi_j}\eta_i\eta_j \stackrel{\eqref{temp:cogr},\eqref{iaesae}}{\geq}  &\big(1-\eta_\delta(|\xi|) \big)\,\frac{\l}{2}\,a(|\xi|)\,|\eta|^2+\eta_\delta(|\xi|)\,\min\{1,1+i_a\}\,a_\e(|\xi|)\,|\eta|^2
         \\
         &+\eta'_\delta(|\xi|)\left\{ -(\widehat\A_\e(\xi)\cdot \eta)\,\frac{(\xi\cdot \eta)}{|\xi|}+b_\e(|\xi|)\,\frac{|\xi\cdot \eta|^2}{|\xi|^2}\right\}
         \\
         \stackrel{\eqref{eta:d1}\text{-}\eqref{bounds:delta},\eqref{temp:cogr1} }{\geq}  &\min\{\l/4,1,1+i_a\} \,a_\e(|\xi|)\,|\eta|^2-\frac{1}{C_\star\,|\xi|}(2\,C_1+1)\,b_\e(|\xi|)\,|\eta|^2
         \\
         \stackrel{\eqref{def:bBe},\eqref{cstar}}{=} &\frac{1}{2}\min\{\l/4,1,1+i_a\}\,a_\e(|\xi|)\,|\eta|^2\,.
    \end{split}
\end{equation*}
 Analogously, from \eqref{d:Aed} we obtain
\begin{equation*}
    \begin{split}
         \left|\frac{\partial \A^i_{\e,\delta}(\xi)}{\partial \xi_j}\right|\leq & \big(1-\eta_\delta(|\xi|) \big)\, \left|\frac{\partial \widehat\A_\e^i(\xi)}{\partial \xi_j}\right|+\eta_\delta(|\xi|)\,a_\e(|\xi|)\,\left|\frac{a_\e'(|\xi|)\,|\xi|}{a_\e(|\xi|)}\,\frac{\xi_i\,\xi_j}{|\xi|^2}+\delta_{ij}\right|
         \\
         &+|\eta'_\delta(|\xi|)|\,\Big(|\widehat\A_\e^i(\xi)|+b_\e(|\xi|) \Big)
         \\
         \stackrel{\eqref{temp:cogr},\eqref{iaesae},\eqref{eta:d1},\eqref{temp:cogr1}}{\leq} &\big(1-\eta_\delta(|\xi|)\big)\,2\,\L\,a(|\xi|)+\eta_\delta(|\xi|)\,a_\e(|\xi|)\,\max\{1,1+s_a\}
         \\
         &+\frac{1}{C_\star\,|\xi|}(2\,C_1+1)\,b_\e(|\xi|)
         \\
         \stackrel{\eqref{eta:d2},\eqref{bounds:delta},\eqref{def:bBe}}{\leq} &\big[\max\{4\L,1,1+s_a\}+C_\star^{-1}(2\,C_1+1)\big]\,a_\e(|\xi|)\,.
    \end{split}
\end{equation*}
Therefore, we have shown that $\A_{\e,\delta}$ satisfies the coercivity 
and growth conditions \eqref{coer:Ae} for all $\delta \in (0,\delta_0)$ and 
all $0 < \e < \e_\delta$. The proof is completed by choosing a sequence 
$\delta_k \to 0$ and, via a diagonal argument, a sequence $\e_k \to 0$, hence  
the vector fields $\A_k \equiv \A_{\e_k,\delta_k}$ will satisfy the 
desired properties, up to relabeling the sequence.
\end{proof}

\begin{remark*}
    \rm{Here we construct the function $\eta_\delta$ fulfilling \eqref{eta:d1}-\eqref{eta:d2}. First, we define the function
    \begin{equation*}\widehat{\eta}_\delta(t)=
        \begin{cases}
            1\quad &t\in (-\infty,\,2\delta ]
            \\
            1-\frac{1}{2C_\star}\ln\left( \frac{t}{2\d}\right)\quad &t\in \big[2\d,\,2\rm{e}^{2C_\star}\d \big]
            \\
            0\quad & t\in \big[2\rm{e}^{2C_\star}\delta,\,2\d^{-1} \big]
            \\
            \frac{1}{2\,C_\star}\ln\left( \frac{\delta\,t}{2}\right)\quad & t\in \big[2\d^{-1},\,2\rm{e}^{2C_\star}\d^{-1} \big]
            \\
            1\quad & t\in \big[2\rm{e}^{2C_\star}\d^{-1}, \,+\infty \big)\,.
        \end{cases}
    \end{equation*}
    Then $\widehat{\eta}_\delta$ is Lipschitz continuous, and $|\widehat{\eta}'_\delta(t)|\leq 1/(2C_\star t)$ for all $t>0$, hence the desired function $\eta_\delta$ can be obtained via convolution. For instance, set
    \begin{equation*}
        \eta_\delta(t)=\widehat{\eta}_\delta\ast \rho_{\d^2}(t)\,.
    \end{equation*}
    By standard properties of convolution we have
    \[
    \eta_\delta \in C^{\infty}([0,\infty)),\quad 0\leq \eta_\d\leq 1,
    \]
    \[
    \eta_\d\equiv 1\,\text{ in $[0,2\delta-\delta^2]\cup \big[ 2\rm{e}^{2C_\star}\d^{-1}+\delta^2,+\infty)$,}\quad\text{and }\,\eta_\d\equiv 0\,\text{ in $\big[2\rm{e}^{2C_\star}\d+\delta^2,\,2\d^{-1}-\delta^2\big]$. }
    \]
In particular \eqref{eta:d2} is satisfied since $\delta<1$. Finally, we estimate $|\eta_\d'(t)|$. Clearly $\eta_\delta'\equiv 0$ in $[0,\delta]$, while for $t>\delta$  we obtain
    \begin{equation*}
        \begin{split}
            |\eta_\d'(t)| &\leq \int_{t-\d^2}^{t+\d^2} |\widehat{\eta}_\d'(s)|\,\rho_{\d^2}(s-t)\,ds\leq \frac{1}{2C_\star}\int_{t-\d^2}^{t+\d^2}  \frac{\rho_{\d^2}(s)}{s}\,ds
            \\
            &\leq \frac{1}{2C_\star(t-\delta^2)}\int_{t-\d^2}^{t+\d^2} \rho_{\d^2}(s)\,ds \leq \frac{1}{C_\star\,t}\,, 
        \end{split}
    \end{equation*}
   where, in the last inequality, we used that $t-\delta^2\geq t/2$ since $t>\delta$ and $\delta<1/2$. Equation \eqref{eta:d1} is thus proven.
    }
\end{remark*}

\subsection{Classes of domains and boundary flattening}\label{subsec:domain}
In what follows, we will consider $\Omega\subset \R^n$ a (not necessarily bounded) domain, i.e., an open connected set. We start with the following definitions.
\vspace{0.2cm}

\begin{definition}\label{def:Lom}\rm{ Let $\mathcal{U}$ be a bounded domain of $\R^n$. We say that $\Omega$ is a \textit{Lipschitz domain relatively to $\mathcal{U}$} if  there exist  constants $L_\Omega>0$ and $R_\Omega \in (0, 1)$ 
such that, for every $x_0\in \partial \Omega\cap \mathcal{U}$ and $R\in (0, R_\Omega]$ there exist an isometry $T=T_{x_0}$ of $\R^n$ such that $Tx_0=0$, an $L_\Omega$-Lipschitz continuous function 
$\phi=\phi_{x_0} : B'_{R}\to (-\ell, \ell)$, where $B'_{R}$ denotes the ball in $\mathbb R^{n-1}$, centered at $0'\in \R^{n-1}$ and with radius $R$, and
\[
\ell = R (1+L_\Omega),
\]
satisfying $\phi(0')=0$, and
\begin{equation}\label{may100}
\begin{split}
    &T(\partial \Omega\cap\mathcal{U}) \cap \big(B'_{R}\times (-\ell,\ell)\big)=\{(x', \phi (x'))\,:\,x'\in B'_{R}\},
    \\
    & T(\Omega\cap\mathcal{U}) \cap \big(B'_{R}\times (-\ell,\ell)\big)=\{(x',x_n)\,:\,x'\in B'_{R}\,,\,\phi (x')<x_n<\ell\}.
\end{split}
\end{equation}
The function $\phi$ is usually called local boundary chart. We set
\begin{equation}\label{may101}
\mathfrak L_\Omega = (L_\Omega, R_\Omega),
\end{equation}
and call $\mathfrak L_\Omega$ a \textit{Lipschitz characteristic} of $\Omega$ (relatively to $\mathcal{U}$). We remark that the Lipschitz characteristic is not unique. since we may always reduce the characteristic radius \(R_\Omega\).
\vspace{0.1cm}

 Given $\alpha\in (0,1]$, we say that $\partial\Omega\cap \mathcal{U}$ is of class $C^{1,\alpha}$  if the function $\phi$ satisfying \eqref{may100} belongs to $C^{1,\alpha}(B'_R)$, and we write $\partial \Omega\cap \mathcal{U}\in C^{1,\alpha}$. 
}
\end{definition}

Next, given $x_0\in \partial \Omega\cap \mathcal{U}\in C^{1,\alpha}$, and $T=T_{x_0},\phi=\phi_{x_0}$ fulfilling \eqref{may100}, we may define the local $C^{1,\alpha}$-diffeomorphism
\begin{equation}\label{diffeom}
    \begin{split}
    &\Phi:T^{-1}\big(B'_{R_\Omega}\times (-\ell,\ell) \big)\to \R^n,
    \\
    &(x',x_n)\mapsto \Phi(x',x_n)=(y',y_n-\phi(y')),\quad y=Tx,
    \end{split}
\end{equation}
 which satisfies, for all $0<R\leq R_\Omega$,
 \begin{equation}\label{flatten}
 \Phi\big(\Omega \cap T^{-1}\big(B'_{R}\times (-\ell,\ell)\big)\big)\subset B'_{R}\times (0,+\infty),\quad 
 \Phi\big(\partial \Omega \cap T^{-1}\big(B'_{R}\times (-\ell,\ell) \big)\big)= B^0_{R},  
 \end{equation}
 and whose inverse function is given by
 \begin{equation*}
      \Phi^{-1}(y',y_n)=T^{-1}(y',x_n+\varphi(y')).
 \end{equation*}
Note that the second equation in \eqref{flatten} tells that $\Phi$  is a flattening diffeomorphism, i.e., it (locally) maps the boundary $\partial\Omega$ onto $B^0_R$. Note also that 
\begin{equation}\label{graddifeom}
  \mathrm{det}\,\nabla \Phi\equiv 1\quad\text{and}\quad  \|\nabla \Phi\|_{\infty}+\|\nabla\Phi^{-1}\|_{\infty}\leq C(n)\,(1+L_\Omega),
\end{equation}
which implies
\begin{equation}\label{okii}
    \frac{1}{C(n)(1+L_\Omega)}\,|\xi|\leq|\nabla\Phi^\top(x)\,\xi|\leq C(n)(1+L_\Omega)\,|\xi|\quad\text{for all $\xi\in \R^n$.}
\end{equation}

Now let $x_0\in \partial \Omega\cap \mathcal{U}$, and  denote by 
\begin{equation}\label{cilindr}
\begin{split}
\mathcal{Q}_{\Omega,x_0}&:=T_{x_0}^{-1}(B'_{R_\Omega}\times (-\ell,\ell))
\\
   \mathcal{C}_\Omega:=\Omega\cap T^{-1}(B'_{R_\Omega}\times (-\ell,\ell))\quad &\text{and}\quad\mathcal{C}_{\partial\Omega}:=\partial\Omega\cap T^{-1}(B'_{R_\Omega}\times (-\ell,\ell)).
   \end{split}
\end{equation}
Suppose that $u\in W^{1,B}(\mathcal{C}_\Omega)$ is a weak solution to either the Dirichlet or Neumann problem
\begin{equation}\label{pb:C}
    \begin{cases}
        -\mathrm{div}\big( \A(x,Du)\big)=f\quad&\text{in $\mathcal{C}_\Omega$}
        \\
        u=g\quad &\text{on $\mathcal{C}_{\partial \Omega}$}
    \end{cases}
    \quad\text{or}\quad
     \begin{cases}
        -\mathrm{div}\big( \A(x,Du)\big)=f\quad&\text{in $\mathcal{C}_\Omega$}
        \\
        \A(x,Du)\cdot \nu=h\quad &\text{on $\mathcal{C}_{\partial \Omega}$}.
    \end{cases}
\end{equation}
Then by setting 
\[
\hat{u}(y)=u\circ \Phi^{-1}(y),\quad y\in  \Phi(\mathcal{C}_\Omega),
\]
a simple change of variables shows that $\hat{u}\in W^{1,B}(\Phi(\mathcal{C}_\Omega))$ is a weak solution to either
\begin{equation}\label{change:var}
        \begin{cases}
        -\mathrm{div}\big( \hat\A(y,D\hat u)\big)=\hat f\quad&\text{in $\Phi(\mathcal{C}_\Omega)$}
        \\
       \hat u=\hat g\quad &\text{on $\Phi(\mathcal{C}_{\partial\Omega})$}
    \end{cases}
    \quad\text{or}\quad
     \begin{cases}
        -\mathrm{div}\big( \hat\A(y,D\hat u)\big)=\hat f\quad&\text{in $\Phi(\mathcal{C}_\Omega)$}
        \\
        \hat \A(y,D\hat u)\cdot e_n+\hat h=0\quad &\text{on $\Phi(\mathcal{C}_{\partial\Omega})$}.
    \end{cases}
\end{equation}
respectively, with $\Phi(\mathcal{C}_{\partial\Omega})=B^0_{R_{\Omega}}$ by \eqref{flatten}. Above, we set
\begin{equation}\label{def:hat}
\hat \A(y,\xi)=\big[\nabla \Phi(\Phi^{-1}(y))\big]\,\A\Big(\Phi^{-1}(y),\,\big[\nabla \Phi\big(\Phi^{-1}(y)\big)\big]^{\top}\,\xi\Big),\quad \hat f(y)=f(\Phi^{-1}(y))
\end{equation}
and
\begin{equation}\label{def:hat1}
g(y')=g(\Phi^{-1}(x')),\quad \hat h(y')=h(\Phi^{-1}(y'))\,\sqrt{1+|\nabla \phi(y')|^2},\quad y'\in B
^0_{R_\Omega}.
\end{equation}
Let us now show that equation \eqref{eq1} is invariant under the change of
coordinates $\Phi$; namely, the transformed problem \eqref{change:var} still
satisfies assumptions \eqref{ass:A} and \eqref{A:hold}.

Clearly we have $\hat f\in L^d(\Phi(\mathcal{C}_\Omega))$, and the functions $\hat{g},\hat{h}$ are of class $C^{1,\alpha}(B^0_{R_0}),C^{0,\alpha}(B^0_{R_0})$, respectively.
Moreover, by setting $M(y):=[\nabla\Phi(\Phi^{-1}(y))]^\top$, a simple computation shows 
\begin{equation*}
   \sum_{i,j=1}^n \frac{\partial\hat{\A}^i}{\partial\xi_j}(y,\xi)\,\eta_i\cdot\eta_j= \sum_{i,j=1}^n \frac{\partial\A^i}{\partial\xi_j}\big(\Phi^{-1}(y),M(y)\xi\big)\,\big( M(y)\,\eta\big)\cdot\big( M(y)\,\eta\big)
\end{equation*}
and thanks to \eqref{ultra:utile} and \eqref{okii}, we also have 
\[
c(n,i_a,s_a,L_\Omega)\,a(|\xi|)\leq a\big(|M(y)\xi|\big)\leq C(n,i_a,s_a,L_\Omega)\,a(|\xi|).
\]
Thus, from \eqref{ass:A},\eqref{okii} and the two estimates above, we deduce that $\hat \A$ still satisfies the coercivity and growth conditions
\begin{equation}\label{new:coer}
\begin{split}
&\sum_{i,j=1}^n\left|\frac{\partial \hat\A^i }{\partial \xi_j}(x,\xi)\right|\leq C\big(n,\l,\L,i_a,s_a,L_\Omega\big)\,a(|\xi|)
\\
 &  \sum_{i,j=1}^n \frac{\partial\hat\A^i}{\partial\xi_j}(y,\xi)\,\eta_i\,\eta_j\geq c\big(n,\l,\L,i_a,s_a,L_\Omega\big)\,a(|\xi|)\,|\eta|^2
   \end{split}
\end{equation}
for all $y\in \Phi(\mathcal{C}_\Omega)$, $\xi\in \R^n\setminus\{0\}$ and $\eta\in \R^n$. Additionally, $\hat{\A}$ satisfies 
\begin{equation}\label{A:hold1}
\begin{split}
   | \hat\A(y,\xi)-\hat\A(z,\xi)|&\leq C\big(n,\l,\L,\L_{\mathrm{h}},\alpha, i_a,s_a,L_\Omega\big)\,\big(1+b(|\xi|)\big)\,\big(1+\|\phi\|_{C^{1,\alpha}}\big)\,|y-z|^\alpha
   \\
   &|\hat\A(y,0)|\leq C(n)\,(1+L_\Omega)\,\L.
   \end{split}
\end{equation}
for all $y,z\in \Phi(\mathcal{C}_\Omega)$. 

Equation \eqref{A:hold1}$_2$ immediately follows from \eqref{A:hold}$_2$, \eqref{def:hat} and \eqref{graddifeom}. Then, to prove \eqref{A:hold1}$_1$, it clearly suffices to study the quantity
\begin{equation}\label{lkodevo}
\begin{split}
    \big|\A\big(y,M(y)\,\xi\big) &-\A(y,M(z)\,\xi)\big|
    \\
    &\leq \sum_{j=1}^n\bigg|\int_0^1 \frac{\partial\A}{\partial \xi_j}\Big(tM(y)\xi+(1-t)M(z)\xi\Big)\,dt\bigg|\,|M(y)-M(z)|\,|\xi|
    \\
    &\leq C(n)\,\bigg(\int_0^1 a\Big(\big|tM(y)\xi+(1-t)M(z)\xi \big| \Big)\,dt\bigg)\,\|\phi\|_{C^{1,\alpha}}|y-z|^{\alpha}\,|\xi|\,,
    \end{split}
\end{equation}
where we used the fundamental theorem of calculus, \eqref{ass:A} and $\|M\|_{C^{0,\alpha}}\leq C(n)\,\|\phi\|_{C^{1,\alpha}}$.
Now assume for the moment that $|y-z|\leq \big(2\,C(n)\,(1+L_\Omega)\,\|\phi\|_{C^{1,\alpha}}\big)^{-1/\alpha}$, where $C(n)$ is the constant appearing in \eqref{okii}. From said inequality, we also have
{\small
\begin{equation*}
    \begin{split}
         C(n)\,(1+L_\Omega)\,|\xi|\,\geq\big|tM(y)\xi+(1-t)M(z)\xi\big|&\geq |M(y)\xi|-\|\phi\|_{C^{0,\alpha}}\,|\xi|\,|y-z|^\alpha\geq \frac{|\xi|}{2\,C(n)(1+L_\Omega)}.
    \end{split}
\end{equation*}
}
Coupling this information with \eqref{ultra:utile}, we deduce \[
a\big(\big|tM(y)\xi+(1-t)M(z)\xi\big| \big)\leq C(n,i_a,s_a,L_\Omega)\,a(|\xi|),
\] 
and this estimate together with \eqref{lkodevo} and \eqref{def:a} yields \eqref{A:hold1}. On the other hand, when $|y-z|\geq \big(C(n)\,(1+L_\Omega)\|\phi\|_{C^{1,\alpha}}\big)^{-1/\alpha}$, from \eqref{co:gr}, \eqref{A:hold}$_2$, \eqref{okii} and the monotonicity of $b(t)$, we get
{\small
\begin{equation*}
\begin{split}
    \big|\A\big(y,M(y)\,\xi\big) -\A(y,M(z)\,\xi)\big|&\leq C\,\Big(b(\max\{ |M(y)\,\xi|,|M(z)\,\xi|\})+1\Big)\big(C(n)\,(1+L_\Omega)\|\phi\|_{C^{1,\alpha}}\big)\, |y-z|^\alpha
    \\
    & \leq C'\big(1+b(|\xi|)\big)\,\|\phi\|_{C^{1,\alpha}}\,|y-z|^\alpha,
    \end{split}
\end{equation*}}
with $C,C'=C,C'(n,\l,\L,\L_\mathrm{h},i_a,s_a,L_\Omega)$, so that \eqref{A:hold1} is proven in this case as well.
\vspace{0.2cm}

\noindent
We conclude this subsection by introducing some notation.
Let $\mathcal{U} \subset \mathbb{R}^n$ be a bounded open set such that
$\partial\Omega \cap \mathcal{U}$ is of class $C^{1,\alpha}$, and let
$\mathcal{U}' \Subset \mathcal{U}$ be open and such that $\partial\Omega\cap \mathcal{U}'\neq \varnothing$. In particular, by Definition \ref{def:Lom}, this implies that  $\partial\Omega \cap \mathcal{U}'$ is of class $C^{1,\alpha}$  as well.

Let $\{x_i\}_{i=1}^N \subset \partial\Omega \cap \overline{\mathcal{U}'}$
and let $\{\phi_i\}_{i=1}^N=\{\phi_{x_i}\}_{i=1}^N$ be coordinate charts satisfying \eqref{may100},
with $\Phi_i$ denoting the associated diffeomorphisms defined in \eqref{diffeom}. Assume that the corresponding coordinate cylinders 
$\{\mathcal{Q}_{\Omega,x_i}\}_{i=1}^N$ form an open cover of
$\partial\Omega \cap \overline{\mathcal{U}'}$. We then define
\begin{equation}\label{normadeom}
\|\partial\Omega \cap \mathcal{U}\|_{C^{1,\alpha}(\mathcal{U}')}
:= \sup_{i=1,\dots,N} \|\phi^i\|_{C^{1,\alpha}}.
\end{equation}
If $\Omega$ is a bounded domain of class $C^{1,\alpha}$, the notation
$\|\partial\Omega\|_{C^{1,\alpha}}$ is self-explanatory.

We also denote by
\begin{equation}\label{norm:hg}
     \|h\|_{C^{0,\alpha}(\partial\Omega\cap \bar{\mathcal{U}}')}:=\max_{i=1,\dots,N}\|h\circ \Phi_i^{-1}\|_{C^{0,\alpha}(B^0_{R_\Omega})},\quad\|g\|_{C^{1,\alpha}(\partial\Omega\cap \bar{\mathcal{U}}')}:=\max_{i=1,\dots,N}\|g\circ \Phi_i^{-1}\|_{C^{1,\alpha}(B^0_{R_\Omega})}.
\end{equation}

\subsection{Auxiliary results and lemmas }
In this final subsection, we collect some useful results and lemmas for later use. We start with an elementary property of averages:  for any measurable set $U\subset \R^n$ with $0<|U|<\infty$, we have
\begin{equation}\label{medie}
   \mint_{U}\big|V(x)-(V)_{U}\big|^q\,dx\leq C(n,N,q) \,\mint_{U}\big|V(x)-V_0\big|^q\,dx
\end{equation}
for all $q\geq 1$, $V\in L^q(U;\R^N)$, $N\in\N\setminus\{0\}$, and for any constant vector $V_0\in \R^N$.

For any $\tau\in(0,1) $, we also have
\begin{equation}\label{added}
   \mint_{B_r}B\big(|V-(V)_{B_r} |\big) \,dx\leq C_\tau\,\mint_{B_R}B\big(|V-(V)_{B_R} |\big) \,dx\,,\quad\text{for all } \tau R\leq r\leq R,
\end{equation}
where $B_r\subset B_R$  are concentric balls, and $C_\tau>0$ depends on $i_a,s_a,\tau$.  To prove it, we use \eqref{triangle}, \eqref{B2t} and Jensen inequality, and get
\begin{equation*}
    \begin{split}
         \mint_{B_r}B\big(|V-(V)_{B_r} |\big) \,dx & \leq C\, \mint_{B_r}B\big(|V-(V)_{B_R} |\big) \,dx+C\,B\big(|(V)_{B_r}-(V)_{B_R} |\big)
         \\
         &\leq C\,\tau^{-n}\mint_{B_R}B\big(|V-(V)_{B_R} |\big) \,dx+C\,B\Big(\mint_{B_r}|V-(V)_{B_R}|\,dx \Big)
         \\
         &\leq C\,\tau^{-n}\mint_{B_R}B\big(|V-(V)_{B_R} |\big) \,dx+C\,\tau^{-ns_B}B\Big(\mint_{B_R}|V-(V)_{B_R}|\,dx \Big)
         \\
         &\leq C\,\tau^{-n}\mint_{B_R}B\big(|V-(V)_{B_R} |\big) \,dx+C\,\tau^{-ns_B}\mint_{B_R}B\big(|V-(V)_{B_R}|\big)\,dx,
    \end{split}
\end{equation*}
with $C=C(i_a,s_a)$, so \eqref{added} is proven.
\vspace{0.1cm}

The next, standard lemma can be found in  \cite[Lemma 4.3]{HL11} or \cite[Chapter 5, Lemma 3.1]{G83}.

\begin{lemma}\label{lemma:uss}
    Let $Z(t)\geq 0$ be a bounded function in $[\tau_0,\tau_1]$, $\tau_0\geq 0$. Suppose that
\begin{equation*}
    Z(t)\leq \theta\,Z(t)+\frac{A}{(s-t)^\alpha}+B\,,\quad\text{for all $\tau_0<t<s<\tau_1$,}
\end{equation*}
    for some $\theta\in [0,1)$, and $A,B,\alpha\geq 0$. Then
    \begin{equation*}
        Z(t)\leq C(\alpha,\theta)\,\left\{ \frac{A}{(s-t)^\alpha}+B\right\},
    \end{equation*}
    for all $\tau_0\leq t<s\leq \tau_1$.
\end{lemma}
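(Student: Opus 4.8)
The plan is to prove the lemma by a standard iteration over a geometric sequence of intermediate points, exploiting crucially that $\theta<1$. Fix $\tau_0\le t<s\le \tau_1$ and a ratio $\sigma\in(0,1)$ to be chosen below, and set
\[
t_0:=t,\qquad t_{i+1}:=t_i+(1-\sigma)\,\sigma^{i}\,(s-t)\quad(i\ge 0),
\]
so that $t_i=t+(1-\sigma^{i})(s-t)$ is increasing, lies in $[t,s)$, converges to $s$, and $t_{i+1}-t_i=(1-\sigma)\sigma^{i}(s-t)$. Applying the hypothesis to each consecutive pair $t_i<t_{i+1}$ (all points of which belong to $[\tau_0,\tau_1]$) gives
\[
Z(t_i)\le \theta\,Z(t_{i+1})+\frac{A}{(1-\sigma)^{\alpha}\,\sigma^{i\alpha}\,(s-t)^{\alpha}}+B ,
\]
and iterating from $i=0$ up to $i=k-1$ yields
\[
Z(t)\;=\;Z(t_0)\;\le\; \theta^{k}Z(t_k)\;+\;\frac{A}{(1-\sigma)^{\alpha}(s-t)^{\alpha}}\sum_{i=0}^{k-1}\bigl(\theta\,\sigma^{-\alpha}\bigr)^{i}\;+\;B\sum_{i=0}^{k-1}\theta^{i}.
\]

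Next I would choose $\sigma\in(0,1)$ so that $q:=\theta\,\sigma^{-\alpha}<1$; this is possible precisely because $\theta<1$ — for $\alpha>0$ any $\sigma$ with $\theta^{1/\alpha}<\sigma<1$ works, while for $\alpha=0$ one simply has $q=\theta<1$ for any $\sigma$. With $\sigma$ fixed in this way, both sums above are partial sums of convergent geometric series, hence bounded by $(1-q)^{-1}$ and $(1-\theta)^{-1}$ respectively, uniformly in $k$. Since $Z$ is bounded on $[\tau_0,\tau_1]$ and $\theta\in[0,1)$, the tail term satisfies $\theta^{k}Z(t_k)\to 0$ as $k\to\infty$, so letting $k\to\infty$ gives
\[
Z(t)\;\le\;\frac{A}{(1-\sigma)^{\alpha}(1-q)\,(s-t)^{\alpha}}+\frac{B}{1-\theta}\;\le\; C(\alpha,\theta)\Bigl(\frac{A}{(s-t)^{\alpha}}+B\Bigr),
\]
where $C(\alpha,\theta)$ depends only on $\alpha$ and $\theta$ through the fixed choice of $\sigma$. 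This is the asserted estimate.

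There is no real obstacle in this argument; the only point that requires a little attention is the selection of $\sigma$ guaranteeing $\theta\,\sigma^{-\alpha}<1$, and it is exactly this constraint ($\sigma>\theta^{1/\alpha}$ when $\alpha>0$) that makes the resulting constant $C(\alpha,\theta)$ degenerate as $\theta\uparrow 1$. The boundedness hypothesis on $Z$ enters only to discard the tail $\theta^{k}Z(t_k)$ in the limit, and could be relaxed to any growth of $Z$ that is $o(\theta^{-k})$ along the sequence $\{t_k\}$.
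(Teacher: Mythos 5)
Your proof is correct and is exactly the classical iteration argument; the paper itself gives no proof of this lemma, referring instead to \cite[Lemma 4.3]{HL11} and \cite[Chapter 5, Lemma 3.1]{G83}, whose proofs proceed precisely as you do (geometric sequence of intermediate points $t_i\to s$, choice of $\sigma$ with $\theta\sigma^{-\alpha}<1$, summation of the two geometric series, and use of the boundedness of $Z$ to kill the tail $\theta^k Z(t_k)$). You also correctly read the hypothesis as $Z(t)\le\theta Z(s)+\dots$; the paper's statement contains the typo $\theta Z(t)$, under which the claim would be a trivial one-line absorption.
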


In the following, we state three iterative lemmas, which are all essentially equivalent.
The first one can be found in \cite[Lemma 8.23]{GT}, and it is particularly useful to quantify the oscillation of a function.

\begin{lemma}\label{lemma:osc}
    Let $\omega$ be a non-decreasing non-negative function on $(0,R_0]$ such that
    \begin{equation*}
        \omega(\tau R)\leq \theta\,\omega(R)+C_0\,R^\alpha,\quad R\leq R_0
    \end{equation*}
    for $\tau,\theta_0\in (0,1)$ and $\alpha, C_0>0$. Then there exists $C_1=C_1(\tau,\theta)>0$ and $\beta=\beta(\theta,\tau)>0$ such that
    \begin{equation*}
        \omega(R)\leq C_1\,\left( \frac{R}{R_0}\right)^{\beta}\,\omega(R_0)+C_1\,C_0\,R^\alpha.
    \end{equation*}  
\end{lemma}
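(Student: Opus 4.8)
The plan is to prove Lemma~\ref{lemma:osc} by a standard dyadic iteration. First I would iterate the hypothesis along the geometric sequence of radii $R_k := \tau^k R_0$, $k \in \N$. Setting $\omega_k := \omega(\tau^k R_0)$, the assumption gives
\begin{equation*}
\omega_{k+1} \leq \theta\,\omega_k + C_0\,(\tau^k R_0)^\alpha = \theta\,\omega_k + C_0\,R_0^\alpha\,(\tau^\alpha)^k .
\end{equation*}
Iterating this linear recursion from $k=0$ yields
\begin{equation*}
\omega_m \leq \theta^m\,\omega_0 + C_0\,R_0^\alpha \sum_{j=0}^{m-1} \theta^{m-1-j}\,(\tau^\alpha)^j .
\end{equation*}
To bound the sum I would split into two regimes according to whether $\theta$ and $\tau^\alpha$ are comparable: if $\theta \neq \tau^\alpha$ the geometric sum is $(\theta^m - \tau^{\alpha m})/(\theta - \tau^\alpha)$ up to the prefactor, which is controlled by $C(\tau,\theta,\alpha)\,\max\{\theta,\tau^\alpha\}^m$; if $\theta = \tau^\alpha$ one gets $m\,\theta^{m-1}$, which is still bounded by $C(\tau,\theta)\,\tilde\theta^m$ for any $\tilde\theta \in (\theta,1)$. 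In all cases we obtain, for a suitable $\mu = \mu(\theta,\tau,\alpha) \in (0,1)$ with $\mu \geq \max\{\theta,\tau^\alpha\}$,
\begin{equation*}
\omega_m \leq C(\tau,\theta,\alpha)\,\mu^m\,\bigl(\omega_0 + C_0\,R_0^\alpha\bigr).
\end{equation*}
Actually it is cleaner to aim directly at the form in the statement: choose $\beta > 0$ so that $\tau^\beta \geq \theta$, i.e. $\beta := \log\theta/\log\tau$ (possibly decreasing $\beta$ so that also $\beta \leq \alpha$, which we may do since the conclusion only weakens), and prove $\omega_k \leq \theta^k \omega_0 + C_0' (\tau^k R_0)^\alpha$ with $C_0'$ depending on $\theta,\tau,\alpha$ by an induction that absorbs the geometric tail — this uses $\beta \leq \alpha$ so that $\theta^k \leq (\tau^k)^\alpha \cdot (\tau^{-k})^{\alpha-\beta}$ stays under control, or more simply uses $\theta \leq \tau^\alpha$ after shrinking $\beta$.

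Second, I would pass from the discrete radii back to a general $R \in (0,R_0]$. Given such $R$, pick the unique $m \in \N$ with $\tau^{m+1} R_0 < R \leq \tau^m R_0$. Monotonicity of $\omega$ gives $\omega(R) \leq \omega(\tau^m R_0) = \omega_m$, and from the discrete estimate together with $\tau^m R_0 < R/\tau$ we get
\begin{equation*}
\omega(R) \leq \theta^m\,\omega(R_0) + C_0'\,(\tau^m R_0)^\alpha \leq C\,(\tau^m)^\beta\,\omega(R_0) + C_0'\,\tau^{-\alpha} R^\alpha .
\end{equation*}
Finally, from $R \leq \tau^m R_0$ we have $\tau^m \geq R/R_0$ only in the wrong direction, so instead I use $\tau^{m+1} R_0 < R$, i.e. $\tau^m < R/(\tau R_0)$, hence $(\tau^m)^\beta \leq \tau^{-\beta}\,(R/R_0)^\beta$. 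Substituting gives exactly
\begin{equation*}
\omega(R) \leq C_1\,\Bigl(\frac{R}{R_0}\Bigr)^{\beta} \omega(R_0) + C_1\,C_0\,R^\alpha ,
\end{equation*}
with $C_1 = C_1(\tau,\theta,\alpha)$ (absorbing the factors $\tau^{-\beta}$, $\tau^{-\alpha}$, and the constant from the first step), which is the claimed inequality; note $\beta = \beta(\theta,\tau)$ as required, the dependence on $\alpha$ entering only through $C_1$ (and through the harmless reduction $\beta \leq \alpha$).

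The only mild obstacle is bookkeeping the constants so that the final $\beta$ depends only on $\theta,\tau$ and not on $C_0$ or $\omega(R_0)$ — this is automatic from the derivation above since $\beta$ is fixed as $\log\theta/\log\tau$ before any of the additive terms enter — and handling the borderline case $\theta = \tau^\alpha$ in the geometric sum, which is dispatched by replacing $\theta$ with a slightly larger $\tilde\theta<1$ at the cost of enlarging $C_1$. No genuine difficulty is expected; this is a textbook iteration (cf.\ \cite[Lemma 8.23]{GT}).
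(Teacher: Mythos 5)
The paper does not prove this lemma; it only cites \cite[Lemma 8.23]{GT}, and your dyadic iteration is indeed the standard route. However, your argument has a genuine gap at the key step. You claim to prove by induction that $\omega_k\leq \theta^k\omega_0+C_0'(\tau^kR_0)^\alpha$. For the induction to close one needs
\begin{equation*}
\theta\,C_0'\,\tau^{k\alpha}+C_0\,\tau^{k\alpha}\;\leq\; C_0'\,\tau^{(k+1)\alpha},
\qquad\text{i.e.}\qquad C_0'\,(\tau^\alpha-\theta)\geq C_0,
\end{equation*}
which is possible only if $\tau^\alpha>\theta$. Your proposed fix --- ``uses $\theta\le\tau^\alpha$ after shrinking $\beta$'' --- is vacuous: $\beta$ does not appear in the inequality $\theta\le\tau^\alpha$, which is a constraint on the given data and may simply fail (e.g.\ $\tau=\tfrac12$, $\theta=\tfrac34$, $\alpha=1$). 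In the regime $\theta>\tau^\alpha$ the geometric tail $\sum_{j}\theta^{m-1-j}\tau^{\alpha j}$ is of order $\theta^{m}$, not $\tau^{\alpha m}$, so the accumulated error is comparable to $R_0^\alpha(R/R_0)^{\beta_0}$ with $\beta_0=\log\theta/\log\tau<\alpha$, which is much larger than $R^\alpha$. Your first, ``clean'' bound $\omega_m\leq C\mu^m(\omega_0+C_0R_0^\alpha)$ is correct, but it yields the conclusion $\omega(R)\leq C_1(R/R_0)^{\beta}\bigl(\omega(R_0)+C_0R_0^{\alpha}\bigr)$, whose error term is $C_1(R/R_0)^\beta C_0R_0^\alpha$, not $C_1C_0R^\alpha$; when $\beta<\alpha$ this is strictly weaker, and the passage from one to the other in your write-up is exactly the broken induction.

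In fact the statement as printed cannot be proved, because it is false when $\theta>\tau^\alpha$: take $R_0=C_0=1$, let $A=\tau^{k_0\alpha}/(1-\theta)$, set $\omega\equiv A$ on $(\tau^{k_0},1]$ and $\omega_{k+1}=\theta\omega_k+\tau^{(k+1)\alpha}$ for $k\geq k_0$ (extended as a step function). One checks that $\omega$ is non-negative, non-decreasing and satisfies the hypothesis, while $\omega(\tau^{k_0+\ell})\geq\theta^\ell A$; dividing the claimed conclusion by $A$ gives $\theta^\ell\leq C_1\tau^{(k_0+\ell)\beta}+C_1(1-\theta)\tau^{\ell\alpha}$, which fails for $\ell$ fixed large (using $\tau^\alpha<\theta$) and then $k_0\to\infty$. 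The correct versions are either \cite[Lemma 8.23]{GT} itself, whose error term is $\sigma(R^\mu R_0^{1-\mu})=C_0R^{\mu\alpha}R_0^{(1-\mu)\alpha}$ with $\mu\in(0,1)$, or the variants in Lemmas \ref{lem:iteration} and \ref{lem:ultiter} of this paper, where the final exponent is the smaller of the two. You should either prove the weaker (and true) estimate $\omega(R)\leq C_1(R/R_0)^{\beta}\bigl(\omega(R_0)+C_0R_0^{\alpha}\bigr)$ --- which is what your intermediate bound gives and what the paper actually uses, e.g.\ in the proof of Lemma \ref{lemma:barrier3} --- or add the hypothesis $\alpha<\log\theta/\log\tau$ (equivalently $\tau^\alpha>\theta$), under which your induction does close.
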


For the next simple, yet fundamental iterative lemma we refer to \cite[Lemma 5.13]{GM12}.

\begin{lemma}\label{lem:iteration}
Consider a non-decreasing function $\phi: (0,R_0] \to [0,\infty)$
which satisfies, for some constants \(A>0\), \(B\ge 0\) and exponents \(\alpha>\beta\), the inequality
\begin{equation*}
\phi(r)\leq
A\left[\left(\frac{r}{R}\right)^{\alpha} + \varepsilon\right] \phi(R)
+ B R^{\beta},
\quad
\text{for all }\, 0<r \le R \le R_0 .
\end{equation*}
Then there exists $C=C(A,\alpha,\beta)>0$ such that, if \[0\leq \e< \e_0=\Big(\frac{1}{2A} \Big)^{\frac{2\alpha}{\alpha-\beta}},\]
then
\begin{equation*}
\phi(r)
\leq
C\,\Big[
\frac{\phi(R)}{R^\beta} 
+ B
\Big]\,r^{\beta},
\quad
\text{for all }\, 0<r \leq R \leq R_0.
\end{equation*}
\end{lemma}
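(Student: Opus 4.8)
The plan is to run the classical geometric (Campanato--Giaquinta) iteration. First I would fix an intermediate exponent $\gamma:=\frac{\alpha+\beta}{2}$, so that $\beta<\gamma<\alpha$ and $\alpha-\gamma=\gamma-\beta=\frac{\alpha-\beta}{2}$, and (assuming as we may that $A\ge 1$) set the dilation ratio
\[
\tau:=(2A)^{-1/(\alpha-\gamma)}=(2A)^{-2/(\alpha-\beta)}\in(0,1).
\]
With these choices one has $A\tau^{\alpha}=A\tau^{\alpha-\gamma}\,\tau^{\gamma}=\frac12\tau^{\gamma}$, and a one-line computation shows that the stated threshold satisfies $A\,\e_0=\frac12\tau^{\gamma}$ as well; hence for every $\e$ with $0\le\e<\e_0$ we get $A(\tau^{\alpha}+\e)<\tau^{\gamma}$. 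Plugging $r=\tau R$ into the hypothesis then yields the self-improving inequality
\[
\phi(\tau R)\le \tau^{\gamma}\,\phi(R)+B\,R^{\beta},\qquad 0<R\le R_0 .
\]

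Next I would iterate this at the scales $R_k:=\tau^{k}R$, which all lie in $(0,R_0]$ since $\tau<1$. An induction on $k$ gives
\[
\phi(\tau^{k}R)\le \tau^{k\gamma}\phi(R)+B\,R^{\beta}\,\tau^{(k-1)\beta}\sum_{j=0}^{k-1}\tau^{j(\gamma-\beta)},
\]
and, because $\gamma>\beta$ and $\tau<1$, the geometric sum is bounded by $(1-\tau^{\gamma-\beta})^{-1}$; using once more that $\tau^{k\gamma}\le\tau^{k\beta}$ we obtain $\phi(\tau^{k}R)\le C_1\,\tau^{k\beta}\bigl[\phi(R)+B\,R^{\beta}\bigr]$ with a constant $C_1=C_1(A,\alpha,\beta)$ (since $\tau$ and $\gamma$ depend only on $A,\alpha,\beta$).

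Finally, to pass from the discrete radii $\tau^{k}R$ to an arbitrary $r\in(0,R]$, I would pick the unique integer $k\ge 0$ with $\tau^{k+1}R<r\le \tau^{k}R$: by monotonicity $\phi(r)\le\phi(\tau^{k}R)$, while $\tau^{k+1}<r/R$ gives $\tau^{k\beta}<\tau^{-\beta}(r/R)^{\beta}$, and combining the last two displays yields
\[
\phi(r)\le C_1\tau^{-\beta}\Bigl(\tfrac{r}{R}\Bigr)^{\beta}\bigl[\phi(R)+B\,R^{\beta}\bigr]
=C\Bigl[\frac{\phi(R)}{R^{\beta}}+B\Bigr]r^{\beta},
\]
with $C=C_1\tau^{-\beta}=C(A,\alpha,\beta)$, which is exactly the claim. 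I do not expect any genuine obstacle here: this is the standard hole-filling iteration, and the only points requiring (routine) care are the bookkeeping of the geometric series in the second step and checking that the explicit value $\e_0=(2A)^{-2\alpha/(\alpha-\beta)}$ in the statement is precisely what makes $A\e\le\frac12\tau^{\gamma}$ hold for the chosen $\gamma=\frac{\alpha+\beta}{2}$ and $\tau=(2A)^{-2/(\alpha-\beta)}$.
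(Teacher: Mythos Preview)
Your argument is correct and is precisely the standard Campanato--Giaquinta iteration; the paper does not give its own proof but simply refers to \cite[Lemma~5.13]{GM12}, whose proof is exactly the one you wrote down (choice of $\gamma=\tfrac{\alpha+\beta}{2}$, $\tau=(2A)^{-2/(\alpha-\beta)}$, geometric iteration at scales $\tau^kR$, then interpolation via monotonicity). Your verification that the explicit threshold $\e_0=(2A)^{-2\alpha/(\alpha-\beta)}$ is exactly what makes $A\e_0=\tfrac12\tau^{\gamma}$ is also correct.
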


The next and final iterative lemma is useful for handling functions that are not necessarily monotone. The proof can be found in \cite[Lemma~7.3, p.~229]{G03}.

\begin{lemma}\label{lem:ultiter}
Let \(\varphi(t)\) be a positive function, and assume that there exist a constant \(q>0\) and a number \(\tau\in(0,1)\)  such that for every \(R \le R_0\)
\begin{equation*}
\varphi(\tau R) \le \tau^{\delta}\varphi(R) + B R^{\beta},
\end{equation*}
with \(0<\beta<\delta\), and
\begin{equation*}
\varphi(t) \le q\,\varphi(\tau^{k}R)
\end{equation*}
for every \(t\) in the interval \((\tau^{k+1}R,\tau^{k}R)\). Then, for every \(0<r < R \le R_0\), we have
\begin{equation*}
\varphi(r)
\leq
C\left\{
\left(\frac{r}{R}\right)^{\beta}\varphi(R)
+ B\,r^{\beta}
\right\},
\end{equation*}
where \(C\) is a constant depending only on \(q\), \(\tau\), \(\delta\), and \(\beta\).
\end{lemma}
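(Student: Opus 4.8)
The plan is to iterate the first hypothesis along the geometric sequence of radii $R_k := \tau^k R$, $k\ge 0$, obtaining a decay estimate at these discrete scales, and then to invoke the second hypothesis (the $q$-comparison) to transfer that decay to an arbitrary $r\in(0,R]$. Non-monotonicity of $\varphi$ plays no role in the iteration step but is precisely what forces the use of the $q$-bound in the transfer step.

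First I would prove by induction on $k$ that
\[
\varphi(R_k)\le \tau^{k\delta}\,\varphi(R)+B\,R^\beta\sum_{j=0}^{k-1}\tau^{(k-1-j)\delta+j\beta}.
\]
The base case $k=1$ is the hypothesis itself, and for the inductive step one writes $\varphi(R_{k+1})=\varphi(\tau R_k)\le \tau^\delta\varphi(R_k)+B\tau^{k\beta}R^\beta$ and substitutes the inductive bound, re-indexing the sum. Since $\delta>\beta$, the substitution $m=k-1-j$ and summation of the geometric series give
\[
\sum_{j=0}^{k-1}\tau^{(k-1-j)\delta+j\beta}=\tau^{(k-1)\beta}\sum_{m=0}^{k-1}\tau^{m(\delta-\beta)}\le\frac{\tau^{(k-1)\beta}}{1-\tau^{\delta-\beta}},
\]
so that, using also $\tau^{k\delta}\le\tau^{k\beta}$,
\[
\varphi(R_k)\le\tau^{k\beta}\varphi(R)+\frac{B}{\tau^\beta(1-\tau^{\delta-\beta})}\,R_k^\beta=\Big(\frac{R_k}{R}\Big)^\beta\varphi(R)+C_1\,B\,R_k^\beta,
\]
with $C_1:=\tau^{-\beta}(1-\tau^{\delta-\beta})^{-1}$ depending only on $\tau,\delta,\beta$.

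Next, given $0<r\le R\le R_0$, I would choose the unique integer $k\ge 0$ with $R_{k+1}<r\le R_k$, i.e.\ $\tau^{k+1}R<r\le\tau^k R$ (the endpoint $r=R$ being the case $k=0$). The second hypothesis gives $\varphi(r)\le q\,\varphi(R_k)$, while $R_k=\tau^{-1}R_{k+1}<\tau^{-1}r$ yields both $R_k^\beta<\tau^{-\beta}r^\beta$ and $(R_k/R)^\beta<\tau^{-\beta}(r/R)^\beta$. Combining these with the previous display,
\[
\varphi(r)\le q\,\tau^{-\beta}\Big[\Big(\frac{r}{R}\Big)^\beta\varphi(R)+C_1\,B\,r^\beta\Big]\le C\Big\{\Big(\frac{r}{R}\Big)^\beta\varphi(R)+B\,r^\beta\Big\},
\]
with $C:=q\,\tau^{-\beta}\max\{1,C_1\}$ depending only on $q,\tau,\delta,\beta$, which is the asserted estimate.

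This argument is essentially mechanical, so I do not anticipate a genuine obstacle; the only points requiring a little care are the bookkeeping of the double index when iterating, and the observation that the assumption $\delta>\beta$ is exactly what makes the forcing-term series converge and be controlled by $R_k^\beta$. The non-monotonicity hypothesis serves only to bridge the gap between consecutive dyadic radii via $\varphi(r)\le q\,\varphi(R_k)$, and introduces no additional difficulty.
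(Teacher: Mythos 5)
Your proof is correct and is essentially the standard iteration argument that the paper cites from Giusti \cite[Lemma~7.3]{G03}: iterate the decay inequality along $R_k=\tau^kR$, sum the geometric forcing terms using $\delta>\beta$, and bridge to arbitrary $r$ via the $q$-comparison. The only cosmetic point is that at the endpoint $r=\tau^kR$ the comparison hypothesis (stated on the open interval) is not needed since the discrete estimate applies directly, and one should take $\max\{q,1\}$ in the final constant in case $q<1$.
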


The next result is De Giorgi's hypergeometric lemma (see \cite[Lemma 7.1, pp. 220]{G03}).
\begin{lemma}\label{lem:hyp}
    Let $\{Z_m\}$, $m=0,1,2,\dots$, be a sequence of positive numbers satisfying the recursive inequality
    \begin{equation*}
        Z_{m+1}\leq C_0\,b^{m}\,Z_m^{1+\alpha}\,,
    \end{equation*}
    where $C_0,b>1$ and $\alpha>0$ are given numbers. If $Z_0\leq C_0^{-1/\alpha}b^{-1/\alpha^2}$, then
        \begin{equation*}
        \lim_{m\to \infty} Z_m=0\,.
        \end{equation*}
\end{lemma}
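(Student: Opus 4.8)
The plan is to establish, by induction on $m$, the quantitative geometric decay
\[
Z_m \le Z_0\, b^{-m/\alpha}, \qquad m = 0,1,2,\dots,
\]
from which the desired conclusion $\lim_{m\to\infty} Z_m = 0$ is immediate, since $b>1$ forces $b^{-m/\alpha}\to 0$ as $m\to\infty$.

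The base case $m=0$ holds trivially. For the inductive step, assume $Z_m \le Z_0\, b^{-m/\alpha}$ and insert this bound into the recursive inequality, keeping track of the exponent of $b$:
\[
Z_{m+1} \;\le\; C_0\, b^m\, Z_m^{1+\alpha} \;\le\; C_0\, b^m\, \bigl(Z_0\, b^{-m/\alpha}\bigr)^{1+\alpha} \;=\; C_0\, Z_0^{1+\alpha}\, b^{\,m - m(1+\alpha)/\alpha} \;=\; C_0\, Z_0^{1+\alpha}\, b^{-m/\alpha},
\]
where I used the elementary identity $m - m(1+\alpha)/\alpha = -m/\alpha$. It then remains to verify that $C_0\, Z_0^{1+\alpha}\, b^{-m/\alpha} \le Z_0\, b^{-(m+1)/\alpha}$; dividing both sides by the positive quantity $Z_0\, b^{-m/\alpha}$, this is equivalent to $C_0\, Z_0^{\alpha} \le b^{-1/\alpha}$, i.e. to $Z_0 \le C_0^{-1/\alpha} b^{-1/\alpha^2}$, which is precisely the smallness assumption on $Z_0$. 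This closes the induction and completes the proof.

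I do not expect any genuine obstacle here: the argument is entirely elementary, and the only mildly non-obvious point is the choice of the ansatz $Z_m \le Z_0\, b^{-m/\alpha}$ on which to run the induction — the exponent $-m/\alpha$ being exactly the one for which the factor $b^m$ in the recursion is absorbed at each step, and the threshold on $Z_0$ being exactly what makes the inductive step self-sustaining. As an alternative, one may pass to logarithms by setting $y_m = \log Z_m$, turning the recursion into the linear inequality $y_{m+1} \le m\log b + (1+\alpha)\,y_m + \log C_0$, and solve it explicitly; but the direct induction above is shorter and cleaner.
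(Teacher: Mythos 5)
Your proof is correct, and it is exactly the standard argument for De Giorgi's hypergeometric lemma: the paper itself gives no proof but cites \cite[Lemma 7.1]{G03}, whose proof is precisely the induction $Z_m\le Z_0\,b^{-m/\alpha}$ that you carry out. The computation of the exponent and the verification that the smallness assumption on $Z_0$ closes the inductive step are both accurate.
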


Next, for a given function $u \in W^{1,1}(B_\varrho)$ and $\kappa\in \R$, we define the super-level set
\begin{equation}\label{def:Akr}
    A(\kappa,\varrho) \coloneqq 
    \bigl\{x \in B_\varrho : u(x) > \kappa \bigr\}.
\end{equation}
If instead $u \in W^{1,1}(B^+_\varrho)$, the same definition applies 
with $B_\varrho$ replaced by $B^+_\varrho$.

We have the following lemma, which is a slight modification of a lemma by De Giorgi \cite{DG56}-- see also \cite[Lemma 3.5, Chapter 3]{LU68}. It is often referred to as \emph{the discrete isoperimetric inequality}.

\begin{lemma}\label{lemma:levels}
For any function $u \in W^{1,1}(B_\varrho)$ and for all $\kappa < \ell$, one has
\begin{equation}\label{levels}
    (\ell-\kappa)\,|A(\ell,\varrho)|^{1-\frac{1}{n}}
    \leq c(n)\,
    \frac{|B_\varrho|}{\,|B_\varrho \setminus A(\kappa,\varrho)|}\,
    \int_{A(\kappa,\varrho)\setminus A(\ell,\varrho)} |Du|\,dx .
\end{equation}
The same estimate holds if $u\in W^{1,1}(B^+_\varrho(x_0))$, upon replacing 
$B_\varrho(x_0)$ with $B^+_\varrho(x_0)$ in \eqref{levels}.
\end{lemma}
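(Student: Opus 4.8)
The plan is to follow De Giorgi's original idea of combining the isoperimetric (or co-area) inequality with a Poincar\'e--Sobolev-type estimate applied to a suitably truncated function. First I would introduce the truncation
\[
w(x) := \min\bigl\{(u(x)-\kappa)_+,\ \ell-\kappa\bigr\},
\]
which belongs to $W^{1,1}(B_\varrho)$ (since $u\in W^{1,1}(B_\varrho)$ and $\ell>\kappa$), satisfies $0\le w\le \ell-\kappa$, equals $\ell-\kappa$ precisely on $A(\ell,\varrho)$, vanishes on $B_\varrho\setminus A(\kappa,\varrho)$, and has $Dw = Du\,\chi_{A(\kappa,\varrho)\setminus A(\ell,\varrho)}$ almost everywhere. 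The key point of choosing $w$ this way is that its gradient is supported exactly on the annular level set $A(\kappa,\varrho)\setminus A(\ell,\varrho)$, which is the region appearing on the right-hand side of \eqref{levels}.

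Next I would invoke the Sobolev--Poincar\'e inequality on the ball $B_\varrho$ in the borderline exponent $1^\ast = \frac{n}{n-1}$: for $w\in W^{1,1}(B_\varrho)$,
\[
\Bigl(\mint_{B_\varrho}|w-(w)_{B_\varrho}|^{\frac{n}{n-1}}\,dx\Bigr)^{\frac{n-1}{n}}
\le c(n)\,\frac{1}{|B_\varrho|^{\frac{n-1}{n}}}\,\frac{|B_\varrho|}{|B_\varrho|}\int_{B_\varrho}|Dw|\,dx,
\]
or, more conveniently, the version with the mean replaced by a lower bound on the measure of the set where $w$ vanishes: since $w\equiv 0$ on $B_\varrho\setminus A(\kappa,\varrho)$, a set of measure $|B_\varrho\setminus A(\kappa,\varrho)|>0$, the Poincar\'e inequality with zero boundary-type data on a large subset gives
\[
\Bigl(\int_{B_\varrho}|w|^{\frac{n}{n-1}}\,dx\Bigr)^{\frac{n-1}{n}}
\le c(n)\,\frac{|B_\varrho|}{|B_\varrho\setminus A(\kappa,\varrho)|}\,\int_{B_\varrho}|Dw|\,dx.
\]
(This sharpened form — which is exactly what produces the factor $|B_\varrho|/|B_\varrho\setminus A(\kappa,\varrho)|$ — follows from the standard relative isoperimetric inequality, or equivalently from applying the usual Sobolev inequality to $w$ after subtracting its average and estimating $|(w)_{B_\varrho}|\le \frac{1}{|B_\varrho|}\int w \le \frac{\ell-\kappa}{|B_\varrho|}|A(\kappa,\varrho)|$ and comparing with $|B_\varrho\setminus A(\kappa,\varrho)|$; I would cite De Giorgi \cite{DG56} or \cite[Lemma 3.5, Chapter 3]{LU68} for it.) Then I would bound the left-hand side from below by restricting the integral to $A(\ell,\varrho)$, where $w=\ell-\kappa$:
\[
\Bigl(\int_{B_\varrho}|w|^{\frac{n}{n-1}}\,dx\Bigr)^{\frac{n-1}{n}}
\ \ge\ (\ell-\kappa)\,|A(\ell,\varrho)|^{\frac{n-1}{n}}
\ =\ (\ell-\kappa)\,|A(\ell,\varrho)|^{1-\frac1n},
\]
while on the right-hand side $\int_{B_\varrho}|Dw| = \int_{A(\kappa,\varrho)\setminus A(\ell,\varrho)}|Du|\,dx$. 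Combining these two estimates yields \eqref{levels} immediately.

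For the half-ball version I would repeat the argument verbatim on $B^+_\varrho(x_0)$, using the fact that the Sobolev--Poincar\'e inequality with the same structure holds on half-balls (which are Lipschitz domains with uniformly controlled characteristic, and in fact one may extend $w$ by reflection or simply quote the relative isoperimetric inequality on convex sets), so the constant $c(n)$ is unchanged. I do not expect a genuine obstacle here; the only mildly delicate point is being careful that the Poincar\'e-type inequality is used in the precise form that keeps track of the ratio $|B_\varrho|/|B_\varrho\setminus A(\kappa,\varrho)|$ rather than the weaker version with the average subtracted — this is what makes the lemma effective in the De Giorgi iteration — and that all the set-theoretic identities for $w$, $Dw$, and the level sets are justified using the standard chain rule for Sobolev truncations (e.g. Stampacchia's lemma).
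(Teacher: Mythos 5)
Your proof is correct, and it reaches \eqref{levels} by a slightly different route than the paper. The truncation $w=\min\{(u-\kappa)_+,\ell-\kappa\}$ is exactly the auxiliary function the paper uses, and the set-theoretic identities for $w$, $Dw$ and the level sets are the same. Where you diverge is in the Poincar\'e-type step: the paper first proves a self-contained Lemma~\ref{lemma:poin} on arbitrary bounded convex sets, obtained by writing $v(x)-v(z)$ as an integral along the segment $[z,x]$ for $z$ in the zero set, averaging over that set, and estimating the resulting Riesz potential $\int |Dv(y)|\,|x-y|^{1-N}dy$; this yields an $L^1$ bound on $\int_A|v|$ with the factor $|A|^{1/N}/|A_0|$, from which \eqref{levels} follows at once. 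You instead use the borderline Sobolev--Poincar\'e embedding $W^{1,1}\hookrightarrow L^{n/(n-1)}$ together with the fact that $w$ vanishes on $B_\varrho\setminus A(\kappa,\varrho)$; the sharpened constant $|B_\varrho|/|B_\varrho\setminus A(\kappa,\varrho)|$ then comes from bounding $|(w)_{B_\varrho}|$ by integrating $|w-(w)_{B_\varrho}|^{n/(n-1)}$ over the zero set (the precise mechanism is that step, rather than the cruder bound $|(w)_{B_\varrho}|\le(\ell-\kappa)|A(\kappa,\varrho)|/|B_\varrho|$ you mention in passing, which by itself does not close). Both arguments are standard and give the same dimensional constant; the paper's potential-theoretic lemma has the advantage of applying verbatim to any convex set (hence to $B_\varrho^+$ with no reflection or extension needed), while your route is shorter if one is willing to quote the scale-invariant $L^1$--$L^{n/(n-1)}$ Poincar\'e inequality on balls and half-balls. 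One caution: citing \cite[Lemma 3.5, Chapter 3]{LU68} for the sharpened Sobolev inequality would be essentially circular, since that lemma is the statement being proved; rely instead on the derivation via the mean-value version of Sobolev--Poincar\'e, which you correctly indicate.
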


Lemma \ref{lemma:levels} is an easy consequence of the following Poincar\'e type inequality. A similar inequality can be found in \cite[Chapter 1, Proposition 2.1]{DB94}, \cite[Proposition 5.2]{DB10}.

\begin{lemma}\label{lemma:poin}
    Let  $\mathcal E$  be a bounded open convex subset of $\R^N$, $2\leq N\in \N$, let $v\in W^{1,1}(\mathcal{E})$ be an arbitrary function, and suppose that the set
    \begin{equation*}
        A_0=\bigl\{x\in \mathcal E:v(x)=0 \bigr\}
    \end{equation*}
    has positive measure. Then, for any measurable set $A\subset \mathcal{E}$, the following inequality is valid:
    \begin{equation}\label{eq:levels}
        \int_A |v|\,dx\leq c(N)\,\frac{\big(\mathrm{diam}(\mathcal{E})\big)^N}{|A_0|}\,|A|^{\frac{1}{N}}\,\int_\mathcal{E}|Dv|\,dx\,.
    \end{equation}
\end{lemma}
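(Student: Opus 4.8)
The plan is to obtain \eqref{eq:levels} from a pointwise potential representation adapted to convex sets, combined with a uniform bound on a Riesz potential over the set $A$.

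First I would establish, for every $v\in C^1(\mathcal E)\cap W^{1,1}(\mathcal E)$ and every measurable $S\subset \mathcal E$ with $|S|>0$, the estimate
\[
|v(x)-v_S|\leq \frac{(\mathrm{diam}\,\mathcal E)^N}{N\,|S|}\int_{\mathcal E}\frac{|Dv(z)|}{|z-x|^{N-1}}\,dz\qquad\text{for all }x\in\mathcal E,
\]
where $v_S:=\frac1{|S|}\int_S v\,dy$. Since $\mathcal E$ is convex, for $x,y\in\mathcal E$ the segment $[x,y]$ lies in $\mathcal E$, so with $\omega=(y-x)/|y-x|$ one has $v(x)-v(y)=-\int_0^{|y-x|}Dv(x+r\omega)\cdot\omega\,dr$. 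Averaging this identity over $y\in S$, passing to polar coordinates centred at $x$ (each chord length being bounded by $d:=\mathrm{diam}\,\mathcal E$), then interchanging the order of integration in the region $0\le r\le\rho\le d$ so as to integrate in $\rho$ first — which produces the factor $\int_r^d\rho^{N-1}\,d\rho\le d^N/N$ — and finally converting back from polar to Cartesian coordinates via $dr\,d\omega=|z-x|^{1-N}\,dz$, yields the displayed inequality.

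Next I would integrate this inequality over $A$ and apply Fubini, reducing matters to bounding $\int_A|z-x|^{1-N}\,dx$ uniformly in $z$. Since $x\mapsto|z-x|^{1-N}$ is radially decreasing about $z$, a rearrangement argument (the bathtub principle) shows this integral is largest when $A$ is the ball about $z$ of the same measure, and a direct computation on that ball gives $\int_A|z-x|^{1-N}\,dx\le c(N)\,|A|^{1/N}$. Combining, one obtains \eqref{eq:levels} for smooth $v$; the general case $v\in W^{1,1}(\mathcal E)$ then follows by Meyers--Serrin approximation, since if $v_k\in C^\infty(\mathcal E)$ with $v_k\to v$ in $W^{1,1}(\mathcal E)$ then $v_{k,S}\to v_S$, $\int_A|v_k-v_{k,S}|\to\int_A|v-v_S|$ and $\int_{\mathcal E}|Dv_k|\to\int_{\mathcal E}|Dv|$. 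Taking $S=A_0=\{v=0\}$, so that $v_S=0$, finally gives the claim, using $|A_0|>0$.

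The computations are elementary throughout; the points that need a little care are the two Fubini interchanges in the first step — both legitimate by nonnegativity of the integrands — and the rearrangement estimate for the Riesz potential $\int_A|z-x|^{1-N}\,dx$, which is precisely what produces the exponent $1/N$ on $|A|$ in \eqref{eq:levels}.
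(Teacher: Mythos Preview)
Your proof is correct and follows essentially the same strategy as the paper: a potential representation via line integrals in the convex set, followed by a uniform bound on $\int_A |x-y|^{1-N}\,dx$. The execution differs only in minor details---you pass through the general average $v_S$ and approximate by smooth functions, whereas the paper uses $v(z)=0$ for $z\in A_0$ directly on a.e.\ lines; and you bound the Riesz potential by symmetric rearrangement, whereas the paper splits the integral at radius $\delta=|A|^{1/N}$---but these are equivalent routes to the same estimate.
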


\begin{proof}
   We may assume that $|A|>0$, for otherwise there is nothing to prove.
   For almost every $x\in \Omega$ and $z\in A_0$, we have
    \begin{equation*}
    \begin{split}
        |v(x)|=|v(x)-v(z)| & =\Bigg| \int_0^{|x-z|}\frac{\partial }{\partial r} v\Big( z+\frac{x-z}{|x-z|}r\Big)\,dr\Bigg|
        \\
        &\leq \int_0^{|x-z|}\left|Dv\left(x+\frac{x-z}{|x-z|}r\right)\right|dr
        \end{split}
    \end{equation*}
We integrate the above identity in $z\in A_0$, and use the polar coordinates with pole at $x$ and radial variable $\vrho=|x-z|$. We also let $\omega=\frac{x-z}{|x-z|}$ be the angular variable, and denote by $\mathcal{R}(\omega)$ the polar representation of $\partial \mathcal{E}$ with pole at $x$. We thus get
\begin{equation*}
    \begin{split}
        |A_0|\,|v(x)|&\leq\int_{A_0}\int_0^{|x-z|}\left|Dv\left(x+\frac{x-z}{|x-z|}r\right)\right|dr\,dz
        \\
        &\leq\int_{\mathbb{S}^{N-1}}\int_0^{\mathcal{R}(\omega)}\vrho^{N-1}\left(\int_0^{\vrho}|Dv(x+r\omega)|\,dr\right)\,d\vrho\,d\omega
        \\
        &\leq \int_{\mathbb{S}^{N-1}}\int_0^{\mathcal{R}(\omega)}\vrho^{N-1}\left(\int_0^{\mathcal{R}(\omega)}|Dv(x+r\omega)|\,dr\right)\,d\vrho\,d\omega
        \\
        &\leq \left(\int_0^{\mathrm{diam}(\mathcal{E})}\vrho^{N-1}\,d\vrho \right)\,\int_{\mathbb{S}^{N-1}}\int_0^{\mathcal{R}(\omega)}|Dv(x+r\omega)|\,dr\,d\omega
        \\
        &=\frac{\big(\mathrm{diam}(\mathcal{E})\big)^N}{N}\,\int_\Omega \frac{|Dv(y)|}{|x-y|^{N-1}}dy
    \end{split}
\end{equation*}
where $\mathbb{S}^{N-1}=\{x\in \R^N: |x|=1\}$ is the unit sphere of $\R^N$. 

We integrate both sides of this inequality over the set $A$, and applying Fubini-Tonelli theorem, for any $\delta>0$ we get
\begin{equation*}
    \begin{split}
        |A_0|&\,\int_{A}|v(x)|\,dx\leq \frac{\big(\mathrm{diam}(\mathcal{E})\big)^N}{N}\,\int_A\int_\mathcal{E} \frac{|Dv(y)|}{|x-y|^{N-1}}dy\,dx
        \\
        &=\frac{\big(\mathrm{diam}(\mathcal{E})\big)^N}{N}\int_\mathcal{E} |Dv(y)|\,\bigg\{\int_{A\cap\big\{|x-y|\geq \delta\big\}}\frac{dx}{|x-y|^{N-1}}+ \int_{A\cap\big\{|x-y|< \delta\big\}}\frac{dx}{|x-y|^{N-1}}\bigg\}\,dy
        \\
        &\leq \frac{\big(\mathrm{diam}(\mathcal{E})\big)^N}{N}\int_\mathcal{E} |Dv(y)|\,dy\,\Big[\delta^{1-N}|A|+\mathcal{H}^{N-1}(\mathbb{S}^{N-1})\,\delta \Big]\,.
    \end{split}
\end{equation*}
By choosing $\delta=|A|^{\frac{1}{N}}$, Equation \eqref{eq:levels} follows.
\end{proof}

\begin{proof}[Proof of Lemma \ref{lemma:levels}]
    It suffices to apply Lemma \ref{lemma:poin} with dimension $N=n$, with either $\mathcal{E}= B_\vrho$ or $\mathcal{E}=B_\vrho^+$, with  function 
    \begin{equation*}
         v(x)=\begin{cases}
            \min\{u(x),\ell\}-\kappa\quad & u(x)>\kappa
            \\
            0\quad & u(x)\leq \kappa\,,
        \end{cases}
    \end{equation*}
  and set $A=A(\ell,\vrho)$, so that
\begin{equation*}
\begin{split}
   \int_A v\,dx=(\ell-\kappa)|A(\ell&,\vrho)|\,,\quad \int_\mathcal{E} |Dv|\,dx=\int_{A(\kappa,\vrho)\setminus A(\ell,\vrho)}|Du|\,dx
   \\
   & A_0=\mathcal{E}\setminus A(\kappa,\vrho).
   \end{split}
   \end{equation*}
\end{proof}
We shall often use Sobolev inequality on half balls $B^+_r$, $r\leq 1$:
\begin{equation}\label{half:sobol}
    \Big(\int_{B^+_r}|w|^{\kappa_p}\,dx\Big)^{1/\kappa_p}\leq C(n)\,\Big(\int_{B^+_r}|Dw|^p\,dx\Big)^{1/p},
\end{equation}
for all $w\in W^{1,p}(B^+_r)$ such that $w=0$ on $\partial B^+_r\setminus B^0_r$, where we set
\begin{equation*}
    \kappa_p=\begin{cases}
        \frac{np}{n-p}\quad &1\leq p<n
        \\
        \text{any number}>1\quad &p\geq n.
    \end{cases}
\end{equation*}
To prove it, it suffices to apply Sobolev inequality to the even extension
\begin{equation}\label{def:even}
    w^{e}(x',x_n) =
    \begin{cases}
        w(x',x_n), & x_n > 0,\\[4pt]
        w(x',-x_n), & x_n \leq 0,
    \end{cases}
\end{equation}
since this satisfies $w^e=0$ on $\partial B_r$. Similarly, one can prove Poincar\'e inequality on half balls
\begin{equation}\label{half:poincare}
    \int_{B^+_r}|w|^p\,dx\leq C(n)\,r^p\,\int_{B^+_r}|Dw|^p\,dx,
\end{equation}
for all $w\in W^{1,p}(B_r^+)$, with $w=0$ on $\partial B^+_r\setminus B^0_r$. We remark that the same inequality holds true if $w \in W^{1,p}(B_r^+)$ is such that $w=0$ on $B^0_r$.

We shall also use the following trace inequality
\begin{equation}\label{in:trace0}
    \int_{B^0_r}|w|d\mathcal{H}^{n-1}\leq C(n)\,\int_{B^+_r}|Dw|\,dx,
\end{equation}
for all $w\in W^{1,1}(B_r^+)$, with $w=0$ on $\partial B^+_r\setminus B^0_r$. where the integrand on the left-hand side has to be interpreted in the sense of traces.
\vspace{0.2cm}

We conclude this section with a simple lemma, which allows us to reduce the right-hand side $f$ in divergence form. It is a simple consequence of Calderon-Zygmund theory for the Laplacian.

\begin{lemma}\label{lemma:diver}
    Let $U\subset \R^n$ be a bounded, open set, and let $f\in L^{d}(U)$, with $d>n$. Then there exists $F\in C^{0,1-\frac{n}{d}}(\bar{U};\R^n)$ such that
    \begin{equation}\label{divFf}
        \mathrm{div} F=f\quad\text{and}\quad   \|F\|_{C^{0,1-\frac{n}{d}}(\bar U)}\leq C(n,d)\,\|f\|_{L^d(U)}.
    \end{equation}
\end{lemma}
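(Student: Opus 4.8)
The plan is to produce $F$ explicitly as the gradient of the Newtonian potential of $f$. First I would extend $f$ by zero outside $U$, so that $f\in L^d(\R^n)$ with compact support and $\|f\|_{L^d(\R^n)}=\|f\|_{L^d(U)}$; since $U$ is bounded this also gives $f\in L^1(\R^n)$. Let $\mathcal N$ be the fundamental solution of the Laplacian, normalized so that $\Delta\mathcal N=\delta_0$ in $\mathcal D'(\R^n)$, and recall that $|D\mathcal N(z)|=c_n\,|z|^{1-n}$. Set $w:=\mathcal N\ast f$, which is well defined because $\mathcal N\in L^1_{\mathrm{loc}}(\R^n)$ and $f\in L^1(\R^n)$ is compactly supported, and put $F:=Dw$. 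Then $\operatorname{div}F=\Delta w=f$ in $\mathcal D'(\R^n)$, hence in particular in $U$; so the only real work is the quantitative $C^{0,1-n/d}$ bound.

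For the regularity and the estimate I would invoke two classical facts. By Calder\'on--Zygmund theory for the Laplacian on the whole space (boundedness of second-order Riesz transforms on $L^d$, $1<d<\infty$), one has $D^2 w=DF\in L^d(\R^n)$ with $\|DF\|_{L^d(\R^n)}\le C(n,d)\,\|f\|_{L^d(\R^n)}=C(n,d)\,\|f\|_{L^d(U)}$; working globally rather than on a ball is important here, since the interior $W^{2,d}$-estimate on a bounded domain carries an extra lower-order term. Consequently $F\in W^{1,d}_{\mathrm{loc}}(\R^n)$, and since $d>n$ Morrey's embedding $W^{1,d}\hookrightarrow C^{0,1-n/d}$ gives a representative of $F$ in $C^{0,1-n/d}_{\mathrm{loc}}(\R^n)$ with $[F]_{C^{0,1-n/d}(\R^n)}\le C(n,d)\,\|DF\|_{L^d(\R^n)}\le C(n,d)\,\|f\|_{L^d(U)}$; restricting to the compact set $\bar U$ controls the H\"older seminorm. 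For the sup part I would estimate the potential directly: $|F(x)|\le c_n\int_{\R^n}|x-y|^{1-n}|f(y)|\,dy$, and since $f$ is supported in $U$, H\"older's inequality gives $|F(x)|\le c_n\,\big\||z|^{1-n}\big\|_{L^{d'}(B_R)}\,\|f\|_{L^d(U)}$ with $R=\mathrm{diam}\,U$ (because $|x-y|\le \mathrm{diam}\,U$ for $x,y\in\bar U$). An elementary polar-coordinates computation shows $\big\||z|^{1-n}\big\|_{L^{d'}(B_R)}=C(n,d)\,R^{1-n/d}$, finite precisely because $d>n$ (equivalently $(n-1)d'<n$), whence $\|F\|_{L^\infty(\bar U)}\le C(n,d)\,(\mathrm{diam}\,U)^{1-n/d}\,\|f\|_{L^d(U)}$, which under the standing normalization $\mathrm{diam}\,U\lesssim 1$ reduces to $C(n,d)\,\|f\|_{L^d(U)}$. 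Combining the two bounds yields \eqref{divFf}.

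The main obstacle is bookkeeping rather than conceptual: one has to make sure the constant depends only on $n$ and $d$. The H\"older seminorm bound is genuinely diameter-free once the global Calder\'on--Zygmund inequality is used; it is the $L^\infty$ bound that needs the diameter of $U$ to be under control, which it always is in our applications, where $U$ is a ball or half-ball of radius $\le 1$. Two minor technical points to dispatch are that $Dw=D\mathcal N\ast f$ holds pointwise and not merely distributionally --- immediate once the right-hand side is shown to be continuous, which again uses $|D\mathcal N(z)|\sim|z|^{1-n}\in L^{d'}_{\mathrm{loc}}$ for $d>n$ --- and that differentiation under the integral sign is legitimate, which follows from the same potential estimates. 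Everything else is routine.
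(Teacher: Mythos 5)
Your proof is correct and follows essentially the same route as the paper's: both solve a Poisson equation for the zero-extension of $f$ and take $F=\pm Dw$, obtaining the H\"older bound from Calder\'on--Zygmund $L^d$-estimates for $D^2w$ plus Morrey's embedding (the paper uses the Dirichlet problem on a large ball, you use the Newtonian potential on $\R^n$, which is an inessential variant). Your explicit remark that the $L^\infty$ part of the norm carries a factor $(\mathrm{diam}\,U)^{1-n/d}$ is a point the paper's proof glosses over (its $W^{2,d}$ constant on $B_R$ likewise depends on $R$), and is harmless here since the lemma is only applied with $U$ a ball or half-ball of radius at most $1$.
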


\begin{proof}
    Extend $f\equiv 0$ in $\R^n\setminus U$, and let $B_R$ be a ball so large  that $U\Subset B_R$. Consider $w\in W^{1,2}(B_{R})$ solution to
    \begin{equation*}
        \begin{cases}
            -\Delta w=f\quad &\text{in $B_R$}
            \\
            w=0\quad &\text{on $\partial B_{R}$.}
        \end{cases}
    \end{equation*}
Then by standard elliptic regularity theory we have that $w\in W^{2,d}(B_{R})$ with $\|w\|_{W^{2,d}(B_R)}\leq C(n,d)\,\|f\|_{L^d(B_R)}$. By taking $F=-\nabla w$, the thesis follows by Morrey's embeddings
\[
W^{1,d}_0(B_R)\hookrightarrow C_0^{0,1-\frac{d}{n}}\big(\overline{B}_R\big),\quad d>n.
\]
\end{proof}

\section{Boundedness of solutions}\label{sec:bounded}
In this section, we provide local $L^\infty$-bounds for solutions to \eqref{eq1}. Since we are dealing with zero order regularity, here we require weaker assumptions on the stress field $\A$. 

Specifically, we only need to assume that $\A:\overline{\Omega}\times \R^n\to \R^n$ is a continuous function, and it fulfills
\begin{equation}\label{weaker:A}
\begin{cases}
    \A(x,\xi)\cdot \xi \geq \l\,B\big(|\xi| \big)-\L\,B(K_1)\,
\\[1ex]
|\A(x,\xi)|\leq \L\,b\big(|\xi| \big)+\L\,b(K_1),
    \end{cases}
\end{equation}
for all $x\in \overline{\Omega}$, for all $\xi \in \R^n$, for some constants $0<\l\leq \L$, and $K_1>0$.

\begin{theorem}[Local boundedness of solutions]\label{thm:bdd}
  Let $B$ be a Young function fulfilling \eqref{iasa}, and let $u\in W^{1,B}(B_{R})$, be a weak solution to
    \begin{equation}\label{again:eq}
        -\diver\big(\A(x,Dw) \big)=0\quad\text{in $B_{R}$,}
    \end{equation}
with $\A(x,\xi)$ satisfying \eqref{weaker:A}. Then $u\in L^\infty_{loc}(B_{R})$, and there exists a constant $C=C(n,\l,\L,i_a,s_a)$ such that Caccioppoli inequality holds:
\begin{equation}\label{caccioppoli}
    \mint_{B_{R/2}}B\big(|Du| \big)\,dx\leq C\,\mint_{B_{R}} B\left( \frac{|u|}{R}\right)\,dx+C\,B(K_1),
\end{equation}
and the following $L^\infty$-bound is valid:
\begin{equation}\label{sol:Linf}
    \sup_{B_{R/2}}|u|\leq C\,\mint_{B_{R}}|u|\,dx+C\,K_1\,R.
\end{equation}
\end{theorem}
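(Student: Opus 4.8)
The plan is to prove the two assertions in order: the Caccioppoli inequality \eqref{caccioppoli} (together with a truncated version) and then the sup-bound \eqref{sol:Linf} via a De~Giorgi iteration. A preliminary remark removes all scaling bookkeeping: if $u$ solves \eqref{again:eq} in $B_R$, then $\tilde u(y):=u(Ry)/R$ solves $-\operatorname{div}\big(\widetilde{\A}(y,D\tilde u)\big)=0$ in $B_1$ with $\widetilde{\A}(y,\xi):=\A(Ry,\xi)$, which again satisfies \eqref{weaker:A} with the same constants $\l,\L,K_1$; moreover $\fint_{B_{1/2}}B(|D\tilde u|)=\fint_{B_{R/2}}B(|Du|)$, $\fint_{B_1}B(|\tilde u|)=\fint_{B_R}B(|u|/R)$, $\sup_{B_{1/2}}|\tilde u|=R^{-1}\sup_{B_{R/2}}|u|$ and $\fint_{B_1}|\tilde u|=R^{-1}\fint_{B_R}|u|$. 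Hence it suffices to treat the case $R=1$.

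\emph{Caccioppoli.} Fix concentric balls $B_{\rho'}\subset B_{\rho}\subset B_1$ and a cut-off $\eta\in C^\infty_c(B_\rho)$ with $\eta\equiv1$ on $B_{\rho'}$, $0\le\eta\le1$, $|D\eta|\le2/(\rho-\rho')$, and set $s_B$ as in \eqref{def:isB}. One tests the weak formulation of \eqref{again:eq} — which extends to $W^{1,B}_c(B_1)$ and is meaningful since $|\A(x,Du)|\le\L\,b(|Du|)+\L\,b(K_1)\in L^{\widetilde{B}}$ by \eqref{wBbt} — with $\varphi=\eta^{s_B}u$ (to obtain \eqref{caccioppoli}) and with $\varphi=\eta^{s_B}(u-k)_+$, $k\ge0$ (to obtain a truncated version). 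Writing $A(k,\rho)$ for the super-level set \eqref{def:Akr}, $w_k:=(u-k)_+$, expanding $D\varphi$ and using the structural bounds \eqref{weaker:A} yields
\[
\l\int_{A(k,\rho)}\eta^{s_B}B(|Du|)\,dx\le s_B\L\int_{A(k,\rho)}\eta^{s_B-1}\,w_k\,\big(b(|Du|)+b(K_1)\big)\,|D\eta|\,dx+\L\,B(K_1)\,|A(k,\rho)|.
\]
The decisive step is the term with $b(|Du|)$: I would apply Young's inequality \eqref{Young} in the form $\eta^{s_B-1}b(|Du|)\cdot t\le\delta^{s_B'}\widetilde{B}\big(\eta^{s_B-1}b(|Du|)\big)+\delta^{-s_B}B(t)$ with $t=w_k|D\eta|$, and then use \eqref{tB:usef} and \eqref{wBbt} together with the identity $(s_B-1)s_B'=s_B$ to get $\widetilde{B}\big(\eta^{s_B-1}b(|Du|)\big)\le C\,\eta^{s_B}B(|Du|)$; it is precisely this identity that forces the cut-off exponent $s_B$, so that the $\delta$-term may be absorbed on the left. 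The terms containing $b(K_1)$ and $B(K_1)$ are absorbed using \eqref{young1} and \eqref{simple}, while the $\Delta_2$-property \eqref{B2t} keeps the cut-off derivative inside $B$, so that $B(w_k|D\eta|)\le C\,B\big(w_k/(\rho-\rho')\big)$. Taking $\delta$ small and absorbing, one gets \eqref{caccioppoli} and, for all $1/2\le\rho'<\rho\le1$ and $k\ge0$, the truncated Caccioppoli inequality $\int_{A(k,\rho')}B(|Du|)\,dx\le\frac{C}{(\rho-\rho')^{s_B}}\int_{A(k,\rho)}B(w_k)\,dx+C\,B(K_1)\,|A(k,\rho)|$.

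\emph{De~Giorgi iteration.} Since $-u$ solves $-\operatorname{div}\big(\widehat{\A}(x,D(-u))\big)=0$ with $\widehat{\A}(x,\xi):=-\A(x,-\xi)$, which again fulfills \eqref{weaker:A}, it is enough to bound $\sup_{B_{1/2}}u_+$ (the bound for $u_-$ being obtained by running the same argument for $-u$). I would set $M:=C_\sharp\big(\fint_{B_1}|u|\,dx+K_1\big)$ with $C_\sharp=C_\sharp(n,\l,\L,i_a,s_a)$ large to be fixed, and choose levels $k_j=M(2-2^{-j})$ (so $k_0=M\uparrow2M$) and radii $\rho_j=\tfrac12(1+2^{-j})$ (so $\rho_0=1\downarrow\tfrac12$). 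Combining the truncated Caccioppoli inequality with $k=k_{j+1}$, $\rho=\rho_j$, $\rho'=\rho_{j+1}$; the embedding $\int_{A}|Du|^{i_B}\,dx\le C\int_{A}B(|Du|)\,dx+C|A|$ from \eqref{unif:refl} (recall $i_B=i_a+2>1$); the Sobolev inequality in $W^{1,i_B}$ (or, alternatively, the discrete isoperimetric inequality of Lemma~\ref{lemma:levels}); and the elementary measure inequality $(k_{j+1}-k_j)|A(k_{j+1},\rho_j)|\le\int_{A(k_j,\rho_j)}w_{k_j}\,dx$, i.e.\ $w_{k_j}\ge M2^{-j-1}$ on $A(k_{j+1})$, together with the $\Delta_2/\nabla_2$ comparisons \eqref{B2t} relating $B(w_{k_j})$ to powers of $w_{k_j}$ on the super-level sets — one is led to a recursion of De~Giorgi type
\[
Y_{j+1}\ \le\ C\,\mathfrak b^{\,j}\,Y_j^{\,1+\sigma},\qquad Y_j:=\frac1{|B_1|}\int_{A(k_j,\rho_j)}\big[B(w_{k_j})+B(K_1)+1\big]\,dx,
\]
for some $\mathfrak b>1$, $\sigma=\sigma(n,i_a)>0$ and $C=C(n,\l,\L,i_a,s_a)$, where the defect $B(K_1)\,|A(k_{j+1},\rho_j)|$ is controlled via the lower bound $w_{k_j}\ge M2^{-j-1}$ on $A(k_{j+1})$ and the choice $M\gtrsim K_1$, which by \eqref{B2t} makes $B(K_1)$ small relative to $B(M2^{-j-1})$ up to a factor exponential in $j$. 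By De~Giorgi's hypergeometric Lemma~\ref{lem:hyp}, $Y_j\to0$ provided $Y_0\le C^{-1/\sigma}\mathfrak b^{-1/\sigma^2}$; and by \eqref{caccioppoli}, the Chebyshev bound $|A(M,1)|\le M^{-1}\int_{B_1}u_+\,dx$, monotonicity of $B$ and \eqref{simple}, this smallness is guaranteed by taking $C_\sharp$ (hence $M$) large depending only on $n,\l,\L,i_a,s_a$. Then $Y_j\to0$ forces $|A(2M,\tfrac12)|=0$, i.e.\ $u\le2M$ a.e.\ in $B_{1/2}$; the analogous bound for $-u$ gives $u\ge-2M$, hence $\sup_{B_{1/2}}|u|\le 2M$, which is \eqref{sol:Linf} for $R=1$, and for general $R$ after undoing the scaling.

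\emph{Main obstacle.} The heart of the matter is transplanting the classical De~Giorgi machinery, designed for power growth, into the Orlicz framework: Caccioppoli produces the modular energy $\int B(|Du|)$, whereas the self-improving gain in the iteration rests on a genuine $W^{1,i_B}$-Sobolev inequality, and reconciling the two requires the two-sided comparisons $B(t)\simeq t^{i_B}$ for $t\ge1$ and $B(t)\simeq t^{s_B}$ for $t\le1$ from the $\Delta_2/\nabla_2$ conditions — which is precisely what forces the iteration functional $Y_j$ to carry a measure term next to the modular term. Superimposed on this is the inhomogeneity of \eqref{weaker:A}: its defect $\L B(K_1)$ behaves like an $L^\infty$ right-hand side, so it must be propagated through every step of the iteration and dominated by incorporating $K_1$ into the level $M$; keeping the final recursion genuinely superlinear in $Y_j$ (rather than merely linear with small coefficient) is the delicate point here. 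Finally, already the Caccioppoli step is non-obvious, because the correct cut-off exponent $s_B$ is dictated by the algebraic identity $(s_B-1)s_B'=s_B$, which is exactly what makes the powers of $\eta$ balance when Young's inequality is applied through the conjugate function $\widetilde{B}$.
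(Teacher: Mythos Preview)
Your Caccioppoli step, including the truncated version, is correct and matches the paper---the cut-off exponent $s_B$ and the absorption via $\widetilde B$ are exactly what the paper does.

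The De~Giorgi iteration as you sketch it, however, has a gap at the Sobolev step. You pass from $\int_A B(|Du|)$ to $\int_A|Du|^{i_B}$ via \eqref{unif:refl} and then apply the $W^{1,i_B}$ Sobolev inequality to $w_k\eta$, landing in $L^{i_B^*}$. But your recursion is on $Y_j\sim\int B(w_{k_j})$, and to close it you must return from $\int w^{i_B^*}$ to $\int B(w)$---which fails whenever $s_B>i_B^*=ni_B/(n-i_B)$ (take $n=3$, $i_a$ near $-1$, $s_a$ large): on $\{w\ge1\}$ one only has $B(w)\le Cw^{s_B}$, not dominated by $w^{i_B^*}$. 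Padding $Y_j$ with $B(K_1)+1$ handles the measure and defect terms but does not cure this mismatch; nor does Lemma~\ref{lemma:levels}, which again controls only $|A(k_{j+1})|$, not $\int B(w_{k_{j+1}})$. The paper resolves this by applying the $L^1\to L^{n/(n-1)}$ Sobolev inequality \emph{directly to the composite function} $B\big((u-\kappa)_+\eta\big)$: the chain rule gives $|DB(v)|=b(v)|Dv|$, and $b(w_\kappa)\,|D(w_\kappa\eta)|\le C\big[B(|Dw_\kappa|)+B\big(w_\kappa/(\tau-\sigma)\big)\big]$ by \eqref{young1}, so H\"older plus truncated Caccioppoli yield
\[
\int_{A(\kappa,\sigma)}B(w_\kappa)\,dx\le \frac{C\,|A(\kappa,\tau)|^{1/n}}{(\tau-\sigma)^{s_B}}\int_{A(\kappa,\tau)}B(w_\kappa)\,dx+C\,B(K_1)\,|A(\kappa,\tau)|^{1+1/n}
\]
without leaving the Orlicz scale. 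The levels are then taken as $\kappa_{i+1}=\kappa_i+B^{-1}\big(B(d)2^{-(i+1)s_B}\big)$ so that $B(\kappa_{i+1}-\kappa_i)$ decays geometrically.

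There is a second gap in reaching \eqref{sol:Linf} directly with $M=C_\sharp\big(\mint_{B_1}|u|\,dx+K_1\big)$: the initial $Y_0$ contains $\int_{\{u>M\}}B\big((u-M)_+\big)\,dx$, which is not controlled by $\mint_{B_1}|u|\,dx$ alone, so taking $C_\sharp$ large does not force $Y_0$ small. The paper instead first obtains $\sup_{B_\sigma}B(|u|)\le CB(K_1)+C(\tau-\sigma)^{-ns_B}\int_{B_\tau} B(|u|)\,dx$ from the iteration, and only then converts to $L^1$: write $\int B(|u|)\le C\,b(\|u\|_\infty)\int|u|$ via \eqref{Bcomeb}, split off $\tfrac12 B(\|u\|_\infty)$ by \eqref{young1}, absorb across radii with Lemma~\ref{lemma:uss}, and finally apply $B^{-1}$.
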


\begin{remark}[Scaling argument]\label{rem:scaling}
    \rm{Due to the lack of homogeneity of $B$, it is often convenient to reduce our problem to the case $R=1$ via a scaling argument. 
    Suppose $u\in W^{1,B}(\mathcal{B}_R)$, with either $\mathcal{B}_R=B_R$ or $\mathcal{B}_R=B_R^+$. Setting
    \begin{equation}\label{uR:scale}
     u_R(y)=\frac{1}{R}u(Ry)\quad\ y\in \mathcal{B}_1,\quad\text{then}\quad  Du_R(y)=Du(x),\quad x=Ry
    \end{equation}
 so that  $u_R\in W^{1,B}(\mathcal{B}_1)$ is solution to 
    \begin{equation*}
        -\mathrm{div}\big( \A_R(y,Du_R)\big)=0\quad\text{in $\mathcal{B}_1$},\quad\text{where }\quad \A_R(y,\xi)=\A(Ry,\xi).
    \end{equation*}
 In particular, if $\A(x,\xi)$ fulfills either \eqref{weaker:A} or \eqref{ass:A}, so does $\A_R$.

Also, if $u=g$ in the sense of traces on $B_R^0$, then $u_R=g_R$ on $B_1^0$, with $g_R(y)=\tfrac{1}{R}g(Ry)$.

As for the conormal boundary condition, if
\[
\mathcal{A}(x,Du)\cdot e_n + h=0 \quad \text{on } B_R^0,
\]
then the rescaled function $u_R$ satisfies
\[
\mathcal{A}_R(y,Du_R)\cdot e_n + h_R=0 \quad \text{on } B_1^0,
\]
where $h_R(y') := h(Ry')$, $y' \in B_1^0$.

 }
\end{remark}

\begin{proof}[Proof of Theorem \ref{thm:bdd}]
We prove the theorem in the case $R=1$, as the general argument follows from the scaling argument of Remark \ref{rem:scaling}.

    Let $\eta\in C^\infty_c(B_{R})$ be a cut-off function, such that $0\leq \eta\leq 1$, $\eta\equiv 1$ in $B_{1/2}$, and $|D\eta|\leq C(n)$. We test the weak formulation of \eqref{again:eq} with function $u\,\eta^{s_B}$, and get
    \begin{equation*}
        \int_{B_{1}}\A(x,Du)\cdot Du\,\eta^{s_B}\,dx+s_B\,\int_{B_{1}}\A(x,Du)\cdot D\eta\,u\,\eta^{s_B-1}\,\,dx=0
    \end{equation*}
By means of \eqref{weaker:A} and of the properties of $\eta$, we obtain
\begin{equation}\label{esempoio:cac}
      \begin{split}\int_{B_{1}}\A(x,Du)\cdot Du\,\eta^{s_B}\,dx\geq  \l\,\int_{B_{1}} &B\big( |Du|\big)\,\eta^{s_B}\,dx-\L\,B(K_1)\int_{B_1}\eta^{s_B}\,dx
\\
      \bigg|\int_{B_{1}}\A(x,Du)\cdot D\eta\,u\,\eta^{s_B-1}\,\,dx\bigg|\leq &\,\L\,\int_{B_{1}} b\big( |Du|\big)\,\eta^{s_B-1}\,|u|\,|D\eta|\,dx 
      \\
      &+\L\,b(K_1)\,\int_{B_{1}}|u|\,|D\eta|\,\eta^{s_B-1}\,dx
      \\
      \stackrel{\eqref{Young}}{\leq} &\delta\,\int_{B_{1}} \widetilde{B}\big(b(|Du|)\,\eta^{s_B-1}\big)\,dx+C_\delta\int_{B_{1}} B(|u|)\,dx
      \\
      &+C\,\int_{B_{1}}\wB\big( b(K_1)\,\eta^{s_B-1}\big)\,dx+C\,\int_{B_{1}}B\big(|u|\big)\,dx
            \\
      \stackrel{\eqref{tB:usef},\eqref{wBbt}}{\leq} &C'\delta\,\int_{B_{1}}  B\big(|Du| \big)\,\eta^{s_B}\,dx+C'_\delta\int_{B_{1}} B(|u|)\,dx
      \\
      &+C'\,B(K_1)\,\int_{B_{1}}\,\eta^{s_B}\,dx,
      \end{split}
\end{equation}
    for all $\delta\in (0,1)$, where $C,C'>0$ depend on $n,\l,\L,i_a,s_a$, and $C_{\delta}, C'_\delta>0$ on $\delta$ as well. 
Choosing $\delta=\delta(n,\l,\L,i_a,s_a)\in (0,1)$ small enough to reabsorb terms, and using the properties of $\eta$, we get the desired Caccioppoli inequality
\begin{equation*}
   \int_{B_{1/2}} B(|Du|)\,dx\leq C\,\int_{B_1}B(|u|)\,dx+C\,B(K_1)\,.
\end{equation*}

We now move onto the proof of \eqref{sol:Linf}. To this end, we need to obtain a Caccioppoli inequality for $(u-\kappa)_+$. So, for $0<r\leq 1$ and $\kappa \geq 0$, we set
\begin{equation*}
    A(\kappa,r)=\{x\in B_r:\,u(x)>\kappa\}.
\end{equation*}
Let us consider $0<\sigma<\tau\leq 1$, and a cut-off function $0\leq\phi\leq 1$ such that
\begin{equation}\label{proprieta:phi}
    \phi\equiv 1\quad\text{in $B_{\sigma}$},\quad \phi\in C^\infty_c\big( B_{\tau}\big),\quad |D\phi|\leq C(n)/(\tau-\sigma)\,.
\end{equation}
Testing \eqref{again:eq} with $(u-\kappa)_+\,\phi^{s_B}$, we get
\begin{equation*}
    \int_{B_{1}}\A(x,Du)\cdot D(u-\kappa)_+\,\phi^{s_B}\,dx+s_B\,\int_{B_{1}}\A(x,Du)\cdot D\phi\,(u-\kappa)_+\,\phi^{s_B-1}\,dx=0.
\end{equation*}
First notice that all the integrals are evaluated in $A(\kappa,1)$, and in such a set $Du=D(u-\kappa)_+$. So, from \eqref{weaker:A} and \eqref{proprieta:phi}, we get
\begin{equation}\label{esempio:cac0}
\begin{split}
\int_{B_{1}}\A(x,Du)\cdot D(u-\kappa)_+\,\phi^{s_B}\,dx \geq &  \,\l\,\int_{B_{1}} B\big( |D(u-\kappa)_+|\big)\,\phi^{s_B}\,dx-\L\,B(K_1)\int_{A(\kappa,1)}\phi^{s_B}\,dx
\\
\geq &\, \l\,\int_{B_{1}} B\big( |D(u-\kappa)_+|\big)\,\phi^{s_B}\,dx-\L\,B(K_1)\big|A(\kappa,\tau)|
\end{split}
\end{equation}
and by \eqref{Young}, \eqref{proprieta:phi} and the monotonicity of $B$, we have
\begin{equation}\label{esempio:cac1}
    \begin{split}
\bigg|\int_{B_{1}}\A(x,Du)\cdot D\phi& \, (u-\kappa)_+ \,\phi^{s_B-1}\,\,dx\bigg| 
\\
\leq &\, \L\,\int_{B_{1}}  b\big(|D(u-\kappa)_+| \big)\,(u-\kappa)_+\,\phi^{s_B-1}|D\phi|\,dx
\\
&+\L\,b(K_1)\,\int_{A(\kappa,\tau)}|D\phi|\,(u-\kappa)_+\,\phi^{s_B-1}\,dx
\\
\stackrel{\eqref{young1}}{\leq}& \,\delta\,\int_{B_1} \wB\big(\,b(|D(u-\kappa)_+|)\,\phi^{s_B-1}\big)\,dx
\\
&+C_\delta\,\int_{B_1}B\big((u-\kappa)_+\,|D\phi| \big)\,dx+ C\,B(K_1)\,|A(\kappa,\tau)|
\\
\stackrel{\eqref{tB:usef}}{\leq} & \,\delta\,\int_{B_1} \wB\big(\,b(|D(u-\kappa)_+|)\big)\,\phi^{s_B}\,dx
\\
&+C_\delta\,\int_{A(\kappa,\tau)}B\bigg(C(n)\,\frac{(u-\kappa)_+}{(\tau-\sigma)}\bigg)\,dx+ C\,B(K_1)\,|A(\kappa,\tau)|
\\
\stackrel{\eqref{B2t},\eqref{wBbt}}{\leq} & \,C'\,\delta\,\int_{B_1} B\big(|D(u-\kappa)_+|\big)\,\phi^{s_B}\,dx
\\
&+\frac{C'_\delta}{(\tau-\sigma)^{s_B}}\,\int_{A(\kappa,\tau)}B((u-\kappa)_+)\,dx+ C\,B(K_1)\,|A(\kappa,\tau)|\,.
    \end{split}
\end{equation}

for all $\delta\in (0,1)$, with $C,C'=C,C'(n,\l,\L,i_a,s_a)$ and $C_\delta,C'_\delta$ also depending on $\delta$. Taking $\delta$ small enough depending on the data, and reabsorbing terms, we arrive at
\begin{equation}\label{cacc:level}
    \begin{split}
        \int_{A(\kappa,\sigma)
        } B\big( |D(u-\kappa)_+|\big)\,dx\leq \frac{C}{(\tau-\sigma)^{s_B}} &\,\int_{A(\kappa,\tau)} B\big((u-\kappa)_+\big)\,dx +C\,B(K_1)\,|A(\kappa,\tau)|,
    \end{split}
\end{equation}
for all $0<\sigma<\tau\leq 1$, with $C=C(n,\l,\L,i_a,s_a)$, where we also used \eqref{proprieta:phi}.

Now let $\eta$ be another cut-off function such that $0\leq \eta\leq 1$,
\begin{equation*}
    \eta\in C^{\infty}\big(B_{\tfrac{\tau+\sigma}{2}}\big),\quad\eta\equiv 1\text{ in $B_\sigma$,}\quad\text{and }\,\,|D\eta|\leq C(n)/(\tau-\sigma)\,.
\end{equation*}
By H\"older and Sobolev inequalities, we get
\begin{equation*}
    \begin{split}
        \int_{A(\kappa,\sigma)} B\big((u-\kappa)_+\big)\,dx\leq &\,\int_{A(\kappa,\sigma)} B\big((u-\kappa)_+\,\eta \big)\,dx
        \\
        \leq  &\,|A(\kappa,\sigma)|^{\tfrac{1}{n}}\,\left(\int_{B_1} B\big((u-\kappa)_+\,\eta \big)^{\tfrac{n}{n-1}}\,dx \right)^{\tfrac{n-1}{n}}
        \\
        \leq &\,C(n)\,|A(\kappa,\sigma)|^{\tfrac{1}{n}}\,\int_{A(\kappa,\tfrac{\tau+\sigma}{2})} \big|DB\big((u-\kappa)_+\,\eta \big)\big|\,dx.
    \end{split}
\end{equation*}
Now observe that, by the monotonicity of $b(t)$, the properties of $\eta$, \eqref{Bcomeb} and \eqref{young1}, we have
\begin{equation*}
\begin{split}
    \big|DB\big((u-\kappa)_+\,\eta \big)\big|= &\,   b\big((u-\kappa)_+\,\eta  \big)\,\big|D(u-\kappa)_+\,\eta+(u-\kappa)_+\,D\eta \big|
    \\
    \leq &\,b\big((u-\kappa)_+\big)\,|D(u-\kappa)_+|+\frac{C(n)}{(\tau-\sigma)}b\big((u-\kappa)_+\big)\,(u-\kappa)_+
    \\
    \leq &\,C\,B\big(|D(u-\kappa)_+|\big)+\frac{C}{(\tau-\sigma)}B((u-\kappa)_+),\qquad\text{in $A(\kappa,1)$.}
    \end{split}
 \end{equation*}
Connecting the two inequalities above, we arrive at
\begin{equation*}
\begin{split}
    \int_{A(\kappa,\sigma)} B\big((u-\kappa)_+\big)\,dx \leq &\,  C\,|A(\kappa,\sigma)|^{\tfrac{1}{n}}\,\int_{A(\kappa,\tfrac{\tau+\sigma}{2})}B\big(|D(u-\kappa)_+|\big)\,dx
    \\
    &+C\,\frac{|A(\kappa,\sigma)|^{\tfrac{1}{n}}}{(\tau-\sigma)^{s_B}}\int_{A(\kappa,\tfrac{\tau+\sigma}{2})}B\big((u-\kappa)_+\big)\,dx,
    \end{split}
\end{equation*}
and coupling this inequality with \eqref{cacc:level} (with $(\tau+\sigma)/2$ in place of $\sigma$), we obtain
\begin{equation}\label{cacc:lev2}
    \int_{A(\kappa,\sigma)}B\big((u-\kappa)_+\big)\,dx\leq C\,\frac{|A(\kappa,\tau)|^{\frac{1}{n}}}{(\tau-\sigma)^{s_B}}\,\int_{A(\kappa,\tau)} B\big((u-\kappa)_+\big)\,dx+C\,
    B(K_1)\,|A(\kappa,\tau)|^{1+\frac{1}{n}},
\end{equation}
for all $0<\sigma<\tau\leq 1$, and all $\kappa>0$, with constant $C=C(n,\l,\L,i_a,s_a)$. Next, for $0<h<\kappa$, we have
\begin{equation*}
    |A(\kappa,\tau)|\,B(\kappa-h)\leq \int_{A(\kappa,\tau)} B(u-h)\,dx\leq \int_{A(h,\tau)} B\big((u-h)_+\big)\,dx.
\end{equation*}
Using this information with \eqref{cacc:lev2}, and majorizing $B(K_1)\leq B(\kappa)$ for $K_1<h<\kappa$, we deduce
\begin{equation}\label{cacc:lev3}
\begin{split}
    \int_{A(\kappa,\sigma)} &B\big((u-\kappa)_+\big)\,dx
    \\
    &\leq \frac{C}{B(\kappa-h)^{\frac{1}{n}}}\left(\int_{A(h,\tau)}B\big((u-h)_+ \big) \right)^{1+\frac{1}{n}}\bigg[\frac{1}{(\tau-\sigma)^{s_B}}+\frac{B(\kappa)}{B(\kappa-h)} \bigg]\,,
    \end{split}
\end{equation}
for all $0<\sigma<\tau\leq 1$, and for all $K_1<h<\kappa$.
Now fix $0<\sigma_0<\tau_0\leq 1$, and a constant $d\geq K_1$ to be determined. For $i\geq 0$, set
\begin{equation*}
    \kappa_0=d,\quad \kappa_{i+1}=\kappa_i+B^{-1}\left(\frac{B(d)}{2^{(i+1)s_B}} \right),\quad \sigma_i=\sigma_0+\frac{\tau_0-\sigma_0}{2^i}\,,
\end{equation*}
and
\begin{equation*}
    \Phi_{i}=\frac{1}{B(d)}\int_{A(\kappa_i,\sigma_i)} B\big( (u-\kappa_i)_+\big)\,dx.
\end{equation*}
Observe that $ \sigma_i-\sigma_{i+1}=\frac{\tau_0-\sigma_0}{2^{i+1}}$, and since by \eqref{B2t} there holds $B(d)/2^{(i+1)s_B}\leq B(d/2^{i+1}) $, we have
\begin{equation*}
\kappa_i=\kappa_0+\sum_{j=1}^{i} B^{-1}\bigg(\frac{B(d)}{2^{js_B}}\bigg)\leq \kappa_0+\sum_{j=1}^\infty\frac{d}{2^{j}}\leq 2d\,.
\end{equation*}
In particular, by \eqref{B2t},  $B(\kappa_i)\leq 2^{s_B}\,B(d)$ for all $i\geq 0$. Therefore, applying \eqref{cacc:lev3} with $h=\kappa_i$, $\kappa=\kappa_{i+1}$, $\sigma=\sigma_{i+1}$ and $\tau=\sigma_{i}$, we arrive at
\begin{equation*}
    \Phi_{i+1}\leq \frac{C}{(\tau_0-\sigma_0)^{s_B}}\,\big(2^{(1+\frac{1}{n})s_B}\big)^i\,\Phi_i^{1+\frac{1}{n}}\quad\text{for all $i\geq 0$.}
\end{equation*}
Therefore, by Lemma \ref{lem:hyp}, if
\begin{equation}\label{to:satisfy}
    \Phi_0=\frac{1}{B(d)}\int_{A(d,\tau_0)}B((u-d)_+)\,dx\leq C\,(\tau_0-\sigma_0)^{ns_B}\,,
\end{equation}
for some constant $C=C(n,\l,\L,i_a,s_a)$, then $\lim_{i\to \infty} \Phi_i=0$, and thus $u\leq 2d$ in $B_{\sigma_0}$. As Equation \eqref{to:satisfy} is certainly fulfilled if $d$ is such that
\[B(d)=B(K_1)+\frac{C}{(\tau_0-\sigma_0)^{ns_B}}\,\int_{B_{\tau_0}}B(|u|)\,dx,\]
we have thus proven
\begin{equation*}
    \sup_{B_{\sigma_0}} B(u_+)\leq C\,B(K_1)+\frac{C}{(\tau_0-\sigma_0)^{ns_B}}\,\int_{B_{\tau_0}}B(|u|)\,dx\,. 
\end{equation*}
Since $-u$ is solution to $-\mathrm{div}\big(\bar{\A}(x,Dv) \big)=f$ in $B_1$, with $\bar{\A}(x,\xi)=\A(x,-\xi)$ still satisfying \eqref{weaker:A}, we deduce that the same estimate holds for $u_-$, hence
\begin{equation}\label{firs:Linf}
    \sup_{B_{\sigma_0}} B(|u|)\leq C\,B(K_1)+\frac{C}{(\tau_0-\sigma_0)^{ns_B}}\,\int_{B_{\tau_0}}B(|u|)\,dx,
\end{equation}
for all $0<\sigma_0<\tau_0\leq 1$, with $C=C(n,\l,\L,i_a,s_a)$.
We now convert this inequality to one involving the $L^1$-norm. 

By  \eqref{Bcomeb}, Young's inequality \eqref{young1} and \eqref{B2t}, we deduce
\begin{equation*}
\begin{split}
    \frac{C}{(\tau_0-\sigma_0)^{ns_B}}\,\int_{B_{\tau_0}}B(|u|)\,dx & \leq \frac{C'}{(\tau_0-\sigma_0)^{ns_B}}\,b\big( \| u\|_{L^\infty(B_{\tau_0})}\big)\,\int_{B_{\tau_0}}|u|\,dx
    \\
    &\leq \frac{1}{2}B(\|u\|_{L^\infty(B_{\tau_0})})+C''\,B\left(\frac{1}{(\tau_0-\sigma_0)^{ns_B}}\int_{B_{\tau_0}}|u|\,dx  \right)
    \\
    &\leq \frac{1}{2}B(\|u\|_{L^\infty(B_{\tau_0})})+\frac{C'''}{(\tau_0-\sigma_0)^{ns_B^2}}\,B\left(\int_{B_{1}}|u|\,dx  \right)\,.
    \end{split}
\end{equation*}
 Setting $Z(t)=B(\|u\|_{L^\infty(B_t)})$, from \eqref{firs:Linf} and the latter inequality we have
\begin{equation*}
    Z(\sigma_0)\leq \frac{1}{2} Z(\tau_0)+\frac{C}{(\tau_0-\sigma_0)^{ns_B^2}}B\left(\int_{B_{1}}|u|\,dx  \right)+C\,B(K_1),
\end{equation*}
for all $0<\sigma_0<\tau_0\leq 1$, so that Lemma \ref{lemma:uss} entails
\begin{equation*}
    B\big(\|u\|_{L^\infty(B_{1/2})} \big)\leq C\,B\left(\int_{B_{1}}|u|\,dx  \right)+C\,B(K_1)\leq 2\,C\,B\left(\int_{B_{1}}|u|\,dx +K_1 \right),
\end{equation*}
with $C=C(n,\l,\L,i_a,s_a)\geq 1$, the last inequality due to the monotonicity of $B$. Taking $B^{-1}$ to both sides of the above inequality, and using \eqref{B2t}, we get the desired estimate \eqref{sol:Linf} in the case $R=1$. The local boundedness of $u$ in $B_R$ is then obtained by a standard covering argument.
\end{proof}

Next, we state and prove local boundedness at the boundary for solutions to Dirichlet or Neumann boundary value problems in the upper half ball.

\begin{theorem}[Local boundedness, Dirichlet problems]\label{thm:bdddir}
    Suppose $\A$ fulfills \eqref{weaker:A}, and let $u\in W^{1,B}(B_R^+)$ be a weak solution to
    \begin{equation}\label{eq:bddir}
        \begin{cases}
            -\mathrm{div}\big( \A(x,Du)\big)=0\quad &\text{in $B^+_R$}
            \\
            u=0\quad & \text{on $B^0_R$.}
        \end{cases}
    \end{equation}
    Then $ u\in L^\infty(B^+_{r})$ for all $0<r<R$, and there exists $C=C(n,\l,\L,i_a,s_a)$ such that \begin{equation}\label{caccioppoli:dir}
    \mint_{B^+_{R/2}}B\big(|Du| \big)\,dx\leq C\,\mint_{B^+_{R}} B\left( \frac{|u|}{R}\right)\,dx+C\,B(K_1),
\end{equation}
and 
\begin{equation}\label{sol:Linfdir}
    \sup_{B^+_{R/2}}|u|\leq C\,\mint_{B^+_{R}}|u|\,dx+C\,K_1\,R.
\end{equation}
\end{theorem}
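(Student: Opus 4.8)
The plan is to adapt the proof of Theorem~\ref{thm:bdd} almost verbatim, the only new ingredient being that the zero Dirichlet datum on $B^0_R$ makes all the cut-off test functions admissible. By the scaling $u_R(y)=R^{-1}u(Ry)$ of Remark~\ref{rem:scaling} --- which preserves both the condition $u=0$ on $B^0_1$ and the structure~\eqref{weaker:A} --- I would first reduce to $R=1$.

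For the Caccioppoli inequalities I would fix $0<\sigma<\tau\le1$ and a cut-off $\phi\in C^\infty_c(B_\tau)$ with $0\le\phi\le1$, $\phi\equiv1$ on $B_\sigma$ and $|D\phi|\le C(n)/(\tau-\sigma)$; crucially $\phi$ is \emph{not} required to vanish on $B^0_\tau$. For $\kappa\ge0$, since $u=0\le\kappa$ on $B^0_1$ we get $(u-\kappa)_+=0$ on $B^0_1$, so $(u-\kappa)_+\phi^{s_B}$ is an admissible test function for~\eqref{eq:bddir}. Running the computation~\eqref{esempoio:cac}--\eqref{cacc:level} word for word, with $B_1$ replaced by $B^+_1$ and $A(\kappa,r)=\{x\in B^+_r:u(x)>\kappa\}$, yields both the Caccioppoli inequality~\eqref{caccioppoli:dir} (choosing $\kappa=0$ and $\phi\equiv1$ on $B^+_{1/2}$) and the level-set version
\begin{equation*}
\int_{A(\kappa,\sigma)}B\big(|D(u-\kappa)_+|\big)\,dx\le\frac{C}{(\tau-\sigma)^{s_B}}\int_{A(\kappa,\tau)}B\big((u-\kappa)_+\big)\,dx+C\,B(K_1)\,|A(\kappa,\tau)|.
\end{equation*}

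Next I would carry out the De Giorgi iteration exactly as in Theorem~\ref{thm:bdd}. The only step to revisit is the Sobolev inequality: with a further cut-off $\eta$ supported in $B_{(\tau+\sigma)/2}$ and $\equiv1$ on $B_\sigma$, the function $B((u-\kappa)_+\eta)\in W^{1,1}(B^+_1)$ vanishes on $B^0_1$ (where $(u-\kappa)_+=0$) and, since $\mathrm{spt}\,\eta\Subset B_1$, also near $\partial B^+_1\setminus B^0_1$; hence its even reflection lies in $W^{1,1}_0(B_1)$ and the half-ball Sobolev inequality~\eqref{half:sobol} applies just as in the interior case. Following~\eqref{cacc:lev2}--\eqref{cacc:lev3} and then introducing the sequences $\kappa_i,\sigma_i$ and $\Phi_i=B(d)^{-1}\int_{A(\kappa_i,\sigma_i)}B((u-\kappa_i)_+)\,dx$ with $d\ge K_1$, one reaches the recursion $\Phi_{i+1}\le C(\tau_0-\sigma_0)^{-s_B}\big(2^{(1+1/n)s_B}\big)^i\Phi_i^{1+1/n}$, so Lemma~\ref{lem:hyp} gives $u\le2d$ in $B^+_{\sigma_0}$ once $d$ is fixed by $B(d)=B(K_1)+C(\tau_0-\sigma_0)^{-ns_B}\int_{B^+_{\tau_0}}B(|u|)\,dx$. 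Since $-u$ solves a Dirichlet problem of the same type with zero datum on $B^0_1$, the same bound holds for $u_-$, hence the half-ball analogue of~\eqref{firs:Linf}. Finally I would convert this into an $L^1$-bound exactly as in the last part of the proof of Theorem~\ref{thm:bdd}, using Young's inequality~\eqref{young1}, the $\Delta_2$-property~\eqref{B2t} and Lemma~\ref{lemma:uss} applied to $Z(t)=B(\|u\|_{L^\infty(B^+_t)})$, obtaining $B(\|u\|_{L^\infty(B^+_{1/2})})\le C\,B(\int_{B^+_1}|u|\,dx+K_1)$ and thus~\eqref{sol:Linfdir} for $R=1$; the case $0<r<R$ then follows by a standard covering argument, combining the rescaled boundary estimate near points of $B^0_R$ with the interior bound of Theorem~\ref{thm:bdd}.

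I do not expect any serious obstacle: the proof is essentially a transcription of the interior one. The two points that genuinely need checking are (i) the admissibility of the cut-off test functions, which rests on $u=0$ on $B^0_R$ together with $\kappa\ge0$ throughout the iteration, and (ii) the validity of the half-ball Sobolev inequality for the truncated functions, which holds because these vanish on the flat part $B^0_R$ and --- thanks to the cut-offs --- also near the spherical part, so that even reflection produces a function in $W^{1,1}_0(B_1)$.
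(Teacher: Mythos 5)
Your proposal is correct and follows essentially the same route as the paper, which simply observes that the test functions $u\,\eta^{s_B}$ and $(u-\kappa)_+\,\eta^{s_B}$ are admissible because $u=0$ on $B^0_R$, and then repeats the proof of Theorem~\ref{thm:bdd} on half balls using the Sobolev inequality~\eqref{half:sobol}. The extra details you supply (the even-reflection justification of~\eqref{half:sobol} for the truncated functions, and the treatment of $-u$) are exactly the ones the paper leaves to the reader.
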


\begin{proof}
We recall that, being $u$ a weak solution to \eqref{eq:bddir}, we have 
\begin{equation*}
    \int_{B_R^+}\A(x,Du)\cdot D\phi\,dx=0
\end{equation*}
    for all test functions $\phi\in W^{1,B}(B_R^+)$ such that $\phi=0$ on $\partial B_R^+$. In particular, we may take $\phi=u\,\eta^{s_B}$ or $\phi=(u-\kappa)_+\,\eta^{s_B}$ for $\eta\in C^\infty_c(B_R)$, and for all $\kappa>0$.

    Therefore, the proof is completely identical to that of Theorem \ref{thm:bdd}, save that all the integrals have to be evaluated in upper half balls $B^+_1$ in place of $B_1$, and one has to use Sobolev inequalities in half balls \eqref{half:sobol}. We leave the details to the reader.
\end{proof}

\begin{theorem}[Local boundedness, Neumann problems]\label{thm:bddneu}
    Suppose $\A$ fulfills \eqref{weaker:A}, and $h:B_{R}^0\to \R$ satisfies
    \begin{equation}\label{new:condneu}
         |h(x')|\leq H\quad\text{for all $x'\in B^0_{R_0}$.}
    \end{equation}
Let $u\in W^{1,B}(B_{R}^+)$ be a weak solution to
    \begin{equation}\label{eq:bddneu}
        \begin{cases}
            -\mathrm{div}\big( \A(x,Du)\big)=0\quad &\text{in $B^+_{R}$}
            \\
            \A(x,Du)\cdot e_n+h=0\quad & \text{on $B^0_{R}$.}
        \end{cases}
    \end{equation}
Then $ u\in L^\infty(B^+_{r})$ for all $0<r<R$, and there exists $C=C(n,\l,\L,i_a,s_a)>0$ such that
\begin{equation}\label{caccioppoli:neu}
    \mint_{B^+_{R/2}}B\big(|Du| \big)\,dx\leq C\,\mint_{B^+_{R}} B\left( \frac{|u|}{R}\right)\,dx+C\,B\big(K_1+b^{-1}(H)\big),
\end{equation}
and 
\begin{equation}\label{sol:Linfneu}
    \sup_{B^+_{R/2}}|u|\leq C\,\mint_{B^+_{R}}|u|\,dx+C\,\big(K_1+b^{-1}(H)\big)\,R.
\end{equation}
    \end{theorem}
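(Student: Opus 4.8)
The plan is to reduce to the case $R=1$ via the scaling argument of Remark~\ref{rem:scaling}, noting that if $\A$ satisfies \eqref{weaker:A} then so does $\A_R$, and the rescaled boundary datum is $h_R(y')=h(Ry')$, which still obeys \eqref{new:condneu} with the same constant $H$. Once $R=1$, the strategy mirrors the proof of Theorem~\ref{thm:bdd}, the only new feature being the boundary integral $\int_{B^0_1}h\,\varphi\,d\mathcal H^{n-1}$ appearing in the weak formulation; the key point is that this term can be absorbed exactly as the $\L\,b(K_1)$-term in \eqref{weaker:A} was, at the cost of replacing $K_1$ by $K_1+b^{-1}(H)$.

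First I would establish the Caccioppoli inequality \eqref{caccioppoli:neu}. Test the weak formulation with $\varphi=u\,\eta^{s_B}$ for a cutoff $\eta\in C^\infty_c(B_1)$ with $\eta\equiv1$ on $B_{1/2}$, $|D\eta|\le C(n)$; this is admissible since $\varphi\in W^{1,B}(B_1^+)$. The volume terms are treated verbatim as in \eqref{esempoio:cac}, yielding a lower bound $\l\int B(|Du|)\eta^{s_B}-\L B(K_1)\int\eta^{s_B}$ for the leading term and an upper bound $C'\delta\int B(|Du|)\eta^{s_B}+C'_\delta\int B(|u|)+C'B(K_1)\int\eta^{s_B}$ for the cross term. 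The extra contribution is $\bigl|\int_{B^0_1}h\,u\,\eta^{s_B}\,d\mathcal H^{n-1}\bigr|\le H\int_{B^0_1}|u|\,\eta^{s_B}\,d\mathcal H^{n-1}$. Using the trace inequality \eqref{in:trace0} applied to $w=|u|\,\eta^{s_B}$ (which vanishes on $\partial B^+_1\setminus B^0_1$), this is bounded by $C(n)\,H\int_{B^+_1}\bigl(|Du|\,\eta^{s_B}+s_B|u|\eta^{s_B-1}|D\eta|\bigr)\,dx$. Now write $H=b(b^{-1}(H))$ and apply Young's inequality \eqref{young1} twice: $H\,|Du|\le \delta\,B(|Du|)+C_\delta B(b^{-1}(H))$ and $H\,|u|\le B(b^{-1}(H))+C\,B(|u|)$ (after also using $\eta\le1$ and $|D\eta|\le C(n)$). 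Absorbing the $\delta B(|Du|)$ piece and noting $B(b^{-1}(H))\le B(K_1+b^{-1}(H))$ and $B(K_1)\le B(K_1+b^{-1}(H))$, we recover precisely \eqref{caccioppoli:neu}.

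For the $L^\infty$-bound \eqref{sol:Linfneu}, I would repeat the De Giorgi iteration from Theorem~\ref{thm:bdd} with test function $(u-\kappa)_+\,\phi^{s_B}$, $\kappa\ge K_1+b^{-1}(H)$, and $\phi$ as in \eqref{proprieta:phi}. The volume estimates \eqref{esempio:cac0}--\eqref{esempio:cac1} go through unchanged on half-balls. The boundary term $\int_{B^0_1}h\,(u-\kappa)_+\,\phi^{s_B}\,d\mathcal H^{n-1}$ is controlled, exactly as above, via \eqref{in:trace0} applied to $w=(u-\kappa)_+\phi^{s_B}$: one gets $C(n)H\int_{A(\kappa,\tau)}\bigl(|D(u-\kappa)_+|\phi^{s_B}+(u-\kappa)_+\phi^{s_B-1}|D\phi|\bigr)dx$, and Young's inequality \eqref{young1} turns this into $\delta\int B(|D(u-\kappa)_+|)\phi^{s_B}+\tfrac{C_\delta}{(\tau-\sigma)^{s_B}}\int_{A(\kappa,\tau)}B((u-\kappa)_+)+C\,B(b^{-1}(H))|A(\kappa,\tau)|$. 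Since $B(b^{-1}(H))\le B(\kappa)$ for $\kappa\ge b^{-1}(H)$, this contribution has the same form as the $B(K_1)|A(\kappa,\tau)|$ term already present in \eqref{cacc:level}, with $B(K_1)$ replaced by $B(K_1+b^{-1}(H))$. All subsequent steps---the Sobolev step producing \eqref{cacc:lev2}, the passage to \eqref{cacc:lev3} (now majorizing $B(K_1+b^{-1}(H))\le B(\kappa)$ for $K_1+b^{-1}(H)<h<\kappa$), the hypergeometric Lemma~\ref{lem:hyp}, and the reabsorption via Lemma~\ref{lemma:uss}---are carried out verbatim, with the role of $K_1$ played throughout by $K_1+b^{-1}(H)$. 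This yields \eqref{sol:Linfneu} for $R=1$, and the general $R$ and local boundedness in $B^+_r$ follow by scaling and a covering argument.

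The main obstacle, and really the only nontrivial point, is the correct handling of the boundary integral: one must recognize that the natural quantity bounding the Neumann datum is $b^{-1}(H)$ rather than $H$ itself (so that the Young inequality \eqref{young1} balances dimensionally against $B$ of the gradient), and that the trace inequality \eqref{in:trace0} is the right tool to convert the $(n-1)$-dimensional integral into a volume integral absorbable by the Caccioppoli structure. Everything else is a routine adaptation of Theorem~\ref{thm:bdd}, which is why I would only indicate these modifications and refer to that proof for the remaining computations.
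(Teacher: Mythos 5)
Your proposal is correct and follows essentially the same route as the paper: rescale to $R=1$, run the argument of Theorem~\ref{thm:bdd} on half balls, and control the new boundary integral by the trace inequality \eqref{in:trace0} followed by Young's inequality, which replaces $K_1$ by $K_1+b^{-1}(H)$ throughout (the paper merely treats $u$ and $(u-\kappa)_+$ simultaneously via a single symbol $\tilde u$). The only quibble is a citation: the inequality $H\,|Du|\le\delta\,B(|Du|)+C_\delta\,B\bigl(b^{-1}(H)\bigr)$ does not come from \eqref{young1} as stated but from \eqref{Young} combined with $\wB(H)\le C\,B\bigl(b^{-1}(H)\bigr)$, i.e.\ \eqref{come:BwB}, which is what the paper uses.
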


\begin{proof}
We briefly sketch the proof, noting that the only difference from the proof of Theorem~\ref{thm:bdd} concerns the boundary term.  By the scaling procedure of Remark \ref{rem:scaling}, we may assume $R=1$.

 Let $\eta\in C^\infty_c(B_1)$, be a cut-off function, with $0\leq \eta \leq 1$ and $\max_{B_1} |D\eta|\geq 1$. Let
 \begin{equation*}
     \tilde u =\text{either $u$ or $ (u-\kappa)_+$,}
 \end{equation*}
and, in both cases, testing \eqref{eq:bddneu} with $\tilde{u}\,\eta^{s_B}$ we get
 \begin{equation*}
     \int_{B^+_1}\A(x,D\tilde u)\cdot D\tilde u\,\eta^{s_B}\,dx+s_B\,\int_{B_1^+} \A(x,D\tilde u)\cdot D\eta\,\tilde u\,\eta^{s_B-1}\,dx=\int_{B_1^0} h(x')\,\tilde u(x')\,\eta^{s_B}(x')\,dx'.
 \end{equation*}
 The terms on the left-hand side are estimated as in \eqref{esempio:cac0}-\eqref{esempio:cac1}, so we find $C_0=C_0(n,\l,\L,i_a,s_a)>0$ such that 
 \begin{equation}\label{absorb:neu}
 \begin{split}
    \int_{B_1^+} B\big(|D\tilde u| \big)\,\eta^{s_B}\,dx\leq  &\,C_0\,\big(\max |D\eta| \big)^{s_B}\,\int_{B^+_1\cap \mathrm{spt}\,\eta} B(|\tilde u|)\,dx
    \\
    &+C_0\,B(K_1)\,|\mathrm{spt}(\eta)\cap\{\tilde u\neq 0\}|
    \\
    &+\int_{B_1^0} h(x')\,\tilde u(x')\,\eta^{s_B}(x')\,dx' .
\end{split}
 \end{equation}
By \eqref{new:condneu}, the trace inequality \eqref{in:trace0}, Young's inequality \eqref{Young} and \eqref{B2t}, we estimate the boundary integral as follows:
\begin{equation}
\begin{split}
    \bigg|\int_{B_1^0} h(x')\,\tilde u(x') \,\eta^{s_B}(x') &\,dx'\bigg| \leq \int_{B_1^0} H\,|\tilde u(x')|\,\eta^{s_B}(x')\,dx'\leq C(n)\,\int_{B_1^+} H\,\big|D\big[|\tilde u|\,\eta^{s_B} \big]\big|\,dx
    \\
    \leq &\, C(n)\,\Big\{\int_{B_1^+} H\,|D\tilde u|\,\eta^{s_B}+H\,s_B|\tilde u|\,\eta^{s_B-1}|D\eta|\,dx \Big\}
\\
    \leq & \,C_\delta\,\wB(H)\,|\mathrm{spt}\eta\cap \{\tilde u\neq 0\}|+\delta\,\int_{B_1^+} B(|D\tilde u|)\,\eta^{s_B}\,dx 
    \\
    &+C\,\big(\max |D\eta|\big)^{s_B}\,\int_{B_1^+\cap \mathrm{spt}\,\eta} B(|\tilde u|)\,dx,
    \end{split}
\end{equation}
 for all $\delta\in (0,1)$, with $C$ depending on $n,i_a,s_a$, and $C_\delta$ also depending on $\delta$. 
 
 Choosing $\delta=\delta(n,\l,\L,i_a,s_a)\in (0,1)$ sufficiently small, we may re-absorb terms in \eqref{absorb:neu}, and get
 \begin{equation}\label{neu:newstart}
 \begin{split}
     \int_{B_1^+} B\big(|D\tilde u| \big)\,\eta^{s_B}\,dx\leq &\,  \,C\,\big(\max |D\eta| \big)^{s_B}\,\int_{B^+_1\cap \mathrm{spt}\,\eta} B(|\tilde u|)\,dx
 \\
 &+C\,B\big(K_1+b^{-1}(H)\big)\,|\mathrm{spt}(\eta)\cap\{\tilde u\neq 0\}|\,,
 \end{split}
 \end{equation}
 where we used that $\wB(H)\leq C(i_a,s_a) B\big(b^{-1}(H)\big)$ by \eqref{come:BwB}, and the trivial inequality $B(K_1)+B(b^{-1}(H))\leq 2\,B\big(K_1+b^{-1}(H) \big)$.
 
 Starting from \eqref{neu:newstart}, the proof proceeds exactly as in  Theorem \ref{thm:bdd}, replacing $K_1$ with $K_1+b^{-1}(H)$, evaluating the integrals in the half ball $B_1^+$, and using Sobolev inequality \eqref{half:sobol} on half balls. We omit the details.
\end{proof}

\begin{remark*}
\rm{
Our assumptions on the vector field $\A$ in~\eqref{weaker:A}, on the Young function $B$ in~\eqref{iasa}, and on the boundary datum ensuring the boundedness of solutions are by no means optimal. Nevertheless, estimates of the form~\eqref{caccioppoli:dir}-\eqref{sol:Linfdir}, which will be important in later sections, are difficult to find explicitly in the literature. For this reason, we have chosen to state and prove them here. For results establishing boundedness under weaker assumptions, we refer, for instance, to \cite{S64, T67, T79, Ta91, K90, L91, MP96,C90, C97, C00, BCM23}.}
\end{remark*}

\section{Interior gradient regularity: homogeneous problems}\label{sec:int0}

In the following section, we establish gradient H\"older regularity for the local solution $v\in W^{1,B}_{loc}(\Omega)$ of the homogeneous equation
\begin{equation}\label{eq:homint}
    -\mathrm{div}\big(\A(Dv) \big)=0\quad\text{in $\Omega$.}
\end{equation}

Since additive constants on $\A$ do not alter the equation, we may assume that $\A(0)=0$.

The main theorem of this section is the following.

\begin{theorem}\label{thm:inthom}
    Let $v\in W^{1,B}_{loc}(\Omega)$ be a local weak solution to \eqref{eq:homint}, under the assumption that the stress field $\A\in C^0(\rn) \cap C^1(\rn\setminus\{0\})$ satisfies \eqref{ass:Aaut}.
    
    Then there exists $\alpha_\mathrm h=\alpha_{\mathrm{h}}(n,\l,\L,i_a,s_a)\in (0,1)$ such that
    \begin{equation}
        v\in C^{1,\alpha_{\mathrm{h}}}_{loc}(\Omega). 
    \end{equation}
    Moreover, for every ball  $B_{2R}=B_{2R}(x_0)\Subset \Omega$, the  $L^\infty$-$L^1$ estimate 
    \begin{equation}\label{inf:hom}
        \sup_{B_{R/2}}|Dv|\leq c_{\mathrm{h}}\,\mint_{B_R} |Dv|\,dx\,,
    \end{equation}
    holds; we have the excess decay estimate
    \begin{equation}\label{exc:hom}
        \mint_{B_r}|Dv-(Dv)_{B_r}|\,dx\leq c_{\mathrm{h}}\,\left( \frac{r}{R}\right)^{\alpha_\mathrm{h}} \mint_{B_R}|Dv-(Dv)_{B_R}|\,dx,
    \end{equation}
    and the oscillation estimates
\begin{equation}\label{oscdes}
    \operatorname*{osc}_{B_r} Dv\leq C_{\mathrm{h}}\left(\frac{r}{R} \right)^{\alpha_h}\,\mint_{B_R}|Dv-(Dv)_{B_R}|\,dx,\quad \text{for all }0<r\leq R/2,
\end{equation}
   and
\begin{equation}\label{fin:oscdec}
     \operatorname*{osc}_{B_r} Dv\leq C_{\mathrm{h}}\left(\frac{r}{R} \right)^{\alpha_h} \operatorname*{osc}_{B_R} Dv\leq C'_{\mathrm{h}}\,\left(\frac{r}{R} \right)^{\alpha_h}\,\mint_{B_{2R}} |Dv|\,dx\quad \text{for all }0<r\leq R,
\end{equation} 
    where $c_\mathrm{h},C_{\mathrm{h}},C_{\mathrm{h}}'>0$ depend on $n,\l,\L,i_a,s_a$, and $B_r\subset B_R\subset B_{2R}$ are concentric balls.
\end{theorem}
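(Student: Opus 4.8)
The plan is to prove Theorem~\ref{thm:inthom} by the classical scheme: first establish the a priori $L^\infty$ bound \eqref{inf:hom} for the gradient, then prove $C^{1,\alpha}$-regularity through the fundamental alternative, and finally upgrade this to the $L^1$-excess decay \eqref{exc:hom}. Since all of these rest on differentiating the equation, and this requires $Dv$ to lie in $W^{1,2}_{loc}$ and $\A$ to be $C^2$ away from the origin (which is only assumed $C^1$), I would work throughout with the regularized vector fields $\A_\e$ from Lemma~\ref{lemma:Ae} and the smoothed Young functions $a_\e, b_\e, B_\e$ from Lemma~\ref{lemma:ae}. Let $\ve$ solve $-\diver(\A_\e(D\ve))=0$ in $B_{2R}$ with $\ve=v$ on $\partial B_{2R}$ (or an inner ball); by the uniform ellipticity in \eqref{nonzero:Ae}--\eqref{coAe:gr}, standard theory gives $\ve$ smooth, and by the monotonicity estimate \eqref{strong:coer} (applied to $\A_\e$) together with the uniform convergence \eqref{Aeunif}, one gets $\ve\to v$ in $W^{1,B}$ and a.e.\ for $Dv$. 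All estimates below should be proven for $\ve$ with constants \emph{uniform in} $\e$ (using Remark~\ref{remark:importante}), then passed to the limit. I would only state this regularization once and refer to it, as the paper promises in Proposition~\ref{prop:veregular} and Remark~\ref{rem:approx}.

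For the $L^\infty$ bound \eqref{inf:hom}: differentiating $-\diver(\A_\e(D\ve))=0$ in the direction $x_k$ shows each $D_k\ve$ solves a linear equation $-\diver(\nabla_\xi\A_\e(D\ve)\,D(D_k\ve))=0$ with measurable, bounded (for fixed $\e$) and, crucially, \emph{uniformly elliptic} (by \eqref{coer:Ae}: $\nabla_\xi\A_\e \approx a_\e(|D\ve|)\,\mathrm{Id}$) coefficients. The Bernstein trick is to test suitably and show $w:=B_\e(|D\ve|)$ is a nonnegative subsolution of a linear uniformly elliptic equation $-\diver(\tilde A(x)\,Dw)\le 0$ with ellipticity ratio controlled by $\l,\L,i_a,s_a$ only; then the De~Giorgi--Nash--Moser weak Harnack / local boundedness estimate gives $\sup_{B_{R/2}}B_\e(|D\ve|)\le C\,\mint_{B_R}B_\e(|D\ve|)\,dx$, and converting through \eqref{Bcomeb}, Young's inequality \eqref{young1} and a standard interpolation/iteration (Lemma~\ref{lemma:uss}) turns the right-hand side into $\big(\mint_{B_R}|D\ve|\,dx\big)$-type control, yielding \eqref{inf:hom} uniformly in $\e$. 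Letting $\e\to0$ gives the bound for $v$.

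For $C^{1,\alpha}$-regularity I would invoke the fundamental alternative as stated in Theorem~\ref{thm:alternative} and Lemmas~\ref{lem:1alt}--\ref{lemma:altfin}: fixing a ball and writing $M=\sup|D\ve|$, $\mu=\osc Dv$-type quantity, one shows that \emph{either} $|D\ve|$ is comparable to $M$ on a definite fraction of a smaller ball --- the \enquote{nondegenerate} regime, where the equation for $D_k\ve$ is uniformly elliptic with \emph{bounded} coefficients and De~Giorgi's oscillation-decay applies directly using the new level-set inequalities of Lemma~\ref{lemma:levquadratic} --- \emph{or} $|D\ve|$ is small on a large portion, the \enquote{degenerate} regime, handled by a further dichotomy and De~Giorgi iteration (Lemma~\ref{lem:hyp}) on the components $D_k\ve$. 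Iterating the resulting decay $\osc_{B_{\tau R}}D\ve\le\theta\,\osc_{B_R}D\ve$ via Lemma~\ref{lemma:osc} yields $Dv\in C^{0,\alpha_{\mathrm h}}_{loc}$, and combined with \eqref{inf:hom} produces \eqref{fin:oscdec}. I expect \textbf{the main obstacle} to be exactly here: making the alternative work without the monotonicity of $t\mapsto a(t)$ that $p$-Laplacian proofs rely on --- this is where \eqref{ultra:utile} and the quadratic De~Giorgi inequalities of Lemma~\ref{lemma:levquadratic} must do the heavy lifting, and where care with $\e$-uniformity is most delicate.

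Finally, for the $L^1$-excess decay \eqref{exc:hom} and \eqref{oscdes}: following \cite{L91,DM10}, once $C^{1,\alpha}$ is known one compares, on a ball $B_R$, the solution $\ve$ with the solution $z$ of the \emph{linearized, frozen-coefficient} equation $-\diver(\nabla_\xi\A_\e((D\ve)_{B_R})\,Dz)=0$ sharing boundary data with $\ve$; linear theory gives $z\in C^{1,\alpha}$ with decay $\mint_{B_r}|Dz-(Dz)_{B_r}|\lesssim (r/R)^\alpha\mint_{B_R}|Dz-(Dz)_{B_R}|$, while a comparison/Caccioppoli estimate based on \eqref{strong:coer} and \eqref{el:comparison} bounds $\mint_{B_R}|D\ve-Dz|$ by a small multiple of the excess plus a higher-order term. (On the degenerate scales where $(D\ve)_{B_R}$ is comparable to the oscillation, one instead uses \eqref{fin:oscdec} directly, trivially implying the excess decay.) Plugging into the iteration Lemma~\ref{lem:iteration} (or Lemma~\ref{lem:ultiter} to handle non-monotonicity of the excess) gives \eqref{exc:hom} for $\ve$ uniformly in $\e$; passing to the limit and using \eqref{medie} to absorb the change from $(D\ve)_{B_r}$ to an arbitrary vector yields \eqref{exc:hom} and then \eqref{oscdes} via \eqref{fin:oscdec} on the final small ball. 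I would keep the last step brief, as the paper signals it is standard.
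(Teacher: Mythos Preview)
Your plan for the regularization, the Bernstein-type $L^\infty$ bound, and the fundamental alternative is correct and matches the paper's approach closely (Propositions~\ref{prop:veregular}--\ref{prop:bernstein}, Theorem~\ref{thm:alternative}, Lemmas~\ref{lemma:parti}--\ref{lemma:altfin}). Two points deserve comment.

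\textbf{Excess decay.} Your proposed route for \eqref{exc:hom} --- first obtain the oscillation decay \eqref{fin:oscdec} from the alternative, then run a \emph{linearized frozen-coefficient comparison} with the solution of $-\diver(\nabla_\xi\A_\e((D\ve)_{B_R})\,Dz)=0$ --- is \emph{not} what the paper does, and it carries a hidden difficulty. The comparison error is the Taylor remainder $\A_\e(D\ve)-\A_\e(\bar p)-\nabla_\xi\A_\e(\bar p)(D\ve-\bar p)$, and to bound this by a \emph{fixed} small multiple of $a_\e(|\bar p|)\,|D\ve-\bar p|$ you need a quantitative modulus of continuity for $\nabla_\xi\A_\e$, uniformly in $\e$; only $\A\in C^1(\R^n\setminus\{0\})$ is assumed, with no rate. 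The paper instead proves \eqref{exc:hom} \emph{first}, directly from the alternative, via the two-speed iteration of \cite{DM10} (Proposition~\ref{exs:Dve}): at each dyadic scale one checks which alternative holds; in the nondegenerate regime $|D\ve|\approx M$ on the ball, so the \emph{differentiated} equation \eqref{eq:classic} for $D_k\ve$ is linear and uniformly elliptic with ratio depending only on $n,\l,\L,i_a,s_a$, and Lemma~\ref{lemma:degiorginash} yields the excess decay with \emph{no comparison needed}; in the degenerate regime $M$ itself decays geometrically, which dominates the excess after a calibration of constants (Steps~2--4 of the proof). The oscillation estimates \eqref{oscdes}--\eqref{fin:oscdec} are then \emph{consequences} of \eqref{exc:hom} via Campanato (Corollary~\ref{osc:decay}), reversing your order. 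This route is both cleaner and avoids the regularity obstruction on $\nabla_\xi\A$.

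\textbf{Approximation.} Your sentence ``let $\ve$ solve $\dots$ with $\ve=v$ on $\partial B_{2R}$'' hides a genuine issue: since $\A_\e$ has quadratic growth by \eqref{bBquadratic}--\eqref{coAe:gr}, one needs $W^{1,2}$ boundary data, but $v$ is only in $W^{1,B}$. The paper resolves this by a second layer of approximation: mollify $v$ to $v_k^{bd}=v\ast\rho_{1/k}\in C^\infty$, solve for $v_{\e,k}$ with this smooth boundary datum, and pass to the limit first in $\e$ and then in $k$ (see the proof following Corollary~\ref{osc:decay}). This is routine but should be stated.
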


\noindent \textbf{The regularized problem.} Due to the lack of  a priori regularity of $v$, we first establish the estimates in Theorem \ref{thm:inthom} for the function $v_\e\in W^{1,2}_{loc}(\Omega)$ solving

\begin{equation}\label{eq:homeint}
     -\mathrm{div}\big(\Ae(Dv_\e) \big)=0\quad \text{in $\Omega$,}
\end{equation}
where $\Ae$ is the vector field provided by Lemma \ref{lemma:Ae}. \footnote{The choice of approximating stress field $\mathcal A_\varepsilon$ is not unique; see Remark~\ref{rem:approx} below.}

Most importantly, taking into account Remark \ref{remark:importante}, these estimates will hold uniformly in $\varepsilon>0$, so that the conclusion of Theorem~\ref{thm:inthom} will follow
via a limiting argument.
We recall, by \eqref{bBquadratic}, the equivalence of Sobolev spaces $W^{1,B_\e}=W^{1,2}$. Here and in what follows, we will denote by
\begin{equation*}
    \nabla_\xi \Ae(\xi)=\{\nabla_\xi \Ae(\xi)\}_{\substack{i=1,\dots,n\\j=1,\dots n}}=\frac{\partial \Ae^i}{\partial \xi_j}(\xi),
\end{equation*}
and $\nabla_\xi \A(D\ve)=\nabla_\xi \A(\xi)|_{\xi=D\ve}$.
\vspace{0.2cm}

We start showing that $v_\e$ are, in fact, smooth. Indeed, we have the following

\begin{proposition}\label{prop:veregular}
    Let $\e\in (0,1)$ fixed,  let $\Omega\subset \rn$ be open, and suppose that $v_\e\in W^{1,2}_{loc}(\Omega)$ is a local weak solution to \eqref{eq:homeint}. Then
    \begin{equation}\label{ve:regular}
    v_\e\in C^{\infty}(\Omega)\,,
\end{equation}
and for all $k=1,\dots,n$, $D_k\ve$ is a classical solution to
\begin{equation}\label{eq:classic}
    -\mathrm{div}\big(\nabla_\xi \Ae(D\ve)\,D(D_k\ve) \big)=0\quad\text{in $\Omega$.}
\end{equation}
\end{proposition}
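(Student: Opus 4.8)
The plan is to regard equation \eqref{eq:homeint}, for fixed $\e\in(0,1)$, as a \emph{uniformly elliptic quasilinear equation with smooth coefficients} — by \eqref{nonzero:Ae} one has $c\,\e\,|\eta|^2\le \nabla_\xi\Ae(\xi)\,\eta\cdot\eta$ and $|\nabla_\xi\Ae(\xi)|\le C\,\e^{-1}$ for all $\xi,\eta\in\rn$, and $\Ae\in C^\infty(\rn)$ by Lemma \ref{lemma:Ae} — and then to run the classical interior regularity bootstrap for such equations. All constants below are allowed to depend on the fixed parameter $\e$, which is harmless here, and the Orlicz structure plays no role.

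\textit{Step 1: $\ve\in W^{2,2}_{loc}(\Omega)$ via difference quotients.} For $s\in\{1,\dots,n\}$ and $h\neq 0$ small, write $\Delta_{s,h}w(x)=h^{-1}\big(w(x+he_s)-w(x)\big)$. Since $\ve\in W^{1,2}_{loc}(\Omega)$ and the weak formulation extends to test functions in $W^{1,2}_c(\Omega)$ by density, I would test \eqref{eq:homeint} with $\varphi=\Delta_{s,-h}(\zeta^2\,\Delta_{s,h}\ve)$ for $\zeta\in C^\infty_c(\Omega)$; after the translation change of variables this gives
\begin{equation*}
\int_\Omega\big(\Delta_{s,h}\Ae(D\ve)\big)\cdot D\big(\zeta^2\,\Delta_{s,h}\ve\big)\,dx=0.
\end{equation*}
Using the identity $\Delta_{s,h}\Ae(D\ve)(x)=\Big(\int_0^1\nabla_\xi\Ae\big(D\ve(x)+t\,h\,\Delta_{s,h}D\ve(x)\big)\,dt\Big)\Delta_{s,h}D\ve(x)$, the bounds \eqref{nonzero:Ae}, and Young's inequality to absorb the cross term, one obtains $\sup_{0<|h|<h_0}\int_\Omega\zeta^2\,|\Delta_{s,h}D\ve|^2\,dx\le C(\e,\zeta)\int_{\mathrm{spt}\,\zeta}|D\ve|^2\,dx$; letting $h\to 0$ and varying $s$ yields $D^2\ve\in L^2_{loc}(\Omega)$.

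\textit{Step 2: linear bootstrap.} Since $\ve\in W^{2,2}_{loc}$, each $w:=D_k\ve\in W^{1,2}_{loc}(\Omega)$ is a weak solution of the linear equation $-\mathrm{div}\big(A(x)\,Dw\big)=0$ with $A(x):=\nabla_\xi\Ae(D\ve(x))$ bounded, measurable and uniformly elliptic by \eqref{nonzero:Ae}. De Giorgi--Nash--Moser theory \cite{DG56,N58,M60} then gives $w\in C^{0,\gamma}_{loc}(\Omega)$, hence $D\ve\in C^{0,\gamma}_{loc}(\Omega)$; since $\Ae\in C^\infty$, the coefficient matrix $A(\cdot)=\nabla_\xi\Ae(D\ve(\cdot))$ then lies in $C^{0,\gamma}_{loc}(\Omega)$, and Schauder estimates upgrade $w$ to $C^{1,\gamma}_{loc}(\Omega)$, i.e.\ $\ve\in C^{2,\gamma}_{loc}(\Omega)$. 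Iterating — at the $m$-th stage the chain rule and the smoothness of $\Ae$ give $A(\cdot)\in C^{m-1,\gamma}_{loc}$ whenever $\ve\in C^{m,\gamma}_{loc}$, so Schauder yields $\ve\in C^{m+1,\gamma}_{loc}$ — one concludes \eqref{ve:regular}. Finally, with $\ve\in C^\infty(\Omega)$ and $\Ae\in C^\infty$ one may differentiate \eqref{eq:homeint} classically in the direction $e_k$, which shows that $D_k\ve$ is a classical solution of \eqref{eq:classic}.

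I expect the only mildly delicate point to be Step~1: one must verify that $\zeta^2\,\Delta_{s,h}\ve$ (and the outer difference quotient applied to it) is an admissible $W^{1,2}_c(\Omega)$ test function, and that the resulting estimate — though permitted to degenerate as $\e\to 0$ — is uniform in $h$. Everything afterwards is a mechanical application of the linear De Giorgi--Nash--Moser and Schauder theories combined with the smoothness of $\Ae$, and requires no new ideas.
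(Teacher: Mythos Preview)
Your proposal is correct and follows essentially the same route as the paper: difference quotients using \eqref{nonzero:Ae} to obtain $\ve\in W^{2,2}_{loc}$, then De~Giorgi--Nash--Moser for the linearized equation to get $D\ve\in C^{0,\gamma}_{loc}$, and finally a Schauder bootstrap exploiting $\Ae\in C^\infty$. The paper's only cosmetic difference is that it records the intermediate inequality $\int|D(\Delta_{s,h}\ve)|^2\zeta^2\,dx\le C_\e\int|\Delta_{s,h}\ve|^2|D\zeta|^2\,dx$ before passing to your $h$-uniform bound.
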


\begin{proof}
  As a matter of fact, \eqref{ve:regular} can be viewed (and proven) as the analogue of 
Hilbert XIX problem in the context of elliptic equations. For the sake of completeness, we provide the details of the proof. First, one shows that
    \begin{equation}\label{ve:W22}
        \ve\in W^{2,2}_{loc}(B_{2R})\,.
    \end{equation}
    via the difference quotients method (see also \cite[Theorems 8.1-8.2]{BF02}). Specifically, let
    \begin{equation}\label{diff:quot}
        \Delta_h^i v(x)=\frac{1}{h}[v(x+h\,e_i)-v(x)]\,,
    \end{equation}
    for $i=1,\dots,n$, and $0<|h|<\mathrm{dist}(x,\partial  \Omega)$. We test the weak formulation of \eqref{eq:homeint} with 
    \begin{equation*}
        \varphi=\Delta^i_{-h}\big(\eta^2\Delta_h^i \ve\big)\,,
    \end{equation*}
where $\eta\in C^\infty_c(\Omega)$ is a cut-off function. By the discrete divergence theorem, we get
\begin{equation*}
\begin{split}
    0&=\int_\Omega \Ae(D\ve)\cdot D\left(\Delta^i_{-h}\big(\eta^2\Delta_h^i u\big) \right)\,dx=\int_\Omega \Delta_h^i\Ae(D\ve)\cdot D\big(\eta^2\Delta_h^i \ve\big) \,dx
    \\
    &=\int_\Omega  \Delta_h^i\Ae(D\ve)\cdot D(\Delta_h^i \ve)\,\eta^2\,dx+2\int_\Omega \Delta_h^i\Ae(D\ve)\cdot D\eta\,\eta\,\Delta_h^i \ve\,dx.
    \end{split}
\end{equation*}
    By the fundamental theorem of calculus
    \begin{equation*}
            \Delta_h^i \Ae(D\ve)=\bigg[\int_0^1\nabla_\xi\Ae\Big(tD\ve(x+he_i)+(1-t)D\ve(x) \Big)\,dt\bigg]\,
            D\big(\Delta_h^i (D\ve)\big)\,,
    \end{equation*}
and owing to \eqref{nonzero:Ae}, we have
\begin{equation*}
    \int_\Omega  \Delta_h^i\Ae(D\ve)\cdot D(\Delta_h^i \ve)\,\eta^2\,dx\geq c\,\e\,\int_\Omega |D(\Delta_h^i \ve)|^2\,\eta^2\,dx\,,
\end{equation*}
    and by \eqref{nonzero:Ae} and Young's inequality 
    \begin{equation*}
    \begin{split}
       \bigg|  & 2\int_\Omega \Delta_h^i\Ae(D\ve)\cdot D\eta\,\eta\,\Delta_h^i \ve\,dx \bigg|\leq 2\,C\,\e^{-1}\int_\Omega |D(\Delta_h^i \ve)|\,|D\eta|\,\eta|\,|\Delta_h^i \ve|\,dx
       \\
       &\leq \frac{c\,\e}{2}\int_\Omega |D(\Delta^i_h \ve)|^2\,\eta^2\,dx+C'\,\e^{-3}\int_\Omega  |\Delta_h^i \ve|^2\,|D\eta|^2\,dx.
       \end{split}
    \end{equation*}
    Connecting the four inequalities above, we get
    \begin{equation}\label{df0}
        \int_\Omega |D(\Delta_h^i\ve)|^2\,\eta^2\,dx\leq C\,\e^{-4}\int_\Omega  |\Delta_h^i \ve|^2\,|D\eta|^2\,dx,
    \end{equation}
 for all $i=1,\dots,n$, for all $0<|h|<\mathrm{dist}(\mathrm{supp}\,\eta,\,\partial \Omega)$. Starting from \eqref{df0}, the  $W^{2,2}_{loc}(\Omega)$-regularity of $\ve$ follows in a standard way using the properties of the difference quotients \cite[Lemmas 7.23-7.24]{GT}-- see also \cite[Proof of Theorem 8.8, pag. 185]{GT}.

Thanks to \eqref{ve:W22}, we may differentiate Equation \eqref{eq:homeint} with respect to $k=1,\dots,n$, and find that $D_k\ve\in W^{1,2}_{loc}(\Omega)$ is a weak solution to
\begin{equation*}
    -\mathrm{div}\big(\nabla_\xi\Ae(D\ve)D(D_k\ve) \big)=0\quad \text{in $\Omega$,}
\end{equation*}
for all $k=1,\dots,n$.

 Then by \eqref{nonzero:Ae}, we may appeal to De Giorgi-Nash-Moser theory \cite[Corollary 4.18]{HL11}, \cite[Theorem 8.13]{GM12} so we find $D_k \ve\in C^{0,\alpha_\e}_{loc}$, for some $\alpha_\e\in (0,1)$, and for all $k=1,\dots,n$. Thus $\nabla_\xi \Ae(D\ve)\in C^{0,\alpha_\e}_{loc}(\Omega)$, so Shauder's theory for uniformly elliptic equations in divergence form \cite[Theorem 5.19]{GM12} gives $D_k\ve\in C^{1,\alpha_\e}_{loc}(\Omega)$ for all $k=1,\dots,n$, and then a bootstrap argument using \cite[Theorem 5.20]{GM12} finally yields \eqref{ve:regular}.
\end{proof}

 \begin{remark}\label{rem:approx}\rm{ The regularization procedure \eqref{eq:homeint} is only introduced to justify the forthcoming computations. However, the specific choice of the approximating functions $a_\varepsilon$ and $\mathcal A_\varepsilon$ is not essential. As  will become apparent along the proofs, the arguments leading to \eqref{inf:hom}-\eqref{fin:oscdec} remain valid for any  $a_\varepsilon(\cdot)$ satisfying 
 \[
 -1 < \mathfrak{i} \le i_{a_\varepsilon} \le s_{a_\varepsilon} \le \mathfrak{s} < \infty\quad \text{for all  $\e>0$}
 \]
 for some constants $\mathfrak{i} \le \mathfrak{s}$,  for any stress field $\mathcal A_\varepsilon$ fulfilling \eqref{coer:Ae}, and for any $\ve\in W^{1,B_\e}_{loc}(\Omega)$ solution to \eqref{eq:homeint} that is regular enough to justify the computations, and such that $v_\e\xrightarrow{\e\to0} v$ in a suitable sense.
 For instance, in the case of  $p$-Laplace problems \eqref{eq:plapl}, a standard choice is $a_\varepsilon(t) = (\varepsilon^2 + t^2)^{\frac{p-2}{2}}$ and $\mathcal A_\varepsilon(\xi) = (\varepsilon^2 + |\xi|^2)^{\frac{p-2}{2}} \xi$.}
\end{remark}

We now turn to the proof of \eqref{inf:hom}, which is based on the Bernstein method. 
That is, we show that the function $B_\varepsilon(|D v_\varepsilon|)$ 
is a subsolution of a uniformly elliptic linear equation\footnote{Differently from what stated in the Introduction-see \eqref{def:unifell}-in the case of linear equations in divergence $\mathrm{div}(M(x)Du)$ or nondivergence form $\mathrm{tr}\big(M(x)D^2u\big)$, uniform ellipticity will mean that the coefficient matrix $M(x)$ satisfies \eqref{M:elliptic}. This condition is also often referred to as \emph{strict ellipticity}.
}, from which \eqref{inf:hom} 
follows via the weak Harnack inequality.

\begin{proposition}\label{prop:bernstein}
    Suppose $\ve\in W^{1,B_\e}_{loc}(\Omega)$ is a local weak solution to \eqref{eq:homeint}, and set
\begin{equation}\label{Ae:uf}
    \mathbb{A}_\e(x)=\frac{\nabla_\xi \Ae(D\ve)}{a_\e(|D\ve|)}\,.
\end{equation}
Then the function $V_\e=B_\varepsilon(|D v_\varepsilon|)$ is a (local) weak subsolution of
\begin{equation}\label{subsol}
    -\mathrm{div}\big(\mathbb{A}_\e(x)\,DV_\e \big)\leq 0\quad\text{in $\Omega$.}
\end{equation}
Moreover, the $L^\infty$-$L^1$ estimate 
\begin{equation}\label{Degiorgi:ve}
    \sup_{B_{R/2}}|D\ve|\leq C_{\mathrm{b}}\,\mint_{B_R}|D\ve|\,dx
\end{equation}
holds for every ball $B_{R}\Subset \Omega$, $R\leq 1$, with $C_{\mathrm{b}}>0$ depending on $n,\l,\L,i_a,s_a$.
\end{proposition}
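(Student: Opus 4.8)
The plan is to prove the two assertions of Proposition~\ref{prop:bernstein} in order: first the differential inequality \eqref{subsol}, and then the $L^\infty$–$L^1$ bound \eqref{Degiorgi:ve} via the weak Harnack inequality.

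\textbf{Step 1: $V_\e$ is a subsolution.} Since $v_\e\in C^\infty(\Omega)$ by Proposition~\ref{prop:veregular}, and each $D_k v_\e$ solves \eqref{eq:classic}, I would compute directly. Write $\partial_j = D_j$, $v=v_\e$, and test \eqref{eq:classic} (for each $k$) with $\varphi D_k v_\e$ for a nonnegative $\varphi\in C^\infty_c(\{Dv_\e\neq 0\})$, then sum over $k$. Using that $\nabla_\xi\A_\e(Dv)$ is symmetric and positive semidefinite, the term $\sum_{i,j,k}\frac{\partial\A_\e^i}{\partial\xi_j}(Dv)\,D_j(D_k v)\,D_i(D_k v)\,\varphi \geq 0$ can be dropped to the right side, leaving
\begin{equation*}
\int \sum_{i,j,k}\frac{\partial\A_\e^i}{\partial\xi_j}(Dv)\,D_j(D_k v)\,(D_k v)\,D_i\varphi\,dx \leq 0.
\end{equation*}
Now I would recognize, using $\nabla_\xi B_\e(|Dv|) = a_\e(|Dv|)\,Dv$ (since $b_\e(t)=a_\e(t)t$) and the chain rule, that $D_i V_\e = a_\e(|Dv|)\sum_k (D_k v)\,D_i(D_k v)$, hence $\sum_k D_j(D_kv)(D_kv) = \frac{1}{a_\e(|Dv|)} D_j V_\e$ on $\{Dv\neq0\}$. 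Substituting gives $\int \mathbb A_\e(x) DV_\e\cdot D\varphi\,dx\leq 0$, i.e.\ \eqref{subsol} holds weakly on the open set $\{Dv_\e\neq 0\}$. To upgrade to all of $\Omega$, I would note that $V_\e$ is Lipschitz (indeed smooth), that $DV_\e=0$ a.e.\ on $\{Dv_\e=0\}$, and that the coefficient matrix $\mathbb A_\e$ is bounded and uniformly elliptic by \eqref{coer:Ae} (so $\lambda/\Lambda$-type bounds hold with constants depending only on $n,\l,\L,i_a,s_a$ — crucially uniform in $\e$ by Remark~\ref{remark:importante}); a standard argument with a cutoff of the distance to $\{Dv_\e=0\}$, or approximating $V_\e$ by $(V_\e-\delta)_+$, shows the inequality extends across the critical set. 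The delicate point here — the main obstacle — is handling the behavior near $\{Dv_\e=0\}$ where $\mathbb A_\e$ is still defined and elliptic (because $\A_\e$ is smooth and satisfies \eqref{coer:Ae} even at $0$), so in fact no singularity arises; the genuinely important structural input is that after dividing by $a_\e(|Dv|)$ the matrix $\mathbb A_\e$ has ellipticity constants \emph{independent of $\e$}.

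\textbf{Step 2: the $L^\infty$–$L^1$ estimate.} Having \eqref{subsol} with $\mathbb A_\e$ uniformly elliptic and measurable, $V_\e=B_\e(|Dv_\e|)\geq 0$ is a nonnegative subsolution of a linear equation in divergence form of the type \eqref{divAxdu}. By the local boundedness part of De Giorgi–Nash–Moser theory (e.g.\ \cite[Theorem 8.17]{GT} or the references already invoked in Proposition~\ref{prop:veregular}), for every ball $B_R\Subset\Omega$ with $R\leq 1$,
\begin{equation*}
\sup_{B_{R/2}} V_\e \leq C\,\mint_{B_R} V_\e\,dx = C\,\mint_{B_R} B_\e(|Dv_\e|)\,dx,
\end{equation*}
with $C=C(n,\l,\L,i_a,s_a)$ by the uniform ellipticity of $\mathbb A_\e$ and scaling (note $R\le 1$ so the $R$-dependence of the De Giorgi constant is controlled). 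Applying $B_\e^{-1}$ (increasing) to both sides and using the $\Delta_2$/$\nabla_2$ bounds for $B_\e$ from Remark~\ref{remark:importante} — in particular $B_\e^{-1}(C\,t)\leq C'\,B_\e^{-1}(t)$ — yields
\begin{equation*}
\sup_{B_{R/2}} |Dv_\e| \leq C'\, B_\e^{-1}\!\Big(\mint_{B_R} B_\e(|Dv_\e|)\,dx\Big).
\end{equation*}
It remains to bound the right-hand side by $C\,\mint_{B_R}|Dv_\e|\,dx$. Here I would use the Caccioppoli inequality of Theorem~\ref{thm:bdd} applied to $v_\e$ on a slightly larger ball (valid since $\A_\e$ satisfies the coercivity/growth \eqref{coAe:gr}, which have the form \eqref{weaker:A}), interpolated with the $L^\infty$ bound on $v_\e - (v_\e)_{B_R}$; alternatively, and more directly, I would invoke a self-improving/reverse-Hölder argument: from $\mint_{B_{R}} B_\e(|Dv_\e|) \lesssim \mint_{B_{2R}} B_\e\big(|v_\e - (v_\e)_{B_{2R}}|/R\big)$ and Sobolev–Poincaré, one controls the modular of $Dv_\e$ by a lower power, ultimately by $\mint_{B_{2R}}|Dv_\e|$, after another covering/iteration. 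Combining these gives \eqref{Degiorgi:ve} with $C_{\mathrm b}=C_{\mathrm b}(n,\l,\L,i_a,s_a)$, uniform in $\e$. The main obstacle in this step is the nonhomogeneity of $B_\e$: one cannot simply ``factor out'' $B_\e$, so the passage from a modular $L^\infty$–$L^1$ estimate to the genuine gradient estimate \eqref{Degiorgi:ve} requires the $\Delta_2$-type inequalities and a short iteration, all of which must be performed with $\e$-uniform constants courtesy of Remark~\ref{remark:importante}.
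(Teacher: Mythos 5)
Your Step 1 is essentially the paper's argument: test \eqref{eq:classic} with $(D_k\ve)\varphi$, sum over $k$, discard the nonnegative term $\sum_k\nabla_\xi\A_\e(D\ve)D(D_k\ve)\cdot D(D_k\ve)\varphi$, and identify $\sum_k D(D_k\ve)D_k\ve=a_\e(|D\ve|)^{-1}DV_\e$. Two remarks. First, you invoke symmetry of $\nabla_\xi\A_\e$, which is neither assumed in \eqref{coer:Ae} nor needed: only the coercivity of the quadratic form enters. Second, the detour through test functions supported in $\{D\ve\neq 0\}$ and the subsequent "extension across the critical set" is unnecessary: by \eqref{ae:nonzero} one has $a_\e\geq\e>0$, $\xi\mapsto a_\e(|\xi|)$ is smooth by \eqref{aexi:Cinf}, $\ve$ is smooth by Proposition~\ref{prop:veregular}, so $V_\e$ is smooth, $DV_\e=a_\e(|D\ve|)\,D(|D\ve|^2/2)$ holds everywhere, and one may test directly with any nonnegative $\varphi\in C^\infty_c(\Omega)$. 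You eventually note this yourself, but the proof should simply proceed on all of $\Omega$ from the start.

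In Step 2 the first half (local boundedness of the subsolution $V_\e$, with constants uniform in $\e$ by \eqref{matrix:Aecgr}) again matches the paper, but the passage from
\[
\sup_{B_{r_1}}B_\e(|D\ve|)\leq \frac{C}{(r_2-r_1)^n}\int_{B_{r_2}}B_\e(|D\ve|)\,dx
\]
to the homogeneous estimate \eqref{Degiorgi:ve} is the genuinely delicate point, and your primary suggestion does not close it. A reverse-H\"older/Gehring self-improvement controls $\mint B_\e(|D\ve|)$ by lower \emph{powers} of $B_\e(|D\ve|)$, not by $B_\e\big(\mint|D\ve|\big)$; since $B_\e(t)^{\theta}$ is not comparable to $B_\e$ of a power of $t$, this route produces at best an inhomogeneous bound with additive constants, not \eqref{Degiorgi:ve}. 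The paper instead writes $\int B_\e(|D\ve|)\leq C\,b_\e\big(\|D\ve\|_{L^\infty(B_{r_2})}\big)\int|D\ve|$ via \eqref{Bcomeb}, applies Young's inequality \eqref{young1} to absorb $\tfrac12 B_\e(\|D\ve\|_{L^\infty(B_{r_2})})$ and leave $B_\e\big(C\int_{B_1}|D\ve|\big)$, and then removes the absorption over the family of radii $r_1<r_2$ with Lemma~\ref{lemma:uss}; applying $B_\e^{-1}$ and \eqref{B2t} (uniform in $\e$ by Remark~\ref{remark:importante}) finishes the case $R=1$, and general $R$ follows by the scaling of Remark~\ref{rem:scaling}. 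Your secondary alternative (Caccioppoli from Theorem~\ref{thm:bdd} with $K_1=0$, the $L^\infty$ bound \eqref{sol:Linf} applied to $\ve-(\ve)_{B_R}$, and Poincar\'e, followed by a covering argument) can be made to work and does yield a homogeneous bound, but as written it is only a gesture; you should either carry it out or replace it with the Young-inequality absorption above.
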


\begin{proof}
    First observe that by \eqref{ve:regular}, \eqref{ae:nonzero} and \eqref{coer:Ae}, the matrix $\mathbb{A}_\e(x)=(\mathbb{A}_\e(x))_{ij}$ is well defined for every $x\in \Omega$, and it satisfies
    \begin{equation}\label{matrix:Aecgr}
        \mathbb{A}_\e(x)\,\eta\cdot \eta\geq c\,|\eta|^2\quad\text{and}\quad \sum_{i,j=1}^n|(\mathbb{A}_\e(x))_{ij}|\leq C\quad \text{for all $x\in \Omega$,}
    \end{equation}
 and  for all $\eta\in \rn$, with $c,C>0$ depending on $n,\l,\L,i_a,s_a$. Now observe that, by the chain rule and \eqref{def:bBe}, we have
    \begin{equation}\label{der:Beve}
        DV_\e=b_\e\big(|D\ve| \big)\,\frac{D^2\ve\,D\ve}{|D\ve|}=a_\e\big(|D\ve| \big)\,D\left(\frac{|D\ve|^2}{2}\right)\,.
    \end{equation}
Now let $\varphi\in C^\infty_c(\Omega)$ be a nonnegative function; we test \eqref{eq:classic} with $(D_k\ve)\,\varphi$ and summing over $k=1,\dots,n$ we find
\begin{equation}\label{ozza}
\begin{split}
    0&=\sum_{k=1}^n\int_\Omega \nabla_\xi\Ae(D\ve)\,D(D_k\ve)\,D_k\ve\,\cdot D\varphi\,dx+\sum_{k=1}^n\int_\Omega\nabla_\xi\Ae(D\ve)\,D(D_k\ve)\cdot D(D_k\ve)\,\varphi\,dx
    \\
    &=\int_\Omega\mathbb{A}_\e(x)\,a_\e\big(|D\ve|)\,D\left(\frac{|D\ve|^2}{2}\right)\,\cdot D\varphi\,dx+\sum_{k=1}^n\int_\Omega\nabla_\xi\Ae(D\ve)\,D(D_k\ve)\cdot D(D_k\ve)\,\varphi\,dx
    \\
    &\geq \int_\Omega \mathbb{A}_\e(x)\,DV_\e\cdot D\varphi\,dx\,,
    \end{split}
\end{equation}
where in the last inequality we used \eqref{der:Beve} and the fact that $\nabla_\xi\A_\e(D\ve)\,D(D_k\ve)\cdot D(D_k\ve)\geq 0$ thanks to \eqref{coer:Ae}. Inequality \eqref{subsol} is thus proven. 
\vspace{0.1cm}

Let us now show \eqref{Degiorgi:ve}. We prove it in the case $R=1$, as the general argument will then follow via the scaling argument of Remark \ref{rem:scaling}.

By \eqref{subsol} and \eqref{matrix:Aecgr}, we may use the weak Harnack inequality \cite[Theorem 4.1]{HL11} and deduce that for all $0<r_1<r_2<1$, we have
\begin{equation}\label{Linf11}
    \sup_{B_{r_1}} B_\e(|D\ve|)\leq \frac{C}{(r_2-r_1)^n}\int_{B_{r_2}}B_\e\big(|D\ve| \big)\,dx.
\end{equation}
 Then by  \eqref{Bcomeb}, \eqref{young1} and \eqref{wBbt}, we deduce
\begin{equation}\label{Linf111}
    \begin{split}
        \int_{B_{r_2}}B_\e\big(|D\ve| \big)\,dx&\leq \frac{C}{(r_2-r_1)^n}\,b_\e\big( \|D\ve\|_{L^\infty(B_{r_2})}\big)\int_{B_{r_2}}|D\ve|dx
        \\
        &\leq \frac{1}{2}\, B_\e\big( \|D\ve\|_{L^\infty(B_{r_2})}\big)+B_\e\bigg(\frac{C}{(r_2-r_1)^n}\int_{B_{r_2}}|D\ve|\,dx \bigg)
        \\
        &\leq \frac{1}{2}\,B_\e\big( \|D\ve\|_{L^\infty(B_{r_2})}\big)+\frac{C^{\max\{s_B,2\}}}{(r_2-r_1)^{n\max\{s_B,2\}}} B_\e\left(\int_{B_1}|D\ve|\,dx \right),
    \end{split}
\end{equation}
 for all $0<r_1<r_2\leq 1$, with $C>0$ depending on $n,\l,\L,i_a,s_a$. Observe that, in the last inequality, we exploited \eqref{B2t} (with $B_\e$ in place of $B$) coupled  with  Remark \ref{remark:importante}. 
 Thereby using  Lemma \ref{lemma:uss} with $Z(t)=B_\e(\|D\ve\|_{L^\infty(B_t)})$, we get
\begin{equation*}
    B_\e(\|D\ve\|_{L^\infty(B_{1/2})})\leq C\,B_\e\left(\int_{B_1}|D\ve|\,dx \right)\,,
\end{equation*}
for some constant $C=C(n,\l,\L,i_a,s_a)\geq 1$. By applying $B_\e^{-1}$ to both sides of the above inequality, we get
\begin{equation}\label{inf:B1}
    \|D\ve\|_{L^\infty(B_{1/2})}\leq B_\e^{-1}\left(C\,B_\e\left(\int_{B_1}|D\ve|\,dx \right)\right)\leq C'\,\int_{B_1}|D\ve|\,dx\,,
\end{equation}
with $C,C'=C,C'(n,\l,\L,i_a,s_a)\geq 1$, where in the last inequality we used \eqref{B2t} (with $B_\e$ in place of $B$) while taking into account Remark \ref{remark:importante}.
Estimate \eqref{Degiorgi:ve} in a generic ball $B_R$, $R\leq 1$, then  follows from \eqref{inf:B1} via the scaling argument of Remark \ref{rem:scaling}.
\end{proof}

\noindent We now move onto the proof of the H\"older continuity of $D v$. This is based on the so-called \textit{fundamental alternative}. The key idea, originating in the work
of De Giorgi \cite{DG56}, can be summarized as follows. Fix a ball
$\mathcal B\Subset\Omega$. If the set where \eqref{eq:classic} is degenerate (namely where
$|D v|$ is small) occupies only a small portion of $\mathcal B$, then this
degeneracy can be controlled, i.e., $|Dv|$ is bounded away from zero in a smaller
concentric ball, and thus the equation \eqref{eq:classic} is nondegenerate. If, on the other hand, $|Dv|$ is small in a large portion of $\mathcal B$, then it can be  compared with its radius. 
\vspace{0.2cm}

Let us now state and prove the result. Given $B_r(x_0)\equiv B_r\Subset \Omega$, we set
\begin{equation}\label{def:Mr}
    M(r)=\max_{k=1,\dots,n}\sup_{B_r}|D_k\ve|\,.
\end{equation}

\begin{theorem}[The fundamental alternative]\label{thm:alternative}
    Let $\ve\in W^{1,B_\e}_{loc}(\Omega)$ be a local weak solution to \eqref{eq:homeint}, and let $B_{2R}\Subset \Omega$. 
    There exist \textit{universal numbers} 
    \begin{equation}\label{universal}
        \mu_0,\,\eta_0\in (0,1) \quad\text{depending only on $n,\l,\L,i_a,s_a$}
    \end{equation}
    such that the following alternatives hold.
    \\
    (i) If for some $k=1,\dots,n$, we have either
    \begin{equation}\label{alt:0}
    \begin{split}
       \big |\{D_k\ve<M(2R)/2\}&\cap B_{2R}\big|\leq \mu_0\,|B_{2R}|
       \\
       &\text{or}
       \\
       \big |\{D_k\ve>-M(2R)/2&\}\cap B_{2R}\big|\leq \mu_0\,|B_{2R}|
       \end{split}
    \end{equation}  
        then
\begin{equation}\label{thalt1:modDu}
    |D\ve|\geq M(2R)/4\quad\text{in $B_R$.}
\end{equation}
    \\
    (ii) In the complementary case, that is, if  \begin{equation}\label{alt:3}
        \begin{cases}
            \big |\{D_k\ve<M(2R)/2\}\cap B_{2R}\big|>\mu_0\,|B_{2R}|\quad\text{and}
            \\
            \\
            \big |\{D_k\ve>-M(2R)/2\}\cap B_{2R}\big|> \mu_0\,|B_{2R}|,\quad\text{for all $k=1,\dots,n$,}
        \end{cases}
    \end{equation}
    then
    \begin{equation}\label{thesis:alt3}
        M(R)\leq \eta_0\,M(2R)\,.
    \end{equation}
\end{theorem}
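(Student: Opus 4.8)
# Proof plan for the fundamental alternative

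\textbf{Plan of attack.} The proof splits naturally according to the two alternatives, and both rest on the fact — established in Proposition~\ref{prop:veregular} — that each $D_k v_\e$ is a classical solution of the linear uniformly elliptic equation \eqref{eq:classic}, whose coefficient matrix $\nabla_\xi \Ae(Dv_\e)$ equals $a_\e(|Dv_\e|)\,\mathbb{A}_\e(x)$ with $\mathbb{A}_\e$ satisfying \eqref{matrix:Aecgr}. The scalar factor $a_\e(|Dv_\e|)$ divides out of the equation, so $D_k v_\e$ solves a linear equation with bounded measurable, strictly elliptic coefficients; De~Giorgi--Nash--Moser theory then applies directly. Throughout I would first reduce to $M(2R)>0$ (otherwise $v_\e$ is constant and there is nothing to prove), and work with $B_{2R}=B_{2R}(x_0)$, keeping in mind the scaling of Remark~\ref{rem:scaling}.

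\textbf{Part (i): the nondegenerate alternative.} Suppose, say, $|\{D_k v_\e < M(2R)/2\}\cap B_{2R}| \le \mu_0|B_{2R}|$ for some $k$. The function $w := M(2R) - D_k v_\e \ge 0$ is a nonnegative \emph{supersolution} of \eqref{eq:classic} divided through by $a_\e(|Dv_\e|)$ — note that dividing by the positive scalar $a_\e(|Dv_\e|)$ preserves the supersolution property and keeps the ellipticity constants in \eqref{matrix:Aecgr}. The hypothesis says that $w \ge M(2R)/2$ on all but a $\mu_0$-fraction of $B_{2R}$. Now I would invoke the weak Harnack inequality for supersolutions (as in \cite[Theorem~4.15]{HL11} or the version in \cite{GT}), which gives a constant $c_1 = c_1(n,\l,\L,i_a,s_a)>0$ such that
\begin{equation*}
\inf_{B_R} w \ge c_1 \left( \mint_{B_{2R}} w^{p_0}\,dx \right)^{1/p_0}
\end{equation*}
for some small exponent $p_0>0$. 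Since $w \ge M(2R)/2$ on a set of measure $(1-\mu_0)|B_{2R}|$, the right-hand side is bounded below by $c_1 (1-\mu_0)^{1/p_0} M(2R)/2$. Choosing $\mu_0$ small enough (depending only on $n,\l,\L,i_a,s_a$) so that $c_1(1-\mu_0)^{1/p_0}/2 \ge 3/4$, we obtain $D_k v_\e \le M(2R)/4$ in $B_R$, whence $|D_k v_\e| \ge \dots$ — wait, that gives an upper bound. The correct reading: $w \ge M(2R)/2$ a.e.\ means $D_k v_\e \le M(2R)/2$, and the weak Harnack gives $\inf_{B_R} w \ge \tfrac34 M(2R)$ after the choice of $\mu_0$, i.e.\ $D_k v_\e \le M(2R)/4$ in $B_R$. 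Since $M(2R) = \sup_{B_{2R}}|D_j v_\e|$ for some $j$, this is not yet \eqref{thalt1:modDu}; rather, when the relevant alternative is the one for which $D_k v_\e$ achieves values close to $+M(2R)$ on a large set (i.e.\ $|\{D_k v_\e < M(2R)/2\}\cap B_{2R}|$ small forces $D_k v_\e \ge M(2R)/2$ on most of $B_{2R}$), one applies the weak Harnack to the supersolution $w = D_k v_\e - (M(2R)/2 - \text{something})$... Let me restate cleanly: apply the weak Harnack to $\tilde w := D_k v_\e + M(2R)$ when $D_k v_\e$ is large positive, or handle the two sub-cases of \eqref{alt:0} symmetrically; in the case $|\{D_k v_\e < M(2R)/2\}\cap B_{2R}|\le \mu_0|B_{2R}|$ the supersolution $D_k v_\e + M(2R) \ge 3M(2R)/2$ on a large set yields $\inf_{B_R}(D_k v_\e + M(2R)) \ge \tfrac54 M(2R)$, hence $D_k v_\e \ge M(2R)/4$ in $B_R$, and therefore $|Dv_\e| \ge |D_k v_\e| \ge M(2R)/4$ in $B_R$, which is \eqref{thalt1:modDu}. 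The other sub-case of \eqref{alt:0} is symmetric (work with $-D_k v_\e$).

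\textbf{Part (ii): the degenerate alternative, oscillation decay.} Here I would use a De~Giorgi-type oscillation estimate applied to each $D_k v_\e$. Fix $k$; by \eqref{alt:3}, both the set where $D_k v_\e < M(2R)/2$ and the set where $D_k v_\e > -M(2R)/2$ occupy at least a $\mu_0$-fraction of $B_{2R}$. In other words, $D_k v_\e$ stays away from \emph{both} extremes $\pm M(2R)$ on substantial portions of the ball. Applying the De~Giorgi oscillation lemma for subsolutions/supersolutions of linear uniformly elliptic equations (the standard consequence of Lemma~\ref{lem:hyp} together with the discrete isoperimetric inequality Lemma~\ref{lemma:levels}, exactly the machinery already assembled in the excerpt and e.g.\ \cite[Chapter~3]{LU68} or \cite{HL11}) to $D_k v_\e$ on $B_{2R}$, one obtains a constant $\vartheta = \vartheta(n,\l,\L,i_a,s_a,\mu_0)\in(0,1)$ with
\begin{equation*}
\operatorname*{osc}_{B_R} D_k v_\e \le \vartheta \,\operatorname*{osc}_{B_{2R}} D_k v_\e \le 2\vartheta\, M(2R).
\end{equation*}
Taking the maximum over $k=1,\dots,n$, and using that $M(R) = \max_k \sup_{B_R}|D_k v_\e| \le \max_k \operatorname*{osc}_{B_R} D_k v_\e$ — this last step needs care, since $\operatorname*{osc}$ controls $\sup - \inf$, not $\sup|\cdot|$; but if $\operatorname*{osc}_{B_R} D_k v_\e$ is small for \emph{every} $k$ while $M(2R)$ is of order $1$, then each $D_k v_\e$ is nearly constant on $B_R$, and combined with the measure conditions \eqref{alt:3} (which force $D_k v_\e$ to take values below $M(2R)/2$ somewhere in $B_{2R}\supset B_R$, hence its near-constant value on $B_R$ is $\le M(2R)/2 + 2\vartheta M(2R)$, and similarly $\ge -M(2R)/2 - 2\vartheta M(2R)$) one gets $\sup_{B_R}|D_k v_\e| \le (1/2 + 2\vartheta)M(2R)$. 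Thus $M(R) \le \eta_0 M(2R)$ with $\eta_0 := 1/2 + 2\vartheta$; to make $\eta_0 < 1$ one needs $\vartheta < 1/4$, which can be arranged by first iterating the oscillation estimate across finitely many dyadic scales between $B_R$ and $B_{2R}$ (replacing $\vartheta$ by $\vartheta^N$) — or more simply by using the oscillation lemma on $B_{2R}$ with the smaller target ball $B_{R}$ already built into the constant, then noting the standard oscillation lemmas give $\vartheta$ as small as one likes at the cost of shrinking the radius ratio, which is harmless since we are free to first prove the estimate at ratio $B_{2R} \to B_{2R/2^m}$ and rename. I would present it as: iterate to get $\vartheta^m < 1/4$, set $\eta_0 = 1/2 + 2\vartheta^m$, and relabel radii.

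\textbf{Main obstacle.} The genuinely delicate point is \emph{uniformity in $\e$}: the ellipticity constants in \eqref{matrix:Aecgr}, and hence the constants in the weak Harnack and De~Giorgi oscillation estimates, must not degenerate as $\e\to 0$. This is exactly what Remark~\ref{remark:importante} and Lemma~\ref{lemma:Ae} guarantee — the bounds \eqref{coer:Ae}, \eqref{iaesae} are uniform in $\e\in(0,1)$ — so the universal numbers $\mu_0,\eta_0$ depend only on $n,\l,\L,i_a,s_a$. The secondary technical nuisance is the bookkeeping in passing from $\operatorname*{osc}$ of individual components $D_k v_\e$ to the quantity $M(\cdot)$ (sup of $|D_k v_\e|$); this is handled by exploiting the two-sided measure conditions \eqref{alt:3} to pin down the near-constant value of each component, as sketched above. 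Finally, one must keep $R \le 1$ throughout so that all the Sobolev/Poincaré constants from Section~\ref{sec:prel} apply with absolute constants.
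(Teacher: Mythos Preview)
Your proposal has a fundamental gap that invalidates both parts. You claim that ``the scalar factor $a_\e(|Dv_\e|)$ divides out of the equation, so $D_k v_\e$ solves a linear equation with bounded measurable, strictly elliptic coefficients'' --- but this is false. Equation \eqref{eq:classic} reads $-\mathrm{div}\bigl(\nabla_\xi\Ae(Dv_\e)\,D(D_kv_\e)\bigr)=0$, and while $\nabla_\xi\Ae(Dv_\e)=a_\e(|Dv_\e|)\,\mathbb{A}_\e(x)$ with $\mathbb{A}_\e$ satisfying the uniform bounds \eqref{matrix:Aecgr}, the weight $a_\e(|Dv_\e(x)|)$ depends on $x$ and \emph{cannot} be divided out of a divergence-form equation. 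The actual ellipticity constants of \eqref{eq:classic} are $c\,a_\e(|Dv_\e(x)|)$ and $C\,a_\e(|Dv_\e(x)|)$, and their ratio to one another is fine, but the standard De~Giorgi--Nash--Moser estimates require fixed constants $\lambda,\Lambda$ --- here $\inf_x a_\e(|Dv_\e|)$ and $\sup_x a_\e(|Dv_\e|)$ are not comparable uniformly in $\e$. (For fixed $\e$ one has $\e\le a_\e\le\e^{-1}$, giving an ellipticity ratio $\e^{-2}$ that blows up as $\e\to 0$.) Remark~\ref{remark:importante} and Lemma~\ref{lemma:Ae} guarantee the uniformity of \eqref{coer:Ae}, not of $a_\e(|Dv_\e|)$ itself --- you have conflated the two.

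This is precisely the degeneracy that makes the theorem nontrivial. The paper's proof circumvents it as follows: Lemma~\ref{lemma:parti} derives weighted Caccioppoli-type inequalities by testing \eqref{eq:classic} with $g(D_kv_\e)\phi^2$, and then Lemma~\ref{lemma:levquadratic} restricts to level sets $\{D_kv_\e\ge\gamma M(2R)\}$ where $|Dv_\e|\ge\gamma M(2R)$, so that by \eqref{ultra:utile} the weight satisfies $a_\e(|Dv_\e|)\approx a_\e(M(2R))$ and cancels from both sides, yielding the unweighted De~Giorgi-type inequalities \eqref{levels:eq}--\eqref{oiii}. From these, Lemma~\ref{lem:1alt} runs a De~Giorgi (or Moser) iteration for the first alternative, and Lemmas~\ref{lemma:alt3small}--\ref{lemma:altfin} handle the second by first shrinking the superlevel set $\{D_kv_\e>(1-2^{-s_0})M(2R)\}$ to arbitrarily small measure and then iterating again. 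The point is that uniform ellipticity is recovered \emph{only on the relevant level sets}, not globally; your shortcut skips this entirely and would produce constants depending badly on $\e$.
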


To prove Theorem \ref{thm:alternative}, we start with an auxiliary lemma, which is a simple consequence of the equation \eqref{eq:classic} and standard computations using the properties of $\nabla_\xi\Ae$.

\begin{lemma}\label{lemma:parti}
    Let $\ve\in W^{1,B_\e}_{loc}(\Omega)$ be a local weak solution to \eqref{eq:homeint}. Let $g:\R\to \R$ be a Lipschitz function  such that $g'(t)\geq 0$  for a.e. $ t\in \R$. Then 
    \begin{equation}\label{temp:parti}
    \begin{split}
        \int_{\Omega} a_\e\big( |D\ve|\big)\,|D(D_k\ve)|^2\,g'(D_k\ve)\,\phi^2\,dx\leq C\, &\,\int_{\Omega} a_\e\big(|D\ve|\big)\,|D\ve|^2\,g'(D_k\ve)\,|D\phi|^2\,dx
        \\
        &+C\,\int_\Omega b_\e\big(|D\ve|\big)\,\big|g\big(D_k\ve\big)\big|\,|D^2\phi^2|\,dx,
        \end{split}
    \end{equation}     
    for all $k=1,\dots,n$, and for every $\phi\in C^2_c(\Omega)$, where $C$ is a positive constant  determined only by $n,\l,\L,i_a,s_a$.

    If in addition $|\{g\neq 0\}\cap\{g'= 0\}|=0 $, there holds
\begin{equation}\label{ooooo}
    \int_{\Omega} a_\e\big( |D\ve|\big)\,|D(D_k\ve)|^2\,g'(D_k\ve)\,\phi^2\,dx\leq C\,\int_{\{D(D_k\ve)\neq 0\}\cap\{g(D_k\ve)\neq 0\}} a_\e(|D\ve|)\,\frac{g^2(D_k\ve)}{g'(D_k\ve)}\,|D\phi|^2\,dx.
\end{equation}
\end{lemma}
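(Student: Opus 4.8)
The plan is to establish \eqref{temp:parti} and \eqref{ooooo} by testing the differentiated equation \eqref{eq:classic} with a carefully chosen test function built from $g$, the partial derivative $D_k v_\e$, and the cut-off $\phi$. Since $v_\e \in C^\infty(\Omega)$ by Proposition \ref{prop:veregular}, all manipulations are legitimate; the approximation parameter $\e$ plays no role here beyond guaranteeing smoothness and the structure conditions \eqref{coer:Ae}, which hold uniformly in $\e$ by Remark \ref{remark:importante}.

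\textbf{Step 1: Choice of test function.} Fix $k$ and abbreviate $w = D_k v_\e$. The natural test function for \eqref{eq:classic} is $\varphi = g(w)\,\phi^2$, which is admissible since $g$ is Lipschitz, $w \in C^\infty$, and $\phi \in C^2_c(\Omega)$. Plugging into the weak form of \eqref{eq:classic} gives
\begin{equation*}
    \int_\Omega \nabla_\xi \Ae(Dv_\e)\, Dw \cdot \big( g'(w)\, Dw\, \phi^2 + 2\,g(w)\,\phi\, D\phi \big)\,dx = 0,
\end{equation*}
so that
\begin{equation*}
    \int_\Omega \nabla_\xi \Ae(Dv_\e)\, Dw \cdot Dw\; g'(w)\,\phi^2\,dx = -2\int_\Omega \nabla_\xi \Ae(Dv_\e)\, Dw \cdot D\phi\; g(w)\,\phi\,dx.
\end{equation*}
The left-hand side is bounded below by $\l \int_\Omega a_\e(|Dv_\e|)\,|Dw|^2\, g'(w)\,\phi^2\,dx$ using the coercivity in \eqref{coer:Ae} and $g'(w)\ge 0$.

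\textbf{Step 2: Estimating the right-hand side for \eqref{temp:parti}.} Here I would \emph{not} differentiate $g(w)$ a second time; instead, use the growth bound $|\nabla_\xi\Ae(Dv_\e)|\le C\,a_\e(|Dv_\e|)$ from \eqref{coer:Ae} and Young's inequality on the right-hand side, splitting $a_\e(|Dv_\e|) = a_\e^{1/2} \cdot a_\e^{1/2}$:
\begin{equation*}
    2\,\big| \nabla_\xi \Ae(Dv_\e)\, Dw \cdot D\phi\; g(w)\,\phi\big| \le \tfrac{\l}{2}\,a_\e(|Dv_\e|)\,|Dw|^2\,g'(w)\,\phi^2 \cdot \chi_{\{g'(w)\neq 0\}} + C\,a_\e(|Dv_\e|)\,\frac{g(w)^2}{g'(w)}\,|D\phi|^2,
\end{equation*}
which after reabsorption gives \eqref{ooooo} directly (the indicator set $\{g(w)\neq 0\}\cap\{Dw\neq 0\}$ appears because the first term vanishes where $Dw = 0$, and the hypothesis $|\{g\neq 0\}\cap\{g'=0\}|=0$ ensures $g(w)^2/g'(w)$ is finite a.e.\ on the relevant set). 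For \eqref{temp:parti}, instead integrate by parts once more in the right-hand term: write $g(w)\,\phi\, D\phi = \tfrac12 g(w)\, D(\phi^2)$ and move the divergence onto $\nabla_\xi\Ae(Dv_\e)\,Dw$, which by the equation \eqref{eq:classic} has zero distributional divergence, leaving $-\tfrac12\int \A\text{-stuff}\cdot\text{(terms with }D^2(\phi^2))$ — more precisely, using $\diver(\nabla_\xi\Ae(Dv_\e)Dw)=0$ one gets $-\tfrac12\int \nabla_\xi\Ae(Dv_\e)\,Dw\cdot D(g(w))\,D(\phi^2)$ reorganized via the product rule, eventually producing the two terms on the right of \eqref{temp:parti} after applying \eqref{coer:Ae}, the chain rule $D(g(w)) = g'(w)Dw$, and Young's inequality; the $b_\e(|Dv_\e|)$ term arises from $|\nabla_\xi\Ae(Dv_\e)|\,|Dw|\le C\,a_\e(|Dv_\e|)|Dv_\e| = C\,b_\e(|Dv_\e|)$ paired with $|g(w)|\,|D^2\phi^2|$.

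\textbf{Main obstacle.} The delicate point is the second integration by parts yielding \eqref{temp:parti}: one must justify moving derivatives onto $\phi^2$ and correctly track the term where $D\big(\nabla_\xi\Ae(Dv_\e)Dw\big)$ would appear — this is handled by invoking that $w = D_k v_\e$ solves \eqref{eq:classic} classically, so the "bad" term integrates to zero against any $C^2_c$ function, and what survives involves only $g(w)$, $Dw$, and up to two derivatives of $\phi$. A secondary subtlety is the a.e.\ finiteness and measurability of $g(w)^2/g'(w)$ on $\{g(w)\neq 0\}$, which is exactly why the extra hypothesis $|\{g\neq 0\}\cap\{g'=0\}|=0$ is imposed; on that set the integrand is interpreted as $0$ where $Dw = 0$ (captured by the domain of integration in \eqref{ooooo}), and the dominated-convergence / truncation argument approximating $g$ by functions with $g'$ bounded below handles any remaining integrability concern. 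All constants depend only on $n,\l,\L$ and, through the uniform bounds of Remark \ref{remark:importante}, on $i_a,s_a$.
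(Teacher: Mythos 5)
Your derivation of \eqref{ooooo} is correct and coincides with the paper's: test \eqref{eq:classic} with $g(D_k\ve)\,\phi^2$, use coercivity on the left and the growth bound plus weighted Young's inequality (splitting $a_\e=a_\e^{1/2}\cdot a_\e^{1/2}$) on the right, then reabsorb.

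Your route to \eqref{temp:parti}, however, has a genuine gap. The "second integration by parts" you propose — moving a divergence onto $\nabla_\xi\Ae(D\ve)\,D(D_k\ve)$ and invoking $\mathrm{div}\big(\nabla_\xi\Ae(D\ve)\,D(D_k\ve)\big)=0$ — is circular. Indeed, for the vector field $V=\nabla_\xi\Ae(D\ve)\,D(D_k\ve)$ the equation gives $\int V\cdot D\eta\,dx=0$ for every test function $\eta$; applying this with $\eta=\phi^2 g(D_k\ve)$ and expanding $D\eta$ by the product rule returns exactly the identity you started from in Step~1, so nothing new is produced. Moreover, your claimed origin of the $b_\e$-term, namely $|\nabla_\xi\Ae(D\ve)|\,|D(D_k\ve)|\le C\,a_\e(|D\ve|)\,|D\ve|=C\,b_\e(|D\ve|)$, is false: it would require $|D(D_k\ve)|\le|D\ve|$, i.e.\ a pointwise bound of second derivatives by first derivatives, which does not hold. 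The correct manipulation, used in the paper, exploits the directional structure rather than the divergence structure: by the chain rule $\nabla_\xi\Ae(D\ve)\,D(D_k\ve)=D_k\big[\Ae(D\ve)\big]$, so one integrates by parts only in the $x_k$-direction in the term $-\int D_k[\Ae(D\ve)]\cdot D\phi^2\,g(D_k\ve)\,dx$. This leaves the \emph{undifferentiated} field $\Ae(D\ve)$, controlled by $|\Ae(D\ve)|\le C\,b_\e(|D\ve|)$ via \eqref{coAe:gr}, paired with $D_k$ falling either on $g(D_k\ve)$ (producing $g'(D_k\ve)\,D_{kk}\ve$, which is absorbed into the left-hand side by Young's inequality and yields the $a_\e(|D\ve|)\,|D\ve|^2\,g'\,|D\phi|^2$ term) or on $D\phi^2$ (producing the $b_\e(|D\ve|)\,|g(D_k\ve)|\,|D^2\phi^2|$ term). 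Without this step your argument cannot produce either term on the right-hand side of \eqref{temp:parti}.
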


\begin{proof}
    We test Equation \eqref{eq:classic} with $g(D_k\ve)\,\phi^2$, thus getting
    \begin{equation*}
        \begin{split}
            \int_\Omega \nabla_\xi\A_\e(D\ve)&\,D(D_k\ve)\cdot D(D_k\ve)\,g'(D_k\ve)\,\phi^2dx
            \\
            &=-\int_\Omega \nabla_\xi\A_\e(D\ve)\,D(D_k\ve)\cdot D\phi^2\, g(D_k\ve)\,dx\,.
        \end{split}
    \end{equation*}
By \eqref{coer:Ae}, Remark \ref{remark:importante} and the hypothesis on the sign of $g'$, we have
\begin{equation*}
    \int_\Omega \nabla_\xi\A_\e(D\ve)\,D(D_k\ve)\cdot D(D_k\ve)\, g'(D_k\ve)\,\phi^2dx\geq c\,\int_\Omega a_\e(|D\ve|)\,|D(D_k\ve)|^2\,g'(D_k\ve)\,\phi^2\,dx,
\end{equation*}
with $c=c(n,\l,\L,i_a,s_a)$. The two identities above give
\begin{equation}\label{tempo:baseineq}
    c\,\int_\Omega a_\e(|D\ve|)\,|D(D_k\ve)|^2\,g'(D_k\ve)\,\phi^2\,dx\leq -\int_\Omega \nabla_\xi\A_\e(D\ve)\,D(D_k\ve)\cdot D\phi^2\, g(D_k\ve)\,dx.
\end{equation}
On the other hand, via integration by parts, we find
\begin{equation*}
    \begin{split}
        -\int_\Omega &\,\nabla_\xi\A_\e(D\ve)\,D(D_k\ve)\cdot D\phi^2\, g(D_k\ve)\,dx=-\int_\Omega D_k\big(\Ae(D\ve)\big)\,\cdot D\phi^2\,g(D_k\ve)\,dx
        \\
        &=\int_\Omega \Ae(D\ve)\cdot D\phi^2\,g'(D_k\ve)\,D_{kk}\ve\,dx+\int_\Omega \Ae(D\ve)\cdot D(D_k\phi^2)\,g(D_k\ve)\,dx\,.
    \end{split}
\end{equation*}
Moreover, by \eqref{co:gr}, \eqref{def:bBe} and Young's inequality, we find
\begin{equation*}
    \begin{split}
       \bigg| \int_\Omega &\Ae(D\ve)\cdot D\phi^2\,g'(D_k\ve)\,D_{kk}\ve\,dx\bigg|\leq 2\,\int_\Omega b_\e(|D\ve|)\,|D\phi|\,\phi\,g'(D_k\ve)\,|D(D_k\ve)|\,dx
       \\
       &\leq \delta\,\int_\Omega a_\e(|D\ve|)\,|D(D_k\ve)|^2\,g'(D\ve)\,\phi^2dx+C_\delta\,\int_{\Omega}a_\e(|D\ve|)\,|D\ve|^2\,g'(D_k\ve)\,|D\phi|^2\,dx,
    \end{split}
\end{equation*}
for all $\delta \in (0,1)$, and
\begin{equation*}
    \bigg|\int_\Omega \Ae(D\ve)\cdot D(D_k\phi^2)\,g(D_k\ve)\,dx\bigg|\leq C\,\int_\Omega b_\e(|D\ve|)\,|g(D_k\ve)|\,|D^2\phi^2|\,dx\,.
\end{equation*}
Coupling the three identities above with \eqref{tempo:baseineq}, and by choosing $\delta$, determined by $n,\l,\L,i_a,s_a$, small enough to reabsorb terms, we get \eqref{temp:parti}.

Next, by \eqref{coer:Ae} and Young's inequality\footnote{The computations in \eqref{tojustify} are justified since $\big|\{D(D_k\ve)\neq 0\}\cap \{g(D_k\ve)\neq 0\}\cap \{g'(D_k\ve)=0\}\big|=0$. This is an immediate consequence of the coarea formula and the assumption $|\{g'=0\}\cap\{g\neq 0\}|=0$.
}
\begin{equation}\label{tojustify}
\begin{split}
    \bigg|  \int_\Omega \nabla_\xi\A_\e(D\ve) &\,D(D_k\ve)\cdot D\phi^2\, g(D_k\ve)\,dx\bigg|
    \\
    \leq &\,C\,\int_{\Omega }a_\e(|D\ve|)\,|D(D_k\ve)|\,|g(D_k\ve)|\,|\phi|\,|D\phi|\,dx
    \\
    \leq &\, \delta\int_\Omega a_\e(|D\ve|)\,|D(D_k\ve)|^2\,g'(D_k\ve)\,\phi^2\,dx
    \\
    &+C_\delta\int_{\{D(D_k\ve)\neq 0\}\cap\{g(D_k\ve)\neq 0\}}a_\e(|D_k\ve|)\,\frac{g^2(D_k\ve)}{g'(D_k\ve)}\,|D\phi|^2\,dx\,.
    \end{split}
\end{equation}
Combining the above estimate with \eqref{tempo:baseineq}, and choosing $\delta=\delta(n,\l,\L, i_a, s_a)$ small enough, we may reabsorb terms and finally obtain \eqref{ooooo}.
\end{proof}




In the next, important lemma we establish an estimate for the integral of $|D(D_k\ve)|^2$ restricted to the level sets of $D_k\ve$ lying above $M(2R)$; specifically, we show that such integral is bounded in terms of the measure of the corresponding superlevel sets. This is very close in spirit to the proof of H\"older continuity for functions in the De Giorgi's classes-- see~\cite[Chapter 2, Section 6]{LU68} or~\cite[Chapter 7]{G03}. 
We remark that these inequalities are of quadratic type, reflecting the fact that we are working with the linearized equation \eqref{eq:classic}.

\begin{lemma}\label{lemma:levquadratic}
    Let $\ve$ be a solution to \eqref{eq:homeint}, let  $B_{2R}\Subset \Omega$, and let $\gamma\in (0,1)$ be fixed. Then for every $0<r_1<r_2\leq 2R$, and for every $\kappa<\ell $ such that
    \begin{equation}\label{cond:ellkap}
        \gamma\,M(2R)\leq \kappa<\ell\leq M(2R)\,,
    \end{equation}
  with  $M(2R)$ as in \eqref{def:Mr}, the following De Giorgi's type inequalities are valid for all $k=1,\dots,n$: 
\begin{equation}\label{levels:eq}
        \int\limits_{\{\kappa\leq D_k\ve<\ell \}\cap B_{r_1}}|D(D_k\ve)|^2\,dx\leq C_\gamma\,\frac{\big(M(2R)\big)^2}{(r_2-r_1)^2}\big|\{D_k\ve<\ell\}\cap B_{r_2}\big|\,,
\end{equation}
and
\begin{equation}\label{oiii}
    \int\limits_{\{D_k\ve>\kappa\}\cap B_{r_1}  } |D(D_k\ve)|^2\,dx\leq C_\gamma\,\frac{(M(2R)-\kappa)^2}{(r_2-r_1)^2}\,|\{D_k\ve>\kappa\}\cap B_{r_2}|
\end{equation}
   where $C_\gamma>0$ depends on $n,\l,\L,i_a,s_a$ and $\gamma$.
\end{lemma}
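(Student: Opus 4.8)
The two inequalities \eqref{levels:eq} and \eqref{oiii} are both instances of the general estimate \eqref{temp:parti} from Lemma~\ref{lemma:parti}, applied to a suitably chosen nondecreasing Lipschitz function $g$ and a suitable cut-off $\phi$. The plan is to exploit the fact that on the relevant level sets of $D_k\ve$ we have $D_k\ve\geq \kappa\geq \gamma M(2R)$, and in particular $|D\ve|\geq \gamma M(2R)>0$; since also $|D\ve|\leq \sqrt n\, M(2R)$ on $B_{2R}$, the quantity $a_\e(|D\ve|)$ is comparable, uniformly in $\e$, to $a_\e(M(2R))$ via \eqref{ultra:utile} and Remark~\ref{remark:importante}. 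Likewise $b_\e(|D\ve|)=a_\e(|D\ve|)|D\ve|$ is comparable to $a_\e(M(2R))\,M(2R)$. This comparison is what converts the weighted inequality \eqref{temp:parti} into the clean, unweighted quadratic estimates claimed.

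\emph{Proof of \eqref{oiii}.} First I would take $g(t)=(t-\kappa)_+$, so that $g'(t)=\chi_{\{t>\kappa\}}$ a.e., $g\geq 0$ is nondecreasing and Lipschitz, and on the set $\{g\neq 0\}$ one has $t>\kappa\geq\gamma M(2R)$. Choose $\phi\in C^2_c(B_{r_2})$ with $\phi\equiv 1$ on $B_{r_1}$, $0\leq\phi\leq 1$, $|D\phi|\leq C/(r_2-r_1)$ and $|D^2\phi^2|\leq C/(r_2-r_1)^2$. Plugging into \eqref{temp:parti} and restricting the left-hand side to $B_{r_1}\cap\{D_k\ve>\kappa\}$ (where $\phi=1$ and $g'=1$) gives
\begin{equation*}
\int_{\{D_k\ve>\kappa\}\cap B_{r_1}} a_\e(|D\ve|)\,|D(D_k\ve)|^2\,dx\leq \frac{C}{(r_2-r_1)^2}\int_{\{D_k\ve>\kappa\}\cap B_{r_2}} \Big(a_\e(|D\ve|)\,|D\ve|^2 + b_\e(|D\ve|)\,(D_k\ve-\kappa)\Big)\,dx.
\end{equation*}
On $\{D_k\ve>\kappa\}\cap B_{r_2}$ we have $\gamma M(2R)\leq |D\ve|\leq \sqrt n M(2R)$, hence $a_\e(|D\ve|)\geq c_\gamma\, a_\e(M(2R))$ on the left and $a_\e(|D\ve|)|D\ve|^2\leq C\, a_\e(M(2R))\,M(2R)^2$, $b_\e(|D\ve|)\,(D_k\ve-\kappa)\leq C\,a_\e(M(2R))\,M(2R)\,(M(2R)-\kappa)\leq C\,a_\e(M(2R))\,(M(2R)-\kappa)^2\cdot\frac{M(2R)}{M(2R)-\kappa}$ on the right. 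To make the factor $(M(2R)-\kappa)^2$ appear cleanly one bounds $a_\e(|D\ve|)|D\ve|^2\leq C\,a_\e(M(2R))\,M(2R)\,(M(2R)-\kappa)$ using $M(2R)-\kappa\geq M(2R)-\ell\geq 0$ is \emph{not} available here, so instead I would simply use $M(2R)\leq C_\gamma(M(2R)-\kappa)$ only when $\kappa\leq (1-\delta)M(2R)$, and otherwise dominate directly; cleaner is to note that in all cases $a_\e(|D\ve|)|D\ve|^2\leq C\,a_\e(M(2R))\,M(2R)^2$ and $M(2R)^2\leq \gamma^{-2}(\ell-\kappa+\kappa)^2$—here the honest route is: bound the right-hand integrand by $C\,a_\e(M(2R))\,(M(2R)-\kappa)^2$ is false in general, so one keeps the bound $C\,a_\e(M(2R))\,M(2R)^2$ and observes that, since $D_k\ve>\kappa\ge\gamma M(2R)$ forces this term to contribute on a set of the stated measure, one absorbs the extra factor $M(2R)^2/(M(2R)-\kappa)^2$ only in \eqref{levels:eq}. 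Consequently the correct reading is: for \eqref{oiii} use $g(t)=\min\{(t-\kappa)_+,\,M(2R)-\kappa\}$ so that on $\{g\neq 0\}$ one has $|g(D_k\ve)|\leq M(2R)-\kappa$ and $g'(t)=\chi_{\{\kappa<t<M(2R)\}}$; then the $b_\e$-term is $\leq C a_\e(M(2R))M(2R)(M(2R)-\kappa)$ and the $a_\e|D\ve|^2$-term, being supported on $\{g'\neq0\}\subseteq\{D_k\ve>\kappa\}$ and weighted by $g'$, is $\leq C a_\e(M(2R))M(2R)^2\chi_{\{\kappa<D_k\ve<M(2R)\}}$; dividing through by $c_\gamma a_\e(M(2R))$ and using $M(2R)\leq \gamma^{-1}\kappa\leq\gamma^{-1}M(2R)$ together with $M(2R)/(M(2R)-\kappa)$ being controlled once we also bound the domain of the $a_\e|D\ve|^2$ term by $\{D_k\ve>\kappa\}$, gives \eqref{oiii} after absorbing $M(2R)^2\leq C_\gamma(M(2R)-\kappa)\,M(2R)$ is again not automatic—so the genuinely clean argument is to apply \eqref{ooooo} instead: with this same truncated $g$, $|\{g\neq0\}\cap\{g'=0\}|=|\{D_k\ve\geq M(2R)\}\cap\{D_k\ve>\kappa\}|$, which is null since $D_k\ve<M(2R)$ wherever $D(D_k\ve)\neq0$ is not guaranteed, so one perturbs $M(2R)$ to $M(2R)+\eta$ and lets $\eta\to0$. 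Then \eqref{ooooo} directly yields $\int a_\e|D(D_k\ve)|^2 g'\phi^2\leq C\int_{\{D(D_k\ve)\neq0\}\cap\{g\neq0\}} a_\e\,\frac{g^2}{g'}|D\phi|^2$, and $g^2/g'\leq (M(2R)-\kappa)^2$ on that set, giving exactly \eqref{oiii} after the $a_\e$-comparison.

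\emph{Proof of \eqref{levels:eq}.} Here I would instead take $g(t)=\min\{(t-\kappa)_+,\,\ell-\kappa\}$, so $g'(t)=\chi_{\{\kappa<t<\ell\}}$, $g$ nondecreasing Lipschitz with $0\leq g\leq\ell-\kappa$, and $|\{g\neq0\}\cap\{g'=0\}|=|\{D_k\ve\geq\ell\}\cap\{D_k\ve>\kappa\}|$ which again is null on $\{D(D_k\ve)\neq0\}$ up to the same $\ell\to\ell+\eta$ limiting trick. Applying \eqref{ooooo} with the same cut-off $\phi$ and restricting the left side to $B_{r_1}\cap\{\kappa\leq D_k\ve<\ell\}$ (where $\phi=1$, $g'=1$) gives
\begin{equation*}
\int_{\{\kappa\leq D_k\ve<\ell\}\cap B_{r_1}} a_\e(|D\ve|)\,|D(D_k\ve)|^2\,dx\leq \frac{C}{(r_2-r_1)^2}\int_{\{\kappa<D_k\ve<\ell\}\cap B_{r_2}} a_\e(|D\ve|)\,(\ell-\kappa)^2\,dx,
\end{equation*}
and since $\{\kappa<D_k\ve<\ell\}\subseteq\{D_k\ve<\ell\}$, and since on the relevant sets $|D\ve|$ is comparable to $M(2R)$ (using $\kappa\geq\gamma M(2R)$ and $|D\ve|\leq\sqrt n M(2R)$, hence \eqref{ultra:utile} and Remark~\ref{remark:importante} give $c_\gamma a_\e(M(2R))\leq a_\e(|D\ve|)\leq C_\gamma a_\e(M(2R))$), the factor $a_\e(M(2R))$ cancels and the bound $(\ell-\kappa)^2\leq M(2R)^2$ yields \eqref{levels:eq}.

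\textbf{Main obstacle.} The delicate point is the justification of \eqref{ooooo} in the presence of the truncation: one must ensure $|\{g\neq0\}\cap\{g'=0\}\cap\{D(D_k\ve)\neq0\}|=0$, which fails for the plateau $\{D_k\ve\geq\ell\}$ (or $\{D_k\ve\geq M(2R)\}$) of the truncated $g$. The remedy is to first run the argument with $g_\eta(t)=\min\{(t-\kappa)_+,\ell-\kappa+\eta\}$, for which the plateau sits at level $\ell+\eta\geq\ell\geq M(2R)\geq\sup_{B_{2R}}D_k\ve$, hence $\{g_\eta'=0,g_\eta\neq0\}\cap B_{2R}=\varnothing$ and \eqref{ooooo} applies verbatim; then let $\eta\to0$. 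Everything else is the uniform-in-$\e$ two-sided comparison $a_\e(|D\ve|)\simeq a_\e(M(2R))$ valid on the super-level sets where $\gamma M(2R)\leq|D\ve|\leq\sqrt n M(2R)$, which is exactly \eqref{ultra:utile} (with $m_0=\gamma$, $M_0=\sqrt n$) combined with Remark~\ref{remark:importante}, together with the elementary Sobolev/cut-off choices.
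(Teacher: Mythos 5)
Your treatment of \eqref{oiii} ultimately lands on the paper's own argument: apply \eqref{ooooo} with $g(t)=(t-\kappa)_+$, bound $g^2/g'=(D_k\ve-\kappa)^2\leq(M(2R)-\kappa)^2$ using $D_k\ve\leq M(2R)$ on $B_{2R}$, and cancel $a_\e(M(2R))$ on both sides via \eqref{ultra:utile} and Remark~\ref{remark:importante} on the set $\{D_k\ve>\kappa\}$, where $\gamma M(2R)\leq|D\ve|\leq\sqrt n\,M(2R)$. The truncation of $g$ at level $M(2R)-\kappa$ and the $\eta$-perturbation are superfluous here: the untruncated $(t-\kappa)_+$ already satisfies $|\{g\neq0\}\cap\{g'=0\}|=0$, so \eqref{ooooo} applies directly. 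The long detour before you arrive at this should be cut.

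For \eqref{levels:eq}, however, there is a genuine gap. You choose $g(t)=\min\{(t-\kappa)_+,\ell-\kappa\}$ and invoke \eqref{ooooo}, but this $g$ violates the hypothesis $|\{g\neq0\}\cap\{g'=0\}|=0$: the plateau is $[\ell,\infty)$, and your proposed remedy rests on the inequality ``$\ell+\eta\geq\ell\geq M(2R)\geq\sup_{B_{2R}}D_k\ve$,'' which inverts the standing assumption $\ell\leq M(2R)$ from \eqref{cond:ellkap}. In \eqref{levels:eq} the level $\ell$ is typically \emph{strictly below} $M(2R)$ (this is exactly the regime needed in Lemmas~\ref{lem:1alt} and~\ref{lemma:altfin}), so the set $\{D_k\ve\geq\ell+\eta\}\cap B_{2R}$ is generically a nonempty open set on which $D(D_k\ve)$ need not vanish; there $g(D_k\ve)\neq0$, $g'(D_k\ve)=0$, and the Young-inequality step \eqref{tojustify} underlying \eqref{ooooo} involves division by $g'$ and breaks down. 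Replacing \eqref{ooooo} by \eqref{temp:parti} with the same truncated $g$ does not rescue the claim either, because the term $\int b_\e(|D\ve|)\,|g(D_k\ve)|\,|D^2\phi^2|$ is then supported on the superlevel set $\{D_k\ve>\kappa\}$, whereas \eqref{levels:eq} requires the measure of the sublevel set $\{D_k\ve<\ell\}$ on the right-hand side (this is essential for the iteration in the first alternative). The correct choice, which the paper makes, is $g(t)=-(t-\ell)_-=(t-\ell)\chi_{\{t<\ell\}}$: this $g$ is supported entirely on $\{t<\ell\}$, has no problematic plateau, and \eqref{temp:parti} then yields a right-hand side controlled by $a_\e(M(2R))\,M(2R)^2\,|\{D_k\ve<\ell\}\cap B_{r_2}|$ via the monotonicity of $b_\e(t)=a_\e(t)t$; the lower bound $a_\e(|D\ve|)\geq c_\gamma a_\e(M(2R))$ is then used only on the left-hand side, after restricting to $\{\kappa\leq D_k\ve<\ell\}$ where $D_k\ve\geq\gamma M(2R)$. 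You should rework the proof of \eqref{levels:eq} along these lines.
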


\begin{proof}
    We use  \eqref{temp:parti} with function $g(t)=-(t-\ell)_{-}=(t-\ell)\chi_{\{t<\ell\}}$, and a cut-off function $\phi\in C^\infty_c(B_{r_2})$ such that $0\leq \phi\leq 1$, and
    \begin{equation}\label{cutoff}
       \phi\equiv 1\quad \text{in $B_{r_1}$,}\quad  |D\phi|\leq \frac{c(n)}{(r_2-r_1)},\quad\text{and  }\,|D^2\phi|\leq \frac{c(n)}{(r_2-r_1)^2}.
    \end{equation}
So, we obtain
\begin{equation}\label{temporaneo:lev}
        \begin{split}
            \int\limits_{\{D_k\ve<\ell\}\cap B_{r_1}} &\,a_\e(|D\ve|)\,|D(D_k\ve)|^2\,dx
            \\
            &\leq \frac{C}{(r_2-r_1)^2}\int\limits_{\{D_k\ve<\ell\}\cap B_{r_2}}a_\e(|D\ve|)\,|D\ve|\Big\{ |D\ve|+(\ell-D\ve)\Big\}\,dx
            \\
            &\leq 3\,C\,a_\e\big(M(2R)\big)\,\frac{(M(2R))^2}{(r_2-r_1)^2}\,\big|\{D_k\ve<\ell\}\cap B_{r_2}\big|\,,
        \end{split}
    \end{equation}
    where in the last inequality we used the monotonicity of the function $a_\e(t)\,t=b_\e(t)$. 
    
    Now observe that when $D_k\ve\geq \gamma\,M(2R)$, then $\gamma\,M(2R)\leq |D\ve|\leq M(2R)$ in $B_{2R}$. Therefore, by \eqref{ultra:utile}, Remark \ref{remark:importante} and \eqref{cond:ellkap}, we have
    \begin{equation}\label{control:ae}
     c_\gamma\leq \frac{a_\e(|D\ve|)}{a_\e(M(2R))} \leq C_\gamma\quad \text{in $\{D_k\ve\geq\kappa\}\cap B_{2R}$},
    \end{equation}
    with $c_\gamma,C_\gamma=c_\gamma,C_\gamma(n,\l,\L,i_a,s_a,\gamma)>0$.
  Coupling this piece of information with  \eqref{temporaneo:lev} yields
  \begin{equation*}
  \begin{split}
      \int_{\{\kappa\leq D_k\ve<\ell \}\cap B_{r_1} }|D(D_k\ve)|^2\,dx &\leq c_\gamma^{-1}\int_{\{\kappa\leq D_k\ve<\ell \}\cap B_{r_1} } \frac{a_\e(|D\ve|)}{a_\e(M(2R))}\,|D(D_k\ve)|^2\,dx
      \\
      &\leq c_\gamma^{-1}C\,\frac{(M(2R))^2}{(r_2-r_1)^2}\,\big|\{D_k\ve<\ell\}\cap B_{r_2}\big|\,,
      \end{split}
  \end{equation*}
and \eqref{levels:eq} is proven.

Finally, to obtain \eqref{oiii}, we use \eqref{ooooo} with $g(t)=(t-\kappa)_+$, and recalling \eqref{control:ae} and \eqref{cutoff}, we get
\begin{equation*}
\begin{split}
    a_\e\big( M(2R)\big)\, &\int_{\{D_k\ve>\kappa\}\cap B_{r_1}}\,|D(D_k\ve)|^2\,dx  
    \\
    &\leq c_\gamma^{-1}
        \int_{\{D_k\ve>\kappa\}\cap B_{r_1}} a_\e\big( |D\ve|\big)\,|D(D_k\ve)|^2\,\phi^2\,dx
        \\
        &\leq \frac{C_\gamma}{(r_2-r_1)^2}\,\int_{\{D_k\ve>\kappa\}\cap B_{r_2}}a_\e(|D\ve|)\,\big(D_k\ve-\kappa \big)^2\,dx
        \\
        &\leq \frac{C_\gamma'\,a_\e\big(M(2R)\big)}{(r_2-r_1)^2}\,\big(M(2R)-\kappa\big)^2\,|\{D_k\ve>\kappa\}\cap B_{r_2}|\,.
        \end{split}
\end{equation*}
 with $C_\gamma,C_\gamma'=C_\gamma,C_\gamma'(n,\l,\L,i_a,s_a,\gamma)>0$, where in the last inequality we used \eqref{control:ae} and we trivially majorized $(D_k\ve-\kappa)\leq (M(2R)-\kappa) $ in $\{D_k\ve>\kappa\}\cap B_{2R}$. Dividing both sides of this inequality by $a_\e\big(M(2R)\big)$ yields \eqref{oiii}.
\end{proof}


We are now in the position to prove the first part of Theorem \ref{thm:alternative}. This is essentially the content of the next lemma. From now on, for notational simplicity, we set
\begin{equation*}
     \mathrm M\equiv M(2R)=\max_{k=1,\dots,n}\sup_{B_{2R}}|D_k\ve|\,.
\end{equation*}

\begin{lemma}[The first alternative]\label{lem:1alt}
    Let $\ve$ be a solution to \eqref{eq:homeint}, and let $k\in \{1,\dots,n\}$. There exists $\mu_0=\mu_0(n,\l,\L,i_a,s_a)\in (0,2^{-n-1})$ such that if
    \begin{equation}\label{alt:1}
         \big |\{D_k\ve<\mathrm M/2\}\cap B_{2R}\big|\leq \mu_0\,|B_{2R}|,
    \end{equation}
then
\begin{equation}\label{thesis:alt1}
    D_k\ve\geq \mathrm M/4\quad \text{in $B_R$.}
\end{equation}
    Analogously, if
    \begin{equation}\label{alt:2}
        \big |\{D_k\ve>-\mathrm M(2R)/2\}\cap B_{2R}\big|\leq \mu_0\,|B_{2R}|,\quad\text{ then $D_k\ve<-\mathrm M(2R)/4 $ in $B_R$.}
    \end{equation}
\end{lemma}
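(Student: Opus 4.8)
The plan is to run a De Giorgi iteration on the super-level sets of $-D_k v_\e$ inside concentric balls shrinking from $B_{2R}$ to $B_R$. Set $w = D_k\ve$ and work with the function $(\mathrm{M}/2 - w)_+$, whose support is $\{w<\mathrm{M}/2\}$. For levels $\kappa$ in the range $[\mathrm{M}/4,\mathrm{M}/2]$ we have $\mathrm{M}/4\le \kappa$, so by choosing $\gamma = 1/4$ (more precisely, working on the set $\{w>\kappa\}$ for the opposite sign, i.e. using the symmetry $w \mapsto -w$ which is admissible since $-\ve$ solves the same equation with $\bar{\A}_\e(\xi)=-\A_\e(-\xi)$), the Caccioppoli-type inequality \eqref{oiii} of Lemma~\ref{lemma:levquadratic} applies with constants depending only on $n,\l,\L,i_a,s_a$. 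Concretely, I would define for $j\ge 0$ the radii $\varrho_j = R(1+2^{-j})$, the levels $\kappa_j = \mathrm{M}/4 + \mathrm{M}2^{-j-2}$ (so $\kappa_0 = \mathrm{M}/2$, $\kappa_j \downarrow \mathrm{M}/4$), and the quantities
\begin{equation*}
    a_j = |\{w<\kappa_j\}\cap B_{\varrho_j}|.
\end{equation*}

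First I would derive the energy estimate: applying \eqref{oiii} (in the $-w$ formulation) on the annulus between $B_{\varrho_{j+1}}$ and $B_{\varrho_j}$ gives
\begin{equation*}
    \int_{\{w<\kappa_{j+1}\}\cap B_{\varrho_{j+1}}} |D w|^2\,dx \le C_\gamma\,\frac{\mathrm{M}^2}{(\varrho_j-\varrho_{j+1})^2}\,a_j \le C\,4^{j}\,\frac{\mathrm{M}^2}{R^2}\,a_j.
\end{equation*}
Then I would combine this with the Sobolev inequality applied to $(\kappa_{j+1}-w)_+$ cut off by a suitable $\phi_j$ supported in $B_{\varrho_j}$ with $|D\phi_j|\le C 2^j/R$: on $\{w<\kappa_{j+1}\}$ one has $\kappa_j - \kappa_{j+1} = \mathrm{M}2^{-j-3}$, so
\begin{equation*}
    (\kappa_j-\kappa_{j+1})\,a_{j+1}^{1-1/n} \lesssim \Big(\int_{B_{\varrho_j}} |(\kappa_{j+1}-w)_+\phi_j|^{\frac{n}{n-1}}\Big)^{\frac{n-1}{n}} \lesssim \int_{\{w<\kappa_{j+1}\}\cap B_{\varrho_j}}\big(|Dw| + \tfrac{2^j}{R}\mathrm{M}\big)\,dx.
\end{equation*}
Estimating $\int |Dw|$ over that set by Cauchy--Schwarz as $(\int|Dw|^2)^{1/2} a_j^{1/2}$ and inserting the energy bound, together with the trivial bound on the remaining term, yields a recursive inequality of the form $Z_{j+1}\le C\,b^{j}\,Z_j^{1+1/n}$ for $Z_j = a_j/|B_{2R}|$, with $C,b$ depending only on $n,\l,\L,i_a,s_a$. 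By De Giorgi's hypergeometric Lemma~\ref{lem:hyp}, there is a universal $\mu_0\in(0,2^{-n-1})$ such that $Z_0 = a_0/|B_{2R}| \le \mu_0$ forces $Z_j\to 0$, i.e. $|\{w<\mathrm{M}/4\}\cap B_R| = 0$, which is \eqref{thesis:alt1}. The second assertion \eqref{alt:2} follows by applying the first to $-\ve$, noting $\bar\A_\e$ satisfies the same structural hypotheses with the same constants.

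The main obstacle I anticipate is bookkeeping the scaling in $\mathrm{M}$ and $R$ carefully so that the final recursion constants are genuinely universal (independent of $\mathrm{M}$ and $R$) — this is where one must be careful that the $\mathrm{M}^2$ from \eqref{oiii}, the $\mathrm{M}$ from $(\kappa_j-\kappa_{j+1})$, and the $\mathrm{M}$ from the crude bound on $(\kappa_{j+1}-w)_+$ all cancel correctly, leaving a clean inequality for the normalized measures $Z_j$. A secondary technical point is ensuring the hypotheses of Lemma~\ref{lemma:levquadratic} are met: the level range condition \eqref{cond:ellkap} with $\gamma=1/4$ requires $\kappa_j\ge \mathrm{M}/4$, which holds by construction, and $\ell \le M(2R)$ which is automatic; one should also invoke Remark~\ref{remark:importante} so that all constants are uniform in $\e\in(0,1)$. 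Finally, one uses $\mu_0 < 2^{-n-1}$ only to guarantee the hypothesis is non-vacuous and compatible with the radius doubling $\varrho_0 = 2R$; no deeper role is played by that specific bound.
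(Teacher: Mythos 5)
Your overall strategy (dyadic levels $\kappa_j\downarrow \mathrm M/4$, shrinking radii, a recursion $Z_{j+1}\le Cb^jZ_j^{1+\alpha}$ closed by Lemma~\ref{lem:hyp}, and the reduction of \eqref{alt:2} to \eqref{alt:1} via $\bar v_\e=-v_\e$) is the right one and matches the paper's first proof. However, there is a genuine gap in the energy step. The estimate
\begin{equation*}
    \int_{\{w<\kappa_{j+1}\}\cap B_{\varrho_{j+1}}} |D w|^2\,dx \le C\,4^{j}\,\frac{\mathrm{M}^2}{R^2}\,a_j
\end{equation*}
over the \emph{full} sub-level set does not follow from \eqref{oiii} or \eqref{levels:eq}. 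The ``$-w$ formulation'' of \eqref{oiii} controls super-level sets $\{-w>\kappa'\}$ only for $\kappa'\ge\gamma\,\mathrm M>0$, i.e.\ sub-level sets $\{w<-\kappa'\}$ at \emph{negative} levels, which is not what you need; and \eqref{levels:eq} only controls the gradient on the layer $\{\kappa\le w<\ell\}$ with both levels $\ge\gamma\,\mathrm M$. This restriction is not cosmetic: the proof of Lemma~\ref{lemma:levquadratic} divides the weighted Caccioppoli inequality by $a_\e(\mathrm M)$ using \eqref{control:ae}, which requires $D_k\ve\ge\gamma\,\mathrm M$ so that $|D\ve|\approx\mathrm M$ and \eqref{ultra:utile} applies. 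On $\{w<\mathrm M/4\}$ the full gradient $D\ve$ may be arbitrarily small, $a_\e(|D\ve|)$ is not comparable to $a_\e(\mathrm M)$ uniformly in $\e$, and the unweighted energy over the full sub-level set is simply not available. Since your Sobolev step uses the untruncated barrier $(\kappa_{j+1}-w)_+\phi_j$, whose gradient lives on the whole of $\{w<\kappa_{j+1}\}$, the iteration as written cannot be closed.

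The gap is fixable in two ways. Either replace $(\kappa_{j+1}-w)_+$ by the truncation $\min\{(\kappa_{j+1}-w)_+,\,\kappa_{j+1}-\kappa_{j+2}\}$, whose gradient is supported on the layer $\{\kappa_{j+2}<w<\kappa_{j+1}\}$ where \eqref{levels:eq} applies with $\gamma=1/4$ (this yields the standard De Giorgi-class iteration with exponent $n/(n-1)$, not $1+1/n$ as you wrote, though this is immaterial for Lemma~\ref{lem:hyp}); or do as the paper does and replace the Sobolev inequality altogether by the discrete isoperimetric inequality of Lemma~\ref{lemma:levels}, whose right-hand side by design only involves $\int_{A(\kappa,\varrho)\setminus A(\ell,\varrho)}|Du|$ over the layer. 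Note that in the latter route the bound $\mu_0\le 2^{-n-1}$ is not merely a normalization: it guarantees $|B_{R_{m+1}}\setminus A^-(\kappa_m,R_{m+1})|\ge\tfrac12|B_{R_{m+1}}|$, which controls the constant $|B_\varrho|/|B_\varrho\setminus A(\kappa,\varrho)|$ in \eqref{levels}; in a corrected Sobolev-based version that density condition would indeed be unnecessary, as you observed.
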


\begin{proof}
   Suppose that \eqref{alt:1} is in force, with $\mu_0$ to be determined later on. Define the sequences
   \begin{equation}\label{start:iter}
       R_m=R+\frac{R}{2^{m}},\quad \text{and}\quad\kappa_m=\frac{\mathrm{M}}{4}+\frac{\mathrm{M}}{2^{m+2}}\,,\quad m=0,1,2,\dots,
   \end{equation}
   and we also set
   \begin{equation*}
       A^-(\kappa,\vrho)\coloneqq\{D_k\ve<\kappa\}\cap B_{\vrho}.
   \end{equation*}
We apply Lemma \ref{lemma:levels} to the function $u=-D_k\ve$, with levels $\ell=-\kappa_{m+1}$, $\kappa=-\kappa_{m}$, and get
\begin{equation}\label{temp:newlevels}
\begin{split}
    \frac{\mathrm{M}}{2^{m+3}} |A^-(\kappa_{m+1}&,R_{m+1})|^{\frac{n-1}{n}}
    \\
    &\leq c(n)\,\frac{|B_{R_{m+1}}|}{|B_{R_{m+1}}\setminus A^-(\kappa_m,R_{m+1})|}\int_{\{\kappa_{m+1}\leq D_k\ve<\kappa_m  \}\cap B_{R_{m+1}}}|D\ve|\,dx.
    \end{split}
\end{equation}
Moreover, by \eqref{alt:1} and since $R\leq R_{m+1}\leq 2R$ and $\kappa_m\leq \mathrm{M}/2$, we have 
\begin{equation}\label{azz}
\begin{split}
|B_{R_{m+1}}\setminus A^-(\kappa_m,R_{m+1})|&=|B_{R_{m+1}}|-|A^-(\kappa_m,R_{m+1})|
\\
&\geq |B_{R_{m+1}}|-\left|A^-\left(\frac{\mathrm{M}}{2},2R\right)\right|
\\
&\geq (1-\mu_0\,2^n)\,|B_{R_{m+1}}|\geq \frac{1}{2}|B_{R_{m+1}}|\,, 
\end{split}
\end{equation}
 provided we take $0<\mu_0\leq 2^{-n-1}$. 
 
 We combine this piece of information with \eqref{temp:newlevels}, H\"older's inequality and \eqref{levels:eq} with $\gamma=1/8$, $r_2=R_m$ and $r_1=R_{m+1}$, thus obtaining
\begin{equation*}
    \begin{split}
        \frac{\mathrm{M}}{2^{m+3}}|A^-(\kappa_{m+1}&,R_{m+1})|^{\frac{n-1}{n}}\leq C(n)\,\int_{\{\kappa_{m+1}\leq D_k\ve<\kappa_m  \}\cap B_{R_{m+1}}}|D\ve|\,dx
        \\
        &\leq C(n)\,|A^-(\kappa_m,R_m)|^{\frac{1}{2}}\,\left(\int_{\{\kappa_{m+1}\leq D_k\ve<\kappa_m  \}\cap B_{R_{m+1}}}|D(D_k\ve)|^2\,dx \right)^{1/2}
        \\
        &\leq C'\,2^{m+2}\,\frac{\mathrm{M}}{R}\,|A^-(\kappa_m,R_m)|
    \end{split}
\end{equation*}
with $C'=C'(n,\l,\L,i_a,s_a)$.
We divide both sides by $\mathrm{M}$, we use that $R\leq R_m\leq 2R$, and we write the result in dimensionless form, i.e., by setting
 \begin{equation*}
     Z_m=\frac{|A^-(\kappa_m,R_m)|}{|B_{R_m}|}\,,
 \end{equation*}
from the above inequality we obtain 
\begin{equation}\label{iterazione:1}
    Z_{m+1}\leq C\,(4^{\frac{n}{n-1}})^{m+2}\,Z_m^{\frac{n}{n-1}},\quad\text{and}\quad Z_0\leq \mu_0\,,
\end{equation}
  with $C=C(n,\l,\L,i_a,s_a)>0$, where the condition on $Z_0$ is a consequence of \eqref{alt:1}. 
  
  It then follow by Lemma \ref{lem:hyp} that 
  \begin{equation*}
      \lim_{m\to \infty} Z_m=0, \quad\text{for $\mu_0=\mu_0(n,\l,\L,i_a,s_a)\in (0,2^{-n-1})$ small enough.}
  \end{equation*}
As $R_m\to R$ and $\kappa_m\geq \mathrm{M}/4$, we have 
\begin{equation}\label{fine:iter}
    0=\lim_{m\to \infty} Z_m=\lim_{m\to\infty} A^-(\kappa_m,R_m)/|B_{R_m}|\geq|\{D_k\ve<\mathrm{M}/4\}\cap B_{R}|/|B_R|,
\end{equation}
which implies \eqref{thesis:alt1}. 
\vspace{0.2cm}

Finally, in order to show \eqref{alt:2}, it suffices to reproduce the above proof with the function $\bar{v}_\e=-\ve$, since it solves the equation $-\mathrm{div}\big(\bar{\A}_{\e}(D\bar{v}_\e)\big)=0$ in $\Omega$, where $\bar{\A}_{\e}(\xi)=-\Ae(-\xi)$ satisfies the same  properties of $\Ae$ in Lemma \ref{lemma:Ae}.
\end{proof}

\begin{proof}[Second proof of Lemma \ref{lem:1alt}]
Here we provide a second proof of the first alternative via a Moser type iteration. By the scaling argument of Remark \ref{rem:scaling} (see \eqref{uR:scale}), we may assume that $R=1$. In particular, Equation \eqref{alt:1} takes the form
\begin{equation}\label{alt:1scale}
    |\{D_k\ve<\mathrm{M}/2\}\cap B_2|\leq c(n)\,\mu_0\,.
\end{equation}
Let $0<r_1<r_2\leq 1$, and let $\phi$ be as in \eqref{cutoff}; for $q\geq 0$ large enough, we consider 
\begin{equation*}
 g(t)=-\min\big\{\mathrm{M}/4,\max\{\mathrm{M}/2-t,0\}\big\}^{2q+1}
\end{equation*}
and we also set 
$$w=\min\big\{\mathrm{M}/4,\max\{\mathrm{M}/2-D_k\ve,\,0\}\big\},\quad\text{so that}\quad g(D_k\ve)=-w^{2q+1}.$$ 
 We remark that in the set $\{g'(D_k\ve)\neq 0\}$ we have $\mathrm{M}/4\leq D_k\ve\leq \mathrm{M}/2$; so  by \eqref{ultra:utile} and Remark \ref{remark:importante}, there holds $c\,a_\e(\mathrm{M}) \leq a_\e(|D\ve|)\leq C\,a_\e(\mathrm{M})$, with $c,C=c,C(n,\l,\L,i_a,s_a)$.

Taking advantage of this information, we make use of \eqref{temp:parti} with $g(t)$ and $\phi$ as above, and using that $b_\e(|D\ve|)\leq b_\e(\mathrm{M})=a_\e(\mathrm{M})\,\mathrm{M}$ by the monotonicity of $b_\e$, and dividing both sides of the resulting equation by $a_\e(\mathrm{M})$, we obtain
\begin{equation*}
    \begin{split}
        \int_{\{g'(D_k\ve)\neq 0 \} } |D(w^{q+1})|^2\,\phi^2\,dx\leq  &\,C\,q^2\,\int_{\{g'(D_k\ve)\neq 0 \}} |D\ve|^2\,\,w^{2q}\,|D\phi|^2\,dx
        \\
        &+C\,q\,\mathrm{M}\int_\Omega \,w^{2q+1}\,|D^2\phi^2|\,dx,
    \end{split} 
\end{equation*}
where in the left hand side we used that $D(D_k\ve)=Dw$ on $\{g'(D_k\ve)\neq 0 \}$.
We then use Sobolev inequality, we estimate $|D\ve|^2\leq \mathrm M^2$, $w\leq \mathrm M$ in $B_{2R}$, and we use the properties of $\phi$ in \eqref{cutoff} to infer
\begin{equation}\label{mos:0}
    \begin{split}
        \Big(\int_{B_{r_1}} w^{2\kappa (q+1)}\,dx \Big)^{1/\kappa}\leq C\,q^2\frac{\mathrm{M}^2}{(r_2-r_1)^2}\,\int_{B_{r_2}}w^{2q}\,dx.
    \end{split}
\end{equation}
where we set
\begin{equation}\label{sob:kappa}
    \kappa=\begin{cases}
        \frac{n}{n-2}\quad &n>2
        \\
        \text{any number $>1$}\quad & n=2.
    \end{cases}
\end{equation}
Now, if we let $\vartheta=\vartheta(n)=\frac{2}{\kappa-1}$, so that $\vartheta\,\kappa=\vartheta+2$, and setting
\begin{equation*}
    d\mu=\Big(\frac{\mathrm{M}}{w}\Big)^{\vartheta+2}\,\chi_{\{w\neq 0\}}\,dx,
\end{equation*}
then \eqref{mos:0} can be rewritten as
\begin{equation}\label{mosa:1}
   \bigg( \int_{B_{r_1}} w^{\kappa(2q+\vartheta+2)}\,d\mu\bigg)^{1/\kappa}\leq C\,\frac{q^2}{(r_2-r_1)^2}\,\int_{B_{r_2}}w^{2q+\vartheta+2}\,d\mu\,.
\end{equation}
for some $C=C(n,\l,\L,i_a,s_a)>0$.
We now consider radii $r_m=1+1/2^m$ for $m=0,1,2,\dots$, we take $r_1=R_m, r_2=R_{m+1}$, and set
\[
\gamma_0=\vartheta+2,\quad \gamma_{m+1}=\kappa\,\gamma_m=\dots=\kappa^{m+1}\,\gamma_0
\]
so iterating \eqref{mosa:1} yields
\begin{equation}\label{iter:chiamo}
\begin{split}
    \|w\|_{L^{\gamma_{m+1}}(B_{r_{m+1}};d\mu)}&\leq (C\,\gamma_0)^{1/\gamma_m}\,(4\kappa)^{m/\gamma_m}\,\|w\|_{L^{\gamma_{m}}(B_{r_{m}};d\mu)}
    \\
    &\leq \dots\leq  (C\,\gamma_0)^{\frac{1}{\gamma_0}\sum_{m=0}^\infty\frac{1}{\kappa^m}}\,(4\kappa)^{\frac{1}{\gamma_0}\sum_{m=0}^\infty\frac{m}{\kappa^m}}\|w\|_{L^{\gamma_0}(B_2;d\mu)}.
\end{split}
\end{equation}
Letting $m\to\infty$, we deduce that
\begin{equation*}
\begin{split}
    \sup_{B_1} w\leq C\,\|w\|_{L^{\gamma_0}(B_2;d\mu)}&=C\,\bigg(\int_{B_2\cap \{w\neq 0\}}\mathrm{M^{\vartheta+2}}dx\bigg)^{1/(\vartheta+2)}
    \\
    &=C\,\mathrm{M}\,|\{w\neq 0\}\cap B_2|^{1/(\vartheta+2)}\stackrel{\eqref{alt:1scale}}{\leq} C'\,\mu_0^{1/(\vartheta+2)}\,\mathrm{M}\,.
    \end{split}
\end{equation*}
where $C,C'>0$ depend only on $n,\l,\L,i_a,s_a$. Choosing $\mu_0=1/(4C'')^{(\vartheta+2)}$, we deduce $w\leq \mathrm{M}/4$ in $B_1$, that is $D_k\ve \geq \mathrm{M}/4$ in $B_1$ by definition of $w$. This concludes the proof. 
\end{proof}

For the second alternative of Theorem \ref{thm:alternative}, we need some preliminary results. The first one is a simple remark.

\begin{remark}\label{remark:nu0}
   \rm{ Let $\mu_0\in (0,2^{n-1})$ be given by Lemma \ref{lem:1alt}, and suppose that, for some $k=1,\dots,n$, there holds
        \begin{equation}\label{temp:alt3}
        \big|\{D_k\ve<\mathrm{M}/2\}\cap B_{2R} \big|>\mu_0\,|B_{2R}|\,.
    \end{equation}
Then
\begin{equation}\label{peso:misura}
    \big|\{D_k\ve>\mathrm{M}/2\}\cap B_{2\nu_0 R} \big|\leq \left(1-\frac{\mu_0}{2}\right)|B_{2\nu_0R}|\,,
\end{equation}
where  $\nu_0=\nu_0(n,\l,\L,i_a,s_a)\in (1/2,1)$ is defined as
\begin{equation}\label{def:nu0}
    \nu_0=\bigg(\frac{1-\mu_0}{1-\mu_0/2}\bigg)^{1/n}=\bigg(2-\frac{2}{2-\mu_0} \bigg)^{1/n}.
\end{equation}
Indeed, since $\nu_0\in(0,1)$, by \eqref{temp:alt3} and \eqref{def:nu0},
\begin{equation*}
\begin{split}
     \big| &\{D_k\ve> \mathrm{M}/2\}\cap B_{2\nu_0 R} \big|\leq \big|\{D_k\ve> \mathrm{M}/2\}\cap B_{2R} \big|
     \\
     &\leq (1-\mu_0)|B_{2R}|=\frac{(1-\mu_0)}{\nu_0^n}|B_{2\nu_0R}|=\left(1-\frac{\mu_0}{2}\right)\,|B_{2\nu_0R}|.
     \end{split}
\end{equation*}
Observe that $\nu_0>1/2$ as we chose $\mu_0\leq 2^{-n-1}$.

Analogously, if
        \begin{equation}\label{temp:alt4}
        \big|\{D_k\ve>-\mathrm{M}/2\}\cap B_{2R} \big|>\mu_0\,|B_{2R}|\,,
    \end{equation}
    then
\begin{equation}\label{peso:misura2}
    \big|\{D_k\ve<-\mathrm{M}/2\}\cap B_{2\nu_0 R} \big|\leq \left(1-\frac{\mu_0}{2}\right)|B_{2\nu_0R}|\,.
\end{equation}
    
   }
\end{remark}

We now show that when \eqref{temp:alt3} is in force, then we can make the set $\{D_k\ve>\kappa\}$ arbitrarily small in measure, provided  we take $\kappa$ sufficiently close to $\mathrm{M}$. This is the content of the following lemma.

\begin{lemma}\label{lemma:alt3small}
    Suppose that \eqref{temp:alt3} is in force, and let $\nu_0$ be given by \eqref{def:nu0}. Then for every $\theta_0\in (0,1)$, there exists $s_0=s_0(n,\l,\L,i_a,s_a,\theta_0)\in \N$ large enough such that
    \begin{equation}\label{arbitr:small}
        \bigg|\Big\{D_k\ve>\Big(1-\frac{1}{2^{s_0}} \Big)\,\mathrm{M} \Big\}\cap B_{2\nu_0R} \bigg|\leq \theta_0\, |B_{2\nu_0R}|\,.
    \end{equation}
\end{lemma}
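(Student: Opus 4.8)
The plan is to derive \eqref{arbitr:small} from the De Giorgi-type inequality \eqref{oiii} combined with the discrete isoperimetric inequality \eqref{levels} of Lemma \ref{lemma:levels}, iterating over dyadically shrinking levels. First I would fix $\theta_0\in(0,1)$ and work with the function $u=D_k\ve$ on the ball $B_{2\nu_0 R}$, introducing for $s\in\N$ the levels $\kappa_s=\big(1-2^{-s}\big)\mathrm M$, so that $\kappa_{s+1}-\kappa_s=2^{-s-1}\mathrm M$ and, crucially, $\kappa_s\geq \mathrm M/2$ for all $s\geq 1$; this last point is what lets us invoke \eqref{oiii} with $\gamma=1/2$ (all the relevant constants $C_\gamma$ are then universal). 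I would also record that, by Remark \ref{remark:nu0}, hypothesis \eqref{temp:alt3} gives $|B_{2\nu_0R}\setminus A(\kappa_s,2\nu_0R)|\geq |\{D_k\ve\leq \mathrm M/2\}\cap B_{2\nu_0R}|\geq \tfrac{\mu_0}{2}|B_{2\nu_0R}|$ for every $s\geq 1$, since $\{D_k\ve\le\mathrm M/2\}\subset B_{2\nu_0R}\setminus A(\kappa_s,2\nu_0R)$; this uniform lower bound on the complement is exactly the denominator needed in \eqref{levels}.

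The core step is a one-step decay estimate for $a_s:=|A(\kappa_s,2\nu_0R)|$. Applying Lemma \ref{lemma:levels} with $\kappa=\kappa_s<\ell=\kappa_{s+1}$ on $B_\varrho=B_{2\nu_0R}$ and using the lower bound on $|B_\varrho\setminus A(\kappa_s,\varrho)|$ just mentioned, I get
\begin{equation*}
    (\kappa_{s+1}-\kappa_s)\,a_{s+1}^{1-\frac1n}\le c(n,\mu_0)\int_{A(\kappa_s,\varrho)\setminus A(\kappa_{s+1},\varrho)}|Du|\,dx
    \le c(n,\mu_0)\,\big(a_s-a_{s+1}\big)^{1/2}\Big(\int_{\{D_k\ve>\kappa_s\}\cap B_\varrho}|D(D_k\ve)|^2\,dx\Big)^{1/2},
\end{equation*}
where the second inequality is Cauchy--Schwarz together with $|Du|\ge|D_k u|=|D(D_k\ve)|$ on the relevant set. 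Now I estimate the last gradient integral by \eqref{oiii} (with $r_1=2\nu_0R$, $r_2=2R$, so $r_2-r_1=2(1-\nu_0)R\ge cR$, a universal fraction of $R$ since $\nu_0\in(1/2,1)$ is a universal constant), obtaining a bound $C_\gamma\,(\mathrm M-\kappa_s)^2 R^{-2}\,a_s=C\,(2^{-s}\mathrm M)^2R^{-2}a_s$. Plugging this in and dividing by $(\kappa_{s+1}-\kappa_s)=2^{-s-1}\mathrm M$, the factors $2^{-s}\mathrm M$ cancel and the factors of $R$ cancel against $R^{-1}$ from $r_2-r_1$, leaving the clean bound $a_{s+1}^{1-\frac1n}\le C\,(a_s-a_{s+1})^{1/2}\,a_s^{1/2}\le C\,(a_s-a_{s+1})^{1/2}\,|B_{2\nu_0R}|^{1/2}$, i.e.
\begin{equation*}
    a_{s+1}^{2-\frac2n}\le C^2\,|B_{2\nu_0R}|\,\big(a_s-a_{s+1}\big),\qquad s=1,2,\dots
\end{equation*}

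To conclude, I would sum this inequality over $s=1,\dots,s_0-1$: since $s\mapsto a_s$ is non-increasing, the left-hand sides can all be bounded below by $a_{s_0}^{2-2/n}$, while the right-hand side telescopes, $\sum_s (a_s-a_{s+1})\le a_1\le |B_{2\nu_0R}|$. This yields $(s_0-1)\,a_{s_0}^{2-\frac2n}\le C^2\,|B_{2\nu_0R}|^2$, hence
\begin{equation*}
    a_{s_0}\le \Big(\frac{C^2}{s_0-1}\Big)^{\frac{n}{2(n-1)}}|B_{2\nu_0R}|,
\end{equation*}
and choosing $s_0=s_0(n,\l,\L,i_a,s_a,\theta_0)$ large enough that $\big(C^2/(s_0-1)\big)^{n/(2(n-1))}\le\theta_0$ gives exactly \eqref{arbitr:small}. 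The main obstacle, and the point to be careful about, is the bookkeeping of the radii in \eqref{oiii}: one must apply it on the concentric pair $B_{2\nu_0R}\subset B_{2R}$ (both contained in $B_{2R}\Subset\Omega$, so the lemma applies) and check that $r_2-r_1$ is a universal multiple of $R$; the hypothesis \eqref{cond:ellkap} is satisfied since $\gamma M\le \mathrm M/2\le\kappa_s<\kappa_{s+1}\le\mathrm M$ with $\gamma=1/2$. A minor technical point is ensuring the use of Lemma \ref{lemma:levels} on $B_{2\nu_0R}$ rather than $B^+$ — here we are genuinely in the interior case, so the ball version applies verbatim — and that all constants $C$ arising from \eqref{oiii} with $\gamma=1/2$ and from $c(n,\mu_0)$ depend only on $n,\l,\L,i_a,s_a$, since $\mu_0$ itself is universal.
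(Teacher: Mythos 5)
Your proof is correct and follows essentially the same route as the paper: the discrete isoperimetric inequality of Lemma \ref{lemma:levels} with the uniform lower bound $|B_{2\nu_0R}\setminus A(\kappa_s,2\nu_0R)|\geq \tfrac{\mu_0}{2}|B_{2\nu_0R}|$ from \eqref{peso:misura}, the Caccioppoli-type bound \eqref{oiii} on the pair $B_{2\nu_0R}\subset B_{2R}$, and a telescoping sum over the dyadic levels $\kappa_s$ (the paper multiplies through by $|A^+_{s+1}|^{1/n}\le C(n)R$ to obtain the quadratic form $|A^+_{s+1}|^2\le C|B_{2\nu_0R}|\,|A^+_s\setminus A^+_{s+1}|$, whereas you retain the exponent $2-\tfrac{2}{n}$; both variants close the argument). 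Two harmless bookkeeping slips: \eqref{oiii} yields $|\{D_k\ve>\kappa_s\}\cap B_{2R}|$ rather than $a_s$ on the right (majorize it by $|B_{2R}|\le 2^n|B_{2\nu_0R}|$ as the paper does), and your final display should carry $|B_{2\nu_0R}|^{n/(n-1)}$ rather than $|B_{2\nu_0R}|$, which is absorbed into the constant since $R\le 1$.
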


\begin{proof}
    For $s=1,2,\dots$, we define
    \begin{equation}
        \kappa_s=\left(1-\frac{1}{2^s} \right)\,\mathrm{M},\quad \text{and}\quad A^+_s\coloneqq\{D_k\ve>\kappa_s\}\cap B_{2\nu_0R}\,.
    \end{equation}
Since $\kappa_s\geq \mathrm{M}/2$, from \eqref{peso:misura} we deduce
\begin{equation}\label{peso:misura3}
    |B_{2\nu_0R}\setminus A^+_s|\geq \frac{\mu_0}{2}|B_{2\nu_0R}|\quad\text{for all $s=1,2,\dots$}
\end{equation}
    Applying Lemma \ref{lemma:levels} to the function $u=D_k\ve$, and levels $\ell=\kappa_{s+1}$, $\kappa=\kappa_{s}$, and using H\"older's inequality and \eqref{peso:misura3},  we get
    \begin{equation*}
    \begin{split}
        \frac{\mathrm{M}}{2^{s+1}}\,|A^+_{s+1}|& \leq c(n)\,|A^+_{s+1}|^{\frac{1}{n}}\frac{|B_{2\nu_0R}|}{|B_{2\nu_0R}\setminus A^+_s|}\int_{A^+_{s}\setminus A^+_{s+1}}|D(D_k\ve)|\,dx
        \\
        &\leq C\,R\,\left( \int_{A^+_{s}\setminus A^+_{s+1}}|D(D_k\ve)|^2\,dx\right)^{1/2}\,|A^+_{s}\setminus A^+_{s+1}|^{1/2}\,,
        \end{split}
    \end{equation*}
   with $C=C(n,\l,\L,i_a,s_a)>0$, where in the last inequality we also used $|A^+_{s+1}|\leq |B_{2\nu_0R}|\leq C(n)\,R^n$, the dependency on the data of $\mu_0$ given by \eqref{universal}, and that $\nu_0\in (1/2,1)$.
   
   Then we exploit \eqref{oiii} with $\gamma=1/8$, $r_1=2\nu_0R$ and $r_2=2R$, and recalling the dependency on the data of $\mu_0$, $\nu_0\in (1/2,1)$, and the definition of $\kappa_s$, we deduce
   \begin{equation*}
       \left( \int_{A^+_{s}\setminus A^+_{s+1}}|D(D_k\ve)|^2\,dx\right)^{1/2}\leq C\,\frac{(\mathrm{M}-\kappa_{s})}{R}\,|B_{2R}|^{1/2}\leq C'\,\frac{\mathrm{M}}{2^{s}R}\,|B_{2\nu_0R}|^{1/2},
   \end{equation*}
   for $C,C'=C,C'(n,\l,\L,i_a,s_a)>0$. Connecting the two inequalities above, and squaring both sides of the resulting equation yields
   \begin{equation*}
       |A^+_{s+1}|^2\leq C\,|B_{2\nu_0 R}|\,|A^+_{s}\setminus A^+_{s+1}|\,.
   \end{equation*}
 with $C=C(n,\l,\L,i_a,s_a)>0$.  We sum this inequality over $s=1,2,\dots,s_0-1$, and telescoping the right-hand side, while using $|A^+_{s+1}|\geq |A^+_{s_0}|$ on the left-hand side, we find
   \begin{equation}
       (s_0-2)|A^+_{s_0}|^2\leq \sum_{s=1}^{s_0-1}|A^+_{s+1}|^2\leq C_0\, |B_{2\nu_0 R}|\big(|A^+_1|-|A^+_{s_0}| \big)\leq C_0\, |B_{2\nu_0 R}|^2,
   \end{equation}
   where $C_0=C_0(n,\l,\L,i_a,s_a)>0$. Choosing $s_0=2+C_0/\theta_0^2$ yields $|A^+_{s_0}|\leq \theta_0\,|B_{2\nu_0R}|$, that is \eqref{arbitr:small}, our thesis.
    \end{proof}

As the counterpart of Lemma \ref{lemma:alt3small}, in the case when \eqref{temp:alt4} holds, we have the following lemma, whose proof is completely identical.

\begin{lemma}\label{lemma:alt4small}
     Suppose that \eqref{temp:alt4} is in force for some $k=1,\dots,n$. Then for every $\theta_0\in (0,1)$, there exists $s_0=s_0(n,\l,\L,i_a,s_a,\theta_0)\in \N$ large enough such that
    \begin{equation}\label{arbitr:small1}
        \left|\left\{D_k\ve<-\left(1-\frac{1}{2^{s_0}} \right)\,\mathrm{M} \right\}\cap B_{2\nu_0R} \right|\leq \theta_0\, |B_{2\nu_0R}|\,.
    \end{equation}
\end{lemma}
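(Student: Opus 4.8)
The plan is to mimic the proof of Lemma~\ref{lemma:alt3small} verbatim, working with the solution $\bar v_\e = -v_\e$ in place of $v_\e$. The point is that $\bar v_\e \in W^{1,B_\e}_{loc}(\Omega)$ solves $-\mathrm{div}\big(\bar\A_\e(D\bar v_\e)\big)=0$ in $\Omega$, where $\bar\A_\e(\xi) = -\A_\e(-\xi)$ enjoys exactly the same structure conditions as $\A_\e$ (cf. the end of the proof of Lemma~\ref{lem:1alt}), and $D_k\bar v_\e = -D_k v_\e$, so that $M(r)$ is unchanged. Under this substitution, hypothesis \eqref{temp:alt4}, namely $\big|\{D_k v_\e > -\mathrm{M}/2\}\cap B_{2R}\big| > \mu_0 |B_{2R}|$, becomes $\big|\{D_k\bar v_\e < \mathrm{M}/2\}\cap B_{2R}\big| > \mu_0|B_{2R}|$, which is precisely \eqref{temp:alt3} with $\bar v_\e$ in place of $v_\e$. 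Likewise the desired conclusion \eqref{arbitr:small1}, i.e. $\big|\{D_k v_\e < -(1-2^{-s_0})\mathrm{M}\}\cap B_{2\nu_0 R}\big| \le \theta_0 |B_{2\nu_0 R}|$, translates into $\big|\{D_k\bar v_\e > (1-2^{-s_0})\mathrm{M}\}\cap B_{2\nu_0 R}\big| \le \theta_0 |B_{2\nu_0 R}|$, which is exactly \eqref{arbitr:small}.

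Concretely, I would first invoke Remark~\ref{remark:nu0}: from \eqref{temp:alt4} we get \eqref{peso:misura2}, i.e. $\big|\{D_k v_\e < -\mathrm{M}/2\}\cap B_{2\nu_0 R}\big| \le (1-\mu_0/2)|B_{2\nu_0 R}|$, with the same $\nu_0 = \nu_0(n,\l,\L,i_a,s_a)\in(1/2,1)$ from \eqref{def:nu0}. Then, defining $\kappa_s = (1-2^{-s})\mathrm{M}$ and $A^-_s := \{D_k v_\e < -\kappa_s\}\cap B_{2\nu_0 R}$, the bound \eqref{peso:misura2} gives $|B_{2\nu_0 R}\setminus A^-_s| \ge \frac{\mu_0}{2}|B_{2\nu_0 R}|$ for all $s\ge 1$ (since $\kappa_s \ge \mathrm{M}/2$). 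Next I apply Lemma~\ref{lemma:levels} (the discrete isoperimetric inequality) to the function $u = -D_k v_\e$ with levels $\ell = \kappa_{s+1}$, $\kappa = \kappa_s$, obtaining
\[
\frac{\mathrm{M}}{2^{s+1}}|A^-_{s+1}| \le c(n)\,|A^-_{s+1}|^{\frac1n}\,\frac{|B_{2\nu_0 R}|}{|B_{2\nu_0 R}\setminus A^-_s|}\int_{A^-_s\setminus A^-_{s+1}}|D(D_k v_\e)|\,dx,
\]
and then bound the right-hand side using H\"older's inequality together with the De Giorgi-type estimate \eqref{oiii} applied to $-D_k v_\e$ (equivalently, to $D_k\bar v_\e$) with $\gamma = 1/8$, $r_1 = 2\nu_0 R$, $r_2 = 2R$; note \eqref{oiii} holds for $-D_k v_\e$ because the linearized equation \eqref{eq:classic} is invariant under $v_\e \mapsto -v_\e$. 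This yields $|A^-_{s+1}|^2 \le C\,|B_{2\nu_0 R}|\,|A^-_s\setminus A^-_{s+1}|$ with $C = C(n,\l,\L,i_a,s_a)$.

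Finally I sum this last inequality over $s = 1,\dots,s_0-1$, telescope the right-hand side using $\sum_{s=1}^{s_0-1}(|A^-_s| - |A^-_{s+1}|) = |A^-_1| - |A^-_{s_0}| \le |B_{2\nu_0 R}|$, and use monotonicity $|A^-_{s+1}| \ge |A^-_{s_0}|$ on the left to obtain $(s_0-2)|A^-_{s_0}|^2 \le C_0|B_{2\nu_0 R}|^2$ with $C_0 = C_0(n,\l,\L,i_a,s_a)$. Choosing $s_0 = 2 + C_0/\theta_0^2$ gives $|A^-_{s_0}| \le \theta_0|B_{2\nu_0 R}|$, which is \eqref{arbitr:small1}. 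Since this is an exact transcription of the argument for Lemma~\ref{lemma:alt3small} under the sign change $v_\e \mapsto -v_\e$, there is no genuine obstacle; the only point requiring a word of care is checking that all the auxiliary estimates invoked (\eqref{oiii}, Remark~\ref{remark:nu0}, Lemma~\ref{lemma:levels}) are indeed symmetric under $v_\e \mapsto -v_\e$, which they are because $\bar\A_\e(\xi) = -\A_\e(-\xi)$ satisfies the same hypotheses as $\A_\e$. Given this, it suffices to say that the proof is identical to that of Lemma~\ref{lemma:alt3small}, working with $-v_\e$ in place of $v_\e$, and we omit the details.
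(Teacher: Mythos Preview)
Your proposal is correct and takes essentially the same approach as the paper, which simply states that the proof of Lemma~\ref{lemma:alt4small} is ``completely identical'' to that of Lemma~\ref{lemma:alt3small}. Your explicit reduction via $\bar v_\e = -v_\e$ and the careful verification that all auxiliary ingredients are symmetric under this substitution is exactly the intended argument.
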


The next lemma establishes the second alternative in Theorem~\ref{thm:alternative}.

\begin{lemma}[The second alternative]\label{lemma:altfin}
There exists $\eta_0\in (0,1)$ depending on $n,\l,\L,i_a,s_a$ such that, if \eqref{temp:alt3} holds for some $k=1,\dots,n$, then 
\begin{equation}\label{eta0:max}
    D_k\ve\leq \eta_0\,\mathrm{M}\quad \text{in $B_{R}$.}
\end{equation}
Analogously, if \eqref{temp:alt4} holds for some $k=1,\dots,n$, then
\begin{equation}\label{eta0:min}
    D_k\ve\geq-\eta_0\,\mathrm{M}\quad\text{in $B_R$.} 
\end{equation}
\end{lemma}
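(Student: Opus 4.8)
The plan is to deduce Lemma~\ref{lemma:altfin} from the De Giorgi-type machinery already developed, exactly in the spirit of the classical oscillation decay for the De Giorgi classes. Assume \eqref{temp:alt3} holds for some index $k$. By Remark~\ref{remark:nu0}, the measure bound \eqref{peso:misura} is available on $B_{2\nu_0 R}$; by Lemma~\ref{lemma:alt3small}, for any $\theta_0\in(0,1)$ to be fixed later we may pick $s_0=s_0(n,\l,\L,i_a,s_a,\theta_0)$ so that $|\{D_k\ve>(1-2^{-s_0})\mathrm M\}\cap B_{2\nu_0 R}|\leq\theta_0|B_{2\nu_0 R}|$. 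The idea is then to run a De Giorgi iteration on the superlevel sets with the \emph{top} levels, i.e.\ between $\kappa_\infty=(1-2^{-s_0-1})\mathrm M$ and $\mathrm M$, over the shrinking radii $R_m=\nu_0 R+\nu_0 R/2^m$ (so $R_m\downarrow \nu_0 R$, $R_0=2\nu_0 R$), and show that the rescaled measures $Z_m=|\{D_k\ve>\kappa_m\}\cap B_{R_m}|/|B_{R_m}|$ tend to $0$, provided $Z_0$ (controlled by $\theta_0$) is small enough. This forces $D_k\ve\leq\kappa_\infty=(1-2^{-s_0-1})\mathrm M$ in $B_{\nu_0 R}$, hence a fortiori in $B_R$ once we note $\nu_0>1/2$; taking $\eta_0=1-2^{-s_0-1}\in(0,1)$ gives \eqref{eta0:max}.

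The engine of the iteration is the discrete isoperimetric inequality of Lemma~\ref{lemma:levels} applied to $u=D_k\ve$ with levels $\ell=\kappa_{m+1}$, $\kappa=\kappa_m$ on the ball $B_{R_{m+1}}$, combined with the quadratic De Giorgi inequality \eqref{oiii} of Lemma~\ref{lemma:levquadratic} (with $\gamma=1/8$, say, since all the relevant levels exceed $\mathrm M/2$, so \eqref{cond:ellkap} is satisfied) to bound $\int_{\{D_k\ve>\kappa_m\}\cap B_{R_m}}|D(D_k\ve)|^2\,dx$ by $C(\mathrm M-\kappa_m)^2 (R_{m+1}-R_m)^{-2}|\{D_k\ve>\kappa_m\}\cap B_{R_m}|$. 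The denominator $|B_{R_{m+1}}\setminus\{D_k\ve>\kappa_m\}|$ appearing in \eqref{levels} is bounded below by $\tfrac{\mu_0}{2}|B_{R_{m+1}}|$ using \eqref{peso:misura3} (valid for all the levels $\kappa_m\geq\mathrm M/2$), which keeps the constant under control and depending only on the data. Inserting $(\mathrm M-\kappa_m)=2^{-m}\cdot 2^{-s_0-1}\mathrm M$ and writing everything in dimensionless form yields a recursion $Z_{m+1}\leq C\, b^{m} Z_m^{1+1/(n-1)}$ with $C,b$ depending only on $n,\l,\L,i_a,s_a$, exactly of the type handled by the hypergeometric Lemma~\ref{lem:hyp}. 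Choosing $\theta_0$ (hence $Z_0$) small enough, determined by $n,\l,\L,i_a,s_a$, makes $Z_0$ fall below the threshold $C^{-(n-1)}b^{-(n-1)^2}$ of Lemma~\ref{lem:hyp}, so $Z_m\to 0$.

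The statement \eqref{eta0:min} for the case \eqref{temp:alt4} follows by the symmetry already used repeatedly in this section: replace $\ve$ by $\bar v_\e=-\ve$, which solves $-\mathrm{div}(\bar\A_\e(D\bar v_\e))=0$ with $\bar\A_\e(\xi)=-\A_\e(-\xi)$ satisfying the same assumptions (Lemma~\ref{lemma:Ae}), note $D_k\bar v_\e=-D_k\ve$, and apply the already-proven case together with Lemma~\ref{lemma:alt4small} in place of Lemma~\ref{lemma:alt3small}. Finally one records that $\mu_0,\nu_0,s_0,\theta_0$ are all determined by $n,\l,\L,i_a,s_a$ alone, so that $\eta_0$ inherits this dependence, consistently with \eqref{universal}.

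The main obstacle I anticipate is purely bookkeeping rather than conceptual: one must be careful that the levels $\kappa_m$ stay in the admissible range \eqref{cond:ellkap} so that \eqref{oiii} applies with a \emph{fixed} $\gamma$ (here all $\kappa_m\geq(1-2^{-s_0})\mathrm M\geq\tfrac12\mathrm M$, so $\gamma=\tfrac18$ works uniformly), and that the measure-lower-bound \eqref{peso:misura3} is invoked on the correct balls $B_{R_{m+1}}\subset B_{2\nu_0 R}$ — it is the interplay between the radius $2\nu_0 R$ (where the small-measure information of Lemma~\ref{lemma:alt3small} lives) and the iteration radii $R_m\in[\nu_0 R,2\nu_0 R]$ that must be threaded correctly. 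Once the geometry is set up so that all constants depend only on the data, the convergence $Z_m\to 0$ is automatic from Lemma~\ref{lem:hyp}.
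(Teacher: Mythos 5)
Your strategy is exactly the paper's: feed the smallness of $|\{D_k\ve>(1-2^{-s_0})\mathrm M\}\cap B_{2\nu_0R}|$ from Lemma~\ref{lemma:alt3small} into a De Giorgi iteration built from Lemma~\ref{lemma:levels} and \eqref{oiii}, and close it with Lemma~\ref{lem:hyp}. The outline is sound, but two of the steps fail as written.

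First, your radii $R_m=\nu_0R+\nu_0R/2^m$ decrease to $\nu_0R$, and since $\nu_0<1$ you have $B_{\nu_0R}\subsetneq B_R$; the deduction ``$D_k\ve\le\eta_0\mathrm M$ in $B_{\nu_0R}$, hence a fortiori in $B_R$ because $\nu_0>1/2$'' is backwards — an estimate on a smaller ball does not imply one on a larger ball. What $\nu_0>1/2$ actually buys is $2\nu_0R>R$, so that one can interpolate $R_m=R+(2\nu_0R-R)2^{-m}$ between the ball $B_{2\nu_0R}$ carrying the initial smallness and the target ball $B_R$; with your choice the proof only yields the conclusion on $B_{\nu_0R}$. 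Second, the lower bound $|B_{R_{m+1}}\setminus A^+(\kappa_m,R_{m+1})|\ge\tfrac{\mu_0}{2}|B_{R_{m+1}}|$ does not follow from \eqref{peso:misura3}: that inequality lives on the full ball $B_{2\nu_0R}$, and the complement of the superlevel set could a priori be concentrated in the annulus $B_{2\nu_0R}\setminus B_{R_{m+1}}$. The correct source is the $\theta_0$-smallness itself: since every $\kappa_m\ge(1-2^{-s_0})\mathrm M$, one has $|A^+(\kappa_m,R_{m+1})|\le\theta_0|B_{2\nu_0R}|\le\theta_0\,2^n|B_{R_{m+1}}|\le\tfrac12|B_{R_{m+1}}|$ once $\theta_0\le 2^{-n-1}\nu_0^{-n}$ (this is \eqref{azz:1} in the paper). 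Finally, your formula $\mathrm M-\kappa_m=2^{-m}2^{-s_0-1}\mathrm M$ is inconsistent with your own claim $\kappa_\infty=(1-2^{-s_0-1})\mathrm M$ (it would force $\kappa_m\to\mathrm M$, making the conclusion vacuous); with the correct levels $\kappa_m=(1-2^{-s_0})\mathrm M+(1-2^{-m})\mathrm M/2^{s_0+1}$ one has $\mathrm M-\kappa_m\approx 2^{-s_0}\mathrm M$ essentially constant, the factor $b^m$ in the recursion coming from $(\mathrm M-\kappa_m)/(\kappa_{m+1}-\kappa_m)\sim 2^m$ together with $(R_m-R_{m+1})^{-1}\sim 2^m/R$. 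All three repairs are local and use only ingredients you already cite, after which Lemma~\ref{lem:hyp} closes the argument exactly as you describe; the symmetry argument for \eqref{eta0:min} is fine.
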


\begin{proof}
    Let us fix $\theta_0=\theta_0(n,\l,\L,i_a,s_a)\in (0,1)$ to be determined later and, correspondingly, from Lemma \ref{lemma:alt3small} we can find $s_0=s_0(n,\l,\L,i_a,s_a)\in \N$ such that \eqref{arbitr:small} holds.

   From this point on, the proof of \eqref{eta0:max} is very similar to that of \eqref{thesis:alt1}-- see Equations \eqref{start:iter}-\eqref{iterazione:1}. Specifically, for $m=0,1,2,3,\dots$, we set
   \begin{equation}\label{azz:in}
       \kappa_m=\left(1-\frac{1}{2^{s_0}} \right)\mathrm{M}+\left( 1-\frac{1}{2^m}\right)\,\frac{\mathrm{M}}{2^{s_0+1}},\quad R_m=R+(2\nu_0R-R)\left(\frac{1}{2^m} \right),
   \end{equation}
where $\nu_0\in \big(\tfrac{1}{2},1\big)$ is given by \eqref{def:nu0}, and we also set
\begin{equation}
    A^+(\kappa_m,R_m)\coloneqq\{D_k\ve>\kappa_m\}\cap B_{R_m}.
\end{equation}
Then, by \eqref{arbitr:small}, and since $\kappa_m\geq \kappa_0= (1-\tfrac{1}{2^{s_0}})\,\mathrm{M}$ and $R\leq R_m\leq 2\nu_0R\leq 2R$, we have
\begin{equation}\label{azz:1}
\begin{split}
    |B_{R_{m+1}} &\setminus A^+(\kappa_m,R_{m+1})|\geq |B_{R_{m+1}}|-|A^+(\kappa_0,2\nu_0R)|
    \\
    &\geq (1-\theta_0\,(2\nu_0)^n)|B_{R_{m+1}}|\geq  \frac{1}{2}\,|B_{R_{m+1}}|\,,
\end{split}
\end{equation}
provided we choose $0<\theta_0\leq 2^{-n-1}\nu_0^{-n}$.

   We then use Lemma \ref{lemma:levels} with function $u=D_k\ve$, and levels $\ell=\kappa_{m+1}$, $\kappa=\kappa_{m}$, and find
   \begin{equation*}
   \begin{split}
       \frac{\mathrm{M}}{2^{s_0+m+2}}&\, |A^+(\kappa_{m+1},R_{m+1})|^{\frac{n-1}{n}}
       \\
       &\leq C\,\frac{|B_{R_{m+1}}|}{|B_{R_{m+1}} \setminus A^+(\kappa_m,R_{m+1})|}\int_{A^+(\kappa_m,R_{m+1})\setminus A^+(\kappa_{m+1},R_{m+1})}|D(D_k\ve)|\,dx
       \\
       &\leq C'\,\left(\int_{A^+(\kappa_m,R_{m+1})\setminus A^+(\kappa_{m+1},R_{m+1})}|D(D_k\ve)|^2\,dx \right)^{1/2}\,|A^+(\kappa_m,R_{m})|^{1/2}\,,
       \end{split}
   \end{equation*}
with $C,C'=C,C'(n,\l,\L,i_a,s_a)>0$, where in the last estimate we used H\"older's inequality and \eqref{azz:1}.

We now exploit \eqref{oiii} with $\gamma=1/8$, $r_2=R_m$ and $r_1=R_{m+1}$, and using that
\[(\mathrm{M}-\kappa_m)\leq \frac{\mathrm{M}}{2^{s_0}},\quad\text{and}\quad
R_m-R_{m+1}=\frac{(2\nu_0-1)\,R}{2^{m+1}}= c(n,\l,\L,i_a,s_a)\,\frac{R}{2^{m+1}}
\]
for all $m$ (as $\nu_0=\nu_0(n,\l,\L,i_a,s_a)\in(1/2,1)$), we find
\begin{equation*}
    \left(\int_{A^+(\kappa_m,R_{m+1})\setminus A^+(\kappa_{m+1},R_{m+1})}|D(D_k\ve)|^2\,dx \right)^{1/2}\leq C\,\frac{2^{m+2}}{R}\,\frac{\mathrm{M}}{2^{s_0} }|A^+(\kappa_m,R_m)|^{1/2}.
\end{equation*}
Merging the content of the two inequalities above, and dividing both sides of the resulting equation by $\mathrm{M}/2^{s_0}$, we get
\begin{equation}\label{iterazione2}
    |A^+(\kappa_{m+1},R_{m+1})|^{\frac{n-1}{n}}\leq C\,\frac{4^{m}}{R}\,|A^+(\kappa_m,R_m)|\,.
\end{equation}
for some $C=C(n,\l,\L,i_a,s_a)>0$. Hence, by setting
\begin{equation*}
    Z_m=\frac{|A^+(\kappa_m,R_m)|}{|B_{R_m}|},
\end{equation*}
and by exploiting that $R\leq R_m\leq 2\nu_0 R\leq 2R$ as $\nu_0\in (1/2,1)$, from\eqref{iterazione2} we find
\begin{equation*}
    Z_{m+1}\leq C\, (4^{\frac{n}{n-1}})^{m}\,Z_m^{\frac{n}{n-1}},\quad\text{and}\quad Z_0\leq \theta_0\,,
\end{equation*}
with $C=C(n,\l,\L,i_a,s_a)$, where the initial condition on $Z_0$ stems from \eqref{arbitr:small}. Hence, by Lemma \ref{lem:hyp}, if we take $\theta_0=\theta_0(n,\l,\L,i_a,s_a)$ small enough, we get $\lim_{m\to \infty }Z_m=0$, which implies
\begin{equation}\label{azz:fin}
    D_k\ve\leq \left(1-\frac{1}{2^{s_0}} \right)\mathrm{M}+\frac{\mathrm{M}}{2^{s_0+1}}\equiv \eta_0\, \mathrm{M}\,,\quad \text{a.e. in $B_R$,}
\end{equation}
where we set $\eta_0=\eta_0(n,\l, \L,i_a,s_a)=(1-\frac{1}{2^{s_0+1}})$, with $s_0$ provided by Lemma \ref{lemma:alt3small} and the corresponding $\theta_0$ we just fixed.  
Equation \eqref{eta0:max} is thus proven.

Finally, the proof of \eqref{eta0:min} is completely specular, using Lemma \ref{lemma:alt4small} in place of Lemma \ref{lemma:alt3small}. We leave the details to the reader.
\end{proof}

\begin{proof}[Second proof of Lemma \ref{lemma:altfin}]
    Here we provide an alternative proof of the second alternative via a Moser type iteration. Owing to Remark \ref{rem:scaling}, we may assume that $R=1$ (see in particular \eqref{uR:scale}). We fix $\theta_0=\theta_0(n,\l,\L,i_a,s_a)$, and then Lemma \ref{lemma:alt3small} gives $s_0(n,\l,\L,i_a,s_a)\in \N$ such that the scaled version of \eqref{arbitr:small} holds, i.e.,
    \begin{equation}\label{arbsm:scale}
         \bigg|\Big\{D_k\ve>\Big(1-\frac{1}{2^{s_0}} \Big)\,\mathrm{M} \Big\}\cap B_{2\nu_0} \bigg|\leq c(n)\,\theta_0.
    \end{equation}
Now let 
\begin{equation*}
    g(t)=\max\{t-(1-1/2^{s_0})\,\mathrm{M},0\}^{2q+1},
\end{equation*}
    for $q\geq 0$ large enough, and we also define
    \[
    w=\max\big\{D_k\ve-(1-1/2^{s_0})\mathrm{M},0\big\},\quad\text{so that}\quad g(D_k\ve)=w^{2q+1}.
    \]
We notice that
\[
(1-1/2^{s_0})\,\mathrm{M}\leq  D_k\ve \leq \mathrm{M}\quad \text{in $\{g(D_k\ve)\neq 0\}$,}
\]
 hence from \eqref{ultra:utile} and Remark \ref{remark:importante}, we deduce
 \[
 c\,a_\e(\mathrm{M})\leq a_\e(|D\ve|)\leq C\,a_\e(\mathrm{M}),\quad\text{in $\{g(D_k\ve)\neq 0\}$,}
 \]
 with $c,C=c,C(n,\l,\L,i_a,s_a)>0$. Let us then consider \eqref{ooooo}, and divide the resulting equation by $a_\e(\mathrm{M})$, thus getting
\begin{equation*}
    \begin{split}
        \int_{B_{r_2}} |D(w
        ^{q+1}\phi)|^2\,dx\leq C\,\int_{B_{r_2}} w^{2(q+1)}|D\phi|^2\,dx\leq C\,\Big(\frac{\mathrm{M}}{2^{s_0}} \Big)^2\int_{B_{r_2}}w^{2q}|D\phi|^2\,dx
    \end{split}
\end{equation*}
    where in the last inequality we used that $w\leq \big(\frac{\mathrm{M}}{2^{s_0}}\big) $. Next we esploit Sobolev inequality, and by setting $\vartheta=\vartheta(n)=\frac{2}{\kappa-1}$, with $\kappa$ given by \eqref{sob:kappa}, and using the properties of $\phi$ \eqref{cutoff}, we infer
\begin{equation}\label{mos:2}
    \Big(\int_{B_{r_1}}w^{\kappa(2q+\vartheta+2)}\,d\mu\Big)^{1/\kappa}\leq \frac{C}{(r_2-r_1)^2}\int_{B_{r_2}} w^{2q+\vartheta+2}\,d\mu,
\end{equation}
    where we defined the measure
    \begin{equation*}
        d\mu=\Big(\frac{\mathrm{M}}{2^{s_0}}\frac{1}{w} \Big)^{\vartheta+2}\,\chi_{\{w\neq 0\}}\,dx.
    \end{equation*}
    Starting from \eqref{mos:2}, we may iterate exactly as in \eqref{iter:chiamo}, the only difference being the radii $r_m=1+(2\nu_0-1)\,2^{-m}$, $m=0,1,2,\dots$. We thus obtain
\begin{equation*}
    \sup_{B_1} w\leq C\,\|w\|_{L^{\vartheta+2}(B_{2\nu_0};d\mu)}=C\,|\{w\neq 0\}\cap B_{2\nu_0}|^{1/(\vartheta+2)}\Big( \frac{\mathrm{M}}{2^{s_0}}\Big)\stackrel{\eqref{arbsm:scale}}{\leq} C'\,\theta_0^{1/(\vartheta+2)}\Big( \frac{\mathrm{M}}{2^{s_0}}\Big).
\end{equation*}
   Thereby choosing $\theta_0=\theta_0(n,\l,\L,i_a,s_a)=1/(2C')^{\vartheta+2}$, we get $w\leq (\mathrm{M}/2^{s_0+1})$ in $B_1$, that is 
   \[
   D_k\ve \leq \Big(1-\frac{1}{2^{s_0}}\Big)\,\mathrm{M}+\frac{\mathrm{M}}{2^{s_0+1}}\equiv \eta_0\,\mathrm{M}\quad\text{in $B_1$,}
   \]
with $s_0=s_0(n,\l,\L,i_a,s_a)\in \N$ provided by Lemma \ref{lemma:alt3small}. This concludes the proof.   
\end{proof}

We can now prove the fundamental alternative Theorem \ref{thm:alternative}, which is an immediate consequence of Lemmas \ref{lem:1alt}- \ref{lemma:altfin}.

\begin{proof}[Proof of Theorem \ref{thm:alternative}.]
If \eqref{alt:0} holds for some $k\in \{1,\dots,n\}$, then by Lemma \ref{lem:1alt} we have $M(R)\geq |D_k\ve|\geq M(2R)/4$. On the other hand, if  \eqref{alt:3} holds, then  Lemma \ref{lemma:altfin} gives
\begin{equation*}
    -\eta_0\,M(2R)\leq D_k\ve\leq \eta_0\,M(2R)\quad\text{in $B_R$}
\end{equation*}
 for all $k=1,\dots,n$,  hence $M(R)\leq\eta_0 M(2R)$, that is our thesis.  
\end{proof}

Having Theorem \ref{thm:alternative} at our disposal, a standard iteration implies the (quantitative)  H\"older continuity of $D\ve$.
The idea is the following: we consider a sequence of dyadic radii $R_m=R_0/2^{m}$ for $m=0,1,2,\dots$.

If for some $m=m_0>0$ the first alternative is valid, i.e., either \eqref{alt:1} or \eqref{alt:2} hold for $R=R_{m_0}$, then $M(2R_{m_0})\geq|D\ve|\geq M(2R_{m_0})/4$  in $B_{R_{m_0+1}}$. It follows from \eqref{coer:Ae}, \eqref{ultra:utile} and Remark \ref{remark:importante}, that the matrix
\begin{equation*}
    \nabla_\xi \Ae(D\ve)\approx a_\e(|M(R_{m_0})|)\,\mathrm{Id}
\end{equation*}
whence \eqref{eq:classic} is a uniformly elliptic linear equation, and the H\"older continuity of $D_k\ve$ in $B_{R_{m_0}+1}$ follows from the De Giorgi-Nash-Moser theory \cite[Corollary 4.18]{HL11}. In the complementary case, that is if \eqref{alt:3} holds for all $R=R_m$, $m=0,1,2,3,\dots$, a simple iteration on $M(r)$ implies 
\begin{equation*}
    \mathrm{osc} (D\ve)\leq  2\,M(\varrho)\lesssim \left(\frac{\varrho}{R_0}\right)^{\alpha_1} M(R_0),\quad  \alpha_1=-\log_2 \eta_0.
\end{equation*}
We refer, for instance, to \cite[pp. 27-29]{M86} for the details.
With a little more effort, but with a similar reasoning, below we prove the excess decay estimate \eqref{exc:hom}. First let us recall the decay estimate for uniformly elliptic linear equations.

\begin{lemma}\label{lemma:degiorginash}
  Let $w\in W^{1,2}_{loc}(\Omega)$ be a local weak solution to
  \begin{equation}\label{eq:DNM}
      -\mathrm{div}\big( \tilde A(x)\,Dw\big)=0\quad\text{in $\Omega$,}
  \end{equation}
  where the matrix $\tilde A(x)=\{\tilde A_{ij}(x)\}_{i,j=1,\dots,n}$ has measurable entries and satisfies  ellipticity
 and growth bounds
 \begin{equation}\label{A:elliptic}
     c_*\,|\eta|^2\leq \tilde A(x)\,\eta\cdot\eta,\quad \sum_{i,j=1}^n|\tilde A_{ij}(x)|\leq C_*\quad\text{for a.e. $x\in \Omega$,}
 \end{equation}
 and for every $\eta\in \R^n$, for some positive constants $c_*,C_*$. Then there exist constants $C_g\geq 1$ and $\beta_g\in (0,1)$ depending on $n$ and on the ratio $C_*/c_*$ such that
 \begin{equation}\label{DNM:exc}
     \mint_{B_r}|w-(w)_{B_r}|\,dx\leq C_g\left( \frac{r}{R}\right)^{\beta_g}\, \mint_{B_R}|w-(w)_{B_R}|\,dx\,,
 \end{equation}
for every concentric balls $B_r\subset B_{R}\Subset \Omega$.
\end{lemma}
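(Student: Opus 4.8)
The statement to prove is Lemma~\ref{lemma:degiorginash}, the classical De~Giorgi--Nash--Moser excess decay estimate for linear uniformly elliptic equations in divergence form with merely measurable coefficients. Here is how I would organize the proof.

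\medskip

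\textbf{Plan of the proof.} The core of the argument is the classical interior H\"older estimate of De~Giorgi--Nash--Moser: under the ellipticity and growth bounds \eqref{A:elliptic}, any local weak solution $w$ of \eqref{eq:DNM} belongs to $C^{0,\beta_g}_{loc}(\Omega)$ for some $\beta_g = \beta_g(n, C_*/c_*) \in (0,1)$, together with the quantitative oscillation decay
\[
\operatorname*{osc}_{B_r} w \le C\left(\frac{r}{R}\right)^{\beta_g} \operatorname*{osc}_{B_{R}} w, \qquad 0 < r \le R,
\]
valid for concentric balls $B_r \subset B_R \Subset \Omega$; this is exactly the content of \cite[Corollary 4.18]{HL11} or \cite[Theorem 8.13]{GM12}, which are already cited in the paper (e.g.\ in the proof of Proposition~\ref{prop:veregular}). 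I would invoke this directly. The only issue is to pass from an \emph{oscillation} decay estimate to the \emph{averaged $L^1$ excess} decay \eqref{DNM:exc}, and to see that $\operatorname*{osc}_{B_{R}} w$ can be controlled by the $L^1$ excess $\mint_{B_R}|w - (w)_{B_R}|\,dx$ on a comparable ball.

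\medskip

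\textbf{Key steps.} First I would fix concentric balls $B_r \subset B_{R} \Subset \Omega$ and assume without loss of generality that $r \le R/2$ (for $R/2 < r \le R$ the estimate \eqref{DNM:exc} is trivial after using \eqref{medie} and enlarging $C_g$). Next, since a constant added to $w$ is still a solution, I may assume $(w)_{B_{R/2}} = 0$. Applying the De~Giorgi--Nash--Moser oscillation estimate on the pair $B_r \subset B_{R/2}$ gives
\[
\operatorname*{osc}_{B_r} w \le C_g\left(\frac{2r}{R}\right)^{\beta_g}\operatorname*{osc}_{B_{R/2}} w.
\]
Then I would estimate $\mint_{B_r}|w - (w)_{B_r}|\,dx \le \operatorname*{osc}_{B_r} w$ on the left, and on the right bound $\operatorname*{osc}_{B_{R/2}} w$ by the $L^\infty$ oscillation bound — again part of the De~Giorgi--Nash--Moser package, $\sup_{B_{R/2}}|w| \le C\,\mint_{B_{R}}|w|\,dx$ when $(w)_{B_R}$ has been normalized, or more precisely the local boundedness estimate applied to $w - (w)_{B_R}$ — to obtain
\[
\operatorname*{osc}_{B_{R/2}} w \le 2\sup_{B_{R/2}}|w - (w)_{B_R}| \le C\,\mint_{B_{R}}|w - (w)_{B_R}|\,dx.
\]
Chaining these inequalities and absorbing all dimensional and ellipticity-ratio constants into a single $C_g$ yields \eqref{DNM:exc}. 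Finally, a covering/iteration remark handles the general position of $B_r$ and $B_R$, but since they are assumed concentric this is immediate.

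\medskip

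\textbf{Main obstacle.} There is no genuine obstacle here — this is a textbook consequence of the De~Giorgi--Nash--Moser theory, and the only mild care needed is bookkeeping: making sure the oscillation decay estimate and the local $L^\infty$--$L^1$ bound both have constants depending only on $n$ and $C_*/c_*$ (not on $c_*, C_*$ separately), which is standard because the equation \eqref{eq:DNM} is invariant under multiplying $\tilde A$ by a positive scalar. Accordingly, I would keep the proof short, citing \cite[Corollary 4.18]{HL11} and \cite[Theorem 8.13, Theorem 8.17]{GM12} for the two ingredients, and present only the two-line passage from oscillation decay to averaged $L^1$ excess decay.
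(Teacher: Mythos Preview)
Your proposal is correct and follows essentially the same route as the paper: invoke the De~Giorgi--Nash--Moser oscillation decay together with the local $L^\infty$--$L^1$ bound (applied to $w-(w)_{B_R}$) to get $\operatorname*{osc}_{B_r} w \le C(r/R)^{\beta_g}\mint_{B_R}|w-(w)_{B_R}|\,dx$ for $r\le R/2$, then bound the $L^1$ excess by the oscillation on the left, and handle $R/2<r\le R$ trivially via \eqref{medie}. The paper cites \cite[Theorems~8.22 and~8.17]{GT} rather than \cite{HL11} and \cite{GM12}, but the ingredients and the chaining are identical.
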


\begin{proof}
     The result is a standard consequence of De Giorgi-Nash-Moser theory for linear elliptic equations. Specifically, by  \cite[Theorems 8.22 and 8.17]{GT} applied to $u=w-(w)_{B_R}$ (which is still a solution to \eqref{eq:DNM}), we get 
     \begin{equation*}
         \operatorname*{osc}_{B_r} w\leq C\,\left( \frac{r}{R}\right)^{\beta_g}\,\mint_{B_R}|w-(w)_{B_R}|\,dx,\quad\text{for all  $r<R/2$.}
     \end{equation*}
     with $\beta_g\in (0,1)$ and $C>0$ depending on $n,C_*/c_*$.
Since, trivially,
\[
\mint_{B_r}|w-(w)_{B_r}|\,dx\leq  \operatorname*{osc}_{B_r} w,
\]
Equation \eqref{DNM:exc} is thus proven in the case $0<r\leq R/2$.
Let us show its validity when  $R/2<r\leq R$. In this case, using \eqref{medie}, we have

\begin{equation*}
\begin{split}
     \mint_{B_r}|w-(w)_{B_r}|\,dx&\leq 2\,\left(\frac{R}{r}\right)^n\mint_{B_R}|w-(w)_{B_R}|\,dx
     \\
     &\leq 2^{n+\beta_g+1}\,\left(\frac{r}{R}\right)^{\beta_g}\mint_{B_R}|w-(w)_{B_R}|\,dx\,,
     \end{split}
\end{equation*}
hence \eqref{DNM:exc} is proven for all $0<r\leq R$.
\end{proof}

We are now ready to prove the excess decay estimate for $D\ve$.
\begin{proposition}\label{exs:Dve}
    Let $\ve\in W^{1,2}_{loc}(\Omega)$ be a local weak solution to \eqref{eq:homeint}. Then
\begin{equation}\label{exc:homee}
        \mint_{B_r}|D\ve-(D\ve)_{B_r}|\,dx\leq c_{\mathrm{h}}\,\left( \frac{r}{R}\right)^{\alpha_\mathrm{h}} \mint_{B_R}|D\ve-(D\ve)_{B_R}|\,dx,
    \end{equation}
    for every concentric ball $B_r\subset B_R\subset B_{2R}\Subset \Omega$,  where $\alpha_\mathrm{h}\in (0,1)$ and $c_\mathrm{h}>0$ depend only on $n,\l,\L,i_a,s_a$.
\end{proposition}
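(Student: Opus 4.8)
The plan is to deduce the excess decay estimate \eqref{exc:homee} for $D\ve$ by combining the fundamental alternative (Theorem~\ref{thm:alternative}), the linear decay estimate of Lemma~\ref{lemma:degiorginash}, and a dichotomy iteration on dyadic radii. As usual, it suffices to prove \eqref{exc:homee} for $0<r\le R/2$ and then extend to $R/2<r\le R$ by the elementary averaging inequality \eqref{medie}, exactly as in the final lines of the proof of Lemma~\ref{lemma:degiorginash}. So fix $B_{2R}=B_{2R}(x_0)\Subset\Omega$, set $R_m=R/2^m$ for $m\ge0$, and write $M(\varrho)=\max_k\sup_{B_\varrho}|D_k\ve|$ as in \eqref{def:Mr}. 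The key observation is the following dichotomy: \emph{either} there is a first index $m_0\ge0$ such that the nondegenerate alternative~(i) of Theorem~\ref{thm:alternative} holds on $B_{2R_{m_0}}$ (so that $|D\ve|\ge M(2R_{m_0})/4$ in $B_{R_{m_0}}$), \emph{or} the degenerate alternative~(ii) holds on every $B_{2R_m}$, $m\ge0$.

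In the second ("always degenerate") case, Theorem~\ref{thm:alternative}(ii) gives $M(R_{m+1})\le\eta_0\,M(R_m)$ for all $m$, hence by the standard geometric iteration $\operatorname{osc}_{B_r}D\ve\le 2n\,M(r)\le C\,(r/R)^{\alpha_1}M(R)$ with $\alpha_1=-\log_2\eta_0$; since $M(R)\le C_{\mathrm b}\mint_{B_{2R}}|D\ve|\,dx$ by Proposition~\ref{prop:bernstein} and, after subtracting the constant vector $(D\ve)_{B_{2R}}$ (which leaves \eqref{eq:homeint} invariant), one may replace $M(R)$ by the excess $\mint_{B_{2R}}|D\ve-(D\ve)_{B_{2R}}|\,dx$, this yields \eqref{exc:homee} in this regime, after adjusting radii by a harmless factor. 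In the first case, on $B_{R_{m_0}}$ we have $M(2R_{m_0})/4\le|D\ve|\le M(2R_{m_0})$, so by \eqref{coer:Ae}, \eqref{ultra:utile} and Remark~\ref{remark:importante} the matrix $\nabla_\xi\Ae(D\ve)$ satisfies the ellipticity/growth bounds \eqref{A:elliptic} with constants $c_*,C_*$ comparable to $a_\e(M(2R_{m_0}))$, whose ratio $C_*/c_*$ depends only on $n,\l,\L,i_a,s_a$; since each $D_k\ve$ solves \eqref{eq:classic}, Lemma~\ref{lemma:degiorginash} applies on balls contained in $B_{R_{m_0}}$ and gives $\mint_{B_r}|D_k\ve-(D_k\ve)_{B_r}|\,dx\le C_g(r/\varrho)^{\beta_g}\mint_{B_\varrho}|D_k\ve-(D_k\ve)_{B_\varrho}|\,dx$ for $r\le\varrho\le R_{m_0}$; summing over $k$ and using \eqref{medie} converts this into the same inequality for the full gradient $D\ve$.

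It remains to splice the two regimes together across the scale $R_{m_0}$. For radii $\varrho\ge R_{m_0}$ the degenerate alternative held, so the geometric decay of $M$ (together with Proposition~\ref{prop:bernstein} on each ball $B_{2R_m}$, $m\le m_0$) controls the excess on $B_{R_{m_0}}$ by $(R_{m_0}/R)^{\alpha_1}$ times the excess on $B_{2R}$; for $r\le R_{m_0}$ the linear estimate of the previous paragraph gives a further factor $(r/R_{m_0})^{\beta_g}$. Choosing $\alpha_{\mathrm h}=\min\{\alpha_1,\beta_g\}\in(0,1)$ (which depends only on $n,\l,\L,i_a,s_a$ via $\eta_0$ and $C_*/c_*$) and multiplying the two estimates produces \eqref{exc:homee} for all $0<r\le R_{m_0}$, while for $R_{m_0}<r\le R$ the estimate follows directly from the degenerate-regime bound and \eqref{medie}. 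The only real subtlety—and the step I expect to require the most care—is the bookkeeping at the crossover: one must verify that the implied constant does not blow up as $m_0\to\infty$, i.e. that the telescoping product of the per-scale factors $\eta_0$ (in the degenerate part) and the single Campanato-type iteration (in the nondegenerate part) combine into a clean $(r/R)^{\alpha_{\mathrm h}}$ with a constant $c_{\mathrm h}$ independent of $m_0$; this is handled by absorbing the geometric-series tail, exactly as in the classical argument (cf. \cite[pp.~27--29]{M86}). Since every estimate used is uniform in $\e\in(0,1)$ by Remark~\ref{remark:importante}, the bound passes to $\ve$ with constants depending only on $n,\l,\L,i_a,s_a$, completing the proof.
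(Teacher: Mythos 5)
Your outline correctly identifies the main ingredients (the fundamental alternative, the linear De Giorgi--Nash--Moser decay of Lemma~\ref{lemma:degiorginash} in the nondegenerate regime, and a dyadic dichotomy with a single crossover scale), and the splicing of the two regimes at the first nondegenerate index is indeed how the paper concludes. However, there is a genuine gap in the degenerate regime, and it is precisely the part you dismiss in one line.

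In the ``always degenerate'' case, iterating $M(R_{m+1})\le\eta_0 M(R_m)$ only yields $\operatorname{osc}_{B_r}D\ve\lesssim (r/R)^{\alpha_1}M(R)$, i.e.\ the oscillation estimate \eqref{fin:oscdec} with $\mint_{B_{2R}}|D\ve|\,dx$ on the right-hand side --- not the excess $\mint_{B_{2R}}|D\ve-(D\ve)_{B_{2R}}|\,dx$. These differ substantially when $D\ve$ is close to a large constant vector: then $M(R)$ is large while $E(R)$ is tiny. Your proposed fix --- ``after subtracting the constant vector $(D\ve)_{B_{2R}}$ (which leaves \eqref{eq:homeint} invariant), one may replace $M(R)$ by the excess'' --- does not work. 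Subtracting the linear function $V\cdot x$ with $V=(D\ve)_{B_{2R}}$ replaces the stress field by $\xi\mapsto\A_\e(\xi+V)$, whose linearization is comparable to $a_\e(|\xi+V|)\,\mathrm{Id}$ rather than $a_\e(|\xi|)\,\mathrm{Id}$; the structure conditions \eqref{coer:Ae}, and hence the fundamental alternative and the Bernstein bound \eqref{Degiorgi:ve}, are not preserved. So $M$-decay for the shifted function is not available, and the reduction from supremum decay to excess decay is exactly the nontrivial content here.

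The paper resolves this (following Duzaar--Mingione) by a second dichotomy inside the degenerate regime: either $|(D\ve)_{B_t}|\le 2\sqrt{n}\,M(2^{-H_1}t)$, in which case the mean is absorbed into the supremum bound after $H_1$ degenerate steps and one gets $E(2^{-H_1}t)\le\tfrac12 E(t)$; or $|(D\ve)_{B_t}|>2\sqrt{n}\,M(2^{-H_1}t)$, in which case $|D\ve-(D\ve)_{B_t}|>\sqrt{n}\,M(2^{-H_1}t)$ pointwise on $B_{2^{-H_1}t}$, giving a lower bound on the excess that, after $K_1$ further degenerate steps, again yields $E(2^{-(H_1+K_1)}t)\le\tfrac12 E(t)$. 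The constants $H_1,K_1$ are fixed via \eqref{H1}--\eqref{K1}, and the conclusion requires a two-speed iteration tracking which of the two sub-cases occurs at each stage. None of this appears in your argument, and without it the degenerate part of your proof only establishes \eqref{fin:oscdec}, not \eqref{exc:homee}.
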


\begin{proof}
The proof follows closely the argument of \cite[Theorem~3.1]{DM10}. Let $M(r)$ be given by \eqref{def:Mr}, and define the excess functional
   \begin{equation}
       E(r)\coloneqq \mint_{B_r}|D\ve-(D\ve)_{B_r}|\,dx.
   \end{equation}
   By Theorem \ref{thm:alternative}, only the two alternatives are possible:  either
    \begin{equation}\label{new:alt1}
        |D\ve|\geq M(2r)/4\quad\text{in $B_r$  }
    \end{equation}
    or
\begin{equation}\label{new:alt2}
    M(r)\leq \eta_0\,M(2r).
\end{equation}
happen for all $0<r\leq R$, with $\eta_0\in (0,1)$ given by \eqref{universal}. 
\\

\noindent \textit{Step 1. The nondegenerate case.} In the case \eqref{new:alt1} holds for some radius $r\leq R$,  then by \eqref{ultra:utile} and Remark \ref{remark:importante}, we have
 $$c_{**}\,a_\e(M(2r))\leq a_\e(|D\ve|)\leq C_{**}\,a_\e(M(2r))\quad \text{in } B_{r},$$
 where $c_{**},C_{**}=c_{**},C_{**}(n,\l,\L,i_a,s_a)$, and hence by \eqref{coer:Ae}, we have
\begin{equation*}
    \nabla_\xi \Ae(D\ve)\,\eta\cdot \eta\geq c'_{**}\,a_\e(M(2r))\,|\eta|^2\quad\text{and }\quad |\nabla_\xi \Ae(D\ve)|\leq C'_{**}\,a_\e(M(2r))\quad \text{in $B_{r}$,}
\end{equation*}
for all $\eta\in \rn$, where $c'_{**},C'_{**}$ depend on the same data as $c_{**},C_{**}$.

Therefore, for all $k=1,\dots,n$, owing to \eqref{eq:classic}, $D_k\ve$ solves a uniformly elliptic linear equation as in \eqref{eq:DNM}. Lemma \ref{lemma:degiorginash}  thus entails
 \begin{equation}\label{temp:DNMexc}
     \mint_{B_\vrho}|D\ve-(D\ve)_{B_\vrho}|\,dx\leq C_{\mathrm{d}}\,\left( \frac{\vrho}{r}\right)^{\beta_{\mathrm{d}}}\mint_{B_t}|D\ve-(D\ve)_{B_t}|\,dx,\quad \text{for all $0<\vrho\leq r$},
 \end{equation}
 with $C_{\mathrm{d}}\geq 1$ and $\beta_{\mathrm{d}}\in (0,1)$ determined only by $n,\l,\L,i_a,s_a$. The main point here is that these constants do not depend on $M(r)$. 
\\

\noindent\textit{Step 2. Choice of constants.} We now choose two constants, whose utility will become apparent later. Let us fix $H_1=H_1(n,\l,\L,i_a,s_a)\geq 1$ such that
\begin{equation}\label{H1}
    8\sqrt{n}\,C_{\mathrm{b}}\eta_0^{H_1-1}\leq 1\,,
\end{equation}
where $C_{\mathrm{b}}>0$ is the constant appearing in \eqref{Degiorgi:ve}. In turn, we fix another parameter $K_1=K_1(n,\l,\L,i_a,s_a)\geq 1$ satisfying
\begin{equation}\label{K1}
    2^{nH_1+2}\eta_0^{K_1}\leq 1.
\end{equation}
Finally, we set
\begin{equation}\label{tbetad} 
\tilde\beta\coloneqq\frac{1}{(H_1+K_1)}\in (0,1]\quad\text{and}\quad \alpha_{\mathrm{h}}\coloneqq\min\big\{\beta_{\mathrm{d}},\, \tilde\beta\big\}\,.
\end{equation}
where $\beta_{\mathrm{d}}$ appears in \eqref{temp:DNMexc}, so that $\tilde\beta,\alpha_{\mathrm{h}}$ are determined only by $n,\l,\L,i_a,s_a$.
\\

\noindent\textit{Step 3. The degenerate case I.} We consider the following situation: there exists a radius $t\leq R$ such that \eqref{new:alt2} happens to hold whenever $r=t/2^i$ for all $1\leq i\leq H_1\in \N$, and we also assume that
\begin{equation}\label{new:alt3}
    |(D\ve)_{B_t}|\leq 2\sqrt{n}\,M\big(2^{-H_1}t \big)\,.
\end{equation}
By iterating \eqref{new:alt2} we find $M(2^{-H_1}t)\leq \eta_0^{H_1-1}\,M(t/2)$. Therefore
\begin{equation*}
    \begin{split}
        M(2^{-H_1}t) &\leq \eta_0^{H_1-1}\,M(t/2)\stackrel{\eqref{Degiorgi:ve}}{\leq} C_{\mathrm{b}}\,\eta_0^{H_1-1}\mint_{B_t}|D\ve|\,dx
        \\
        &\leq C_{\mathrm{b}}\,\eta_0^{H_1-1}\mint_{B_t}|D\ve-(D\ve)_{B_t}|\,dx+C_{\mathrm{b}}\,\eta_0^{H_1-1}|(D\ve)_{B_t}|
        \\
        &\stackrel{\eqref{new:alt3}}{\leq} C_{\mathrm{b}}\,\eta_0^{H_1-1}\,E(t)+2\sqrt{n}C_{\mathrm{b}}\,\eta_0^{H_1-1} M(2^{-H_1}t)
        \\
        &\stackrel{\eqref{H1}}{\leq} C_{\mathrm{b}}\,\eta_0^{H_1-1}\,E(t)+\frac{1}{2}M(2^{-H_1}t).
    \end{split}
\end{equation*}
hence, reabsorbing term, we get $M(2^{-H_1}t)\leq 2\,C_{\mathrm{b}}\,\eta_0^{H_1-1}\,E(t)$. Noticing that, trivially, 
\[E(2^{-H_1}t)\leq 2\sqrt{n} M(2^{H_1}t),\]
making use of this information and  using \eqref{H1} again, we infer
\begin{equation}\label{deg:I}
    E(2^{-H_1}t)\leq \frac{1}{2} E(t)\,.
\end{equation}
\vspace{0.2cm}

\noindent\textit{Step 4. The degenerate case II.} Continuing the reasoning of the previous step, we now assume that there exists a radius $t\leq R$ such that \eqref{new:alt2} holds whenever $r=t/2^i$ and $1\leq i\leq H_1+K_1\in \N$, and we also assume, in alternative to \eqref{new:alt3}, that
\begin{equation}\label{new:alt4}
    |(D\ve)_{B_t}|>2\sqrt{n}\,M(2^{-H_1}t)\,.
\end{equation}
In particular, this implies
\begin{equation}\label{abz}
    |D\ve-(D\ve)_{B_t}|>\sqrt{n}\,M(2^{-H_1}t)\quad\text{in $B_{2^{-H_1}t}$.}
\end{equation}
Iterating \eqref{new:alt2}, we get
\begin{equation*}
    \begin{split}
        E(2^{-(H_1+K_1)}t) &\leq 2\sqrt{n}\,M(2^{-(H_1+K_1)}t)\leq 2\sqrt{n}\,\eta_0^{K_1}M(2^{-H_1}t)
        \\
        &\stackrel{\eqref{abz}}{\leq} 2\,\eta_0^{K_1}\mint_{B_{2^{-H_1}t}}|D\ve-(D\ve)_{B_t}|\,dx
        \\
        &\leq 2^{nH_1+1}\,\eta_0^{K_1}\mint_{B_{t}}|D\ve-(D\ve)_{B_t}|\,dx,
    \end{split}
\end{equation*}
and using \eqref{K1}, we deduce
\begin{equation}\label{deg:II}
    E(2^{-(H_1+K_1)}t)\leq \frac{1}{2}E(t)\,.
\end{equation}
\vspace{0.2cm}

\noindent\textit{Conclusion.} We conclude by a two-speed iteration combined with a
 certain alternative. With $H_1$ and $K_1$ defined in Step 2, we set

\begin{equation}\label{stilde}
    \tilde\sigma\coloneqq2^{-H_1}\quad \text{and}\quad\tilde\tau\coloneqq 2^{-(H_1+K_1)}\,.
\end{equation}
For $0<r\leq R$ as in the statement of the theorem, we consider the set
\begin{equation}\label{S:iter}
    \mathcal{S}=\{i\in \N:\,\text{ \eqref{new:alt2} holds for } r=R/2^i,\,i\geq 1 \},
\end{equation}
and consider the following alternative.
\vspace{0.2cm}

\textit{Case 1:} $\mathcal{S}=\N\setminus\{0\}$. We start by setting $\tau_0=1$, and $t=R$. As $\mathcal{S}=\N\setminus\{0\}$, we have that either Step 3 or Step 4 is in force, that is either
\begin{equation*}
    E(\tilde\sigma\, t)\leq\frac{1}{2} E(R)\quad\text{or}\quad  E(\tilde\tau\, t)\leq\frac{1}{2} E(R)
\end{equation*}
holds. Now, we check whether Step 4 works, and if this is the case we set $\tau_1=\tilde\tau$. If not, then Step 3 applies, and we set $\tau_1=\tilde \sigma$.  In both cases, we have
\begin{equation}\label{Etau1:R}
    E(\tau_1 R)\leq \frac{1}{2} E(R).
\end{equation}

Next, we set $t=\tau_1 R$ and re-examine the alternative between Steps 3 and 4. Taking \eqref{Etau1:R} into account, we thus have that one of the two
 inequalities applies :
 \begin{equation*}
     E(\tilde\tau\,\tau_1R)\leq (1/2)^2 E(R)\quad\text{or}\quad E(\tilde\sigma\,\tau_1R)\leq (1/2)^2 E(R)
 \end{equation*}
 Again, if Step 4 applies we set $\tau_2=\tilde\tau\,\tau_1$ and, if not, then Step 3 holds and we set $\tau_2=\tilde\sigma\,\tau_1$. In any case, we have
 \begin{equation}
     E(\tau_2 R)\leq \left(\frac{1}{2}\right)^2 E(R)\,,
 \end{equation}
 and then we restart by $t=\tau_2 R$ to re-study whether Step 3 or 4 holds.

 Proceeding inductively, we find sequences $\{\tau_i\}\subset (0,1)$, $\{s_i\}\subset \N$, $\{h_i\}\subset \N$ such that
 \begin{equation}\label{tauuu:1}
     \tau_i=(\tilde\sigma)^{s_i}\,(\tilde\tau)^{h_i},\quad s_i+h_i=i
 \end{equation}
for every $i\in \N$. The sequences $\{s_i\}$ and $\{h_i\}$ are such that either $s_{i+1} = s_i +1$
 and $h_{i+1} = h_i$ or $s_{i+1} = s_i$ and $h_{i+1} = h_i+1$; moreover, the inductive procedure gives
\begin{equation}\label{indazzz}
    E(\tau_i\,R)\leq \left(\frac{1}{2}\right)^i\,E(R),\quad \text{for all $i\geq0$.}
\end{equation}
More in detail, to define $\tau_{i+1}$ starting from $\tau_i$, we let $t = \tau_i\,R$; then, if Step 4 holds we
 let $h_{i+1} = h_i + 1$, otherwise Step 3 holds and in this case we let $s_{i+1} = s_i+ 1$; we finally define $\tau_{i+1}$ according to \eqref{tauuu:1}. We notice that $\tau_i$ is a strictly decreasing
 sequence, while $\{s_i\}$, $\{h_i\}$ are nondecreasing.

Now, for $0<r\leq R$, we may find $i\in \N$ be such that $\tau_{i+1}R\leq r\leq \tau_iR$, and notice that by \eqref{tauuu:1} and \eqref{stilde}, we have 
\begin{equation}\label{blockrho:tauR}
    \frac{\tau_i R}{r}+\frac{r}{\tau_{i+1}R}\leq 2\frac{\tau_i}{\tau_{i+1}}\leq C(n,\l,\L,i_a,s_a),
\end{equation}
where the last inequality follows from the dependence of $H_1$ and $K_1$ on the data. Moreover, by definition of $\tau_i$ in \eqref{tauuu:1} and that of $\tilde\beta,\,\tilde\tau$ in \eqref{tbetad} and \eqref{stilde}, respectively, we get 
\begin{equation}\label{domin:taui}
    (\tau_i)^{\tilde\beta}\geq \big(\tilde\tau \big)^{i\,\tilde\beta}=(1/2)^i.
\end{equation}
So, by also using \eqref{medie} we get 
\begin{equation}\label{argue:utile}
    \begin{split}
        E(r) \leq C\,\mint_{B_r}|D\ve&-(D\ve)_{B_{\tau_iR}}|\,dx\leq C\left(\frac{\tau_i R}{r} \right)^{n}E(\tau_iR)
        \\
        &\stackrel{\eqref{blockrho:tauR}}{\leq} C'\,E(\tau_iR)\stackrel{\eqref{indazzz}}{\leq} C'\left(\frac 1 2 \right)^i\, E(R)
        \\
        &=C'\left(\frac 1 2 \right)^i\left(\frac{R}{r} \right)^{\tilde\beta}\left(\frac{r}{R} \right)^{\tilde\beta}\,E(R)
        \\
        &\stackrel{\eqref{blockrho:tauR}}{\leq}C''\left(\frac 1 2 \right)^i\tau_{i}^{-\tilde\beta}\left(\frac{r}{R} \right)^{\tilde\beta}\,E(R)
        \\
        &\stackrel{\eqref{domin:taui}}{\leq} C''\left(\frac{\vrho}{R} \right)^{\tilde\beta}\,E(R)\leq C''\left(\frac{\vrho}{R} \right)^{\alpha_{\mathrm{h}}}\,E(R),
    \end{split}
\end{equation}
where $C,C',C''>0$ depend on $n,\l,\L,i_a,s_a$, and where in the last inequality we used $\alpha_\mathrm{h}\leq \tilde \beta$ by \eqref{tbetad}. Equation  \eqref{exc:homee} is thus proven in this case.
\\

\textit{Case 2.} $\mathcal{S}\neq \N\setminus\{0\}.$ We set $\mathrm{m}\coloneqq \min\big\{ (\N\setminus \{0\})\setminus \mathcal{S}\big\}$, so that \eqref{new:alt1} implies 
\[
|D\ve|\geq M(R/2^{\mathrm{m}-1})/4\quad\text{in $B_{R/2^{\mathrm{m}}}$.}
\]
Therefore, we are in the setting of Step 1 with $r=R/2^{\mathrm{m}}$, and by \eqref{temp:DNMexc} we have
\begin{equation}\label{temp11:DNMexc}
    E(r)\leq C_{\mathrm{d}}\,\left(\frac{r}{R/2^{\mathrm{m}}} \right)^{\alpha_\mathrm{h}}E(R/2^{\mathrm{m}}),\quad \text{for every $0<r\leq R/2^{\mathrm{m}},$}
\end{equation}
where we also used $\alpha_\mathrm{h}\leq \beta_{\mathrm{d}}$ by \eqref{tbetad}. 

In order to pass from the previous inequality to the full form \eqref{exc:homee}, we iterate exactly as in Case 1, and we stop as soon as we find a certain number $\tau_iR\leq R/2^{\mathrm{m}}$. 
 
 More precisely, we proceed as follows: we start checking if $\tilde\tau R\geq R/2^{\mathrm{m}}$;  in this case
 we perform the alternative between Step 3 and Step 4, and we define $\tau_1$ as in Case 1. Otherwise, if $\tilde\tau R\leq R/2^{\mathrm{m}}$, we stop  and set $\gamma_0=0$, and $\tau_\gamma=\tau_0=1$. 
 
Then we restart, and check whether $\tilde\tau\,\tau_1R\leq R/2^{\mathrm{m}} $; if this is the case, we perform the alternative and define $\tau_2$, otherwise we stop and set $\gamma=1$. Proceeding in this way, after a finite number of times we find  numbers $\{\tau_0,\tau_1,\dots,\tau_\gamma\}$ with the property
\begin{equation}\label{bho:tauig}
    \tau_i\geq (\tilde\tau)^i\quad \text{for all $i=1,\dots,\gamma$,}
\end{equation}
\begin{equation}\label{again:indazzz}
      E(\tau_i\,R)\leq \left(\frac{1}{2}\right)^i\,E(R),\quad \text{for all $0\leq i\leq \gamma\in \N$,}
\end{equation}
and
\begin{equation}\label{tau:gamma}
    \tau_\gamma R\geq R/2^{\mathrm{m}}\geq \tau_\gamma\,\tilde\tau\,R\,.
\end{equation}
Using this last inequality, the definition of $\tilde\tau$ in \eqref{stilde}, and \eqref{medie}, we get 
\begin{equation*}
\begin{split}
    E(R/2^{\mathrm{m}})&\leq 2\,\mint_{B_{R/2^{\mathrm{m}}}} |D\ve-(D\ve)_{B_{\tau_\gamma R}}|\,dx
    \\
    &\leq (\tilde\tau)^n\,E(\tau_\gamma R)\stackrel{\eqref{again:indazzz}}{\leq} C\,(1/2)^\gamma E(R)\,,
    \end{split}
\end{equation*}
with $C=C(n,\l,\L,i_a,s_a)$. Then we estimate
\begin{equation*}
    \begin{split}2^{\mathrm{m}\alpha_\mathrm{h}} &\stackrel{\eqref{tbetad}}{\leq} 2^{\frac{\mathrm{m}}{(H_1+K_1)}}\stackrel{\eqref{tau:gamma}}{\leq} (\tau_\gamma\tilde\tau)^{-\frac{1}{(H_1+K_1)}}\stackrel{\eqref{bho:tauig}} {\leq} (\tilde\tau)^{-\frac{\gamma+1}{(H_1+K_1)}}\stackrel{\eqref{stilde}}{=} 2^{\gamma+1}\,.
    \end{split}
\end{equation*}
Combining the two inequalities above with \eqref{temp11:DNMexc},  we infer the validity of \eqref{exc:homee} for $0<r\leq R/2^{\mathrm{m}}$.
\\

We are left to study the case $r>R/2^{\mathrm{m}}$, and we distinguish two cases. If $r\geq \tau_\gamma R$, then there exists $i\in \{0,1,\dots,\gamma-1\}$ such that $\tau_{i+1}R\leq r< \tau_i R $, and arguing similarly to \eqref{argue:utile}, we find 
\begin{equation}\label{qquasi:ex}
        E(r)\leq C\,\left(\frac{r}{R} \right)^{\alpha_\mathrm{h}} E(R),\quad \text{if }\,\tau_\gamma R\leq r.
\end{equation}
Finally, if $R/2^{\mathrm{m}}\leq r\leq \tau_\gamma R$, we have
\begin{equation*}
\begin{split}
    E(r)& \leq C\,\left(\frac{\tau_\gamma R}{r} \right)^{n}\,E(\tau_\gamma R)\leq C\,(\tau_\gamma\,2^{\mathrm{m}})^{n}\,E(\tau_\gamma R)
    \\
    &\stackrel{\eqref{tau:gamma}}{\leq } C'\,E(\tau_\gamma R) \stackrel{\eqref{qquasi:ex}}{\leq} C''\,(\tau_\gamma)^{\alpha_{\mathrm{h}}} E(R)
    \\
    &= C''(\tau_\gamma R/r)^{\alpha_{\mathrm{h}}} (r/R)^{\alpha_{\mathrm{h}}} E(R)
    \\
    &\stackrel{\eqref{tau:gamma}}{\leq }C''(\tilde\tau)^{-\alpha_{\mathrm{h}}}(r/R)^{\alpha_{\mathrm{h}}}\,E(R)=C'''\,(r/R)^{\alpha_{\mathrm{h}}}\,E(R).
    \end{split}
\end{equation*}
where $C,C',C'',C'''$ depend on $n,\l,\L,i_a,s_a$. The proof is complete.
\end{proof}

As an immediate consequence of the excess decay estimate \eqref{exc:homee}, we obtain the following oscillation estimate.

\begin{corollary}\label{osc:decay}
    Let $\ve\in W^{1,2}_{loc}(\Omega)$ be solution to \eqref{eq:homint}. Then for every $B_{2R}\Subset \Omega$, we have
    \begin{equation}\label{campanato0}
        \operatorname*{osc}_{B_r} Dv_\e\leq C_{\mathrm{h}}\,\left( \frac{r}{R}\right)^{\alpha_{\mathrm h}}\,\mint_{B_R}|D\ve-(D\ve)_{B_R}|\,dx\quad\text{ for all $0<r\leq R/2$,}
    \end{equation}
    and for every $0<r\leq R$, we have \begin{equation}\label{campanato}
        \operatorname*{osc}_{B_r} D\ve\leq C_{\mathrm{h}}\left(\frac{r}{R} \right)^{\alpha_h} \operatorname*{osc}_{B_R} D\ve\leq C'_\mathrm{h}\,\left(\frac{r}{R} \right)^{\alpha_h}\,\mint_{B_{2R}}|D\ve|\,dx
    \end{equation}
    where $\alpha_{\mathrm{h}}\in (0,1)$ is given by     
    \eqref{exc:homee}, and $C_{\mathrm{h}},C'_{\mathrm{h}}=C_{\mathrm{h}},C'_{\mathrm{h}}(n,\l,\L,i_a,s_a)>0$.
\end{corollary}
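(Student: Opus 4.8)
The plan is to deduce Corollary~\ref{osc:decay} from the excess decay estimate \eqref{exc:homee} established in Proposition~\ref{exs:Dve}, using only elementary manipulations. The first observation is that the oscillation of $D\ve$ and its $L^1$-excess are comparable \emph{after halving the radius}: indeed, by the standard iteration argument described in the text preceding Lemma~\ref{lemma:degiorginash} (iterating the fundamental alternative Theorem~\ref{thm:alternative}), $D\ve$ is H\"older continuous, and more quantitatively one has
\begin{equation*}
    \operatorname*{osc}_{B_r} D\ve \leq C(n,\l,\L,i_a,s_a)\left(\frac{r}{R}\right)^{\alpha_{\mathrm h}}\operatorname*{osc}_{B_{R/2}} D\ve,\qquad 0<r\leq R/2,
\end{equation*}
but I prefer to bypass this and argue directly. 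The cleanest route is: fix $B_{2R}\Subset\Omega$ and, for $0<r\leq R/2$, apply \eqref{exc:homee} with radii $r$ and $R/2$ to get
\[
\mint_{B_r}|D\ve-(D\ve)_{B_r}|\,dx \leq c_{\mathrm h}\Big(\frac{r}{R/2}\Big)^{\alpha_{\mathrm h}}\mint_{B_{R/2}}|D\ve-(D\ve)_{B_{R/2}}|\,dx,
\]
and then use that $\operatorname*{osc}_{B_r}D\ve$ is controlled by a geometric series of such excess terms along a dyadic chain $r, 2r, 4r, \dots$ up to scale $R/2$.

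Concretely, for the first step I would argue as follows. Since $D\ve\in C^0$, for any $x,y\in B_r$ we can estimate $|D\ve(x)-D\ve(y)|$ by a telescoping sum over the dyadic annuli. Setting $r_j=2^j r$ for $j=0,1,\dots,J$ where $J$ is chosen so that $r_J\approx R/2$, one has for a.e.\ $x\in B_r$ that $|D\ve(x)-(D\ve)_{B_{r_J}}|\leq \sum_{j=0}^{J-1}|(D\ve)_{B_{r_j}}-(D\ve)_{B_{r_{j+1}}}| + |D\ve(x)-(D\ve)_{B_r}| + \dots$; each difference of averages $|(D\ve)_{B_{r_j}}-(D\ve)_{B_{r_{j+1}}}|$ is bounded by $C(n)\,E(r_{j+1})$, and since the $E(r_j)$ decay geometrically by \eqref{exc:homee} the sum converges and is dominated by $C\,E(R/2)\,(r/R)^{\alpha_{\mathrm h}}$ up to a harmless change of the constant. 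Combining with $\operatorname*{osc}_{B_r}D\ve\leq 2\sup_{B_r}|D\ve-(D\ve)_{B_r}|$ and $\operatorname*{osc}_{B_r}D\ve \le \sqrt{n}\max_k \operatorname*{osc}_{B_r}D_k\ve$, and finally replacing $E(R/2)$ by $E(R)$ via \eqref{medie} (which gives $E(R/2)\leq 2^{n}E(R)\cdot$const, or directly $E(R/2)\le C(n)\mint_{B_R}|D\ve-(D\ve)_{B_R}|\,dx$), yields \eqref{campanato0}.

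For \eqref{campanato} I would first treat $0<r\leq R/2$: from \eqref{campanato0} and the trivial bound $\mint_{B_R}|D\ve-(D\ve)_{B_R}|\,dx\leq \operatorname*{osc}_{B_R}D\ve$ we immediately obtain $\operatorname*{osc}_{B_r}D\ve\leq C_{\mathrm h}(r/R)^{\alpha_{\mathrm h}}\operatorname*{osc}_{B_R}D\ve$. The remaining range $R/2<r\leq R$ is handled trivially since then $(r/R)^{\alpha_{\mathrm h}}\geq 2^{-\alpha_{\mathrm h}}$, so $\operatorname*{osc}_{B_r}D\ve\leq\operatorname*{osc}_{B_R}D\ve\leq 2^{\alpha_{\mathrm h}}(r/R)^{\alpha_{\mathrm h}}\operatorname*{osc}_{B_R}D\ve$; adjusting $C_{\mathrm h}$ absorbs the constant. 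Finally, the second inequality in \eqref{campanato}, namely $\operatorname*{osc}_{B_R}D\ve\leq C'_{\mathrm h}\mint_{B_{2R}}|D\ve|\,dx$, follows directly from the $L^\infty$--$L^1$ estimate \eqref{Degiorgi:ve} of Proposition~\ref{prop:bernstein}: $\operatorname*{osc}_{B_R}D\ve\leq 2\sqrt n\sup_{B_R}|D\ve|\leq 2\sqrt n\, C_{\mathrm b}\mint_{B_{2R}}|D\ve|\,dx$.

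I expect no serious obstacle here — this is a bookkeeping argument. The only mild subtlety is making the dyadic telescoping sum precise enough that the geometric decay from \eqref{exc:homee} actually closes (one must check that the ratio $c_{\mathrm h}2^{-\alpha_{\mathrm h}}$ is handled correctly, but since we only need summability of $\sum_j (r_j/R)^{\alpha_{\mathrm h}}$, which is a convergent geometric series regardless of the size of $c_{\mathrm h}$ because the terms $(r_j/R)^{\alpha_{\mathrm h}}$ themselves form a geometric sequence with ratio $2^{\alpha_{\mathrm h}}>1$ running \emph{backwards} from $r$ to $R/2$ — i.e.\ the dominant term is the last one, at scale $R/2$, and the sum is $\le C(n,\alpha_{\mathrm h})\,E(R/2)$). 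I would keep this step brief, citing the standard lemma structure (Lemma~\ref{lemma:osc} or a direct computation) rather than belaboring it.
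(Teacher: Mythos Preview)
Your argument for \eqref{campanato0} has a real gap. The telescoping you describe runs along concentric balls $B_{r_j}$, $r_j=2^jr$, all sharing the center of $B_R$, from scale $r$ up to $R/2$. This controls only the difference of averages $|(D\ve)_{B_r}-(D\ve)_{B_{R/2}}|$, and as your own parenthetical correctly observes, the resulting bound is $C\,E(R/2)$ \emph{without} the factor $(r/R)^{\alpha_{\mathrm h}}$: the series $\sum_j (r_j/R)^{\alpha_{\mathrm h}}$ has ratio $2^{\alpha_{\mathrm h}}>1$ and is dominated by its last term, contradicting your earlier claim that the sum is $\le C\,E(R/2)(r/R)^{\alpha_{\mathrm h}}$. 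More seriously, the pointwise term $|D\ve(x)-(D\ve)_{B_r}|$ left dangling in your decomposition cannot be bounded by any integral excess over balls centered at the origin --- the $L^1$-excess $E(r)$ says nothing about $\sup_{B_r}|D\ve-(D\ve)_{B_r}|$, so the step ``$\operatorname*{osc}_{B_r}D\ve\le 2\sup_{B_r}|D\ve-(D\ve)_{B_r}|$'' does not close.

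The fix --- which is exactly the paper's approach --- is to apply \eqref{exc:homee} to balls $B_\vrho(x_0)\subset B_{R/2}(x_0)$ centered at \emph{each} point $x_0\in B_r$ (legitimate since $B_{R/2}(x_0)\subset B_R$ when $r\le R/2$, and \eqref{exc:homee} holds for any center). This yields a uniform Campanato seminorm bound on $D\ve$ over $B_r$, and the paper then invokes the Campanato characterization of H\"older continuity \cite[Theorem~5.5]{GM12} to obtain \eqref{campanato0}. Equivalently, one telescopes \emph{downward} along $B_{2^{-j}r}(x_0)$, $j\to\infty$, producing a convergent geometric series with ratio $2^{-\alpha_{\mathrm h}}<1$ that does carry the factor $(r/R)^{\alpha_{\mathrm h}}$. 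Your treatment of \eqref{campanato} --- the reduction to \eqref{campanato0} via $E(R)\le\operatorname*{osc}_{B_R}D\ve$, the trivial range $R/2<r\le R$, and the appeal to \eqref{Degiorgi:ve} for the second inequality --- is correct and matches the paper.
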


\begin{proof}
    For the moment let us take $0<r\leq R/2$. Let $0<\vrho\leq R/2$, and $x_0\in B_{r}$. By \eqref{exc:homee}, we have
\begin{equation*}
\begin{split}
    \frac{1}{\vrho^{\alpha_{\mathrm{h}}}}\mint_{B_\vrho(x_0)}|D\ve-(D\ve)_{B_{\vrho}(x_0)}|dx&\leq \frac{C}{R^{\alpha_{\mathrm{h}}}}\,\mint_{B_{R/2}(x_0)}|D\ve-(D\ve)_{B_{R/2}(x_0)}|dx
\\            
&\leq \frac{C}{R^{\alpha_{\mathrm{h}}}}\,\mint_{B_{R}}|D\ve-(D\ve)_{B_{R}}|dx
    \end{split}
\end{equation*}
    where in the last inequality we used \eqref{medie} and that $B_{\vrho}(x_0)\subset B_{r}(x_0)\subset B_R$. If instead $\vrho\geq R/2$, then by \eqref{medie} we immediately get 
\begin{equation*}
    \frac{1}{\vrho^{\alpha_{\mathrm{h}}+n}}\int_{B_r\cap B_{\vrho}(x_0)}|D\ve-(D\ve)_{B_r\cap B_{\vrho}(x_0)}|\,dx\leq \frac{C}{R^{\alpha_{\mathrm h}}}\,\mint_{B_R}|D\ve-(D\ve)_{B_R}|\,dx.
\end{equation*}
The two inequalities above are valid for all $x_0\in B_r$, so by Campanato characterization of H\"older continuity \cite[Theorem 5.5]{GM12} we get
\begin{equation*}
    \operatorname*{osc}_{B_r} D\ve\leq C\,\left( \frac{r}{R}\right)^{\alpha_{\mathrm h}}\,\mint_{B_R}|D\ve-(D\ve)_{B_R}|\,dx\leq C\left( \frac{r}{R}\right)^{\alpha_{\mathrm h}}\operatorname*{osc}_{B_R} D\ve
\end{equation*}
in the case $0<r\leq R/2$, so \eqref{campanato0} is proven.
On the other hand, when $R/2\leq r\leq R$ we trivially have
\begin{equation*}
     \operatorname*{osc}_{B_r} D\ve\leq  \operatorname*{osc}_{B_R} D\ve\leq 2^{\alpha_{\mathrm{h}}}\left( \frac{r}{R}\right)^{\alpha_{\mathrm h}}\operatorname*{osc}_{B_R} D\ve.
\end{equation*}
Finally, the second inequality in \eqref{campanato} follows from the elementary inequality $\operatorname{osc}_{B_R} Dv_\e\leq C(n)\,\sup_{B_R} |D\ve|$, and \eqref{Degiorgi:ve}.
\end{proof}

Having Thereom \ref{thm:alternative} at disposal, we can now prove Theorem \ref{thm:inthom} via an approximation procedure. 
Since $v\in W^{1,B}$ does not necessarily belong to $W^{1,2}$, while the vector field $\A_\e$ has quadratic growth  by \eqref{bBquadratic} and \eqref{coAe:gr} (see also \eqref{nonzero:Ae}), an additional approximation step is required.

\begin{proof}[Proof of Theorem \ref{thm:inthom}]
Let $B_{2R}\Subset \Omega$, and for $k\in \N$ large enough so that $B_{2R+1/k}\subset \Omega$, define the regularized function
\begin{equation*}
    v_k^{bd}(x)=v\ast \rho_{1/k}(x).
\end{equation*}
Here $bd$ stands for boundary. By the properties of convolution \cite[Theorem 4.4.7
]{HHbook}, $v_k^{bd}\in C^{\infty}(\overline{B}_{2R})$ and we have $v_k^{bd}\xrightarrow{k\to\infty} v\quad\text{in $W^{1,B}(B_{2R})$}$,
    so that  by \eqref{equiv:modular} and \eqref{triangle},
    \begin{equation}\label{bd:kconv}
        \lim_{k\to \infty}\int_{B_{2R}}B\big( |v_k^{bd}|)\,dx=\int_{B_{2R}}B\big( |v|)\,dx,\quad\text{and}\quad  
        \lim_{k\to \infty}\int_{B_{2R}}B\big( |Dv_k^{bd}|)\,dx=\int_{B_{2R}}B\big( |Dv|)\,dx.
    \end{equation}
Moreover, by \eqref{unif:refl}, we have that $v\in W^{1,i_B}_{loc}(\Omega)$, so by the properties of convolution and the continuity of the trace operator
\begin{equation}\label{vktov_trace}
    v_k^{bd}\xrightarrow{k\to\infty}v\quad\text{in $L^1(\partial B_{2R})$ and in the sense of traces on $\partial B_{2R}$.}
\end{equation}
    
For $\e>0$ and $k$ as above, let $v_{\e,k}\in W^{1,2}(B_{2R})$ be the unique solution to
\begin{equation}\label{eq:veku}
\begin{cases}
    -\mathrm{div}\big(\Ae(Dv_{\e,k})\big)=0\quad &\text{in $B_{2R}$}
    \\
    v_{\e,k}=v_k^{bd}\quad &\text{on $\partial B_{2R}$.}
    \end{cases}
\end{equation}
    Thanks to \eqref{strong:coer}, the properties of $\Ae$ in Lemma \ref{lemma:Ae} and \eqref{bBquadratic}, existence and uniqueness of $v_{\e,k}$ readily follows from the theory of monotone operators \cite[Theorem 26.A]{Z90}-- see also the proof of Proposition \ref{prop:ex} below. 
    Next, let us prove a uniform energy bound for $v_{\e,k}$. We test the weak formulation of \eqref{eq:veku} with $v_{\e,k}-v_k^{bd}$, and get
    \begin{equation*}
        \int_{B_{2R}}\Ae(Dv_{\e,k})\cdot Dv_{\e,k}\,dx=\int_{B_{2R}} \Ae(Dv_{\e,k})\cdot Dv_k^{bd}\,dx\,.
    \end{equation*}
    By \eqref{coAe:gr} and  Young's inequality \eqref{young1}, while keeping in mind Remark \ref{remark:importante}, we get
    \begin{equation*}
        \begin{split}
            \int_{B_{2R}}\Ae(Dv_{\e,k})\cdot Dv_{\e,k}\,dx&\geq c_0\,\int_{B_{2R}} B_\e\big(|Dv_{\e,k}| \big)\,dx
            \\
           \bigg| \int_{B_{2R}} \Ae(Dv_{\e,k})\cdot Dv_k^{bd}\,dx\bigg|&\leq C_0\int_{B_{2R}}b_\e\big(|Dv_{\e,k}|\big)\,|Dv_k^{bd}|\,dx
           \\
           &\leq \frac{c_0}{2}\int_{B_{2R}}B_\e\big(|Dv_{\e,k}| \big)\,dx+C\,\int_{B_{2R}}B_\e\big(|Dv_{k}^{bd}| \big)\,dx\,,
        \end{split}
    \end{equation*}
    where all the involved constants depend on $n,\l,\L,i_a,s_a$. Therefore
    \begin{equation*}
        \int_{B_{2R}}B_\e\big(|Dv_{\e,k}| \big)\,dx\leq C\,\int_{B_{2R}}B_\e\big(|Dv_{k}^{bd}| \big)\,dx\,.
    \end{equation*}
    and coupling this information with \eqref{BE:unif} and \eqref{bd:kconv}, we deduce    \begin{equation}\label{energy:bound}
\limsup_{\e\to 0} \int_{B_{2R}}B_\e\big(|Dv_{\e,k}| \big)\,dx\leq C\,\int_{B_{2R}}B(|Dv_k^{bd}|)\,dx\leq 2\,C\,\int_{B_{2R}}B(|Dv|)\,dx\,,
    \end{equation}
    for $k\in \N$ large enough, with $C=C(n,\l,\L,i_a,s_a)$. Then, from \eqref{int:simple} and \eqref{energy:bound}, we get
    \begin{equation}\label{L1:unif}
        \int_{B_{2R}}|Dv_{\e,k}|\,dx\leq C(i_a,s_a)\,\int_{B_{2R}} B_\e\big( |Dv_{\e,k}|\big)\,dx+|B_{2R}|\leq C\,\int_{B_{2R}} B(|Dv|)\,dx+|B_{2R}|
    \end{equation}
    for all $0<\e<\e_0$ small enough, with $C>0$ independent of $\e,k$.
    

By the triangle and Poincar\'e inequalities
\begin{equation}\label{to:limit4}
\begin{split}
   \int_{B_{2R}}|v_{\e,k}|\,dx &\leq\int_{B_{2R}}|v_{\e,k}-v_k^{bd}|\,dx+\int_{B_{2R}}|v_k^{bd}|\,dx 
   \\
   &\leq C\,R\,\int_{B_{2R}}|Dv_{\e,k}-Dv_k^{bd}|\,dx+\int_{B_{2R}}|v_k^{bd}|\,dx 
   \\
   &\leq C\int_{B_{2R}}|Dv_{\e,k}|\,dx+C\int_{B_{2R}}|Dv_k^{bd}|\,dx +C\int_{B_{2R}}|v_k^{bd}|\,dx\leq C',
    \end{split}
\end{equation}
with $C,C'$ independent of $\e,k$, where in the last estimate we exploited \eqref{L1:unif}, \eqref{int:simple0} and \eqref{bd:kconv}.


Estimate \eqref{to:limit4} coupled with Theorem \ref{thm:bdd} (taking into account Remark \ref{remark:importante}), Proposition \ref{prop:bernstein}, Corollary \ref{osc:decay} and a standard covering argument (applied to $v_{\e,k}$ in place of $v_\e$) yields
\begin{equation*}
    \|v_{\e,k}\|_{C^{1,\alpha_{\mathrm{h}}}(\mathcal{B})}\leq \overline{C}(\mathcal{B})\,,
\end{equation*}
for every $\mathcal{B}\Subset B_{2R}$, with $\overline{C}(\mathcal{B})$ independent of $\e,k$. By Ascoli-Arzel\'a theorem, we infer that, up to subsequences, we have
\begin{equation}\label{ennersimo:unif}
 \lim_{k\to \infty}\lim_{\e\to 0}v_{\e,k}=w\quad\text{in $C^1_{loc}(B_{2R})$,}
\end{equation}
for some function $w\in C^1(B_{2R})$. Let us show that $w\in W^{1,B}(B_{2R})$ and $w=v$ on $\partial B_{2R}$. To this end, take $\mathcal{B}\Subset B_{2R}$, and thanks to \eqref{BE:unif} and \eqref{ennersimo:unif}, we have
\[
\lim_{k\to\infty}\lim_{\e\to 0}B_\e\big(|Dv_{\e,k}| \big)=B\big(|Dw| \big)\quad\text{uniformly in $\mathcal{B}$}
\]
which together with \eqref{energy:bound} gives
\begin{equation}\label{calBmon}
    \int_{\mathcal{B}} B(|Dw|)\,dx=\lim_{k\to\infty}\lim_{\e\to 0}\int_{\mathcal{B}} B_\e(|Dv_{\e,k}|)\,dx\leq C\,\int_{B_{2R}}B(|Dv|)\,dx,
\end{equation}
hence letting $\mathcal{B}\nearrow B_{2R}$, by monotone convergence theorem we deduce that $w\in W^{1,B}(B_{2R})$.
Then, by \eqref{unif:refl} and \eqref{energy:bound}, the sequence $\{v_{\e,k}\}$ is uniformly bounded in $W^{1,\min\{i_B,2\}}(B_{2R})$, with $i_B=i_a+2>1$; by the reflexivity of such space, the boundary condition \eqref{eq:veku}, the convergence \eqref{vktov_trace}, and the continuity of the trace operator w.r.t. weak convergence, we deduce that $w=v$ on $\partial B_{2R}$.

To conclude, we need to show that $w=v$ on $B_{2R}$. Testing the weak formulation of \eqref{eq:veku} with a function $\varphi\in C^\infty_c(B_{2R})$, and using  \eqref{Aeunif} and \eqref{ennersimo:unif}, by letting $\e\to 0$ and then $k\to \infty$ we find that $w\in W^{1,B}(B_{2R})\cap C^1(B_{2R})$ satisfies
\begin{equation*}
    \int_{B_{2R}} \A(Dw)\cdot D\varphi\,dx=0;
\end{equation*}
via a density argument, the above equation holds for all test functions $\varphi\in W^{1,B}_0(B_{2R})$. Thus
 $w\in W^{1,B}(B_{2R})\cap C^1(B_{2R})$ is solution to $-\mathrm{div}\big( \A(Dw)\big)=0$ in $B_{2R}$, and $w=v$ on $\partial B_{2R} $, and therefore $w\equiv v$  by uniqueness. Finally, using \eqref{ennersimo:unif}, we may pass to the limit in Equations  \eqref{Degiorgi:ve}, \eqref{exc:homee}, \eqref{campanato0}-\eqref{campanato} (with $v_\e$ replaced by $v_{\e,k}$) and finally obtain \eqref{inf:hom}-\eqref{fin:oscdec}, thus completing the proof. 
\end{proof}

\section{Boundary regularity for equations in trace form}\label{sec:trace}

 The following section is devoted to the proof of boundary regularity for uniformly elliptic equations in trace form. These auxiliary results will play an important role in establishing boundary regularity for solutions to the Dirichlet problem \eqref{eq:dir1}.
 
To start with, let $ A(x)=(A_{ij}(x))_{i,j=1}^n$ be a matrix satisfying growth and coercivity conditions
    \begin{equation}\label{Ax:again}
        A(x)\,\eta\cdot \eta\geq \l_0\,|\eta|^2,\quad \sum_{i,j=1}^n |A_{ij}(x)|\leq \L_0,\quad \text{for all $x\in B_R^+$},
    \end{equation}
   and for all $\eta\in \R^n$, with given constants $0<\l_0<\L_0$. We also define the operator \begin{equation}\label{operator:L}
      L\equiv\sum_{i,j=1}^nA_{ij}(x)\,D_{ij}\,,\quad\text{so that}\quad  Lu(x)=\mathrm{tr}\big( A(x)D^2u(x)\big)
  \end{equation}

Our goal is to provide a control on the supremum and the oscillation of $u/x_n$ via delicate barrier arguments, based on the work of Krylov \cite{K84}.
For analogous results, we also refer to \cite{L86,L88}, \cite[Theorem 9.31]{GT}, \cite[Lemmas 11.13-11.14]{L13}, and \cite[Lemmas 7.46-7.47]{L96} for the parabolic setting.

  \begin{lemma}\label{lemma:barrier1}
    Let $0<r\leq R\leq 1$, and let $ A, L$ be given by \eqref{Ax:again} and \eqref{operator:L}, respectively. Suppose that $u\in C^2(B_r^+)\cap C^0(\overline
    B_r^+)$ is solution to
    \begin{equation}\label{eq:trace1}
           \begin{cases} \big|Lu(x)\big|\leq K\,x_n^{\alpha-1}\quad &\text{for $x=(x',x_n) \in B_r^+$,}
           \\[1ex]
           u(x',0)=0\quad &\text{for $x=(x',0)\in B_r^0$.}
           \end{cases}
    \end{equation}
    where $K>0$ and $\alpha \in (0,1)$ are given constants. Then there exists $C_0=
    C_0(n,\l_0,\L_0,\alpha)>0$ such that
    \begin{equation}\label{uxn:bdd}
        \sup_{B^+_{r/4}} \left(\dfrac{|u|}{x_n}\right)\leq C_0\,\sup_{B_{r/2}^+} \left(\dfrac{|u|}{r}\right)+C_0\,K\,r^\alpha.
    \end{equation}
\end{lemma}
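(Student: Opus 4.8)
The strategy is a two-sided barrier argument in the spirit of Krylov. By scaling (replacing $u$ by $u(r\,\cdot)/r$ and adjusting $K$ via $K r^\alpha$), one reduces to the case $r=1$, so the goal becomes bounding $|u|/x_n$ on $B^+_{1/4}$ by $C_0\sup_{B^+_{1/2}}|u| + C_0 K$. The idea is to construct, for each fixed point $\bar x' \in B'_{1/4}$, a function that controls $u$ near the flat boundary segment below $\bar x'$.

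\textbf{Step 1: the barrier.} I would look for a comparison function of the form
\[
w(x) = A\,x_n - B\,x_n^{1+\alpha} + D\,|x'-\bar x'|^2,
\]
on a half-ball $B^+_{\sigma}(\bar x')$ with $\sigma = 1/4$ (or a convenient fixed fraction), where $A$ will be proportional to $\sup_{B^+_{1/2}}|u| + K$, and $B, D$ are constants to be chosen depending only on $n,\l_0,\L_0,\alpha$. Applying $L$ and using \eqref{Ax:again}: the term $A x_n$ is annihilated, $-B x_n^{1+\alpha}$ contributes $-B(1+\alpha)\alpha\, A_{nn}(x) x_n^{\alpha-1} \le -B(1+\alpha)\alpha\,\l_0\, x_n^{\alpha-1}$ (since $A_{nn}\ge \l_0$ by taking $\eta=e_n$ in the ellipticity bound), and $D|x'-\bar x'|^2$ contributes $\le 2 D (n-1) \L_0$. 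Choosing $B$ large enough (depending on $n,\l_0,\L_0,\alpha$) so that $B(1+\alpha)\alpha\,\l_0 \ge K\cdot(\text{normalization}) $ — more precisely so that $-B(1+\alpha)\alpha\,\l_0\, x_n^{\alpha-1} + 2D(n-1)\L_0 \le -K x_n^{\alpha-1}$ on the relevant region where $x_n$ is bounded — we get $Lw \le -K x_n^{\alpha-1} \le Lu$ in $B^+_\sigma(\bar x')$, i.e. $L(w-u)\le 0$. The subtlety is that $D|x'-\bar x'|^2$ and $-Bx_n^{1+\alpha}$ have bounded $L$-image only on a fixed-size region, which is fine since we work on $B^+_\sigma(\bar x')$ with $\sigma$ fixed; one absorbs the $D$-term contribution into the slack by enlarging $B$.

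\textbf{Step 2: boundary comparison and maximum principle.} On the flat part $\{x_n=0\}$ of $\partial B^+_\sigma(\bar x')$ we have $w = D|x'-\bar x'|^2 \ge 0 = u$. On the spherical cap $\partial B_\sigma(\bar x')\cap\{x_n>0\}$, one needs $w \ge |u|$; since $u=0$ on $B^0_1$ and $u\in C^0(\overline{B^+_1})$, on this cap we can estimate $|u(x)| \le \sup_{B^+_{1/2}}|u|$, while $w \ge A x_n - B x_n^{1+\alpha} + D\,(\text{distance of } x' \text{ from }\bar x')^2$; choosing the free constants so that $D\sigma^2$ already dominates on the part of the cap where $x_n$ is small, and $Ax_n - Bx_n^{1+\alpha} \gtrsim A x_n$ dominates where $x_n$ is bounded below (recall $x_n \le \sigma$ so $Bx_n^{1+\alpha}\le B\sigma^\alpha x_n \le \tfrac12 A x_n$ once $A \ge 2B\sigma^\alpha$), we get $w\ge |u|$ there with $A = C(\sup_{B^+_{1/2}}|u| + K)$. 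Then by the maximum principle for $L$ (valid since $A$ is strictly elliptic with bounded coefficients, cf.\ the linear theory invoked in the paper), $w - u \ge 0$, i.e. $u \le w$ in $B^+_\sigma(\bar x')$. Applying the same argument to $-u$ gives $|u| \le w$. Restricting to the vertical segment $x' = \bar x'$ yields $|u(\bar x', x_n)| \le A x_n - Bx_n^{1+\alpha} \le A x_n$, hence $|u(\bar x',x_n)|/x_n \le A = C_0(\sup_{B^+_{1/2}}|u| + K)$. Since $\bar x' \in B'_{1/4}$ was arbitrary and the segment reaches $x_n$ up to $\sigma/2 > 1/4$, and for points $x\in B^+_{1/4}$ with $x_n$ comparable to $1/4$ the bound $|u|/x_n \le 4\sup_{B^+_{1/2}}|u|$ is trivial, we obtain \eqref{uxn:bdd} after undoing the scaling.

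\textbf{Main obstacle.} The delicate part is calibrating the three constants $A, B, D$ simultaneously so that the differential inequality $Lw \le Lu$ holds throughout $B^+_\sigma(\bar x')$ \emph{and} the boundary inequality $w\ge|u|$ holds on the whole of $\partial B^+_\sigma(\bar x')$, while keeping $A$ linear in $\sup_{B^+_{1/2}}|u| + K$. The competition is between the cap near $x_n=0$ (where only the $D|x'-\bar x'|^2$ term is available to beat $|u|$, which there is $O(\text{dist to }B^0_1)$ by continuity and vanishing of $u$ — one may need to invoke a preliminary modulus-of-continuity estimate or simply use the crude bound and absorb) and the region of moderate $x_n$ (where $-Bx_n^{1+\alpha}$ must not overwhelm $Ax_n$). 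Fixing $\sigma$ to a small universal fraction makes $Bx_n^{1+\alpha} \le B\sigma^\alpha x_n$ a harmless perturbation and decouples the estimates; this is the standard resolution and I would follow it, citing \cite{K84} for the original construction.
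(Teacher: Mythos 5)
Your proposal is correct and follows essentially the same Krylov-type barrier argument as the paper: the paper's barrier $\mathfrak b^{\pm}$ is exactly of your form (linear in $x_n$) $-$ ($x_n^{1+\alpha}$ term) $+$ (quadratic centered at the boundary projection of the target point), compared via the classical maximum principle on a half-ball and then evaluated at the target point. The only cosmetic difference is that the paper works on the single half-ball $B^+_{r/2}$ and cancels the trace contribution of the quadratic exactly with an added $-\tfrac{\Lambda_0}{\lambda_0}x_n^2/r^2$ term, whereas you absorb it into the $x_n^{1+\alpha}$ coefficient using $x_n^{\alpha-1}\ge 1$; both work.
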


\begin{proof}
    Fix $x_0\in B^+_{r/4}$ and, for $x=(x',x_n)\in B^+_{r/2}$, we define the barriers
\begin{equation}\label{def:barrier}
\begin{split}
   \mathfrak{b}^{\pm}(x)\equiv \mathfrak{b}_{x_0}^{\pm}(x',x_n)=\pm 16\,\Big(\sup_{B^+_{r/2}}|u|\Big) &\,\bigg\{\, \frac{\L_0}{\l_0}\Big(\frac{x_n}{r}-\frac{x_n^2}{r^2}\Big)+\frac{|x-(x_0',0)|^2}{r^2} \Big)\bigg\}
    \\
    &\pm \frac{K}{\l_0\,(1+\alpha)\,\alpha}\big\{r^\alpha x_n-x_n^{1+\alpha}\big\}.
    \end{split}
\end{equation}
    We claim that 
    \begin{equation}\label{barrier:subsol}
        L\mathfrak{b}^+_{x_0}\leq -K\,x_n^{\alpha-1},\quad L\mathfrak{b}^-_{x_0}\geq K\, x_n^{\alpha-1}\quad\text{in $B_{r/2}^+$}
    \end{equation}
and
\begin{equation}\label{barrier:bdr}
    \mathfrak{b}^-_{x_0}\leq u\leq \mathfrak{b}^+_{x_0}\quad\text{on $\partial B_{r/2}^+$.}
\end{equation}
Suppose the claims are true; then by \eqref{eq:trace1} and the maximum principle \cite[Theorem 3.1]{GT} we have $ \mathfrak{b}^-_{x_0}(x)\leq u(x)\leq \mathfrak{b}^+_{x_0}(x)$ for all $x\in B^+_{r/2}$; in particular by evaluating at $x=x_0$  we get 
\begin{equation*}
    \begin{split}-16\,\Big(\sup_{B^+_{r/2}}|u|\Big) \,\frac{\L_0}{\l_0}\,\frac{x_{0n}}{r}- \frac{K\,r^\alpha}{\l_0\,(1+\alpha)\,\alpha}x_{0n}&\leq \mathfrak{b}^-(x_0)
    \\
    &\leq u(x_0)
    \\&\leq \mathfrak{b}^+(x_0)\leq 16\,\Big(\sup_{B^+_{r/2}}|u|\Big) \,\frac{\L_0}{\l_0}\,\frac{x_{0n}}{r}+ \frac{K\,r^\alpha}{\l_0\,(1+\alpha)\,\alpha}x_{0n};
    \end{split}
\end{equation*}
 dividing by $x_{0n}$, and by the arbitrariness of $x_0\in B^+_{r/4}$, this proves \eqref{uxn:bdd}.

We are left to prove \eqref{barrier:subsol}-\eqref{barrier:bdr}. Clearly it suffices to show them only for $\mathfrak{b}^+$. We have 
\begin{equation*}
\begin{split}
    D^2\mathfrak{b}^+(x)=32\,\big(\sup_{B^+_{r/2}}|u|\big)r^{-2}\,\bigg\{&-\Big(\frac{\L_0} {\l_0}\Big)\, e_n\otimes e_n+\,\mathrm{Id} \bigg\}-\frac{K}{\l_0}x_n^{\alpha-1}\,e_n\otimes e_n,
    \end{split}
\end{equation*}
where $\mathrm{Id}$ is the identity matrix. Hence \eqref{barrier:subsol} immediately follows from \eqref{Ax:again} and a simple computation. Then, $\mathfrak{b}^+\geq 0=u$ on $B^0_{r}$, whereas if $x\in \partial B^+_{r/2}\setminus B^0_r$, by Young's inequality we have
\begin{equation*}
    \begin{split}
        \frac{|x-(x'_0,0)|^2}{r^2}&=\frac{|x'|^2-2\,x'\cdot x'_0+|x_0'|^2+x_n^2}{r^2}\geq \frac{(1/2)\,|x|^2-|x'_0|^2}{r^2}\geq \frac{1}{16}\,,
    \end{split}
\end{equation*}
where in the last inequality we used  $|x|=r/2$ and $|x'_0|\leq r/4$. From the inequality above, and since $x_n/r\geq (x_n/r)^2$ and $r^\alpha x_n\geq x_n^{1+\alpha}$, it follows that $\mathfrak{b}^+\geq \sup_{B^+_{r/2}} |u|$ on $\partial B^+_{r/2}\setminus B^0_{r/2} $, and \eqref{barrier:bdr} is proven.
\end{proof}

Next, in order to control the oscillation of the normal derivative, we need two additional lemmas. These are better stated in terms of suitable rectangles, so we introduce the sets
\begin{equation}\label{rectangles}
\begin{split}
    &\mathcal{R}(\sigma,R)=\{x=(x',x_n):\,|x'|<R,\,0<x_n<\sigma\,R \}=Q'_R\times (0,\sigma R)
    \\
    &\widetilde{\mathcal{R}}(\sigma,R)=\{x=(x',x_n):\,|x'|< R,\,\sigma R<x_n<2\sigma\,R \}=Q'_R\times (\sigma R,2\sigma R),
    \end{split}
\end{equation}
where $Q'_{\vrho}$ is the $(n-1)$-dimensional cube of wedge $2\vrho$.

\begin{lemma}\label{lemma:barrier2}
    Let $A,L$ be as in Lemma \ref{lemma:barrier1}. Suppose $u\in C^2(B_R^+)\cap C^0(\overline{B}_R^+)$ satisfies
    \begin{equation}\label{traceeq:2}
        Lu(x)\leq K\,x_n^{\alpha-1},\quad u(x)\geq 0,\quad x=(x',x_n)\in U\subset B^+_R.
    \end{equation}
    with $U$ open in $\R^n$, and for some constants $K>0$ and $\alpha\in(0,1)$. Then, setting
\begin{equation}\label{sigma:0}
    \sigma_0=\frac{\l_0}{2+(2n+4)\L_0},
\end{equation}
 for all $0<r<R$ such that $\mathcal{R}(\sigma_0,4r)\subset U$, we have
    \begin{equation}\label{barries:temp}
        \inf_{\widetilde{\mathcal{R}}(\sigma_0,2r)} \Big(\frac{u}{x_n}\Big)\leq 4\inf_{\mathcal{R}(\sigma_0,r)}\Big(\frac{u}{x_n}\Big)+C_0\,K\,r^{\alpha},
    \end{equation}
    for some constant $C_0=C_0(n,\l_0,\L_0,\alpha)$.
\end{lemma}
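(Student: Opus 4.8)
The plan is to run a barrier argument in the spirit of Krylov~\cite{K84}, exactly as in the proof of Lemma~\ref{lemma:barrier1}: on the rectangle $\mathcal R(\sigma_0,2r)$ we compare $u$ from below with an explicit $L$--supersolution that transports the lower bound available on the middle band $\widetilde{\mathcal R}(\sigma_0,2r)$ down to the small rectangle $\mathcal R(\sigma_0,r)$. First the reduction: set $m:=\inf_{\widetilde{\mathcal R}(\sigma_0,2r)}(u/x_n)$; since $u\ge 0$ on $U\supset\mathcal R(\sigma_0,4r)$ we have $\inf_{\mathcal R(\sigma_0,r)}(u/x_n)\ge 0$, so \eqref{barries:temp} is trivial when $m\le0$, and we may assume $m>0$. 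Note $\overline{\mathcal R(\sigma_0,2r)}\subset\overline{\mathcal R(\sigma_0,4r)}\subset\overline U\subset\overline{B_R^+}$, so $u$ is continuous and $\ge 0$ there; moreover the flat bottom $Q'_{2r}\times\{0\}$ lies on $B_R^0$, where $u\equiv 0$, and the top face $Q'_{2r}\times\{2\sigma_0 r\}$ lies in $\overline{\widetilde{\mathcal R}(\sigma_0,2r)}$, so by continuity $u\ge m\,x_n$ on it. Thus $u\ge 0$ on the bottom and lateral parts of $\partial\mathcal R(\sigma_0,2r)$ and $u\ge m\,x_n$ on its top face.

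Next, the barrier. One looks for $w\in C^2(\mathcal R(\sigma_0,2r))\cap C^0(\overline{\mathcal R(\sigma_0,2r)})$ of the form
$$
w(x)=m\,x_n\,h(x')+c_1\,m\Big(\frac{x_n^2}{r}-2\sigma_0 x_n\Big)-\frac{K}{\l_0\,\alpha(1+\alpha)}\big[(2\sigma_0 r)^{\alpha}x_n-x_n^{1+\alpha}\big],
$$
with a constant $c_1=c_1(n,\l_0,\L_0)>0$ and a profile $h\in C^2(\overline{Q'_{2r}})$ satisfying $h\le 1$ on $Q'_{2r}$, $h\le 0$ on $\partial Q'_{2r}$, $h\ge \tfrac12$ on $Q'_r$, and $r\,\|D'h\|_\infty+r^2\|D'^2h\|_\infty\le C(n)$ on $Q'_{2r}$. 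Here $h$ \emph{must be adapted to the cube geometry} rather than radial: in high dimension the corner of $Q'_r$ lies at distance $\sqrt{n-1}\,r$ from the origin, farther than a face of $Q'_{2r}$, so no radially decreasing profile can be $\ge\tfrac12$ on $Q'_r$ and $\le0$ on $\partial Q'_{2r}$; a suitably normalized smooth regularization of $x'\mapsto 1-\tfrac12(|x'|_\infty/r)^2$ does the job. A direct computation then shows, using $L(x_n^2/r)=2A_{nn}/r\ge 2\l_0/r$ to absorb the negative lower bound $L(m\,x_n\,h)\ge -C(n,\L_0)\,m/r$ and $L\big((2\sigma_0 r)^\alpha x_n-x_n^{1+\alpha}\big)=-\alpha(1+\alpha)A_{nn}x_n^{\alpha-1}$, that $Lw\ge K\,x_n^{\alpha-1}$ in $\mathcal R(\sigma_0,2r)$; that $w=0$ on the bottom face and $w=m\,x_n\,h\le m\,x_n$ on the top face; and that on the lateral boundary $h\le 0$, $x_n^2/r-2\sigma_0 x_n\le 0$ (as $x_n<2\sigma_0 r$) and the bracket is $\ge 0$, whence $w\le 0$. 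The precise value $\sigma_0=\l_0/(2+(2n+4)\L_0)$ from \eqref{sigma:0} is forced at this point: it is the threshold that makes $c_1$ small enough for $Lw\ge Kx_n^{\alpha-1}$ and for all boundary inequalities to close simultaneously, and also so that $\tfrac12-2c_1\sigma_0\ge\tfrac14$. Finally, on $\mathcal R(\sigma_0,r)$, where $x_n<\sigma_0 r$, one gets $w\ge(\tfrac12-2c_1\sigma_0)m\,x_n-\tfrac{(2\sigma_0)^\alpha}{\l_0\alpha(1+\alpha)}K\,r^\alpha x_n\ge(\tfrac m4-C_0K r^\alpha)x_n$.

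For the conclusion: $L(u-w)=Lu-Lw\le Kx_n^{\alpha-1}-Kx_n^{\alpha-1}=0$, so $u-w$ is an $L$--supersolution in $\mathcal R(\sigma_0,2r)$, continuous up to the boundary, with $u-w\ge 0$ on $\partial\mathcal R(\sigma_0,2r)$ by the two previous steps. The maximum principle \cite[Theorem 3.1]{GT} gives $u\ge w$ in $\mathcal R(\sigma_0,2r)$; restricting to $\mathcal R(\sigma_0,r)$ and dividing by $x_n>0$ yields $u/x_n\ge \tfrac m4-C_0Kr^\alpha$ there, that is $m\le 4\inf_{\mathcal R(\sigma_0,r)}(u/x_n)+4C_0Kr^\alpha$, which is \eqref{barries:temp} after renaming the constant.

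The main obstacle is the construction of $h$ in the second step. It has to lose at most a factor $4$ in passing from width $2r$ to width $r$ --- which rules out radial barriers and forces a cube-adapted profile with bounded first and second tangential derivatives up to $\partial Q'_{2r}$ --- and it must be compatible with the sharp threshold $\sigma_0$ in \eqref{sigma:0}, which requires a careful bookkeeping of all constants entering the three inequalities $Lw\ge Kx_n^{\alpha-1}$, $w\le u$ on $\partial\mathcal R(\sigma_0,2r)$, and $w\ge(\tfrac m4-C_0Kr^\alpha)x_n$ on $\mathcal R(\sigma_0,r)$.
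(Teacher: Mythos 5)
Your overall strategy (Krylov-type barrier plus the maximum principle on the rectangle $\mathcal{R}(\sigma_0,2r)$) is exactly the paper's, and your boundary analysis and final division by $x_n$ are fine. But the barrier you propose is normalized differently from the paper's, and this normalization is what creates the ``main obstacle'' you describe — which is then left unresolved. The paper does not multiply $m\,x_n$ by a profile $h$ that must be $\le 0$ on the lateral boundary and $\ge \tfrac12$ on the inner cube. Instead it considers
\[
w=u-\widetilde m\,x_n+\frac{\widetilde m}{4}\,\mathfrak b_1+K\,\mathfrak b_2,
\qquad
\mathfrak b_1(x)=\Bigl(1-\frac{x_n}{2\sigma_0 r}+\frac{|x'|^2}{r^2}\Bigr)x_n,
\]
and only needs the radial, increasing profile $1-\tfrac{x_n}{2\sigma_0 r}+\tfrac{|x'|^2}{r^2}$ to be $\ge 4$ on the lateral boundary $\{|x'|=2r\}$ (so that $\tfrac{\widetilde m}{4}\mathfrak b_1\ge \widetilde m\,x_n$ compensates $u-\widetilde m\,x_n\ge -\widetilde m\,x_n$) and $\le 3$ on $\mathcal R(\sigma_0,r)$. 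No sign condition on the profile at the boundary is required, so the issue of radial versus cube-adapted profiles in high dimension simply does not arise. Your claim that no radial profile works is correct only under the constraints you imposed on yourself.

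Beyond that, the proposal has genuine unverified steps. The existence of $h$ with $h\le 0$ on $\partial Q'_{2r}$, $h\ge\tfrac12$ on $Q'_r$, and $r\|D'h\|_\infty+r^2\|D'^2h\|_\infty\le C(n)$ is asserted, not constructed (and the suggested regularization of $1-\tfrac12(|x'|_\infty/r)^2$ needs care to preserve the second-derivative bound near the edges of the cube). More importantly, your constant bookkeeping is inconsistent: to absorb the negative contribution $L(m\,x_n h)\ge -C(n,\L_0)m/r$ you need $c_1\gtrsim C(n,\L_0)/\l_0$, i.e.\ $c_1$ \emph{large}, not ``small enough'' as you write; this then competes with the requirement $\tfrac12-2c_1\sigma_0\ge\tfrac14$, and whether both close with the \emph{fixed} value $\sigma_0=\l_0/(2+(2n+4)\L_0)$ of \eqref{sigma:0} — which was calibrated to the paper's barrier, not yours — is asserted (``is forced at this point'') rather than checked. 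As written, the argument is a plausible plan with the right skeleton, but not a complete proof; adopting the paper's barrier $\mathfrak b_1$ removes both the construction of $h$ and most of the constant-chasing.
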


\begin{proof}
    We set
\begin{equation*}
    \widetilde{\mathit{m}}=\inf_{\widetilde{\mathcal{R}}(\sigma_0,2r)}\Big(\frac{u}{x_n}\Big),
\end{equation*}
    and introduce the  functions
    \begin{equation*}
        \begin{split}
           &\mathfrak{b}_1(x)=\bigg(1-\frac{x_n}{2\sigma_0r}+\frac{|x'|^2}{r^2} \bigg)\,x_n
           \\
           &\mathfrak{b}_2(x)=\frac{1}{\l_0\,\alpha(\alpha+1)}\big[(2\sigma_0 r)^\alpha-x_n^{\alpha} \big]\,x_n
           \\
           &w(x)=u(x)-\widetilde{\mathit{m}}\,x_n+\frac{\widetilde{\mathit{m}}}{4}\,\mathfrak{b}_1(x)+K\,\mathfrak{b}_2(x)
        \end{split}
    \end{equation*}
Taking into account \eqref{traceeq:2}, \eqref{Ax:again} and \eqref{sigma:0}, a simple computation shows that 
\begin{equation*}
    Lw\leq 0\quad\text{in $\mathcal{R}(\sigma_0,2r)$.}
\end{equation*}
Moreover, using that $u\geq 0$, $\mathfrak{b}_2\geq 0$, $\mathfrak{b}_2=0$ if $\{x_n=2\sigma_0r\}$, and $\mathfrak{b_1}\geq 4x_n$ if $|x'|=2r$,  we deduce that, on $\mathcal{R}(\sigma_0,2r)$, we have
\begin{equation*}
    \begin{split}
   & w=u\geq 0\quad\text{if $x_n=0$ }
   \\
        &w\geq u-\widetilde{m}\,x_n+(\widetilde{m}/4)\,\mathfrak{b}_1\geq 0\quad\text{ if $ |x'|=2r$}
        \\
       & w\geq u-\widetilde{m}x_n\geq 0\quad \text{if $x_n=2\sigma_0 r$}
    \end{split}
\end{equation*}
the last inequality due to the definition of $\widetilde{m}$. The maximum principle \cite[Theorem 3.1]{GT} implies $w\geq 0$ on $\mathcal{R}(\sigma_0,2r)$, hence in particular on $\mathcal{R}(\sigma_0,r)$. Since $\mathfrak{b}_1\leq 3x_n$ on $\mathcal{R}(\sigma_0,r)$, it follows that  
\begin{equation*}
    u(x)\geq \Big[\Big(\frac{\widetilde{m}}{4}\Big)-K\frac{(2\sigma_0\,r)^\alpha}{(\lambda_0)}\Big]\,x_n,\quad x\in \mathcal{R}(\sigma_0,r),
\end{equation*}
and dividing by $x_n$, this proves \eqref{barries:temp}.
\end{proof} 

We now combine \eqref{barries:temp} with the weak Harnack inequality to obtain an oscillation estimate for $u/x_n$.

\begin{lemma}\label{lemma:barrier3}
    Let $A,L$ be as in Lemma \ref{lemma:barrier2}, and suppose that $u\in C^2(B_R^+)\cap C^0(\overline{B}^+_R)$, is such that $u/x_n\in L^\infty(B_R^+)$, and it satisfies
    \begin{equation}
        |Lu(x)|\leq K\,x_n^{\alpha-1}\quad x\in B^+_R.
    \end{equation}
  for some $\alpha\in (0,1)$ and $K>0$. Then there exist constants $C_0>0$ and $\theta_0\in (0,1)$ determined only by $n,\l_0,\L_0,\alpha$ such that
\begin{equation}\label{oscillation}
     \operatorname*{osc}_{B_r^+} \Big(\frac{u}{x_n}\Big)\leq C_0\,\left( \frac{r}{R}\right)^{\theta_0}\bigg\{ \operatorname*{osc}_{B_R^+}\Big(\frac{u}{x_n}\Big)+K\,R^\alpha\bigg\}.
\end{equation}

\begin{proof}
    Let us first assume that $100\,n\,r\leq R$, and for $i\in \N$ we set
    \begin{equation*}
        \mathrm{m}_i=\inf_{\mathcal{R}(\sigma_0,ir)}\Big(\frac{u}{x_n}\Big),\quad \mathrm{M}_i=\sup_{\mathcal{R}(\sigma_0,ir)}\Big(\frac{u}{x_n}\Big),
    \end{equation*}
where $\mathcal{R}(\sigma,r)$ and $\sigma_0$ are defined by \eqref{rectangles} and \eqref{sigma:0}, respectively.

We now apply the weak Harnack inequality \cite[Theorem 9.22]{GT} to the  function $u-\mathrm{m}_4 x_n$ in $\widetilde{\mathcal{R}}(\sigma_0,2r)$, and find $p_0=p_0(n,\l_0,\L_0)>0$ such that \footnote{\cite[Theorem 9.22]{GT} is stated for balls instead of rectangles. Nonetheless, it suffices to apply the bi-Lipschitz transformation $\widetilde{\mathcal{R}}(\sigma_0,2r)\iff B_{2r}$, whose constants only depend on $n$ and $\sigma_0$.}

\begin{equation*}
    \begin{split}\left(\mint_{\widetilde{\mathcal{R}}(\sigma_0,2r)} (u-\mathrm{m}_4x_n)^{p_0}\,dx\right)^{1/p_0} &\leq C\inf_{\widetilde{\mathcal{R}}(\sigma_0,2r)}(u-\mathrm{m}_4x_n)+C\,K\,r\bigg(\int_{\widetilde{\mathcal{R}}(\sigma_0,2r)}x_n^{n(\alpha-1)}\,dx\bigg)^{1/n}
    \\
    &\leq C'\,r\,\inf_{\widetilde{\mathcal{R}}(\sigma_0,2r)}\Big(\frac{u}{x_n}-\mathrm{m}_4\Big)+C'\,K\,r^{1+\alpha},
    \end{split}
\end{equation*}
with $C,C'=C,C'(n,\l_0,\L_0)$, where in the last inequality we used that $2\sigma_0r<x_n\leq 4\sigma_0r$ in $\widetilde{\mathcal{R}}(\sigma_0,2r)$. As $v=u-\mathrm{m}_4 x_n$ is a nonnegative solution to $|Lv(x)|\leq K\,x_n^{\alpha-1}$ in $\mathcal{R}(\sigma_0,4r)$, we can make use of Lemma \ref{lemma:barrier2} and get
\begin{equation}\label{bla}
    \left(\mint_{\widetilde{\mathcal{R}}(\sigma_0,2r)} (u-\mathrm{m}_4x_n)^{p_0}\,dx\right)^{1/p_0}\leq C\,r\,\big[\mathrm{m}_1-\mathrm{m}_4+K\,r^\alpha \big].
\end{equation}
The same argument applied to $\mathrm{M}_4x_n -u$ then gives
\begin{equation}\label{blaa}
     \left(\mint_{\widetilde{\mathcal{R}}(\sigma_0,2r)} (\mathrm{M}_4x_n-u)^{p_0}\,dx\right)^{1/p_0}\leq C\,r\,\big[\mathrm{M}_4-\mathrm{M}_1+K\,r^\alpha \big].
\end{equation}
By using that 
    \begin{equation*}
        \Big(\int(v+w)^{p_0}dx\Big)^{1/p_0}\leq C(n,p_0)\,\bigg[\Big(\int v^{p_0}dx\Big)^{1/p_0}+\int w^{p_0}dx\Big)^{1/p_0} \bigg]
    \end{equation*}
for all nonnegative functions $v,w$, adding \eqref{bla}-\eqref{blaa} and using once more that $2\sigma_0r<x_n\leq 4\sigma_0r$ in $\widetilde{\mathcal{R}}(\sigma_0,2r)$, we obtain
\begin{equation*}
   \big[\mathrm{M}_4-\mathrm{m}_4\big]\,r\leq C\,r\,\big[\mathrm{m}_1-\mathrm{m}_4+\mathrm{M}_4-\mathrm{M}_1+K\,r^\alpha \big]
\end{equation*}
for some constant $C=C(n,\l_0,\L_0)\geq 1$, from which we deduce
\begin{equation*}
    \operatorname*{osc}_{\mathcal{R}(\sigma_0,r)} \Big( \frac{u}{x_n}\Big)\leq \Big(\frac{C}{1+C}\Big)\, \operatorname*{osc}_{\mathcal{R}(\sigma_0,4r)} \Big( \frac{u}{x_n}\Big)+C\,K\,r^\alpha.
\end{equation*}
An application of Lemma \ref{lemma:osc} thus yields
\begin{equation*}
    \operatorname*{osc}_{\mathcal{R}(\sigma_0,r)} \Big(\frac{u}{x_n}\Big)\leq C\,\left( \frac{r}{R}\right)^{\theta_0}\bigg\{ \operatorname*{osc}_{\mathcal{R}\big(\sigma_0,\,R/(100n)\big)}\Big(\frac{u}{x_n}\Big)+K\,R^\alpha\bigg\},
\end{equation*}
and since $B^+_{\sigma_0\vrho}\subset \mathcal{R}(\sigma_0,\vrho)\subset B^+_{\sqrt{n}\vrho} $ for every $\vrho>0$, this implies  \eqref{oscillation} in the case $0<r\leq R/(100 n) $. However, \eqref{oscillation} is valid also for $R/(100n)\leq r\leq R$, as 
\begin{equation*}
     \operatorname*{osc}_{B^+_r}\Big( \frac{u}{x_n}\Big)\leq  \operatorname*{osc}_{B^+_R}\Big( \frac{u}{x_n}\Big)\leq (100n)^{\theta_0}\,\left(\frac{r}{R} \right)^{\theta_0}\operatorname*{osc}_{B^+_R}\Big( \frac{u}{x_n}\Big).
\end{equation*}
The proof is thus complete.
\end{proof}

\begin{remark}\label{rem:dernormale}
    \rm{Let $u$ be as in Lemma \ref{lemma:barrier3}. Then thanks to \eqref{oscillation}, the normal derivative $D_n u(x_0')$ exists at all points $x'_0\in B^0_R$. Indeed,  let $\vrho>0$ be such that $B_\vrho^+(x'_0)\subset B_R
    ^+$.
Take a sequence of points $\{x^{(k)}=(x_0',x_n^{(k)})\}_{k\in \N}$ such that $x^{(k)}_n\xrightarrow{k\to\infty} 0$; by \eqref{oscillation} we have
\begin{equation}\label{oscillation1}
    \Bigg|\frac{u(x^{(k)})}{x^{(k)}_n}-\frac{u(x^{(j)})}{x_n^{(j)}} \Bigg|\leq C\,\left( \frac{r}{\vrho}\right)^{\theta_0}\bigg\{ \operatorname*{osc}_{B_\vrho^+(x_0')}\Big(\frac{u}{x_n}\Big)+K\,\vrho^\alpha\bigg\}
\end{equation}
for every $0<r\leq \vrho$ such that $|x_n^{(k)}-x_n^{(j)}|<r$.
This implies that the sequence $\{u(x^{(k)})/x_n^{(k)}\}_{k\in\N}$ is Cauchy so, up to a subsequence, it will converge to a number $L_0$. Since \eqref{oscillation1} is valid for every sequence $x_n^{(k)}\to 0$, this implies that $L_0=\lim_{k\to \infty} u(x_0',x_n^{(k)})/x_n^{(k)}$ is uniquely determined, and thus $L_0=D_n u(x'_0)$. Moreover,  by \eqref{oscillation}, we have
    \begin{equation*}
        \Bigg|\frac{u(x^{(k)})}{x^{(k)}_n}-\frac{u(y)}{y_n} \Bigg|\leq C\,\left( \frac{r}{\vrho}\right)^{\theta_0}\bigg\{ \operatorname*{osc}_{B_\vrho^+(x_0')}\Big(\frac{u}{x_n}\Big)+K\,\vrho^\alpha\bigg\}
    \end{equation*}
    and by taking the limit as $k\to \infty$ above, we obtain 
    \begin{equation}\label{control:Dnu}
        \Big|D_nu(x'_0)-\frac{u(y)}{y_n} \Big|\leq C\,\left( \frac{r}{\vrho}\right)^{\theta_0}\bigg\{ \operatorname*{osc}_{B_\vrho^+(x_0')}\Big(\frac{u}{x_n}\Big)+K\,\vrho^\alpha\bigg\},
    \end{equation}
    for all $x'_0\in B_R^0$, $B_{\vrho}^+(x'_0)\subset B_R^+$, and for all $y\in B_r^+(x'_0)$, $0<r\leq \vrho$.
    }
\end{remark}
 
\end{lemma}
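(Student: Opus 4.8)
\textbf{Proof strategy for Lemma \ref{lemma:barrier3}.}
The plan is to establish the oscillation decay \eqref{oscillation} by combining the one-sided comparison estimate \eqref{barries:temp} of Lemma \ref{lemma:barrier2} with the weak Harnack inequality, in order to produce a geometric-decay inequality of the form $\operatorname*{osc}_{\mathcal{R}(\sigma_0,r)}(u/x_n)\leq \theta\,\operatorname*{osc}_{\mathcal{R}(\sigma_0,4r)}(u/x_n)+C\,K\,r^\alpha$ with $\theta\in(0,1)$, and then invoke the iteration Lemma \ref{lemma:osc}. The key point is that $u/x_n$ is \emph{not} assumed to solve any equation, so one cannot apply Harnack directly to it; instead one applies Harnack to the functions $u-\mathrm{m}_4 x_n$ and $\mathrm{M}_4 x_n-u$, which \emph{are} nonnegative solutions of $|L(\cdot)|\le K x_n^{\alpha-1}$ on the rectangle $\mathcal{R}(\sigma_0,4r)$, since $L(x_n)=0$.

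\textbf{Main steps.} First I would reduce to the regime $100n\,r\le R$ (the complementary range $R/(100n)\le r\le R$ being handled trivially at the end, using that the oscillation is monotone in the domain and that $(r/R)^{\theta_0}\ge (100n)^{-\theta_0}$). Next, set $\mathrm{m}_i=\inf_{\mathcal{R}(\sigma_0,ir)}(u/x_n)$ and $\mathrm{M}_i=\sup_{\mathcal{R}(\sigma_0,ir)}(u/x_n)$ as in the excerpt; these are finite because $u/x_n\in L^\infty(B_R^+)$ by hypothesis. Apply the weak Harnack inequality \cite[Theorem 9.22]{GT} to $v=u-\mathrm{m}_4 x_n\ge 0$ on $\mathcal{R}(\sigma_0,4r)$: after the bi-Lipschitz change of variables $\widetilde{\mathcal{R}}(\sigma_0,2r)\leftrightarrow B_{2r}$ (constants depending only on $n,\sigma_0$), and using $2\sigma_0 r<x_n\le 4\sigma_0 r$ on $\widetilde{\mathcal{R}}(\sigma_0,2r)$ to convert between $v$ and $v/x_n$, one gets
\[
\Big(\mint_{\widetilde{\mathcal{R}}(\sigma_0,2r)}(u-\mathrm{m}_4 x_n)^{p_0}\,dx\Big)^{1/p_0}\le C'\,r\,\inf_{\widetilde{\mathcal{R}}(\sigma_0,2r)}\Big(\frac{u}{x_n}-\mathrm{m}_4\Big)+C'\,K\,r^{1+\alpha}.
\]
Then bound the infimum on the right using Lemma \ref{lemma:barrier2} applied to $v$ (whose hypotheses hold since $v\ge0$ and $Lv\le K x_n^{\alpha-1}$ on $\mathcal{R}(\sigma_0,4r)$): $\inf_{\widetilde{\mathcal{R}}(\sigma_0,2r)}(v/x_n)\le 4\inf_{\mathcal{R}(\sigma_0,r)}(v/x_n)+C_0 K r^\alpha = 4(\mathrm{m}_1-\mathrm{m}_4)+C_0 K r^\alpha$, yielding \eqref{bla}. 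Running the identical argument on $\mathrm{M}_4 x_n-u\ge0$ gives \eqref{blaa}. Adding the two estimates with the elementary quasi-triangle inequality for $L^{p_0}$ norms, and again using $x_n\simeq \sigma_0 r$ on $\widetilde{\mathcal{R}}(\sigma_0,2r)$ to restore a clean factor of $r$, produces
\[
(\mathrm{M}_4-\mathrm{m}_4)\,r\le C\,r\,\big[(\mathrm{m}_1-\mathrm{m}_4)+(\mathrm{M}_4-\mathrm{M}_1)+K r^\alpha\big],
\]
i.e.\ $(\mathrm{M}_4-\mathrm{m}_4)\le C[(\mathrm{M}_4-\mathrm{M}_1)+(\mathrm{m}_1-\mathrm{m}_4)]+CKr^\alpha$. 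Since $(\mathrm{M}_4-\mathrm{M}_1)+(\mathrm{m}_1-\mathrm{m}_4)=(\mathrm{M}_4-\mathrm{m}_4)-(\mathrm{M}_1-\mathrm{m}_1)$, this rearranges to $\operatorname*{osc}_{\mathcal{R}(\sigma_0,r)}(u/x_n)\le \frac{C}{1+C}\operatorname*{osc}_{\mathcal{R}(\sigma_0,4r)}(u/x_n)+CKr^\alpha$.

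\textbf{Conclusion and obstacle.} With the geometric decay in hand, Lemma \ref{lemma:osc} (with $\tau=1/4$, $\theta=C/(1+C)$) gives $\operatorname*{osc}_{\mathcal{R}(\sigma_0,r)}(u/x_n)\le C(r/R)^{\theta_0}\{\operatorname*{osc}_{\mathcal{R}(\sigma_0,R/(100n))}(u/x_n)+KR^\alpha\}$; sandwiching $B^+_{\sigma_0\varrho}\subset\mathcal{R}(\sigma_0,\varrho)\subset B^+_{\sqrt n\,\varrho}$ converts rectangles to half-balls and yields \eqref{oscillation} for $0<r\le R/(100n)$, and the range $R/(100n)\le r\le R$ follows by monotonicity as noted. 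The main technical obstacle is the careful bookkeeping in transferring between the quantities $u-\mathrm{m}_4 x_n$ (on which Harnack and Lemma \ref{lemma:barrier2} operate) and $u/x_n$ (whose oscillation we want to control): this relies crucially on the two-sided bound $2\sigma_0 r<x_n\le 4\sigma_0 r$ on the ``collar'' rectangle $\widetilde{\mathcal{R}}(\sigma_0,2r)$, which is precisely why the argument is staged on rectangles rather than half-balls, and one must check that all constants indeed depend only on $n,\l_0,\L_0,\alpha$ (in particular $\sigma_0$ depends only on $n,\l_0,\L_0$ via \eqref{sigma:0}, and $p_0$ only on $n,\l_0,\L_0$).
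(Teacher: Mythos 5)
Your proposal is correct and follows essentially the same route as the paper's proof: weak Harnack applied to $u-\mathrm{m}_4 x_n$ and $\mathrm{M}_4 x_n-u$ on the collar rectangle, Lemma \ref{lemma:barrier2} to pass to $\mathrm{m}_1-\mathrm{m}_4$ and $\mathrm{M}_4-\mathrm{M}_1$, the quasi-triangle inequality to combine, rearrangement to a geometric oscillation decay, and Lemma \ref{lemma:osc} to iterate. All the technical points you flag (the two-sided bound $2\sigma_0 r<x_n\le 4\sigma_0 r$, the bi-Lipschitz transfer to balls, the dependence of $\sigma_0$ and $p_0$ on the data) are exactly those used in the paper.
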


\section{Boundary gradient regularity: homogeneous Dirichlet problems}\label{sec:dirhomog}

This section is devoted to proving boundary regularity for the gradient of solutions to the  homogeneous Dirichlet problem, with estimates valid up to the flat boundary portion of the upper half ball. Precisely, we consider $v\in W^{1,B}(B_{3R}^+)$ a local weak solution to
\begin{equation}\label{eq:dirhomogen}
    \begin{cases}
        -\mathrm{div}\big(\A(Dv) \big)=0\quad & \text{in $B^+_{2R}$}
        \\
        v=g\quad &\text{on $B^0_{2R}$.}
    \end{cases}
\end{equation}
with $g\in C^{1,\alpha}(\R^{n-1})$ compactly supported.
Observe that, by the interior regularity established in Section~\ref{sec:int0} and the control of tangential derivatives on the flat boundary portion $B^0_{2R}$ given by the Dirichlet datum, it remains to establish regularity for the normal derivative $D_n v$ up to $B^0_{2R}$. But this is exactly provided by the results of the preceding Section \ref{sec:trace}, via the control of the supremum and oscillation of $(v-g)/x_n$.

The main result of this section is the following.
\begin{theorem}\label{thm:gradddir}
    Suppose that the stress field $\A\in C^0(\R^n)\cap C^1(\rn\setminus\{0\})$ fulfills \eqref{ass:Aaut}, and let $g\in C^{1,\alpha}(\R^{n-1})$ be compactly supported.
    
    Let $v\in W^{1,B}(B_{3R}^+)$ be a weak solution to the Dirichlet problem \eqref{eq:dirhomogen}. Then there exist  constant $C_{\mathrm{h}}=C_{\mathrm{h}}(n,\l,\L,i_a,s_a,\alpha)>0$ and $\beta_{\mathrm{h}}=\beta_{\mathrm{h}}(n,\l,\L,i_a,s_a,\alpha)\in (0,1)$ such that 
    \[v\in C^{1,\beta_{\mathrm{h}}}(\overline{B}^+_{R/2}),
    \] and we have
\begin{equation}\label{dir:gradbdd}
        \sup_{B^+_{R/2}} |Dv|\leq C_{\mathrm{h}}\,\mint_{B^+_R} |Dv|\,dx+C_{\mathrm{h}}\,\|g\|_{C^{1,\alpha}}\,,
    \end{equation}
    and, for all $0<r<R/2$, there holds
    \begin{equation}\label{dir:gradholder}
        \operatorname*{osc}_{B_r^+}\, Dv\leq C_{\mathrm{h}}\,\left(\frac{r}{R} \right)^{\beta_{\mathrm{h}}}\,\left(\mint_{B_R^+} |Dv|\,dx+\|g\|_{C^{1,\alpha}} \right).
    \end{equation}
\end{theorem}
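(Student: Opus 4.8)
The strategy is to reduce everything to the normal derivative via the auxiliary results of Section~\ref{sec:trace}, following the scheme outlined right after the statement. We may assume $3R\le 1$ by the scaling of Remark~\ref{rem:scaling}; under scaling $g$ transforms to $g_R(y')=\tfrac1R g(Ry')$, with $\|g_R\|_{C^{1,\alpha}}$ controlled by $\|g\|_{C^{1,\alpha}}$ (here the compact support of $g$ and the normalization $R\le1$ are used).

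\emph{Step 1: regularization and interior estimates.} As in the proof of Theorem~\ref{thm:inthom}, we first work with the regularized vector field $\A_\e$ from Lemma~\ref{lemma:Ae} and a mollified Dirichlet datum $g_k$, solving the smooth Dirichlet problem on $B^+_{2R}$, so that $v_{\e,k}\in C^\infty(B^+_{2R})\cap C^{1,\alpha}(\overline{B}^+_{2R})$ by standard Schauder theory up to the flat boundary (using that the flattened boundary is trivially $C^{1,\alpha}$). All estimates below will be uniform in $\e,k$ in view of Remark~\ref{remark:importante}, and the conclusion follows by the same compactness-plus-uniqueness argument used at the end of Section~\ref{sec:int0}; I will not repeat those details. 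Write $v=v_{\e,k}$ for brevity. The interior gradient bound and oscillation decay of Theorem~\ref{thm:inthom} (or rather its $\e$-uniform version) give, for balls $B_\varrho(x_0)$ with $B_{2\varrho}(x_0)\subset B^+_{2R}$,
\begin{equation*}
\sup_{B_{\varrho/2}(x_0)}|Dv|\le c_\mathrm{h}\mint_{B_\varrho(x_0)}|Dv|\,dx,\qquad
\operatorname*{osc}_{B_\varrho(x_0)} Dv\le C_\mathrm{h}\Big(\tfrac{\varrho}{\varrho_0}\Big)^{\alpha_\mathrm{h}}\mint_{B_{\varrho_0}(x_0)}|Dv|\,dx.
\end{equation*}
It remains to control $D_nv$ near $B^0_{2R}$, since the tangential derivatives $D'v$ equal $D'g$ on $B^0_{2R}$, which is $C^{0,\alpha}$.

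\emph{Step 2: the equation for $w:=v-g$ in trace form.} Since each $D_kv$ solves the linearized equation $-\operatorname{div}(\nabla_\xi\A_\e(Dv)\,D(D_kv))=0$, differentiating and using $v\in C^2$ one checks that $v$ itself satisfies a nondivergence equation $\sum_{ij}a_{ij}(x)D_{ij}v=0$ with $a_{ij}(x)=(\nabla_\xi\A_\e(Dv(x)))_{ij}/a_\e(|Dv|)$, uniformly elliptic by \eqref{coer:Ae} and Remark~\ref{remark:importante}. Then $w=v-g$ (extending $g$ to a $C^{1,\alpha}$ function of $x\in\R^n$ independent of $x_n$, still called $g$) satisfies $Lw=-Lg=-\sum_{ij}a_{ij}D_{ij}g$, and crucially $w=0$ on $B^0_{2R}$. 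Here the key estimate is $|Lg(x)|\le C\,a_\e(|Dv(x)|)\,\|g\|_{C^{1,\alpha}}\,x_n^{\alpha-1}$: this follows because $D^2g$ exists a.e.\ with $|D^2g(x',x_n)|\lesssim \|g\|_{C^{1,\alpha}}\,(\text{dist to flat bdry})^{\alpha-1}$... actually $g$ is only $C^{1,\alpha}$ so $D^2g$ need not exist; the clean way is to exploit that $D'g\in C^{0,\alpha}$ and use the identity $Lg = \operatorname{div}\!$-type manipulations, or better: apply Lemma~\ref{lemma:barrier1} not to $w$ directly but observe $w$ satisfies $|Lw|\le K x_n^{\alpha-1}$ with $K=C\,a_\e(\sup|Dv|)\,\|g\|_{C^{1,\alpha}}$ — the bound on $Lg$ in the half-ball being obtained by a standard difference-quotient/interpolation argument on $C^{1,\alpha}$ functions. \emph{This is the main technical obstacle}: producing the pointwise weighted bound $|Lw(x)|\le K x_n^{\alpha-1}$ with $K$ proportional to $\|g\|_{C^{1,\alpha}}$ times the local size of the coefficients, starting only from $g\in C^{1,\alpha}$ and the ellipticity of $a_{ij}$.

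\emph{Step 3: apply the barrier lemmas.} Granted Step~2, Lemma~\ref{lemma:barrier1} gives
\begin{equation*}
\sup_{B^+_{r/4}}\Big(\tfrac{|w|}{x_n}\Big)\le C_0\sup_{B^+_{r/2}}\Big(\tfrac{|w|}{r}\Big)+C_0 K r^{\alpha}
\le C_0\Big(\sup_{B^+_{r/2}}|Dw|+K r^\alpha\Big),
\end{equation*}
since $|w|\le r\sup|Dw|$ on $B^+_r$ as $w$ vanishes on $B^0_r$; combined with Step~1 and the fact that $|Dw|\le|Dv|+\|g\|_{C^{1,\alpha}}$, this bounds $\sup_{B^+_{R/2}}|D_nv|$ (taking $r\sim R$ and noting $D_nw=D_nv$), yielding \eqref{dir:gradbdd}. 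For \eqref{dir:gradholder}, Lemma~\ref{lemma:barrier3} and Remark~\ref{rem:dernormale} give that $D_nv$ extends continuously to $B^0_R$ with
\begin{equation*}
\Big|D_nv(x_0')-\tfrac{w(y)}{y_n}\Big|\le C\Big(\tfrac{r}{\varrho}\Big)^{\theta_0}\Big\{\operatorname*{osc}_{B^+_\varrho(x_0')}\big(\tfrac{w}{x_n}\big)+K\varrho^\alpha\Big\}
\end{equation*}
for $x_0'\in B^0_R$, $y\in B^+_r(x_0')$, $r\le\varrho$, and in particular the oscillation of $D_nv$ on $B^+_r(x_0')$ (comparing interior values via Step~1 with boundary values via this estimate) decays like $(r/\varrho)^{\min\{\theta_0,\alpha_\mathrm{h},\alpha\}}$. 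Combining the tangential oscillation control ($D'v=D'g$ on $B^0$ with $D'g\in C^{0,\alpha}$, plus interior decay of $\operatorname{osc}D'v$) with the normal one, and invoking the interpolation Lemma~\ref{lemma:interpol} to merge interior and boundary decay estimates into a single power decay on all of $B^+_{R/2}$, we obtain \eqref{dir:gradholder} with $\beta_\mathrm{h}=\min\{\theta_0,\alpha_\mathrm{h},\alpha\}$ (or a fixed fraction thereof). Passing $\e\to0$ then $k\to\infty$ as in Section~\ref{sec:int0} completes the proof.
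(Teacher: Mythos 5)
Your overall architecture matches the paper's: regularize, use interior estimates for the tangential directions, and control $D_n v$ near the flat boundary via the Krylov-type barrier lemmas of Section~\ref{sec:trace}, then glue with Lemma~\ref{lemma:interpol}. However, there is a genuine gap exactly at the point you flag in Step~2. Extending $g$ as a function independent of $x_n$ gives a $C^{1,\alpha}$ extension whose second derivatives need not exist, so you cannot write $Lw=-Lg$ as a classical trace-form equation, and the vague appeal to ``a standard difference-quotient/interpolation argument'' does not produce the pointwise weighted bound $|Lw(x)|\le K\,x_n^{\alpha-1}$ that Lemmas~\ref{lemma:barrier1}--\ref{lemma:barrier3} require (the barriers are compared via the classical maximum principle, so a distributional or averaged bound is not enough). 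The paper resolves this with Lemma~\ref{lemma:extension}: one constructs a \emph{smooth} extension $G$ of $g$ to the upper half-space (e.g.\ $G(x',x_n)=g\ast\varrho_{x_n}(x')$) satisfying $\|G\|_{C^{1,\alpha}}\lesssim\|g\|_{C^{1,\alpha}}$ together with the crucial weighted Hessian bound $|D^2G(x',x_n)|\le C\|g\|_{C^{1,\alpha}}\,x_n^{\alpha-1}$. Setting $w_\e=v_\e-G$ then yields $\bigl|\mathrm{tr}(A_\e(x)D^2w_\e)\bigr|\le C\|g\|_{C^{1,\alpha}}x_n^{\alpha-1}$ with $w_\e=0$ on the flat boundary, which is precisely the hypothesis of the barrier lemmas. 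Without this extension device your Step~2 does not go through.

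A secondary issue is that your Step~3 does not actually yield the quantitative estimate \eqref{dir:gradbdd}. From Lemma~\ref{lemma:barrier1} you obtain $\sup(|w|/x_n)\lesssim\sup|Dw|+Kr^\alpha$, and you then assert this bounds $\sup|D_nv|$; but $\sup|Dw|$ is (up to $\|g\|_{C^{1,\alpha}}$) the very quantity you are trying to estimate by the $L^1$ average $\mint_{B_R^+}|Dv|$, so the argument is circular as written. The paper instead first bounds $\sup_{B^+_{r/2}}(|w_\e|/r)$ by $\mint_{B^+_r}(|w_\e|/r)$ using the boundary $L^\infty$ estimate of Theorem~\ref{thm:bdddir}, feeds this into the barrier lemma to control $\sup(|w_\e|/x_n)$ by the $L^1$ average, and then converts this into a pointwise gradient bound at an interior point $x_0$ at distance $d$ from the flat boundary by combining the interior $L^\infty$--$L^1$ estimate on $B_{d/4}(x_0)$ with the Caccioppoli inequality and the inequality $x_n\le 2d$ on that ball. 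You need this chain (or an equivalent one) to place the $L^1$ average of $Dv$, rather than its supremum, on the right-hand side of \eqref{dir:gradbdd}.
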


As problem \eqref{eq:dirhomogen} is left unchanged by additive constants, we will also assume that $\A(0)=0$. As in the previous sections, we proceed via approximation, and consider $\ve\in W^{1,2}(B_{2R}^+)$ solution to
\begin{equation}\label{eqve:dir}
        \begin{cases}
        -\mathrm{div}\big(\Ae(D\ve) \big)=0\quad & \text{in $B^+_{2R}$}
        \\
        \ve=g\quad &\text{on $B^0_{2R}$,}
    \end{cases}
\end{equation}
where $\Ae$ is given by Lemma \ref{lemma:Ae}. 

\begin{proposition}
    Suppose $\ve \in W^{1,2}(B_{2R}^+)$ is solution to \eqref{eqve:dir}. Then
    \begin{equation}\label{ve:reg1}
        \ve\in C^\infty(B_{2R}^+)\cap C^{0}\big(\overline{B}_r^+\big)\quad\text{for all $0<r<2R$.}
    \end{equation}
\end{proposition}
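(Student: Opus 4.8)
The assertion splits into an interior part and a boundary part, which I would treat separately.

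\emph{Interior smoothness.} Since $B^+_{2R}$ is open and, by \eqref{bBquadratic}, $\ve\in W^{1,2}(B^+_{2R})=W^{1,B_\e}(B^+_{2R})$ is a local weak solution of $-\mathrm{div}(\Ae(D\ve))=0$ in $B^+_{2R}$, Proposition~\ref{prop:veregular} applied with $\Omega=B^+_{2R}$ gives directly $\ve\in C^\infty(B^+_{2R})$ (and, incidentally, that each $D_k\ve$ solves the linearized equation \eqref{eq:classic} classically in $B^+_{2R}$). In particular $\ve$ is continuous at every interior point, and also at every point of the spherical cap $\{|x|=r,\ x_n>0\}$, which lies in $B^+_{2R}$ whenever $r<2R$. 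Since $\overline{B}^+_r\subset B^+_{2R}\cup B^0_{2R}$ for $r<2R$, the only thing left to prove is that $\ve$ extends continuously to $B^+_{2R}\cup B^0_{2R}$ with boundary value $g$ on $B^0_{2R}$, i.e. $\ve(x)\to g(x_0')$ as $B^+_{2R}\ni x\to x_0'\in B^0_{2R}$.

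\emph{Continuity up to the flat boundary.} Here the plan is to first shift away the boundary datum. Set $G(x):=g(x')$; since $g\in C^{1,\alpha}(\R^{n-1})$ has compact support, $G\in C^{1,\alpha}(\rn)$ with $\|DG\|_{L^\infty(\rn)}=\|D'g\|_{L^\infty}\le\|g\|_{C^{1,\alpha}}$. Then $w:=\ve-G\in W^{1,2}(B^+_{2R})$ has zero trace on $B^0_{2R}$ and is a weak solution of $-\mathrm{div}(\mathcal B_\e(x,Dw))=0$ in $B^+_{2R}$, where $\mathcal B_\e(x,\xi):=\Ae(\xi+DG(x))$. By \eqref{bBquadratic} and \eqref{coAe:gr} (equivalently \eqref{nonzero:Ae}), the field $\mathcal B_\e$ has quadratic growth and coercivity; using Young's inequality and $|DG(x)|\le\|g\|_{C^{1,\alpha}}$ one checks that $\mathcal B_\e$ satisfies \eqref{weaker:A} with $B,b$ replaced by the (equivalently quadratic, resp.\ linear) functions $B_\e,b_\e$ and with $K_1=C\,\|g\|_{C^{1,\alpha}}$. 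Hence Theorem~\ref{thm:bdddir}, applied with $B_\e$ in place of $B$ (legitimate by Remark~\ref{remark:importante}), yields $w\in L^\infty(B^+_r)$ for every $r<2R$. Finally, $w$ is a bounded weak solution of a uniformly elliptic quasilinear equation in divergence form with $p=2$ structure and zero Dirichlet datum on the flat portion $B^0_{2R}$, so the classical boundary De Giorgi--Nash--Moser oscillation estimate (see e.g.\ \cite[Theorem~8.29]{GT}, \cite[Chapters~II--III]{LU68}) provides $\gamma=\gamma(n,\e,\|g\|_{C^{1,\alpha}})\in(0,1)$ with $w\in C^{0,\gamma}(\overline{B}^+_r)$ and $w=0$ on $B^0_r$, for every $r<2R$. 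Therefore $\ve=w+G$ is continuous on $\overline{B}^+_r$ (sum of a $C^{0,\gamma}$ function and $G\in C^{1,\alpha}$) and equals $g$ on $B^0_r$, which together with the interior smoothness gives \eqref{ve:reg1}.

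\emph{Main obstacle.} The interior part is immediate, so the only genuine content is continuity up to $B^0_{2R}$; after the reduction $w=\ve-G$ this is classical De Giorgi--Nash--Moser boundary theory. The points requiring mild care are: verifying that $\mathcal B_\e$ still fits the structural framework \eqref{weaker:A} (so that the $L^\infty$ bound of Section~\ref{sec:bounded} applies); localizing all the boundary estimates near the flat patch $B^0_{2R}$, so that no information is used on the curved part of $\partial B^+_{2R}$, where $\ve$ is merely $W^{1,2}$; and observing that the constants here may depend on $\e$, which is harmless since only qualitative regularity of the approximating solutions is claimed at this stage (the uniform-in-$\e$ estimates will come later, from the alternative).
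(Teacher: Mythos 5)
Your proof is correct and follows essentially the same route as the paper: interior smoothness from Proposition~\ref{prop:veregular}, and continuity up to $B^0_{2R}$ from the classical De Giorgi--Nash--Moser boundary theory for the uniformly elliptic (quadratic-growth, by \eqref{bBquadratic} and \eqref{coAe:gr}) regularized equation. The only cosmetic difference is that you first subtract an extension $G$ of $g$ to reduce to zero boundary data (a reduction the paper itself performs a few lines later in \eqref{we}--\eqref{again:coer} for other purposes), whereas the paper invokes \cite[Theorems 8.25--8.29]{GT} directly for the inhomogeneous Dirichlet datum.
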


\begin{proof}
    The interior regularity of $\ve$ is the content of Proposition \ref{prop:veregular}. Regarding the boundary regularity, 
    thanks to \eqref{coAe:gr} and \eqref{bBquadratic}, we can use the boundary Harnack inequality \cite[Theorems 8.25-8.29]{GT} and deduce that $\ve\in C^{0,\alpha_\e}(\overline{B}_r^+)$ for every $0<r<2R$, for some $\alpha_\e \in (0,1)$. 
\end{proof}

We shall also need to extend $g$ to the upper half space in a smooth way. To this end, we exploit the following extension lemma.

\begin{lemma}\label{lemma:extension}
    Let $g\in C^{1,\alpha}(\R^{n-1})$ be compactly supported. Then there exists an extension function $G\in C^{\infty}(\R^n_+)$ such that $  G(x',0)=g(x')$,
    \begin{equation}\label{G:ext}
    \begin{split}
        \|G\|_{C^{1,\alpha}(\R^n_+)} \leq C(n,\alpha)\,\|g\|_{C^{1,\alpha}(\R^{n-1})}
 \quad\text{and}
        \quad
        |D^2 G(x',x_n)|\leq C(n,\alpha)\,\|g\|_{C^{1,\alpha}(\R^{n-1})}\,x_n^{\alpha-1}\,,
        \end{split}
    \end{equation}
    for all $x'\in \R^{n-1}$, and all $x_n>0$.
\end{lemma}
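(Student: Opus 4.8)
\textbf{Proof strategy for Lemma \ref{lemma:extension}.}
The plan is to use a standard mollification-type extension, adapted so that the precise decay estimate on the second derivative near $\{x_n=0\}$ is visible. First I would fix a smooth, radially symmetric kernel $\eta\in C^\infty_c(B_1'(0'))\subset C^\infty_c(\R^{n-1})$ with $\int_{\R^{n-1}}\eta(z')\,dz'=1$, and define the extension by averaging $g$ at scale comparable to $x_n$:
\begin{equation*}
    G(x',x_n):=\int_{\R^{n-1}} g\big(x'-x_n\,z'\big)\,\eta(z')\,dz'=\frac{1}{x_n^{\,n-1}}\int_{\R^{n-1}} g(y')\,\eta\!\Big(\frac{x'-y'}{x_n}\Big)\,dy',
\end{equation*}
for $x_n>0$, and $G(x',0):=g(x')$. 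Since $\eta$ has mean $1$ and $g$ is uniformly continuous with compact support, $G(x',x_n)\to g(x')$ as $x_n\to 0^+$, so $G$ is continuous up to $\{x_n=0\}$ and agrees with $g$ there; smoothness in $\R^n_+$ is immediate from differentiating under the integral sign, using that $x_n>0$ keeps the kernel smooth. The compact support of $g$ also guarantees $G$ is supported in a bounded region of $\R^n_+$ enlarged by the support of $\eta$.

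Next I would establish the bounds in \eqref{G:ext}. For the $C^0$ and first-derivative bounds one writes, using $\int\eta=1$ and $\int z'\eta(z')\,dz'$ finite together with $\|g\|_{C^{1,\alpha}}$-control, that $|G|\le \|g\|_{L^\infty}$, that $D_{x'} G(x',x_n)=\int Dg(x'-x_nz')\eta(z')\,dz'$ so $|D_{x'}G|\le \|Dg\|_{L^\infty}$, and that $D_{x_n}G(x',x_n)=-\int Dg(x'-x_nz')\cdot z'\,\eta(z')\,dz' = -\int \big(Dg(x'-x_nz')-Dg(x')\big)\cdot z'\,\eta(z')\,dz'$ (the subtraction of the constant $Dg(x')$ is legitimate because $\int z'\eta\,dz'$ need not vanish, but $\int(Dg(x')\cdot z')\eta\,dz'$ is a fixed bounded quantity — more cleanly, just bound $|D_{x_n}G|\le \|Dg\|_{L^\infty}\int|z'|\eta\,dz'$ directly). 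The H\"older seminorm of $DG$ is controlled by splitting into the cases where the two points are at comparable or very different heights and using $[Dg]_{C^{0,\alpha}}$; this is routine. The crucial estimate is the one on $D^2G$: differentiating twice and exploiting that derivatives of the kernel integrate to zero, for any second-order derivative one gets a representation of the schematic form
\begin{equation*}
    D^2 G(x',x_n)=\frac{1}{x_n}\int_{\R^{n-1}}\big(Dg(x'-x_nz')-Dg(x')\big)\otimes \big(\text{(derivative of }\eta)\big)(z')\,dz',
\end{equation*}
and since $|Dg(x'-x_nz')-Dg(x')|\le [Dg]_{C^{0,\alpha}}\,(x_n|z'|)^\alpha\le [Dg]_{C^{0,\alpha}}\,x_n^\alpha$ on $\mathrm{spt}\,\eta$, the $1/x_n$ prefactor yields exactly $|D^2G|\le C(n,\alpha)\,\|g\|_{C^{1,\alpha}}\,x_n^{\alpha-1}$.

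The main obstacle — really the only subtle point — is the careful bookkeeping in the second-derivative bound: one must differentiate the kernel rather than $g$ (so as not to lose a derivative on $g$, which only has $C^{1,\alpha}$ regularity), and then use the vanishing moments $\int \partial_{z_i'}\eta\,dz'=0$ to subtract the constant $Dg(x')$ and convert a naive $x_n^{-1}$ blow-up into the sharp $x_n^{\alpha-1}$ rate. Once this is set up correctly, separating the $x'$- and $x_n$-differentiations (the latter producing an extra factor $z'/x_n$ but not an extra kernel derivative) and collecting terms gives all three inequalities in \eqref{G:ext} with constants depending only on $n$, $\alpha$, and $\|\eta\|_{C^2}$, i.e.\ only on $n$ and $\alpha$.
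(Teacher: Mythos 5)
Your proposal is correct and is essentially the convolution extension $G(x',x_n)=g\ast\varrho_{x_n}(x')$ that the paper explicitly attributes to \cite[Lemma~2.3]{GH80}: the key mechanism — passing the derivatives onto the rescaled kernel via integration by parts in $z'$ and using the vanishing integral of the kernel derivatives to subtract $Dg(x')$, yielding $x_n^{\alpha-1}$ instead of $x_n^{-1}$ — is exactly the right one. Only the parenthetical claiming that $x_n$-differentiation produces "no extra kernel derivative" is slightly off (avoiding a second derivative of $g$ there also requires converting $D_{x_n}$ into $x_n^{-1}z'\cdot D_{z'}$ and integrating by parts, which does hit the kernel), but your displayed formula already reflects the correct computation, so this is a harmless slip of prose.
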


The proof of Lemma~\ref{lemma:extension} can be found in \cite[Lemma~2.3]{GH80} or \cite[Lemma~5.1]{AC25}. We simply remark that both proofs rely on techniques arising in the study of fractional Sobolev spaces. In \cite{GH80}, the extension function is defined via convolution, namely $
G(x',x_n) = g \ast \varrho_{x_n}(x')$,
a method employed to establish properties of fractional Sobolev spaces-see \cite[Section 1.4]{grisvard}. In contrast, the proof in \cite{AC25} is based on the Caffarelli-Silvestre extension \cite{CS07}.

\begin{remark*}
\rm{The assumption that $g$ is compactly defined on the whole space $\mathbb{R}^{n-1}$ is not restrictive. Indeed, if $g$ is defined only on $B'_{4R_0}$, for some $R_0>0$, it suffices to multiply it with a cut-off function $\eta\in C^\infty_c(B'_{4R_{0}})$ such that $\eta\equiv 1$ on $B'_{2R_0}$, $|D\eta|\leq C(n)/R_0$ and $|D^2\eta|\leq C(n)/R_0^2$. Thus we obtain a new function $\tilde g\in C^{1,\alpha}(\R^{n-1})$ compactly supported, which is equal to $g$ in $B'_{2R_0}$, and such that $\|\tilde g\|_{C^{1,\alpha}(\R^{n-1})}\leq C(n,\alpha)/R_0^2\,\|g\|_{C^{1,\alpha}(B'_{4R_0})}$. }
\end{remark*}

It is convenient to introduce the auxiliary function 
\begin{equation}\label{we}
\we=\ve-G,    
\end{equation}
 where $G$ is the extension function given by Lemma \ref{lemma:extension}.
In particular, by \eqref{ve:reg1} and the regularity of $G$ , we have
\[
\we\in C^\infty(B_{2R}^+)\cap C^0(\overline{B}_r^+)\quad\text{for all $0<r<2R$,}
\]
 and, owing to \eqref{eqve:dir}, it is a weak solution to
\begin{equation}\label{we:sol}
    \begin{cases}
         -\mathrm{div}\big(\tilde\Ae(x,D\we) \big)=0\quad & \text{in $B^+_{2R}$}
        \\
        \we=0\quad &\text{on $B^0_{2R}$,}
    \end{cases}
\end{equation}
where $\tilde{\Ae}(x,\xi)=\Ae(\xi+DG(x))$. We claim that
\begin{equation}\label{again:coer}
    \begin{split}
        &\tilde{\Ae}(x,\xi)\cdot \xi \geq c\,B_\e(|\xi|)-C\,B_\e(\|g\|_{C^{1,\alpha}})
        \\
        &\big|\tilde{\Ae}(x,\xi)\big|\leq C\, b_\e\big(|\xi| \big)+C\,b_\e\big(\|g\|_{C^{1,\alpha}} \big).
    \end{split}
\end{equation}
for some constants $c,C=c,C(n,\l,\L,i_a,s_a,\alpha)$. Indeed, by \eqref{coAe:gr}, Young's inequality \eqref{young1} and \eqref{triangle} (and recalling Remark \ref{remark:importante}), we deduce
\begin{equation*}
\begin{split}
    \tilde{\Ae}(x,\xi)\cdot \xi &=\A_\e(\xi+DG)\cdot \xi\geq c\,B_\e\big(|\xi+DG| \big)- \Ae(\xi+DG)\cdot DG
    \\
    &\geq c\,B_\e\big(|\xi+DG| \big)-C\,b_\e\big(|\xi+DG| \big)\,|DG|
    \\
    &\geq  c'\,B_\e\big(|\xi+DG| \big)-C'B_\e\big(|DG|\big)
    \\
    &\geq c'\,B_\e\big(|\xi|\big)-C''B_\e\big(|DG|\big),
    \end{split}
\end{equation*}
where the constants $c,c',C,C',C''>0$ depend only on $n,\l,\L,i_a,s_a$ thanks to Remark \ref{remark:importante}. Taking advantage of \eqref{G:ext} the monotonicity of $B_\e$ and \eqref{B2t}, the first inequality of \eqref{again:coer} follows. The second inequality is simpler, as by \eqref{coAe:gr} and \eqref{triangleb} (coupled with Remark \ref{remark:importante}), we get
\begin{equation*}
    \begin{split}
        |\tilde{\Ae}(x,\xi)|=|\Ae(\xi+DG)|\leq C\,b_\e(|\xi+DG|)\leq C'\,b_\e(|\xi|)+C'b_\e(|DG|)\,,
    \end{split}
\end{equation*}
where $C,C'=C,C'(n,\l,\L,i_a,s_a)$. The monotonicity of $b_\e$, \eqref{G:ext} and \eqref{b2t} finally prove \eqref{again:coer}.

Now observe that, by using the chain rule in \eqref{we:sol}, on $B_{2R}^+$ we find
\begin{equation*}
    0=\mathrm{div}\big(\Ae(D\we+DG) \big)=\mathrm{tr}\big(\nabla_\xi \Ae(D\we+DG)\,D^2(\we+G) \big)=\mathrm{tr}\big(\nabla_\xi \Ae(D\ve)\,D^2(\we+G) \big).
\end{equation*}
Dividing the above identity by $a_\e(|D\ve|)$ (which never vanishes by \eqref{ae:nonzero}), setting 
\begin{equation*}
    A_\e(x)=\frac{\nabla_\xi\Ae(D\ve)}{a_\e\big(|D\ve| \big)}\,,
\end{equation*}
we thus have that $w_\e$ solves 
\begin{equation}\label{temp:trace}
    \mathrm{tr}\big(A_\e(x)D^2\we \big)=-\mathrm{tr}\big(A_\e(x)D^2G\big)\quad \text{in $B_{2R}^+.$}
\end{equation}
Now observe that from  \eqref{coer:Ae}, we have
\begin{equation}\label{matrix:Ae}
    A_\e(x)\eta\cdot \eta\geq c\,|\eta|^2\quad \sum_{i,j=1}^n |(A_\e)_{ij}(x)|\leq C\quad \text{for all $\eta\in \rn$, $x\in B^+_{2R}$.}
\end{equation}
with $c,C=c,C(n,\l,\L,i_a,s_a)>0$, and making use of this information and \eqref{G:ext} in \eqref{temp:trace}, we find that $\we=\ve-G$ solves
\begin{equation}\label{we:trace}
    \begin{cases}\big|\mathrm{tr}\big(A_\e(x)D^2\we \big) \big|\leq C\,\|g\|_{C^{1,\alpha}}\,x_n^{\alpha-1}\quad &\text{in $B_{2R}^+$,}
    \\
    \we=0\quad &\text{on $B_{2R}^0$,}
    \end{cases}
\end{equation}
with $C=C(n,\l,\L,i_a,s_a,\alpha)$.

We are now ready to prove the boundedness of $D\ve$.

\begin{proposition}\label{prop:dveDir}
    Let $\ve\in W^{1,2}(B_{2R}^+)$ be a local weak solution to \eqref{eqve:dir}. Then there exist constants $C,C'=C,C'(n,\l,\L,i_a,s_a,\alpha)>0$ such that
    \begin{equation}\label{Dve:dirichlet}
    \begin{split}
        \sup_{B^+_{R/2}} |D\ve|&\leq C\,\mint_{B_R^+}   \bigg|\frac{\ve-G}{R} \bigg|\,dx+C\,\|g\|_{C^{1,\alpha}} 
        \\
       &\leq C'\,\mint_{B_R^+}|D\ve   |\,dx+C'\,\|g\|_{C^{1,\alpha}}, 
        \end{split}
        \end{equation}
and
\begin{equation}\label{vdivisoxn}
\begin{split}
    \sup_{B_{R/2}^+} \left(\frac{|\ve-G|}{x_n} \right) &\leq C\mint_{B^+_{R}}\left(\frac{|\ve-G|}{R}\right)\,dx+C\,\|g\|_{C^{1,\alpha}}
    \\
    &\leq C'\mint_{B_R^+}|D\ve|\,dx+C'\|g\|_{C^{1,\alpha}}.
    \end{split}
\end{equation}
\end{proposition}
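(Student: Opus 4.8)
\textbf{Proof plan for Proposition~\ref{prop:dveDir}.}
The strategy is to split the gradient $D\ve = D\we + DG$, control $DG$ by $\|g\|_{C^{1,\alpha}}$ via \eqref{G:ext}, and estimate the tangential and normal components of $D\we$ separately on $B_{2R}^+$. For the \emph{tangential} derivatives $D_i\we$, $i=1,\dots,n-1$: since $\we=0$ on $B_{2R}^0$, the interior estimate \eqref{inf:hom} (applied to $\ve$, which solves the autonomous homogeneous equation, on interior balls) together with a boundary argument handles them; more precisely, one differentiates \eqref{eqve:dir} tangentially and uses that $D_i\we$ vanishes on the flat boundary, or alternatively one uses that $\we$ solves the uniformly elliptic trace equation \eqref{we:trace} and invokes global Lipschitz-type bounds. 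However the cleaner route is to bound $D_n\ve$ first, then recover the full gradient.

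\textbf{Step 1: Scaling.} By Remark~\ref{rem:scaling} (see \eqref{uR:scale}), we may assume $R=1$; the boundary datum $g$ rescales to $g_R(y)=\tfrac1R g(Ry)$, whose $C^{1,\alpha}$-norm is controlled since $R\le 1$, and $G$ rescales accordingly. The equation \eqref{we:trace} is preserved with the same ellipticity constants.

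\textbf{Step 2: Bound on $(\ve-G)/x_n$ and on $D_n\ve$ at the boundary.} Apply Lemma~\ref{lemma:barrier1} to $\we=\ve-G$, which by \eqref{we:trace} satisfies $|Lw_\e|\le C\,\|g\|_{C^{1,\alpha}}\,x_n^{\alpha-1}$ on $B_1^+$ with $L=\mathrm{tr}(A_\e(x)D^2\,\cdot\,)$ and $A_\e$ satisfying \eqref{matrix:Ae} with constants depending only on $n,\l,\L,i_a,s_a$, and $\we=0$ on $B_1^0$. This yields
\[
\sup_{B^+_{1/4}}\Big(\frac{|\we|}{x_n}\Big)\le C_0\,\sup_{B^+_{1/2}}|\we|+C_0\,\|g\|_{C^{1,\alpha}},
\]
which is the first inequality of \eqref{vdivisoxn} (after unscaling). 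To convert $\sup_{B_{1/2}^+}|\we|$ into the $L^1$-average of $|D\ve|$: use $\we=0$ on $B_{2R}^0$ together with the boundedness estimate \eqref{sol:Linfdir} from Theorem~\ref{thm:bdddir} applied to $\we$ — note $\we$ solves \eqref{we:sol} with vector field $\tilde\A_\e$ satisfying \eqref{again:coer}, hence \eqref{weaker:A} with $K_1\sim\|g\|_{C^{1,\alpha}}$ — obtaining $\sup_{B_{1/2}^+}|\we|\le C\mint_{B_1^+}|\we|\,dx+C\|g\|_{C^{1,\alpha}}$, and then Poincaré on half-balls \eqref{half:poincare} (valid since $\we=0$ on $B_1^0$) bounds $\mint_{B_1^+}|\we|$ by $\mint_{B_1^+}|D\we|\le \mint_{B_1^+}|D\ve|+\|DG\|_\infty$. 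This gives the second inequality of \eqref{vdivisoxn}. Moreover, by Remark~\ref{rem:dernormale}, $D_n\we(x_0')$ exists for $x_0'\in B_1^0$ and $|D_n\we(x_0')|\le \sup_{B^+}(|\we|/x_n)$; since $D_n G$ is bounded by $\|g\|_{C^{1,\alpha}}$, this controls $D_n\ve$ on $B_1^0$.

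\textbf{Step 3: Interior estimate and transfer to the boundary.} For $x_0\in B_{1/2}^+$ write $d=x_{0,n}=\mathrm{dist}(x_0,B_1^0)$. If $d\ge \tfrac14$, the interior estimate \eqref{inf:hom} applied on $B_{d}(x_0)\subset B_1^+$ bounds $|D\ve(x_0)|$ by $C\mint_{B_1^+}|D\ve|$. If $d<\tfrac14$, apply \eqref{inf:hom} on $B_{d/2}(x_0)$: $|D\ve(x_0)|\le C\mint_{B_{d/2}(x_0)}|D\ve|\,dx$. Split $D\ve=D'\ve+ (D_n\ve)e_n$. The tangential part $D'\we$ is controlled because $\we$ vanishes on $B_1^0$ so $D'\ve = D'G$ on $B_1^0$, hence by interior $C^{1,\alpha_\e}$ regularity and the oscillation control, $|D'\we(x)|\le C\,d^{\,\gamma}\cdot(\dots)$ — but more simply, $|D'\we(x)|\lesssim \sup|\we|/d$ via a standard gradient bound for solutions of the uniformly elliptic equation \eqref{we:trace} (interior gradient estimate on $B_{d/2}(x_0)$), combined with $\sup_{B_{d}(x_0)}|\we|\le d\,\sup(|\we|/x_n)$. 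The normal part: $|D_n\we(x)-D_n\we(x_0')|$ is small by the oscillation estimate, and $|D_n\we(x_0')|\le \sup(|\we|/x_n)$ from Step 2. Altogether $|D\ve(x_0)|\le C\sup_{B^+_{1/2}}(|\we|/x_n)+C\|g\|_{C^{1,\alpha}}\le C\mint_{B_1^+}|D\ve|+C\|g\|_{C^{1,\alpha}}$ by Step 2, which is \eqref{Dve:dirichlet}.

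\textbf{Main obstacle.} The delicate point is Step~3: transferring the boundary control of $D_n\we$ (which lives only on $B_1^0$, via the barrier lemmas and Remark~\ref{rem:dernormale}) to a pointwise bound on $|D\ve|$ at interior points near the boundary, uniformly in $\e$. One must carefully combine the interior estimate \eqref{inf:hom} (which has no $\e$-dependence thanks to Remark~\ref{remark:importante}) with the oscillation estimate for $\we/x_n$ and the fact that the tangential derivatives on $B_1^0$ are exactly $D'g$. Keeping all constants independent of $\e$ — so that the estimate survives the limit $\e\to 0$ in the proof of Theorem~\ref{thm:gradddir} — requires that every tool used (barrier lemmas, half-ball Poincaré, Theorem~\ref{thm:bdddir}, interior estimate) has constants depending only on $n,\l,\L,i_a,s_a,\alpha$, which is indeed the case as recorded in the respective statements.
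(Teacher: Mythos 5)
Your treatment of \eqref{vdivisoxn} is essentially the paper's argument: $\we=\ve-G$ solves the divergence-form problem \eqref{we:sol} with the coercive structure \eqref{again:coer}, so Theorem~\ref{thm:bdddir} gives $\sup|\we|\lesssim \mint|\we|+\|g\|_{C^{1,\alpha}}$; combining this with the barrier Lemma~\ref{lemma:barrier1} applied to the trace equation \eqref{we:trace}, and then Poincar\'e on half balls together with \eqref{G:ext}, yields both inequalities in \eqref{vdivisoxn}. This part is correct and matches the paper (the paper works on balls $B_r^+(x_0')$ centered on the flat boundary and covers, rather than rescaling to $R=1$, but that is immaterial).

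There is, however, a genuine gap in your Step~3, i.e.\ in the passage from \eqref{vdivisoxn} to \eqref{Dve:dirichlet}. After correctly invoking \eqref{inf:hom} on $B_{d/2}(x_0)$ to get $|D\ve(x_0)|\le C\mint_{B_{d/2}(x_0)}|D\ve|\,dx$, you need to convert this $L^1$-average of the gradient over a tiny interior ball into a quantity controlled by $\sup(|\we|/x_n)$. Your proposed mechanisms do not do this: (a) the ``standard gradient bound $|D'\we|\lesssim \sup|\we|/d$ for the uniformly elliptic equation \eqref{we:trace}'' does not exist, because \eqref{we:trace} is a \emph{non-divergence} (trace-form) equation with merely bounded measurable coefficients $A_\e(x)$, for which no interior gradient estimate holds; and (b) the oscillation estimate \eqref{control:Dnu} for $\we/x_n$ only controls the difference between the boundary limit $D_n\we(x_0')$ and the quotient $\we(y)/y_n$ at interior points, not the actual interior values of $D_n\we$, so it cannot produce a pointwise bound on $|D\ve(x_0)|$. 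The missing ingredient is the \emph{Caccioppoli inequality} \eqref{caccioppoli} applied to $\we$ on the interior ball $B_{d/2}(x_0)$ (legitimate because $\we$ solves the divergence-form equation \eqref{we:sol} satisfying \eqref{weaker:A} with $K_1\sim\|g\|_{C^{1,\alpha}}$): it bounds $\mint_{B_{d/4}(x_0)}B_\e(|D\we|)$ by $\mint_{B_{d/2}(x_0)}B_\e(|\we|/d)+B_\e(\|g\|_{C^{1,\alpha}})$, and since $x_n\le 2d$ on $B_{d/2}(x_0)$ one has $|\we|/d\le 2|\we|/x_n$, so \eqref{vdivisoxn} closes the estimate. (The paper runs this chain at the level of $B_\e(|D\ve(x_0)|)$, using the $B_\e$-form of \eqref{inf:hom}, the quasi-triangle inequality, and then inverts $B_\e$ via \eqref{B2t}; the splitting of $D\ve$ into tangential and normal components is unnecessary.) Without the Caccioppoli step your argument does not close.
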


\begin{proof}
    Let $\we$ be given by \eqref{we}, and let $0<r\leq 2R$. Owing to \eqref{we:sol} and \eqref{again:coer} (while taking into account \eqref{iaesae} and Remark \ref{remark:importante}), we may use Theorem \ref{thm:bdddir} and find
\begin{equation}\label{wetemp:control}
\begin{split}
    &\sup_{B^+_{r/2}(x_0')}\left(\frac{|\we|}{r}\right)\leq C\,\mint_{B^+_{r}(x_0')}\left(\frac{|\we|}{r}\right)\,dx+C\,\|g\|_{C^{1,\alpha}},
    \end{split}
\end{equation}
    for all $x_0'\in B^0_{2R}$ such that $B_{r}^+(x_0')\subset B_{2R}^+$, with $C=C(n,\l,\L,i_a,s_a)$. 
    Next, since $\we$ is solution to \eqref{we:trace}, we may use Lemma \ref{lemma:barrier1} and then \eqref{wetemp:control}, to obtain
    \begin{equation}\label{wetemp:contr2}
    \begin{split}
        \sup_{B^+_{r/4}(x_0')} \left(\dfrac{|\we|}{x_n}\right) &\leq C\,\sup_{B_{r/2}^+(x_0')} \left(\dfrac{|\we|}{r}\right)+C\,\|g\|_{C^{1,\alpha}}\,r^\alpha
        \\
        &\leq C'\mint_{B^+_{r}(x_0')}\left(\frac{|\we|}{r}\right)\,dx+C'\,\|g\|_{C^{1,\alpha}}.
        \end{split}
    \end{equation} 
    By the arbitrariness of $x_0'\in B^0_{2R}$ such that $B_{r}(x_0')\subset B_{2R}^+$, and recalling the definition of $\we$ in \eqref{we}, from \eqref{wetemp:contr2} and a standard covering argument we obtain the first inequality of \eqref{vdivisoxn}. 
Then, the second inequality in \eqref{vdivisoxn} follows via Poincar\'e inequality \eqref{half:poincare} and \eqref{G:ext}. 



Now let $0<r\leq 2R$, and let $x_0\in B_{r/16}^+$, $x_0=(x_0',d)$, $x_0'\in B^0_{r/16}$, $x_{0n}\eqqcolon d\leq r/16$.
From the $L^\infty$-bound \eqref{inf:hom}, Jensen inequality, the definition of $w_\e$ in \eqref{we}, \eqref{triangle}, \eqref{G:ext}, \eqref{B2t}, the interior Caccioppoli inequality \eqref{caccioppoli} applied to $u=\we$ (which is valid thanks to \eqref{we:sol}-\eqref{again:coer}), and by the monotonicity of $B_\e$, we obtain
\begin{equation*}
    \begin{split}
        B_\e\big(|D\ve(x_0)|\big)&\leq C\,\mint_{B_{d/4}(x_0)}B_\e\big(|D\ve|\big)\,dx\leq C'\,\mint_{B_{d/4}(x_0)}B_\e\big(|D\we|\big)\,dx+C'\,B_\e(\|DG\|_{L^\infty})
        \\
        &\leq C''\mint_{B_{d/2}(x_0)}B_\e\left(\frac{|\we|}{d} \right)+C''\,B_\e(\|g\|_{C^{1,\alpha}})
        \\
        &\leq C''\,\sup_{B_{d/2}(x_0)}B_\e\left(\frac{|\we|}{d} \right)+C''\,B_\e(\|g\|_{C^{1,\alpha}})\leq 2\,C''\,B_\e\bigg(\sup_{B_{d/2}(x_0)}\frac{|\we|}{d}+\|g\|_{C^{1,\alpha}} \bigg),
    \end{split}
\end{equation*}
with $C,C',C''=C,C',C''(n,\l,\L,i_a,s_a,\alpha)$ independent of $\e$ thanks to Remark \ref{remark:importante}. Taking $B_\e^{-1}$ to both sides of the above identity, and using \eqref{B2t}(having Remark \ref{remark:importante} in mind), we get
\begin{equation}\label{uff}
    |D\ve(x_0)|\leq C\,\sup_{B_{d/2}(x_0)}\left(\frac{|\we|}{d} \right)+C\,\|g\|_{C^{1,\alpha}}
\end{equation}
with $C=C(n,\l,\L,i_a,s_a,\alpha)$.

Now observe that, $d=x_{0n}\geq x_n/2$ for all $x
=(x',x_n)\in B_{d/2}(x_0)$, and that 
\[
B_d(x_0)\subset B_{2d}(x_0')\subset B_{r/8}^+(x_0')\subset B_{r/4}^+.
\]
 Using these pieces of information with \eqref{uff} and \eqref{wetemp:contr2}, we infer
\begin{equation*}
    \begin{split}
        |D\ve(x_0)| &\leq C\sup_{B_{d/2}(x_0)}\left(\frac{|\we|}{x_n}\right)+C\,\|g\|_{C^{1,\alpha}}\leq C\, \sup_{B^+_{r/4}}\left(\frac{|\we|}{x_n}\right)+C\,\|g\|_{C^{1,\alpha}}
        \\
        &\leq C'\,\mint_{B_{r}^+}\left(\frac{|\we|}{r}\right)\,dx+C'\,\|g\|_{C^{1,\alpha}}
    \end{split}
\end{equation*}
for all $x_0\in B_{r/16^+}$, with $C,C'=C,C'(n,\l,\L,i_a,s_a,\alpha)$. Therefore, recalling \eqref{we}, we have found
\begin{equation*}
    \sup_{B_{r/16}^+}|D\ve|\leq  C\,\mint_{B_r^+}\left(\frac{|\ve-G|}{r}\right)\,dx+C\,\|g\|_{C^{1,\alpha}}
\end{equation*}
for all $0<r<2R$.  The first inequality in \eqref{Dve:dirichlet} then follows  via a standard covering argument, while the second one is finally obtained using Poincar\'e inequality \eqref{half:poincare} and \eqref{G:ext}.
\end{proof}

We remark that a similar estimate to \eqref{Dve:dirichlet} was obtained in \cite[Theorem 2.2]{CM16} in the case $B(t)=t^p+a_0\,t^q$, $1<p\leq q$, for a fixed constant $a_0\geq 0$, and with zero boundary datum $g\equiv 0$.
\vspace{0.1cm}

Next, we provide a quantitative control on the oscillation of $D\ve$.

\begin{proposition}\label{prop:oscve}
    Let $\ve\in W^{1,2}(B_{2R}^+)$ be a weak solution to \eqref{eqve:dir}. Then there exists  $\beta_{\mathrm{h}}=\beta_{\mathrm{h}}\in (0,1)$ depending on $n,\l,\L,i_a,s_a,\alpha$ such that $\ve\in C^{1,\beta_{\mathrm{h}}}(\overline{B}^+_{R/2})$, and
    \begin{equation}\label{dir:oscillation}
        \operatorname*{osc}_{B^+_r} D\ve\leq C\,\left( \frac{r}{R}\right)^{\beta_{\mathrm{h}}}\bigg\{\mint_{B_R^+} |D\ve|\,dx+\|g\|_{C^{1,\alpha}(\R^{n-1})}\bigg\}
    \end{equation}
    for all $0<r<R/2$, with $C=C(n,\l,\L,i_a,s_a,\alpha)>0$.
\end{proposition}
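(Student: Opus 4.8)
\textbf{Proof strategy for Proposition \ref{prop:oscve}.} The plan is to control the oscillation of $D\ve = (D'\ve, D_n\ve)$ separately in the tangential and normal directions up to the flat boundary $B^0_{2R}$, then combine these with the interior oscillation decay from Theorem \ref{thm:inthom} (equivalently Corollary \ref{osc:decay}) via an iteration on dyadic radii. Throughout we write $\we = \ve - G$ as in \eqref{we}, recall that $\we$ solves the trace-form equation \eqref{we:trace} with uniformly elliptic coefficient matrix $A_\e(x)$ satisfying \eqref{matrix:Ae}, and that by Proposition \ref{prop:dveDir} both $\sup_{B^+_{R/2}}|D\ve|$ and $\sup_{B^+_{R/2}}\big(|\we|/x_n\big)$ are bounded by $N := C\mint_{B^+_R}|D\ve|\,dx + C\|g\|_{C^{1,\alpha}}$. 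The first, easy ingredient is the tangential control: for $i=1,\dots,n-1$, on $B^0_{2R}$ we have $D_i\ve = D_i g = D_i G(x',0)$, and since $g\in C^{1,\alpha}$ and $G$ satisfies \eqref{G:ext}, the trace of $D_i\ve$ on the flat boundary is $C^{0,\alpha}$ with norm controlled by $\|g\|_{C^{1,\alpha}}$.

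The core of the argument is the normal derivative. By Lemma \ref{lemma:barrier3} applied to $\we$ (using \eqref{we:trace} with $K = C\|g\|_{C^{1,\alpha}}$), one obtains, for every $x_0'\in B^0_{R}$ and every $B^+_\varrho(x_0')\subset B^+_{2R}$,
\begin{equation*}
    \operatorname*{osc}_{B^+_r(x_0')}\Big(\frac{\we}{x_n}\Big)\leq C_0\,\Big(\frac{r}{\varrho}\Big)^{\theta_0}\Big\{\operatorname*{osc}_{B^+_\varrho(x_0')}\Big(\frac{\we}{x_n}\Big)+K\,\varrho^\alpha\Big\},\qquad 0<r\le\varrho,
\end{equation*}
and, as in Remark \ref{rem:dernormale}, $D_n\ve(x_0') = D_nG(x_0',0) + \lim_{x_n\to0}\we(x_0',x_n)/x_n$ exists with the pointwise bound \eqref{control:Dnu}. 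This gives H\"older continuity of $D_n\ve$ restricted to $B^0_R$, and—crucially—control of $|D_n\ve(x) - D_n\ve(x_0')|$ for $x$ near a boundary point $x_0'$ in terms of $(|x-x_0'|/\varrho)^{\theta_0}$ times $\big(\operatorname{osc}_{B^+_\varrho(x_0')}(\we/x_n) + K\varrho^\alpha\big)$, where the oscillation of $\we/x_n$ is itself dominated by $N$ via Proposition \ref{prop:dveDir}. Combined with the tangential estimate, this yields a boundary H\"older modulus for the full gradient $D\ve$ at points of $B^0_R$: there is $\gamma\in(0,1)$ (namely $\gamma = \min\{\theta_0,\alpha,\alpha_{\mathrm h}\}$) with
\begin{equation*}
    \sup_{B^+_r(x_0')}\big|D\ve - (D\ve)(x_0')\big|\le C\,\Big(\frac{r}{R}\Big)^{\gamma}\,N,\qquad x_0'\in B^0_{R/2},\ 0<r\le R/4 .
\end{equation*}

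It remains to upgrade the boundary modulus to an interior-up-to-boundary oscillation estimate at arbitrary points of $B^+_{R/2}$, i.e.\ to prove \eqref{dir:oscillation}. This is the standard dichotomy: given $x\in B^+_{R/2}$ with $d = \operatorname{dist}(x,B^0_{2R}) = x_n$ and a radius $r$, if $r\le d/2$ one uses the interior oscillation decay \eqref{campanato}/\eqref{campanato0} for $D\ve$ inside the ball $B_{d/2}(x)\subset B^+_{2R}$ (the interior estimates of Section \ref{sec:int0} apply since $\ve$ solves the autonomous homogeneous equation \eqref{eq:homeint} in $B^+_{2R}$), combined with the global $L^\infty$ bound \eqref{Dve:dirichlet}; while if $r > d/2$ one picks the nearest boundary point $x_0'$, notes $B^+_r(x)\subset B^+_{3r}(x_0')$, and applies the boundary estimate just established. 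Interpolating the two regimes gives $\operatorname{osc}_{B^+_r}D\ve\le C(r/R)^{\beta_{\mathrm h}}N$ with $\beta_{\mathrm h}=\min\{\gamma,\alpha_{\mathrm h}\}$, and $\ve\in C^{1,\beta_{\mathrm h}}(\overline B^+_{R/2})$ follows by Campanato's characterization. I expect the main obstacle to be the bookkeeping in this last dichotomy—matching the scales of the barrier-based boundary estimate (stated in \eqref{control:Dnu} with a free inner radius) against the tangential datum control and the interior decay, uniformly in $\varepsilon$ by Remark \ref{remark:importante}—rather than any single hard estimate; all the genuinely delicate analytic inputs (the barrier lemmas, the $\we/x_n$ oscillation, Proposition \ref{prop:dveDir}) are already in place.
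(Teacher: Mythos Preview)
Your overall architecture is correct and matches the paper's: tangential control comes from the Dirichlet datum, normal control at the boundary comes from the barrier Lemma~\ref{lemma:barrier3} applied to $\we$ via \eqref{we:trace}, and the interior decay from Section~\ref{sec:int0} is then glued to the boundary information. The dichotomy you sketch at the end is also essentially the right final step.

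There is, however, a genuine gap in the middle, and it is not just bookkeeping. You write that \eqref{control:Dnu} gives ``control of $|D_n\ve(x) - D_n\ve(x_0')|$ for $x$ near a boundary point $x_0'$'', and on this basis you assert the boundary modulus
\[
\sup_{B^+_r(x_0')}\bigl|D\ve - (D\ve)(x_0')\bigr|\le C\,(r/R)^{\gamma}\,N.
\]
But \eqref{control:Dnu} controls only $\bigl|D_n\we(x_0')-\we(y)/y_n\bigr|$, not $\bigl|D_n\we(x_0')-D_n\we(y)\bigr|$ at interior points $y$. The quantity $\we(y)/y_n$ equals $D_n\we(y)$ only in the limit $y_n\to 0$; for interior $y$ it is an average $\fint_0^{y_n} D_n\we(y',s)\,ds$, and converting closeness of this average to $D_n\we(x_0')$ into closeness of $D_n\we(y)$ to $D_n\we(x_0')$ is exactly the missing step. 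Without it, your boundary modulus is unproved, and then the dichotomy (which needs $\operatorname{osc}_{B_{d/2}(x)}D\ve\le C(d/R)^\gamma N$ to chain with the interior estimate) does not close.

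The paper fills this gap with the interpolation Lemma~\ref{lemma:interpol}: if $w$ satisfies an interior oscillation decay of the form \eqref{hp:interpol}, then a $C^0$ bound on $|w-L\cdot x-U|$ over a ball yields a $C^0$ bound on $|Dw-L|$ on the half-ball. Concretely, for $x_0=(x_0',d)\in B^+_r$ one applies it on $B_{d/4}(x_0)$ with $L=D\we(x_0')=D_n\we(x_0')e_n$ and $U=0$; the hypothesis \eqref{hp:interpol} is the interior decay \eqref{osc:Dwe}, and the input $|w-L\cdot x|=x_n\bigl|\we/x_n-D_n\we(x_0')\bigr|$ is precisely what \eqref{bound:normal} bounds. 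This yields $|D\we(x_0)-D\we(x_0')|\le C(r/R)^{\beta_{\mathrm h}}N$, which is the boundary modulus you asserted. Once this is in place, your dichotomy (or the paper's slightly different packaging) finishes the proof.
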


Before proving Proposition \ref{prop:oscve}, we need some preliminary results. 

\begin{remark}\label{remark}
\rm{Let $\we$ be given by \eqref{we}. Owing to equations \eqref{we:trace} and \eqref{matrix:Ae}, we can use Lemma~\ref{lemma:barrier3} with $K=\|g\|_{C^{1,\alpha}}$, and in particular Remark~\ref{rem:dernormale}, so that the normal derivative $D_n \we(x_0')$ exists at every point 
$x_0'\in B_{2R}^0$, and from \eqref{control:Dnu} we have the bound 
\begin{equation}\label{bound:normal}
\begin{split}
    \bigg|D_n\we(x'_0)-\frac{\we(x)}{x_n} \bigg|&\leq C\,\left(\frac{r}{R} \right)^{\theta_0}\bigg\{\sup_{B_{R/2}^+}\left(\frac{|\we|}{x_n}\right)+\|g\|_{C^{1,\alpha}}R^\alpha \bigg\},
    \\
    &\leq C'\,\left(\frac{r}{R} \right)^{\theta_0}\bigg\{\mint_{B_R^+}|D\ve|\,dx+\|g\|_{C^{1,\alpha}}\bigg\}
    \end{split}
\end{equation}
for all $x_0'\in B_r^0$, for all $x=(x',x_n)\in B^+_{2r}(x_0') $, whenever $0<r\leq R/8$, where in the second inequality we used \eqref{vdivisoxn} and \eqref{G:ext}. Here $C,C'$ depend on $n,\l,\L,i_a,s_a,\alpha$, and $\theta_0\in (0,1)$ depends on $n,\l,\L,i_a,s_a,\alpha$. As $\we\equiv 0$ on $B^0_{2R}$, and it is smooth in $B^+_{2R}$, we thus have that $D\we$ exists at all points $\overline{B}_R^+$, and so does $D\ve=D\we+DG$.
\vspace{0.2cm}

We just remark that, owing to Equation \eqref{eqve:dir} and the property \eqref{nonzero:Ae}, we could also have appealed to \cite{GG84} and deduce that $\ve\in C^{1,\alpha_\e}(\overline{B}_R^+)$ for some $\alpha_\e\in (0,1)$ depending on $\e>0$ as well. 
}
\end{remark}

We shall also need the following interpolation lemma, which will be used to connect the interior gradient regularity \eqref{fin:oscdec}, and the pointwise oscillation estimate \eqref{bound:normal}.

\begin{lemma}\label{lemma:interpol}
    Let $w\in C^0(\overline{\mathcal{B}})\cap C^1(\mathcal{B})$ for some ball $\mathcal{B}=B_{R_0}(x_0)$, and assume that
    \begin{equation}\label{hp:interpol}
        \operatorname*{osc}_{B_r(x_1)} Dw\leq c_1\,\left( \frac{r}{\vrho}\right)^{\alpha}\bigg\{  \operatorname*{osc}_{B_\vrho(x_1)} Dw+K\,\vrho^\alpha\bigg\},
    \end{equation}
   whenever $0<r\leq \vrho$, and $B_\vrho(x_1)\subset \mathcal{B}$, for some constants $c_1,K>0$ and $\alpha\in (0,1)$. Then for any $L\in \rn$ and $U\in \R$, we have
   \begin{equation}\label{thesis:interpol}
       \sup_{B_{R_0/2}(x_0)}|Dw-L|\leq C(c_1,n,\alpha)\bigg\{ R_0^{-1}\sup_{\mathcal{B}}|w-L\cdot x-U|+K\,{R}_0^\alpha\bigg\}.
   \end{equation}
\end{lemma}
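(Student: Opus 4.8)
\textbf{Proof plan for Lemma~\ref{lemma:interpol}.}
The statement is a purely ``local'' interpolation fact: a pointwise bound on the modular continuity of $Dw$ (morally a $C^{1,\alpha}$ estimate, but with the scale-invariant hypothesis phrased through the oscillation functional) plus a $C^0$-bound on $w-L\cdot x-U$ yields an $L^\infty$-bound on $Dw-L$. Since adding an affine function to $w$ changes neither side of \eqref{hp:interpol} nor the conclusion in an essential way (it shifts $L,U$), I would first reduce to $L=0$, $U=0$, so that we must estimate $\sup_{B_{R_0/2}(x_0)}|Dw|$ in terms of $R_0^{-1}\sup_{\mathcal B}|w|+K R_0^\alpha$. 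I would also rescale to $R_0=1$ by passing to $\tilde w(y)=w(x_0+R_0 y)$, which is legitimate because \eqref{hp:interpol} is scale invariant (the $K$-term picks up the correct power $R_0^\alpha$), reducing everything to: $\sup_{B_{1/2}}|D\tilde w|\le C\{\sup_{B_1}|\tilde w|+K\}$.

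The core of the argument is a standard ``contradiction via blow-up'' or, more constructively, a direct chaining estimate. I would argue directly. Fix $x_1\in B_{1/2}$; the ball $B_{1/4}(x_1)\subset B_1$. By the mean value theorem applied along a segment through $x_1$, there is a point $\bar x\in B_{1/4}(x_1)$ (or a point where an average of $Dw$ is controlled) with $|D\tilde w(\bar x)|\le C(n)\sup_{B_1}|\tilde w|$; more precisely, using that $\tilde w\in C^1$, for any unit vector $e$ one has $|\tilde w(x_1+\tfrac14 e)-\tilde w(x_1)|=\tfrac14|D\tilde w(\zeta)\cdot e|$ for some $\zeta$ on the segment, hence there is $\zeta\in B_{1/4}(x_1)$ with a one-sided control on the directional derivative; doing this for a spanning set of directions gives a point (or finitely many points) where $|D\tilde w|\le C(n)\sup_{B_1}|\tilde w|$. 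Then I would use \eqref{hp:interpol} with $\varrho=1/4$ to transfer this bound to $x_1$ itself:
\[
|D\tilde w(x_1)|\le |D\tilde w(\zeta)|+\operatorname*{osc}_{B_{1/4}(x_1)}D\tilde w\le C(n)\sup_{B_1}|\tilde w|+c_1\Big\{\operatorname*{osc}_{B_{1/4}(x_1)}D\tilde w+K\, 4^{-\alpha}\Big\}.
\]
The leftover term $\operatorname*{osc}_{B_{1/4}(x_1)}D\tilde w$ is itself $\le 2\sup_{B_{1/2}}|D\tilde w|$, but that would need reabsorption and the constant $c_1$ is not small, so instead I would iterate \eqref{hp:interpol} down a dyadic sequence of radii $r_j=4^{-1}\cdot 2^{-j}$ to make the oscillation term as small as we like at the cost of a multiplicative constant depending on $c_1,n,\alpha$, while picking up a geometric sum $\sum_j (2^{-j})^\alpha K r_j^\alpha$-type contribution controlled by $K$. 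Concretely, applying \eqref{hp:interpol} repeatedly,
\[
\operatorname*{osc}_{B_{r_j}(x_1)}D\tilde w\le c_1^{\,j}\, 2^{-j\alpha}\Big(\operatorname*{osc}_{B_{1/4}(x_1)}D\tilde w\Big)+K\sum_{i<j} c_1^{\,i} 2^{-i\alpha}\cdot(\text{powers}),
\]
which is not quite what we want since $c_1>1$ could blow the first factor up; the correct move is the other direction — apply \eqref{hp:interpol} \emph{once} with a small ratio $r/\varrho=\tau$ chosen so that $c_1\tau^\alpha\le\tfrac12$, giving $\operatorname*{osc}_{B_{\tau\varrho}(x_1)}D\tilde w\le \tfrac12\operatorname*{osc}_{B_\varrho(x_1)}D\tilde w+c_1 K(\tau\varrho)^{?}$, i.e. a genuine geometric decay of the oscillation down to scale zero, hence $Dw$ is continuous at $x_1$ with an explicit modulus, and summing the telescoping $K$-contributions yields $\operatorname*{osc}_{B_{1/4}(x_1)}D\tilde w\le C(c_1,n,\alpha)\,K$. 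Plugging this back gives $|D\tilde w(x_1)|\le C(c_1,n,\alpha)\{\sup_{B_1}|\tilde w|+K\}$ for all $x_1\in B_{1/2}$, which is the desired bound; undoing the normalizations restores the affine datum $L\cdot x+U$ and the scale $R_0$, producing exactly \eqref{thesis:interpol}.

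The main technical obstacle is the bookkeeping of constants and powers of the radius in the iteration of \eqref{hp:interpol}: one must choose the fixed ratio $\tau=\tau(c_1,\alpha)$ so that $c_1\tau^\alpha<1$, then verify that the accumulated $K\varrho^\alpha$ terms form a convergent geometric series whose sum is $C(c_1,n,\alpha)K\varrho_0^\alpha$ with $\varrho_0$ the starting radius ($=1/4$ after rescaling), and finally that the ``anchor'' estimate $|D\tilde w(\zeta)|\le C(n)\sup_{B_1}|\tilde w|$ at some interior point is correctly obtained from the $C^0$-bound via the mean value theorem (using only that $\tilde w\in C^1(\mathcal B)$, which is part of the hypothesis). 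None of these steps is deep; the proof is entirely elementary, and I would expect it to be relegated, as the paper indicates, to Appendix~\ref{appendixA}. The only place to be slightly careful is that \eqref{hp:interpol} involves the \emph{oscillation} of $Dw$ rather than $|Dw-L|$ directly, so the passage from ``oscillation small'' to ``$|Dw(x_1)-Dw(\zeta)|$ small'' uses the trivial inequality $|Dw(x_1)-Dw(\zeta)|\le\operatorname*{osc}_{B_\varrho(x_1)}Dw$ whenever $\zeta\in B_\varrho(x_1)$, which is why anchoring at an interior point of the \emph{same} ball is essential.
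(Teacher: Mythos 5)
Your overall architecture --- reduce to $L=U=0$, rescale, anchor $|Dw|$ at an interior point via the mean value theorem, and transfer to an arbitrary point via the oscillation hypothesis --- matches the spirit of the paper's proof, and you correctly diagnose that naive reabsorption of the leftover oscillation term fails because $c_1$ need not be small. However, your proposed fix has a genuine gap: the downward iteration of \eqref{hp:interpol} with a fixed ratio $\tau$ satisfying $c_1\tau^\alpha\le 1/2$ controls $\operatorname*{osc}_{B_{\tau^j\varrho}(x_1)}Dw$ \emph{in terms of} $\operatorname*{osc}_{B_{\varrho}(x_1)}Dw$; it bounds the oscillation at small scales by the oscillation at the starting scale $\varrho=1/4$, not the other way around. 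The claimed conclusion $\operatorname*{osc}_{B_{1/4}(x_1)}D\tilde w\le C(c_1,n,\alpha)\,K$ is false: take $\tilde w(x)=M|x|^2$ with $M$ large, which satisfies \eqref{hp:interpol} with $c_1=1$ and \emph{any} $K>0$ (since $(r/\varrho)^{1-\alpha}\le 1$), yet $\operatorname*{osc}_{B_{1/4}}D\tilde w=M$. The macroscopic oscillation of $Dw$ cannot be bounded by $K$ alone; it must carry a contribution from $\sup|w|$, and your iteration never produces one, so the argument does not close.

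The paper handles the reabsorption differently, and the difference is essential. It introduces the interior-distance-weighted quantities $|Dv|_0^{(1)}=\sup_x d(x)|Dv(x)|$ and $[Dv]_\alpha^{(1)}$, with $d(x)=\operatorname{dist}(x,\partial\mathcal B)$, proves the interpolation inequality
\[
|Dv|_0^{(1)}\le \frac{2}{\varepsilon}\,|v|_0+2^{1+\alpha}\varepsilon^\alpha\,[Dv]_\alpha^{(1)},\qquad \varepsilon\in(0,1/2),
\]
by running the mean value argument over a segment of length $\varepsilon\,d(x_1)$ (so that the free small parameter $\varepsilon$, not a fixed radius, multiplies the H\"older seminorm), and then shows from \eqref{hp:interpol} that $[Dv]_\alpha^{(1)}\le C(n,\alpha,c_1)\{|Dv|_0^{(1)}+K R_0^{1+\alpha}\}$; choosing $\varepsilon=\varepsilon(n,\alpha,c_1)$ small absorbs the weighted gradient norm. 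The two ingredients your scheme is missing are: (i) the smallness needed for absorption must come from the adjustable parameter in the mean-value step, not from iterating the oscillation estimate downward; and (ii) the supremum being absorbed must be taken over the same set on both sides, which is exactly what the weight $d(x)$ accomplishes --- your unweighted version would at best bound $\sup_{B_{1/2}}|Dw|$ by $\sup_{B_{3/4}}|Dw|$, which does not close. Repairing the argument along these lines essentially reproduces the proof in Appendix \ref{appendixA}.
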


We postpone the proof of Lemma \ref{lemma:interpol} to Appendix A, and we also refer to \cite[Lemma 12.4]{L96} for the same lemma in the parabolic setting. We now have all the ingredients  to prove the oscillation estimate \eqref{dir:oscillation}.

\begin{proof}[Proof of Proposition \ref{prop:oscve}]
We first observe that, by \eqref{we} and \eqref{G:ext}, we have 
\[
\begin{split}
 &\operatorname*{osc}_{B_r(x_0)}D\we\leq  \operatorname*{osc}_{B_r(x_0)}D\ve+ \operatorname*{osc}_{B_r(x_0)}DG \leq  \operatorname*{osc}_{B_r(x_0)}D\ve+ C(n,\alpha)\,\|g\|_{C^{1,\alpha}}\,r^\alpha
 \\
& \operatorname*{osc}_{B_r(x_0)}D\ve\leq  \operatorname*{osc}_{B_r(x_0)}D\we+ \operatorname*{osc}_{B_r(x_0)}DG \leq  \operatorname*{osc}_{B_r(x_0)}D\we+ C(n,\alpha)\,\|g\|_{C^{1,\alpha}}\,r^\alpha,
 \end{split}
\]
which together with the interior oscillation estimate \eqref{fin:oscdec}, yields
\begin{equation}\label{osc:Dwe}
    \operatorname*{osc}_{B_r(x_0)}D\we\leq C\left(\frac{r}{\vrho} \right)^{\tilde{\mathrm{\alpha}}_h}\bigg\{\operatorname*{osc}_{B_\vrho(x_0)}D\we +\|g\|_{C^{1,\alpha}}\,\vrho^{\tilde{\mathrm{\alpha}}_h}\bigg\},
\end{equation}
    for all $x_0$ such that $B_{2\vrho}(x_0)\subset B^+_{2R}$, and all $0<r\leq \vrho$. Here we set ${\tilde{\mathrm{\alpha}}_h}=\min\{\alpha_{\mathrm h},\alpha\}$ and we used that $0<\vrho\leq 1$.

Now let $0<r\leq R/8$, and we fix a point $x_0=(x'_0,d)\in B^+_r$.
  Also observe that 
  \begin{equation}\label{DweDnwe}
      D\we(x_0')=D_n\we(x_0')e_n\quad\text{ on $B_{2R}^0$}
  \end{equation}
 as $\we\equiv 0$ on $B_{2R}^0$. Therefore, since  $B_{d/4}(x_0)\subset B_{2r}^+(x_0')$, owing to \eqref{bound:normal}, and using that $x_n\leq 2d$, for all $x\in B_{d/4}(x_0)$ we get
\begin{equation}\label{uffi}
\begin{split}
    |\we(x)-D\we(x_0')\cdot x|&=|\we(x)-D_n\we(x_0')\cdot x_n|=x_n\bigg|\frac{\we(x)}{x_n}-D_n\we(x_0')\bigg|
    \\
    &\leq C\,d\,\left(\frac{r}{R} \right)^{\theta_0}\bigg\{\mint_{B_R^+}|D\ve|\,dx+\|g\|_{C^{1,\alpha}}\bigg\},
    \end{split}
\end{equation}
Next, thanks to \eqref{osc:Dwe}, we may use Lemma \ref{lemma:interpol} with $L=D\we(x_0')$, $U=0$, $R_0=d/4$, and coupling the resulting equation with \eqref{uffi}, we infer
\begin{equation}
    |D\we(x_0)-D\we(x_0')|\leq C\left( \frac{r}{R}\right)^{\beta_{\mathrm{h}}}\bigg\{\mint_{B_R^+}|D\ve|\,dx+\|g\|_{C^{1,\alpha}}\bigg\},
\end{equation}
 where $C>0$, and we set $\beta_{\mathrm{h}}=\min\{\tilde{\alpha}_h,\theta_0\}\in (0,1)$, both depending on $n,\l,\L,i_a,s_a,\alpha$.
Moreover, using \eqref{DweDnwe},  taking $x=(0',x_n)$ in \eqref{bound:normal} and letting $x_n\to 0$ , we find
\begin{equation*}
    |D\we(x_0')-D\we(0)|=|D_n\we(x_0')-D_n\we(0)|\leq C\,\left(\frac{r}{R} \right)^{\theta_0}\bigg\{\mint_{B_R^+}|D\ve|\,dx+\|g\|_{C^{1,\alpha}}\bigg\},
\end{equation*}
The two identities above are valid for all $x_0\in B^+_r$, $0<r\leq R/8$, so we get
\begin{equation*}
    \operatorname*{osc}_{B_r^+} D\we\leq C(n)\,\sup_{x_0\in B_r^+}|D\we(x_0)-D\we(0)|\leq C\left( \frac{r}{R}\right)^{\beta_{\mathrm{h}}}\bigg\{\mint_{B_R^+}|D\ve|\,dx+\|g\|_{C^{1,\alpha}}\bigg\}, 
\end{equation*}
with $C=C(n,\l,\L,i_a,s_a,\alpha)$; coupling the above inequality with 
\[
\operatorname*{osc}_{B^+_r}D\ve\leq  \operatorname*{osc}_{B^+_r}D\we+ \operatorname*{osc}_{B^+_r}DG \leq  \operatorname*{osc}_{B_r^+}D\we+ C(n,\alpha)\,\|g\|_{C^{1,\alpha}}\,r^\alpha
\]
which stem from \eqref{we} and \eqref{G:ext}, we finally obtain \eqref{dir:oscillation} in the case $0<r\leq R/8$. 
On the other hand, if $R/8\leq r\leq R/2$, \eqref{dir:oscillation} is still valid, since by \eqref{Dve:dirichlet} we have
\begin{equation*}
    \begin{split}
        \operatorname*{osc}_{B_r^+} D\ve\leq \operatorname*{osc}_{B_{R/2}^+} D\ve &\leq C(n)\,\sup_{B_{R/2}^+} |D\ve|\leq C\,\mint_{B_R^+}|D\ve|\,dx+C\,\|g\|_{C^{1,\alpha}}
        \\
        &\leq C\,8^{\beta_{\mathrm{h}}}\left(\frac{r}{R} \right)^{\beta_{\mathrm{h}}}\bigg\{\mint_{B_R^+}|D\ve|\,dx+C\,\|g\|_{C^{1,\alpha}}\bigg\}.
    \end{split}
\end{equation*}
This completes the proof.
\end{proof}

By means of an approximation argument, similar to that of Theorem \ref{thm:inthom}, we now prove the main result of this section.

\begin{proof}[Proof of Theorem \ref{thm:gradddir}]
    Let $v\in W^{1,B}(B^+_{3R})$ be as in the statement. Since $v=g=G$ on $B^0_{3R}$, we may consider the odd reflection
    \begin{equation*}
        (v-G)^o(x',x_n)=\begin{cases}
            (v-G)(x',x_n),\quad &x_n>0
            \\
             -(v-G)(x',x_n),\quad &x_n\leq 0
        \end{cases}
    \end{equation*}
    which clearly belongs to $W^{1,B}(B_{3R})$. Now we claim that its regularization $(v-G)^o\ast \rho_{1/k}$ vanishes on $B^0_{3R-2/k}$. Indeed, for $(x'_0,0)=x'_0\in B^0_{3R-2/k}$, and denoting by  $B^-_r(x'_0)$ the lower half ball centered at $x'_0$ of radius $r>0$, we have
    \begin{equation*}
        \begin{split}
            (v-G)^o\ast \rho_{1/k}(x'_0)=&\int_{B_{1/k}(x'_0,0)}(v-G)^o(y)\,\rho_{1/k}(y-(x'_0,0))\,dy
            \\
            =&\int_{B^+_{1/k}(x'_0)}(v-G)^o(y)\,\rho_{1/k}(y-(x'_0))\,dy
            \\
            &+\int_{B^-_{1/k}(x'_0)}(v-G)^o(y)\,\rho_{1/k}(y-(x'_0,0))\,dy=0,
        \end{split}
    \end{equation*}
  the last equality stemming from the symmetries of $(v-G)^o$ and of the mollifier $\rho$. Now let
\begin{equation}
    v_k^{bd}\coloneqq(v-G)^o\ast \rho_{1/k}+G,
\end{equation}
    so that $v_k^{bd}\in C^{1,\alpha}(\overline{B^+_{2R}})$, $v_k^{bd}=G=g$ on $B_{2R}^0$ and by the properties of convolution \cite[Theorem 4.4.7]{HHbook}, \eqref{unif:refl} and the properties of the trace operator
\begin{equation}\label{uffffii}
    v_k^{bd}\xrightarrow{k\to\infty} v\quad\text{in $W^{1,B}(B^+_{2R})$, in the sense of traces and in $L^1(\partial B^+_{2R})$.}
\end{equation}
    Let $v_{\e,k}\in W^{1,2}(B_{2R}^+)$ be the unique solution to
    \begin{equation}\label{temo:weakdir}
        \begin{cases}
            -\mathrm{div}\big( \A_\e(D v_{\e,k})\big)=0\quad&\text{in $B_{2R}^+$}
            \\
            v_{\e,k}=v_k^{bd}&\text{on $\partial B_{2R}^+$}
        \end{cases}
    \end{equation}
   Existence and uniqueness follow exactly as in the proof of Theorem \ref{thm:inthom}--see the discussion after Equation \eqref{eq:veku}, or the proof of Proposition \ref{prop:ex}. Also, testing the weak formulation of \eqref{temo:weakdir}  with $v_{\e,k}-v_k^{bd}$,  and arguing exactly as in the proof of \eqref{energy:bound}, while taking into account \eqref{uffffii},  we obtain
    \begin{equation}\label{dir:enbound}
       \limsup_{k\to\infty} \limsup_{\e\to 0}\int_{B_{2R}^+}B_\e(|Dv_{\e,k}|)\,dx\leq C\,\int_{B^+_{2R}} B(|Dv|)\,dx,
    \end{equation}
    for $C>0$ independent of $\e,k$. Then, by the interior and boundary estimates Theorems \ref{thm:bdd}-\ref{thm:bdddir} (while taking into account Remark \ref{remark:importante}), Theorem \ref{thm:inthom}, estimates \eqref{Dve:dirichlet} and \eqref{dir:oscillation}, and a simple covering argument, we infer
\begin{equation*}
    \|v_{\e,k}\|_{C^{1,\beta_{\mathrm h}}({\overline{B}_r^+})}\leq C\,\bigg(1+\int_{B_{2R}^+}|Dv_{\e,k}|\,dx \bigg)\,dx\leq C'
\end{equation*}
    for every $0<r<2R$, with $C,C'>0$ independent of $\e,k$, where in the last inequality we used \eqref{dir:enbound} and \eqref{int:simple}. Hence, up to a diagonalization argument,  by Ascoli-Arzel\'a theorem we find 
    \begin{equation}\label{C1ekdir}
        \lim_{k\to \infty}\lim_{\e\to 0}v_{\e,k}=w\quad\text{in $C^1(\overline{B}_r^+)$,}
    \end{equation}
for some function $w\in C^1(\overline{B}_r^+)$, and for every $0<r<2R$. Then, by \eqref{unif:refl} and \eqref{dir:enbound}, the sequence $\{v_{\e,k}\}_{\e,k}$ is uniformly bounded in $W^{1,\min\{i_B,2\}}(B^+_{2R})$ with $i_B>1$; hence by reflexivity, \eqref{uffffii} and the continuity of the trace operator, we deduce that $w=v$ on $\partial B^+_{2R}$. Then, by using \eqref{dir:enbound}, \eqref{BE:unif}, \eqref{C1ekdir}, and arguing exactly as in the proof of \eqref{calBmon}, we find that $w\in W^{1,B}(B^+_{2R})$.

 Exploiting \eqref{Aeunif} and \eqref{C1ekdir}, by passing to the limit in the weak formulation of \eqref{temo:weakdir},  we find that $w\in W^{1,B}(B_{2R}^+)$ solves the Dirichlet problem $-\mathrm{div}\big(\A(Dw) \big)=0$ in $B^+_{2R}$, $w=u$ on $\partial B^+_{2R} $, hence $w=v$ by uniqueness. Finally, inequalities \eqref{dir:gradbdd}-\eqref{dir:gradholder} are obtained, via \eqref{C1ekdir}, by passing to the limit in the estimates \eqref{Dve:dirichlet} and \eqref{dir:oscillation}. This completes the proof.
\end{proof}

\section{Boundary gradient regularity: homogeneous Neumann problems}\label{sec:neuhom}

In this section we study boundary regularity for the gradient of solution to the Neumann problem
\begin{equation}\label{homog:neum}
    \begin{cases}
        -\mathrm{div}\big( \A(Dv)\big)=0\quad&\text{in $B_{3R_0}^+$}
        \\
        \A(Dv)\cdot e_n+h_0=0\quad &\text{on $B_{3R_0}^0$}.
    \end{cases}
\end{equation}
for a given constant $h_0\in \R$. We recall that $v\in W^{1,B}(B_{3R_0}^+)$ is a weak solution to \eqref{homog:neum} if
\begin{equation}
    \int_{B_{3R_0}^+}\A(Dv)\cdot D\varphi\,dx=h_0\,\int_{B^0_{3R_0}}\varphi(x')\,dx'
\end{equation}
for all test functions $\varphi\in W^{1,B}_c(B_{3R_0})$. We may also assume that $\A(0)=0$. Indeed, if this is not the case, we can replace $\A(\xi)$ with $\A(\xi)-\A(0)$, so the function $v$ then satisfies the same equation, with boundary datum shifted to $h_{0}+\A(0)$, which is also controlled since, by \eqref{A:hold}$_2$, we have $|\A(0)|\leq \L$.
\vspace{0.1cm}

The main result of this section is  the following.
\begin{theorem}\label{thm:homneu}
    Suppose the stress field $\A\in C^0(\R^n)\cap  C^1(\rn\setminus\{0\})$ satisfies \eqref{ass:Aaut} and $\A(0)=0$.  Let $v\in W^{1,B}(B_{3R_0}^+)$ be a weak solution to \eqref{homog:neum}.
    
    Then there exists $\beta_{N}\in (0,1)$ depending on $n,\l,\L,i_a,s_a$ such that 
    \[
    v\in C^{1,\beta_N}(\overline{B}^+_{R_0/2}),
    \]
    and there exist constants $C,C'=C,C'(n,\l,\L,i_a,s_a)>0$ such that
\begin{equation}\label{neu:EST}
    \sup_{B_{R_0}^+} |Dv|\leq C\,\mint_{B_{2R_0}^+}|Dv|\,dx+C\,b^{-1}\big(|h_0|\big)\,.
\end{equation}
Moreover, for all $0<r\leq R\leq R_0/2$, there holds the excess decay estimate
\begin{equation}\label{decay:NEU}
    \mint_{B^+_r}|Dv-(Dv)_{B^+_r}|\,dx\leq C\,\left(\frac{r}{R} \right)^{\beta_N}\,\mint_{B^+_{R}} |Dv-(Dv)_{B^+_{R}}|\,dx\,,
\end{equation}
and the oscillation estimates
\begin{equation}\label{neu:EST1}
    \operatorname*{osc}_{B_r^+} Dv\leq C\,\left(\frac{r}{R} \right)^{\beta_N}\operatorname*{osc}_{B_R^+} Dv\leq  C'\,\left(\frac{r}{R} \right)^{\beta_N}\bigg\{\mint_{B_{2R}^+}|Dv|\,dx+C\,b^{-1}\big(|h_0|\big) \bigg\},
\end{equation}
\begin{equation}\label{NEU:est2}
      \operatorname*{osc}_{B_r^+} Dv\leq C\,\left(\frac{r}{R} \right)^{\beta_N}\,\mint_{B^+_R}|Dv-(Dv)_{B^+_R}|\,dx,\quad \text{for}\,\,0<r\leq R/2.
\end{equation}
\end{theorem}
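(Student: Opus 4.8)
The plan is to mirror the strategy used for the interior problem (Theorem~\ref{thm:inthom}) and the homogeneous Dirichlet problem (Theorem~\ref{thm:gradddir}), namely: first establish all the estimates for the regularized solution $v_\e \in W^{1,2}(B_{3R_0}^+)$ of
\[
\begin{cases}
-\mathrm{div}\big(\A_\e(Dv_\e)\big)=0 & \text{in } B^+_{3R_0}\\
\A_\e(Dv_\e)\cdot e_n + h_0 = 0 & \text{on } B^0_{3R_0},
\end{cases}
\]
where $\A_\e$ is the regularized field of Lemma~\ref{lemma:Ae}, and then pass to the limit $\e\to 0$ (with an additional mollification $v_k^{bd}$ of the boundary data, exactly as in the proofs of Theorems~\ref{thm:inthom} and~\ref{thm:gradddir}) using the uniform-in-$\e$ bounds guaranteed by Remark~\ref{remark:importante}. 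So the real content is to prove the $L^\infty$-bound~\eqref{neu:EST}, the excess-decay~\eqref{decay:NEU} and the oscillation estimates~\eqref{neu:EST1}--\eqref{NEU:est2} for $v_\e$, with constants depending only on $n,\l,\L,i_a,s_a$.

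For the $L^\infty$-bound~\eqref{neu:EST}, I would run a \emph{Neumann version of the Bernstein argument} of Proposition~\ref{prop:bernstein}. Since $v_\e$ is smooth in $B^+_{3R_0}$ (interior regularity from Proposition~\ref{prop:veregular}) and, by the boundary regularity theory for conormal problems (e.g.\ \cite{GG84} together with \eqref{nonzero:Ae}), $v_\e\in C^{1,\alpha_\e}(\overline B^+_{r})$, one differentiates the equation tangentially to see that each tangential derivative $D_j v_\e$, $j<n$, solves a linear uniformly elliptic equation in $B^+_{3R_0}$ with a \emph{homogeneous conormal condition} on $B^0_{3R_0}$ (because $h_0$ is constant, so $D_j h_0 = 0$), while the normal derivative can be recovered algebraically from the conormal relation $\A_\e(Dv_\e)\cdot e_n = -h_0$. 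Then $V_\e := B_\e(|Dv_\e|)$ is a subsolution of $-\mathrm{div}(\mathbb A_\e(x)DV_\e)\le 0$ up to the boundary: the boundary integral produced when testing with $\varphi\,(D_jv_\e)$ is controlled by $|h_0|$, which after using Young's inequality \eqref{young1} and the trace inequality \eqref{in:trace0} contributes the term $b^{-1}(|h_0|)$ (the same mechanism that produced $b^{-1}(H)$ in Theorem~\ref{thm:bddneu}). The $L^\infty$--$L^1$ bound then follows from the weak Harnack inequality on half-balls, reflecting $V_\e$ evenly across $B^0_{3R_0}$, exactly as in~\eqref{Linf11}--\eqref{inf:B1}.

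For the excess decay~\eqref{decay:NEU} and oscillation estimates, I would adapt the \emph{fundamental alternative} of Theorem~\ref{thm:alternative} to the half-ball. The key observation is that, after subtracting an affine function that kills $h_0$ — more precisely, working with $w_\e = v_\e - \ell\cdot x$ where $\ell\in\R^n$ is chosen so that $\A_\e(\ell)\cdot e_n + h_0 = 0$ (such $\ell$ exists and is bounded by $b^{-1}(|h_0|)$ using \eqref{coAe:gr} and strict monotonicity) — the tangential derivatives $D_j w_\e$ ($j<n$) satisfy a linear equation with homogeneous conormal datum, so they can be reflected evenly across $B^0$, while the normal derivative $D_n w_\e$, extended oddly, satisfies the reflected equation as well (this is the classical device for conormal problems; one uses that $\A_\e(Dw_\e)\cdot e_n$ vanishes on $B^0$ to leading order). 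Consequently De~Giorgi's level-set machinery of Lemmas~\ref{lemma:levquadratic}--\ref{lemma:altfin} applies to the reflected functions on full balls $B_{2R}$, and the two-speed iteration of Proposition~\ref{exs:Dve} yields~\eqref{decay:NEU}; Corollary~\ref{osc:decay}-type reasoning then gives~\eqref{neu:EST1}--\eqref{NEU:est2}, with the $b^{-1}(|h_0|)$ appearing through the affine correction $\ell\cdot x$.

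The main obstacle will be making the reflection step rigorous when $\A_\e$ is genuinely nonlinear and nonautonomous after the change of variables: the reflected vector field must still satisfy the ellipticity/growth bounds \eqref{coer:Ae} uniformly in $\e$, and one must check that the odd/even reflection of $v_\e$ really is a weak solution across $B^0$ of an equation of the same structural type — this is standard for the Laplacian but needs care here because $\A_\e$ does not commute with the reflection unless one reflects the \emph{components} of the field appropriately (even in $\A^i_\e$ for $i<n$, odd in $\A^n_\e$), which forces a corresponding parity constraint that holds precisely because $\A_\e$ is a gradient field, $\A_\e = \nabla_\xi B_\e(H(\cdot))$-type, or more generally because of the rotational structure $\nabla_\xi \A_\e \approx a_\e(|\xi|)\mathrm{Id}$. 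Once the reflected problem is set up correctly, everything reduces to the interior results already proven, and the limit passage $\e\to0$, $k\to\infty$ is a verbatim repetition of the arguments in Theorems~\ref{thm:inthom} and~\ref{thm:gradddir}, using the uniform energy bound obtained by testing with $v_{\e,k} - v_k^{bd}$, the reflexivity estimate~\eqref{unif:refl}, and Ascoli--Arzel\`a.
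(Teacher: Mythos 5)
Your overall skeleton (regularize, prove uniform-in-$\e$ estimates, pass to the limit with a mollified boundary datum) matches the paper, and your observation that the tangential derivatives satisfy a homogeneous weak conormal relation while $D_n v_\e$ is recovered algebraically on $B^0$ is exactly the right starting point. However, both of the two core steps contain genuine gaps.

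First, the claim that $V_\e=B_\e(|Dv_\e|)$ is a subsolution \emph{up to the boundary} with a boundary contribution ``controlled by $|h_0|$'' is not justified. When you test the differentiated equation for $D_nv_\e$ (equation \eqref{neu:Dnve}) with $(D_nv_\e)\varphi$, the surviving boundary term is $-\int_{B^0}\A_\e'(Dv_\e)\cdot D'\big((D_nv_\e)\varphi\big)\,dx'$, which involves \emph{tangential derivatives of} $D_nv_\e$ on $B^0$, i.e.\ second derivatives of $v_\e$; this is not the zeroth-order term $\int h\varphi$ of Theorem~\ref{thm:bddneu}, and Young plus the trace inequality do not reduce it to $b^{-1}(|h_0|)$. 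The paper's subsolution inequality \eqref{new:Bernstein} is therefore stated only for test functions vanishing on $B^0_{2R}$, and the $L^\infty$-bound is obtained in two separate stages: a weighted Moser iteration for the tangential gradient alone (Proposition~\ref{prop:gradneuLinf}, with the weight $|Dv_\e|^2/|D'v_\e|^2$), followed by the pointwise boundary bound $|Dv_\e|\le C|D'v_\e|+Cb^{-1}(|h_0|)$ on $B^0$ deduced from $\A_\e^n(Dv_\e)=-h_0$ and coercivity; only then can one run De~Giorgi for $V_\e$ at levels above the boundary bound, so that $(V_\e-\ell)_+\phi^2$ vanishes on $B^0$. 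Your proposal skips the first stage entirely.

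Second, the reflection device for the H\"older continuity does not work here. Even after subtracting $\ell\cdot x$ with $\A_\e(\ell)\cdot e_n=-h_0$, the boundary condition becomes $\A_\e(Dw_\e+\ell)\cdot e_n=0$ on $B^0$, which does \emph{not} imply $D_nw_\e=0$ there (it only constrains $Dw_\e$ to a level hypersurface of $\A_\e^n$), so the odd extension of $D_nw_\e$ is not even continuous across $B^0$, let alone a solution. Moreover the parity $\A_\e^i(\xi',-\xi_n)=\A_\e^i(\xi',\xi_n)$ for $i<n$ and $\A_\e^n(\xi',-\xi_n)=-\A_\e^n(\xi',\xi_n)$ that the reflection requires is \emph{not} a consequence of the two-sided bound $\nabla_\xi\A_\e\approx a_\e(|\xi|)\,\mathrm{Id}$ nor of the hypotheses \eqref{ass:Aaut}; the theorem does not assume $\A$ is radial or of the form $\nabla_\xi B(H(\xi))$. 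Reflection (in the weak form of Remark~\ref{remark:evenext}) is viable only for the \emph{tangential} derivatives and for the level-set inequalities; the H\"older continuity of $D_nv_\e$ instead requires the dedicated boundary alternatives of Lemmas~\ref{lemma:alt1Dn}--\ref{lemma:alt2Dn}, which exploit the algebraic relations on $B^0$ (the bound \eqref{trbd:neu} and the oscillation transfer \eqref{ttt:osc} from $D'v_\e$ to $D_nv_\e$, both consequences of the monotonicity of $\A_\e$) together with the prior oscillation control of the tangential gradient (Lemma~\ref{lemma:oscDive}). Without these ingredients the argument for \eqref{decay:NEU}--\eqref{NEU:est2} does not close.
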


As per usual, we proceed  via approximation by considering $\ve\in W^{1,2}(B_{2R_0}^+)$ solution to
\begin{equation}\label{homve:neu}
    \begin{cases}
         -\mathrm{div}\big( \Ae(D\ve)\big)=0\quad\text{in $B_{2R_0}^+$}
        \\
        \Ae(D\ve)\cdot e_n+h_0=0\quad \text{on $B_{2R_0}^0$}.
    \end{cases}
\end{equation}
where $\Ae$ is given by Lemma \ref{lemma:Ae}.
Let us first recollect some regularity properties of $\ve$.

\begin{proposition}
    Let $\ve\in W^{1,2}(B_{2R_0}^+)$ be a weak solution to \eqref{homve:neu}. Then
    \begin{equation}\label{reg:neuve}
        \ve\in C^\infty(B_{2R_0}^+)\cap W^{2,2}(B_r^+)\quad \text{for all $0<r<2R_0$.}
    \end{equation}
\end{proposition}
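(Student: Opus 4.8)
The interior assertion $\ve\in C^\infty(B^+_{2R_0})$ is nothing but Proposition~\ref{prop:veregular} applied on the open set $\Omega=B^+_{2R_0}$, so the only genuinely new claim is $W^{2,2}$-regularity up to the flat portion $B^0_{2R_0}$. The plan is to follow the classical Hilbert-XIX scheme: first obtain all second derivatives except $D_{nn}\ve$ by tangential difference quotients, and then recover $D_{nn}\ve$ algebraically from the equation, using the ellipticity \eqref{nonzero:Ae} (which for fixed $\e$ is non-degenerate).

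\emph{Tangential difference quotients.} Fix $i\in\{1,\dots,n-1\}$, a cut-off $\eta\in C^\infty_c(B_{2R_0})$ (which may be non-zero on $B^0_{2R_0}$), and $0<|h|<\mathrm{dist}(\mathrm{spt}\,\eta,\partial B_{2R_0})$, with $\Delta_h^i$ as in \eqref{diff:quot}. One tests the weak formulation of \eqref{homve:neu} with $\varphi=\Delta_{-h}^i(\eta^2\,\Delta_h^i\ve)$, which is admissible since the shift direction $e_i$ is tangential to $B^0_{2R_0}$ and $\varphi$ vanishes near the curved part of $\partial B_{2R_0}$. Performing the discrete summation by parts, the boundary (Neumann) contribution $h_0\int_{B^0_{2R_0}}\Delta_{-h}^i(\eta^2\,\Delta_h^i\ve)\,dx'$ vanishes identically: $h_0$ is a \emph{constant}, and the integral over $B^0_{2R_0}$ of a tangential difference quotient of a function compactly supported in $B'_{2R_0}$ is zero by translation invariance of Lebesgue measure. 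What remains is exactly the identity treated in the proof of Proposition~\ref{prop:veregular}: writing $\Delta_h^i(\Ae(D\ve))=\mathcal M^i_h(x)\,D(\Delta_h^i\ve)$ with $\mathcal M^i_h(x)=\int_0^1\nabla_\xi\Ae\big(tD\ve(x+he_i)+(1-t)D\ve(x)\big)\,dt$, and using the uniform bounds in \eqref{nonzero:Ae} (so that $\mathcal M^i_h\zeta\cdot\zeta\ge c\,\e\,|\zeta|^2$ and $|\mathcal M^i_h|\le C\,\e^{-1}$), the same computation leading to \eqref{df0} gives
\[
\int_{B^+_{2R_0}}|D(\Delta_h^i\ve)|^2\,\eta^2\,dx\le C\,\e^{-4}\int_{B^+_{2R_0}}|\Delta_h^i\ve|^2\,|D\eta|^2\,dx\le C_\eta\,\e^{-4}\int_{B^+_{2R_0}}|D_i\ve|^2\,dx,
\]
a bound uniform in $h$. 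By the difference-quotient characterization \cite[Lemmas 7.23--7.24]{GT}, this yields $D_i\ve\in W^{1,2}_{loc}(B^+_{2R_0}\cup B^0_{2R_0})$ for all $i=1,\dots,n-1$; in particular $D_{jk}\ve\in L^2(B^+_r)$ for every $0<r<2R_0$ and every pair $(j,k)\neq(n,n)$, since such a pair always has $\min\{j,k\}\le n-1$ and $D_{jk}\ve=D_{kj}\ve$.

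\emph{Recovering $D_{nn}\ve$.} On the open set $B^+_{2R_0}$, where $\ve$ is already smooth, expanding $\mathrm{div}(\Ae(D\ve))=0$ pointwise gives $\sum_{j,k}\tfrac{\partial\Ae^j}{\partial\xi_k}(D\ve)\,D_{jk}\ve=0$ a.e.; since by \eqref{nonzero:Ae} one has $\tfrac{\partial\Ae^n}{\partial\xi_n}(D\ve)=\nabla_\xi\Ae(D\ve)\,e_n\cdot e_n\ge c\,\e>0$, we may solve
\[
D_{nn}\ve=-\Big(\tfrac{\partial\Ae^n}{\partial\xi_n}(D\ve)\Big)^{-1}\sum_{(j,k)\neq(n,n)}\tfrac{\partial\Ae^j}{\partial\xi_k}(D\ve)\,D_{jk}\ve\qquad\text{a.e. in }B^+_{2R_0}.
\]
Here the reciprocal is bounded by $(c\e)^{-1}$ and the remaining coefficients by $C\e^{-1}$ (again by \eqref{nonzero:Ae}), while each $D_{jk}\ve$ with $(j,k)\neq(n,n)$ lies in $L^2(B^+_r)$ by the previous step; hence the right-hand side belongs to $L^2(B^+_r)$ for every $0<r<2R_0$. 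Since $B^+_r$ is an open subset of $B^+_{2R_0}$ (for $r<2R_0$) and distributional derivatives restrict to open subsets, it follows that $D_{nn}\ve\in L^2(B^+_r)$, so $\ve\in W^{2,2}(B^+_r)$ for every $0<r<2R_0$, which together with the interior smoothness proves \eqref{reg:neuve}. The one point requiring care is the treatment of the boundary term under tangential difference quotients, namely checking that $\varphi=\Delta_{-h}^i(\eta^2\Delta_h^i\ve)$ is admissible and that $h_0\int_{B^0_{2R_0}}\varphi\,dx'=0$; this crucially exploits that the conormal datum $h_0$ is constant, while everything else is the standard bootstrap already performed in Proposition~\ref{prop:veregular}.
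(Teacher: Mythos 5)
Your proposal is correct and follows essentially the same route as the paper: tangential difference quotients to control $D(D_i\ve)$ for $i<n$, then algebraic recovery of $D_{nn}\ve$ from the trace form of the equation using the non-degenerate bounds \eqref{nonzero:Ae}. Your explicit verification that the conormal boundary term vanishes under tangential difference quotients (by translation invariance and constancy of $h_0$) is a detail the paper leaves implicit, and it is handled correctly.
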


\begin{proof}
    The $C^\infty$-smoothness of $\ve$ in $B_{2R_0}^+$ is guaranteed by Proposition \ref{prop:veregular}. For what concerns the $W^{2,2}$- regularity, let us consider the difference quotients  $\Delta_h^i \ve$ for $i=1,\dots,n-1$, which are defined by \eqref{diff:quot}. We test the weak formulation of \eqref{homve:neu} with
    \begin{equation*}
        \Delta_{-h}^i(\eta^2\Delta_h^i\ve),\quad \eta\in C^{\infty}_c(B_{2R_0-2h}),
    \end{equation*}
    which is admissible since $\N\ni i<n$, and repeating verbatim the computations done for \eqref{df0}, we arrive at
\begin{equation*}
    \int_{B_{2R_0}^+}|D(\Delta_h^i\ve)|^2\,\eta^2\,dx\leq C_\e\,\int_{B_{2R_0}^+}|\Delta_h^i\ve|^2\,|D\eta|^2\,dx, 
\end{equation*}
    and from this inequality and the properties of the difference quotients \cite[Section 7.11]{GT}, we deduce
    \begin{equation}\label{W22:neu}
        \int_{B_r^+}|D(D_i\ve)|^2\,dx\leq C(\e,r)\, \int_{B_{2R_0}^+}|D\ve|^2\,dx\,,
    \end{equation}
    for all $i=1,\dots,n-1$. We now need to estimate the $L^2$-norm of $D_{nn}\ve$. By the chain rule, we may write \eqref{eq:homeint} in the trace form
    \begin{equation}\label{trace:utile}
        \sum_{i,j=1}^n\frac{\partial \Ae^i}{\partial \xi_j}(D\ve(x))\,D_{ij}\ve(x)=0,\quad \text{for every $x\in B^+_{2R_0}$.}
    \end{equation}
In particular, 
    \begin{equation*}
        D_{nn}\ve=-\left( \frac{\partial \Ae^n}{\partial \xi_n}(D\ve)\right)^{-1}\sum_{\substack{ i,j=1  \\ i\lor j \neq n}}^n
\frac{\partial \Ae^i}{\partial \xi_j}(D\ve)\,D_{ij}\ve(x),\quad \text{in $ B^+_{2R_0}$,}
    \end{equation*}
and using that $|\frac{\partial \Ae}{\partial \xi_j}|\leq C\e^{-1}$ and $\frac{\partial \Ae^n}{\partial \xi_n}(D\ve)\geq c\,\e$ by \eqref{nonzero:Ae}, we deduce
\begin{equation*}
    |D_{nn}\ve|\leq C(\e)\,\sum_{i=1}^{n-1}|D(D_i\ve)|\quad \text{in $B_{2R_0}^+$,}
\end{equation*}
We square the above expression, and integrate it over $\mathcal{B}^+\Subset B_{r}^+$ so that, by also using \eqref{W22:neu}, we get
\begin{equation*}
    \int_{\mathcal{B}^+}|D_{nn}\ve|^2\,dx\leq C(\e,r)\,\int_{B_{2R_0}^+}|D\ve|^2\,dx.
\end{equation*}
Taking a sequence of sets $\mathcal{B}^+\nearrow B_{r}^+$ and using mononotone convergence theorem finally yields $D_{nn}\ve\in L^2(B_r^+)$, that is our thesis.
\end{proof}

\subsection{Some properties of \texorpdfstring{$D\ve$}{Dnv}}
Here we recollect some useful identities and properties of $\ve$ and its derivatives. Let us consider a given radius $R\leq R_0$. First, the weak formulation of \eqref{homve:neu} tells that
\begin{equation}\label{test:neuve}
    \int_{B^+_{2R}}\Ae(D\ve)\cdot D\varphi\,dx=h_0\,\int_{B_{2R}^0}\varphi\,dx\,,
\end{equation}
for all $\varphi\in W^{1,2}_c(B_{2R})$.
Testing \eqref{test:neuve} with $\varphi=D_k\eta$, $\eta\in C^\infty_c(B_{2R})$, integrating by parts and using the chain rule for $W^{2,2}$-functions, we find
\begin{equation*}
    \int_{B^+_{2R}}\nabla_\xi \Ae(D\ve)\,D(D_k\ve)\cdot D\eta\,dx=-\int_{B_{2R}^0}\Ae(D\ve)\cdot D\eta\,\delta_{kn}\,dx'-h_0\,\int_{B_{2R}^0} D_k\eta\,dx',
\end{equation*}
 where $\delta_{kn}$ is the Kronecker delta. In particular, if $1\leq k<n$,  by the divergence theorem on $B_{2R}^0$ and a density argument on $\eta$, we deduce
\begin{equation}\label{neu:Dkve}
    \int_{B_{2R}^+}\nabla_\xi \Ae(D\ve)\,D(D_k\ve)\cdot D\eta\,dx=0,\quad k=1,\dots,n-1
\end{equation}
for all $\eta\in W^{1,2}_c(B_{2R})$. On the other hand, for $k=n$ we have
\begin{equation}\label{lauso:Dnv}
    \int_{B_{2R}^+}\nabla_\xi \Ae(D\ve)\,D(D_n\ve)\cdot D\eta\,dx=-\int_{B_{2R}^0}\Ae(D\ve)\cdot D\eta\,dx'-h_0\,\int_{B_{2R}^0} D_n\eta\,dx',
\end{equation}
for all $\eta\in C^\infty_c(B_{2R})$. Observe that, by the regularity property \eqref{reg:neuve}, the boundary condition $\Ae^n(D\ve)+h_0=0$ holds in the sense of traces, that is for a.e. $x'\in B_{2R}^0$. 
Therefore
\[
\Ae(D\ve)\cdot D\eta=\Ae'(D\ve)\cdot D'\eta+\Ae^n(D\ve)\,D_n\eta=\Ae'(D\ve)\cdot D'\eta-h_0\,D_n\eta\quad\text{ a.e. in $B_{2R}^0$,}
\]
  so equation \eqref{lauso:Dnv} simplifies to
\begin{equation}\label{neu:Dnve}
    \int_{B_{2R}^+}\nabla_\xi \Ae(D\ve)\,D(D_n\ve)\cdot D\eta\,dx=-\int_{B_{2R}^0}\Ae'(D\ve)\cdot D'\eta\,dx',
\end{equation}
for all $\eta\in C^\infty_c(B_{2R})$ and, via a density argument, for all $\eta\in W^{1,2}_c(B_{2R})$ whose trace fulfills $\eta\in W^{1,2}(B^0_{2R})$. 
\vspace{0.2cm}

Next, analogously to Proposition \ref{prop:bernstein}, we show that $V_\e=B_\e\big( |D\ve|\big)$ is a weak sub-solution (in an integral sense) of a certain equation. 
First, by the chain rule, \eqref{ae:nonzero} and \eqref{reg:neuve}, we have $V_\e \in W^{1,1}(B_r^+)$ for all $0<r<2R$. 

Then we take  $\varphi\in C^\infty_c(B_{2R})$  such that $\varphi\geq 0$ in $B_{2R}$, and $\varphi=0$ on $B_{2R}^0$, and test Equations \eqref{neu:Dkve} and \eqref{neu:Dnve} with $\eta=(D_k\ve)\,\varphi$ and $\eta=(D_n\ve)\,\varphi$.  Using that $\eta\equiv 0$ on $B^0_{2R}$, we may repeat the computations of Proposition \ref{prop:bernstein}, namely \eqref{der:Beve}-\eqref{ozza}, and after performing a density argument on $\varphi$, we find

\begin{equation}\label{new:Bernstein}
    \int_{B_{2R}^+}\mathbb{A}_\e(x)\,DV_\e\cdot D\varphi\,dx\leq 0\quad\text{for all nonnegative   $\varphi\in W^{1,\infty}_c(B_{2R})$ s.t.  $\varphi\equiv 0$ on $B^0_{2R}$,}
\end{equation}
 where $\mathbb{A}_\e$ is the matrix defined by \eqref{Ae:uf}.
\vspace{0.2cm}

Next, observe that by \eqref{trace:utile} and \eqref{coer:Ae}, in $B_{2R}^+$ we have
{\small
\begin{equation*}
\begin{split}
    c\,a_\e\big(|D\ve| \big)\,|D_{nn}\ve|&\leq \frac{\partial \Ae^n}{\partial \xi_n}|D_{nn}\ve| \leq \Bigg|\sum_{\substack{ i,j=1  \\ i\lor j \neq n}}^n
\frac{\partial \Ae^i}{\partial \xi_j}(D\ve)\,D_{ij}\ve(x)\Bigg|\leq C\,a_\e\big(|D\ve| \big)\,|D(D'\ve)|,
    \end{split}
\end{equation*}
}
so dividing by $a_\e\big(|D\ve| \big)$ (which never vanishes by \eqref{ae:nonzero}), we deduce
\begin{equation}\label{control:D2ve}
    |D^2\ve|\leq C\,|D(D'\ve)|\quad\text{in $B^+_{2R}$,}
\end{equation}
for $C=C(n,\l,\L,i_a,s_a)>0$.
\vspace{0.2cm}

\subsection{Bounds for \texorpdfstring{$D_n\ve$}{Dnv} on the flat boundary.} Exploiting the boundary condition and the coercivity properties of $\A$, here we  derive supremum and oscillation bounds for $D_n\ve$, in terms of the tangential derivatives, on the flat part $B^0_{2R}$. First, for all $0<r\leq 2R$, we claim 
\begin{equation}\label{trbd:neu}
    |D\ve|\leq C\,|D'\ve|+C\,b^{-1}\big(|h_0| \big)\quad\text{a.e. on $B^0_{r}$,}
\end{equation}
for some constant $C=C(n,\l,\L,i_a,s_a)>0$, for $0<\e<\e_0$ small enough ($\e_0$ depending only on $b(\cdot)$ and an upper bound on  $|h_0|$, as we will see below using \eqref{be:unif}).

To prove it, we use
\eqref{coAe:gr}, the boundary condition $\Ae^n(D\ve)\equiv-h_0$ a.e. on $B_{r}^0$, and Young's inequality \eqref{young1}, thus getting
\begin{equation*}
    \begin{split}c_0\,B_\e\big(|D\ve|\big) &\leq \Ae(D\ve)\cdot D\ve=\Ae'(D\ve)\cdot D'\ve+\A^n(D\ve)\,D_n\ve
\\
&=\Ae'(D\ve)\cdot D'\ve-h_0\,D_n\ve\leq C\,b_\e\big(|D\ve|\big)\,|D'\ve|+|h_0|\,|D\ve|
\\
&\leq \frac{c_0}{2}\,B_\e\big(|D\ve|\big)+C'\,B_\e\big(|D'\ve|\big)+C'\,\widetilde{B}_\e
\big(|h_0| \big),    \end{split}
\end{equation*}
 where the constants $c_0,C,C'>0$ depend on $n,\l,\L,i_a,s_a$, but are independent of $\e$ thanks to Remark \ref{remark:importante}. Reabsorbing terms, we get
\begin{equation}\label{unaltro}
    B_\e\big( |D\ve|\big)\leq C\,B_\e\big(|D'\ve|\big)+C\,\widetilde{B}_\e
\big(|h_0| \big)\quad\text{a.e. on $B^0_{r}$,}
\end{equation}
 so taking $B_\e^{-1}$ to both sides, and using \eqref{B2t} and \eqref{come:BwB}, we find 
\[
|D\ve|\leq C\,|D'\ve|+C\,b_\e^{-1}(|h_0|)\leq C\,|D'\ve|+2C\,b^{-1}(|h_0|),
\] 
 the last inequality due to the uniform convergence \eqref{be:unif}, whenever $0<\e<\e_0$ is small enough. Again, here $C=C(n,\l,\L,i_a,s_a)>0$ thanks to Remark \ref{remark:importante}, so \eqref{trbd:neu} is proven.
\vspace{0.2cm}

\noindent Next, for all $0<r<2R$, we have the oscillation bound
 \begin{equation}\label{ttt:osc}
     |D_n\ve(x')-D_n\ve(y')|\leq C_{\mathrm{a}}\,\max_{k=1,\dots,n-1}|D_k\ve(x')-D_k\ve(y')|\quad\text{for a.e. $x',y'\in B^0_{r}$,}
 \end{equation}
 for some constant $C_{\mathrm{a}}=C_{\mathrm{a}}(n,\l,\L,i_a,s_a)>0$.
 
 To prove it, we first observe that, since $\Ae^n(D\ve)\equiv-h_0$ a.e. on $B_{r}^0$, we have
 {\small
\begin{equation*}
\begin{split}
    \Big(\Ae\big(D\ve(x')\big)-\Ae&\big(D\ve(y')\big)\Big)\cdot\big(D\ve(x')-D\ve(y')\big)
    \\
    &= \Big(\Ae'\big(D\ve(x')\big)-\Ae'\big(D\ve(y')\big)\Big)\cdot\big(D'\ve(x')-D'\ve(y')\Big)\quad \text{for a.e. $x',y'\in B^0_{r}$.}
\end{split}
\end{equation*}
}
 On the other hand, by the fundamental theorem of calculus and \eqref{coer:Ae} with Remark \ref{remark:importante}, we have
 {\small
\begin{equation*}
    \begin{split}
        \Big(&\Ae\big(D\ve(x')\big)-\Ae\big(D\ve(y')\big)\Big)\cdot\big(D\ve(x')-D\ve(y')\big)\\
        &=\bigg(\int_0^1
\nabla_\xi \Ae\Big(tD\ve(x')+(1-t)D\ve(y') \Big)dt \bigg)\,\Big(D\ve(x')-D\ve(y') \Big)\cdot\Big(D\ve(x')-D\ve(y') \Big)
\\
&\geq c\,\bigg(\int_0^1 a_\e\big(tD\ve(x')+(1-t)D\ve(y') \big)\,dt\bigg)\,|D\ve(x')-D\ve(y')|^2,
\end{split}
\end{equation*}
}
and
{\small
\begin{equation*}
    \begin{split}
    \bigg|\Big(\Ae'&\big(D\ve(x')\big)-\Ae'\big(D\ve(y')\big)\Big)\cdot\big(D'\ve(x')-D'\ve(y')\Big)\bigg|
    \\
    =&\,\Bigg|\sum_{i=1}^{n-1}\sum_{j=1}^{n}\bigg(\int_0^1\frac{\partial \Ae^i}{\partial \xi_j}\big(tD\ve(x')+(1-t)D\ve(y')\big)dt\bigg)\,\big(D_i\ve(x')-D_i\ve(y')\big)\,\big(D_j\ve(x')-D_j\ve(y')\big)\Bigg|
    \\
    \leq &\, C\,\bigg(\int_0^1 a_\e\big( tD\ve(x')+(1-t)D\ve(y')\big)dt\bigg)\,|D\ve(x')-D\ve(y')|\,|D'\ve(x')-D\ve(y')|
    \\
    \leq &\,\frac{c}{2}\bigg(\int_0^1 a_\e\big( tD\ve(x')+(1-t)D\ve(y')\big)dt\bigg)\,|D\ve(x')-D\ve(y')|^2
    \\
    &+C'\,\bigg(\int_0^1 a_\e\big( tD\ve(x')+(1-t)D\ve(y')\big)dt\bigg)\,|D'\ve(x')-D'\ve(y')|^2
    \end{split}
\end{equation*}
}
with $c,C,C'$ depending on $n,\l,\L,i_a,s_a$, where we used weighted Young's inequality in the  last estimate. Combining the three identities above, and simplifying the resulting expression (which is admissible since $a_\varepsilon(t)$ never vanishes by \eqref{ae:nonzero}) yields \eqref{ttt:osc}.

\subsection{Gradient boundedness}
In a first step, we establish that
\begin{equation}\label{temp:Dvebddd}
D v_\varepsilon \in L^\infty(B_r^+), \quad \text{for all } 0 < r < 2R,
\end{equation}
without providing a quantitative bound. At this stage, we do not aim to obtain an $\e$-independent estimate, since \eqref{temp:Dvebddd} will only be used to justify the computations later on.
\vspace{0.2cm}

We start with the boundedness of the tangential gradient $D'\ve$. This immediately follows from \eqref{neu:Dkve}, \eqref{nonzero:Ae} and the theory of quadratic equations in $B_r^+$. Specifically, for $\kappa>0$, we test \eqref{neu:Dkve} with $(D_k\ve-\kappa)_+\,\phi^2$, for $\phi\in C^\infty_c(B_{r_2})$, $0\leq \phi\leq 1$, $\phi\equiv 1$ on $B_{r_1}$, $|D\phi|\leq C(n)/(r_2-r_1)$. Using \eqref{nonzero:Ae}, weighted Young's inequality, and that $D(D_k\ve)=D(D_k\ve-\kappa)_+$ in  $\{(D_k\ve-\kappa)_+\neq 0\}$, from \eqref{neu:Dkve} we get
\begin{equation*}
    \begin{split}c&\,\e\int_{B_{2R}^+}|D(D_k\ve-\kappa)_+|^2\phi^2\,dx\leq \int_{B_{2R}^+}\nabla_\xi \Ae(D\ve)D(D_k\ve)\cdot D(D_k\ve-\kappa)_+\phi^2dx
    \\
    &= -2\int_{B_{2R}^+} \nabla_\xi \Ae(D\ve)D(D_k\ve)\cdot D\phi\,\phi\,(D_k\ve-\kappa)_+\,dx
    \\
    &\leq C\,\e^{-1}\,\int_{B_{2R}^+}|D(D_k\ve)|\,|D\phi|\,\phi\,|(D_n\ve-\kappa)_+|\,dx
    \\
    &\leq \frac{c\,\e}{2}\int_{B_{2R}^+}|D(D_k\ve-\kappa)_+|^2\phi^2\,dx+C'\,\e^{-3}\int_{B_{2R}^+} |(D_k\ve-\kappa)_+|^2\,|D\phi|^2\,dx.
    \end{split}
\end{equation*}
Reabsorbing terms, and using the properties of $\phi$, we find
\begin{equation*}
    \int_{B_{r_1}^+}|D(D_k\ve-\kappa)_+|^2\,dx\leq \frac{C_\e}{(r_2-r_1)^2}\,\int_{B_{r_2}^+} |(D_k\ve-\kappa)_+|^2\,dx
\end{equation*}
for all $k=1,\dots,n-1$. Therefore, $D_k\ve$ belongs to the De Giorgi's class \cite[Section 7.2]{G03} with $B_r^+$ in place of $Q_r$, so the proofs of \cite[Section 7.2]{G03} can be repeated verbatim replacing $Q_r$ with $B_r^+$ (see also Remark \ref{remark:evenext} below), thus obtaining
\begin{equation}\label{tmp:neubdd}
    \sup_{B_r^+}|D'\ve|\leq C(\e,r)\,\mint_{B_{2R}^+}|D\ve|\,dx
\end{equation}
for all $0<r<2R$.
\vspace{0.2cm}

Next, owing to the bounds \eqref{tmp:neubdd}, \eqref{trbd:neu}, and the properties of the trace operator, we find
\begin{equation*}
    |D\ve|\leq C_{\e,r}\,\mint_{B_{2R}^+}|D\ve|\,dx+ C_{\e,r}\,b^{-1}\big(|h_0|\big)\eqqcolon \widehat C_{\e,r}\quad\text{a.e. on $B_{r}^0$.}
\end{equation*}

Therefore, for every $\kappa>\widehat{C}_{\e,r}$, the function $(D_n\ve-\kappa)_+$ vanishes on $B^0_r$; thus, by testing \eqref{neu:Dnve} with $\eta=(D_n\ve-\kappa)_+\,\phi^2$, $\phi\in C^\infty_c(B_r)$, the boundary integral in \eqref{neu:Dnve} dies off, and using \eqref{nonzero:Ae} and Young's inequality, we obtain once more
\begin{equation*}
    \begin{split}c&\,\e\int_{B_{2R}^+}|D(D_n\ve-\kappa)_+|^2\phi^2\,dx\leq \int_{B_{2R}^+}\nabla_\xi \Ae(D\ve)D(D_n\ve)\cdot D(D_n\ve-\kappa)_+\phi^2dx
    \\
    &= -2\int_{B_{2R}^+} \nabla_\xi \Ae(D\ve)D(D_n\ve)\cdot D\phi\,\phi\,(D_n\ve-\kappa)_+\,dx
    \\
    &\leq C\,\e^{-1}\,\int_{B_{2R}^+}|D(D_n\ve)|\,|D\phi|\,\phi\,|(D_n\ve-\kappa)_+|\,dx
    \\
    &\leq \frac{c\,\e}{2}\int_{B_{2R}^+}|D(D_n\ve-\kappa)_+|^2\phi^2\,dx+C'\,\e^{-3}\int_{B_{2R}^+} |(D_n\ve-\kappa)_+|^2\,|D\phi|^2\,dx.
    \end{split}
\end{equation*}
Reabsorbing terms, and taking $\phi\in C^\infty_c(B_{r_2})$, $\phi\equiv 1$ in $B_{r_1}$, $|D\phi|\leq C(n)/(r_2-r_1)$ yields
\begin{equation*}
    \int_{B_{r_1}^+}|D(D_n\ve-\kappa)_+|^2\phi^2\,dx\leq \frac{C_\e}{(r_2-r_1)^2}\int_{B_{r_2}^+} |(D_n\ve-\kappa)_+|^2\,dx, 
\end{equation*}
for all $\kappa>\widehat{C}_{\e,r}$, and all $0<r_1<r_2<r$. Again, it follows by \cite[Theorem 7.2-7.5]{G03} with $B_r^+$ in place of $Q_r$ (see also Remark \ref{remark:evenext} below) that $D_n\ve\in L^\infty(B_{r}^+)$, and thus \eqref{temp:Dvebddd} is proven.
\vspace{0.2cm}

\begin{remark}\label{remark:evenext}
\rm{
Suppose that \(w \in W^{1,2}(B_R^+)\) satisfies one of the following De~Giorgi-type inequalities:
\begin{equation}\label{DGrem}
    \int_{B_{r_1}^+} |D(w-\kappa)_+|^2\,dx 
    \le \frac{C}{(r_2 - r_1)^2} 
    \int_{B_{r_2}^+} |(w-\kappa)_+|^2\,dx, 
    \qquad \kappa > \kappa_0,
\end{equation}
or
\begin{equation}\label{DGrem1}
    \int_{B_{r_1}^+} |D(w-\ell)_-|^2\,dx 
    \le \frac{C}{(r_2 - r_1)^2} 
    \int_{B_{r_2}^+} |(w-\ell)_-|^2\,dx, 
    \qquad \ell < -\ell_0,
\end{equation}
for every \(0 < r_1 < r_2 < R\). Then its even extension $w^e$ defined by \eqref{def:even}
satisfies the same De~Giorgi-type inequalities in the full ball \(B_R\). 
In particular, one may apply the results of \cite[Section~7]{G03} to obtain boundedness and H\"older continuity of \(w^{e}\), and hence of \(w\), since $
\sup_{B_r} |w^{e}| = \sup_{B_r^+} |w|$ and $\operatorname{osc}_{B_r} w^{e} = \operatorname{osc}_{B_r^+} w$. 

We also remark that if $w\in W^{1,2}(B_R^+)$ satisfies 
\begin{equation}\label{satisfies}
\begin{split}
    \int_{B_R^+}&A(x)Dw\cdot D\phi\,dx=0\quad \text{for all $\phi\in C^\infty_c(B_R^+)$}
    \\
     \sum_{i,j=1}^n|&A_{ij}(x)|\leq \L\quad A(x)\eta\cdot \eta\geq \l|\eta|^2\quad \text{for a.e. $x$, for all $\eta\in \R^n$}
    \end{split}
\end{equation}
 then the standard computations (i.e., testing with $(w-\kappa)_+\phi^2$ or $(w-\ell)_-\phi^2$ and weighted Young's inequality) show that $w$ fulfills \eqref{DGrem}-\eqref{DGrem1}.

Further details concerning boundary De Giorgi's classes can be found in \cite[Chapter 2, Section 7]{LU68}, \cite[Sections 10.7-10.8]{DB10}, and \cite[Section 5.10]{L13}.
}
\end{remark}

\begin{remark}
  \rm{ It is also possible to prove that 
    \begin{equation*}
        D\ve\in C^{0,\alpha_\e}(\overline{B}^+_r)\quad 0<r<2R.
    \end{equation*}
for some $\alpha_\e\in (0,1)$. Since we do not need this fact, we just sketch the idea of the proof: for $\ell>0$, we test \eqref{neu:Dkve} with $(D_k\ve-\ell)_-\phi^2$, and via the usual computations (\eqref{nonzero:Ae} and weighted Young's inequality), 
one obtains
\begin{equation*}
    \int_{B_{r_1}^+}|D(D_k\ve-\ell)_-|^2\,dx\leq \frac{C_\e}{(r_2-r_1)^2}\int_{B_{r_2}^+}|(D_k\ve-\ell)_-|^2\,dx,
\end{equation*}
for all $k=1,\dots,n-1$; so, by Remark \ref{remark:evenext} and \cite[Theorem 7.6]{G03}, we have $D_k\ve\in C^{0,\alpha_\e}(\overline{B}_r^+)$. Next, by testing \eqref{neu:Dnve} with $\eta=(D_n\ve-\ell)_-\,\phi^2$, with 
$\phi\in C^\infty_c(B_r)$, and let
\[
\ell<\ell_0:=-C\sup_{B^0_{2R}}|D'\ve|-C\,b^{-1}(|h_0|),
\]
with $C>0$ being the constant in \eqref{trbd:neu}. It follows by said expression that $\eta\equiv 0$ on $B^0_{2R}$, so the boundary integral in \eqref{neu:Dnve} vanishes, and we may perform the routine computations  (\eqref{nonzero:Ae} and Young's inequality), and get
\begin{equation*}
    \int_{B_{r_1}^+}|D(D_n\ve-\ell)_-|^2\,dx\leq \frac{C_\e}{(r_2-r_1)^2}\int_{B_{r_2}^+}|(D_n\ve-\ell)_-|^2\,dx,
\end{equation*}
and, in view of \eqref{ttt:osc} and the H\"older continuity of $D'\ve$, we also have the boundary condition 
\begin{equation*}
    \operatorname*{osc}_{B_r^0}D_n\ve\leq C_{\mathrm{a}} \max_{k=1,\dots,n-1} \operatorname*{osc}_{B^0_r}D_k\ve\leq C'r^{\alpha_\e}.
\end{equation*} 
It then follows by Remark \ref{remark:evenext} and a simple modification of \cite[proof of Theorem 7.8]{G03} (see also \cite[Theorem 7.1]{DB10} or \cite[Chapter 2, Theorem 7.1]{LU68}), that $D_n\ve\in C^{0,\alpha_\e}(\overline{B}_r^+)$, and the claim is proven.
}
\end{remark}

We now prove the quantitative boundedness of $D\ve$. First, we use a weighted Moser iteration to show that the tangential gradient $D'\ve$ is bounded, uniformly in $\e>0$. Then by using the fact that $V_\e=B_\e(|D\ve|)$ is a  sub-solution to a uniform elliptic equation \eqref{new:Bernstein}, and it is bounded on the flat part of the boundary by \eqref{trbd:neu} and the newfound estimate for $|D'\ve|$, the  (uniform in $\e$) bound on $|D\ve|$ will follow from the weak Harnack inequality.

\begin{proposition}\label{prop:gradneuLinf}
    Let $\ve\in W^{1,2}(B_{2R_0}^+)$ be a weak solution to \eqref{homve:neu}. Then  $D\ve\in L^\infty(B^+_{R_0})$, and there exists a constant $C=C(n,\l,\L,i_a,s_a)>0$ such that
    \begin{equation}\label{bbb:neu}
        \sup_{B^+_{R_0}}|D\ve|\leq C\,\mint_{B_{2R_0}^+}|D\ve|\,dx+C\,b^{-1}\big(|h_0|\big).
    \end{equation}
\end{proposition}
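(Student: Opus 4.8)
The plan is to prove \eqref{bbb:neu} in two stages, exactly as anticipated right before the statement. In the first stage, I will show that the tangential gradient $D'v_\e$ satisfies the quantitative bound
\[
\sup_{B^+_{R}}|D'\ve|\le C\,\mint_{B^+_{2R}}|D\ve|\,dx+C\,b^{-1}\big(|h_0|\big),\qquad 0<R\le R_0,
\]
with $C=C(n,\l,\L,i_a,s_a)$ independent of $\e$. In the second stage, I will invoke the fact that $V_\e=B_\e(|D\ve|)$ is a nonnegative weak subsolution of $-\mathrm{div}(\mathbb A_\e(x)DV_\e)\le0$ on $B^+_{2R}$ in the sense \eqref{new:Bernstein}, i.e.\ for test functions vanishing on $B^0_{2R}$ only, so I cannot directly apply the interior weak Harnack inequality; instead I will handle the boundary via the pointwise bound \eqref{trbd:neu} for $|D\ve|$ on $B^0_{2R}$ combined with the stage-one estimate for $|D'\ve|$, which yields $V_\e\le C\,B_\e\big(\text{(averaged } |D\ve|)+b^{-1}(|h_0|)\big)$ on $B^0_{2R}$. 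Subtracting this boundary bound and using the maximum-principle-type argument (or a boundary version of De Giorgi iteration via the even-extension trick of Remark~\ref{remark:evenext}), I will propagate the bound into the interior and conclude. Throughout, I will use the scaling of Remark~\ref{rem:scaling} to reduce to $R_0=1$ (noting that the boundary datum scales as $h_R(y')=h(Ry')$, so a constant $h_0$ is unchanged), and the estimate \eqref{temp:Dvebddd} to guarantee all quantities are finite so the manipulations are legitimate.

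\textbf{Stage one: the tangential gradient.} For $k=1,\dots,n-1$, equation \eqref{neu:Dkve} shows that $D_k\ve$ solves the homogeneous equation $-\mathrm{div}(\nabla_\xi\Ae(D\ve)\,D(D_k\ve))=0$ in $B^+_{2R}$ with \emph{no boundary term}. I will run a weighted Moser iteration on $w=\max\{\mathrm{M}^{\,t}-D_k\ve\,,\,0\}$-type quantities in the spirit of the second proof of Lemma~\ref{lem:1alt}; but since here I want an $L^\infty$ bound rather than an alternative, I instead test \eqref{neu:Dkve} with $\phi^2\,g(D_k\ve)$ for $g(t)=|t|^{2q}t$ (or truncations thereof), use the coercivity and growth \eqref{coer:Ae} to replace $\nabla_\xi\Ae(D\ve)$ by $a_\e(|D\ve|)\,\mathrm{Id}$ up to constants, absorb $a_\e(|D\ve|)$ into the measure $d\mu$ as done in \eqref{mosa:1}, apply Sobolev on half-balls \eqref{half:sobol} (legitimate because $D_k\ve$ need not vanish on $B^0$, so I use the even-extension Sobolev inequality, not the zero-trace one), and iterate over dyadic radii. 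The outcome is $\sup_{B^+_{r}}|D_k\ve|\le C\,\mint_{B^+_{2R}}|D\ve|\,dx$ with $C$ independent of $\e$. Summing over $k=1,\dots,n-1$ gives the tangential bound. (Alternatively, and perhaps more cleanly, I can note that $D_k\ve$ belongs to the boundary De Giorgi class via Remark~\ref{remark:evenext} and quote the $L^\infty$-bound of \cite[Section~7.2]{G03} applied to the even extension — this avoids redoing the Moser machinery.)

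\textbf{Stage two: the full gradient via the subsolution property.} With the tangential bound in hand, trace theory gives $|D'\ve|\le C\,\mint_{B^+_{2R}}|D\ve|\,dx$ a.e.\ on $B^0_{2R}$, and then \eqref{trbd:neu} yields $|D\ve|\le C\,\mint_{B^+_{2R}}|D\ve|\,dx+C\,b^{-1}(|h_0|)=:\Theta$ a.e.\ on $B^0_{2R}$ (for $\e$ small). Hence $V_\e=B_\e(|D\ve|)\le C\,B_\e(\Theta)$ on $B^0_{2R}$, using \eqref{B2t} and Remark~\ref{remark:importante}. Now the function $\bar V_\e:=(V_\e-C\,B_\e(\Theta))_+$ is a nonnegative subsolution of the uniformly elliptic equation that \emph{vanishes on} $B^0_{2R}$, so I can legitimately use test functions with full support in $B_{2R}$ — or, equivalently, extend $\bar V_\e$ by zero across $B^0_{2R}$ and observe it is a subsolution in the whole ball $B_{2R}$ for the (bounded, uniformly elliptic) even-reflected coefficient matrix. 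Applying the local boundedness estimate for subsolutions (the weak Harnack / De Giorgi $L^\infty$-bound, as in \eqref{Linf11}) to $\bar V_\e$ on $B_{2R}$, then using $\int_{B^+_{2R}}V_\e\,dx\lesssim b_\e(\sup|D\ve|)\int|D\ve|\,dx$ and Young's inequality \eqref{young1} with \eqref{wBbt} exactly as in \eqref{Linf111}, followed by the absorption Lemma~\ref{lemma:uss} with $Z(t)=B_\e(\|D\ve\|_{L^\infty(B^+_t)})$, gives $B_\e(\sup_{B^+_{R_0}}|D\ve|)\le C\,B_\e\big(\mint_{B^+_{2R_0}}|D\ve|\,dx+b^{-1}(|h_0|)\big)$. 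Applying $B_\e^{-1}$ and \eqref{B2t} (with Remark~\ref{remark:importante}) yields \eqref{bbb:neu}.

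\textbf{Main obstacle.} The delicate point is the boundary handling in stage two: the subsolution inequality \eqref{new:Bernstein} is only valid for test functions vanishing on the flat boundary, so I cannot invoke the standard local maximum principle near $B^0_{2R}$ without an extra argument. The resolution — subtracting the explicit boundary bound $C\,B_\e(\Theta)$ (which is an $\e$-dependent but harmless constant) to obtain a function that genuinely vanishes on $B^0_{2R}$, and then reflecting evenly so that the problem becomes an honest interior subsolution problem in $B_{2R}$ with bounded measurable uniformly elliptic coefficients — is the crux, and one must check carefully that the even reflection of $\mathbb A_\e(x)$ preserves ellipticity constants (it does, componentwise, since only the mixed $x_n$-derivative components flip sign, which does not affect $\mathbb A_\e\eta\cdot\eta$ after the reflection is set up correctly). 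A secondary technical nuisance is ensuring uniformity in $\e$ of all constants; this is guaranteed at each step by Remark~\ref{remark:importante}, but it must be invoked explicitly wherever $B_\e$, $b_\e$, $\widetilde B_\e$, or $\psi_\e$ appears.
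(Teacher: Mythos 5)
Your overall architecture coincides with the paper's: bound the tangential gradient first, transfer it to the flat boundary via \eqref{trbd:neu} to get a boundary bound for $V_\e=B_\e(|D\ve|)$, then run a level-set/De Giorgi argument on the subsolution $V_\e$ with levels above the boundary bound so that the test functions vanish on $B^0_{2R}$, and finish with the Young/absorption interpolation of Lemma~\ref{lemma:uss}. Stage two is essentially the paper's proof (the paper derives the Caccioppoli inequality \eqref{la:riuso} for $(V_\e-\ell)_+$ with $\ell$ above the boundary bound and invokes Remark~\ref{remark:evenext}, which is the same device as your ``subtract the boundary constant and reflect'').

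The genuine gap is in stage one. First, the ``cleaner alternative'' of quoting the De Giorgi class $L^\infty$-bound for $D_k\ve$ via even extension cannot give an $\e$-uniform constant: viewed as a bounded measurable coefficient matrix, $\nabla_\xi\Ae(D\ve)$ has ellipticity ratio of order $\e^{-2}$ by \eqref{nonzero:Ae}, so this route only reproduces the qualitative, $\e$-dependent bound \eqref{tmp:neubdd} that the paper uses merely to justify later computations. Second, your quantitative sketch (test with $\phi^2 g(D_k\ve)$, $g(t)=|t|^{2q}t$, and ``absorb $a_\e(|D\ve|)$ into the measure as in \eqref{mosa:1}'') does not go through: in \eqref{mosa:1} the weight $a_\e(|D\ve|)$ cancels only because $|D\ve|\approx \mathrm M$ on the support of the test function, a nondegeneracy that is unavailable here, and in general ($i_a<0$) one cannot compare $a_\e(|D\ve|)$ with $a_\e(|D_k\ve|)$ to feed the left-hand side into the Sobolev inequality. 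The paper's resolution is the specific weighted test function $\psi_\e(|D'\ve|)^q\,D_k\ve\,|D\ve|^2/|D'\ve|^2\,\phi^2$ built from $\psi_\e(t)=\int_0^t a_\e(s)^{1/2}ds$, which turns the coercivity term into $|D[\psi_\e(|D'\ve|)^{q/2}\psi_\e(|D\ve|)]|^2$ and whose iteration yields \eqref{oo:Dve}, namely $\sup B_\e(|D'\ve|)\lesssim \int B_\e(|D\ve|)\,dx$ — not the bound $\sup|D_k\ve|\lesssim \mint|D\ve|\,dx$ you claim. That stronger form cannot be obtained at stage one, because converting $\int B_\e(|D\ve|)$ into $b_\e(\sup|D\ve|)\int|D\ve|$ requires the supremum of the \emph{full} gradient, which only becomes available after stage two; the conversion to the $L^1$-average must therefore be postponed to the very end, as the paper does. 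With the intermediate estimates kept in their $B_\e$-energy form, the rest of your argument is sound.
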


\begin{proof}
By scaling (see Remark \ref{rem:scaling}), we may assume that $R_0=1$.    Let $\phi\in C^\infty_c(B_{2})$, $0\leq \phi\leq 1$ be a cut-off function, and let $q>q_0$ for $q_0\geq 2$ large enough to be determined later. Let $\psi_\e$ be the function defined by \eqref{def:psi} with $a_\e$ in place of $a$;  we test equation \eqref{neu:Dkve} with \begin{equation*}
        \eta=\psi_\e\big( |D'\ve|\big)^q\,D_k\ve\bigg(\frac{|D\ve|^2}{|D'\ve|^2}\bigg)\,\phi^{2},
    \end{equation*}
     which is admissible thanks to \eqref{temp:Dvebddd} and \eqref{reg:neuve},  and we sum the resulting equations for $k=1,\dots,n-1$, thus obtaining
    \begin{equation}\label{lungo.test}
        \begin{split}
            0= &\,q\sum_{i,j=1}^n\sum_{k=1}^{n-1}\int_{B_{2}^+}\frac{\partial \Ae^i}{\partial \xi_j}(D\ve)\,D_j(D_k\ve)\,D_k\ve\,D_i\psi_\e\big(|D'\ve| \big)\,\psi_\e\big(|D'\ve| \big)^{q-1}\frac{|D\ve|^2}{|D'\ve|^2}\,\phi^2\,dx
            \\
            &-2\sum_{i,j=1}^n\sum_{k=1}^{n-1}\int_{B_{2}^+}\frac{\partial \Ae^i}{\partial \xi_j}(D\ve)\,D_j(D_k\ve)\,D_k\ve\,\psi_\e\big(|D'\ve| \big)^q\,\frac{|D\ve|^2}{|D'\ve|^3}\,D_i|D'\ve|\,\phi^2\,dx
            \\
            &+\sum_{i,j=1}^n\sum_{k=1}^{n-1}\int_{B_{2}^+}\frac{\partial \Ae^i}{\partial \xi_j}(D\ve)\,D_j(D_k\ve)\,D_i(D_k\ve)\,\psi_\e\big(|D'\ve| \big)^q\frac{|D\ve|^2}{|D'\ve|^2}\,\phi^2\,dx
            \\
            &+\sum_{i,j=1}^n\sum_{k=1}^{n-1}\int_{B_{2}^+}\frac{\partial \Ae^i}{\partial \xi_j}(D\ve)\,D_j(D_k\ve)\,D_k\ve\,\psi_\e\big(|D'\ve| \big)^q\frac{D_i|D\ve|^2}{|D'\ve|^2}\,\phi^2\,dx
            \\
            &+2\,\sum_{i,j=1}^n\sum_{k=1}^{n-1}\int_{B_{2}^+}\frac{\partial \Ae^i}{\partial \xi_j}(D\ve)\,D_j(D_k\ve)\,D_k\ve\,\psi_\e\big(|D'\ve| \big)^q\frac{|D\ve|^2}{|D'\ve|^2}\,\phi\,\,D_i\phi\,dx
            \\
            \eqqcolon & (I)+(II)+(III)+(IV)+(V).
        \end{split}
    \end{equation}
Now observe that
\begin{equation}\label{rivative}
    \sum_{k=1}^{n-1} D_j(D_k\ve)\,D_k\ve=D_j|D'\ve|\,|D'\ve|,\quad D_j\psi_\e\big(|D'\ve| \big)=\psi_\e'\big(|D'\ve| \big)\,D_j|D'\ve|,
\end{equation}
that
\begin{equation}
    \big|D\psi_\e\big(|D'\ve|\big)\big|=C(n)\,\psi_\e'(|D'\ve|)\,|D(D'\ve)|,
\end{equation}
and by \eqref{ultima} and Remark \ref{remark:importante}, we have
\begin{equation}\label{ultima1}
    c(s_a)\,\psi_\e(t)\leq\psi'_\e(t)\,t=a_\e(t)^{1/2}t\leq C(i_a)\,\psi_\e(t).
\end{equation}
 Therefore, by using \eqref{rivative}, \eqref{ultima1} and  \eqref{coer:Ae}, we obtain  
\begin{equation*}
    \begin{split}
        (I)=q\,\sum_{i,j=1}^n\int_{B_{2}^+}\frac{\partial \Ae^i}{\partial \xi_j}&(D\ve)\,D_j|D'\ve|\,\psi_\e'\big(|D\ve| \big)|D'\ve|\,D_i\psi_\e\big( |D'\ve|\big)
        \\
        &\times\psi_\e\big( |D'\ve|\big)^{q-1}\frac{|D\ve|^2}{|D'\ve|^2\,\psi_\e'(D\ve)}\,\phi^2\,dx
        \\
       =q\,\sum_{i,j=1}^n\int_{B_{2}^+}\frac{\partial \Ae^i}{\partial \xi_j}&(D\ve)\,D_j\psi_\e\big(|D'\ve|\big)\,D_i\psi_\e\big( |D'\ve|\big)
       \\
       &\times\psi_\e\big( |D'\ve|\big)^{q-1}\frac{|D\ve|^2}{|D'\ve|\,\psi_\e'(|D'\ve|)}\,\phi^2\,dx
       \\
       \geq  c_0\,q\,\int_{B^+_{2}} a_\e\big(|D&\ve| \big)\,|D\ve|^2\,\Big|D\big[\psi_\e\big(|D'\ve|\big)\big]\Big|^2\,\psi_\e\big(|D'\ve|\big)^{q-2}\,\phi^2\,dx,
    \end{split}
\end{equation*}
with $c_0=c_0(n,\l,\L,i_a,s_a)\in (0,1)$. Then, by \eqref{rivative}, \eqref{coer:Ae} and \eqref{ultima1}, we find
\begin{equation*}
    \begin{split}
        |(II)|=&\,2\,\bigg|\sum_{i,j=1}^n\int_{B^+_{2}} \frac{\partial \Ae^i}{\partial \xi_j}(D\ve)\,D_j|D'\ve|\,D_i|D'\ve|\,\psi_\e'\big(|D'\ve| \big)^2
        \\
         &\hspace{5cm}\times\psi_\e\big(|D'\ve| \big)^q\frac{|D\ve|^2}{|D'\ve|^2\psi_\e'\big(|D'\ve| \big)^2}\,\phi^2\,dx\bigg|
        \\
        \leq &\, C_0\int_{B^+_{2}}a_\e\big(|D\ve| \big)\,|D\ve|^2\,\big|D\big[\psi_\e\big(|D'\ve| \big)\big]\big|^2\frac{\psi_\e\big(|D'\ve| \big)^q}{|D'\ve|^2\,\psi_\e'\big(|D'\ve| \big)^2 }\,\phi^2\,dx
        \\
        \leq &\, C_1\int_{B^+_{2}}a_\e\big(|D\ve| \big)\,|D\ve|^2\,\big|D\big[\psi_\e\big(|D'\ve| \big)\big]\big|^2\psi_\e\big(|D'\ve| \big)^{q-2}\,\phi^2\,dx\,,
    \end{split}
\end{equation*}
with $C_0,C_1=C_0,C_1(n,\l,\L,i_a,s_a)$. Next, by using \eqref{coer:Ae} and \eqref{control:D2ve}, we get
\begin{equation*}
    \begin{split}
        (III)\geq &\,c_2\,\int_{B_{2}^+}a_\e\big(|D\ve| \big)\,|D(D'\ve)|^2\,\psi_\e\big(|D'\ve|\big)^q\,\frac{|D\ve|^2}{|D'\ve|^2}\,\phi^2\,dx
        \\
        \geq &\,c_3\int_{B_{2}^+}a_\e\big(|D\ve| \big)\,|D^2\ve|^2\,\psi_\e\big(|D'\ve| \big)^q\,\frac{|D\ve|^2}{|D'\ve|^2}\,\phi^2\,dx
    \end{split}
\end{equation*}
with $c_2,c_3>0$ depending on $n,\l,\L,i_a,s_a$. By \eqref{coer:Ae}, \eqref{rivative}-\eqref{ultima1}, and weighted Young's inequality, we find
\begin{equation*}
    \begin{split}
        |(IV)|\leq &\,C_2\,\int_{B_{2}^+} a_\e\big(|D\ve| \big)\,|D(D'\ve)|\,\psi_\e\big(|D'\ve| \big)^q |D^2\ve|\,\frac{|D\ve|}{|D'\ve|}\,\phi^2\,dx
        \\
        \leq &\,C_3\,\int_{B_{2}^+} a_\e\big(|D\ve| \big)\,|D\ve|\,|D\psi_\e(|D'\ve|)|\,\frac{\psi_\e\big(|D'\ve| \big)^q }{|D'\ve|\,\psi'_\e\big( |D'\ve|\big)}|D^2\ve|\,\phi^2\,dx
        \\
        \leq &\,C_4\,\int_{B_{2}^+} a_\e\big( |D\ve|\big)\,|D\ve|\,|D\psi_\e(|D'\ve|)|\,\psi_\e\big(|D'\ve| \big)^{q-1}|D^2\ve|\,\phi^2\,dx
        \\
        \leq & \,C_5\int_{B_{2}^+}a_\e\big(|D\ve| \big)\,|D\ve|^2\,|D\psi_\e(|D'\ve|)|^2\,\psi_\e\big(|D'\ve| \big)^{q-2}\,\phi^2\,dx
        \\
        &+\frac{c_3}{4}\int_{B_{2}^+}a_\e\big(|D\ve| \big)\,|D^2\ve|^2\,\psi_\e\big(|D'\ve| \big)^q\,\phi^2\,dx
    \end{split}
\end{equation*}
where $C_2,C_3,C_4,C_5\geq 2$ depend on $n,\l,\L,i_a,s_a$. Finally, by \eqref{coer:Ae} and weighted Young's inequality
\begin{equation*}
    \begin{split}
        |(V)|\leq &\,C_6\int_{B_{2}^+} a_\e\big(|D\ve| \big)\,|D^2\ve|\,\psi_\e\big(|D'\ve| \big)^q\,\frac{|D\ve|^2}{|D'\ve|}\,\phi\,|D\phi|\,dx
        \\
        \leq &\,\frac{c_3}{4}\,\int_{B_{2}^+}a_\e\big(|D\ve| \big)\,|D^2\ve|^2\, \psi_\e\big(|D'\ve| \big)^q\frac{|D\ve|^2}{|D'\ve|^2}\,\phi^2\,dx
        \\
        &+C_7\,\int_{B_{2}^+}a_\e\big(|D\ve| \big)\,|D\ve|^2\psi_\e\big(|D'\ve| \big)^q\,|D\phi|^2\,dx.
    \end{split}
\end{equation*}
with $C_6,C_7=C_6,C_7(n,\l,\L,i_a,s_a)>0$. Inserting the five inequalities above into \eqref{lungo.test}, we obtain
\begin{equation*}
\begin{split}
    \Big(c_0\,q-C_1-C_5\Big)&\,\int_{B^+_{2}} a_\e\big(|D\ve| \big)\,|D\ve|^2\,\Big|D\big[\psi_\e\big(|D'\ve|\big)\big]\Big|^2\,\psi_\e\big(|D'\ve|\big)^{q-2}\,\phi^2\,dx
    \\
    &+\frac{c_3}{2}\int_{B_{2}^+}a_\e\big(|D\ve| \big)\,|D^2\ve|^2\,\psi_\e\big(|D'\ve| \big)^q\,\frac{|D\ve|^2}{|D'\ve|^2}\,\phi^2\,dx
    \\
    \leq &\,C_7\,\int_{B_{2}^+}a_\e\big(|D\ve| \big)\,|D\ve|^2\psi_\e\big(|D'\ve| \big)^q\,|D\phi|^2\,dx.
    \end{split}
\end{equation*}
Thereby choosing $q\geq q_0:=2\frac{C_1+C_5}{c_0}>2$, and using \eqref{ultima1} and $\psi_\e(|D\ve|)\geq \psi_\e(|D'\ve|)$ by the monotonicity of $\psi_\e$, the above expression implies
\begin{equation}\label{zazan}
    \int_{B_{2}^+}\bigg| D\Big[\psi_\e\big(|D'\ve| \big)^{q/2}\psi_\e\big(|D\ve| \big)\,\phi  \Big]\bigg|^2\,dx\leq C\,q\, \int_{B_{2}^+}\psi_\e\big(|D'\ve| \big)^{q}\psi_\e\big(|D\ve| \big)^2\,|D\phi|^2\,dx
\end{equation}
with $C=C(n,\l,\L,i_a,s_a)>0$. 
By applying  Sobolev inequality \eqref{half:sobol} to \eqref{zazan} yields
\begin{equation}\label{moser1}
    \begin{split}
        \bigg(\int_{B_2^+} \psi_\e&\big(|D'\ve| \big)^{\kappa(q+2)}\left(\frac{\psi_\e\big(|D\ve| \big)}{\psi_\e\big(|D'\ve| \big)}\right)^2\,\phi^{2\kappa}\,dx\bigg)^{1/\kappa}
        \\
        &\leq \bigg(\int_{B_2^+} \psi_\e\big(|D'\ve| \big)^{\kappa(q+2)}\left(\frac{\psi_\e\big(|D\ve| \big)}{\psi_\e\big(|D'\ve| \big)}\right)^{2\kappa}\,\phi^{2\kappa}\,dx \bigg)^{1/\kappa}
        \\
        &\leq C\,q\, \int_{B_{2}^+}\psi_\e\big(|D'\ve| \big)^{q+2}\left(\frac{\psi_\e\big(|D\ve| \big)}{\psi_\e\big(|D'\ve| \big)}\right)^2\,|D\phi|^2\,dx.
    \end{split}
\end{equation}
where $\kappa>1$ is defined by \eqref{sob:kappa}, and in the first inequality we used the monotonicity of $\psi_\e(t)$ and that $\kappa\geq 1$. We are in the setting of Moser's iteration: fix two radii $0<r_1<r_2<2$, and for $m=0,1,2,\dots$ let
\begin{equation*}
    r_m=r_1+\frac{r_2-r_1}{2^m},\quad q_{m+1}+2=\kappa\,(q_m+2),\quad \gamma_m=q_m+2.
\end{equation*}
In Equation \eqref{moser1}, we take 
\[
q=q_m,\quad \phi=\phi_m\,\text{ such that $\phi_m\in C^\infty_c(B_{r_m})$, $\phi_m\equiv 1$ in $B_{r_{m+1}}$, and $|D\phi_m|\leq \frac{C(n)}{(r_m-r_{m+1})}$,}
\]
and if we define the measure 
\begin{equation*}
    d\mu=\left(\frac{\psi_\e\big(|D\ve| \big)}{\psi_\e\big(|D'\ve| \big)}\right)^2\,\chi_{\{D'\ve\neq 0\}}\,dx,
\end{equation*}
we arrive at
\begin{equation*}
    \big\|\psi_\e\big(|D'\ve|\big)\big\|_{L^{\gamma_{m+1}}(B^+_{r_{m+1}};\,d\mu)}\leq \left(\frac{C\,\gamma_m 4^m}{(r_2-r_1)^2} \right)^{1/\gamma_m}\,\big\|\psi_\e\big(|D'\ve|\big)\big\|_{L^{\gamma_{m}}(B^+_{r_{m}};\,d\mu)}.
\end{equation*}
with  $C=C(n,\l,\L,i_a,s_a)$. 

As $\gamma_m=\kappa^m\,\gamma_0$, iterating the above expression yields
\begin{equation}\label{finito:iter}
\begin{split}
    \big\|\psi_\e\big(|D'\ve|\big)&\big\|_{L^{\gamma_{m+1}}(B^+_{r_{m+1}};\,d\mu)}
    \\
    &\leq \left(\frac{C\,\gamma_0}{(r_2-r_1)^2} \right)^{\textstyle\frac{1}{\gamma_0}\sum_{m=0}^\infty \frac{1}{\kappa^m}}\,(4\kappa)^{\textstyle\frac{1}{\gamma_0}\sum_{m=0}^\infty \frac{m}{\kappa^m}}\big\|\psi_\e(|D'\ve|)\big\|_{L^{\gamma_0}(B^+_{r_2};\,d\mu)}
    \\
    &\leq \frac{C_1}{(r_2-r_1)^{n_0/\gamma_0}}\big\|\psi_\e(|D'\ve|)\big\|_{L^{\gamma_0}(B^+_{r_2};\,d\mu)},
    \end{split}
\end{equation}
with $C_1=C_1(n,\l,\L,i_a,s_a)$, where we set 
\begin{equation*}
    n_0=\begin{cases}
        n\quad &\text{if $n\geq 3$}
        \\
        \text{any number $>2$}\quad &\text{if $n= 2$}
    \end{cases}
\end{equation*}
 and in the last inequality we used that $\sum_{m=0}^\infty\frac{1}{\kappa^m}=\frac{n_0}{2}$, $\sum_{m=0}^\infty\frac{m}{\kappa^m}=c(n)<\infty$, and that $\gamma_0=q_0+2=2\frac{C_1+C_5}{c_0}+2$ depends on $n,\l,\L,i_a,s_a$.

Letting $m\to\infty$ in \eqref{finito:iter}, and expanding the resulting expression yields
\begin{equation*}
    \begin{split}
        \sup_{B_{r_1}^+}\psi_\e\big(|D'\ve| \big)&\leq C\,\left(\frac{1}{(r_2-r_1)^{n_0}} \int_{B_{r_2}^+}\psi_\e\big(|D'\ve| \big)^{\gamma_0-2}\,\psi_\e\big(|D\ve| \big)^2dx\right)^{1/\gamma_0}
        \\
        &\leq C\,\sup_{B^+_{r_2}}\psi_\e\big(|D'\ve| \big)^{\frac{\gamma_0-2}{\gamma_0}}\left(\frac{1}{(r_2-r_1)^{n_0}} \int_{B_{r_2}^+}\psi_\e\big(|D\ve| \big)^2dx\right)^{1/\gamma_0}
        \\
         &\leq \frac{1}{2}\,\sup_{B^+_{r_2}}\psi_\e\big(|D'\ve| \big)+C'\left(\frac{1}{(r_2-r_1)^{n_0}} \int_{B_{r_2}^+}\psi_\e\big(|D\ve| \big)^2dx\right)^{1/2}.
    \end{split}
\end{equation*}
for all $0<r_1<r_2<2$, where in the last estimate we used weighted Young's inequality with exponents $(\frac{\gamma_0}{\gamma_0-2},\frac{\gamma_0}{2})$. Therefore,  using the interpolation Lemma \ref{lemma:uss}, squaring the resulting expression and recalling \eqref{ultima1} and \eqref{Bcomeb} (with Remark \ref{remark:importante} in mind), we obtain
\begin{equation}\label{oo:Dve}
    \sup_{B^+_{r_1}}B_\e\big(|D'\ve| \big)\leq \frac{C}{(r_2-r_1)^{n_0}}\,\int_{B_{r_2}^+} B_\e\big(|D\ve| \big)\,dx\,.
\end{equation}
for every $0<r_1<r_2<2$, where $C=C(n,\l,\L,i_a,s_a)>0$.
\vspace{0.2cm}

Next we observe that, owing to \eqref{oo:Dve} and \eqref{trbd:neu} (see also \eqref{unaltro}), the following boundary condition holds:
\begin{equation}\label{kkk}
B_\e\big(|D\ve| \big)\leq \frac{C}{(r_2-r_1)^{n_0}}\,\int_{B_{r_2}^+} B_\e\big(|D\ve| \big)\,dx+C\,\widetilde{B}_\e\big(|h_0| \big)\quad\text{a.e. on $B^0_{\frac{r_1+r_2}{2}}$,}
\end{equation}
for every $0<r_1<r_2<2$.

Then let $s,t$ be such that $r_1<s<t<\frac{r_1+r_2}{2}$, and let $\phi\in C^\infty_c(B_t)$, $\phi\equiv 1$ on $B_s$, $|D\phi|\leq C(n)/(t-s)$. Let $V_\e=B_\e\big(|D\ve|\big)$, so that, since $B_\e\in C^\infty\big([0,\infty)\big)$ by \eqref{ae:nonzero}$_1$ and \eqref{def:bBe}, thanks to \eqref{reg:neuve}, \eqref{temp:Dvebddd} and the chain rule, we have $V_\e\in W^{1,2}(B_r^+)$ for every $0<r<2$.

Thus, via a density argument, \eqref{new:Bernstein} remains valid for all test functions $\varphi\in W^{1,2}_c(B^+_{2})$ such that $\varphi\equiv0$ on $B^0_{2}$.

Henceforth, thanks to \eqref{kkk}, we may take $\varphi=(V_\e-\ell)_+\,\phi^2$ in \eqref{new:Bernstein} for all $\phi\in C^\infty_c(B_{\frac{r_2+r_1}{2}})$, and for all $\ell>0$ such that
\begin{equation}\label{neu:ell}
    \ell>\frac{C}{(r_2-r_1)^{n_0}}\,\int_{B_{r_2}^+} B_\e\big(|D\ve| \big)\,dx+C\,\widetilde{B}_\e\big(|h_0| \big)
\end{equation} 

Using \eqref{matrix:Aecgr}, $DV_\e=D(V_{\e}-\ell)_+$ on $\{(V_{\e}-\ell)_+\neq 0\}$ and weighted Young's inequality, we get
\begin{equation*}
\begin{split}
    c&\,\int_{B_{2}^+} |D(V_{\e}-\ell)_+|^2\,\phi^2\,dx \leq \int_{B_{2}^+} \mathbb{A}_\e(x)\,DV_\e\cdot D(V_{\e}-\ell)_+\phi^2\,dx
    \\
    &= -\int_{B_{2}^+} \mathbb{A}_\e(x)\,D(V_\e-\ell)_+(V_\e-\ell)_+ \,\phi\,D\phi\,dx
    \\
    &\leq \frac{c}{2}\int_{B_{2}^+} |D(V_\e-\ell)_+|^2\,\phi^2\,dx+C\,\big(\sup|D\phi|^2\big)\,\int_{B_{2}^+} |(V_\e-\ell)_+|^2\,dx
    \end{split}
\end{equation*}
with $c,C=c,C(n,\l,\L,i_a,s_a)$. Thus, reabsorbing terms and using the properties of $\phi$, we infer 
\begin{equation}\label{la:riuso}
    \int_{B_{s}^+} |D(V_\e-\ell)_+|^2\,dx \leq \frac{C}{(t-s)^2}\int_{B_{t}^+} |(V_\e-\ell)_+|^2\,dx,
\end{equation}
for all $r_1<s<t<\frac{r_1+r_2}{2}$, and $\ell$ as in \eqref{neu:ell}
By \eqref{la:riuso} and Remark \ref{remark:evenext}, we may apply \cite[Theorem 7.3]{G03} (with $B_R^+$ in place of $Q_R$), and deduce
\begin{equation*}
    \begin{split}\sup_{B^+_{r_1}}B_\e\big(|D\ve|\big) & \leq \frac{C}{(r_2-r_1)^{n_0}}\int_{B^+_{r_2}}B_\e\big(|D\ve|\big)\,dx+C\,\widetilde{B}_\e\big(|h_0| \big),
    \end{split}
\end{equation*}
with $C=C(n,\l,\L,i_a,s_a)>0$.
We now apply the same interpolation argument of\eqref{Linf111}: from the above expression,  Young's inequality \eqref{young1} and \eqref{B2t} (and recalling Remark \ref{remark:importante}), we get
\begin{equation*}
    \begin{split}
        \sup_{B^+_{r_1}}B_\e\big(|D\ve|\big) & \leq \frac{C}{(r_2-r_1)^{n_0}}\sup_{B_{r_2}^+}b_\e\big(|D\ve| \big)\,\int_{B_{r_2}^+}|D\ve|\,dx+C\,\widetilde{B}_\e\big(|h_0| \big)
        \\
        &\leq \frac{1}{2}\sup_{B_{r_2}^+}B_\e\big(|D\ve| \big)+\frac{C'}{(r_2-r_1)^{n_0\max\{s_B,2 \} }}B_\e\bigg(\int_{B_{2}^+}|D\ve|\,dx \bigg)+C\,\widetilde{B}_\e\big(|h_0| \big).
    \end{split}
\end{equation*}
for all $0<r_1<r_2<2$. Thereby using the interpolation  Lemma \ref{lemma:uss}, taking $B_\e^{-1}$ to both sides of the resulting equation and recalling \eqref{triangle}, \eqref{come:BwB}, \eqref{be:unif} and Remark \ref{remark:importante}, we finally obtain
\begin{equation*}
    \sup_{B^+_{1}}|D\ve|\leq C\,\int_{B^+_2}|D\ve|\,dx+C\,b^{-1}\big(|h_0| \big),
\end{equation*}
for $0<\e<\e_0$ small enough ($\e_0$ depending only on $b(\cdot)$ and an upper bound on $|h_0|$), and with $C=C(n,\l,\L,i_a,s_a)>0$, that is our thesis.
\end{proof}

\subsection{H\"older continuity of \texorpdfstring{$D\ve$}{D}}
We now move onto the proof of H\"older continuity of $D\ve$. To this end, we introduce some  notation. For $0<r<2R$, we set
\begin{equation}\label{maxplus}
    M^+(r)=\max_{k=1,\dots,n}\sup_{B_r^+} |D_k\ve|,\quad  T^+(r)=\max_{k=1,\dots,n-1}\sup_{B_r^+} |D_k\ve|,
\end{equation}
that is the supremum of the gradient and of the tangential gradient of $\ve$, respectively.
\vspace{0.2cm}

We start by showing that a slightly modified version of the fundamental alternative holds true for the tangential derivatives. More precisely, we have the following lemma.

\begin{lemma}[The fundamental alternative for $D'\ve$]\label{lem:alttang}
    Let $\ve \in W^{1,B_\e}(B_{2R_0}^+)$ be a weak solution to \eqref{homve:neu}, and let $0<R\leq R_0/2$. Then for every $\tau\in (0,1)$, there exists $\eta_{\tau}=\eta_{\tau}(n,\l,\L,i_a,s_a,\tau)\in (0,1)$ such that
    \begin{equation}\label{alt:tang}
        \text{either }\, T^+(\tau R)\geq M^+(2R)/4\quad \text{or}\quad T^+(\tau R)\leq \eta_\tau\,M^+(2R).
    \end{equation}
\end{lemma}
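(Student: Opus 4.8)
The idea is to exploit the fact that the tangential derivatives $D_k\ve$, $k=1,\dots,n-1$, satisfy the ``clean'' equation \eqref{neu:Dkve}, i.e. $-\mathrm{div}\big(\nabla_\xi\Ae(D\ve)\,D(D_k\ve)\big)=0$ weakly in $B_{2R}^+$ with \emph{no} boundary contribution. This is structurally identical to the interior equation \eqref{eq:classic}, the only difference being that we work in the half-ball $B_{2R}^+$ and that the coefficient matrix $\nabla_\xi\Ae(D\ve)$ is still controlled by $a_\e(|D\ve|)\,\mathrm{Id}$ via \eqref{coer:Ae}. Therefore I would essentially rerun the proof of the fundamental alternative, Theorem~\ref{thm:alternative} (together with Lemmas~\ref{lemma:levquadratic}--\ref{lemma:altfin}), but now applied \emph{only} to the tangential components, and with the balls $B_{\vrho}$ replaced everywhere by half-balls $B_{\vrho}^+$. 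The reflection device of Remark~\ref{remark:evenext} is the natural tool: since $D_k\ve$ (for $k<n$) solves a uniformly elliptic equation in $B_{2R}^+$ with a Neumann-type/no-flux condition on $B_{2R}^0$, its even extension across $\{x_n=0\}$ (or more precisely the even reflection of the relevant De Giorgi quantities) satisfies the corresponding inequalities in the full ball, so that De~Giorgi's level-set lemma (Lemma~\ref{lemma:levels}, which is already stated in the half-ball version) and the discrete isoperimetric inequality apply verbatim.

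The concrete steps, mirroring Section~\ref{sec:int0}, are: (i) establish the half-ball analogue of the De~Giorgi-type inequality Lemma~\ref{lemma:levquadratic}, namely for $\kappa<\ell$ with $\gamma\,M^+(2R)\le\kappa<\ell\le M^+(2R)$ and $k=1,\dots,n-1$,
\begin{equation*}
\int_{\{\kappa\le D_k\ve<\ell\}\cap B_{r_1}^+}|D(D_k\ve)|^2\,dx\le C_\gamma\,\frac{(M^+(2R))^2}{(r_2-r_1)^2}\,\big|\{D_k\ve<\ell\}\cap B_{r_2}^+\big|,
\end{equation*}
and the companion superlevel estimate; the proof is identical to Lemma~\ref{lemma:levquadratic}, testing \eqref{neu:Dkve} with $g(D_k\ve)\phi^2$ and using \eqref{ultra:utile} to compare $a_\e(|D\ve|)$ with $a_\e(M^+(2R))$ on the set where $D_k\ve\ge\gamma M^+(2R)$ (note here $|D\ve|\le M^+(2R)$ by \eqref{maxplus}). (ii) Run the first-alternative De~Giorgi iteration exactly as in Lemma~\ref{lem:1alt}, using Lemma~\ref{lemma:levels} on half-balls and Lemma~\ref{lem:hyp}: there is a universal $\mu_0\in(0,2^{-n-1})$ so that if $|\{D_k\ve<M^+(2R)/2\}\cap B_{2R}^+|\le\mu_0|B_{2R}^+|$ (or the symmetric condition) then $|D_k\ve|\ge M^+(2R)/4$ in $B_R^+$, hence $T^+(\tau R)\ge T^+(R)\ge M^+(2R)/4$ for any $\tau\in(0,1)$. (iii) In the complementary case, i.e. when for every $k=1,\dots,n-1$ both sets $\{D_k\ve<M^+(2R)/2\}\cap B_{2R}^+$ and $\{D_k\ve>-M^+(2R)/2\}\cap B_{2R}^+$ exceed $\mu_0|B_{2R}^+|$, apply Remark~\ref{remark:nu0} and Lemmas~\ref{lemma:alt3small}--\ref{lemma:altfin} (again on half-balls, via the reflection of Remark~\ref{remark:evenext}) to get $-\eta_0 M^+(2R)\le D_k\ve\le\eta_0 M^+(2R)$ in $B_R^+$ for all $k<n$, with $\eta_0=\eta_0(n,\l,\L,i_a,s_a)$; thus $T^+(R)\le\eta_0 M^+(2R)$, and a fortiori $T^+(\tau R)\le\eta_0 M^+(2R)$. (iv) Finally, combine: if the first-alternative hypothesis fails for \emph{some} $k<n$ we are in case (ii); otherwise we are in case (iii). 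Setting $\eta_\tau=\max\{\eta_0,\,\text{(a trivial bound if }\tau\text{ forces nothing new)}\}$ — in fact $\eta_\tau\equiv\eta_0$ works since shrinking to $\tau R\le R$ only helps — yields \eqref{alt:tang}. The dependence of $\eta_\tau$ on $\tau$ is only formal; the real content is $\eta_0(n,\l,\L,i_a,s_a)$.

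The main obstacle is purely bookkeeping: one must check that in the half-ball setting the test functions used in the iterations ($g(D_k\ve)\phi^2$ with $\phi$ supported in $B_{r_2}$, not vanishing on $B_{r_2}^0$) are admissible in the weak formulation \eqref{neu:Dkve}, which they are precisely because \eqref{neu:Dkve} holds for all $\eta\in W^{1,2}_c(B_{2R})$ (test functions need not vanish on the flat boundary). One also needs $D_k\ve\in W^{1,2}_{loc}(B_{2R}^+)$ up to the flat boundary, which is the content of \eqref{reg:neuve} and \eqref{W22:neu}, so all computations are justified. Beyond that, every estimate is a line-by-line transcription of the interior arguments of Section~\ref{sec:int0} with $B_\vrho\rightsquigarrow B_\vrho^+$ and the interior De~Giorgi lemma replaced by its already-stated half-ball version, so I would state the lemma and indicate that its proof follows \emph{mutatis mutandis} from Theorem~\ref{thm:alternative} and Lemmas~\ref{lemma:levquadratic}--\ref{lemma:altfin}, pointing to Remark~\ref{remark:evenext} for the reduction to full balls.
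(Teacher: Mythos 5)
Your proposal is correct and follows essentially the same route as the paper: one derives the half-ball De Giorgi-type inequalities for the tangential derivatives from \eqref{neu:Dkve} and then reruns the interior fundamental alternative (Lemmas \ref{lem:1alt}, \ref{lemma:alt3small}--\ref{lemma:altfin}) with $B_\vrho$ replaced by $B_\vrho^+$. The only difference is that the paper modifies the radii in the iterations so as to land directly in $B^+_{\tau R}$ (which is why its constants $\mu_\tau,\eta_\tau$ depend on $\tau$), whereas you iterate down to $B^+_R$ with universal constants and then restrict to the smaller half-ball; both yield the lemma, but note that your intermediate inequality $T^+(\tau R)\ge T^+(R)$ is backwards (the supremum over the smaller half-ball is not larger) --- the correct justification is that the \emph{pointwise} bound $|D_k\ve|\ge M^+(2R)/4$ on $B^+_R$ restricts to $B^+_{\tau R}\subset B^+_R$, while in the second alternative $T^+(\tau R)\le T^+(R)$ is the right direction.
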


\begin{proof}
    Let $\gamma \in (0,1)$ be a fixed constant. First we observe that, starting from \eqref{neu:Dkve} (in place of the weak formulation of \eqref{eq:classic}), we may repeat verbatim the proof of Lemmas \ref{lemma:parti}- \ref{lemma:levquadratic}, (replacing $B_r$ with $B_r^+$, and $M(2R)$ with $M^+(2R)$) and find that for every $\kappa,\ell$ such that
\begin{equation*}
    \gamma/8\, M^+(2R)\leq \kappa<\ell\leq M^+(2R),
\end{equation*}
the following integral inequalities hold:
\begin{equation}\label{oaa:neu}
        \int\limits_{\{\kappa\leq D_k\ve<\ell \}\cap B_{r_1}^+}|D(D_k\ve)|^2\,dx\leq C_\gamma\,\frac{\big(M^+(2R)\big)^2}{(r_2-r_1)^2}\big|\{D_k\ve<\ell\}\cap B_{r_2}^+\big|\,,
\end{equation}
and
\begin{equation}\label{oaa:neu1}
    \int\limits_{\{D_k\ve>\kappa\}\cap B_{r_1}^+  } |D(D_k\ve)|^2\,dx\leq C_\gamma\,\frac{(M^+(2R)-\kappa)^2}{(r_2-r_1)^2}\,|\{D_k\ve>\kappa\}\cap B_{r_2}^+|
\end{equation}
    for every $0<r_1<r_2\leq 2R$, and for all $k=1,\dots,n-1$. 
    
    Starting from these inequalities, the proof of \eqref{alt:tang} is almost identical to that of Lemma \ref{lem:1alt} and Lemmas \ref{lemma:alt3small}-\ref{lemma:altfin}, save that we have to suitably modify the radii in the iteration, and use \eqref{oaa:neu}-\eqref{oaa:neu1} in place of \eqref{levels:eq}-\eqref{oiii}, respectively.
Specifically, regarding the first alternative, we follow the iterative scheme of Lemma \ref{lem:1alt}, i.e., Equations \eqref{start:iter}-\eqref{fine:iter}, with the following modifications: we work with $B^+_R, M^+(2R)$ in place of $B_R, M(2R)$, respectively, we use \eqref{oaa:neu} in place of \eqref{levels:eq},  and we consider the sequence of radii \begin{equation*}
    R_m=\tau\,R+(1-\tau)\,\frac{R}{2^m},\quad m=0,1,2,\dots.
\end{equation*}
With this choice, since along the estimates we also use the inequalities $\tau R \leq R_{m} \leq 2R$, the constants arising in the iteration will depend on $\tau$ as well.
In the end, we obtain a parameter $\mu_\tau=\mu_\tau(n,\l,\L,i_a,s_a,\tau)\in (0,2^{-n-1})$ small enough such that, if
\begin{equation}\label{fails}
\begin{split}
    |\{D_k\ve<\mathrm{M}^+(2R)/2 \}&\cap B^+_{2R}|\leq \mu_\tau\,|B^+_{2R}|,
    \\
    &\text{or}
    \\
    |\{D_k\ve>-\mathrm{M}^+(2R)/2 &\}\cap B^+_{2R}|\leq \mu_\tau\,|B^+_{2R}|
\end{split}
\end{equation}
    for some $k=1,\dots,n-1$, then 
    \begin{equation*}
        |D_k\ve|\geq \mathrm{M}^+(2R)/4 \quad\text{in $B^+_{\tau R}$,}
    \end{equation*}
    so the first alternative in \eqref{alt:tang} is true.
    
    If, on the other hand, \eqref{fails} fails for all $k=1,\dots,n-1$, then we follow the arguments of Lemmas \ref{lemma:alt3small}-\ref{lemma:altfin}: specifically, as in Remark \ref{remark:nu0}, we find 
    $\nu_\tau=\nu_\tau(n,\l,\L,i_a,s_a,\tau)\in (1/2,1)$ of the form
\[
\nu_\tau=\Big(\frac{1-\mu_\tau}{1-\mu_\tau/2}\Big)^{1/n}
\]
 such that
\begin{equation*}
\begin{split}
    |\{D_k\ve>M^+(2R)/2\}\cap &\,B^+_{2\nu_\tau R}|\leq \Big(1-\frac{\mu_\tau}{2} \Big)\,|B^+_{2\nu_\tau R}|
\\
&\text{and}
\\
|\{D_k\ve<-M^+(2R)/2\}&\cap B^+_{2\nu_\tau R}|\leq \Big(1-\frac{\mu_\tau}{2} \Big)\,|B^+_{2\nu_\tau R}|.
    \end{split}
\end{equation*}
Then we repeat the proof of Lemmas \ref{lemma:alt3small}-\ref{lemma:alt4small}, with $A^+_s=\{D_k\ve>\kappa_s\}\cap B^+_{2\nu_\tau R}$ and using \eqref{oaa:neu1} in place of \eqref{oiii}; by so doing, we find that for every $\theta_0\in (0,1)$, there exists $s_\tau=s_\tau(n,\l,\L,i_a,s_a,\tau,\theta_0)\in \N$ large enough such that
\begin{equation}\label{purtroppo}
\begin{split}
    \Big|\big\{D_k\ve>\Big(1-\frac{1}{2^{s_\tau}} \Big)&\,M^+(2R)\big\}\cap B^+_{2\nu_\tau R} \Big|\leq \theta_0\,|B^+_{2\nu_\tau R}|
    \\
&\text{and}
\\
\Big|\big\{D_k\ve<-\Big(1-\frac{1}{2^{s_\tau}}& \Big)\,M^+(2R)\big\}\cap B^+_{2\nu_\tau R} \Big|\leq \theta_0\,|B^+_{2\nu_\tau R}|.
    \end{split}
\end{equation}
Finally, we fix $\theta_\tau=\theta_\tau(n,\l,\L,i_a,s_a,\tau)$, and we repeat the argument of Lemma \ref{lemma:altfin} with radii
\begin{equation*}
    R_m=\tau\,R+(2\nu_\tau R-\tau R)\Big(\frac{1}{2^m} \Big)\quad m=0,1,2,\dots,
\end{equation*}
 so that, by using \eqref{oaa:neu1} and \eqref{purtroppo}, we determine a parameter $\eta_\tau=\eta_\tau(n,\l,\L,i_a,s_a,\tau)\in (0,1)$ such that 
  \begin{equation*}
      |D_k\ve|<\eta_\tau\,\mathrm{M}^+(2R)\quad\text{in $B^+_{\tau R}$}
  \end{equation*}
for all $k=1,\dots,n-1$; this shows the second alternative \eqref{alt:tang} is valid, which ends the proof.
\end{proof}

We now control the oscillation of the tangential gradient $D'\ve$, showing that it can be made arbitrarily small by reducing the size of $B_r^+$. To this end, let us define 
\begin{equation}\label{def:osctang}
    \omega'(r)=\max_{k=1,\dots,n-1}\operatorname*{osc}_{ B_r^+} D_k\ve,
\end{equation}
the oscillation of the tangential gradient $D'\ve=(D_1\ve,\dots, D_{n-1}\ve)$.

\begin{lemma}\label{lemma:oscDive}
    Let $\ve\in W^{1,B_\e}(B^+_{2R_0})$ be a solution to \eqref{homve:neu}, and let $0<R\leq R_0/2$. Then for any $\gamma\in (0,1)$, there exists $\tau_\gamma\in (0,1)$ depending on $n,\l,\L,i_a,s_a$ and $\gamma$ such that
    \begin{equation}\label{osc:Dineu}
       \omega'\big(\tau_\gamma R \big)\leq \frac{\gamma}{2} M^+(2R).
    \end{equation}
\end{lemma}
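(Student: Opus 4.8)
The plan is to derive the oscillation bound \eqref{osc:Dineu} from the fundamental alternative for the tangential derivatives (Lemma~\ref{lem:alttang}) by iterating the alternative on a dyadic family of balls, very much in the spirit of the classical argument which converts the fundamental alternative into H\"older continuity. First I would fix $\gamma \in (0,1)$. The key observation is that the oscillation $\omega'(r)$ of $D'\ve$ can always be bounded by $2\,T^+(r)$, while the two outcomes of \eqref{alt:tang} control $T^+(\tau R)$ in terms of $M^+(2R)$. So the strategy is: start from $B^+_{2R}$, and apply Lemma~\ref{lem:alttang} with a suitable scaling factor $\tau \in (0,1)$ to be chosen; either we land in the second (``good'') alternative $T^+(\tau R)\le \eta_\tau\, M^+(2R)$, and then since $\omega'(\tau R)\le 2\,T^+(\tau R)\le 2\eta_\tau\, M^+(2R)$, it suffices to choose $\tau$ so that $\eta_\tau \le \gamma/4$ and we are done with $\tau_\gamma = \tau$; or we land in the first (``bad'') alternative $T^+(\tau R)\ge M^+(2R)/4$, in which case $|D'\ve|$ is bounded below on $B^+_{\tau R}$, and I would then invoke a sub-case analysis.

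In the bad case $T^+(\tau R)\ge M^+(2R)/4$, I would argue that this forces $|D\ve|$ itself to be comparable to $M^+(2R)$ on a smaller ball, whence the linearized equations \eqref{neu:Dkve}--\eqref{neu:Dnve} become \emph{uniformly elliptic} on that ball: indeed, from $T^+(\tau R)\ge M^+(2R)/4$ and the definitions \eqref{maxplus}, one has $\sup_{B^+_{\tau R}}|D\ve|\ge M^+(2R)/4$, and combining with an $L^\infty$--gradient bound of the type \eqref{bbb:neu} (rescaled and localized), together with \eqref{ultra:utile} and Remark~\ref{remark:importante}, one controls the ratio $a_\e(|D\ve|)/a_\e(M^+(2R))$ from below and above on a suitable sub-ball, say $B^+_{\tau R/C}$. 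Thus $\nabla_\xi \Ae(D\ve)\approx a_\e(M^+(2R))\,\mathrm{Id}$ there. At this point the tangential derivatives $D_k\ve$, $k<n$, solve \eqref{neu:Dkve} which is a uniformly elliptic linear equation with no boundary term (a pure interior equation in the sense of Remark~\ref{remark:evenext}, since the conormal condition does not constrain $D_k\ve$ for $k<n$), so De Giorgi--Nash--Moser theory \cite[Corollary~4.18]{HL11}, applied to the even extensions as in Remark~\ref{remark:evenext}, gives the oscillation decay
\[
\omega'(r)\le C\,\Big(\frac{r}{\tau R}\Big)^{\alpha_1}\,\omega'(\tau R)\le C\,\Big(\frac{r}{\tau R}\Big)^{\alpha_1}\,M^+(2R),\qquad 0<r\le \tau R/C,
\]
with $C,\alpha_1$ depending only on $n,\l,\L,i_a,s_a$. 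Choosing $r = \tau_\gamma R$ with $\tau_\gamma$ small enough depending on $\gamma$ (and on the data) then makes the right-hand side $\le (\gamma/2)\, M^+(2R)$, as required.

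There is one subtlety I would address carefully: the choice of $\tau$ in Lemma~\ref{lem:alttang} and the constant $\eta_\tau$ are interdependent, so I should first pick $\tau$ \emph{small enough} so that $\eta_\tau \le \gamma/4$ in the good case. With that $\tau$ fixed (now depending on $\gamma$ and the data), the bad case yields a further smaller radius $\tau_\gamma R$, and I take the overall $\tau_\gamma$ to be the minimum of the two scales produced. Since all constants along the way ($\eta_\tau$, the De Giorgi exponent $\alpha_1$, the ellipticity ratios from \eqref{ultra:utile}) depend only on $n,\l,\L,i_a,s_a$ and $\gamma$, the final $\tau_\gamma$ has the asserted dependence. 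For the boundary portion $B_r^0$ I do not need anything extra here, because the tangential-derivative equations \eqref{neu:Dkve} hold across the whole half ball and the even extension trick of Remark~\ref{remark:evenext} reduces everything to interior estimates on full balls.

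The main obstacle I anticipate is \emph{cleanly establishing uniform ellipticity on a sub-ball in the bad case}. The inequality $T^+(\tau R)\ge M^+(2R)/4$ only guarantees that $|D\ve|$ is large \emph{somewhere} in $B^+_{\tau R}$, not everywhere; to upgrade this to a pointwise lower bound on a full sub-ball one needs a continuity/Harnack-type propagation argument. Here I would lean on the already-proven gradient boundedness Proposition~\ref{prop:gradneuLinf} together with the structure of the first alternative as used in Lemma~\ref{lem:alttang}: in fact the proof of that lemma, mirroring Lemma~\ref{lem:1alt}, actually delivers the stronger conclusion $|D_k\ve|\ge M^+(2R)/4$ \emph{throughout} $B^+_{\tau R}$ for the relevant index $k$, not merely $\sup_{B^+_{\tau R}}|D_k\ve|\ge M^+(2R)/4$. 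So the pointwise lower bound on $|D\ve|$ on all of $B^+_{\tau R}$ is available directly, and the ellipticity constants follow immediately from \eqref{ultra:utile}, \eqref{coer:Ae} and Remark~\ref{remark:importante}, with no need for any additional propagation argument. Once that point is settled, the rest is a routine application of the linear theory and a bookkeeping of constants.
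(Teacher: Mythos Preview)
Your handling of the ``bad'' first alternative is correct: once some tangential derivative satisfies $|D_k\ve|\ge M^+(2R)/4$ throughout $B^+_{\tau R}$, the coefficient matrix is uniformly elliptic on that half-ball and De~Giorgi--Nash--Moser (via the even extension of Remark~\ref{remark:evenext}) gives the required oscillation decay for $D'\ve$. That part is fine.

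The gap is in the ``good'' case. You write that it suffices to choose $\tau$ so that $\eta_\tau\le \gamma/4$, but Lemma~\ref{lem:alttang} gives you no such freedom: for each $\tau$, the constant $\eta_\tau$ is a fixed number of the form $1-2^{-(s_\tau+1)}$, which is close to $1$, not close to $0$. Shrinking $\tau$ only affects where the alternative lands, not how strong the contraction is. Nor does naive iteration of Lemma~\ref{lem:alttang} rescue the argument: at the next scale the alternative compares $T^+$ with $M^+(\tau R)$ rather than $M^+(2R)$, and since the normal derivative can keep $M^+(\tau R)$ equal to $M^+(2R)$, the bound $T^+\le \eta_\tau M^+(\tau R)$ gives no improvement over the first step.

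What the paper does instead is to condition on the hypothesis $\omega'(\varrho)>(\gamma/2)M^+(2R)$ being violated or not. If it holds at scale $\varrho$, then $T^+(\varrho)\gtrsim_\gamma M^+(2R)$, and one re-runs the fundamental alternative at scale $\varrho$ with $T^+(\varrho)$ (not $M^+(2R)$) as the reference level; this yields that either $\omega'(\varrho/8)\le\delta_\gamma\,\omega'(\varrho)$ (nondegenerate branch, via De~Giorgi--Nash--Moser exactly as you describe) or $T^+(\varrho/8)\le\delta_\gamma\,T^+(\varrho)$ (degenerate branch), with a single $\delta_\gamma\in(0,1)$. One then iterates along $\varrho_m=R/8^m$: either the hypothesis fails at some $m$ and we are done, or at every step one of the two decay inequalities holds, and a pigeonhole argument over $2m_\gamma$ steps (with $\delta_\gamma^{m_\gamma}\le\gamma/4$) forces either $\omega'$ or $T^+$ to become small enough, and in either case $\omega'\le 2T^+$ finishes the job. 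The missing idea in your proposal is precisely this two-track iteration with the pigeonhole step; a single application of the tangential alternative cannot produce the arbitrarily small factor you need.
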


\begin{proof} For clarity of exposition, we divide the proof into a few steps.
\vspace{0.2cm}

\noindent \textit{Step 1.} Assume $\vrho\in (0,2R]$ is such that
\begin{equation}\label{hip:1}
    \omega'(\vrho)> \frac{\gamma}{2} M^+(2R).
\end{equation}
    In particular, by definition of $T^+(\vrho)$ in \eqref{maxplus}, this implies 
    \begin{equation}\label{hip:2}
        T^+(\vrho)>\gamma\, M^+(2R).
    \end{equation}
   Since $T^+(\vrho)\leq  M^+(2R)$, \eqref{hip:2}  together with \eqref{oaa:neu}-\eqref{oaa:neu1} entails
\begin{equation}\label{oaa:neu2}
        \int\limits_{\{\kappa\leq D_k\ve<\ell \}\cap B_{r_1}^+}|D(D_k\ve)|^2\,dx\leq C'_\gamma\,\frac{\big(T^+(\vrho)\big)^2}{(r_2-r_1)^2}\big|\{D_k\ve<\ell\}\cap B_{r_2}^+\big|\,,
\end{equation}
and
\begin{equation}\label{oaa:neu3}
    \int\limits_{\{D_k\ve>\kappa\}\cap B_{r_1}^+  } |D(D_k\ve)|^2\,dx\leq C'_\gamma\,\frac{\big(T^+(\vrho)-\kappa\big)^2}{(r_2-r_1)^2}\,|\{D_k\ve>\kappa\}\cap B_{r_2}^+|,
\end{equation}
   for every $T^+(\vrho)/8\leq \kappa<\ell\leq T^+(\vrho)$, and for every $0<r_1<r_2<2R$, where $C'_\gamma$ depends on $n,\l,\L,i_a,s_a,\gamma$. We now divide the remainder of Step 1 into a number of cases.
\vspace{0.2cm}

\noindent\textit{Case 1. The first alternative.}
Starting from \eqref{oaa:neu2}, we repeat the proof of Lemma \ref{lem:1alt}.
In particular, we reproduce the iterative scheme given by 
\eqref{start:iter}--\eqref{fine:iter}, with the following modifications:
we keep track of the constants that, in view of \eqref{oaa:neu2}, now depend also on \(\gamma\),
we apply Lemma~\ref{lemma:levels} on \(B_\varrho^+\)
and, in place of \eqref{start:iter}, we consider the sequences
   \begin{equation*}
    \kappa_m=\frac{T^+(\vrho)}{4}+\frac{T^+(\vrho)}{2^{m+2}},\quad R_m=\frac{\vrho}{2}+\frac{\vrho}{2^{m+1}},\quad m=0,1,2,\dots.
\end{equation*}

\noindent By so doing, we determine a parameter $\mu_\gamma\in (0,2^{-n-1})$ depending on $n,\l,\L,i_a,s_a,\gamma$ such that, if
\begin{equation*}
\begin{split}
    \big|\big\{D_k\ve<T^+(\vrho)/2\big\}&\cap B^+_\vrho\big|\leq \mu_\gamma\,|B^+_\vrho|
    \\
   & \text{or}
    \\
    \big|\big\{D_k\ve>-T^+(\vrho)/2&\big\}\cap B^+_\vrho\big|\leq \mu_\gamma\,|B^+_\vrho|
    \end{split}
\end{equation*}
hold for some $k=1,\dots,n-1$, then 
\begin{equation*}
   |D_k\ve|\geq \frac{T^+(\vrho)}{4}\quad\text{in $B^+_{\vrho/2}$}.
\end{equation*}

\noindent\textit{Case 2. The second alternative.} In the complementary case, that is if
\begin{equation*}
    \begin{split}
    \big|\big\{D_k\ve<T^+(\vrho)/2\big\}&\cap B^+_\vrho\big|> \mu_\gamma\,|B^+_\vrho|
    \\
   & \text{and}
    \\
    \big|\big\{D_k\ve>-T^+(\vrho)/2&\big\}\cap B^+_\vrho\big|> \mu_\gamma\,|B^+_\vrho|,\quad\text{hold for all $k=1,\dots,n-1$},
    \end{split}
\end{equation*}
we follow the proof of Lemmas \ref{lemma:alt3small}-\ref{lemma:altfin}. First, as in Remark \ref{remark:nu0}, we find a parameter $\nu_\gamma=\nu_\gamma(n,\l,\L,i_a,s_a,\gamma)\in (1/2,1)$ of the form
\[
\nu_\gamma=\bigg(\frac{1-\mu_\gamma}{1-\mu_\gamma/2}\bigg)^{1/n}
\]
such that
\begin{equation*}
\begin{split}
    \big|\{D_k\ve>T^+(\vrho)/2\}&\cap B^+_{\nu_\gamma \vrho} \big|\leq \left(1-\frac{\mu_\gamma}{2}\right)|B^+_{\nu_\gamma\vrho}|
    \\
     & \text{and}
    \\
    \big|\{D_k\ve<-T^+(\vrho)/&2\}\cap B^+_{\nu_\gamma \vrho} \big|\leq \left(1-\frac{\mu_\gamma}{2}\right)|B^+_{\nu_\gamma\vrho}|,
    \end{split}
\end{equation*}
for all $k=1,\dots,n-1$. Then, taking advantage of \eqref{oaa:neu3}, we repeat the proof of Lemma \ref{lemma:alt3small} with sequences 
\begin{equation*}
 \kappa_s=\Big(1-\frac{1}{2^s}\Big)\,T^+(\vrho),   \quad A^+_s=\big\{D_k\ve>\kappa_s\big\}\cap B^+_{\nu_\gamma\,\vrho},
\end{equation*}
and find that for every $\theta_0\in (0,1)$, there exists  $s_\gamma=s_\gamma(n,\l,\L,i_a,s_a,\gamma,\theta_0)\in \N$ large enough such that
\begin{equation*}
\begin{split}
            \bigg|\bigg\{D_k\ve>\left(1-\frac{1}{2^{s_\gamma}} \right)\,T^+(\vrho) \bigg\}&\cap B^+_{\nu_\gamma\vrho} \bigg|\leq \theta_0\, |B^+_{\nu_\gamma\vrho}|\,
            \\
                 & \text{and}
    \\
        \bigg|\bigg\{D_k\ve<-\left(1-\frac{1}{2^{s_\gamma}} \right)\,T^+(\vrho)& \bigg\}\cap B^+_{\nu_\gamma \vrho} \bigg|\leq \theta_0\, |B^+_{\nu_\gamma\vrho}|
            \end{split}
\end{equation*}
for all $k=1,\dots,n-1$. Then, by using \eqref{oaa:neu3}, we follow the iterative scheme of \eqref{azz:in}-\eqref{azz:fin}, with sequences 
\begin{equation*}
      \kappa_m=\left(1-\frac{1}{2^{s_\gamma}} \right)\,T^+(\vrho)+\left( 1-\frac{1}{2^m}\right)\,\frac{\mathrm{T^+(\vrho)}}{2^{s_\gamma+1}},\quad R_m=\frac{\vrho}{2}+(\nu_0\vrho-\vrho/2)\left(\frac{1}{2^m} \right),\quad m=0,1,\dots.
\end{equation*}
We thus find a parameter $\eta_\gamma=(1-\frac{1}{2^{s_\gamma+1}})\in (0,1)$ depending on $n,\l,\L,i_a,s_a,\gamma$ such that
\begin{equation*}
    T^+(\vrho/2)\leq \eta_\gamma\,T^+(\vrho).
\end{equation*}

\noindent\textit{Step 2.} We now combine the two alternatives. When Case 1 occurs we have
\begin{equation*}
    M^+(2R)\geq |D\ve|\geq |D_{k_0}\ve|\geq T^+(\vrho)\stackrel{\eqref{hip:2}}{\geq} \gamma\,M^+(2R)\quad\text{in $B_{\vrho/2}^+$,}
\end{equation*}
for some $k_0=1,\dots,n-1$. It then follows by \eqref{ultra:utile} and Remark \ref{remark:importante}, that
\begin{equation}
    c^\ast_\gamma\leq\frac{a_\e\big(|D\ve| \big)}{a_\e\big(M^+(2R) \big)}\leq C^\ast_\gamma\quad \text{in $B^+_{\vrho/2}$,}
\end{equation}
with $c^\ast_\gamma,C_\gamma^\ast$ depending on $n,\l,\L,i_a,s_a$ and $\gamma$. Therefore, by \eqref{coer:Ae}, equation \eqref{neu:Dkve} is linear, uniformly elliptic in $B_{\vrho/2}^+$, for all $k=1,\dots,n-1$, with ellipticity constants depending on $c_\gamma^*,C_\gamma^*$.  Recalling Remark \ref{remark:evenext}, we may appeal to the De Giorgi-Nash-Moser theory \cite[Equation (7.44)]{G03}, and get
\begin{equation*}
   \omega'(\vrho/8)\leq  \eta^*_{\gamma}\,\omega'(\vrho/2)\leq \eta^*_{\gamma}\,\omega'(\vrho)
\end{equation*}
for some $\eta^\ast_\gamma\in (0,1)$ depending on $n,\l,\L,i_a,s_a,\gamma$.
In the complementary case, i.e.,  whenever Case 2 occurs, we have
\begin{equation*}
    T^+(\vrho/8)\leq \eta_\gamma\,T^+(\vrho)\,.
\end{equation*}
All in all, we have shown that if \eqref{hip:1} occurs, then either
\begin{align}
    \omega'(\vrho/8)&\leq \delta_\gamma\, \omega'(\vrho) \label{again:alt}
    \\
    &\text{or}\nonumber
    \\
        T^+(\vrho/8)&\leq \delta_{\gamma}\,T^+(\vrho),\label{again:alt1}
\end{align}
happen, where we set $\delta_{\gamma}=\max\{\eta_\gamma,\,\eta^\ast_\gamma\}\in (0,1)$.
\vspace{0.2cm}

\noindent\textit{Step 3. Iteration and conclusion.}
For $\gamma$ as in the statement of the lemma, and $\delta_\gamma$ defined in \eqref{again:alt}-\eqref{again:alt1}, we fix $m_\gamma=m_\gamma(n,\l,\L,i_a,s_a,\gamma)>0$ such that
\begin{equation}\label{m:gamma}
    (\delta_\gamma)^{m_\gamma}\leq \gamma/4,
\end{equation}
and we claim that
\begin{equation*}
    \tau_\gamma=\tau_\gamma(n,\l,\L,i_a,s_a,\gamma)\coloneqq 8^{-(2m_\gamma+1)}
\end{equation*}
satisfies \eqref{osc:Dineu}. Let us consider the sequence of radii
\begin{equation*}
    \vrho_m=\frac{R}{8^m},\quad m=1,2,\dots,
\end{equation*}
and we check whether \eqref{hip:1} holds for $\vrho=\vrho_m$, $m=1,2,\dots$. If \eqref{hip:1} fails for some \(\varrho_m\), we stop the iteration; otherwise, we test the next radius \(\varrho_{m+1}\). 

We perform this procedure up to \(2m_\gamma\) times, so only two alternatives are possible:
either 
\begin{equation*}
    \omega'(\vrho_{m_\ast})\leq \frac{\gamma}{2} M^+(2R),\quad\text{for some $m_\ast\leq 2m_\gamma$,}
\end{equation*}
and thus \eqref{osc:Dineu} is verified, since
\[
\omega'(\tau_\gamma R)
= \omega'(\vrho_{2m_\gamma+1})
\le \omega'(\vrho_{m_\ast}).
\]
Otherwise, one has
\begin{equation*}
\omega'(\vrho_m)>\frac{\gamma}{2}\,M^+(2R),
\qquad \text{for all } m=1,2,\dots,2m_\gamma,
\end{equation*}
and we are therefore left to analyze this situation. In this case, by Step~2, for every $m=1,\dots,2m_\gamma$, either
\eqref{again:alt} or \eqref{again:alt1} occurs.  We thus determine two sets of indices
\begin{equation*}
\begin{split}
    &\mathcal{I}_{\omega'}=\big\{m\in \{1,\dots,2m_\gamma\}: \eqref{again:alt}\, \text{occurs for  } \vrho=\vrho_m\big\}
    \\
    &\mathcal{I}_{T^+}=\big\{m\in \{1,\dots,2m_\gamma\}: \eqref{again:alt1}\, \text{occurs for  } \vrho=\vrho_m\big\}.
    \end{split}
\end{equation*}
whose cardinalities satisfy $| \mathcal{I}_{\omega'}|+| \mathcal{I}_{T^+}|=2m_\gamma$. 

Necessarily, either $| \mathcal{I}_{\omega'}|\geq m_\gamma$ or $| \mathcal{I}_{T^+}|\geq m_\gamma$ must happen. Suppose the first case holds, so we may find  indices $j_1>j_2>\dots >j_{m_\gamma}\geq 2m_\gamma$, $j_m\in I_{\omega'}$ for all $m=1,\dots,m_\gamma$. By definition of $I_{\omega'}$ and $\vrho_m$, we have
\begin{equation*}
    \omega'(\vrho_{j_{m+1}})\leq \omega'(\vrho_{j_{m}}/8)\leq \delta_\gamma\,\omega'(\vrho_{j_{m}}),\quad \text{for all }\,m=1,\dots,m_\gamma.
\end{equation*}
Using that $\tau_\gamma R=\vrho_{2m_\gamma+1}\leq \vrho_{j_{m_\gamma+1}}$, iterating the above inequality yields 
\begin{equation*}
\begin{split}
    \omega'(\tau_\gamma R) &\leq \omega'(\vrho_{j_{m_\gamma+1}}) \leq \delta_\gamma\,\omega'(\vrho_{j_{m_\gamma}})\leq \dots\leq (\delta_\gamma)^{m_\gamma}\,\omega'(\vrho_{j_1})
    \\
    &\leq 2\,(\delta_\gamma)^{m_\gamma}\,M^+(2R)\stackrel{\eqref{m:gamma}}{\leq }\frac{\gamma}{2}\,M^+(2R),
    \end{split}
\end{equation*}
that is \eqref{osc:Dineu}. In the other case, that is if $|\mathcal{I}_{T^+}|\geq m_\gamma$, we may find a sequence $i_1>i_2>\dots i_{m_\gamma}\geq 2m_\gamma$ such that $i_m\in \mathcal{I}_{T^+}$ for all $m=1,\dots,m_\gamma$. So, by definition of $\mathcal{I}_{T^+}$ and $\vrho_m$, we have
\begin{equation*}
    T^+(\vrho_{i_{m+1}})\leq T^+(\vrho_{i_{m}}/8)\leq \delta_\gamma\,T^+(\vrho_{i_{m}}),\quad \text{for all }\,m=1,\dots,m_\gamma.
\end{equation*}
Thereby iterating and using that $\tau_\gamma R\leq \vrho_{i_{m_\gamma+1}}$, we obtain
\begin{equation*}
\begin{split}
    \omega'(\tau_\gamma R)&\leq \omega'(\vrho_{i_{m_\gamma+1}})\leq 2\, T^+(\vrho_{i_{m_\gamma+1}})\leq 2\,\delta_\gamma\, T^+(\vrho_{i_{m_\gamma}})
    \\
    &\leq \dots\leq 2\,(\delta_\gamma)^{m_\gamma}T^+(\vrho_{i_1})\stackrel{\eqref{m:gamma}}{\leq }\frac{\gamma}{2}\,M^+(2R),
    \end{split}
\end{equation*}
and \eqref{osc:Dineu} is proven in this case as well, thus concluding the proof. 
\end{proof}

We now turn onto the H\"older continuity of $D_n\ve$. To this end, we first derive some integral inequalities as in  Lemma \ref{lemma:parti}.

\begin{lemma}\label{lemmaNEU:DG}
    Let $\ve\in W^{1,2}(B^+_{2R_0})$ be solution \eqref{homve:neu}, and let $0<R\leq R_0$. Then, for every  $\phi\in C^\infty_c(B_{2R})$ and  for every Lipschitz function $g:\R\to \R$ such that $g'(t)\geq 0$ for a.e. $t\in \R$ and
    \begin{equation}\label{g:const}
        g'(D_n\ve)\equiv 0\quad \text{on $B_{2R}^0\cap \mathrm{spt}\,\phi$,}
    \end{equation}
the integral inequalities
    \begin{equation}\label{qqq}
    \begin{split}
        \int_{B^+_{2R}} a_\e&\big(|D\ve| \big)\,|D(D_n\ve)|^2\,g'(D_n\ve)\,\phi^2\,dx\leq  C\,\int_{B^+_{2R}}a_\e\big(|D\ve| \big)|D\ve|^2\,g'(D_n\ve)\,|D\phi|^2\,dx
        \\
        &+C\,\int_{B^+_{2R}}b_\e\big(|D\ve| \big)\,|g(D_n\ve)|\,|D^2\phi^2|\,dx+C\,|h_0|\,\bigg|\int_{B^0_{2R}} D_{n}\phi^2\,g(D_n\ve)\,dx'\bigg|\,,
        \end{split}
    \end{equation}
    and
    \begin{equation}\label{quai}
    \begin{split}
        \int_{B^+_{2R}} a_\e \big(|D\ve| \big)\,|D(D_n\ve)|^2\,g'(D_n\ve)\,\phi^2\,dx\leq  C\,\int_{B^+_{2R}}&a_\e\big(|D\ve| \big)|D\ve|^2\,g'(D_n\ve)\,|D\phi|^2\,dx
        \\
        +C\,\int_{B^+_{2R}}b_\e\big(|D\ve| \big)\,|g(D_n\ve)|\,|D^2\phi^2|\,dx&+ C\,|h_0|\,\int_{B^+_{2R}}|D^2\phi^2|\,|g(D_n\ve)|\,dx
        \\&+C|h_0|^2\int_{B^+_{2R}}\frac{g'(D_n\ve)}{a_\e(|D\ve|)}|D\phi|^2\,dx,
    \end{split}
    \end{equation}
  hold true, with $C=C(n,\l,\L,i_a,s_a)>0$. 
    
    If in addition $g(D_n\ve)\equiv 0$ on $B^0_{2R}\cap \mathrm{spt}\,\phi$, and~$|\{g\neq 0\}\cap \{g'= 0\}|=0$, then
    \begin{equation}\label{quai1}
    \begin{split}
        \int_{B^+_{2R}} &a_\e\big( |D\ve|\big)\,|D(D_n\ve)|^2\,g'(D_n\ve)\,\phi^2\,dx
        \\
        &\leq C\,\int_{B^+_{2R}\cap \{g(D_n\ve)\neq 0\}\cap\{D(D_n\ve)\neq 0\}} a_\e(|D\ve|)\,\frac{g^2(D_n\ve)}{g'(D_n\ve)}\,|D\phi|^2\,dx.
        \end{split}
    \end{equation}
\end{lemma}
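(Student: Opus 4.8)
The plan is to use the partial derivative $D_n v_\varepsilon$ as a "solution" to the perturbed equation \eqref{neu:Dnve}, and to test that weak formulation with $g(D_n v_\varepsilon)\,\phi^2$, in exact analogy with the proof of Lemma~\ref{lemma:parti}. The new feature, compared to \eqref{eq:classic}, is the boundary integral $-\int_{B^0_{2R}}\mathcal{A}_\varepsilon'(Dv_\varepsilon)\cdot D'\eta\,dx'$ appearing on the right-hand side of \eqref{neu:Dnve}. The whole point of hypotheses \eqref{g:const} (respectively $g(D_n v_\varepsilon)\equiv 0$ on $B^0_{2R}\cap\operatorname{spt}\phi$) is to control or kill the pieces of this boundary term that would otherwise be unmanageable. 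First I would verify admissibility: by \eqref{reg:neuve} and the a priori (non-quantitative) bound \eqref{temp:Dvebddd}, the function $\eta=g(D_n v_\varepsilon)\,\phi^2$ lies in $W^{1,2}_c(B_{2R})$ with trace in $W^{1,2}(B^0_{2R})$, so it is a legitimate test function in \eqref{neu:Dnve}.

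\textbf{Interior manipulations.} Plugging $\eta=g(D_n v_\varepsilon)\phi^2$ into \eqref{neu:Dnve}, the left-hand side splits, exactly as in \eqref{tempo:baseineq}, into a good term
\begin{equation*}
  \int_{B^+_{2R}} \nabla_\xi\mathcal{A}_\varepsilon(Dv_\varepsilon)\,D(D_nv_\varepsilon)\cdot D(D_nv_\varepsilon)\,g'(D_nv_\varepsilon)\,\phi^2\,dx \;\ge\; c\int_{B^+_{2R}} a_\varepsilon(|Dv_\varepsilon|)\,|D(D_nv_\varepsilon)|^2\,g'(D_nv_\varepsilon)\,\phi^2\,dx
\end{equation*}
by \eqref{coer:Ae} and Remark~\ref{remark:importante}, plus a term with $D\phi^2$ that gets moved to the right. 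On the right I would integrate the $\nabla_\xi\mathcal{A}_\varepsilon\cdot D\phi^2$-term by parts precisely as in Lemma~\ref{lemma:parti}: write $\nabla_\xi\mathcal{A}_\varepsilon(Dv_\varepsilon)D(D_nv_\varepsilon) = D_n(\mathcal{A}_\varepsilon(Dv_\varepsilon))$, integrate the $x_n$-derivative by parts, and estimate the resulting volume integrals using \eqref{coAe:gr}, \eqref{def:bBe} and weighted Young's inequality to get the terms $\int a_\varepsilon(|Dv_\varepsilon|)|Dv_\varepsilon|^2 g'(D_nv_\varepsilon)|D\phi|^2$ and $\int b_\varepsilon(|Dv_\varepsilon|)|g(D_nv_\varepsilon)||D^2\phi^2|$. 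The integration by parts in the $x_n$-direction on $B^+_{2R}$ produces an \emph{extra} boundary term on $B^0_{2R}$ of the form $\int_{B^0_{2R}}\mathcal{A}^n_\varepsilon(Dv_\varepsilon)\,g(D_nv_\varepsilon)\,D_n\phi^2\,dx'$ (and similarly with $\mathcal{A}_\varepsilon\cdot D(D_n\phi^2)\,g$), which I would combine with the original boundary term from \eqref{neu:Dnve}.

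\textbf{Handling the boundary terms.} This is the crux, and where the three cases diverge. In all cases, on $B^0_{2R}$ one has the identity $\mathcal{A}^n_\varepsilon(Dv_\varepsilon)\equiv -h_0$ in the trace sense (from \eqref{homve:neu}), so every surface integral involving $\mathcal{A}^n_\varepsilon(Dv_\varepsilon)$ becomes a surface integral with the constant $-h_0$. For \eqref{qqq}: after the integration by parts in the tangential variables on $B^0_{2R}$, the original term $-\int_{B^0_{2R}}\mathcal{A}_\varepsilon'(Dv_\varepsilon)\cdot D'\eta$ is rewritten using $\mathcal{A}_\varepsilon(Dv_\varepsilon)\cdot D\eta = \mathcal{A}_\varepsilon'(Dv_\varepsilon)\cdot D'\eta + \mathcal{A}^n_\varepsilon(Dv_\varepsilon)D_n\eta$, and since $\eta=g(D_nv_\varepsilon)\phi^2$ with $g'(D_nv_\varepsilon)\equiv 0$ on $B^0_{2R}\cap\operatorname{spt}\phi$ by \eqref{g:const}, the tangential derivatives $D'\eta$ on $B^0_{2R}$ reduce to $g(D_nv_\varepsilon)D'\phi^2$ (the term $g'(D_nv_\varepsilon)D'(D_nv_\varepsilon)\phi^2$ vanishes). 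Hence the only surviving boundary contribution is, up to constants, $h_0\int_{B^0_{2R}}D_n\phi^2\,g(D_nv_\varepsilon)\,dx'$, giving the last term of \eqref{qqq}. For \eqref{quai}, instead of leaving a surface integral I would convert it to a volume integral: use the divergence theorem on $B^+_{2R}$ to write $|h_0|\big|\int_{B^0_{2R}}D_n\phi^2\,g(D_nv_\varepsilon)\,dx'\big| \le |h_0|\int_{B^+_{2R}}|D_n(D_n\phi^2\,g(D_nv_\varepsilon))|\,dx = |h_0|\int_{B^+_{2R}}|D^2\phi^2\,g(D_nv_\varepsilon) + D_n\phi^2\,g'(D_nv_\varepsilon)D_{nn}v_\varepsilon|\,dx$, then split and apply weighted Young's inequality to the term containing $D_{nn}v_\varepsilon$: $|h_0|\,|D\phi^2|\,g'|D_{nn}v_\varepsilon| \le \delta\,a_\varepsilon(|Dv_\varepsilon|)|D(D_nv_\varepsilon)|^2 g'\phi^2 + C_\delta\,|h_0|^2\,a_\varepsilon(|Dv_\varepsilon|)^{-1}\,g'\,|D\phi|^2$ (absorbing the $D_{nn}v_\varepsilon$ part into the left-hand side, $\phi^2$ coming out of $D\phi^2$ appropriately — one uses $|D\phi^2|\le 2\phi|D\phi|$). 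This yields the two extra terms in \eqref{quai}. Finally for \eqref{quai1}: now $g(D_nv_\varepsilon)\equiv 0$ on $B^0_{2R}\cap\operatorname{spt}\phi$, so \emph{all} boundary integrals vanish identically, and the computation reduces verbatim to that of \eqref{ooooo} in Lemma~\ref{lemma:parti} — one repeats the Young's inequality argument from \eqref{tojustify}, the coarea-formula justification being guaranteed by $|\{g'=0\}\cap\{g\ne 0\}|=0$, and then absorbs the bad term to obtain \eqref{quai1}. In each case one chooses $\delta=\delta(n,\lambda,\Lambda,i_a,s_a)$ small enough to reabsorb the $\int a_\varepsilon|D(D_nv_\varepsilon)|^2 g'\phi^2$ contributions, getting the stated constant $C=C(n,\lambda,\Lambda,i_a,s_a)$. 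The main obstacle is bookkeeping: keeping precise track of which boundary integrals survive under each hypothesis on $g$, and correctly isolating the $D_{nn}v_\varepsilon$ piece in \eqref{quai} so that the absorption is legitimate; the use of \eqref{control:D2ve} to bound $|D^2v_\varepsilon|$ by $|D(D'v_\varepsilon)|$ is not needed here since we only integrate by parts in $x_n$ and the $D_{nn}v_\varepsilon$ term is directly controlled by $|D(D_nv_\varepsilon)|$.
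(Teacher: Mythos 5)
Your proposal is correct and follows essentially the same route as the paper's proof: testing \eqref{neu:Dnve} with $g(D_n v_\e)\,\phi^2$, integrating by parts in $x_n$ so that the surviving boundary contribution reduces (via the boundary condition $\A_\e^n(Dv_\e)=-h_0$ and \eqref{g:const}) to $h_0\int_{B^0_{2R}}D_n\phi^2\,g(D_nv_\e)\,dx'$, then converting this surface term to a volume term by the divergence theorem and absorbing the $D_{nn}v_\e$ piece for \eqref{quai}, and reducing to the argument of \eqref{ooooo} for \eqref{quai1}. The bookkeeping of which boundary integrals cancel and which survive under each hypothesis on $g$ matches the paper's computation in \eqref{bdn:st}--\eqref{O2}.
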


\begin{proof}
    We test equation \eqref{neu:Dnve} with $\eta=g(D_n\ve)\,\phi^2$, thus getting
    \begin{equation}\label{bdn:st}
        \begin{split}
            \int_{B^+_{2R}}&\nabla_\xi \Ae(D\ve)\,D(D_n\ve)\cdot D(D_n\ve)\,g'(D_n\ve)\,\phi^2\,dx
            \\
            &+\int_{B^+_{2R}}\nabla_\xi \Ae(D\ve)\,D(D_n\ve)\cdot D\phi^2\, g(D_n\ve)\,dx=-\int_{B_{2R}^0}\Ae'(D\ve)\cdot D'(g(D_n\ve)\phi^2\big)\,dx'
        \end{split}
    \end{equation}
By \eqref{g:const}, we have
\begin{equation*}
    -\int_{B_{2R}^0}\Ae'(D\ve)\cdot D'(g(D_n\ve)\phi^2\big)\,dx'=-\int_{B_{2R}^0}\Ae'(D\ve)\cdot D'\phi^2\,g(D_n\ve)\,dx',
\end{equation*}
    whilst using the chain rule and integrating by parts, we find
    \begin{equation*}
        \begin{split}
            \int_{B^+_{2R}}\nabla_\xi &\Ae(D\ve)\,D(D_n\ve)\cdot D\phi^2\, g(D_n\ve)\,dx=\int_{B^+_{2R}}D_n\big(\Ae(D\ve)\big)\cdot D\phi^2\, g(D_n\ve)\,dx
            \\
            =&\int_{B^+_{2R}}\Ae(D\ve)\cdot D_n\big( D\phi^2\,g(D_n\ve)\big)\,dx-\int_{B_{2R}^0}\Ae(D\ve)\cdot D\phi^2\,g(D_n\ve)\,dx
            \\
            =&\int_{B^+_{2R}}\Ae(D\ve)\cdot D(D_n\phi^2)\,g(D_n\ve)\,dx+\int_{B^+_{2R}}\Ae(D\ve)\cdot D\phi^2\, D_{nn}\ve\,g'(D_n\ve)\,dx
            \\
            &-\int_{B_{2R}^0}\Ae'(D\ve)\cdot D'\phi^2\,g(D_n\ve)\,dx'+h_0\int_{B^0_{2R}} D_{n}\phi^2\,g(D_n\ve)\,dx',
        \end{split}
    \end{equation*}
    where in the last equality we used the boundary condition $\Ae^n(D\ve)=-h_0$ a.e. on $B^0_{2R}$.

Coupling the three identities above yields
\begin{equation}\label{chiamal}
    \begin{split}
        (O)\coloneqq\int_{B^+_{2R}}&\nabla_\xi \Ae(D\ve)\,D(D_n\ve)\cdot D(D_n\ve)\,g'(D_n\ve)\,\phi^2\,dx
        \\
        =&\int_{B^+_{2R}}\Ae(D\ve)\cdot D(D_n\phi^2)\,g(D_n\ve)\,dx+\int_{B^+_{2R}}\Ae(D\ve)\cdot D\phi^2 D_{nn}\ve\,g'(D_n\ve) \,dx
        \\
        & -h_0\,\int_{B^0_{2R}} D_{n}\phi^2\,g(D_n\ve)\,dx'\eqqcolon  (I)+(II)+(III).
    \end{split}
\end{equation}
    We now repeat the computations of Lemma \ref{lemma:parti}. By \eqref{coer:Ae}, we get
    \begin{equation}\label{O1}
        (O)\geq c_1\,\int_{B_{2R}^+} a_\e(|D\ve|)\,|D(D_n\ve)|^2\,g'(D_n\ve)\,\phi^2\,dx;
    \end{equation}
with $c_1=c_1(n,\l,\L,i_a,s_a)$, and owing to \eqref{coAe:gr}, we have
    \begin{equation*}
        |(I)|\leq C\,\int_{B_{2R}^+}b_\e(|D\ve|)\,|g(D_n\ve)|\,|D^2\phi^2|\,dx,
    \end{equation*}
    and by \eqref{coAe:gr}, \eqref{def:bBe} and weighted Young's inequality, we find
    \begin{equation*}
    \begin{split}
        |(II)|\leq &\, C\,\int_{B_{2R}^+}b_\e(|D\ve|)|D(D_n\ve)| g'(D_n\ve)\,|D\phi|\,\phi\,dx
        \\
        \leq &\, \frac{c_1}{4}\int_{B_{2R}^+} a_\e(|D\ve|)\,|D(D_n\ve)|^2\,g'(D_n\ve)\,\phi^2\,dx
        \\
        &+C'\,\int_{B^+_{2R}}a_\e(|D\ve|)\,|D\ve|^2g'(D_n\ve)\,|D\phi|^2\,dx,
        \end{split}
    \end{equation*}
    with $C,C'=C,C'(n,\l,\L,i_a,s_a)>0$. Merging the content of the four inequalities above, and reabsorbing terms we get \eqref{qqq}.  To obtain \eqref{quai}, we use \eqref{g:const} and the divergence theorem:
    \begin{equation}\label{diverg:0}
    \begin{split}
        h_0&\int_{B^0_{2R}} D_{n}\phi^2\,g(D_n\ve)\,dx'=-h_0\int_{B^+_{2R}}D_n\big( D_n\phi^2\,g(D_n\ve)\big)\,dx
        \\
        &=-h_0\int_{B^+_{2R}}D_{nn}\phi^2 g(D_n\ve)\,dx-2h_0\int_{B^+_{2R}} D_n\phi\,\phi\,g'(D_n\ve)\,D_{nn}\ve\,dx
        \end{split}
    \end{equation}
and then estimate $|h_0\int_{B^+_{2R}}D_{nn}\phi^2 g(D_n\ve)\,dx|\leq |h_0|\int_{B^+_{2R}}|D^2\phi^2|\,|g(D_n\ve)|\,dx$, and  via weighted Young's inequality
\begin{equation}\label{O2}
\begin{split}
    \bigg|2h_0\int_{B^+_{2R}} D_n\phi\,\phi\,g'(D_n\ve)\,D_{nn}\ve\,dx \bigg|\leq  &\,\delta\, \int_{B_{2R}^+} a_\e(|D\ve|)\,|D(D_n\ve)|^2\,g'(D_n\ve)\,\phi^2\,dx
    \\
    &+C_\delta\,|h_0|^2\int_{B^+_{2R}}\frac{g'(D_n\ve)}{a_\e(|D\ve|)}|D\phi|^2\,dx,
    \end{split}
\end{equation}
Choosing $\delta\in (0,1)$ small enough to reabsorb terms to the left hand side of \eqref{qqq} yields \eqref{quai}.

Finally, if $g(D_n\ve)\equiv 0$ on $B^0_{2R}\cap \mathrm{spt}\,\phi$, the last integral in \eqref{bdn:st} is zero, so \eqref{quai1} follows via the same computations as in the proof of Equation \eqref{ooooo}.
\end{proof}

\begin{remark}[Another integral inequality for $D_n\ve$]\label{rem:identity}
\rm{  Let $\mathbf{V}\in \R^n$ be a constant vector field, and set~$\tilde{\A_\e}(\xi):=\A_\e(\xi)-\mathbf{V}$. Then, by \eqref{homve:neu} we have that $v_\e$ satisfies the Neumann problem 
    \begin{equation*}
            \begin{cases}
         -\mathrm{div}\big( \tilde\Ae(D\ve)\big)=0\quad\text{in $B_{2R}^+$}
        \\
        \tilde\Ae(Dv)\cdot e_n+\tilde{h}_0=0\quad \text{on $B_{2R}^0$}.
    \end{cases}
    \end{equation*}
    where $\tilde h_0=h_0+\mathbf{V}\cdot e_n$. Hence, testing the weak formulation with  $\phi=D_n\eta$, and performing the same computations as in \eqref{lauso:Dnv}-\eqref{neu:Dnve}, we find
    \begin{equation*}
        \int_{B_{2R}^+}\nabla_\xi \Ae(D\ve)\,D(D_n\ve)\cdot D\eta\,dx=-\int_{B_{2R}^0}\tilde{\Ae'}(D\ve)\cdot D'\eta\,dx'.
    \end{equation*}
Now let $g:\R\to \R$ be a Lipschitz function such that $g'(t)\geq 0$ for a.e. $t$, and satisfying \eqref{g:const}. Testing the above identity with $g(D_n\ve)\,\phi^2$, we may repeat the computations of \eqref{chiamal} and of \eqref{diverg:0}, the only difference being that $\Ae$ and $h_0$ are replaced by $\tilde\Ae$ and $\tilde{h}_0$, respectively. By doing so, we arrive at
\begin{equation*}
    \begin{split}
\int_{B^+_{2R}}&\nabla_\xi \Ae(D\ve)\,D(D_n\ve)\cdot D(D_n\ve)\,g'(D_n\ve)\,\phi^2\,dx
        \\
        =&\int_{B^+_{2R}}\tilde\Ae(D\ve)\cdot D(D_n\phi^2)\,g(D_n\ve)\,dx+\int_{B^+_{2R}}\tilde\Ae(D\ve)\cdot D\phi^2 D_{nn}\ve\,g'(D_n\ve) \,dx
        \\
        &+\tilde h_0\int_{B^+_{2R}}D_{nn}\phi^2 g(D_n\ve)\,dx+2\tilde h_0\int_{B^+_{2R}} D_n\phi\,\phi\,g'(D_n\ve)\,D_{nn}\ve\,dx.
    \end{split}
\end{equation*}
    We estimate the left-hand side and the last integral on the right hand side via \eqref{O1} and \eqref{O2}, thus obtaining
\begin{equation}\label{inednv}
    \begin{split}
        \int_{B^+_{2R}}&a_\e\big(|D\ve|\big)\,|D(D_n\ve)|^2\,g'(D_n\ve)\,\phi^2\,dx 
        \\
        \leq &\,C\,\int_{B^+_{2R}}\big|\tilde\Ae(D\ve)\big|\,|D^2\phi^2|\,|g(D_n\ve)|\,dx+C\,\int_{B^+_{2R}}\big|\tilde\Ae(D\ve)\big|\,| D\phi\,\phi|\,|D_{nn}\ve|\,g'(D_n\ve) \,dx
               \\
               &+C\,|\tilde h_0|\,\int_{B^+_{2R}}|D^2\phi^2|\,| g(D_n\ve)|\,dx+C\,|\tilde h_0|\,\int_{B^+_{2R}} |D\phi\,\phi|\,g'(D_n\ve)\,|D_{nn}\ve|\,dx,
    \end{split}
\end{equation}
with constant $C=C(n,\l,\L,i_a,s_a)>0$.
}
\end{remark}

\begin{lemma}[The first alternative for $D_n\ve$]\label{lemma:alt1Dn}
    Let $\ve\in W^{1,B_\e}(B_{2R_0}^+)$ be a solution to \eqref{homve:neu}, and let $0<R\leq R_0/2$. Then there exist $\mu_0\in (0,2^{-n-1})$ and $\gamma_0\in (0,1)$, both depending on  $n,\l,\L,i_a,s_a$ such that if $\gamma\leq \gamma_0$, and $\tau_\gamma\in (0,1)$ is the parameter given by Lemma \ref{lemma:oscDive} the following holds: if
\begin{equation}\label{a1:Dnv}
    \Big|\{D_n\ve<  M^+(2R)/2\}\cap B^+_{\tau_\gamma R} \Big|\leq \mu_0\,|B^+_{\tau_\gamma R}|,
\end{equation}
    then
    \begin{equation}\label{tesi}
        D_n\ve\geq M^+(2R)/4\quad \text{in $B^+_{\tau_\gamma R/2}$.}
    \end{equation}
    Analogously, if 
    \begin{equation}\label{specular0}
     \big|\{D_n\ve> - M^+(2R)/2\}\cap B^+_{\tau_\gamma R} \big|\leq \mu_0\,|B_{\tau_\gamma R}|,\quad\text{then}\quad D_n\ve<-\frac{M^+(2R)}{4}\quad\text{in $B^+_{\tau_\gamma R/2}$.}
    \end{equation}
\end{lemma}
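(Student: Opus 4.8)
The strategy is to reproduce the De~Giorgi iteration of Lemma~\ref{lem:1alt}, carefully tracking how the extra Neumann boundary terms enter, and then to remove them using the tangential oscillation control of Lemma~\ref{lemma:oscDive}. The key structural observation is this: in $B^+_{\tau_\gamma R}$ we have $|D_n\ve|\le M^+(2R)$, so the set $\{D_n\ve \ge M^+(2R)/4\}$ is precisely where the equation \eqref{neu:Dnve} for $D_n\ve$ is \emph{uniformly elliptic}, by \eqref{ultra:utile}, \eqref{coer:Ae} and Remark~\ref{remark:importante}. The subtlety, absent in the interior case, is the boundary integral $\int_{B^0_{2R}}\Ae'(D\ve)\cdot D'\eta\,dx'$ appearing on the right-hand side of \eqref{neu:Dnve}; this is controlled, after integration by parts on the flat boundary, by the \emph{tangential} derivatives of $\ve$, whose oscillation is small on $B^+_{\tau_\gamma R}$ by the very choice of $\tau_\gamma$ in Lemma~\ref{lemma:oscDive}, provided $\gamma$ is chosen small (this fixes $\gamma_0$).

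First I would set up the truncations exactly as in \eqref{start:iter}: with $\mathrm M^+\equiv M^+(2R)$, put $R_m = \tfrac{\tau_\gamma R}{2} + \tfrac{\tau_\gamma R}{2^{m+1}}$, $\kappa_m = \tfrac{\mathrm M^+}{4} + \tfrac{\mathrm M^+}{2^{m+2}}$, and $A^-(\kappa,\vrho) = \{D_n\ve<\kappa\}\cap B^+_\vrho$. Applying Lemma~\ref{lemma:levels} in the half-ball form to $u=-D_n\ve$ with levels $\ell=-\kappa_{m+1}$, $\kappa=-\kappa_m$, and using \eqref{a1:Dnv} with $\mu_0\le 2^{-n-1}$ to bound $|B_{R_{m+1}}^+\setminus A^-(\kappa_m,R_{m+1})|$ from below by $\tfrac12|B^+_{R_{m+1}}|$ (as in \eqref{azz}), I reduce everything to controlling $\int_{\{\kappa_{m+1}\le D_n\ve<\kappa_m\}\cap B^+_{R_m}} |D(D_n\ve)|^2\,dx$. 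This is where a half-ball De Giorgi inequality for $D_n\ve$ is needed. I would derive it from Lemma~\ref{lemmaNEU:DG}: take $g(t) = -(t-\kappa_m)_- = (t-\kappa_m)\chi_{\{t<\kappa_m\}}$ (note $\kappa_m\ge \mathrm M^+/4$, so $g'(D_n\ve)\ne 0$ forces $D_n\ve<\kappa_m\le\mathrm M^+/2$, while on the part of the boundary where $g'(D_n\ve)\ne0$ we may need to ensure $g$ vanishes — more on this below) and $\phi$ a cutoff between $B^+_{R_{m+1}}$ and $B^+_{R_m}$ as in \eqref{cutoff}, scaled by $\tau_\gamma R$. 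The main point is that on $\{g'(D_n\ve)\ne 0\}$ we have $\mathrm M^+/4\le|D\ve|$ only on part of the set — but here the argument diverges from the interior case: $D_n\ve$ small does \emph{not} imply $|D\ve|$ small, so $a_\e(|D\ve|)$ need not be comparable to $a_\e(\mathrm M^+)$. I would instead exploit \eqref{quai} directly, keeping $a_\e(|D\ve|)$ inside, and use that $b_\e(|D\ve|)\le b_\e(\mathrm M^+) = a_\e(\mathrm M^+)\mathrm M^+$ by monotonicity to bound the $|D^2\phi^2|$ term, and that $g'(D_n\ve)/a_\e(|D\ve|)$ is bounded by $1/\big(c\,a_\e(\mathrm M^+)\big)$ on the region $\{g'\ne0\}\cap\{|D\ve|\gtrsim \mathrm M^+\}$ — whereas on $\{|D\ve|\ll \mathrm M^+\}$ a separate, simpler bound applies. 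After dividing by a suitable power of $a_\e(\mathrm M^+)$ and using \eqref{come:BwB} to handle $|h_0|^2$ terms, this yields $\int_{\{\kappa_{m+1}\le D_n\ve<\kappa_m\}\cap B^+_{R_{m+1}}}|D(D_n\ve)|^2\,dx \le C_\gamma \tfrac{4^m (\mathrm M^+)^2}{(\tau_\gamma R)^2}\big(|A^-(\kappa_m,R_m)| + (\text{tangential contribution})\big)$.

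The \textbf{main obstacle}, and the place where $\gamma_0$ is fixed, is the boundary term. On $B^0_{2R}\cap\mathrm{spt}\,\phi$ we do \emph{not} a priori have $g'(D_n\ve)\equiv0$, so condition \eqref{g:const} may fail. To circumvent this I would use the bound \eqref{trbd:neu}: on $B^0_{\tau_\gamma R}$, $|D_n\ve|\le |D\ve|\le C|D'\ve| + C\,b^{-1}(|h_0|)$, and since $|D'\ve|\le T^+(\tau_\gamma R)\le \omega'(\tau_\gamma R) + |D'\ve(0)|$, Lemma~\ref{lemma:oscDive} plus the observation that $|D'\ve(0)|$ is itself small \emph{whenever the second alternative of Lemma~\ref{lem:alttang} holds} forces $|D_n\ve|$ small on the flat boundary. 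More precisely: either $T^+(\tau_\gamma R)\ge \mathrm M^+/4$ — in which case \eqref{a1:Dnv} combined with $|D_n\ve|$ small on a large set is incompatible with $D_n\ve$ being the maximal derivative, so one argues $D_n\ve$ cannot be the coordinate realizing $M^+$ and the claim \eqref{tesi} must be re-examined (actually this case needs the hypothesis to be shown vacuous or handled by the first alternative) — or $T^+(\tau_\gamma R)\le \eta_{\tau_\gamma}\mathrm M^+$, in which case $\sup_{B^0_{\tau_\gamma R}}|D_n\ve| \le C\eta_{\tau_\gamma}\mathrm M^+ + C b^{-1}(|h_0|)$, which for $\gamma$ (hence $\eta_{\tau_\gamma}$) small is $\le \mathrm M^+/8$, so the truncation $g(D_n\ve)$ with level $\kappa_m\ge\mathrm M^+/4$ \emph{does} vanish on the boundary, \eqref{g:const} and the hypothesis of \eqref{quai1} hold, and the boundary integral disappears. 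Then \eqref{quai1} gives the clean De Giorgi inequality with no boundary remainder, and the iteration of Lemma~\ref{lem:hyp} closes exactly as in \eqref{iterazione:1}--\eqref{fine:iter}, giving $|\{D_n\ve<\mathrm M^+/4\}\cap B^+_{\tau_\gamma R/2}|=0$, i.e. \eqref{tesi}. Finally, \eqref{specular0} follows by applying the result to $\bar v_\e=-\ve$, which solves the Neumann problem with stress field $\bar\A_\e(\xi)=-\Ae(-\xi)$ and datum $-h_0$, both satisfying the same structural assumptions as in Lemma~\ref{lemma:Ae}; I would note this reduction explicitly and leave the (verbatim) details to the reader.
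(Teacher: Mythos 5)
Your overall framework (half-ball De Giorgi iteration via Lemma \ref{lemmaNEU:DG} and Lemma \ref{lemma:levels}, with the boundary term as the key obstacle) is the right one, and you correctly identify that on the truncated set $\{\kappa\le D_n\ve<\ell\}$ with $\kappa\ge M^+(2R)/4$ one has $|D\ve|\ge D_n\ve\ge M^+(2R)/4$, so $a_\e(|D\ve|)\approx a_\e(M^+(2R))$. However, your treatment of the boundary term contains several genuine gaps. First, you invoke the second alternative of Lemma \ref{lem:alttang} and assert that $\eta_{\tau_\gamma}$ becomes small as $\gamma\to 0$; nothing of the sort is proved — $\eta_\tau\in(0,1)$ is merely some constant depending on $\tau$, with no smallness as $\tau$ shrinks. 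The correct tool is Lemma \ref{lemma:oscDive}, which controls the \emph{oscillation} $\omega'(\tau_\gamma R)\le \tfrac{\gamma}{2}M^+(2R)$, not the size of $T^+$. Second, passing through the absolute bound \eqref{trbd:neu} leaves the term $C\,b^{-1}(|h_0|)$, which is not comparable to $M^+(2R)$ and cannot be absorbed; the paper avoids this by using the \emph{oscillation} estimate \eqref{ttt:osc} on the flat boundary, in which the constant datum $h_0$ cancels. Third, and most seriously, your conclusion is logically reversed: for the lower truncation $g(t)=-(t-\kappa)_-$ used in the first alternative, $g(D_n\ve)$ vanishes exactly where $D_n\ve$ is \emph{large} ($\ge\kappa$). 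If you succeed in showing $|D_n\ve|\le M^+(2R)/8$ on $B^0_{\tau_\gamma R}$, then $g(D_n\ve)\ne 0$ there and \eqref{g:const}, \eqref{quai1} fail. Finally, the branch $T^+(\tau_\gamma R)\ge M^+(2R)/4$ is explicitly left unresolved in your write-up.

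For comparison, the paper's dichotomy is on the boundary values of $D_n\ve$ itself. \textbf{Case 1:} $D_n\ve\ge \tfrac{3}{8}M^+(2R)$ a.e.\ on $B^0_{\tau_\gamma R}$; then the truncation at levels $\ell\in(M^+(2R)/4,\,3M^+(2R)/8)$ vanishes on the flat boundary, \eqref{qqq} has no boundary contribution, and the iteration closes as in the interior, giving \eqref{tesi}. \textbf{Case 2:} $D_n\ve(x_0')<\tfrac{3}{8}M^+(2R)$ at some boundary point; then \eqref{ttt:osc} together with $\omega'(\tau_\gamma R)\le\tfrac{\gamma}{2}M^+(2R)$ yields $D_n\ve\le\tfrac{7}{16}M^+(2R)$ on all of $B^0_{\tau_\gamma R}$ once $\gamma\le\gamma_0=(8C_{\mathrm a})^{-1}$ (this is where $\gamma_0$ is fixed). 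One then runs a second De Giorgi iteration with a truncation whose derivative vanishes on the boundary (since $D_n\ve<\kappa$ there), still using the measure hypothesis \eqref{a1:Dnv}, to conclude $D_n\ve\ge\tfrac{7}{16}M^+(2R)$ in $B^+_{\tau_\gamma R/2}$ — contradicting the boundary bound and the continuity of the trace. Hence Case 2 cannot occur. Without this contradiction mechanism (or an equivalent one), your argument does not close.
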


\begin{proof}
For notational simplicity, we set $\mathrm{M}^+=M^+(2R)$.
    We distinguish two  cases.

    \noindent\textit{Case 1:} Suppose that
    \begin{equation}\label{caso1:dnv}
        D_n\ve \geq \frac{3}{8} \mathrm{M}^+\quad \text{a.e. on $B^0_{\tau_\gamma R}$.}
    \end{equation}
We consider a parameter $\ell$ such that
\[
\ell\in \Big(\frac{\mathrm{M}^+}{4},\frac{3\mathrm{M}^+}{8}\Big)
\]
and let $g(t)= -(t-\ell)_-$. so, it follows from \eqref{caso1:dnv} that  $
g(D_n\ve)\equiv 0$ on $B_{\tau_\gamma R}^0$.

Then let $\kappa$ be such that $\mathrm{M}^+/4<\kappa<\ell$, so that by \eqref{ultra:utile} and Remark \ref{remark:importante}, we have
\begin{equation*}
    c\,a_\e(\mathrm{M}^+)\leq  a_\e(|D\ve|)\leq C\,a_\e(\mathrm{M}^+)\quad\text{in the set $\big\{\kappa\leq D_k\ve<\ell\big\}\cap B^+_{2R}$,}
\end{equation*}
 with $c,C=c,C(n,\l,\L,i_a,s_a)$.
Therefore, by taking a cut-off function $\phi\in C^\infty_c(B_{\tau_\gamma R})$, $0\leq \phi\leq 1$ satisfying \eqref{cutoff} with $0<r_1<r_2\leq \tau_\gamma R$, by exploiting Equation \eqref{qqq}, and dividing both sides of the resulting equation by $a_\e(\mathrm{M^+})$, we deduce
\begin{equation}\label{ff}
    \int_{\{\kappa\leq D_n\ve<\ell \}\cap B^+_{r_1}} |D(D_n\ve)|^2\,dx\leq C\,\frac{(\mathrm{M}^+)^2}{(r_2-r_1)^2}\,\big|\{D_n\ve<\ell\}\cap B^+_{r_2}\big|;
\end{equation}
we remark that, in order to obtain the full form of \eqref{ff}, we also used $|g(D_n\ve)|\leq \mathrm{M}^+$ and 
\[
a_\e(|D\ve|)\,|D\ve|=b_\e\big( |D\ve|\big)\leq b_\e(\mathrm{M}^+)=a_\e(\mathrm{M}^+)\,\mathrm{M}^+,
\]
 which stem from \eqref{def:bBe} and the monotonicity of $b_\e$--compare with the proof of \eqref{levels:eq}, i.e., Equations \eqref{temporaneo:lev} and \eqref{control:ae}. 

Now set $ A^-(\kappa,\vrho)\coloneqq \{D_n\ve <\kappa\}\cap B^+_\vrho$, and define the sequences
\begin{equation*}
    R_m=\frac{\tau_\gamma R}{2}+\frac{\tau_\gamma R}{2^{m+1}},\quad \kappa_m=\frac{\mathrm{M}^+}{4}+\frac{\mathrm{M}^+}{2^{m+3}},\quad m=0,1,2,\dots.
\end{equation*}
Since $\tau_\gamma R/2\leq R_m\leq \tau_\gamma R$ and  $\kappa_m<3/8\mathrm{M}^+<\mathrm{M}
^+/2$, from \eqref{a1:Dnv} we deduce

    \begin{equation}\label{azz0}
\begin{split}
|B^+_{R_{m+1}}\setminus A^-(\kappa_m,R_{m+1})|&=|B^+_{R_{m+1}}|-|A^-(\kappa_m,R_{m+1})|
\\
&\geq |B^+_{R_{m+1}}|-\left|A^-\left(\frac{\mathrm{M}^+}{2},\tau_\gamma R\right)\right|
\\
&\geq (1-\mu_0\,2^n)\,|B^+_{R_{m+1}}|\geq \frac{1}{2}|B^+_{R_{m+1}}|\,, 
\end{split}
\end{equation}
provided that $\mu_0\in (0,2^{-n-1})$. Owing to \eqref{ff} and \eqref{azz0}, we may perform the same iterative scheme of \eqref{temp:newlevels}-\eqref{fine:iter}; that is, defining $Z_m=\frac{A^-(\kappa_m,R_m)}{|B^+_{R_m}|}$, we find $\lim_{m\to \infty} Z_m=0$ for $\mu_0=\mu_0(n,\l,\L,i_a,s_a)\in (0,2^{-n-1})$  small enough, which implies \eqref{tesi}, thus completing the proof in this case.
\vspace{0.2cm}

\noindent\textit{Case 2.} Suppose now that
\begin{equation}\label{caso2:dnv}
    D_n\ve(x'_0)<\frac{3}{8}\mathrm{M}^+\quad \text{for some point $x_0'\in B^0_{\tau_\gamma R}$.}
\end{equation}
 We aim to show that this cannot happen as long as $\gamma\leq \gamma_0$, for $\gamma_0\in (0,1)$ chosen small enough. Owing to the boundary oscillation \eqref{ttt:osc} and \eqref{osc:Dineu}, for a.e. $x'\in B^0_{\tau_\gamma R}$ we find
\begin{equation*}
    \begin{split}
        D_n\ve(x')\leq |D_n\ve(x')-D_n\ve (x'_0)|+D_n\ve (x'_0)\leq C_{\mathrm{a}}\,\omega'(\tau_\gamma R)+ \frac{3}{8}\mathrm{M}^+\leq\Big( C_{\mathrm{a}}\frac{\gamma}{2}+\frac{3}{8}\Big)\,\mathrm{M}^+,
    \end{split}
\end{equation*}
hence we have
\begin{equation}\label{dn:b01}
    D_n\ve \leq \frac{7}{16}\mathrm{M}^+\quad \text{a.e. on $B^0_{\tau_\gamma R}$}
\end{equation}
provided we take
\begin{equation}\label{1cond:g0}
    \gamma\leq \gamma_0(n,\l,\L,i_a,s_a)=\frac{1}{(8C_{\mathrm{a}})}.
\end{equation} 
Then for  
\[
\frac{7}{16}\mathrm{M}^+<\kappa<\ell<\frac{\mathrm{M}^+}{2},
\]
 consider the function
\begin{equation*}
    g(t)=\begin{cases}
        0\quad & t\geq \ell
        \\
        t-\ell\quad & \kappa<t<\ell
        \\
        \kappa-\ell\quad &t\leq \kappa.
    \end{cases}
\end{equation*}
In particular, by \eqref{dn:b01}, we have $g'(D_n\ve)=0$ on $B^0_{\tau_\gamma R}$, so for a given cut-off function $\phi\in C^\infty_c(B_{\tau_\gamma R})$, we may exploit Equation \eqref{quai}; in said expression, we estimate the left-hand side and the first two terms on the right-hand side exactly as in Case 1. For what concerns the other terms, we exploit the inequalities $g(D_n\ve)\leq \mathrm M^+$,  $$|h_0|=|\A^n(D\ve)|\stackrel{\eqref{coAe:gr}}{\leq }Cb_\e\big(|D\ve|\big)\leq C\,b_\e(\mathrm{M}^+)=C\,a_\e(\mathrm{M}^+)\,\mathrm{M}^+,$$
coming from the boundary condition $\A^n(D\ve)=-h_0$ on $B^0_{2R}$, the continuity of the trace operator, and the monotonicity of $b_\e$, and that 
\begin{equation}\label{rihae}
c\,a_\e(\mathrm{M}^+)\leq a_\e(|D\ve|)\leq C\,a_\e(\mathrm{M}^+)\quad\text{in $\{g'(D_n\ve)\neq 0\}\subset \{D_n\ve\geq \kappa\}\cap B^+_{2R}$}
\end{equation}
with $c,C=c,C(n,\l,\L,i_a,s_a)>0$ by our choice of $\kappa$, \eqref{ultra:utile} and Remark \ref{remark:importante}; ultimately, we get
\begin{equation*}
    \begin{split}
        &|h_0|\,\int_{B^+_{2R}}|D^2\phi^2|\,|g(D_n\ve)|\,dx\leq C\,\frac{a_\e(\mathrm{M}^+)\,(\mathrm{M}^+)^2}{(r_2-r_1)^2}\,\big| \{ D_n\ve<\ell\}\cap B^+_{r_2} \big|
        \\
        &|h_0|^2\int_{B^+_{2R}}\frac{g'(D_n\ve)}{a_\e(|D\ve|)}|D\phi|^2\,dx\leq C\,\frac{a_\e(\mathrm{M}^+)\,(\mathrm{M}^+)^2}{(r_2-r_1)^2}\,\big| \{ D_n\ve<\ell\}\cap B^+_{r_2} \big|.
    \end{split}
\end{equation*}
Therefore, inserting these estimates into \eqref{quai}, we once again obtain Equation \eqref{ff}.

Now let $R_{m}$ be as in Case 1, and set 
\[
    \kappa_{m} = \frac{7}{16}\mathrm{M}^{+} + \frac{\mathrm{M}^{+}}{2^{m+4}},   \quad m=0,1,\dots
\]
Since $\kappa_{m} \leq \mathrm{M}^{+}/2$, condition \eqref{azz0} still holds.  
We may therefore carry out the same iteration argument as in the previous case and conclude that
\[
    D_{n}v \geq \kappa_{\infty} = \frac{7}{16}\mathrm{M}^{+}
    \qquad \text{in } B^{+}_{\tau_{\gamma} R/2}.
\]
This contradicts \eqref{dn:b01} and the continuity of the trace operator.  
Henceforth, as long as  $\gamma$ fulfills \eqref{1cond:g0},  only Case~1 can occur, for which we have proved the validity of \eqref{tesi}. This completes the proof
of Lemma~\ref{lemma:alt1Dn} in the situations covered by \eqref{tesi}.

Finally, the proof of \eqref{specular0} is completely specular, so it is left to the reader.
\end{proof}

\begin{lemma}[The second alternative for $D_n\ve$]\label{lemma:alt2Dn}  Let $\ve\in W^{1,B_\e}(B_{2R_0}^+)$ be a solution to \eqref{homve:neu}, and let $0<R\leq R_0/2$. Then there exist $\gamma=\gamma(n,\l,\L,i_a,s_a)\in (0,\gamma_0)$ and $\eta_0=\eta_0(n,\l,\L,i_a,s_a)\in (0,1)$ such that, if 
\begin{equation}\label{a2:Dnv}
    \Big|\{D_n\ve<  M^+(2R)/2\}\cap B^+_{\tau_\gamma R} \Big|> \mu_0\,|B^+_{\tau_\gamma R}|,
\end{equation}
    with $\mu_0, \tau_\gamma, \gamma_0$ as in Lemma \ref{lemma:alt1Dn}, then
    \begin{equation}\label{tteesi}
        D_n\ve \leq \eta_0\,M^+(2R)\quad\text{in $B^+_{\tau_\gamma R/2}$.}
    \end{equation}
Analogously, if
\begin{equation}\label{specular}
\Big|\{D_n\ve>- M^+(2R)/2\}\cap B^+_{\tau_\gamma R} \Big|> \mu_0\,|B^+_{\tau_\gamma R}|,\quad\text{then}\quad D_n\ve \geq -\eta_0\,M^+(2R)\quad\text{in $B^+_{\tau_\gamma R/2}$.}
\end{equation}
\end{lemma}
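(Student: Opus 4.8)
The plan is to mimic, \emph{mutatis mutandis}, the second alternative in the interior case (Lemma~\ref{lemma:altfin}) and for the tangential derivatives (Lemma~\ref{lem:alttang}, Case~2), by combining the arbitrarily-small-superlevel-set lemma with a De Giorgi iteration built on the quadratic inequalities of Lemma~\ref{lemmaNEU:DG}. The crucial point that distinguishes the present Neumann/normal-derivative setting is that $D_n v_\varepsilon$ is \emph{not} assumed bounded on the flat boundary portion $B_{\tau_\gamma R}^0$, so before running any iteration I must first reduce to a situation in which $D_n v_\varepsilon$ is controlled on $B^0_{\tau_\gamma R}$. For this I will use the boundary oscillation estimate \eqref{ttt:osc} together with the tangential oscillation bound \eqref{osc:Dineu} of Lemma~\ref{lemma:oscDive}: choosing $\gamma = \gamma(n,\l,\L,i_a,s_a)\le \gamma_0$ small enough forces $D_n v_\varepsilon \le (1 - \delta_0)\,M^+(2R)$ a.e.\ on $B^0_{\tau_\gamma R}$ for an appropriate small $\delta_0$, exactly as in \eqref{dn:b01}–\eqref{1cond:g0}. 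This is the step that decides the value of $\gamma$ in the statement.

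\textbf{Key steps.} First, I would set $\mathrm{M}^+ = M^+(2R)$ and, assuming \eqref{a2:Dnv}, apply the analogue of Lemma~\ref{lemma:alt3small} for $D_n v_\varepsilon$ on half balls: one repeats the proof of Lemma~\ref{lemma:alt3small} with $A^+_s = \{D_n v_\varepsilon > \kappa_s\}\cap B^+_{\nu_\gamma \tau_\gamma R}$, $\kappa_s = (1-2^{-s})\mathrm{M}^+$, using Lemma~\ref{lemma:levels} on $B^+$, Remark~\ref{remark:nu0} (half-ball version, giving $\nu_\gamma\in(1/2,1)$), and the quadratic inequality \eqref{oaa:neu1} for $k=n$ — which is available precisely because the quantities $g'(D_n v_\varepsilon)$, $g(D_n v_\varepsilon)$ vanish on the flat boundary by the previous step, so \eqref{quai} reduces to the De~Giorgi inequality on superlevel sets. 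This yields: for every $\theta_0\in(0,1)$ there is $s_0\in\N$ with $|\{D_n v_\varepsilon > (1-2^{-s_0})\mathrm{M}^+\}\cap B^+_{\nu_\gamma\tau_\gamma R}| \le \theta_0 |B^+_{\nu_\gamma\tau_\gamma R}|$. Second, I would fix $\theta_0 = \theta_0(n,\l,\L,i_a,s_a)$ and run the De Giorgi iteration of Lemma~\ref{lemma:altfin} (Equations \eqref{azz:in}–\eqref{azz:fin}) with radii $R_m = \tfrac{\tau_\gamma R}{2} + (\nu_\gamma\tau_\gamma R - \tfrac{\tau_\gamma R}{2})2^{-m}$ and levels $\kappa_m = (1-2^{-s_0})\mathrm{M}^+ + (1-2^{-m})\tfrac{\mathrm{M}^+}{2^{s_0+1}}$, again using \eqref{quai} (now with $g(t) = (t-\kappa_m)_+$, which vanishes on $B^0_{\tau_\gamma R}$ once $\kappa_m > (1-\delta_0)\mathrm{M}^+$, killing all boundary terms) and \eqref{oaa:neu1}. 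Hölder's inequality, Lemma~\ref{lem:hyp}, and the choice of $\theta_0$ then give $\lim_m Z_m = 0$, i.e.\ $D_n v_\varepsilon \le \eta_0 \mathrm{M}^+$ in $B^+_{\tau_\gamma R/2}$ with $\eta_0 = 1 - 2^{-(s_0+1)}$. I would also carry out a Moser-type second proof along the lines of the second proof of Lemma~\ref{lemma:altfin}, using \eqref{quai1} in place of \eqref{ooooo}, though this is optional.

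\textbf{Specular case and obstacle.} The estimate \eqref{specular} follows by applying the above to $\bar v_\varepsilon = -v_\varepsilon$, which solves the Neumann problem for $\bar{\A}_\varepsilon(\xi) = -\A_\varepsilon(-\xi)$ with boundary datum $-h_0$; since $\bar{\A}_\varepsilon$ enjoys all the properties of $\A_\varepsilon$ from Lemma~\ref{lemma:Ae}, the argument transfers verbatim, swapping superlevel sets for sublevel sets and \eqref{a2:Dnv} for the hypothesis in \eqref{specular}. The main obstacle I anticipate is bookkeeping in the boundary-term control: in \eqref{qqq}, \eqref{quai} the terms carrying $|h_0|$ and $|h_0|^2$ must be absorbed into the level-set measure $|\{D_n v_\varepsilon < \ell\}\cap B^+_{r_2}|$ times $a_\varepsilon(\mathrm{M}^+)(\mathrm{M}^+)^2/(r_2-r_1)^2$, which requires the bound $|h_0| = |\A^n_\varepsilon(Dv_\varepsilon)| \le C\,b_\varepsilon(|Dv_\varepsilon|) \le C\,b_\varepsilon(\mathrm{M}^+) = C\,a_\varepsilon(\mathrm{M}^+)\mathrm{M}^+$ on $B^0_{2R}$ (from \eqref{coAe:gr}, the boundary condition, the monotonicity of $b_\varepsilon$, and the trace operator) together with $c\,a_\varepsilon(\mathrm{M}^+)\le a_\varepsilon(|Dv_\varepsilon|)\le C\,a_\varepsilon(\mathrm{M}^+)$ on the relevant level set via \eqref{ultra:utile} and Remark~\ref{remark:importante}; this is exactly the computation already performed in Case~2 of Lemma~\ref{lemma:alt1Dn} (see \eqref{rihae} and the two displays after it), so I would cross-reference it rather than redo it. Everything else is a routine adaptation of Lemmas~\ref{lemma:alt3small}–\ref{lemma:altfin} to half balls, with Remark~\ref{remark:evenext} justifying the use of the De~Giorgi-class machinery.
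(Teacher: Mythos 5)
There is a genuine gap at the very first step of your argument, and it propagates through the rest. You claim that combining the boundary oscillation estimate \eqref{ttt:osc} with the tangential oscillation bound \eqref{osc:Dineu} and choosing $\gamma$ small "forces $D_n v_\varepsilon \le (1-\delta_0)\,M^+(2R)$ a.e.\ on $B^0_{\tau_\gamma R}$, exactly as in \eqref{dn:b01}--\eqref{1cond:g0}". This is not what those estimates give. They only control the \emph{oscillation} of $D_n v_\varepsilon$ on the flat boundary: $D_n v_\varepsilon$ is nearly constant there, but the constant could perfectly well be close to $\mathrm{M}^+=M^+(2R)$ itself. In Case~2 of Lemma~\ref{lemma:alt1Dn} the argument works because one \emph{assumes} there is a point $x_0'\in B^0$ with $D_n v_\varepsilon(x_0')<\tfrac38\mathrm{M}^+$ and propagates that smallness; here the hypothesis \eqref{a2:Dnv} is a measure statement about the interior and gives no pointwise information on $B^0_{\tau_\gamma R}$. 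Consequently you cannot assume that the truncations $g(t)=(t-\kappa_m)_+$ with $\kappa_m$ near $\eta_0\mathrm{M}^+$ vanish on the flat boundary, and the boundary terms in \eqref{qqq}--\eqref{quai} do not disappear. Your fallback — bounding $|h_0|\le C\,b_\varepsilon(\mathrm{M}^+)=C\,a_\varepsilon(\mathrm{M}^+)\mathrm{M}^+$ — produces a right-hand side of size $a_\varepsilon(\mathrm{M}^+)(\mathrm{M}^+)^2(r_2-r_1)^{-2}|\{D_nv_\varepsilon>\kappa\}\cap B^+_{r_2}|$, whereas the De Giorgi iteration for the second alternative needs the much smaller scaling $(\mathrm{M}^+-\kappa)^2$ in place of $(\mathrm{M}^+)^2$: after dividing by $(\mathrm{M}^+-\kappa_m)^2\approx(\mathrm{M}^+/2^{s_0})^2$ you pick up a factor $4^{s_0}$ that destroys the smallness condition $Z_0\le\theta_0$ required by Lemma~\ref{lem:hyp}.

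The paper resolves exactly this point by a dichotomy on the boundary values. If $D_nv_\varepsilon\le(1-\gamma)\mathrm{M}^+$ on $B^0_{\tau_\gamma R}$ (Case~1), your scheme goes through with $g(t)=(t-\kappa)_+$ and \eqref{quai1}. In the complementary case, \eqref{ttt:osc} and \eqref{osc:Dineu} force $D_nv_\varepsilon\ge(1-\hat C\gamma)\mathrm{M}^+$ on all of $B^0_{\tau_\gamma R}$, and one must then work with the \emph{shifted} stress field $\tilde{\mathcal A}_\varepsilon(\xi)=\mathcal A_\varepsilon(\xi)-\mathcal A_\varepsilon\big(D'v_\varepsilon(0),\mathrm{M}^+\big)$ of Remark~\ref{rem:identity}: on the superlevel sets $\{D_nv_\varepsilon>\kappa\}$ with $\kappa>\mathrm{M}^+/2$, both $|\tilde{\mathcal A}_\varepsilon(Dv_\varepsilon)|$ and the effective datum $|\tilde h_0|$ are controlled by $a_\varepsilon(\mathrm{M}^+)\{\mathrm{M}^+2^{-t_0-1}+(\mathrm{M}^+-\kappa)\}$ thanks to the tangential oscillation estimate, which restores the correct $(\mathrm{M}^+-\kappa)^2$ scaling in \eqref{eccoci}; the ensuing iteration then contradicts the boundary lower bound, showing this case cannot occur. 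This shifted-field device and the contradiction argument are the essential missing ideas in your proposal; without them the second alternative for $D_nv_\varepsilon$ does not close.
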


\begin{proof}
Let us set $\mathrm{M}^+=M^+(2R)$, and from now on we fix
\begin{equation}\label{gfix}
    \gamma=\frac{1}{2^{t_0}}\quad\text{for some $t_0=t_0(n,\l,\L,i_a,s_a)\in \N$ large enough,}
\end{equation}
which will be determined at the end of the proof, and we consider the corresponding $\tau_\gamma=\tau_\gamma(n,\l,\L,i_a,s_a)$ given by Lemma \ref{lemma:oscDive}.

Owing to \eqref{a2:Dnv}, and using the argument of Remark \ref{remark:nu0}, we may find $\nu_0\in (1/2,1)$, depending on $n,\l,\L,i_a,s_a$, such that
\begin{equation}\label{hfd}
    |\{D_n\ve >\mathrm{M}^+/2\}\cap B^+_{\nu_0\tau_\gamma R}|<(1-\mu_0/2)\,|B^+_{\nu_0\tau_\gamma R}|.
\end{equation}
We remark that $\nu_{0}=\left(\frac{1-\mu_{0}}{1-\mu_{0}/2}\right)^{1/n} \in (1/2,1)$ is independent of $\gamma$, since $\mu_{0}=\mu_{0}(n,\lambda,\Lambda,i_{a},s_{a})\in (0,2^{-n-1})$ is.
We now distinguish two cases, which in turn will be divided into sub-steps.
\vspace{0.2cm}

\noindent\textit{Case 1.} Let us suppose that
\begin{equation}\label{2ncase1}
    D_n\ve \leq (1-\gamma)\,\mathrm{M}^+\quad\text{on $B^0_{\tau_\gamma R}$.}
\end{equation}
In this case, we consider $\kappa>(1-\gamma)\,\mathrm{M}^+$, and the function $g(t)=(t-\kappa)_+$, so that $g(D_n\ve)\equiv 0$ on $B^0_{\tau_\gamma R}$. For $\phi\in C^\infty_c(B_{\tau_\gamma R})$ satisfying \eqref{cutoff}, we may use \eqref{quai1}, and deduce
\begin{equation}\label{ijn}
    \int_{B^+_{r_1}\cap\{D_n\ve>\kappa\}} |D(D_n\ve)|^2\,dx\leq \frac{C\,(\mathrm{M^+-\kappa})^2}{(r_2-r_1)^2}\,|\{D_n\ve>\kappa\}\cap B^+_{r_2}|\,,
\end{equation}
where, once again, we used  \eqref{rihae}, as well as the estimate~$g^2(D_n\ve)\leq (\mathrm{M^+-\kappa})^2 $. 
\vspace{0.2cm}

\noindent\textit{Step 1.} We show that for every $\theta_0\in (0,1)$, we may find $s_0=s_0(n,\l,\L,i_a,s_a,\theta_0)\in \N$ large enough, such that
\begin{equation}\label{smdnve}
         \bigg|\Big\{D_n\ve>\Big(1-\frac{1}{2^{s_0+t_0}} \Big)\,\mathrm{M}^+ \Big\}\cap B^+_{\nu_0\tau_\gamma R} \bigg|\leq \theta_0\, |B^+_{\nu_0\tau_\gamma R}|,
\end{equation}
with $t_0\in \N$ given by \eqref{gfix}. 

We mostly reproduce the argument of Lemma \ref{lemma:alt3small}: for $s=t_0+1,t_0+2,\dots,$, we define
\[
\kappa_s=\Big(1-\frac{1}{2^s}\Big)\,\mathrm{M}^+,\quad\text{and }\quad A^+_s=\{D_n\ve>\kappa_s\}\cap B^+_{\nu_0\tau_\gamma R}.
\]
 Since $\kappa_s\geq \mathrm{M}^+/2$, from \eqref{hfd} we deduce
\begin{equation*}
    |B^+_{\nu_0\tau_\gamma R}\setminus A^+_s|\geq \frac{\mu_0}{2}|B^+_{\nu_0\tau_\gamma R}|\quad\text{for all $s=t_0+1,t_0+2,\dots$}
\end{equation*}
    Applying Lemma \ref{lemma:levels} to the function $u=D_n\ve$, and levels $\ell=\kappa_{s+1}$, $\kappa=\kappa_{s}$, using H\"older's and the above inequality, we get
    \begin{equation*}
    \begin{split}
        \frac{\mathrm{M}^+}{2^{s+1}}\,|A^+_{s+1}|& \leq c(n)\,|A^+_{s+1}|^{\frac{1}{n}}\frac{|B^+_{\nu_0 \tau_\gamma R}|}{|B^+_{\nu_0 \tau_\gamma R}\setminus A^+_s|}\int_{A^+_{s}\setminus A^+_{s+1}}|D(D_k\ve)|\,dx
        \\
        &\leq C\,\tau_\gamma\,R\,\left( \int_{A^+_{s}\setminus A^+_{s+1}}|D(D_k\ve)|^2\,dx\right)^{1/2}\,|A^+_{s}\setminus A^+_{s+1}|^{1/2}\,,
        \end{split}
    \end{equation*}
   with $C=C(n,\l,\L,i_a,s_a)>0$, where in the last inequality we also used $|A^+_{s+1}|\leq |B^+_{\nu_0\tau_\gamma R}|\leq C(n)\,R^n\,(\tau_\gamma)^n$ since $\nu_0\in (1/2,1)$.
   
   Then we use \eqref{ijn} with $\kappa=\kappa_s$, $r_1=\nu_0\tau_\gamma R$,  $r_2=\tau_\gamma R$, recalling the dependence on the data of $\mu_0$, $\nu_0$, and the definition of $\kappa_s$, we deduce
   \begin{equation*}
       \left( \int_{A^+_{s}\setminus A^+_{s+1}}|D(D_k\ve)|^2\,dx\right)^{1/2}\leq C\,\frac{(\mathrm{M}^+-\kappa_{s})}{\tau_\gamma R}\,|B^+_{\tau_\gamma R}|^{1/2}\leq C'\,\frac{\mathrm{M}^+}{2^{s}\,\tau_\gamma R}\,|B^+_{\nu_0\tau_\gamma R}|^{1/2},
   \end{equation*}
   for $C,C'=C,C'(n,\l,\L,i_a,s_a)>0$. Connecting the two inequalities above,  and squaring both sides of the resulting equation yields
   \begin{equation*}
       |A^+_{s+1}|^2\leq C\,|B^+_{\nu_0 \tau_\gamma R}|\,|A^+_{s}\setminus A^+_{s+1}|\,.
   \end{equation*}
   We sum this inequality over $s=t_0+1,t_0+2,\dots,t_0+s_0-1$, and telescoping the right-hand side, while using $|A^+_{s+1}|\geq |A^+_{s_0+t_0}|$ on the left-hand side, we find
   \begin{equation*}
       (s_0-2)|A^+_{s_0+t_0}|^2\leq \sum_{s=t_0+1}^{s_0+t_0-1}|A^+_{s+1}|^2\leq C_0\, |B^+_{\nu_0 \tau_\gamma R}|\big(|A^+_{t_0+1}|-|A^+_{s_0+t_0}| \big)\leq C_0\, |B^+_{\nu_0 \tau_\gamma R}|^2,
   \end{equation*}
   where $C_0=C_0(n,\l,\L,i_a,s_a)>0$. Choosing $s_0=s_0(n,\l,\L,i_a,s_a,\theta_0)$ large enough gives \eqref{smdnve}.
   \vspace{0.2cm}

   \noindent\textit{Step 2.} Let us now prove \eqref{tteesi}, following the proof of Lemma \ref{lemma:altfin}. We fix $\theta_0=\theta_0(n,\l,\L,i_a,s_a)\in (0,1)$  and the corresponding $s_0=s_0(n,\l,\L,i_a,s_a)\in \N$ from the previous step. For $m=0,1,2,3,\dots$, we set
   \begin{equation*}
       \kappa_m=\left(1-\frac{1}{2^{s_0+t_0}} \right)\,\mathrm{M}^++\left( 1-\frac{1}{2^m}\right)\,\frac{\mathrm{M}^+}{2^{s_0+t_0+1}},\quad R_m=\frac{\tau_\gamma R}{2}+\Big(\nu_0\tau_\gamma R-\frac{\tau_\gamma R}{2}\Big)\left(\frac{1}{2^m} \right),
   \end{equation*}
 and we also set $
    A^+(\kappa_m,R_m)\coloneqq\{D_n\ve>\kappa_m\}\cap B^+_{R_m}$. Then, by \eqref{smdnve}, and since $\kappa_m\geq \kappa_0= (1-\tfrac{1}{2^{s_0+t_0}})\,\mathrm{M}^+$ and $\tau_\gamma R/2\leq R_m\leq \nu_0 \tau_\gamma R$, we have
\begin{equation*}
\begin{split}
    |B^+_{R_{m+1}} &\setminus A^+(\kappa_m,R_{m+1})|\geq |B^+_{R_{m+1}}|-|A^+(\kappa_0,\nu_0 \tau_\gamma R)|
    \\
    &\geq (1-\theta_0\,(\tau_\gamma\nu_0)^n)|B^+_{R_{m+1}}|\geq  \frac{1}{2}\,|B^+_{R_{m+1}}|\,,
\end{split}
\end{equation*}
provided we choose $0<\theta_0\leq \tau_\gamma^{-n-1}\nu_0^{-n}$. We then use Lemma \ref{lemma:levels} with function $u=D_n\ve$, and levels $\ell=\kappa_{m+1}$, $\kappa=\kappa_{m}$, together with the above inequality, thus finding
   \begin{equation*}
   \begin{split}
       \frac{\mathrm{M}^+}{2^{s_0+t_0+m+2}}&\,|A^+(\kappa_{m+1},R_{m+1})|^{\frac{n-1}{n}} 
       \\
       & \leq C\,\frac{|B^+_{R_{m+1}}|}{|B^+_{R_{m+1}} \setminus A^+(\kappa_m,R_{m+1})|}\int_{A^+(\kappa_m,R_{m+1})\setminus A^+(\kappa_{m+1},R_{m+1})}|D(D_n\ve)|\,dx
       \\
       &\leq C'\,\left(\int_{A^+(\kappa_m,R_{m+1})\setminus A^+(\kappa_{m+1},R_{m+1})}|D(D_n\ve)|^2\,dx \right)^{1/2}\,|A^+(\kappa_m,R_{m})|^{1/2}\,,
       \end{split}
   \end{equation*}
with $C,C'=C,C'(n,\l,\L,i_a,s_a)>0$,  where in the last estimate we used H\"older's inequality. We now use \eqref{ijn} with $r_2=R_m$ and $r_1=R_{m+1}$, and using that $(\mathrm{M}^+-\kappa_m)\leq \mathrm{M}^+/2^{s_0+t_0}$, and that $\nu_0=\nu_0(n,\l,\L,i_a,s_a)\in (1/2,1)$, we find
\begin{equation*}
    \left(\int_{A^+(\kappa_m,R_{m+1})\setminus A^+(\kappa_{m+1},R_{m+1})}|D(D_n\ve)|^2\,dx \right)^{1/2}\leq C\,\frac{2^{m+2}}{\tau_\gamma R}\,\frac{\mathrm{M}^+}{2^{s_0+t_0} }|A^+(\kappa_m,R_m)|^{1/2}.
\end{equation*}
Merging the content of the two inequalities above, and dividing both sides of the resulting equation by $\mathrm{M}^+/2^{s_0+t_0}$, we get
\begin{equation*}
    |A^+(\kappa_{m+1},R_{m+1})|^{\frac{n-1}{n}}\leq C\,\frac{4^{m}}{\tau_\gamma R}\,|A^+(\kappa_m,R_m)|\,.
\end{equation*}
with $C=C(n,\l,\L,i_a,s_a)$. Hence by setting $ Z_m=\frac{|A^+(\kappa_m,R_m)|}{|B^+_{R_m}|}$,
and by exploiting that $\tau_\gamma R/2\leq R_m\leq \nu_0 \tau_\gamma R$ and $\nu_0\in (1/2,1)$,  the above inequality and \eqref{smdnve} imply
\begin{equation*}
    Z_{m+1}\leq C\, (4^{\frac{n}{n-1}})^{m}\,Z_m^{\frac{n}{n-1}},\quad\text{and}\quad Z_0\leq \theta_0\,,
\end{equation*}
with $C=C(n,\l,\L,i_a,s_a)>0$. Thus, by Lemma \ref{lem:hyp}, choosing $\theta_0=\theta_0(n,\l,\L,i_a,s_a)$ small enough, we get $\lim_{m\to \infty }Z_m=0$, which implies
\begin{equation*}
    D_n\ve\leq \Big(1-\frac{1}{2^{s_0+t_0}} \Big)\mathrm{M}^++\frac{\mathrm{M}^+}{2^{s_0+t_0+1}}\equiv \eta_0\, \mathrm{M}^+\,,\quad \text{a.e. in $B^+_{\tau_\gamma R/2}$,}
\end{equation*}
that is \eqref{tteesi}, and this proves the lemma when \eqref{2ncase1} holds.
\vspace{0.2cm}

\noindent\textit{Case 2.} Suppose now that
\begin{equation*}
    D_n\ve(x_0') >(1-\gamma)\,\mathrm{M}^+\quad\text{for some point $x_0'\in B^0_{\tau_\gamma R}$.}
\end{equation*}
We aim to show that this cannot occur, provided $t_0$ in \eqref{gfix} is chosen large enough. By  \eqref{ttt:osc} and \eqref{osc:Dineu}, for a.e. $x'\in B^0_{\tau_\gamma R}$ we find
\begin{equation}\label{csfindn}
\begin{split}
    D_n\ve (x')&\geq -\operatorname*{osc}_{B^0_{\tau_\gamma R}} D_n\ve+D_n\ve(x'_0)\geq -C_{\mathrm{a}}\,\omega'(\tau_\gamma R)+(1-\gamma)\,\mathrm{M}^+
    \\
    &\geq \big(1-\big(C_{\mathrm{a}}/2+1\big)\,\gamma\big)\,\mathrm{M}^+\eqqcolon (1-\hat{C}\,\gamma)\,\mathrm{M}^+=(1-\hat{C}2^{-t_0})\,\mathrm{M}^+.\quad\text{on $B^0_{\tau_\gamma R}$}
    \end{split}
\end{equation}
where we set $\hat{C}=C_{\mathrm{a}}/2+1$.

We take $t_0=t_0(n,\l,\L,i_a,s_a)\in \N$ so large that $\gamma=2^{-t_0}$ satisfies $\hat{C}\gamma<1/4$; then
we consider parameters~$\mathrm{M}^+/2<\kappa<\ell<(1-\hat{C}\gamma)\,\mathrm{M}^+$, and the function
\begin{equation}\label{last:g}
    g(t)=\begin{cases}
        0\quad & t\leq\kappa
        \\
        t-\kappa\quad & \kappa< t< \ell
        \\
        \ell-\kappa\quad &t>\ell.
    \end{cases}
\end{equation}
In particular, by \eqref{csfindn} and our choice of $\ell$, we have $g'(D_n\ve)=0$ on $B^0_{\tau_\gamma R}$. 
\vspace{0.2cm}

\noindent\textit{Step 1.} Our goal is to obtain an integral inequality similar to \eqref{ijn}. To this end we will exploit Remark \ref{rem:identity} with $\mathbf{V}=\Ae\big(D'\ve(0),\mathrm{M}^+\big)$ and $\tilde \Ae(\xi)=\Ae(\xi)-\Ae\big(D'\ve(0),\mathrm{M}^+\big)$.

By the fundamental theorem of calculus and \eqref{coer:Ae}, on the set~$\{g(D_n\ve)\neq 0\}\cap B^+_{\tau_\gamma R}=\{D_n\ve>\kappa\}\cap B^+_{\tau_\gamma R}$, for all $i=1,\dots,n$ we have
\begin{equation}\label{above}
    \begin{split}
        |\tilde{\Ae^i}(D\ve)|=&\big|\Ae^i(D\ve)-\Ae^i\big(D'\ve(0),\mathrm{M}^+\big)\big|
        \\
        \leq &\sum_{j=1}^{n-1}\int_0^1\bigg|\frac{\partial \Ae^i}{\partial \xi_j}\Big(tD\ve+(1-t)\big(D'\ve(0),\mathrm{M}^+\big) \Big)\bigg|\,dt\,\big| D_j\ve-D_j\ve(0)\big|
        \\
        &+\int_0^1 \bigg|\frac{\partial \Ae^i}{\partial \xi_n}\Big(tD\ve+(1-t)\big(D'\ve(0),\mathrm{M}^+\big) \Big)\bigg|\,dt\,(\mathrm{M}^+-D_n\ve)
        \\
        \leq &\, C\,\int_0^1 a_\e\Big(\big|tD\ve+(1-t)\big(D'\ve(0),\mathrm{M}^+\big)\big| \Big)\,dx\,\bigg\{\frac{\gamma\,\mathrm{M^+}}{2}+(\mathrm{M}^+-\kappa)\bigg\},
    \end{split}
\end{equation}
where in the last inequality we also used the oscillation estimate \eqref{osc:Dineu}. Now observe that in $\{D_n\ve>\kappa\}\cap B^+_{2R}$, since $\kappa\geq \mathrm{M}^+/2$, for all $t\in [0,1]$ we have $$C(n)\,\mathrm{M}^+\geq\big|tD\ve+(1-t)(D'\ve(0),\mathrm{M}^+)\big|\geq|tD_n\ve+(1-t)\mathrm{M}^+|\geq \mathrm{M}^+/2. $$
Hence by \eqref{ultra:utile} and Remark \ref{remark:importante}, we have $a_\e\Big(\big|tD\ve+(1-t)\big(D'\ve(0),\mathrm{M}^+\big) \big|\Big)\leq C\,a_\e(\mathrm{M}^+)$, with $C=C(n,\l,\L,i_a,s_a)>0$. Using this information and \eqref{gfix} into \eqref{above}, we infer
\begin{equation}\label{tae:stima}
    |\tilde{\Ae}(D\ve)|\leq C\,a_\e\big(\mathrm{M}^+ \big)\Big\{\frac{\mathrm{M}^+}{2^{t_0+1}}+(\mathrm{M}^+-\kappa) \Big\}\quad\text{in $\{D_n\ve>\kappa\}\cap B^+_{\tau_\gamma R}$.}
\end{equation}
for some $C=C(n,\l,\L,i_a,s_a)>0$. In particular, since $\tilde{h}_0\equiv -\tilde{\Ae^n}(D\ve)$ on $B^0_{\tau_\gamma R}$, by the continuity of the trace operator the above inequality implies
\begin{equation}\label{h0:stima}
    |\tilde{h}_0|\leq C\,a_\e\big(\mathrm{M}^+ \big)\Big\{\frac{\mathrm{M}^+}{2^{t_0+1}}+(\mathrm{M}^+-\kappa) \Big\}
\end{equation}

By the properties of $\phi\in C^\infty_c(B_{\tau_\gamma R})$ in \eqref{cutoff}, and by using that
\[
c\, a_\e(\mathrm{M}^+)\leq a_\e(|D\ve|)\leq C\,a_\e(\mathrm{M}^+)\quad\text{on $\{g(D_n\ve)\neq 0\}\cap B^+_{2R}$}
\]
with $c,C=c,C(n,\l,\L,i_a,s_a)$ thanks to our choice of $g(t)$ in \eqref{last:g}, \eqref{ultra:utile} and Remark \ref{remark:importante}, we infer
\begin{equation*}
    \int_{B^+_{2R}}a_\e\big(|D\ve|\big)\,|D(D_n\ve)|^2\,g'(D_n\ve)\,\phi^2\,dx\geq c\,a_\e(\mathrm{M}^+)\int_{\{\kappa< D_n\ve<\ell\}\cap B^+_{2R}} |D(D_n\ve)|^2\,\phi^2\,dx,
\end{equation*}
and using \eqref{tae:stima}, the estimate $|g(D_n\ve)|\leq (\mathrm{M^+}-\kappa)$ and the properties of $\phi$, we find
\begin{equation*}
    \begin{split}\int_{B^+_{2R}}\big|\tilde\Ae(D\ve)\big|&\,|D^2\phi^2|\,|g(D_n\ve)|\,dx
    \\
    &\leq C\,\frac{a_\e(\mathrm{M}^+)}{(r_2-r_1)^2}\,\Big\{\frac{\mathrm{M}^+}{2^{t_0+1}}+(\mathrm{M}^+-\kappa) \Big\}\,\big(\mathrm{M}^+-\kappa\big)\,\big|\{D_n\ve>\kappa\}\cap B^+_{r_2}\big|,
    \end{split}
\end{equation*}
and also using weighted Young's inequality
\begin{equation*}
    \begin{split}
    \int_{B^+_{2R}}\big|\tilde\Ae(D\ve)\big|&\,| D\phi\,\phi|\,|D_{nn}\ve|\,g'(D_n\ve) \,dx
    \\
    \leq & \,   C\,a_\e(\mathrm{M}^+)\,\Big\{\frac{\mathrm{M}^+}{2^{t_0+1}}+(\mathrm{M}^+-\kappa) \Big\}\,\int_{\{\kappa < D_n\ve<\ell\}\cap B^+_{2R}}|D\phi|\,\phi\,|D(D_n\ve)|\,dx
    \\
    \leq &\,\delta\,a_\e(\mathrm{M}^+)\int_{\{\kappa < D_n\ve<\ell\}\cap B^+_{2R}}|D(D_n\ve)|^2\,\phi^2\,dx
    \\
&+C_\delta\,\frac{a_\e(\mathrm{M}^+)}{(r_2-r_1)^2}\,\Big\{\frac{\mathrm{M}^+}{2^{t_0+1}}+(\mathrm{M}^+-\kappa) \Big\}^2|\{D_n\ve>\kappa\}\cap B^+_{r_2}|,
    \end{split}
\end{equation*}
for all $\delta\in (0,1)$, with $C>0$ depending on $n,\l,\L,i_a,s_a$, and $C_\delta$ depending on $\delta$ as well.

Then, by \eqref{h0:stima}, we obtain
\begin{equation*}
\begin{split}
    |\tilde h_0|\,\int_{B^+_{2R}}&|D^2\phi^2|\,| g(D_n\ve)|\,dx
    \\
    &\leq C\,\frac{a_\e(\mathrm{M}^+)}{(r_2-r_1)^2}\,\Big\{\frac{\mathrm{M}^+}{2^{t_0+1}}+(\mathrm{M}^+-\kappa) \Big\}\,\big(\mathrm{M}^+-\kappa\big)\,|\{D_n\ve>\kappa\}\cap B^+_{r_2}|,
    \end{split}
\end{equation*}
and by also using Young's inequality
\begin{equation*}
    \begin{split}
        |\tilde h_0|\,\int_{B^+_{2R}} &|D\phi\,\phi|\,g'(D_n\ve)\,|D_{nn}\ve|\,dx
        \\
        \leq &\,a_\e(\mathrm{M}^+)\,\Big\{\frac{\mathrm{M}^+}{2^{t_0+1}}+(\mathrm{M}^+-\kappa) \Big\}\int_{\{\kappa< D_n\ve<\ell\}}|D\phi|\,\phi\,|D(D_n\ve)|\,dx
        \\
        \leq &\,\delta\, a_\e(\mathrm{M}^+)\,\int_{\{\kappa < D_n\ve<\ell\}}|D(D_n\ve)|^2\,\phi^2\,dx
        \\
        &+C_\delta\,\frac{a_\e(\mathrm{M}^+)}{(r_2-r_1)^2}\,\Big\{\frac{\mathrm{M}^+}{2^{t_0+1}}+(\mathrm{M}^+-\kappa) \Big\}^2\,|\{D_n\ve>\kappa\}\cap B^+_{r_2}|\,.
    \end{split}
\end{equation*}
Coupling the five estimates above with \eqref{inednv}, choosing $\delta=\delta(n,\l,\L,i_a,s_a)\in (0,1)$ small enough to reabsorb terms, and using that $\phi\equiv 1$ on $B_{r_1}$ finally yields \footnote{By the implicit function theorem, the set
$\{D_n\ve \neq 0\} \cap \{D_n\ve = \kappa\}$
is a smooth hypersurface in $B^+_{2R}$ and therefore has zero Lebesgue measure.
Hence, the integral in \eqref{eccoci} may be taken over
$\{\kappa \le D_n \ve < \ell\}$ or equivalently $\{\kappa < D_n\ve < \ell\}$.}

\begin{equation}\label{eccoci}
    \int_{\{\kappa \leq D_n\ve<\ell\}\cap B^+_{r_1}}|D(D_n\ve)|^2\,dx\leq \frac{C}{(r_2-r_1)^2}\,\Big\{\frac{\mathrm{M}^+}{2^{t_0+1}}+(\mathrm{M}^+-\kappa) \Big\}^2|\{D_n\ve>\kappa\}\cap B^+_{r_2}|,
\end{equation}
with $C=C(n,\l,\L,i_a,s_a)>0$, which is valid for all 
\[
\frac{\mathrm{M}^+}{2}<\kappa<\ell<\bigg(1-\frac{\hat{C}}{2^{t_0}}\bigg)\,\mathrm{M}^+,
\]
where $\hat{C}=\hat{C}(n,\l,\L,i_a,s_a)$ is the constant appearing in \eqref{csfindn}.
\vspace{0.2cm}

\noindent\textit{Step 2.} We show that for every $\theta_0\in (0,1)$, we may find $t_0=t_0(n,\l,\L,i_a,s_a,\theta_0)\in \N$ large enough, such that
\begin{equation}\label{smdnve2}
         \bigg|\Big\{D_n\ve>\Big(1-\frac{\hat{C}}{2^{t_0-2}} \Big)\,\mathrm{M}^+ \Big\}\cap B^+_{\nu_0\tau_\gamma R} \bigg|\leq \theta_0\, |B^+_{\nu_0\tau_\gamma R}|.
\end{equation}
Let us first impose that $t_0\geq \big\lceil\log_2 \hat{C}\big\rceil+6$.
 
 For $s=\big\lceil\log_2 \hat{C}\big\rceil+1,\big\lceil\log_2 \hat{C}\big\rceil+2,\dots,t_0-3$, we define 
 \begin{equation*}
     \kappa_s=\Big(1-\frac{\hat{C}}{2^s}\Big)\,\mathrm{M}^+\quad\text{and}\quad A^+_s=\{D_n\ve>\kappa_s\}\cap B^+_{\nu_0\tau_\gamma R}.
 \end{equation*}
Again, since $\kappa_s\geq \mathrm{M}^+/2$, from \eqref{hfd} we deduce $|B^+_{\nu_0\tau_\gamma R}\setminus A^+_s|\geq \frac{\mu_0}{2}|B^+_{\nu_0\tau_\gamma R}|$.  Thus, by applying Lemma \ref{lemma:levels} to the function $u=D_n\ve$, with levels $\ell=\kappa_{s+1}$, $\kappa=\kappa_{s}$, and by using H\"older's inequality,  we get
    \begin{equation*}
    \begin{split}
        \frac{\mathrm{M}^+}{2^{s+1}}\,|A^+_{s+1}|& \leq c(n)\,|A^+_{s+1}|^{\frac{1}{n}}\frac{|B^+_{\nu_0 \tau_\gamma R}|}{|B^+_{\nu_0 \tau_\gamma R}\setminus A^+_s|}\int_{A^+_{s}\setminus A^+_{s+1}}|D(D_k\ve)|\,dx
        \\
        &\leq C\,\tau_\gamma\,R\,\Big( \int_{A^+_{s}\setminus A^+_{s+1}}|D(D_k\ve)|^2\,dx\Big)^{1/2}\,|A^+_{s}\setminus A^+_{s+1}|^{1/2}\,,
        \end{split}
    \end{equation*}
   with $C=C(n,\l,\L,i_a,s_a)>0$, where in the last inequality we estimated $|A^+_{s+1}|^{\frac{1}{n}}\leq C(n)\,\tau_\gamma\,R$.
   
   Then, by \eqref{eccoci} with $r_1=\nu_0\tau_\gamma R$,  $r_2=\tau_\gamma R$,  and by the estimates 
   \[
   \frac{\mathrm M^+}{2^{t_0+1}}\leq \frac{\mathrm M^+}{2^{s}},\quad (\mathrm{M}^+-\kappa_s)=\frac{\hat{C}\,\mathrm M^+}{2^{s}},
   \]
and $|B^+_{\tau_\gamma R}|\leq C(n)\,|B^+_{\nu_0\tau_\gamma R}|$ as $\nu_0\in (1/2,1)$, we get
   \begin{equation*}
       \Big( \int_{A^+_{s}\setminus A^+_{s+1}}|D(D_k\ve)|^2\,dx\Big)^{1/2}\leq C\,\frac{\mathrm{M}^+}{2^{s}\,\tau_\gamma R}\,|B^+_{\nu_0\tau_\gamma R}|^{1/2},
   \end{equation*}
   Connecting the two inequalities above,  and squaring both sides of the resulting equation yields
   \begin{equation*}
       |A^+_{s+1}|^2\leq C\,|B^+_{\nu_0 \tau_\gamma R}|\,|A^+_{s}\setminus A^+_{s+1}|\,.
   \end{equation*}
   We sum this inequality over $s=\big\lceil\log_2 \hat{C}\big\rceil+1,\big\lceil\log_2 \hat{C}\big\rceil+2,\dots,t_0-3$, and telescoping the right-hand side, while using $|A^+_{s+1}|\geq |A^+_{t_0-2}|$ on the left-hand side, we find
   \begin{equation*}
   \begin{split}
       \big(t_0-\big\lceil\log_2 \hat{C}\big\rceil-5\big)\,|A^+_{t_0-2}|^2&\leq \sum_{s=\big\lceil\log_2 \hat{C}\big\rceil+1}^{t_0-3}|A^+_{s+1}|^2
       \\
       &\leq C_0\, |B^+_{\nu_0 \tau_\gamma R}|\Big(|A^+_{\big\lceil\log_2 \hat{C}\big\rceil+1}|-|A^+_{t_0-2}| \Big)\leq C_0\, |B^+_{\nu_0 \tau_\gamma R}|^2,
       \end{split}
   \end{equation*}
hence choosing $t_0=t_0(n,\l,\L,i_a,s_a,\theta_0)\in \N$ large enough yields \eqref{smdnve2}.
\vspace{0.2cm}

\noindent\textit{Step 3.} 
We fix $\theta_{0}=\theta_{0}(n,\lambda,\Lambda,i_{a},s_{a})\in (0,1)$ sufficiently small, to be specified later.  
From the previous step we then determine the parameter $t_{0}=t_{0}(n,\lambda,\Lambda,i_{a},s_{a})\in \N$ large enough such that \eqref{smdnve2} holds.  
At this point we finally determine the parameter $\gamma$ precisely, namely $\gamma = 2^{-t_{0}}$ according to \eqref{g:const}, which also determines the corresponding value of $\tau_{\gamma}$.

For $m=0,1,2,3,\dots$, we set
   \begin{equation*}
       \kappa_m=\Big(1-\frac{\hat{C}}{2^{t_0-2}} \Big)\,\mathrm{M}^++\hat{C}\,\Big( 1-\frac{1}{2^m}\Big)\,\frac{\mathrm{M}^+}{2^{t_0-1}},\quad R_m=\frac{\tau_\gamma R}{2}+\Big(\nu_0\tau_\gamma R-\frac{\tau_\gamma R}{2}\Big)\left(\frac{1}{2^m} \right),
   \end{equation*}
 and we also set 
 \[
    A^+(\kappa_m,R_m)\coloneqq\{D_n\ve>\kappa_m\}\cap B^+_{R_m}.
 \] 
    Then, by \eqref{smdnve2}, and since $\kappa_m\geq \kappa_0= (1-\tfrac{\hat{C}}{2^{t_0-2}})\,\mathrm{M}^+$ and $\tau_\gamma R/2\leq R_m\leq \nu_0 \tau_\gamma R$, we have
\begin{equation*}
\begin{split}
    |B^+_{R_{m+1}} &\setminus A^+(\kappa_m,R_{m+1})|\geq |B^+_{R_{m+1}}|-|A^+(\kappa_0,\nu_0 \tau_\gamma R)|
    \\
    &\geq (1-\theta_0\,(\tau_\gamma\nu_0)^n)|B^+_{R_{m+1}}|\geq  \frac{1}{2}\,|B^+_{R_{m+1}}|\,,
\end{split}
\end{equation*}
provided we choose $0<\theta_0\leq \tau_\gamma^{-n-1}\nu_0^{-n}$. By  Lemma \ref{lemma:levels} with function $u=D_n\ve$, and levels $\ell=\kappa_{m+1}$, $\kappa=\kappa_{m}$, the above inequality and H\"older's inequality, we get
   \begin{equation*}
   \begin{split}
       \frac{\mathrm{M}^+}{2^{t_0+m+1}}&\,|A^+(\kappa_{m+1},R_{m+1})|^{\frac{n-1}{n}} 
       \\
       & \leq C\,\frac{|B^+_{R_{m+1}}|}{|B^+_{R_{m+1}} \setminus A^+(\kappa_m,R_{m+1})|}\int_{A^+(\kappa_m,R_{m+1})\setminus A^+(\kappa_{m+1},R_{m+1})}|D(D_n\ve)|\,dx
       \\
       &\leq C'\,\left(\int_{A^+(\kappa_m,R_{m+1})\setminus A^+(\kappa_{m+1},R_{m+1})}|D(D_n\ve)|^2\,dx \right)^{1/2}\,|A^+(\kappa_m,R_{m})|^{1/2}\,,
       \end{split}
   \end{equation*}
with $C,C'=C,C'(n,\l,\L,i_a,s_a)>0$. Then we use \eqref{eccoci} with $r_2=R_m$ and $r_1=R_{m+1}$, and  that $(\mathrm{M}^+-\kappa_m)\leq \hat{C}\,\mathrm{M}^+/2^{t_0-2}$, so  we find
\begin{equation*}
    \left(\int_{A^+(\kappa_m,R_{m+1})\setminus A^+(\kappa_{m+1},R_{m+1})}|D(D_n\ve)|^2\,dx \right)^{1/2}\leq C\,\frac{2^{m+2}}{\tau_\gamma R}\,\frac{\mathrm{M}^+}{2^{t_0-2} }|A^+(\kappa_m,R_m)|^{1/2}.
\end{equation*}
Merging the content of the two inequalities above, and dividing both sides of the resulting equation by $\mathrm{M}^+/2^{t_0}$, we get
\begin{equation*}\label{iterazione7}
    |A^+(\kappa_{m+1},R_{m+1})|^{\frac{n-1}{n}}\leq C\,\frac{4^{m}}{\tau_\gamma R}\,|A^+(\kappa_m,R_m)|\,.
\end{equation*}
with $C=C(n,\l,\L,i_a,s_a)>0$. Hence by setting $ Z_m=\frac{|A^+(\kappa_m,R_m)|}{|B^+_{R_m}|}$,
and by exploiting that $\tau_\gamma R/2\leq R_m\leq \nu_0 \tau_\gamma R$ and $\nu_0\in (1/2,1)$,  the above inequality and \eqref{smdnve2} imply
\begin{equation*}
    Z_{m+1}\leq C\, (4^{\frac{n}{n-1}})^{m}\,Z_m^{\frac{n}{n-1}},\quad\text{and}\quad Z_0\leq \theta_0\,,
\end{equation*}
with $C=C(n,\l,\L,i_a,s_a)$. Thus, by Lemma \ref{lem:hyp}, choosing $\theta_0=\theta_0(n,\l,\L,i_a,s_a)$ small enough, we get $\lim_{m\to \infty }Z_m=0$, which implies
\begin{equation*}
    D_n\ve\leq \Big(1-\frac{\hat{C}}{2^{t_0-2}} \Big)\mathrm{M}^++\hat{C}\frac{\mathrm{M}^+}{2^{t_0-1}}=\Big(1-\frac{\hat{C}}{2^{t_0-1}} \Big)\,\mathrm{M}^+\,,\quad \text{a.e. in $B^+_{\tau_\gamma R/2}$,}
\end{equation*}
which is in contradiction with \eqref{csfindn} and the continuity of the trace operator. Hence only Case 1 can occur, for which we proved the validity of the lemma.
This concludes the proof of \eqref{tteesi}. Finally, the proof of \eqref{specular} is completely specular, and is left to the reader.
\end{proof}

We are now in the position to prove the excess decay and oscillation estimates for the approximating functions $v_\e$ solutions to \eqref{homve:neu}.

\begin{proposition}\label{propneu:excess}
    Let $\ve\in W^{1,2}(B^+_{2R_0})$ be a weak solution to \eqref{homve:neu}. Then there exists $\beta_N\in (0,1)$ depending only in $n,\l,\L,i_a,s_a$ such that $\ve\in C^{1,\beta_N}(B^+_{R_0/2})$. Moreover, for every $0<r\leq R\leq R_0/2$, the excess decay estimate
\begin{equation}\label{fpt}
   \mint_{B_r^+} |D\ve-(D \ve)_{B_r^+}|\,dx\leq C\,\Big( \frac{r}{R}\Big)^{\beta_N}\,\mint_{B_R^+}|D\ve-(D \ve)_{B_R^+}|\,dx,
\end{equation}
holds, and
\begin{equation}\label{fqt}
    \operatorname*{osc}_{B_r^+} D\ve\leq C\,\Big( \frac{r}{R}\Big)^{\beta_N}\, \operatorname*{osc}_{B_R^+} D\ve\leq C'\,\Big( \frac{r}{R}\Big)^{\beta_N}\bigg\{ \mint_{B^+_{2R}}|D\ve|\,dx+C\,b^{-1}(|h_0|)\bigg\},
\end{equation}
\begin{equation}\label{fqt1}
      \operatorname*{osc}_{B_r^+} D\ve\leq C\,\left(\frac{r}{R} \right)^{\beta_N}\,\mint_{B^+_R}|D\ve-(D\ve)_{B^+_R}|\,dx,\quad 0<r\leq R/2.
\end{equation}
for constants $C,C'=C,C'(n,\l,\L,i_a,s_a)>0$.
\end{proposition}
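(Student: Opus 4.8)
The plan is to mirror the proof of Proposition~\ref{exs:Dve}, i.e.\ to run the two‑speed iteration of \cite{DM10}, the new input being a \emph{boundary} version of the fundamental alternative on half‑balls, assembled from Lemmas~\ref{lem:alttang}, \ref{lemma:alt1Dn} and~\ref{lemma:alt2Dn}. By Remark~\ref{rem:scaling} we may assume $R_0=1$. First I would record the combined alternative: fixing $\gamma=\gamma(n,\l,\L,i_a,s_a)$ as in Lemma~\ref{lemma:alt2Dn} and letting $\tau_\gamma$ be the parameter of Lemma~\ref{lemma:oscDive}, there are universal $\tau_*,\eta_*\in(0,1)$ such that for every $B_{2R}^+\subseteq B_{2R_0}^+$,
\begin{equation*}
|Dv_\e|\geq \tfrac14\,M^+(2R)\ \ \text{in }B_{\tau_* R}^+,\qquad\text{or}\qquad M^+(\tau_* R)\leq \eta_*\,M^+(2R).
\end{equation*}
Indeed, if the first‑alternative measure condition of Lemma~\ref{lem:alttang} holds for some tangential index $k<n$ then (as in the proof of that lemma) $|D_kv_\e|\geq M^+(2R)/4$ on $B_{\tau_\gamma R}^+$, giving the first branch; otherwise $T^+(\tau_\gamma R)\leq\eta_\gamma M^+(2R)$, and one tests the measure conditions of Lemmas~\ref{lemma:alt1Dn}--\ref{lemma:alt2Dn} for $D_nv_\e$: either $|D_nv_\e|\geq M^+(2R)/4$ on $B_{\tau_\gamma R/2}^+$, or $|D_nv_\e|\leq\eta_0 M^+(2R)$ there, whence $M^+(\tau_\gamma R/2)\leq\max\{\eta_\gamma,\eta_0\}M^+(2R)$. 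All constants are $\e$‑uniform by Remark~\ref{remark:importante}.

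Next I would treat the nondegenerate case. When $|Dv_\e|\geq M^+(2R)/4$ on $B_{\tau_* R}^+$, the matrix $\nabla_\xi\A_\e(Dv_\e)/a_\e(|Dv_\e|)$ has ellipticity ratio bounded by a universal constant, by \eqref{coer:Ae}, \eqref{ultra:utile} and Remark~\ref{remark:importante}. For the tangential derivatives, the even extension of $D_kv_\e$ ($k<n$) solves a uniformly elliptic divergence‑form equation in the full ball (Remark~\ref{remark:evenext}, applied to \eqref{neu:Dkve}), so Lemma~\ref{lemma:degiorginash} yields the excess decay for $D'v_\e$ and $D'v_\e\in C^{0,\alpha}(\overline{B_{\tau_* R/2}^+})$. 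For $D_nv_\e$, the boundary condition $\A_\e^n(Dv_\e)=-h_0$ on $B^0$ together with the implicit function theorem (legitimate since $\partial_{\xi_n}\A_\e^n\gtrsim a_\e(M^+)>0$ in this regime, with $|\partial_{\xi'}\A_\e^n|\lesssim a_\e(M^+)$) expresses $D_nv_\e$ on $B^0$ as a fixed Lipschitz function of $D'v_\e$, hence $D_nv_\e|_{B^0}\in C^{0,\alpha}$; and $D_nv_\e$ solves a uniformly elliptic divergence‑form equation in $B^+$ by \eqref{neu:Dnve} (tested against functions supported in $B^+$), so boundary De~Giorgi--Nash--Moser theory (cf.\ \cite[Theorems~8.27--8.29]{GT}) gives the excess decay of $D_nv_\e$ as well. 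Altogether one obtains
\begin{equation*}
\mint_{B_\rho^+}|Dv_\e-(Dv_\e)_{B_\rho^+}|\,dx\leq C_{\mathrm d}\,\Big(\tfrac{\rho}{r}\Big)^{\beta_{\mathrm d}}\mint_{B_r^+}|Dv_\e-(Dv_\e)_{B_r^+}|\,dx,\qquad 0<\rho\leq r\leq\tau_* R,
\end{equation*}
with $C_{\mathrm d},\beta_{\mathrm d}$ depending only on $n,\l,\L,i_a,s_a$, and in particular not on $M^+$ or $h_0$.

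With these two ingredients I would run the two‑speed iteration exactly as in Steps~2--4 and the Conclusion of the proof of Proposition~\ref{exs:Dve}, with balls replaced by half‑balls, \eqref{Degiorgi:ve} replaced by the gradient bound \eqref{bbb:neu}, \eqref{temp:DNMexc} replaced by the display above, and the obvious changes of dyadic radii. The point requiring care is that \eqref{bbb:neu} carries an extra term $C\,b^{-1}(|h_0|)$ absent from \eqref{fpt}: this is absorbed by noting that the conormal condition and \eqref{coAe:gr} force $b^{-1}(|h_0|)\leq C_*\,M^+(\rho)$ at \emph{every} scale $\rho$, so in the degenerate ``case~I'' step the $b^{-1}(|h_0|)$‑contribution is reabsorbed into $M^+$ after choosing the iteration parameter $H_1$ large enough (the same mechanism by which $8\sqrt n\,C_{\mathrm b}\eta_0^{H_1-1}\leq 1$ is used in \eqref{H1}). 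This proves \eqref{fpt}; the $C^{1,\beta_N}$‑regularity of $v_\e$ up to $B^0_{R_0/2}$ and the oscillation estimates \eqref{fqt}--\eqref{fqt1} then follow from \eqref{fpt} by Campanato's characterization of H\"older continuity together with \eqref{bbb:neu}, precisely as Corollary~\ref{osc:decay} was deduced from Proposition~\ref{exs:Dve}. I expect the main obstacle to be the nondegenerate boundary excess decay for $D_nv_\e$: unlike the interior situation, \eqref{neu:Dnve} carries a nontrivial boundary term, so one must combine the even reflection for the tangential derivatives, the implicit‑function description of $D_nv_\e|_{B^0}$, and boundary De~Giorgi--Nash--Moser to close the argument with constants independent of $M^+$ and $h_0$; the two‑speed iteration itself, though lengthy, is essentially a transcription of Proposition~\ref{exs:Dve}.
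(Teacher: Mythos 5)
Your proposal is correct and follows essentially the same route as the paper: combine Lemmas~\ref{lem:alttang}, \ref{lemma:alt1Dn} and \ref{lemma:alt2Dn} into a single dichotomy on half-balls, run the two-speed iteration of Proposition~\ref{exs:Dve} with $E^+(r)$ and powers of the universal ratio in place of dyadic radii, and conclude \eqref{fqt}--\eqref{fqt1} by Campanato's characterization as in Corollary~\ref{osc:decay}. In fact you make explicit two points the paper leaves implicit -- the uniformly elliptic boundary excess decay for $D_n v_\e$ (via the reflection for $D'v_\e$, the boundary oscillation transfer, and boundary De~Giorgi--Nash--Moser) and the absorption of the $b^{-1}(|h_0|)$ term in \eqref{bbb:neu} through the pointwise bound $b^{-1}(|h_0|)\lesssim M^+(\rho)$ coming from the conormal condition -- and both fixes are valid.
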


\begin{proof}
    By combining Lemmas \ref{lemma:alt1Dn}-\ref{lemma:alt2Dn}, we deduce the existence of $\tau_\gamma=\tau_\gamma(n,\l,\L,i_a,s_a)\in (0,1) $ and $\eta_0=\eta_0(n,\l,\L,i_a,s_a)\in (0,1)$ such that, for every $0<R\leq R_0/2$,
    \begin{equation*}
        \text{either } |D_n\ve|\geq\frac{M^+(2R)}{4}\quad\text{or}\quad |D_n\ve|\leq \eta_0\,M^+(2R)\quad\text{in $B^+_{\tau_\gamma R/2}$.}
    \end{equation*}
Then, by Lemma \ref{lem:alttang} applied with $\tau(n,\l,\L,i_a,s_a)=\tau_\gamma/2$, we find $\eta_1=\eta_1(n,\l,\L,i_a,s_a)\in (0,1)$ such that
\begin{equation*}
    \text{either }\, T^+(\tau_\gamma R/2)\geq M^+(2R)/4\quad \text{or}\quad T^+(\tau_\gamma R/2)\leq \eta_1\,M^+(2R).
\end{equation*}
From the two expressions above, it follows that only the following two alternatives can hold:
\begin{equation}\label{neum:alternative}
     \text{either }\, M^+(\tau_d R)\geq \frac{M^+(2R)}{4}\quad \text{or}\quad M^+(\tau_d R)\leq \eta_2\,M^+(2R)
\end{equation}
for all $0<R\leq R_0/2$, where we set $\tau_d(n,\l,\L,i_a,s_a)=\tau_\gamma/2$ and $\eta_2(n,\l,\L,i_a,s_a)=\max\{\eta_0,\eta_1\}\in (0,1)$. Using \eqref{neum:alternative},  the proof of \eqref{fpt}now follows exactly the same lines as that of Proposition~\ref{exs:Dve}. The only difference is that the excess is defined by
\[
    E^{+}(r)=\mint_{B^{+}_{r}}\bigl|D\ve-(D\ve)_{B^{+}_{r}}\bigr|\,dx,
\]
and that, in the iteration, powers of $\tau_{d}$ replace the negative powers of~$2$.
Then, by using \eqref{fpt}, \eqref{bbb:neu}  and Campanato chacterization of H\"older continuity on half balls \cite[Theorem 5.5]{GM12}, the proof of \eqref{fqt}-\eqref{fqt1} is identical to that of Corollary \ref{osc:decay}.  We omit the details.
\end{proof}

Finally, we are able to prove Theorem \ref{thm:homneu} by means of an approximation procedure, analogous to that of Sections \ref{sec:int0}, \ref{sec:dirhomog}.

\begin{proof}[Proof of Theorem \ref{thm:homneu}]
The argument is very similar to the proof of Theorem \ref{thm:inthom} and Theorem \ref{thm:gradddir}. Let $v\in W^{1,B}(B^+_{3R_0})$, and let $v^{e}$ be its even extension to $B_{3R_0}$ given by \eqref{def:even}. For $k$ large enough, let 
\begin{equation*}
    v_k^{bd}(x)=v^{e}\ast\rho_{1/k}(x),\quad x\in B_{2R_0},
\end{equation*}
and consider $v_{\e,k}\in W^{1,2}(B^+_{2R_0})$ the unique solution to
\begin{equation}\label{neu:vek}
\begin{cases}
    -\mathrm{div}\big(\Ae(Dv_{\e,k}) \big)=0\quad&\text{in $B^+_{2R_0}$}
    \\
    \Ae(Dv_{\e,k})\cdot e_n+h_0=0\quad&\text{on $B^0_{2R_0}$}
    \\
    v_{\e,k}=v_k^{bd}\quad&\text{on $\partial B^+_{2R_0}\setminus B^0_{2R_0}.$}
    \end{cases}
\end{equation}
Existence and uniqueness of \eqref{neu:vek} is guaranteed by the theory of monotone operators \cite[Theorem 26.A]{Z90}-- see also the proof of Proposition \ref{prop:existmixed} below.

Then, by applying Theorem \ref{thm:bdd}, Proposition \ref{prop:gradneuLinf}, Proposition \ref{propneu:excess} and a standard covering argument, we find
\begin{equation}\label{uffa}
    \|v_{\e,k}\|_{C^{1,\beta_N}(\overline B^+_r)}\leq C_r\,\Big(1+\int_{B^+_{2R_0}}|D\ve|\,dx \Big),
\end{equation}
    for every $0<r<2R_0$, with $C_r$ independent of $\e,k$.
Moreover, by testing the weak formulation of \eqref{neu:vek} with $v_{\e,k}-v_k^{bd}$, we get
\begin{equation*}
    \begin{split}
        \int_{B^+_{2R_0}}\Ae(Dv_{\e,k})\cdot D(v_{\e,k}-v^{bd}_k)\,dx=h_0\,\int_{B_{2R_0}^0}(v_{\e,k}-v_k^{bd})dx'\,.
    \end{split}
\end{equation*}
By means of  \eqref{coer:Ae} and Young's inequality \eqref{wBbt}, we infer
\begin{equation*}
    \begin{split}
        &\int_{B^+_{2R_0}}\Ae(Dv_{\e,k})\cdot Dv_{\e,k}\,dx\geq c\,\int_{B^+_{2R_0}}B_\e(|Dv_{\e,k}|)\,dx
        \\
         &\bigg|\int_{B^+_{2R_0}}\Ae(Dv_{\e,k})\cdot Dv_k^{bd}\,dx\bigg|\leq \frac{c}{4}\, \int_{B^+_{2R_0}}B_\e(|Dv_{\e,k}|)\,dx+C\,\int_{B^+_{2R_0}}B_\e(|Dv_k^{bd}|)\,dx,
    \end{split}
\end{equation*}
    while by the trace inequality \eqref{in:trace0},
    Young's inequality \eqref{Young} and Remark \ref{remark:importante}, we get
    \begin{equation*}
        \begin{split}
            \bigg|h_0\,\int_{B_{2R_0}^0}(v_{\e,k}-v_k^{bd})dx'\bigg| &\leq C\,|h_0|\,\int_{B^+_{2R_0}}|Dv_{\e,k}-Dv_k^{bd}|\,dx
            \\
            &\leq \frac{c}{4}\int_{B^+_{2R_0}}B_\e\big(|Dv_{\e,k}| \big)\,dx+C'\,\int_{B^+_{2R_0}}B_\e\big(|Dv_k^{bd}|+1\big)\,dx,
        \end{split}
    \end{equation*}
    with $C,C'$ independent of $\e,k$. Merging the content of the four expressions above, and using \eqref{BE:unif} and that $v_k^{bd}\xrightarrow{k\to\infty}v$ in $W^{1,B}(B^+_{2R_0})$ by the properties of convolution, we deduce 
    \begin{equation*}
        \limsup_{k\to \infty}\limsup_{\e\to 0}\int_{B^+_{2R_0}}B_\e\big(|Dv_{\e,k}| \big)\,dx\leq C\bigg(1+\int_{B^+_{2R_0}}B(|Dv|)\,dx \bigg),
    \end{equation*}
    with $C>0$ independent of $\e,k$. We use this piece of information coupled with \eqref{uffa} and the same argument of  \eqref{energy:bound}-\eqref{calBmon} and discussion below, thus getting, up to subsequences,
    \begin{equation}\label{uffa1}
        \lim_{k\to \infty}\lim_{\e\to 0}v_{\e,k}= w\quad\text{in $C^{1,\beta_N}(\overline B^+_{r})$, for all $0<r<2R_0$,}
    \end{equation}
with $w\in W^{1,B}(B^+_{2R_0})$ satisfying  $w=v$ on $\partial B_{2R_0}^+\setminus B^0_{2R_0}$.
    
Letting $\e\to 0$ and $k\to \infty$ in the weak formulation of \eqref{neu:vek}, and using Lemma \eqref{Aeunif} and \eqref{uffa1}, we deduce that $w\in W^{1,B}(B^+_{2R_0})$ is solution to
\begin{equation*}
\begin{cases}
    -\mathrm{div}\big(\A(Dw) \big)=0\quad & \text{in $B^+_{2R_0}$}
    \\
    \A(Dw)\cdot e_n+h_0=0\quad&\text{on $B^0_{2R_0}$}
    \\
    w=v\quad &\text{on $\partial B^+_{2R_0}\setminus B^0_{2R_0}.$}
    \end{cases}
\end{equation*}
hence $w=v$ by uniqueness. Thereby using \eqref{uffa1}, and  passing to the limit  in \eqref{bbb:neu}, \eqref{fpt}-\eqref{fqt1}, we finally obtain \eqref{neu:EST}-\eqref{NEU:est2}. This concludes the proof.
\end{proof}

\section{Proof Theorem \ref{thm:interior}}\label{sec:thmint}

This section is devoted to the proof of the interior gradient regularity of solutions to \eqref{eq1}.
The argument relies on the so-called perturbation method. 
More precisely, we consider a function $u_{0}$ solving the homogeneous problem with frozen coefficients
\[
    -\operatorname{div}\A(x_{0},Du_{0})=0\quad\text{in $B_R(x_0)$}
\]
and $u_0=u$ on $\partial B_R(x_0)$, for $x_0\in \Omega$, $R>0$ such that $B_R\Subset \Omega$. By the results Section \ref{sec:int0}, $u_{0}$ enjoys fine oscillation estimates, which  
 are then transferred to the original solution $u$ via a comparison argument.  Namely, we test the equations satisfied by $u$ and $u_{0}$ with $u-u_{0}$, and by 
exploiting the coercivity and H\"older continuity of $\A(x,\xi)$, we obtain Campanato-type estimates for $u$, and the H\"older continuity then follows as a consequence of Campanato's theorem.
\vspace{0.1cm}

So let $u\in W^{1,B}_{loc}(\Omega)$ be a weak solution to \eqref{eq1}.
First, we establish existence and some estimates for the solution to the homogeneous frozen problem.

\begin{proposition}\label{prop:ex}
    Let $x_0\in \Omega$ and $B_{R}(x_0)\Subset \Omega$. Then there exists a unique solution $u_0\in W^{1,B}(B_R)$ of the problem
    \begin{equation}\label{frozen}
        \begin{cases}
            -\mathrm{div}\big(\A(x_0,Du_0) \big)=0\quad& \text{in $B_R(x_0)$}
            \\
            u_0=u\quad & \text{on $\partial B_R(x_0)$.}
        \end{cases}
    \end{equation}
    Moreover, the estimate
\begin{equation}\label{en:u0}
  \int_{B_R(x_0)}B\big(|Du_0| \big) \,dx\leq C\,\int_{B_R(x_0)}\big[ B\big(|Du| +1\big)\big] \,dx,
\end{equation}
    holds true with $C=C(n,\l,\l,i_a,s_a)>0$.
\end{proposition}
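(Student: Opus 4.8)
The statement is a standard existence-plus-energy-estimate result for the frozen homogeneous problem, so the proof proceeds in two independent parts. First I would establish existence and uniqueness of $u_0\in W^{1,B}(B_R(x_0))$ via the theory of monotone operators. Since $\A(x_0,\cdot)$ is continuous on $\R^n$, of class $C^1$ away from the origin, and satisfies \eqref{ass:A} (hence the strong monotonicity \eqref{strong:coer} from Lemma~\ref{lemma:Agrco}, namely $\big(\A(x_0,\xi)-\A(x_0,\eta)\big)\cdot(\xi-\eta)\ge c_0\,a(|\xi|+|\eta|)\,|\xi-\eta|^2\ge 0$), the operator
\[
\mathcal{T}:W^{1,B}_0(B_R(x_0))\to \big(W^{1,B}_0(B_R(x_0))\big)^*,\qquad \langle \mathcal{T}w,\varphi\rangle=\int_{B_R(x_0)}\A(x_0,Du+Dw)\cdot D\varphi\,dx,
\]
is well defined (by the growth bound $|\A(x_0,\xi)-\A(x_0,0)|\le C_1 b(|\xi|)$ from \eqref{co:gr}, together with $|\A(x_0,0)|\le \L$ and H\"older's inequality \eqref{holder:orlicz} in Orlicz spaces), strictly monotone, coercive, and hemicontinuous; the reflexivity of $W^{1,B}$ (a consequence of the $\Delta_2$ and $\nabla_2$ conditions) and \cite[Theorem 26.A]{Z90} then give a unique $w\in W^{1,B}_0(B_R(x_0))$ with $\mathcal{T}w=0$, and $u_0:=u+w$ is the desired solution, with $u_0=u$ on $\partial B_R(x_0)$ in the trace sense since $B_R(x_0)$ is a smooth (hence Lipschitz) domain. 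Coercivity is checked using \eqref{co:gr}$_2$: $\langle \mathcal{T}w,w\rangle\ge c_1\int B(|Du+Dw|)\,dx - \int \A(x_0,0)\cdot Dw\,dx$, and the last term is absorbed via Young's inequality \eqref{young1}, which after using the quasi-triangle inequality \eqref{triangle} yields $\langle\mathcal{T}w,w\rangle/\|w\|_{W^{1,B}_0}\to\infty$.

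Second, for the energy estimate \eqref{en:u0}, I would test the weak formulation of \eqref{frozen} with the admissible test function $\varphi=u_0-u\in W^{1,B}_0(B_R(x_0))$ (admissible by the density argument recalled after \eqref{def:weak}), obtaining
\[
\int_{B_R(x_0)}\A(x_0,Du_0)\cdot (Du_0-Du)\,dx=0,
\]
hence $\int \A(x_0,Du_0)\cdot Du_0\,dx = \int \A(x_0,Du_0)\cdot Du\,dx$. Applying \eqref{co:gr} (with $x=x_0$) to the left-hand side gives $\int\A(x_0,Du_0)\cdot Du_0\,dx\ge c_1\int B(|Du_0|)\,dx - \int |\A(x_0,0)|\,|Du_0|\,dx\ge c_1\int B(|Du_0|)\,dx-\L\int|Du_0|\,dx$, and on the right-hand side $|\A(x_0,Du_0)|\le |\A(x_0,0)|+C_1 b(|Du_0|)\le \L+C_1 b(|Du_0|)$ by \eqref{co:gr}, so
\[
c_1\int_{B_R(x_0)}B(|Du_0|)\,dx\le \int_{B_R(x_0)}\big(\L+C_1 b(|Du_0|)\big)|Du|\,dx+\L\int_{B_R(x_0)}|Du_0|\,dx.
\]
The terms $b(|Du_0|)|Du|$ and $|Du_0|$ (the latter via \eqref{simple}) are then absorbed into $c_1\int B(|Du_0|)$ using Young's inequality \eqref{young1} with a small parameter, leaving $\int B(|Du_0|)\le C\int\big(B(|Du|)+1\big)\,dx$ on a ball of radius $R\le 1$, which is \eqref{en:u0} after majorizing $B(|Du|)+1\le C\,B(|Du|+1)$ via \eqref{triangle} and \eqref{B=1}. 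All constants depend only on $n,\l,\L,i_a,s_a$ as claimed (note the statement's $C$ does not depend on $\L_{\mathrm h}$ since the frozen coefficient has no $x$-dependence).

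\textbf{Main obstacle.} The only genuinely delicate point is verifying coercivity and hemicontinuity of $\mathcal{T}$ in the Orlicz setting with sufficient care: one must handle the absence of homogeneity of $B$, ensure the functional $w\mapsto \int\A(x_0,Du+Dw)\cdot D\varphi$ is finite and continuous along line segments (using $\Delta_2$ for $B$ and $\nabla_2$/$\Delta_2$ for $\widetilde B$, together with dominated convergence), and confirm that $W^{1,B}_0$ is the correct space in which to set up the monotone-operator machinery. These are all by-now-routine in Orlicz-Sobolev analysis, and the preliminary estimates \eqref{Bcomeb}--\eqref{young1} collected in Section~\ref{sec:young} provide exactly the tools needed; the energy estimate itself is entirely elementary once existence is in hand.
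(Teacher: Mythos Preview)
Your proposal is correct and follows essentially the same approach as the paper: existence and uniqueness via the Browder--Minty theorem \cite[Theorem 26.A]{Z90} applied to the operator $w\mapsto \int_{B_R}\A(x_0,Du+Dw)\cdot D\varphi\,dx$ on $W^{1,B}_0(B_R)$, with monotonicity from \eqref{strong:coer}, coercivity from \eqref{co:gr}, and hemicontinuity by dominated convergence; then the energy estimate by testing with $u_0-u$ and absorbing via \eqref{young1}. The paper's argument is identical in structure and in the key inequalities invoked.
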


\begin{proof}
In the course of the proof, the center of the balls will be implicitly taken to be $x_{0}$,  that is, we write $B_{R}=B_{R}(x_{0})$.
To prove the existence of $u_0$, we want to apply \cite[Theorem 26.A]{Z90} with function space $X=W^{1,B}_0(B_R)$, endowed with norm $\|v\|=\|Dv\|_{L^B(B_R)}$, and operator
    \begin{equation*}
     \langle Av_1,v_2\rangle=\int_{B_R}\A(x_0,Dv_1+Du)\cdot Dv_2\,dx,\quad v_1,v_2\in X.
    \end{equation*}
First observe that $A$ is a monotone operator; indeed, if $v_1\neq v_2$ (and thus $Dv_1\not\equiv Dv_2$), by \eqref{strong:coer} we have
\begin{equation*}
    \langle Av_1-A v_2,v_1-v_2\rangle=\int_{B_R} \big[\A(x_0,Du+Dv_1)-\A(x_0,Du+Dv_2)\big]\cdot D(v_1-v_2)\,dx>0
\end{equation*}
  Let us show the hemicontinuity of $A$, that is the continuity of the map
    \begin{equation*}
        [0,1]\ni t\mapsto \langle A(v_1+tv_2),v_3\rangle=\int_{B_R}\A(x_0, Du+Dv_1+tDv_2)\cdot Dv_3\,dx.
    \end{equation*}
 By\eqref{A:hold}$_2$, \eqref{co:gr}, Young's inequality \eqref{young1}, \eqref{simple} and the monotonicity of $B$, we have 
    \begin{equation*}
    \begin{split}
        \A(x_0, Du+Dv_1+tDv_2)\cdot Dv_3&\leq C\,b\big(|Du|+|D v_1|+|Dv_2| \big)\,|Dv_3|+C\,|Dv_3|
        \\
        &\leq C' B\big(|Du|+|D v_1|+|Dv_2|+|Dv_3| \big)+1
        \end{split}
    \end{equation*}
    hence the hemicontinuity of $A$  follows by the continuity of $\xi\mapsto \A(x,\xi)$
    and dominated convergence theorem. We are left to prove the coercivity of $A$. By \eqref{co:gr}, \eqref{A:hold}$_2$, \eqref{triangle} and \eqref{simple}, we have
    \begin{equation*}
    \begin{split}
        \langle Av,v\rangle=&\int_{B_R}\A(x_0,Du+Dv)\cdot Dv\,dx
        \\
        \geq& \,c\,\int_{B_R}B\big(\big|Dv \big|\big)\,dx-C\,\int_{B_R}B\big(|Du| \big) \,dx-C\,\L\,\int_{B_R}|Dv|\,dx
        \\
        \geq &\,c'\int_{B_R}B(|Dv|)\,dx-C\,\int_{B_R}B\big(|Du| \big) \,dx-C'\,\widetilde{B}(1)\,|B_R|,
        \end{split}
    \end{equation*}
    with $c,c',C,C'>0$ depending on $n,\l,\L,i_a,s_a$. In particular, assuming $\|v\|=\|Dv\|_{L^B(B_R)}\geq 1$, from the above inequality and \eqref{B2t}, we have
    \begin{equation*}
    \begin{split}
        \langle Av,v\rangle&\geq c\,\|Dv\|^{i_B}_{L^B(B_R)}\,\int_{B_R}B\bigg(\frac{|Dv|}{\|Dv\|_{L^B(B_R)}} \bigg)\,dx-C\,\int_{B_R}B\big(|Du| \big) \,dx-C'\,\widetilde{B}(1)\,|B_R|
        \\
        &\stackrel{\eqref{modular1}}{=}c\,\|Dv\|_{L^B(B_R)}^{i_B}-C\,\int_{B_R}B\big(|Du| \big) \,dx-C'\,\widetilde{B}(1)\,|B_R|
\end{split}   
    \end{equation*}
    Since $i_B>1$, it follows that
    \begin{equation*}
        \lim_{\|v\|\to +\infty}\frac{\langle Av,v\rangle}{\|v\|}= +\infty
    \end{equation*}
    that is the desired coercivity. 
    Hence, \cite[Theorem 26.A]{Z90} ensures the existence and uniqueness of $v_0$ solution to $ Av_0=0$ in  $X^*$ (the dual space of $X$), which is equivalent to the solvability of \eqref{frozen} with $u_0=v_0+u$. Next, we test the weak formulation of \eqref{frozen} with $u-u_0$, thus getting
\begin{equation*}
\begin{split}
    \int_{B_R} B\big(|Du_0| \big)\,dx&\stackrel{\eqref{co:gr}}{\leq } C\,\int_{B_R} \big(\A(x_0,Du_0)-\A(x_0,0)\big)\cdot Du_0\,dx
    \\
    &\stackrel{\eqref{frozen}}{=}C\,\int_{B_R} \A(x_0,Du_0)\cdot Du-C   \,\int_{B_R}\A(x_0,0)\cdot Du_0\,dx
    \\
    &\stackrel{\eqref{co:gr},\eqref{A:hold}_2}{\leq} C'\,\int_{B_R} b\big(|Du_0|\big)\,|Du|\,dx+C\,\L\,\int_{B_R}|Du_0|\,dx
    \\
    &\stackrel{\eqref{young1},\eqref{Young},\eqref{tB=1}}{\leq}\frac{1}{2}\,\int_{B_R} B\big(|Du_0|\big)\,dx+C''\,\int_{B_R}\big[ B(|Du|)+1\big]\,dx,
    \end{split}
\end{equation*}
    with $C,C',C''>0$ depending on $n,\l,\L,i_a,s_a$.  Equation \eqref{en:u0} thus follows.
\end{proof}

Next, we recollect some standard estimates on $u_0$, that immediately follow from Theorem \ref{thm:inthom}.
\begin{proposition}\label{pr:otherest}
    Let $u_0\in W^{1,B}(B_R(x_0))$ be the solution to \eqref{frozen}. Then there exists $C=C(n,\l,\L,i_a,s_a)>0$ such that
\begin{equation}\label{du00}
    \mint_{B_r(x_0)} B\big(|Du_0| \big)\,dx\leq C\,\mint_{B_R(x_0)} B\big(|Du_0| \big)\,dx,
\end{equation}
and the following excess decay estimate 
\begin{equation}\label{excBB}
    \mint_{B_r(x_0)}B\Big(\big|Du_0-(Du_0)_{B_r(x_0)} \big|\Big)\,dx\leq C\,\left( \frac{r}{R}\right)^{\alpha_{\mathrm{h}}\,i_B} \mint_{B_R(x_0)}B\Big(\big|Du_0-(Du_0)_{B_R(x_0)} \big|\Big)\,dx,
\end{equation}
   holds for every $0<r\leq R$, with $\alpha_{\mathrm{h}}=\alpha_{\mathrm{h}}(n,\l,\L,i_a,s_a)\in (0,1)$ given by Theorem \ref{thm:inthom}.
\end{proposition}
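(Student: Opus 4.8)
\emph{Approach.} I would deduce both \eqref{du00} and \eqref{excBB} from the interior estimates of Theorem~\ref{thm:inthom} --- which are available precisely because $u_0$ solves the \emph{autonomous} homogeneous equation $-\mathrm{div}(\A(x_0,Du_0))=0$ in $B_R(x_0)$ --- by upgrading the $L^\infty$--$L^1$ bound \eqref{inf:hom} and the oscillation bound \eqref{oscdes} to their $L^B$-averaged counterparts, using Jensen's inequality (recall $B$ is convex with $B(0)=0$) together with the $\Delta_2$--$\nabla_2$ properties \eqref{B2t}. One preliminary point: the estimates of Theorem~\ref{thm:inthom} apply on balls compactly contained in the domain of the equation, so I would first use them on concentric balls $B_\rho(x_0)\Subset B_R(x_0)$ and then let $\rho\uparrow R$; this causes no loss since $u_0\in C^{1,\alpha_{\mathrm h}}_{loc}(B_R(x_0))$ and $B(|Du_0|)\in L^1(B_R(x_0))$ by \eqref{en:u0}, so the relevant averages are finite and continuous in the radius. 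I also assume $c_{\mathrm h},C_{\mathrm h}\ge 1$ without loss of generality.

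\emph{Proof of \eqref{du00}.} For $R/4\le r\le R$ the claim is just a volume comparison, $\mint_{B_r}B(|Du_0|)\,dx\le 4^n\mint_{B_R}B(|Du_0|)\,dx$. For $0<r<R/4$ I would combine \eqref{inf:hom} (after the exhaustion argument, to bound $\sup_{B_r}|Du_0|$ by $c_{\mathrm h}\mint_{B_{R/2}}|Du_0|\,dx$), a second volume comparison to replace $B_{R/2}$ by $B_R$, the monotonicity of $B$, the inequality $B(Ct)\le C^{s_B}B(t)$ from \eqref{B2t} (applied with $C=2^nc_{\mathrm h}\ge1$), and Jensen's inequality, to arrive at
\[
\mint_{B_r}B(|Du_0|)\,dx\le B\Big(2^nc_{\mathrm h}\mint_{B_R}|Du_0|\,dx\Big)\le (2^nc_{\mathrm h})^{s_B}\mint_{B_R}B(|Du_0|)\,dx .
\]

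\emph{Proof of \eqref{excBB}.} The plan is a two-range split around a threshold $\delta_0=\delta_0(n,\l,\L,i_a,s_a)\in(0,1/4]$ chosen so small that $\sqrt n\,C_{\mathrm h}\,\delta_0^{\alpha_{\mathrm h}}\le 1$, with $C_{\mathrm h}$ the constant of \eqref{oscdes}. For $0<r\le\delta_0R$: \eqref{oscdes} (via the exhaustion argument, and \eqref{medie} plus a volume comparison to pass from $B_{R/2}$ to $B_R$) bounds $\operatorname*{osc}_{B_r}Du_0$ by $C_{\mathrm h}(r/R)^{\alpha_{\mathrm h}}s$, where $s:=\mint_{B_R}|Du_0-(Du_0)_{B_R}|\,dx$; since $|Du_0(x)-(Du_0)_{B_r}|\le\sqrt n\operatorname*{osc}_{B_r}Du_0$ pointwise and $\lambda:=\sqrt n\,C_{\mathrm h}(r/R)^{\alpha_{\mathrm h}}\le 1$, the monotonicity of $B$, the inequality $B(\lambda s)\le\lambda^{i_B}B(s)$ from \eqref{B2t}, and Jensen's inequality yield
\[
\mint_{B_r}B\big(|Du_0-(Du_0)_{B_r}|\big)\,dx\le \lambda^{i_B}B(s)\le (\sqrt n\,C_{\mathrm h})^{i_B}\Big(\frac{r}{R}\Big)^{\alpha_{\mathrm h}i_B}\mint_{B_R}B\big(|Du_0-(Du_0)_{B_R}|\big)\,dx .
\]
For $\delta_0R<r\le R$ I would instead invoke \eqref{added} with $\tau=\delta_0$ and use $(r/R)^{\alpha_{\mathrm h}i_B}\ge\delta_0^{\alpha_{\mathrm h}i_B}$; taking the larger of the two constants gives \eqref{excBB} with $C=C(n,\l,\L,i_a,s_a)$.

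\emph{Main difficulty.} There is no genuine obstacle here; the only point needing care is the extraction of the decay factor: the $\nabla_2$-inequality $B(\lambda s)\le\lambda^{i_B}B(s)$ holds only for $\lambda\le 1$, so $\sqrt n\,C_{\mathrm h}(r/R)^{\alpha_{\mathrm h}}$ must first be forced below $1$ (whence the threshold $\delta_0$), the complementary range being absorbed into \eqref{added}; and passing through $B$ via its lower growth exponent $i_B>1$ is exactly why the resulting exponent in \eqref{excBB} is $\alpha_{\mathrm h}i_B$ rather than $\alpha_{\mathrm h}$.
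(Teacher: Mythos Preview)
Your proposal is correct and follows essentially the same approach as the paper: split into a small-radius range (where the pointwise bounds \eqref{inf:hom} and \eqref{oscdes} from Theorem~\ref{thm:inthom} feed into $B$ via monotonicity, \eqref{B2t}, and Jensen) and a large-radius range handled by volume comparison and \eqref{added}. The paper simply splits at $r=R/4$ rather than at your $\delta_0 R$; your extra care in forcing $\lambda=\sqrt n\,C_{\mathrm h}(r/R)^{\alpha_{\mathrm h}}\le 1$ is not strictly needed, since one can always factor $B\big(C(r/R)^{\alpha_{\mathrm h}}s\big)\le C^{s_B}B\big((r/R)^{\alpha_{\mathrm h}}s\big)\le C^{s_B}(r/R)^{\alpha_{\mathrm h}i_B}B(s)$ by applying the two cases of \eqref{B2t} separately to the factors $C\ge 1$ and $(r/R)^{\alpha_{\mathrm h}}\le 1$.
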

\begin{proof}
   In the case $R/4\leq r\leq R$, Equation \eqref{du00} is immediate to prove, while \eqref{excBB} easily follows from \eqref{added}. So let us assume that $0<r\leq R/4$. We compute
    \begin{multline*}
        \mint_{B_r(x_0)} B\big( |Du_0|\big)\,dx\leq B\Big(\sup_{B_r(x_0)}|Du_0| \Big)\leq B\Big(\sup_{B_{R/4}(x_0)}|Du_0| \Big)
            \\
            \stackrel{\eqref{inf:hom}}{\leq} B\Big(c_{\mathrm{h}}\mint_{B_{R/2}(x_0)}|Du_0|\,dx \Big)            \stackrel{\eqref{B2t},\text{Jensen}}{\leq} C\,\mint_{B_R(x_0)} B\big(|Du_0| \big)\,dx,
    \end{multline*}
 and \eqref{du00} is proven. Then, by means of \eqref{oscdes}, \eqref{B2t} and Jensen inequality, we obtain
    \begin{equation*}
        \begin{split}
            \mint_{B_r(x_0)}B\Big(\big|Du_0-(Du_0)_{B_r(x_0)} \big|\Big)\,dx &\leq B\Big( \operatorname*{osc}_{B_r(x_0)}Du_0\Big)
            \\
            &\leq B\Big(C\,\Big(\frac{r}{R}\Big)^{\alpha_\mathrm{h}}\,\mint_{B_{R}(x_0)}|Du_0-(Du_0)_{B_R(x_0)}|\,dx\Big)
            \\
            &\leq C'\,\Big(\frac{r}{R}\Big)^{\alpha_\mathrm{h}\,i_B}B\Big(\mint_{B_R(x_0)}|Du_0-(Du_0)_{B_R(x_0)}|\,dx\Big)
            \\
            &\leq C'\,\Big(\frac{r}{R}\Big)^{\alpha_\mathrm{h}\,i_B}\mint_{B_R(x_0)}B\Big(|Du_0-(Du_0)_{B_R(x_0)}|\Big)\,dx,
        \end{split}
    \end{equation*}
which proves \eqref{excBB}.
\end{proof}
Our next, important result is the comparison estimate between $u$ and $u_0$.
\begin{proposition}[Comparison estimate]\label{prop:comparison}
Let $u,u_0, x_0$ and $R$ be as in Proposition \ref{prop:ex}. Then there exist $C=C(n,\l,\L,\L_\mathrm{h},i_a,s_a)>0$ and $\theta=\theta(n,d,\alpha)\in (0,1/2)$ such that
\begin{equation}\label{comp:fin1}
\begin{split}
    \int_{B_R(x_0)} B\big(|Du-Du_0| \big) \,dx\leq C\,\Big( 1+\|f\|_{L^d(B_R(x_0))}\Big)&\,\Big(\int_{B_R(x_0)}\big[B\big(|Du|\big)+1\big]\,dx\Big)\,R^\theta.
    \end{split}
\end{equation}
\end{proposition}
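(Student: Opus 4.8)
The plan is to run the standard comparison/perturbation argument, testing both equations with $\varphi = u - u_0 \in W^{1,B}_0(B_R(x_0))$. Writing $B_R = B_R(x_0)$ for brevity, subtracting the weak formulations of \eqref{eq1} (for $u$) and \eqref{frozen} (for $u_0$) gives
\begin{equation*}
\int_{B_R}\big(\A(x,Du) - \A(x_0,Du_0)\big)\cdot D(u-u_0)\,dx = \int_{B_R} f\,(u-u_0)\,dx.
\end{equation*}
The left-hand side I would split by inserting $\pm\,\A(x_0,Du)$: the term $\big(\A(x_0,Du)-\A(x_0,Du_0)\big)\cdot D(u-u_0)$ is bounded below by $c_0\,a(|Du|+|Du_0|)\,|Du-Du_0|^2$ via \eqref{strong:coer}, which by the key inequality \eqref{el:comparison} controls $\delta^{-1}\big[B(|Du-Du_0|) - C\delta(B(|Du|)+B(|Du_0|))\big]$ after rearranging. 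The remaining term $\big(\A(x,Du)-\A(x_0,Du)\big)\cdot D(u-u_0)$ is estimated using the H\"older continuity \eqref{A:hold}: it is at most $\L_{\mathrm h}(1+b(|Du|))\,R^\alpha\,|Du-Du_0|$, which after Young's inequality \eqref{young1} (applied with a small parameter to absorb into the good term) and \eqref{Bcomeb} gives a contribution like $\delta\!\int_{B_R} B(|Du-Du_0|) + C_\delta R^{\alpha\cdot(\text{something})}\!\int_{B_R}(1+B(|Du|))$.

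The right-hand side $\int_{B_R} f(u-u_0)\,dx$ is where the exponent $\theta$ and the dependence on $\|f\|_{L^d}$ enter. Here I would use Lemma \ref{lemma:diver} to write $f = \operatorname{div}F$ with $F \in C^{0,1-n/d}$, $\|F\|_{C^{0,1-n/d}(\overline{B}_R)}\le C(n,d)\|f\|_{L^d(B_R)}$, so that $\int_{B_R} f(u-u_0)\,dx = -\int_{B_R} F\cdot D(u-u_0)\,dx$ (no boundary term since $u-u_0$ has zero trace). Then $|F|\le \|F\|_{C^{0,1-n/d}}\,R^{1-n/d}$ on $B_R$ (using $F$ vanishes nowhere a priori — actually one only needs the sup bound, and since $R\le 1$ one can also absorb the value at the center, or simply note $|F|\le \|F\|_{C^{0,1-n/d}(\overline B_R)}$ times a power of $R$ after subtracting a constant, which is harmless as $D(u-u_0)$ integrates the constant to zero). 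This yields $\big|\int_{B_R}f(u-u_0)\big|\le C\,\|f\|_{L^d}\,R^{1-n/d}\!\int_{B_R}|D(u-u_0)| \le \delta\!\int_{B_R}B(|Du-Du_0|) + C_\delta\,\|f\|_{L^d}^{(\cdot)}\,R^{(1-n/d)\cdot(\cdot)}\,|B_R|$ after \eqref{int:simple0}/\eqref{young1}. Choosing $\delta$ small to reabsorb all the $\delta\int B(|Du-Du_0|)$ terms into the left side, and then estimating $\int_{B_R}B(|Du_0|)$ by $\int_{B_R}(B(|Du|)+1)$ via \eqref{en:u0}, collects everything into the stated form with $\theta = \min\{\alpha',\,1-n/d\}/(\text{power from Young})$ for a suitable $\alpha'$ coming from the $\Delta_2$-exponents; one takes $\theta\in(0,1/2)$ by further shrinking if necessary (legitimate since $R\le1$). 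The precise bookkeeping of which power of $R$ survives Young's inequality determines $\theta=\theta(n,d,\alpha)$.

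The main obstacle — really the only delicate point — is handling the $f$-term cleanly enough to extract a genuine positive power $R^\theta$ of the radius, \emph{and} to get the multiplicative (rather than additive) structure $(1+\|f\|_{L^d})\cdot\int_{B_R}(B(|Du|)+1)$ on the right. The multiplicative form is important for the iteration in Section~\ref{sec:thmint} and is a bit subtle because Young's inequality naturally produces $\|f\|_{L^d}^{p}$ for some $p>1$; one circumvents this by applying Young's inequality \eqref{young1} in the form that isolates a single copy of $B(|Du-Du_0|)$ with small coefficient and leaves the $f$-dependence in a term of the shape $C\,\widetilde B\!\big(\|f\|_{L^d}R^{1-n/d}\big)\,|B_R|$, then using $R\le 1$ and the elementary bound $\widetilde B(st)\le$ (power of $s$)$\cdot\widetilde B(t)$ plus $\widetilde B(t)\le C(1+t^{s'_B})$ to peel off $\|f\|_{L^d}$ linearly at the cost of a harmless constant and a smaller power $R^\theta$; alternatively one simply keeps $(1+\|f\|_{L^d})$ as a crude upper bound, which is all \eqref{comp:fin1} claims. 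Everything else is a routine assembly of the structural inequalities \eqref{strong:coer}, \eqref{el:comparison}, \eqref{young1}, \eqref{Bcomeb}, \eqref{int:simple0}, together with \eqref{en:u0} from Proposition~\ref{prop:ex}.
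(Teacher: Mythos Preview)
Your approach is essentially correct and follows the same perturbation strategy as the paper, but the organization is more convoluted than necessary, and this leads you to worry about issues that do not actually arise.

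The paper's proof differs from yours in two ways. First, it does \emph{not} apply \eqref{el:comparison} early and then absorb small multiples of $\int B(|Du-Du_0|)$ term by term. Instead it keeps the quadratic form $\int a(|Du|+|Du_0|)|Du-Du_0|^2$ on the left throughout, bounds each error contribution directly by a multiple of $\int_{B_R}[B(|Du|)+1]$ using only \eqref{simple} and \eqref{en:u0} (so $\int|Du-Du_0|\le C\int[B(|Du|)+1]$ with no Young splitting), and only at the very end invokes \eqref{el:comparison} with a single parameter $\delta=\max\{R^{\alpha/2},R^{(1-n/d)/2}\}$. This gives $\theta=\min\{\alpha/2,(1-n/d)/2\}$ cleanly, with no $\Delta_2$-exponents entering $\theta$; your ``power from Young'' is an artifact of the order in which you apply the inequalities.

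Second, for the $f$-term the paper does not use Lemma~\ref{lemma:diver} at all. It simply applies H\"older with exponents $(n,n')$ and the Sobolev inequality for $u-u_0\in W^{1,B}_0(B_R)$:
\[
\int_{B_R} f(u-u_0)\,dx \le \|f\|_{L^n(B_R)}\|u-u_0\|_{L^{n'}(B_R)} \le C\,R^{1-n/d}\|f\|_{L^d(B_R)}\int_{B_R}|Du-Du_0|\,dx,
\]
and then $\int|Du-Du_0|\le C\int[B(|Du|)+1]$ via \eqref{simple} and \eqref{en:u0}. Your divergence-form route through Lemma~\ref{lemma:diver} (with the observation that the constant $F(x_0)$ integrates to zero against $D(u-u_0)$) is a valid alternative leading to the same intermediate bound, but it is longer. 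Either way, the multiplicative structure $(1+\|f\|_{L^d})\int[B(|Du|)+1]$ falls out immediately from this bound---your concern about $\widetilde B$-manipulations and powers of $\|f\|_{L^d}$ is unfounded, because no Young inequality is needed at this step.
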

\begin{proof}
    We test the weak formulation of Equations \eqref{eq1} and \eqref{frozen} with $u-u_0$, and using \eqref{strong:coer} we get 
\begin{equation*}
    \begin{split}
    \int_{B_R(x_0)}a&\big(|Du|+|Du_0| \big)\,|Du-Du_0|^2\,dx 
\\
&\leq C\,\int_{B_R(x_0)}\big(\A(x_0,Du)-\A(x_0,Du_0)\big)\cdot \big(Du-Du_0 \big)\,dx
\\
&=C\,\int_{B_R(x_0)}\A(x_0,Du)\cdot \big(Du-Du_0 \big)\,dx
\\
&=C\,\int_{B_R(x_0)}\big(\A(x_0,Du)-\A(x,Du)\big)\cdot \big(Du-Du_0 \big)\,dx+C\,\int_{B_R(x_0)}f\,(u-u_0)\,dx;
    \end{split}
\end{equation*}
then, by using \eqref{A:hold}, H\"older and Sobolev inequalities, \eqref{simple} and \eqref{en:u0}, we obtain
\begin{equation}
        \begin{split}
            \int_{B_R(x_0)}a&\big(|Du|+|Du_0| \big)\,|Du-Du_0|^2\,dx
\\
&\leq C\,R^{\alpha}\,\int_{B_R(x_0)}|Du-Du_0|\,dx+C\,\Big(\int_{B_R(x_0)}|f|^n\,dx \Big)^{1/n}\,\Big( \int_{B_R(x_0)}|u-u_0|^{n'}\,dx\Big)^{1/n'}
\\
&\leq C'\,R^{\alpha}\int_{B_R(x_0)} \big[ B\big(|Du| \big)+1\big]\,dx+C'\Big(\int_{B_R(x_0)}|f|^n\,dx \Big)^{1/n}\, \int_{B_R(x_0)}|Du-Du_0|\,dx
  \\
&\leq  C''\,\Big(R^{\alpha}+R^{1-n/d}\,\|f\|_{L^d(B_R(x_0))}\Big)\,\int_{B_R(x_0)} \big[ B\big(|Du| \big)+1\big]\,dx.
        \end{split}
    \end{equation}
with $C,C',C''>0$ depending on $n,\l,\L,\L_\mathrm{h},i_a,s_a$. Coupling the above inequality with \eqref{el:comparison} and \eqref{en:u0}, we infer
\begin{equation*}
    \int_{B_R(x_0)} B\big(|Du-Du_0| \big)\,dx\leq C''\,\Big\{\delta+\delta^{-1}\Big(R^{\alpha}+R^{1-n/d}\,\|f\|_{L^d(B_R(x_0))}\Big)\Big\}\,\int_{B_R(x_0)} \big[ B\big(|Du| \big)+1\big]\,dx
\end{equation*}
for all $\delta\in (0,1)$. Thereby choosing $\delta=\max\big\{R^{\alpha/2}, R^{(1-n/d)/2}\big\}$, we finally obtain \eqref{comp:fin1} with 
\[\theta=\min\{\alpha/2, (1-n/d)/2\}.\]

\end{proof}

In our next result, we show that $B(|Du|)$ belongs to the Morrey space $\mathcal{L}^{1,\mu}$ for every $\mu\in (0,n)$. For further details on these spaces, we refer to \cite[Section 5.1]{GM12}.

\begin{lemma}[A Morrey-type estimate]\label{lem:morrey}
    Let $\Omega'\Subset \Omega$, and $u\in W^{1,B}_{loc}(\Omega)$ be a weak solution to \eqref{eq1}.  Then for every $\mu\in (0,n)$, there exists a constant  $C_\mu=C_\mu(n,\l,\L,\L_\mathrm{h},i_a,s_a,\mu)>0$ and radius $R_\mu\in (0,1)$ depending on $n,\l,\L,\L_\mathrm{h},i_a,s_a,\alpha,d,\|f\|_{L^d(\Omega')}$ and $\mu$ such that 
\begin{equation}\label{est:morrey}
    \int_{B_R(x_0)} B\big(|Du
    |\big)\,dx\leq C_\mu\,\bigg(1+\|f\|_{L^d(\Omega')}+\frac{1}{R_0^\mu}\int_{B_{R_0}(x_0)}B\big(|Du| \big)\,dx \bigg)\,R^\mu,\quad R\leq R_0
\end{equation}
for every ball $B_{R_0}(x_0)\Subset \Omega'$, with $R_0\leq R_\mu$.
\end{lemma}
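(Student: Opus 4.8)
The plan is to run the classical perturbation argument and close it with the iteration Lemma~\ref{lem:iteration}. Fix $\mu\in(0,n)$, a point $x_0\in\Omega'$ and a ball $B_{R_0}(x_0)\Subset\Omega'$ with $R_0\leq R_\mu$, where the threshold $R_\mu$ will be pinned down at the very end. For each pair of radii $0<r\leq R\leq R_0$ I would freeze the coefficients at $x_0$ and let $u_0\in W^{1,B}(B_R(x_0))$ be the solution of the homogeneous problem \eqref{frozen} given by Proposition~\ref{prop:ex}, so that the energy bound \eqref{en:u0} and the interior estimates of Proposition~\ref{pr:otherest} are at our disposal.

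The first step is to split $\int_{B_r(x_0)}B(|Du|)\,dx$ via the quasi-triangle inequality \eqref{triangle} into a contribution from $u_0$ and one from the difference $u-u_0$. The $u_0$-term is controlled by \eqref{du00} followed by \eqref{en:u0}, which produces a factor $(r/R)^n$ and replaces $u_0$ by $u$; the difference term is estimated by monotonicity of the integral together with the comparison estimate \eqref{comp:fin1} of Proposition~\ref{prop:comparison}, bounding $\|f\|_{L^d(B_R(x_0))}$ by $\|f\|_{L^d(\Omega')}$. Introducing the nondecreasing quantity $\phi(r):=\int_{B_r(x_0)}[B(|Du|)+1]\,dx$ — the additive $1$ is there to absorb the measure term $c(n)r^n$ and the constants in \eqref{en:u0} and \eqref{comp:fin1} — these two bounds combine into an inequality of the form
\[
\phi(r)\leq A_0\Big[\Big(\tfrac{r}{R}\Big)^n+\big(1+\|f\|_{L^d(\Omega')}\big)R^{\theta}\Big]\phi(R)+c(n)\,R^{\mu},\qquad 0<r\leq R\leq R_0,
\]
where $\theta=\theta(n,d,\alpha)\in(0,1/2)$ is as in Proposition~\ref{prop:comparison} and $A_0\geq1$ depends only on $n,\l,\L,\L_{\mathrm h},i_a,s_a$; here I also used $c(n)r^n\leq c(n)R^{\mu}$ for $0<r\leq R\leq1$.

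The final step is to apply Lemma~\ref{lem:iteration} with $\alpha=n$, $\beta=\mu$, $A=A_0$ and $B=c(n)$. The one point requiring care is that the perturbation parameter in the displayed inequality is not constant but proportional to $R^{\theta}$; since we only consider $R\leq R_0\leq R_\mu$, it is bounded by $(1+\|f\|_{L^d(\Omega')})R_\mu^{\theta}$, and I would choose $R_\mu\in(0,1)$ small enough — depending on $n,\l,\L,\L_{\mathrm h},i_a,s_a,\alpha,d,\|f\|_{L^d(\Omega')}$ and $\mu$ — so that this quantity lies below the threshold $\varepsilon_0=(2A_0)^{-2n/(n-\mu)}$ of Lemma~\ref{lem:iteration}. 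The lemma then yields $\phi(r)\leq C\big[R^{-\mu}\phi(R)+c(n)\big]r^{\mu}$ for all $0<r\leq R\leq R_\mu$ with $C=C(A_0,n,\mu)$; specializing $R=R_0$ and using $R_0^{\,n-\mu}\leq1$ gives exactly \eqref{est:morrey}, with $C_\mu=C_\mu(n,\l,\L,\L_{\mathrm h},i_a,s_a,\mu)$ (the explicit $\|f\|_{L^d(\Omega')}$ in the statement is merely a weaker upper bound, harmlessly added to the bracket). The main obstacle, such as it is, lies entirely in this last bookkeeping: reconciling the $R$-dependent error term of the comparison estimate with the constant-$\varepsilon$ hypothesis of the iteration lemma, which is precisely why the admissible radius $R_\mu$ must be allowed to depend on $\|f\|_{L^d(\Omega')}$ and on $\mu$.
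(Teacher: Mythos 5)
Your proposal is correct and follows essentially the same route as the paper: split via \eqref{triangle}, control the homogeneous part by \eqref{du00} and \eqref{en:u0}, the difference by \eqref{comp:fin1}, and close with Lemma~\ref{lem:iteration} after shrinking $R_\mu$ so that the $R$-dependent perturbation $(1+\|f\|_{L^d(\Omega')})R^{\theta}$ falls below the threshold $\varepsilon_0$. The only (harmless) cosmetic difference is that you iterate $\phi(r)=\int_{B_r}[B(|Du|)+1]\,dx$ rather than $\int_{B_r}B(|Du|)\,dx$, which merely relocates the $\|f\|$-dependence from the error term $BR^\mu$ into the final bracket.
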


\begin{proof}
   Let $B_{R_0}(x_0)\Subset \Omega'$ be as in the statement, and estimate $\|f\|_{L^d(B_R(x_0))}\leq \|f\|_{L^d(\Omega')}$.
Then for all $0<r\leq R\leq R_0$, we compute
{\small
\begin{equation*}
    \begin{split}
        \int_{B_r(x_0)}B\big(|Du| \big)\,dx \stackrel{\eqref{triangle}}{\leq} &C\,\int_{B_r(x_0)}B\big(|Du-Du_0| \big)\,dx+C\,\int_{B_r(x_0)} B\big( |Du_0|\big)\,dx
        \\
        \stackrel{\eqref{du00}}{\leq} &C\,\int_{B_R(x_0)}B\big(|Du-Du_0| \big)\,dx+C'\,\Big(\frac{r}{R} \Big)^n\,\int_{B_R(x_0)} B\big( |Du_0|\big)\,dx
        \\
        \stackrel{\eqref{comp:fin1},\eqref{en:u0}}{\leq} & C''\,\bigg(\Big(\frac{r}{R} \Big)^n+\Big(1+ \|f\|_{L^d(\Omega'))}\Big)\,R^\theta\bigg)\,\int_{B_R(x_0)}B\big(|Du|\big)\,dx +C''\,\Big(1+ \|f\|_{L^d(\Omega')}\Big)\,R^n\,.
    \end{split}
\end{equation*}
}
with $C,C',C''>0$ depending on $n,\l,\L,\L_\mathrm{h},i_a,s_a$. Setting $\phi(r)=\int_{B_r(x_0)}B\big(|Du| \big)\,dx$, and using $R^n\leq R^{\mu}$, the above inequality can be rewritten as
\begin{equation*}
    \phi(r)\leq C''\,\bigg(\Big(\frac{r}{R} \Big)^n+\Big(1+ \|f\|_{L^d(\Omega'))}\Big)\,R^\theta\bigg)\,\phi(R)+C\,\Big(1+ \|f\|_{L^d(\Omega')}\Big)\,R^\mu;
\end{equation*}
hence, by taking 
\begin{equation}\label{Rmu}
    R_0\leq R_\mu:=\bigg(\frac{1}{2C''(1+\|f\|_{L^d(\Omega')})} \bigg)^{\frac{2n}{(n-\mu)\theta}}
\end{equation}
the thesis follows via an application of Lemma \ref{lem:iteration}.
\end{proof}

We now have all the ingredients to prove the interior regularity of the gradient.

\begin{proof}[Proof of Theorem \ref{thm:interior}] 
 Let $\Omega'\Subset \Omega$, and for $\theta=\theta(n,d,\alpha)$ provided by Proposition \ref{prop:comparison}, we fix
 {\small
 \begin{equation}\label{fix:mu}
     \mu=\mu(n,d,\alpha):=n-\frac{\theta}{2}\in (0,n)\xRightarrow{\text{Lemma \ref{lem:morrey}
     }} R_\mu=R_\mu(n,\l,\L,\L_\mathrm{h},i_a,s_a,d,\alpha,\|f\|_{L^d(\Omega')}).
 \end{equation}
 }
Then let $B_{2R_0}\Subset \Omega'$, be such that $R_0\leq R_\mu$, and  consider $x_0\in B_{R_0}$, and  $0< R\leq R_0$. 

We first observe that, by combining \eqref{comp:fin1} with \eqref{est:morrey} and the choice of $\mu$ in \eqref{fix:mu}, we obtain the improved comparison estimate
 \begin{equation}\label{comp:morrey}
     \mint_{B_R(x_0)} B\big(|Du-Du_0| \big)\,dx\leq C_{f,R_0}\,R^{\theta/2}
 \end{equation}
 where, by also using that $B_{R_0}(x_0)\subset B_{2R_0}\subset \Omega'$, we set
 \[
C_{f,R_0}:=C(n,\l,\L,\L_\mathrm{h},i_a,s_a,d,\alpha)\,\Big( 1+\|f\|_{L^d(\Omega')}\Big)\,\bigg(1+\|f\|_{L^d(\Omega')}+\frac{1}{R_0^\mu}\int_{\Omega'} B\big( |Du|\big)\,dx \bigg).
 \]
 Then, for $0<r\leq R\leq R_0$,  we estimate 
{\small
\begin{equation}\label{urcs}
    \begin{split}
        \mint_{B_r(x_0)} &B\big(|Du-(Du)_{B_r(x_0)}| \big)\,dx
        \\
        \stackrel{\eqref{triangle}, \text{Jensen}}{\leq} &\,C\,\mint_{B_r(x_0)}B\big(|Du-Du_0| \big)\,dx+C\,\mint_{B_r(x_0)}B\big(|Du_0-(Du_0)_{B_r(x_0)}| \big)\,dx
        \\
        \stackrel{\eqref{excBB}}{\leq}&\,C\,\Big(\frac{R}{r} \Big)^n\,\mint_{B_R(x_0)}B\big(|Du-Du_0| \big)\,dx
        \\
        &+C'\,\left( \frac{r}{R}\right)^{\alpha_{\mathrm{h}}i_B} \mint_{B_R(x_0)}B\big(|Du_0-(Du_0)_{B_R(x_0)}| \big)\,dx
         \\
        \stackrel{\eqref{triangle}\text{,Jensen}}{\leq} &\,C_1\bigg(\Big(\frac{R}{r} \Big)^n+\Big(\frac{r}{R}\Big)^{\alpha_{\mathrm{h}}i_B} \bigg)\,\mint_{B_R(x_0)}B\big(|Du-Du_0| \big)\,dx
        \\
        &+C''\,\left( \frac{r}{R}\right)^{\alpha_{\mathrm{h}}i_B} \mint_{B_R(x_0)}B\big(|Du-(Du)_{B_R(x_0)}| \big)\,dx
        \\
        \stackrel{\eqref{comp:morrey}}{\leq} &\,C_2\, C_{f,R_0}\,\Big(\frac{R^{n+\theta/2}}{r^n}\Big)+C''\,\left( \frac{r}{R}\right)^{\alpha_{\mathrm{h}}i_B} \mint_{B_R(x_0)}B\big(|Du-(Du)_{B_R(x_0)}| \big)\,dx.
    \end{split}
\end{equation}
}
with $C,C',C'',C_1,C_2>0$ depending on $n,\l,\L,i_a,s_a$.
Let us now set 
\begin{equation}\label{fix:vphi}
    \varphi(t):=\mint_{B_t(x_0)}B\big(|Du-(Du)_{B_t(x_0)}| \big)\,dx,
\end{equation}
and choose a parameter 
\begin{equation}\label{parameter}
    \tau=\tau(n,\l,\L,i_a,s_a)\in (0,1)\quad\text{such that }\, C''\,\tau^{(\alpha_{\mathrm{h}}i_B)/2}\leq 1.
\end{equation}
By doing so, from \eqref{urcs} we deduce
\begin{equation}\label{fin:itur}
    \varphi(\tau R)\leq \tau^{(\alpha_{\mathrm{h}}i_B)/2}\,\varphi(R)+C_2\,\tau^{-n}\,C_{f,R_0} R^{\theta/2}\quad \text{for all $0<R\leq R_0$.}
\end{equation}
Moreover, by \eqref{added} and our choice of $\tau$ in \eqref{parameter}, it is immediate to verify that
\[\varphi(t)\leq C(n,\l,\L,\L_\mathrm{h},i_a,s_a)\,\varphi(R)\quad\text{for all $t\in (\tau^{k+1}R,\tau^kR)$}.
\]
Hence, thanks to \eqref{fin:itur}, we may apply Lemma \ref{lem:ultiter} and deduce
{\small
\begin{equation}\label{Bcampanato}
     \mint_{B_r(x_0)} B\big(|Du-(Du)_{B_r(x_0)}| \big)\,dx\leq C\,\bigg[\frac{1}{R_0^\vartheta}\, \mint_{B_{R_0}(x_0)} B\big(|Du-(Du)_{B_{R_0}(x_0)}| \big)\,dx+C_{f,R_0}\bigg]\,r^\vartheta\leq \hat{C}_{f,R_0}\,r^\vartheta,
\end{equation}
}
for all $0<r\leq R_0$, and $x_0\in B_{R_0}$, where we set  \[\vartheta=\vartheta(n,\l,\L,i_a,s_a,\alpha,d)=\min\big\{(\alpha_{\mathrm{h}}i_B)/2,\theta/2\big\}\in (0,1),
\]
and 
\[
\hat{C}_{f,R_0}=\hat{C}_{f,R_0}(n,\l,\L,\L_\mathrm{h},i_a,s_a,\alpha,d,\|f\|_{L^d(\Omega')},R_0)=C\,\bigg[\frac{1}{R_0^{\vartheta+n}}\, \int_{\Omega'}\Big[ B\big(|Du|\big)+1\Big] \,dx+C_{f,R_0}\bigg].
\]
Finally, by Jensen inequality
\begin{equation*}
    B\Big( \mint_{B_r(x_0)} |Du-(Du)_{B_r(x_0)}|\,dx\Big)\leq  \mint_{B_r(x_0)} B\big(|Du-(Du)_{B_r(x_0)}| \big)\,dx,
\end{equation*}
so coupling this information with \eqref{Bcampanato}, and using  \eqref{B2t} and \eqref{B=1},  we get
\begin{equation}\label{notBcamp}
    \begin{split}
         \mint_{B_r(x_0)} |Du-(Du)_{B_r(x_0)}|\,dx\leq B^{-1}\Big(\hat{C}_{f,R_0}\,r^\vartheta \Big)\leq B^{-1}\Big(\hat{C}_{f,R_0} \Big)\,r^{\vartheta/s_B}\leq C(i_a,s_a)\,(\hat{C}_{f,R_0})^{\frac{1}{i_B}}\,r^{\beta},
    \end{split}
\end{equation}
for all $0<r\leq R_0$, and $x_0\in B_{R_0}$, where we set $\beta=\beta(n,\l,\L,i_a,s_a,\alpha,d):=\vartheta/s_B\in (0,1)$. 

By Campanato characterization of H\"older continuity \cite[Theorem 5.5]{GM12}, estimate \eqref{notBcamp} implies 
\begin{equation}\label{camp:sphere}
    \|Du\|_{C^{0,\beta}(B_{R_0})}\leq C\bigg(n,\l,\L,\L_\mathrm{h},i_a,s_a,\alpha,d,R_0,\int_{\Omega'}B\big(|Du| \big)\,dx,\|f\|_{L^d(\Omega')}\bigg)\,,
\end{equation}
for all $B_{2R_0}\Subset \Omega'$ with $R_0\leq R_\mu$.
\vspace{0.1cm}

Now let $\Omega''\Subset \Omega'\Subset \Omega$; in order to obtain the $C^{1,\beta}$-estimate on $\Omega''$, it suffices to apply a covering argument. More precisely, we cover the set $\overline{\Omega''}$ with balls $\{B_{R_0}(x_i)\}_{i=1}^N$ centered at $x_i\in \overline{\Omega''}$, and of radius 
\begin{equation}\label{R0cov}
    R_0=R_0\big(n,\l,\L,\L_\mathrm{h},i_a,s_a,d,\alpha,\|f\|_{L^d(\Omega')}, \mathrm{dist}(\Omega'',\partial \Omega')\big):=\min\{\mathrm{dist}(\Omega'',\partial \Omega')/4,R_\mu\}
\end{equation}
 so that $B_{2R_0}(x_i)\Subset \Omega'$ for all $i=1,\dots,N$, and the dependence of the constants in~\eqref{R0cov} follows from~\eqref{fix:mu}. Moreover, 
the cardinality of such a cover satisfies
\begin{equation}\label{cover}
    N\leq C(n)\,\bigg(\frac{\mathrm{diam}(\Omega'')}{R_0}\bigg)^n= C\Big(n,\l,\L,\L_\mathrm{h},i_a,s_a,\alpha,d,\|f\|_{L^d(\Omega')},\mathrm{dist}(\Omega'',\partial \Omega'),\mathrm{diam}(\Omega'')\Big).
\end{equation}
Then let $\{\phi^i\}_{i=1}^N$ be a partition of unity associated to $\{B_{R_0}(x_i)\}_{i=1}^N$, and satisfying $\phi^i\in C^\infty_c(B_{R_0}(x_i))$,
\begin{equation*}
  0\leq \phi^i\leq1,\quad   |D\phi^i|\leq C(n)/R_0,
\end{equation*}
We observe that, by \eqref{eq1} and Lemma \ref{lemma:diver}, $u\in W^{1,B}(\Omega')$ is solution to
\begin{equation*}
    -\mathrm{div}\big(\A_f(x,Du)\big)=0\quad\text{in $\Omega'$,}\quad \text{where}\quad \A_f(x,\xi)=\A(x,\xi)+F(x)
\end{equation*}
with $\mathrm{div}F=f$, and by \eqref{co:gr}, \eqref{A:hold}$_2$ and \eqref{divFf} we have
\begin{equation}\label{Afdiv}
\begin{split}
    &\A_f(x,\xi)\cdot \xi\geq c\,B\big(|\xi| \big)-C\,B\Big(\big(1+\|f\|_{L^d(\Omega')}\big)^{\frac{1}{i_b}}\Big) 
    \\
    &|\A_f(x,\xi)|\leq C\,b\big(|\xi| \big)+C\,b\Big(\big(1+\|f\|_{L^d(\Omega')}\big)^{\frac{1}{i_b}}\Big).
    \end{split}
\end{equation}
with $c,C=c,C(n,\l,\L,d,i_a,s_a)>0$, where we also used the elementary inequalities 
\[
\begin{split}
    &(1+t)=B\big(B^{-1}(1+t) \big)\leq C\,B((1+t)^{\frac{1}{i_b+1}})\leq C\,B((1+t)^{\frac{1}{i_b}})
    \\
   & (1+t)=b\big(b^{-1}(1+t) \big)\leq C\,b((1+t)^{\frac{1}{i_b}}),\quad t\geq 0
\end{split}
\]
 which stem from \eqref{b2t}, \eqref{B2t} and the monotonicity of $B$.

Hence, by applying Theorem \ref{thm:bdd} on each ball $B_{R_0}(x_i)$, and recalling \eqref{R0cov}, we deduce
\begin{equation}\label{fin:Linf}
\begin{split}
    \|u\|_{L^\infty(\Omega'')}&\leq \frac{C}{R_0^n}\int_{\Omega'} |u|\,dx+C\,\big(1+\|f\|_{L^d(\Omega')} \big)^{1/i_b}\,R_0
    \\
    &\leq C\bigg(n,\l,\L,i_a,s_a,d,\alpha,\|f\|_{L^d(\Omega')}, \mathrm{dist}(\Omega'',\partial \Omega'),\int_{\Omega'}|u|\,dx\bigg).
    \end{split}
\end{equation}
Then, by \eqref{camp:sphere}-\eqref{cover}, \eqref{fin:Linf} and the properties of $\phi^i$ and $N$, we get
\begin{equation}\label{finalcover}
    \begin{split}
        \|Du\|_{C^{0,\beta}(\Omega'')}&\leq \sum_{i=1}^N  \|(Du)\,\phi^i\|_{C^{0,\beta}(\Omega'')}
        \\
        &\leq C(n)\,\sum_{i=1}^N\bigg\{\|Du\|_{L^\infty(B_{R_0}(x_i))}\,\|\phi^i\|_{C^{0,\beta}(B_{R_0}(x_i))}+\|\phi^i\|_{L^\infty(\Omega'')}\,\|Du\|_{C^{0,\beta}(B_{R_0}(x_i))} \bigg\}
        \\
        &\leq  C(n)\,\Big\{\frac{N}{R_0^\beta}+ N\Big\}\,\|Du\|_{C^{0,\beta}(B_{R_0}(x_i))}
        \\
        &\leq C\bigg(n,\l,\L,\L_\mathrm{h},i_a,s_a,d,\alpha,\|f\|_{L^d(\Omega')}, \mathrm{dist}(\Omega'',\partial \Omega'),\int_{\Omega'}|u|\,dx+\int_{\Omega'}B\big( |Du|\big)\,dx\bigg),
    \end{split}
\end{equation}
which together with \eqref{fin:Linf} yields \eqref{stimaDuint}.
\end{proof}

\section{Boundary \texorpdfstring{$C^{1,\beta}$}{C1b}-regularity, Dirichlet problems}\label{sec:pfdir}
We first study the gradient regularity of solutions $u\in W^{1,B}(B^+_{R_0})$ to the Dirichlet problem
\begin{equation}\label{eq:dirBup}
    \begin{cases}
        -\mathrm{div}\big(\A(x,Du) \big)=f\quad& \text{in $B^+_{R_0}$}
        \\
        u=g\quad &\text{on $B^0_{R_0}$}
    \end{cases}
\end{equation}
with $g\in C^{1,\alpha}(\R^{n-1})$, and $\A(x,\xi)$ satisfying \eqref{new:coer}-\eqref{A:hold1}. In particular, this implies that the quantitative constants will also depend on \(L_\Omega\) or on an upper bound on \(\|\phi\|_{C^{1,\alpha}}\). We keep this dependence explicit throughout the proofs.

We also assume that $R_0\leq 1$,  $B^+_{R_0}\subsetneq \mathcal{V}$ and $f\in L^d(\mathcal{V})$, where $\mathcal V$ is a bounded domain of $\R^{n}_+$.
\vspace{0.1cm}

We then fix $x'_0\in B^0_{R_0}$, and 
 $R\in (0,1)$ such that $B^+_{R}(x'_0)\subset B^+_{R_0}$; let $u_0\in W^{1,B}(B^+_R(x_0'))$ be the weak solution to the homogeneous, frozen Dirichlet problem
\begin{equation}\label{homog:dirx0}
    \begin{cases}
        -\mathrm{div}\big( \A(x'_0,Du_0)=0\quad &\text{in $B^+_R(x_0')$}
        \\
        u_0=u \quad &\text{on $\partial B^+_R(x_0')$}.
    \end{cases}
\end{equation}
As in Section \ref{sec:thmint}, we start with the following
\begin{proposition}\label{propo:dirhomog}
    There exists a unique weak solution $u_0\in W^{1,B}(B^+_R(x_0'))$ to \eqref{homog:dirx0}. Moreover, it satisfies the energy estimate
    \begin{equation}\label{en:u0dir}
        \int_{B^+_{R}(x_0')} B\big(|Du_0| \big)\,dx\leq C\,\int_{B^+_{R}(x_0')}\big[ B\big(|Du| \big)+1\big]\,dx
    \end{equation}
    with $C=C(n,\l,\L,i_a,s_a,L_\Omega)>0$.
\end{proposition}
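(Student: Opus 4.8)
The plan is to mirror the argument of Proposition~\ref{prop:ex}, adapting it to the half-ball setting and to the frozen, nonautonomous vector field $\A(x_0',\cdot)$. The key point is that for fixed $x_0'$, the vector field $\xi\mapsto\A(x_0',\xi)$ still satisfies the autonomous structure conditions \eqref{new:coer} (hence the coercivity/growth bounds \eqref{co:gr} and the strong monotonicity \eqref{strong:coer}), with constants depending additionally on $L_\Omega$. First I would set up the Browder--Minty framework exactly as in the proof of Proposition~\ref{prop:ex}: take the reflexive Banach space $X=W^{1,B}_0(B_R^+(x_0'))$ with norm $\|v\|=\|Dv\|_{L^B(B_R^+(x_0'))}$ (recall $W^{1,B}_0$ of a Lipschitz domain coincides with the zero-trace subspace, by the extension theorem cited after the definition of $W^{1,B}_0$), and define the operator
\begin{equation*}
\langle Av_1,v_2\rangle=\int_{B_R^+(x_0')}\A(x_0',Dv_1+Du)\cdot Dv_2\,dx,\qquad v_1,v_2\in X.
\end{equation*}
Monotonicity follows from \eqref{strong:coer} applied to $\A(x_0',\cdot)$; hemicontinuity follows from the continuity of $\xi\mapsto\A(x_0',\xi)$, the growth bound in \eqref{co:gr} together with \eqref{A:hold1}$_2$, Young's inequality \eqref{young1}, \eqref{simple}, and dominated convergence, just as before; coercivity follows from the lower bound in \eqref{co:gr}, \eqref{A:hold1}$_2$, \eqref{triangle}, \eqref{simple}, \eqref{modular1} and \eqref{B2t}, using $i_B>1$, again verbatim from the autonomous case since these estimates are insensitive to replacing the ball by a half-ball. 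Thus \cite[Theorem~26.A]{Z90} yields a unique $v_0\in X$ with $Av_0=0$, and $u_0:=v_0+u\in W^{1,B}(B_R^+(x_0'))$ is the unique weak solution to \eqref{homog:dirx0}; uniqueness of $u_0$ is equivalent to uniqueness of $v_0$ via strict monotonicity.

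For the energy estimate \eqref{en:u0dir} I would test the weak formulation of \eqref{homog:dirx0} with $u-u_0\in W^{1,B}_0(B_R^+(x_0'))$ (admissible since $u_0=u$ on $\partial B_R^+(x_0')$ in the trace sense). This gives
\begin{equation*}
\int_{B_R^+(x_0')}\A(x_0',Du_0)\cdot Du_0\,dx=\int_{B_R^+(x_0')}\A(x_0',Du_0)\cdot Du\,dx.
\end{equation*}
Then, exactly as in Proposition~\ref{prop:ex}: the left-hand side is bounded below by $c\int B(|Du_0|)$ using \eqref{co:gr}$_2$ together with $|\A(x_0',0)|\le C(1+L_\Omega)\L$ from \eqref{A:hold1}$_2$ (so that $\A(x_0',Du_0)\cdot Du_0\ge (\A(x_0',Du_0)-\A(x_0',0))\cdot Du_0-|\A(x_0',0)||Du_0|$, and the last term is reabsorbed via Young after using \eqref{simple}); the right-hand side is estimated by $C\int b(|Du_0|)|Du|+C\int|\A(x_0',0)||Du|$, which via Young's inequalities \eqref{young1} and \eqref{Young}, \eqref{tB=1}, and \eqref{simple} is bounded by $\tfrac12\int B(|Du_0|)+C\int[B(|Du|)+1]$. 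Reabsorbing the $\tfrac12$ term yields \eqref{en:u0dir}, with $C$ depending on $n,\l,\L,i_a,s_a,L_\Omega$ — the $L_\Omega$-dependence entering only through the constants in \eqref{new:coer}, \eqref{A:hold1}.

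I do not anticipate a genuine obstacle here: the proof is a routine transcription of Proposition~\ref{prop:ex} with two cosmetic changes — replacing $B_R(x_0)$ by the half-ball $B_R^+(x_0')$, and replacing the autonomous $\A(\xi)$ by the frozen $\A(x_0',\xi)$. The only point requiring a modicum of care is that, unlike in the interior case where one could assume $\A(0)=0$, here $\A(x_0',0)$ need not vanish; but the bound $|\A(x_0',0)|\le C(1+L_\Omega)\L$ from \eqref{A:hold1}$_2$ handles this, and these extra constant terms are absorbed into the ``$+1$'' already present on the right-hand side of \eqref{en:u0dir}. One should also note that $W^{1,B}$-functions on a Lipschitz half-ball admit traces and a well-posed Dirichlet theory, which is exactly what the extension result quoted in Section~\ref{sec:spaces} provides.
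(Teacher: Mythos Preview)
Your proposal is correct and follows exactly the approach the paper intends: the paper explicitly omits the proof, stating it is entirely analogous to that of Proposition~\ref{prop:ex} with $B_R(x_0)$ replaced by $B_R^+(x_0')$ and the constants picking up an $L_\Omega$-dependence via \eqref{new:coer}--\eqref{A:hold1}. Your handling of the nonvanishing $\A(x_0',0)$ term via \eqref{A:hold1}$_2$ is also the right adaptation, matching what the paper already does in the energy estimate of Proposition~\ref{prop:ex}.
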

We omit the proof, as it is entirely analogous to that of Proposition~\ref{prop:ex}, the only difference that \(B_R(x_0)\) is replaced by \(B_R^+(x_0')\), and the presence of $L_\Omega$ in the constants.

The next proposition is the analogue of Proposition \ref{pr:otherest}.
\begin{proposition}\label{prop:direst}
    Let $u_0\in W^{1,B}(B^+_R(x_0'))$ be the unique solution to \eqref{homog:dirx0}. Then there exists $C=C(n,\l,\L,i_a,s_a,\alpha,L_\Omega)>0$ such that, for every $0<r\leq R$, the inequality
    \begin{equation}\label{duu:dir}
        \mint_{B_r^+(x_0')}B\big(|Du_0| \big)\,dx\leq C\,\mint_{B_R^+(x_0')} B\big(|Du_0| \big)\,dx+C\,B\big(\|g\|_{C^{1,\alpha}} \big),
    \end{equation}
    holds true, and we have the decay estimate
    \begin{equation}\label{duurrdir1}
        \mint_{B^+_r(x_0')} B\big(|Du_0-(Du_0)_{B^+_r(x_0')}| \big)\,dx\leq C\,\Big( \frac{r}{R}\Big)^{\beta_{\mathrm{h}}i_B}\,\bigg\{\mint_{B^+_R(x'_0)} B\big(|Du_0| \big)\,dx+B\big(\|g\|_{C^{1,\alpha}} \big)\bigg\}.
    \end{equation}
    with $\beta_{\mathrm{h}}=\beta_{\mathrm{h}}(n,\l,\L,i_a,s_a,\alpha,L_\Omega)\in (0,1)$.
\end{proposition}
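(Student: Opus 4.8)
\textbf{Proof proposal for Proposition \ref{prop:direst}.}
The plan is to mirror the argument used for the interior case in Proposition \ref{pr:otherest}, but now relying on the boundary estimates for the homogeneous frozen Dirichlet problem established in Theorem \ref{thm:gradddir}. Observe first that $u_0$ solves \eqref{homog:dirx0}, which after the translation $x\mapsto x-(x_0',0)$ is precisely a problem of the form \eqref{eq:dirhomogen} (with autonomous stress field $\A(x_0',\cdot)$, which still satisfies \eqref{ass:Aaut} up to constants depending on $L_\Omega$ via \eqref{new:coer}), so Theorem \ref{thm:gradddir} applies. The only subtlety is the scaling: the boundary datum on $B^0_R(x_0')$ is (a translate of) $g$, and since $g\in C^{1,\alpha}(\R^{n-1})$ with $R\le 1$, the relevant quantity $\|g\|_{C^{1,\alpha}}$ is scale-stable up to harmless factors, so the constants $C_\mathrm h, \beta_\mathrm h$ from Theorem \ref{thm:gradddir} (which depend only on $n,\l,\L,i_a,s_a,\alpha$, hence on $L_\Omega$ through \eqref{new:coer}) can be used directly.

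For \eqref{duu:dir}, in the easy range $R/4\le r\le R$ the inequality is immediate since averages are comparable up to a dimensional constant. For $0<r\le R/4$, I would estimate
\begin{equation*}
\mint_{B_r^+(x_0')}B\big(|Du_0|\big)\,dx\le B\Big(\sup_{B_{R/4}^+(x_0')}|Du_0|\Big)\le B\Big(C_\mathrm h\mint_{B_{R/2}^+(x_0')}|Du_0|\,dx+C_\mathrm h\|g\|_{C^{1,\alpha}}\Big),
\end{equation*}
using \eqref{dir:gradbdd} on the half ball $B_{R/2}^+(x_0')$ (after translation). Then by \eqref{triangle}, \eqref{B2t} and Jensen's inequality, the right-hand side is controlled by $C\,\mint_{B_R^+(x_0')}B(|Du_0|)\,dx+C\,B(\|g\|_{C^{1,\alpha}})$, which is \eqref{duu:dir}. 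For \eqref{duurrdir1}, again the range $R/4\le r\le R$ follows from \eqref{added} (extended to half balls, which holds by the same proof replacing balls with half balls and using \eqref{medie}), so assume $0<r\le R/4$. Here I would write, via Jensen and the oscillation control,
\begin{equation*}
\mint_{B_r^+(x_0')}B\big(|Du_0-(Du_0)_{B_r^+(x_0')}|\big)\,dx\le B\Big(\operatorname*{osc}_{B_r^+(x_0')}Du_0\Big)\le B\bigg(C_\mathrm h\Big(\tfrac{r}{R}\Big)^{\beta_\mathrm h}\Big(\mint_{B_R^+(x_0')}|Du_0|\,dx+\|g\|_{C^{1,\alpha}}\Big)\bigg),
\end{equation*}
using \eqref{dir:gradholder} on $B_{R/2}^+(x_0')$ together with the elementary bound $\operatorname*{osc}_{B_r^+}Du_0\le C(n)\sup_{B_r^+}|Du_0|$ is \emph{not} the right route; rather one applies \eqref{dir:gradholder} directly with the ball of radius $R/2$ in place of $R$, absorbing the factor $2^{\beta_\mathrm h}$. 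Then \eqref{B2t} (to pull the factor $(r/R)^{\beta_\mathrm h}$ out, producing the exponent $\beta_\mathrm h i_B$ since $B$ grows at least like $t^{i_B}$) and Jensen's inequality (to replace $B(\mint\,\cdot\,)$ by $\mint B(\cdot)$, and to convert $B(\|g\|_{C^{1,\alpha}})$ appropriately) give \eqref{duurrdir1}.

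The main obstacle is bookkeeping rather than conceptual: one must be careful that Theorem \ref{thm:gradddir} is stated for solutions on $B_{3R}^+$ (with estimates up to $B_{R/2}^+$), so when applying it at scale $R$ centered at $x_0'$ one should really invoke it on a slightly larger half ball — this is harmless because $u_0$ is defined on all of $B_R^+(x_0')$ and one can re-center and rescale, or simply apply the estimate on $B_{R/2}^+(x_0')$ which requires $u_0$ only on $B_{3R/2}^+(x_0')\subset B_R^+(x_0')$ after rescaling — wait, that containment fails, so the correct move is to apply Theorem \ref{thm:gradddir} with its $R$ replaced by $R/6$, giving estimates on $B_{R/12}^+(x_0')$, and then cover the range $0<r\le R/12$ directly while handling $R/12\le r\le R$ by the trivial comparability of averages. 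This rescaling-and-covering step, together with tracking the $L_\Omega$-dependence through \eqref{new:coer}, is the only place where care is needed; the rest is a direct transcription of the proof of Proposition \ref{pr:otherest}.
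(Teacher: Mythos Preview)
Your proposal is correct and follows essentially the same route as the paper: split into a trivial large-$r$ range and a small-$r$ range, then for small $r$ invoke \eqref{dir:gradbdd} together with \eqref{triangle}, \eqref{B2t}, Jensen for \eqref{duu:dir}, and \eqref{dir:gradholder} together with \eqref{B2t}, Jensen for \eqref{duurrdir1}. The paper uses the threshold $r=R/8$ rather than your $R/4$ or $R/12$, and simply applies the inequalities of Theorem \ref{thm:gradddir} at scale $R$ without pausing over the $B^+_{3R}$-versus-$B^+_R$ bookkeeping that you (rightly) flag and resolve.
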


\begin{proof}
    In the case $R/8\leq r\leq R$, inequality \eqref{duu:dir} is trivial, while \eqref{duurrdir1} readily follows from \eqref{added}, \eqref{triangle} and Jensen's inequality. Therefore, we may assume that $0<r\leq R/8$. Then, using \eqref{dir:gradbdd}, \eqref{B2t}, \eqref{triangle} and Jensen's inequality, we find
    \begin{equation*}
        \begin{split}
            \mint_{B_r^+(x_0')}B\big(|Du_0| \big)\,dx &\leq \sup_{B^+_r(x_0')} B\big(|Du_0| \big)\leq \sup_{B^+_{R/2}(x_0')} B\big(|Du_0| \big)
            \\
            &\leq C\,B\bigg(\mint_{B^+_R(x_0')}|Du_0|\,dx \bigg)+C\,B\big(\|g\|_{C^{1,\alpha}}\big)
            \\
            &\leq C\,\mint_{B^+_R(x_0')}B\big(|Du_0|\big)\,dx+C\,B\big(\|g\|_{C^{1,\alpha}}\big)
        \end{split}
    \end{equation*}
    with $C=C,(n,\l,\L,i_a,s_a,\alpha,L_\Omega)>0$, so \eqref{duu:dir} is proven. Then by \eqref{dir:gradholder}, \eqref{B2t}-\eqref{triangle} and Jensen's inequality, we find
\begin{equation*}
    \begin{split}
        \mint_{B^+_r(x_0')} B\Big(|Du_0&-(Du_0)_{B^+_r(x_0')}| \Big)\,dx\leq  B\Big(\operatorname*{osc}_{B_r^+(x_0')} Du_0 \Big)
        \\
        &\leq B\bigg(C\,\Big( \frac{r}{R}\Big)^{\beta_{\mathrm{h}}}\Big\{ \mint_{B^+_R(x_0')}|Du_0|\,dx+\|g\|_{C^{1,\alpha}}\Big\} \bigg)
        \\
        &\leq C'\,\Big( \frac{r}{R}\Big)^{\beta_{\mathrm{h}}i_B}\,\bigg\{\mint_{B^+_R} B\big(|Du_0| \big)\,dx+B\big(\|g\|_{C^{1,\alpha}} \big)\bigg\},
    \end{split}
\end{equation*}
    and the proof is complete.
\end{proof}
Next, we state and prove the comparison and Morrey type estimates.

\begin{proposition}[Morrey type estimate]\label{prop:comparisondir}
    Let $u,u_0$  be as above. Then there exist constants 
    \[C=C(n,\l,\L,\L_\mathrm{h},i_a,s_a,L_\Omega,\alpha,\|\phi\|_{C^{1,\alpha}})>0,
    \] 
    and $\theta=\theta(n,d,\alpha)\in (0,1/2)$ such that
\begin{equation}\label{comp:fin2}
\begin{split}
    \int_{B_R^+(x_0')} B\big(|Du-Du_0| \big) \,dx\leq C\,\Big( 1+\|f\|_{L^d(B_R^+(x_0'))}\Big)&\,\Big(\int_{B_R^+(x_0')}\big[B\big(|Du|\big)+1\big]\,dx\Big)\,R^\theta.
    \end{split}
    \end{equation}
Moreover, for every $\mu\in(0,n)$, there exist
\[\begin{split}
&C_\mu=C_\mu(n,\l,\L,\L_\mathrm{h},i_a,s_a,L_\Omega,\alpha,\|\phi\|_{C^{1,\alpha}},\mu)>0
    \\
    &R_\mu=R_\mu(n,\l,\L,\L_\mathrm{h},i_a,s_a,\alpha,d,\|f\|_{L^d(\mathcal{V})},L_\Omega,\|\phi\|_{C^{1,\alpha}},\mu)\in (0,1)
\end{split}\]
 such that, if $R_0\leq R_\mu$, then the Morrey-type estimate
\begin{equation}\label{est:morreydir}
    \int_{B_R^+(x_0')} B\big(|Du
    |\big)\,dx\leq C_\mu\,\bigg(1+B(\|g\|_{C^{1,\alpha}})+\|f\|_{L^d(\mathcal{V})}+\frac{1}{R_0^\mu}\int_{\mathcal{V}}B\big(|Du| \big)\,dx \bigg)\,R^\mu,
\end{equation}
holds true for every $x'_0\in B^0_{R_0/2}$ and  $0<R\leq R_0/2$.
\end{proposition}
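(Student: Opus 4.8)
Both estimates are the half-ball, Dirichlet counterparts of results already established in the interior: \eqref{comp:fin2} is proved exactly as \eqref{comp:fin1} in Proposition~\ref{prop:comparison}, and \eqref{est:morreydir} follows from \eqref{comp:fin2} together with the decay estimate \eqref{duu:dir} via the iteration Lemma~\ref{lem:iteration}, in the spirit of Lemma~\ref{lem:morrey}. The only new ingredients are that integrations are performed on $B_R^+(x_0')$, that Sobolev/Poincar\'e are invoked in the half-ball form \eqref{half:sobol}--\eqref{half:poincare}, and that the boundary datum $g$ produces an extra $B(\|g\|_{C^{1,\alpha}})$ term inherited from \eqref{duu:dir}.

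\textbf{Proof of \eqref{comp:fin2}.} Since $u_0=u$ on the whole of $\partial B_R^+(x_0')$, the function $u-u_0$ has zero trace there; in particular it vanishes on $B_R^0(x_0')$ (where $u=g=u_0$), so it is an admissible test function both for \eqref{eq:dirBup} restricted to $B_R^+(x_0')$ and for \eqref{homog:dirx0}. Testing both weak formulations with $u-u_0$ and subtracting, and using the monotonicity bound \eqref{strong:coer} for $\A(x_0',\cdot)$, one obtains
\begin{equation*}
\int_{B_R^+(x_0')} a\big(|Du|+|Du_0|\big)\,|Du-Du_0|^2\,dx \le C\int_{B_R^+(x_0')}\big(\A(x_0',Du)-\A(x,Du)\big)\cdot(Du-Du_0)\,dx + C\int_{B_R^+(x_0')} f\,(u-u_0)\,dx.
\end{equation*}
For the first integral, \eqref{A:hold1} gives $|\A(x_0',Du)-\A(x,Du)|\le C(1+b(|Du|))R^{\alpha}$; writing $b(|Du|)|Du-Du_0|\le |Du|+|Du_0|$ after majorizing $b\le 1$ is too crude, so one instead uses $\int_{B_R^+(x_0')}|Du-Du_0|\le C\int_{B_R^+(x_0')}B(|Du-Du_0|)+|B_R^+|$ together with \eqref{simple}, \eqref{el:comparison} and \eqref{en:u0dir} to bound this term by $CR^{\alpha}\int_{B_R^+(x_0')}[B(|Du|)+1]\,dx$, exactly as in Proposition~\ref{prop:comparison}. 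For the second integral, H\"older's inequality, the Sobolev inequality on half balls \eqref{half:sobol} applied to $u-u_0$ (zero trace on $\partial B_R^+(x_0')$), \eqref{simple} and \eqref{en:u0dir} give the bound $CR^{1-n/d}\|f\|_{L^d(B_R^+(x_0'))}\int_{B_R^+(x_0')}[B(|Du|)+1]\,dx$. Plugging these into \eqref{el:comparison} with $\xi=Du$, $\eta=Du_0$ and a free parameter $\delta\in(0,1]$, integrating, using \eqref{en:u0dir} once more and choosing $\delta=\max\{R^{\alpha/2},R^{(1-n/d)/2}\}$ yields \eqref{comp:fin2} with $\theta=\min\{\alpha/2,(1-n/d)/2\}$.

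\textbf{Proof of \eqref{est:morreydir}.} Fix $\mu\in(0,n)$ and $x_0'\in B_{R_0/2}^0$, and set $\phi(r)=\int_{B_r^+(x_0')}B(|Du|)\,dx$, a nondecreasing function of $r$. For $0<r\le R\le R_0/2$ we use \eqref{triangle}, then \eqref{duu:dir} on $B_r^+(x_0')\subset B_R^+(x_0')$, then \eqref{comp:fin2} and \eqref{en:u0dir}, and finally $\|f\|_{L^d(B_R^+(x_0'))}\le\|f\|_{L^d(\mathcal V)}$ together with $R^n\le R^\mu$, to reach an inequality of the form
\begin{equation*}
\phi(r)\le C\Big[\big(\tfrac rR\big)^n+\big(1+\|f\|_{L^d(\mathcal V)}\big)R^{\theta}\Big]\phi(R)+C\big(1+B(\|g\|_{C^{1,\alpha}})+\|f\|_{L^d(\mathcal V)}\big)R^{\mu},
\end{equation*}
with $C$ depending only on the quantities listed in the statement. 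Lemma~\ref{lem:iteration} (applied with exponents $n>\mu$ and with the perturbation parameter $(1+\|f\|_{L^d(\mathcal V)})R^{\theta}$) then gives \eqref{est:morreydir}, provided $R_0\le R_\mu$ where $R_\mu=\big(2C(1+\|f\|_{L^d(\mathcal V)})\big)^{-2n/((n-\mu)\theta)}$ is chosen so that $(1+\|f\|_{L^d(\mathcal V)})R_\mu^{\theta}$ lies below the threshold $\varepsilon_0$ of Lemma~\ref{lem:iteration}. Since all constants are independent of the point $x_0'\in B_{R_0/2}^0$, so is the resulting estimate.

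\textbf{Main difficulty.} There is no essential new obstacle beyond the interior case of Proposition~\ref{prop:comparison} and Lemma~\ref{lem:morrey}; the delicate points are purely bookkeeping: checking that $u-u_0$ is an admissible (zero-trace) test function on $B_R^+(x_0')$, controlling the half-ball Sobolev constant uniformly in $R\le 1$, and carrying the $B(\|g\|_{C^{1,\alpha}})$ contribution coming from \eqref{duu:dir} correctly through the iteration while keeping $R_\mu$ independent of $x_0'$.
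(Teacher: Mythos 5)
Your proposal is correct and follows essentially the same route as the paper: the comparison estimate \eqref{comp:fin2} is obtained by repeating the proof of Proposition~\ref{prop:comparison} on the half ball (testing with the zero-trace function $u-u_0$, using \eqref{A:hold1}, the half-ball Sobolev inequality, \eqref{el:comparison}, \eqref{en:u0dir}, and the choice $\delta=\max\{R^{\alpha/2},R^{(1-n/d)/2}\}$), and the Morrey estimate \eqref{est:morreydir} follows from \eqref{triangle}, \eqref{duu:dir}, \eqref{comp:fin2}, \eqref{en:u0dir} and Lemma~\ref{lem:iteration} exactly as in Lemma~\ref{lem:morrey}, including your choice of $R_\mu$.
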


\begin{proof}
    We omit the proof of \eqref{comp:fin2}, as it is identical to that of Propositions \ref{prop:comparison}, save that one has to replace $B_R(x_0)$ with $B_R^+(x_0')$, and take into account the presence of $L_\Omega,\|\phi\|_{C^{1,\alpha}}$ in the constants due to \eqref{new:coer}-\eqref{A:hold1}. For what concerns \eqref{est:morreydir}, we argue as in Lemma \ref{lem:morrey}, that is
{\small
\begin{equation*}
    \begin{split}
        \int_{B^+_r(x'_0)}B\big(|Du| \big)\,dx \stackrel{\eqref{triangle}}{\leq} &C\,\int_{B_r^+(x'_0)}B\big(|Du-Du_0| \big)\,dx+C\,\int_{B^+_r(x'_0)} B\big( |Du_0|\big)\,dx
        \\
        \stackrel{\eqref{duu:dir}}{\leq} &\,C\,\int_{B_R^+(x'_0)}B\big(|Du-Du_0| \big)\,dx
        \\
        &+C'\Big(\frac{r}{R} \Big)^n\,\int_{B^+_R(x'_0)} B\big( |Du_0|\big)\,dx+C'\,B\big(\|g\|_{C^{1,\alpha}} \big)\,r^n
        \\
        \stackrel{\eqref{comp:fin2},\eqref{en:u0dir}}{\leq} & C''\,\bigg(\Big(\frac{r}{R} \Big)^n+\Big(1+ \|f\|_{L^d(\mathcal{V}))}\Big)\,R^\theta\bigg)\,\int_{B^+_R(x'_0)}B\big(|Du|\big)\,dx 
        \\
        &+C''\,\Big(1+ B(\|g\|_{C^{1,\alpha}})+\|f\|_{L^d(\mathcal{V})}\Big)\,R^n\,.
    \end{split}
\end{equation*}}
Therefore, by applying  Lemma \ref{lem:iteration}, and using that $B^+_R(x_0')\subset B^+_{R_0}\subset \mathcal{V}$ we obtain \eqref{est:morreydir}.
\end{proof}

We now have all the ingredient to prove the boundary regularity of solutions to Dirichlet problems.

\begin{proof}[Proof of Theorem \ref{thm:dir}]
We divide the proof into a few steps.
\vspace{0.1cm}

\noindent \textit{Step 1.} Let us assume that  $u\in W^{1,B}\big(B^+_{R_0}\big)$ is  solution to \eqref{eq:dirBup}, with $B^+_{R_0}\subsetneq \mathcal{V}$, and $\A(x,\xi)$ satisfying \eqref{new:coer}-\eqref{A:hold1}.

We also fix $\bar\mu=\bar\mu(n,\l,\L,i_a,s_a,d,\alpha)\in (0,n)$ to be determined later in the proof, so that from Proposition \ref{prop:comparisondir} we may find a radius 
\[
R_{\bar\mu}=R_{\bar\mu}(n,d,\l,\L,\L_\mathrm{h},i_a,s_a,\alpha,d,\|f\|_{L^d(\mathcal{V})}, L_\Omega,\|\phi\|_{C^{1,\alpha}})\in (0,1)
\] 
such that \eqref{est:morreydir} holds provided $R_0\leq R_{\bar \mu}$. For the moment, we assume that this condition is satisfied. Let $0<r\leq R\leq R_0/2$, and estimate
{\small
\begin{equation}\label{tocca}
\begin{split}
     \mint_{B^+_r(x'_0)}& B\big(|Du-(Du)_{B_r^+(x'_0)}| \big)\,dx
        \\
        \stackrel{\eqref{triangle}}{\leq}& \,C\,\mint_{B^+_r(x'_0)}B\big(|Du-Du_0| \big)\,dx+C\,\mint_{B^+_r(x'_0)}B\big(|Du_0-(Du_0)_{B_r^+(x_0)}| \big)\,dx
         \\
        \stackrel{\eqref{duurrdir1}}{\leq} & C\Big(\frac{R}{r} \Big)^n\mint_{B^+_R(x'_0)}B\big(|Du-Du_0| \big)\,dx+C'\Big( \frac{r}{R}\Big)^{\beta_{\mathrm{h}}i_B}\,\bigg\{\mint_{B^+_R(x'_0)} B\big(|Du_0| \big)\,dx+B\big(\|g\|_{C^{1,\alpha}} \big)\bigg\}
        \\
        \stackrel{\eqref{comp:fin2},\eqref{en:u0dir}}{\leq} & C''\,\Big(\frac{R}{r} \Big)^n\,\big(1+\|f\|_{L^d(\mathcal{V})} \big)\,\Big(\mint_{B_R^+(x_0')}\big[B\big( |Du|\big)+1\big]\,dx\Big)\,R^\theta
        \\
        &+C''\,\Big( \frac{r}{R}\Big)^{\beta_{\mathrm{h}}i_B}\,\bigg\{\mint_{B_R^+(x_0')}\big[B\big( |Du|\big)+1\big]\,dx+B\big(\|g\|_{C^{1,\alpha}} \big)\bigg\}
        \\
        \stackrel{\eqref{est:morreydir}}{\leq} & C_{\bar \mu}\bigg\{\Big(\frac{R}{r}\Big)^{n}R^{\theta-n+\bar\mu}+\Big(\frac{r}{R} \Big)^{\beta_{\mathrm{h}}i_B}\,R^{\bar\mu-n}\bigg\},
        \end{split}
\end{equation}
}
where we set
\[
C_{\bar \mu}:=C(n,\l,\L,\L_{\mathrm{h}},i_a,s_a,L_\Omega,d,\alpha,\|\phi\|_{C^{1,\alpha}})\,\bigg(1+B\big(\|g\|_{C^{1,\alpha}}\big)+\|f\|_{L^d(\mathcal{V})}+\frac{1}{R_0^{\bar \mu}}\int_{\mathcal{V}}B\big(|Du| \big)\,dx \bigg).
\]
By choosing
\[
\begin{split}
    \bar\mu:=n-\min\Big\{\frac{\theta}{4},\frac{\beta_{\mathrm{h}}i_B\,\theta}{4n\big(1+\tfrac{\theta}{4n}\big)} \Big\},\quad \sigma:=\frac{n}{\theta+\bar \mu}\Big(1+\frac{\theta}{4n}\Big)\in (0,1),\quad\text{and radii}\quad R=r^\sigma,
\end{split}
\]
from \eqref{tocca} we find
\begin{equation}\label{tocca1}
     \mint_{B^+_r(x'_0)} B\big(|Du-(Du)_{B_r^+(x'_0)}| \big)\,dx\leq C_{\bar \mu}\,r^\beta,\quad \beta=\beta(n,\l,\L,i_a,s_a,\alpha,d,L_\Omega)=\frac{\beta_\mathrm{h}i_B\theta}{4}\in (0,1),
\end{equation}
which is valid for all $x_0'\in B_{R_0/2}$, and $0<r\leq R_0/2$. From this inequality, the interior gradient regularity Theorem \ref{thm:interior} and Campanato's theorem \cite[Theorem 5.5]{GM12}, we infer
\begin{equation}\label{temp0:dirC1}
    \|Du \big\|_{C^{0,\beta}(\overline B^+_{R_0/2})}\leq C_0\bigg(n,\l,\L,\L_{\mathrm{h}},i_a,s_a,\alpha,d,L_\Omega,\|\phi\|_{C^{1,\alpha}},\|f\|_{L^d(\mathcal{V})},\|g\|_{C^{1,\alpha}},R_0,\int_{\mathcal{V}}B\big(|Du| \big)\,dx \bigg)
\end{equation}

Moreover, from \eqref{eq:dirBup}, Lemma \ref{divFf} and arguing as in \eqref{we:sol}-\eqref{again:coer} and \eqref{Afdiv}, we deduce that $w=u-G$ is solution to
\begin{equation*}
    \begin{cases}
        -\mathrm{div}\big(\widetilde{\A}_f(x,Dw) \big)=0\quad &\text{in $B^+_{R_0}$}
        \\
        w=0\quad&\text{on $B^0_{R_0}$,}
    \end{cases}
\end{equation*}
with $\widetilde{\A}_f(x,\xi)=\A\big(x,\xi+DG(x)\big)+\mathrm{div}\,F(x)$ satisfying
\begin{equation*}
\begin{split}
   & \widetilde{\A}_f(x,\xi)\cdot \xi\geq c\,B\big( |\xi|\big
)-C\,B\Big(\big(1+\|f\|_{L^d(\mathcal{V})}\big)^{1/i_b} \Big)-C\,B\big(\|g\|_{C^{1,\alpha}} \big)
\\
 &|\widetilde{\A}_f(x,\xi)|\leq C\,b\big(|\xi| \big)+C\,b\Big((1+\|f\|_{L^d(\mathcal{V})})^{1/i_b} \Big)+C\,b\big(\|g\|_{C^{1,\alpha}}\big).
\end{split}
\end{equation*}
with $c,C=c,C(n,\l,\L,\alpha,i_a,s_a,L_\Omega)>0$. Hence we may use Theorem \ref{thm:bdddir} and \eqref{simple}, and deduce that
\begin{equation}\label{temp0:bdddir}
    \|u\|_{L^\infty(B^+_{R_0/2})}\leq \frac{C}{R_0^n}\int_{B_{R_0}^+}|u|\,dx+C\big\{(1+\|f\|_{L^d(\mathcal{V})})^{1/i_b}+\|g\|_{C^{1,\alpha}} \big\},
\end{equation}
for some positive constant $C=C(n,\l,\L,\alpha,i_a,s_a,d,L_\Omega)$.
\vspace{0.1cm}

\noindent\textit{Step 2.} To complete the proof, it suffices to apply the results of the previous steps together with a covering argument. Let $u\in W^{1,B}(\Omega\cap \mathcal{U})$ be solution to \eqref{eq:dir1}, and let $\mathcal{U}'\Subset \mathcal{U} $ be such that $\partial\Omega\cap\mathcal{U}'\neq \varnothing$ (hence it is of class $C^{1,\alpha}$).

By compactness, we cover $\partial\Omega\cap \overline{\mathcal{U}'}$ with a family of cylinders $\{\mathcal{Q}_{\Omega,x_i}\}_{i=1}^{N_0}$, $x_i\in \partial\Omega\cap \overline{\mathcal{U}'}$, defined by \eqref{cilindr}, with corresponding local boundary charts $\phi_i=\phi_{x_i}$ and diffeomorphisms $\Phi_{i}=\Phi_{x_i}$ given by \eqref{diffeom}. In particular, the cardinality of such covering satisfies
\[N_0\leq C(n)\,\bigg(\frac{(1+L_\Omega)\,\mathrm{diam}\,(\mathcal{U})}{R_\Omega}\bigg)^n,
\]
and by the definition in \eqref{normadeom}, we have
\[\|\phi_i\|_{C^{1,\alpha}}\leq\|\partial\Omega\cap \mathcal{U}\|_{C^{1,\alpha}(\mathcal{U}')} \quad\text{for all $i=1,\dots,N_0$.}
\]
 Then we fix the radius 
\begin{equation}\label{rad0:depend}
\begin{split}
&R_0\big(n,\l,\L,\L_\mathrm{h},i_a,s_a,\alpha,d,\mathcal{L}_\Omega,\|\partial\Omega\cap \mathcal{U}\|_{C^{1,\alpha}(\mathcal{U}')},\|f\|_{L^d(\Omega\cap \mathcal{U})},\mathrm{dist}(\mathcal{U}',\partial\mathcal{U})\big)
\\
&R_0:=\min\Big\{\frac{R_{\bar\mu}}{4},\frac{\mathrm{dist}(\mathcal{U}',\partial\mathcal{U})}{C(n)(1+L_\Omega)},\frac{R_\Omega}{C(n)(1+L_\Omega)} \Big\},
\end{split}
\end{equation}
with $C(n)\geq 8$ large, and we cover $\partial\Omega\cap \overline{\mathcal{U}'}$ with sets $\{\Phi_i^{-1}(B^+_{R_0}(y'_j))\}_{j=1}^N$, where we choose points 
\[
y'_j\in B^0_{R_0}\cap \Phi_i(\partial\Omega\cap\,\overline{\mathcal{U}'}).
\] 
Notice that, since $\Phi_i$ is a diffeomorphism, with gradient bounds \eqref{graddifeom}, and thanks to our choice of $R_0$ and $y'_j$, we have \[
B_{\frac{R_0}{C(n)(1+L_\Omega)}}(\Phi^{-1}_i(y'_j))\subset\Phi_i^{-1}(B_{R_0}(y'_j))\subset B_{C(n)(1+L_\Omega)R_0}(\Phi_i^{-1}(y'_j))\Subset \mathcal{U} ,
\]
and the first inequality implies that the cardinality of the covering satisfies
\[
N\leq C(n)\bigg( \frac{(1+L_\Omega)\,\mathrm{diam}\,\mathcal{U}'}{R_0}\bigg)^n.
\]
Then, we cover \[\overline{\mathcal{U}'}\setminus \bigcup\limits_{i,j} \Phi^{-1}_i\big(B^+_{R_0}(y'_j) \big)\subset \bigcup_{k=1}^{N_{\rm int}} B_{R_0}(x_k),\quad x_k\in \overline{\mathcal{U}'}\setminus \bigcup\limits_{i,j} \Phi^{-1}_i\big(B^+_{R_0}(y'_j)\big),
\]
so we have
\[
N_{\rm int}\leq C(n)\Big(\frac{\mathrm{diam}\,\mathcal{U}}{R_0} \Big)^n
\]
Now let $\{\eta_{ij}\}$, $\{\eta_k\}$ be partitions of unity associated to $\{\Phi_i^{-1}(B^+_{R_0}(y'_j))\}$ and $\{B_{R_0}(x_k)\}$, respectively, and such that  
\[
0\leq \eta_{ij},\eta_k\leq 1,\quad |D\eta_{ij}|+|D\eta_k|\leq \frac{C}{R_0},
\]
with $C=C(n,L_\Omega)$. Then we observe that the function $\hat{u}_i=u\circ\Phi_i^{-1}$ is solution to \eqref{pb:C}$_1$, and in particular they solve the Dirichlet problem \eqref{eq:dirBup} in $B^+_{2R_0}(y'_j)$, with $\A,f,g$ replaced by $\hat{\A},\hat{f},\hat{g}$ defined by \eqref{def:hat}-\eqref{def:hat1}. 

Hence, from the previous step, and in particular from \eqref{temp0:dirC1}-\eqref{temp0:bdddir}, we get
\[
\|u\|_{C^{1,\beta}(\Phi_i^{-1}\big(B^+_{R_0}(y'_j))\big)}\leq C_{\mathrm{data}}
\]
with $C_{\mathrm{data}}$ depending on the same quantities as in \eqref{stimafin:Dir}. Now, the estimate $\|u\|_{L^\infty(\Omega\cap\, \mathcal{U})}$ can be immediately obtained from this inequality and the interior estimate of Theorem \ref{thm:interior}. Then, from the estimate above, Theorem \ref{thm:interior}, the properties of $\eta_{ij},\eta_k$, and the estimates on the cardinalities $N_0,N,N_{\rm int}$ above, and on $R_0$ in \eqref{rad0:depend}, we may argue as in \eqref{finalcover}, and get
\begin{equation*}
        \|Du\|_{C^{1,\beta}(\overline\Omega\cap\,\mathcal{U}')}\leq \sum_{i=1}^{N_0}\sum_{j=1}^{N}\|(Du)\,\eta_{ij}\|_{C^{0,\beta}(\Phi_i^{-1}(B^+_{R_0}(y'_j)))}+\sum_{k=1}^{N_{int}}\|(Du)\,\eta_k\|_{C^{0,\beta}(B_{R_0}(x_k))}
        \leq C'_{\mathrm{data}}
\end{equation*}
with $C'_{\mathrm{data}}$ depending on the same quantities as in \eqref{stimafin:Dir}, hence our thesis.
\end{proof}

\begin{proof}[Proof of Corollary \ref{cor:dirichlet}]
    The result is an immediate consequence of Theorem \ref{thm:dir}, and the following energy estimate on $u$.
    We test \eqref{eq:dir2} with test function $\varphi=u-g\in W^{1,B}_0(\Omega)$, so that by using \eqref{co:gr}, \eqref{A:hold}$_2$ and \eqref{Young}, \eqref{young1}, \eqref{B=1}, we get
    \begin{equation*}
    \begin{split}
        \int_\Omega B\big(|Du| \big)\,dx &\leq C\,\int_\Omega\big[ \A(x,Du)-\A(x,0)\big]\cdot Du\,dx
        \\
        &\leq  C\,\int_\Omega \A(x,Du)\cdot Dg+C'\,\int_\Omega |Du|\,dx
        \\
        &\leq C''\int_\Omega \big[b(|Du|)+1\big]\,|Dg|\,dx+\frac{1}{4}\,\int_\Omega B(|Du|)\,dx+C''\,|\Omega|
        \\
        &\leq \frac{1}{2}\,\int_\Omega B\big(|Du|\big)\,dx+C'''\,\big(|\Omega|+1\big)\,\Big(1+B\big( \|g\|_{C^{1,\alpha}}\big)\Big),
        \end{split}
    \end{equation*}
 with $C,C',C'',C'''>0$ depending on $n,\l,\L,i_a,s_a,\alpha$. Hence, using also \eqref{B2t}, we have found
\begin{equation}\label{stima:enudir}
      \int_\Omega B\big(|Du| \big)\,dx\leq C\,\big(|\Omega|+1 \big)\,\big(1+ \|g\|_{C^{1,\alpha}}\big)^{s_B}.
 \end{equation}
 Next, we fix the ball $
 \mathcal{B}=B_{2\rm{diam}\,\Omega}(x_0)$ for some $x_0\in \Omega$. so that $\Omega \Subset \mathcal{B}$, hence $|\Omega|\leq C(n)\,(\mathrm{diam}\,\Omega)^n$; we also extend $u\equiv g$ in $\mathcal{B}\setminus \Omega$.
Therefore, by Poincar\'e inequality, \eqref{simple} and \eqref{stima:enudir}, we get
\begin{equation}\label{stima:udirff}
\begin{split}
    \int_\Omega |u|\,dx&\leq \int_\mathcal{B} |u-g| \,dx+C\,(\mathrm{diam}\,\Omega)^n\,\|g\|_{L^\infty}
    \\
    &\leq C_1\,\big(\mathrm{diam}\,\Omega\big)\,\int_\Omega |D(u-g)| \,dx+C\,(\mathrm{diam}\,\Omega)^n\,\|g\|_{L^\infty}
    \\
     &\leq C_2\,\big(\mathrm{diam}\,\Omega\big)\,\int_\Omega B\big(|Du|\big)\,dx+C\,(1+\mathrm{diam}\,\Omega)^n\,\|g\|_{C^{1,\alpha}}
    \\
    &\leq C_3\,\big(\mathrm{diam}\,\Omega+1 \big)^n\,\big(1+ \|g\|_{C^{1,\alpha}}\big)^{s_B},
    \end{split}
\end{equation}
 with $C,C_1,C_2,C_3>0$ depending on $n,\l,\L,i_a,s_a,\alpha$. The estimates \eqref{stima:enudir}-\eqref{stima:udirff} and Theorem \ref{thm:dir} finally yield the desired result Equation \eqref{est:globdirichlet}.
\end{proof}

\section{Boundary \texorpdfstring{$C^{1,\beta}$}{C1b}-regularity, Neumann problems}\label{sec:finNeum}
As in  Section \ref{sec:pfdir}, we begin studying regularity of solutions $u\in W^{1,B}(B_{R_0}
^+)$ to the conormal problem
\begin{equation}\label{eq:halfBneu}
    \begin{cases}
        -\mathrm{div}\big(\A(x,Du)\big)=f\quad &\text{in $B^+_{R_0}$}
        \\
        \A(x,Du)\cdot e_n+h=0\quad&\text{on $B^0_{R_0}$}
    \end{cases}
\end{equation}
where $h\in C^{0,\alpha}(\bar B^0_{R_0})$, and $\A(x,\xi)$ fulfilling \eqref{new:coer}-\eqref{A:hold1}, and $B_{R_0}\subsetneq \mathcal{V}$ for some bounded domain $\mathcal{V}\subset \R^n_+$. Again, throughout the proofs we shall keep explicit the dependence on the Lipschitz constant $L_\Omega$ and on the boundary chart norm $\|\phi\|_{C^{1,\alpha}}$.

Let $x'_0\in B^0_{R_0}$ and $R\in (0,1)$ be such that $B_R(x_0')\subset B^+_{R_0}$; we consider $u_0\in W^{1,B}\big(B_R^+(x_0')\big)$ solution to the the homogeneous, frozen mixed problem
\begin{equation}\label{eq:mixed}
    \begin{cases}
        -\mathrm{div}\big(\A(x_0',Du_0) \big)=0\quad &\text{in $B^+_R(x_0')$}
        \\
        \A(x'_0,Du_0)\cdot e_n+h(x_0')=0&\text{on $B^0_R(x_0')$}
        \\
        u_0=u\quad &\text{on $\partial B^+_R(x_0')\setminus B^0_R(x_0')$.}
    \end{cases}
\end{equation}
The scheme of the proofs now follows the lines of Sections \ref{sec:thmint}- \ref{sec:pfdir}, so we start with the following

\begin{proposition}\label{prop:existmixed}
    There exists a unique solution $u_0\in W^{1,B}\big(B^+_{R}(x_0')\big)$ solution to the problem \eqref{eq:mixed}. Furthermore, it satisfies
    \begin{equation}\label{lkj:energia}
        \mint_{B^+_R(x_0')} B\big( |D u_0|\big)\,dx\leq C\,(1+\|h\|_{L^\infty})\mint_{B^+_R(x_0')} \big[B\big( |D u|\big)+1\big]\,dx+C\,\widetilde{B}\big(\|h\|_{L^\infty} \big),
    \end{equation}
    with positive constant $C=C(n,\l,\L,i_a,s_a,L_\Omega)$.
\end{proposition}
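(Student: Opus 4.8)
The statement is the exact analogue of Proposition~\ref{prop:ex} (interior case) and Proposition~\ref{propo:dirhomog} (Dirichlet case), now for a mixed (Dirichlet/conormal) boundary value problem on a half-ball; consequently the plan is to repeat the monotone-operator argument, keeping careful track of the boundary integral produced by the conormal condition. First I would set up the functional-analytic framework: let
\[
X=\big\{v\in W^{1,B}\big(B^+_R(x_0')\big):\,v=0\text{ on }\partial B^+_R(x_0')\setminus B^0_R(x_0')\big\},
\]
endowed with the norm $\|v\|=\|Dv\|_{L^B(B^+_R(x_0'))}$, which is an equivalent norm on $X$ by the half-ball Poincar\'e inequality \eqref{half:poincare} (valid since members of $X$ vanish on $\partial B^+_R(x_0')\setminus B^0_R(x_0')$). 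Since $B$ satisfies the $\Delta_2$ and $\nabla_2$ conditions, $X$ is a reflexive Banach space. On $X$ I would define the operator
\[
\langle Av_1,v_2\rangle=\int_{B^+_R(x_0')}\A\big(x_0',Dv_1+Du\big)\cdot Dv_2\,dx+h(x_0')\int_{B^0_R(x_0')}v_2\,d\mathcal H^{n-1},
\]
so that $Av_0=0$ in $X^*$ is precisely the weak formulation of \eqref{eq:mixed} with $u_0=v_0+u$.

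The three hypotheses of \cite[Theorem 26.A]{Z90} are then checked exactly as in Proposition~\ref{prop:ex}. Monotonicity follows from the strict monotonicity estimate \eqref{strong:coer}, since the boundary term is linear in $v_2$ and hence cancels in $\langle Av_1-Av_2,v_1-v_2\rangle$. Hemicontinuity follows from the continuity of $\xi\mapsto\A(x_0',\xi)$, the growth bound \eqref{co:gr} together with \eqref{A:hold1}$_2$, Young's inequality \eqref{young1} and dominated convergence, again with the boundary term causing no trouble because it is linear. For coercivity, the new ingredient is the estimate of the boundary contribution: using the trace inequality \eqref{in:trace0} (applied to the even reflection, or directly to functions vanishing on the curved part of $\partial B^+_R(x_0')$) one bounds
\[
\bigg|h(x_0')\int_{B^0_R(x_0')}v\,d\mathcal H^{n-1}\bigg|\le C(n)\,\|h\|_{L^\infty}\int_{B^+_R(x_0')}|Dv|\,dx
\le \delta\int_{B^+_R(x_0')}B(|Dv|)\,dx+C_\delta\,\widetilde B(\|h\|_{L^\infty})\,|B^+_R|,
\]
by \eqref{Young} and \eqref{int:simple0}; choosing $\delta$ small and combining with the coercivity bound $\langle A v,v\rangle\ge c\int B(|Dv|)-C\int B(|Du|)-C\widetilde B(1)|B^+_R|$ coming from \eqref{co:gr} and \eqref{A:hold1}$_2$, together with \eqref{modular1} and \eqref{B2t} exactly as in Proposition~\ref{prop:ex}, one gets $\langle Av,v\rangle/\|v\|\to\infty$ since $i_B>1$. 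This yields existence and uniqueness of $u_0$.

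For the energy estimate \eqref{lkj:energia}, I would test the weak formulation of \eqref{eq:mixed} with $\varphi=u_0-u\in X$, obtaining
\[
\int_{B^+_R(x_0')}\A(x_0',Du_0)\cdot(Du_0-Du)\,dx=-h(x_0')\int_{B^0_R(x_0')}(u_0-u)\,d\mathcal H^{n-1}.
\]
Using the lower bound in \eqref{co:gr}, moving the term $\int\A(x_0',Du_0)\cdot Du$ to the right, estimating it by \eqref{co:gr}, \eqref{A:hold1}$_2$, \eqref{young1} and \eqref{Young} and absorbing $\tfrac12\int B(|Du_0|)$, and then handling the boundary term once more by \eqref{in:trace0}, \eqref{half:poincare}, \eqref{Young} and \eqref{int:simple0}, one arrives at
\[
\int_{B^+_R(x_0')}B(|Du_0|)\,dx\le C(1+\|h\|_{L^\infty})\int_{B^+_R(x_0')}\big[B(|Du|)+1\big]\,dx+C\,\widetilde B(\|h\|_{L^\infty})\,|B^+_R|,
\]
and dividing by $|B^+_R(x_0')|$ gives \eqref{lkj:energia}. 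The only genuine novelty compared with Propositions~\ref{prop:ex}--\ref{propo:dirhomog} is the systematic reabsorption of the conormal boundary integral, and the main (mild) obstacle is simply making sure that, in both the coercivity step and the energy-estimate step, the $\widetilde B(\|h\|_{L^\infty})$-terms are produced with the correct structure so that they survive cleanly into the final bound; this is routine given the trace inequality \eqref{in:trace0} and Young's inequality in Orlicz form \eqref{Young}, exactly as was already done for the a priori $L^\infty$-bound in Theorem~\ref{thm:bddneu}.
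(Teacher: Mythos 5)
Your proposal is correct and follows essentially the same route as the paper: the same space $X$, the same appeal to the monotone-operator theorem \cite[Theorem 26.A]{Z90}, and the same test function $u_0-u$ for the energy estimate \eqref{lkj:energia}; the only structural difference is that you fold the conormal boundary integral into the operator $A$ and reabsorb it in the coercivity step, whereas the paper keeps it as a separate continuous linear functional $\mathscr{H}\in X^*$ on the right-hand side (bounded via \eqref{in:trace0} and \eqref{holder:orlicz}), which is an equivalent formulation. Note only that with the paper's sign convention the weak formulation reads $\int_{B^+_R(x_0')}\A(x_0',Du_0)\cdot D\varphi\,dx=h(x_0')\int_{B^0_R(x_0')}\varphi\,d\mathcal{H}^{n-1}$, so your boundary term should enter $A$ with the opposite sign for $Av_0=0$ to coincide with it; this is immaterial for the argument, since every subsequent estimate uses only $|h(x_0')|\le\|h\|_{L^\infty}$.
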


\begin{proof}
    The proof is very similar to that of Proposition \ref{prop:ex}, with minor differences due to the co-normal boundary value.     
    We consider the space
\begin{equation*}
    X=\big\{v\in W^{1,B}(B_R^+(x_0')): v=0\quad\text{on $\partial B^+_R(x_0')\setminus B^0_R(x_0')$} \big\},
\end{equation*}
   endowed with norm $\|v\|=\|Dv\|_{L^B(B_R^+(x_0'))}$; by the properties of the trace operator, it can be easily shown that $X$ is reflexive. We then consider the operator 
   \begin{equation*}
       \langle Av_1,v_2\rangle=\int_{B^+_R(x_0')} \A(x_0',Dv_1+Du)\cdot Dv_2\,dx\quad \text{and}\quad\langle\mathscr{H},v\rangle=h(x'_0)\,\int_{B^0_R(x'_0)}v_1\,d\mathcal{H}^{n-1}.
   \end{equation*}
 for $v,v_1,v_2\in X$.  From the computations of Proposition \ref{prop:ex}, the operator $A$ is monotone, hemicontinuous and coercive, while the operator $\mathscr{H}:X\to \R$ is linear and continuous. Indeed, by the trace inequality \eqref{in:trace0} and H\"older's inequality \eqref{holder:orlicz}, we get
 \begin{equation*}
     |\langle\mathscr{H},v\rangle|\leq C\,|h(x_0')|\,\int_{B^+_R(x_0')}|Dv|\,dx\leq C'\,\|Dv\|_{L^B(\Omega)}.
 \end{equation*}
 Therefore, \cite[Theorem 26.A]{Z90} ensures the existence of a unique solution $v_0\in X$ satisfying
 \begin{equation*}
     \langle Av_0,v\rangle=\langle\mathscr{H},v\rangle\quad\text{for all $v\in X$,}
 \end{equation*}
 and this is equivalent to the existence of $u_0=v_0+u$ solution to \eqref{eq:mixed}. Next, we test the weak formulation of \eqref{eq:mixed} with $u-u_0$, so that
 \begin{equation*}
     \begin{split}
         \int_{B^+_R(x_0')}\A(x_0',Du_0)\cdot D(u_0-u)\,dx=h(x_0')\,\int_{B^0_{R}(x_0')}(u_0-u)\,d\mathcal{H}^{n-1},
     \end{split}
 \end{equation*}
 which together with \eqref{co:gr}, \eqref{A:hold1}$_2$, \eqref{in:trace0}, \eqref{Young}, \eqref{young1} and \eqref{simple} yields
 \begin{equation*}
     \begin{split}
         \mint_{B^+_R(x_0')} B\big( |D u_0|\big)\,dx\leq &\, C\,\mint_{B^+_R(x_0')}\A(x_0',Du_0)\cdot D u_0\,dx-C\, \mint_{B^+_R(x_0')}\A(x_0',0)\cdot Du_0\,dx
         \\
         = &\, C\,\mint_{B^+_R(x_0')}\A(x_0',Du_0)\cdot D u\,dx+C\,h(x_0')\,\mint_{B^0_{R}(x_0')}(u_0-u)\,d\mathcal{H}^{n-1}
         \\
         &-C\, \mint_{B^+_R(x_0')}\A(x_0',0)\cdot Du_0\,dx
         \\
         \leq &\,C_1\,\mint_{B^+_R(x_0')}b\big(|Du_0| \big)\,|Du|\,dx+C_1\,\|h\|_{\infty}\,\mint_{B^+_R(x_0')}|D(u_0-u)|\,dx
         \\
         &+C_1\,\mint_{B^+_R(x_0')}\big(|Du_0|+|Du|\big)\,dx
         \\
         \leq &\,\frac{1}{2}\mint_{B^+_R(x_0')} B\big(|Du_0| \big)\,dx+C_2\,\big(1+\|h\|_{L^\infty}\big)\,\mint_{B^+_R(x_0')}[B(|Du|)+1]\,dx
         \\
         &+C_2\,\widetilde{B}\big(\|h\|_{L^\infty} \big),
     \end{split}
 \end{equation*}
 where $C,C_1,C_2>0$ depend on $n,\l,\L,i_a,s_a,L_\Omega$. Equation \eqref{lkj:energia} thus follows.
\end{proof}

Next, analogous to Proposition \ref{pr:otherest}, we have the following

\begin{proposition}\label{prop:neuosc}
    Let $u_0\in W^{1,B}(B^+_R(x'_0))$ be the solution to \eqref{eq:mixed}. Then there exists $C=C(n,\l,\L,i_a,s_a,L_\Omega)>0$ such that
\begin{equation}\label{neu000}
    \mint_{B^+_r(x'_0)} B\big(|Du_0| \big)\,dx\leq C\,\mint_{B^+_R(x'_0)} B\big(|Du_0| \big)\,dx+C\,\widetilde{B}\big(\|h\|_{L^\infty}\big)+C ,
\end{equation}
and the following excess decay estimate 
\begin{equation}\label{exc:BBNEU}
    \mint_{B_r^+(x'_0)}B\Big(\big|Du_0-(Du_0)_{B^+_r(x'_0)} \big|\Big)\,dx\leq C\,\left( \frac{r}{R}\right)^{\beta_Ni_B} \mint_{B^+_R(x'_0)}B\Big(\big|Du_0-(Du_0)_{B_R^+(x'_0)} \big|\Big)\,dx,
\end{equation}
   holds for every $0<r\leq R$, with $\beta_N=\beta_N(n,\l,\L,i_a,s_a,L_\Omega)\in (0,1)$.
\end{proposition}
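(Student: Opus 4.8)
The plan is to mimic closely the proof of Proposition~\ref{pr:otherest} (see also Proposition~\ref{prop:direst}), replacing the interior homogeneous estimates of Theorem~\ref{thm:inthom} with the Neumann homogeneous estimates of Theorem~\ref{thm:homneu}. The key structural fact is that $u_0$ solves the frozen mixed problem \eqref{eq:mixed}, so after shifting to remove the constant $h(x_0')$ from the conormal condition --- that is, replacing $\A(x_0',\cdot)$ with $\A(x_0',\cdot)-\mathbf{V}$ for a suitable constant $\mathbf{V}$ so that the boundary datum becomes a real constant $h_0$, which is legitimate since additive constants do not change the equation --- we are exactly in the setting \eqref{homog:neum}, with $\A(x_0',\cdot)$ playing the role of the autonomous stress field. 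This requires checking that $\A(x_0',\cdot)$ satisfies \eqref{ass:Aaut}: this follows directly from \eqref{new:coer} (or \eqref{ass:A} before flattening), with ellipticity constants depending on $n,\l,\L,i_a,s_a,L_\Omega$, and we may arrange $\A(x_0',0)=0$ by the shift. Hence Theorem~\ref{thm:homneu} applies to $u_0$ on $B^+_R(x_0')$, giving both the gradient $L^\infty$-$L^1$ bound \eqref{neu:EST} and the excess decay \eqref{decay:NEU}, with $\beta_N\in(0,1)$ depending only on $n,\l,\L,i_a,s_a,L_\Omega$.

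First I would handle the range $R/8\le r\le R$: here \eqref{neu000} is immediate (the average over the smaller ball is controlled by $(R/r)^n$ times the average over the larger one, and $(R/r)^n\le 8^n$), and \eqref{exc:BBNEU} follows from the Neumann analogue of \eqref{added}, i.e.\ the half-ball version of \eqref{added}, together with \eqref{triangle} and Jensen's inequality. So I may assume $0<r\le R/8$. For \eqref{neu000}, I would use the gradient bound \eqref{neu:EST} applied to $u_0$ (with $h_0$ the shifted constant boundary datum, whose absolute value is bounded by $\|h\|_{L^\infty}+|\mathbf V\cdot e_n|$, and $|\mathbf V|$ in turn bounded by $\L\,a(\text{something})$; more simply, one keeps $\A(x_0',\cdot)$ nonautonomous-in-value but frozen-in-$x$, and the shift amounts only to replacing $b^{-1}(|h_0|)$ with $b^{-1}(C\|h\|_{L^\infty})$, which by \eqref{b2t} and \eqref{come:BwB} is comparable to $b^{-1}(\widetilde B(\|h\|_{L^\infty})^{\,\cdot})$-type quantities). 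Concretely:
\begin{equation*}
\mint_{B^+_r(x_0')}B(|Du_0|)\,dx\le \sup_{B^+_{r}(x_0')}B(|Du_0|)\le \sup_{B^+_{R/2}(x_0')}B(|Du_0|)\le B\Big(C\mint_{B^+_R(x_0')}|Du_0|\,dx+C\,b^{-1}(|h_0|)\Big),
\end{equation*}
and then using \eqref{B2t}, \eqref{triangle}, Jensen's inequality and $B(b^{-1}(t))\le C\,\widetilde B(t)$ from \eqref{come:BwB}, together with $\widetilde B(|h_0|)\le C\,\widetilde B(\|h\|_{L^\infty})+C$ (using \eqref{tB:usef} and an upper bound on $\|h\|_{L^\infty}$, or simply absorbing $|\mathbf V\cdot e_n|$), one arrives at \eqref{neu000}.

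For \eqref{exc:BBNEU}, I would start from the oscillation decay for $Du_0$: combining \eqref{fin:oscdec}-type control with \eqref{decay:NEU}, or more directly using that $\operatorname{osc}_{B^+_r} Du_0\le C\,(r/R)^{\beta_N}\mint_{B^+_R}|Du_0-(Du_0)_{B^+_R}|\,dx$ from \eqref{NEU:est2}, we get
\begin{equation*}
\mint_{B_r^+(x_0')}B\big(|Du_0-(Du_0)_{B^+_r(x_0')}|\big)\,dx\le B\Big(\operatorname*{osc}_{B^+_r(x_0')}Du_0\Big)\le B\Big(C\Big(\tfrac{r}{R}\Big)^{\beta_N}\mint_{B^+_R(x_0')}|Du_0-(Du_0)_{B^+_R(x_0')}|\,dx\Big),
\end{equation*}
and then \eqref{B2t} (bringing out the factor $(r/R)^{\beta_N i_B}$) followed by Jensen's inequality yields \eqref{exc:BBNEU}. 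Throughout, the constants depend on $n,\l,\L,i_a,s_a,L_\Omega$ through the dependence of $\beta_N$ and of the Neumann homogeneous constants on $L_\Omega$ (inherited from \eqref{new:coer}).

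The main obstacle I anticipate is purely bookkeeping rather than conceptual: one must verify carefully that freezing the coefficient $x\mapsto x_0'$ in $\A(x,\cdot)$ and in $h$ genuinely reduces \eqref{eq:mixed} to the homogeneous autonomous Neumann problem \eqref{homog:neum} with a \emph{constant} conormal datum $h_0$ --- the subtlety being that $h(x_0')$ is indeed a constant (good), but the vector field $\A(x_0',\xi)$, while autonomous in $x$, is the \emph{value-frozen} field, and one must make sure its structural hypotheses \eqref{ass:Aaut} hold with the stated constants and that the normalization $\A(x_0',0)=0$ can be imposed. Once this reduction is set up cleanly, every remaining step is a direct transcription of the interior argument in Proposition~\ref{pr:otherest} with half-balls in place of balls and with the extra additive $\widetilde B(\|h\|_{L^\infty})$ and $+1$ terms carried along. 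I do not expect any genuinely new difficulty beyond tracking these lower-order terms and the $L_\Omega$-dependence.
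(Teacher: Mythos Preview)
Your proposal is correct and follows essentially the same route as the paper. The paper handles the normalization $\A(x_0',0)=0$ exactly as you suggest (via the shift $\hat\A(x_0',\xi)=\A(x_0',\xi)-\A(x_0',0)$, with the shifted boundary constant $h(x_0')+\A(x_0',0)\cdot e_n$ controlled by \eqref{A:hold1}$_2$), then applies \eqref{neu:EST} together with \eqref{B2t}, \eqref{triangle}, Jensen and \eqref{come:BwB}--\eqref{tB:usef} for \eqref{neu000}, and invokes \eqref{NEU:est2} in place of \eqref{oscdes} to obtain \eqref{exc:BBNEU} exactly as you outline.
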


\begin{proof}
    Estimate \eqref{neu000} is trivial when $R/8\leq r\leq R $, so we may assume that $0<r\leq R/8$. From \eqref{neu:EST}, \eqref{triangle}, \eqref{B2t}, Jensen inequality, \eqref{come:BwB}, \eqref{tB:usef} and \eqref{tB=1}, we get\footnote{Estimate \eqref{neu:EST} is valid under the assumption $\A(0)=0$, but 
in the present setting we may have $\A(x'_0,0)\neq 0$. However,  
introducing $\hat\A(x'_0,\xi)= \A(x'_0,\xi)-\A(x'_0,0)$, we have $\hat\A(x'_0,0)=0$, and that $u_0$ satisfies the same equation with $\hat\A(x'_0,\xi)$, but with boundary datum $h(x'_0)+\A(x'_0,0)\cdot e_n$. This can be controlled  via \eqref{A:hold1}$_2$, i.e., $|h(x'_0)+\A(x'_0,0)\cdot e_n|\leq |h(x'_0)|+C(n,L_\Omega,\L)$.}
    \begin{equation*}
        \begin{split}
            \mint_{B^+_r(x'_0)} B\big(|Du_0| \big)\,dx &\leq B\Big(\sup_{B^+_{R/8}(x'_0)}|Du_0| \Big)\leq B\Big(C\,\mint_{B^+_R(x'_0)}|Du_0|\,dx+C\,b^{-1}\big(|h(x_0')|+C\big) \Big)
            \\
            &\leq C_1\,\mint_{B^+_R(x'_0)}B\big(|Du_0| \big)\,dx+C_1\,\widetilde{B}\big( \|h\|_\infty\big)+C_1\,,
        \end{split}
    \end{equation*}
with $C,C_1=C,C_1(n,\l,\L,i_a,s_a,L_\Omega)>0$, so \eqref{neu000} is proven. Finally, by using \eqref{NEU:est2} in place of \eqref{oscdes}, the proof \eqref{exc:BBNEU} is completely analogous to that of \eqref{excBB}. We omit the details.
\end{proof}

Next we prove the following

\begin{proposition}[Comparison estimate]\label{prop:comparisonNEU}
Let $u_0\in W^{1,B}(B^+_{R}(x'_0))$ be the solution to \eqref{eq:mixed}. Then there exist $C=C(n,\l,\L,\L_\mathrm{h},i_a,s_a,L_\Omega,\alpha,\|\phi\|_{C^{1,\alpha}})>0$
 and $\theta=\theta(n,d,\alpha)\in (0,1/2)$ such that
\begin{equation}\label{comp:NEufin}
\begin{split}
    \int_{B^+_R(x'_0)} B\big(|Du-Du_0| \big) \,dx\leq &\,C\,\Big( 1+\|h\|_{C^{0,\alpha}}+\|f\|_{L^d(B^+_R(x'_0))}\Big)\,\Big(\int_{B^+_R(x'_0)}\big[B\big(|Du|\big)+1\big]\,dx\Big)\,R^\theta
    \\
    &+C\,\Big( 1+\|h\|_{C^{0,\alpha}}+\|f\|_{L^d(B^+_R(x'_0))}\Big)\,\widetilde{B}\big(\|h\|_{L^\infty} \big)\,R^{n+\theta}.
    \end{split}
\end{equation}
\end{proposition}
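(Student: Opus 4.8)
The plan is to mimic the interior comparison estimate of Proposition~\ref{prop:comparison}, with the extra care needed to handle the co-normal boundary term. First I would test the weak formulations of \eqref{eq:halfBneu} (with datum $h$) and of the frozen problem \eqref{eq:mixed} (with datum $h(x_0')$) against the function $u-u_0\in W^{1,B}(B^+_R(x_0'))$, whose trace vanishes on $\partial B^+_R(x_0')\setminus B^0_R(x_0')$ but not on $B^0_R(x_0')$. Subtracting, the leading term $\bigl(\A(x_0',Du)-\A(x_0',Du_0)\bigr)\cdot(Du-Du_0)$ is bounded below by $c\,a(|Du|+|Du_0|)\,|Du-Du_0|^2$ via \eqref{strong:coer}, while the right-hand side splits into three pieces: a ``frozen coefficient'' piece $\bigl(\A(x_0',Du)-\A(x,Du)\bigr)\cdot(Du-Du_0)$, a ``source'' piece $\int f\,(u-u_0)$, and a ``boundary'' piece $\int_{B^0_R(x_0')}\bigl(h(x_0')-h(x')\bigr)\,(u-u_0)\,d\mathcal H^{n-1}$.

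The frozen-coefficient and source pieces are handled exactly as in Proposition~\ref{prop:comparison}: using \eqref{A:hold1}, H\"older's and Sobolev's inequalities on the half ball, \eqref{simple}, and the energy bound \eqref{lkj:energia}, they are controlled by $C\bigl(R^\alpha+R^{1-n/d}\|f\|_{L^d}\bigr)\int_{B^+_R(x_0')}[B(|Du|)+1]\,dx$, up to the presence of $L_\Omega,\|\phi\|_{C^{1,\alpha}}$ in the constants. The genuinely new piece is the boundary integral. Here I would use that $|h(x_0')-h(x')|\le \|h\|_{C^{0,\alpha}}\,|x_0'-x'|^\alpha\le \|h\|_{C^{0,\alpha}}R^\alpha$ on $B^0_R(x_0')$, then the trace inequality \eqref{in:trace0} applied to $u-u_0$ (after subtracting a suitable constant, or directly since the relevant trace vanishes on the spherical part) to reduce $\int_{B^0_R(x_0')}|u-u_0|\,d\mathcal H^{n-1}$ to $C\int_{B^+_R(x_0')}|Du-Du_0|\,dx$; by Young's inequality \eqref{young1} and \eqref{el:comparison} this is absorbed at the cost of a term $C\,\delta^{-1}\|h\|_{C^{0,\alpha}}R^\alpha\int[B(|Du|)+1]\,dx$ plus a lower-order term. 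The appearance of $\widetilde B(\|h\|_{L^\infty})R^{n+\theta}$ in \eqref{comp:NEufin} comes from carrying the $\widetilde B(\|h\|_{L^\infty})$-term of the energy estimate \eqref{lkj:energia} through these manipulations, using $B(|Du_0|)\le C(1+\|h\|_\infty)[B(|Du|)+1]+C\widetilde B(\|h\|_\infty)$ and $|B^+_R|\approx R^n$, and noting that whatever power of $R$ multiplies this constant is at least $R^{n+\theta}$ after the choice of $\delta$.

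Finally, collecting the three estimates, inserting into \eqref{el:comparison} (to pass from the weighted $L^2$ quantity $a(|Du|+|Du_0|)|Du-Du_0|^2$ back to $B(|Du-Du_0|)$), choosing $\delta=\max\{R^{\alpha/2},R^{(1-n/d)/2}\}$, and reabsorbing, yields \eqref{comp:NEufin} with $\theta=\min\{\alpha/2,(1-n/d)/2\}$. The main obstacle I anticipate is bookkeeping the boundary term correctly: one must make sure the trace inequality is applied to a function that genuinely vanishes on the spherical portion of $\partial B^+_R(x_0')$ (which $u-u_0$ does by \eqref{eq:mixed}), and must track how the two constants $\|h\|_{C^{0,\alpha}}$ and $\widetilde B(\|h\|_{L^\infty})$ propagate --- the former entering linearly with an $R^\alpha$ gain, the latter entering with the extra volume factor $R^{n}$ that upgrades to $R^{n+\theta}$. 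Once the structure of the right-hand side mirrors that in Proposition~\ref{prop:comparison} plus this controlled boundary contribution, the rest is a routine reabsorption identical to the interior case.
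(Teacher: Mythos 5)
Your proposal is correct and follows essentially the same route as the paper: test both weak formulations with $u-u_0$, use \eqref{strong:coer} on the left, split the right-hand side into the frozen-coefficient, source, and boundary pieces, control the boundary term via the H\"older continuity of $h$ and the trace inequality \eqref{in:trace0} (legitimate since $u-u_0$ vanishes on the spherical part of $\partial B_R^+(x_0')$ by \eqref{eq:mixed}), track the $\widetilde B(\|h\|_{L^\infty})R^{n}$ contribution through \eqref{simple} and \eqref{lkj:energia}, and conclude with \eqref{el:comparison} and the choice $\delta=\max\{R^{\alpha/2},R^{(1-n/d)/2}\}$, $\theta=\min\{\alpha/2,(1-n/d)/2\}$. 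This matches the paper's argument in all essential respects.
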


\begin{proof}
    We argue similarly to Proposition \ref{prop:comparison}. More precisely, by means of \eqref{strong:coer}, \eqref{eq:mixed} and \eqref{eq:halfBneu}, we find
    \begin{equation*}
        \begin{split}
            \int_{B^+_R(x'_0)}a&\big(|Du|+|Du_0| \big)\,|Du-Du_0|^2\,dx 
\\
\leq&\,C\,\int_{B^+_R(x'_0)}\big(\A(x'_0,Du)-\A(x'_0,Du_0)\big)\cdot \big(Du-Du_0 \big)\,dx
\\
= &\,C\,\int_{B^+_R(x'_0)}\A(x'_0,Du)\cdot \big(Du-Du_0 \big)\,dx-\int_{B^0_{R}(x'_0)}h(x'_0)\,(u-u_0)\,dx'
\\
= &\,C\,\int_{B^+_R(x'_0)}\big(\A(x'_0,Du)-\A(x,Du)\big)\cdot \big(Du-Du_0 \big)\,dx+\int_{B^+_R(x'_0)}f\,(u-u_0)\,dx
\\
&+\int_{B^0_R(x'_0)} \big(h(x')-h(x'_0)\big)\,(u-u_0)\,dx',
        \end{split}
    \end{equation*}
   and then, by using  \eqref{A:hold1}, the H\"older, Sobolev and trace inequalities \eqref{half:sobol}, \eqref{in:trace0}, we obtain
\begin{equation}
        \begin{split}
\int_{B^+_R(x'_0)}a&\big(|Du|+|Du_0| \big)\,|Du-Du_0|^2\,dx 
\\
\leq &\, C\,R^{\alpha}\,\int_{B^+_R(x'_0)}|Du-Du_0|\,dx+C\,\|f\|_{L^n(B^+_R(x'_0))}\,\Big( \int_{B_R^+(x'_0)}|u-u_0|^{n'}\,dx\Big)^{1/n'}
\\
&+\|h\|_{C^{0,\alpha}}\,R^\alpha\,\int_{B^0_R(x'_0)}|u-u_0|\,dx'
\\
\leq & \,C'\,(1+\|h\|_{L^\infty})\,R^{\alpha}\int_{B^+_R(x'_0)} \big[ B\big(|Du| \big)+1\big]\,dx+C'\,\widetilde{B}\big(\|h\|_{L^\infty}\big)\,R^{n+\alpha}
 \\
 &+C'\Big(\|f\|_{L^n(B^+_R(x'_0))}+R^\alpha\,\|h\|_{C^{0,\alpha}}\Big)\, \int_{B^+_R(x'_0)}|Du-Du_0|\,dx
  \\
\leq  &\,C''\,\Big(R^{\alpha}(1+\|h\|_{C^{0,\alpha}})+R^{1-n/d}\,\|f\|_{L^d(B^+_R(x'_0))}\Big)\,\int_{B^+_R(x'_0)} \big[ B\big(|Du| \big)+1\big]\,dx
\\
&+C''\,\Big(R^{\alpha}(1+\|h\|_{C^{0,\alpha}})+R^{1-n/d}\,\|f\|_{L^d(B^+_R(x'_0))}\Big)\,\widetilde{B}\big(\|h\|_{L^\infty}\big)\,R^n
        \end{split}
    \end{equation}
    with $C,C',C''>0$ depending on $n,\l,\L,\L_\mathrm{h},i_a,s_a,\alpha,L_\Omega,\|\phi\|_{C^{1,\alpha}}$, where in the last two estimates we used \eqref{simple} and \eqref{lkj:energia}. Coupling this inequality with \eqref{el:comparison} and \eqref{lkj:energia}, we infer
\begin{equation*}
\begin{split}
    \int_{B^+_R(x'_0)} & B\big(|Du-Du_0| \big)\,dx
    \\
    \leq &\, C\,\Big\{\delta+\delta^{-1}\Big(R^{\alpha}\big(1+\|h\|_{C^{0,\alpha}} \big)+R^{1-n/d}\,\|f\|_{L^d(B^+_R(x'_0))}\Big)\Big\}\,\int_{B^+_R(x'_0)} \big[ B\big(|Du| \big)+1\big]\,dx
    \\
    &+C\,\Big\{\delta+\delta^{-1}\Big(R^{\alpha}\big(1+\|h\|_{C^{0,\alpha}} \big)+R^{1-n/d}\,\|f\|_{L^d(B^+_R(x'_0))}\Big)\Big\}\,\widetilde{B}\big(\|h\|_{L^\infty} \big)\,R^n
\end{split} 
\end{equation*}
for all $\delta\in (0,1)$. Choosing $\delta=\max\big\{R^{\alpha/2}, R^{(1-n/d)/2}\big\}$, we finally obtain \eqref{comp:NEufin} with 
\[\theta=\min\{\alpha/2, (1-n/d)/2\}.\]
\end{proof}

Next, we prove the Morrey-type estimate for $B(|Du|)$.

\begin{lemma}[A Morrey-type estimate]\label{lem:morreyneu}
    Let  $u\in W^{1,B}(B^+_{R_0})$ be solution to \eqref{eq:halfBneu}.  Then for every $\mu\in (0,n)$, there exist  \[
    \begin{split}
    &C_\mu=C_\mu(n,\l,\L,\L_\mathrm{h},i_a,s_a,\alpha,L_\Omega,\|\phi\|_{C^{1,\alpha}},\mu)>0
    \\
    &R_\mu=R_\mu(n,\l,\L,\L_\mathrm{h},i_a,s_a,\alpha,d,L_\Omega,\|\phi\|_{C^{1,\alpha}},\|h\|_{C^{0,\alpha}},\|f\|_{L^d(\mathcal{V})},\mu)\in(0,1)
    \end{split}
    \] 
 such that, if $R_0\leq R_\mu$, then 
\begin{equation}\label{est:morreyneu}
    \int_{B^+_R(x'_0)} B\big(|Du
    |\big)\,dx\leq C_\mu\,\bigg(1+\widetilde{B}\big(\|h\|_{C^{0,\alpha}}\big)+\|f\|_{L^d(\mathcal{V})}+\frac{1}{R_0^\mu}\int_{\mathcal{V}}B\big(|Du| \big)\,dx \bigg)\,R^\mu
\end{equation}
is valid for every $x'_0\in B^0_{R_0/2}$ and  $0<R\leq R_0/2$.
\end{lemma}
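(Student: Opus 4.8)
The plan is to mimic the proof of the interior Morrey estimate, Lemma~\ref{lem:morrey}, and of its Dirichlet analogue \eqref{est:morreydir}, now using as comparison map the solution $u_0\in W^{1,B}(B^+_R(x_0'))$ of the homogeneous frozen mixed problem \eqref{eq:mixed}, whose existence and energy bound are given by Proposition~\ref{prop:existmixed}, and whose decay and excess-decay estimates are collected in Proposition~\ref{prop:neuosc}. Since $B^+_R(x_0')\subset B^+_{R_0}\subset\mathcal{V}$ whenever $x_0'\in B^0_{R_0/2}$ and $R\le R_0/2$, throughout the argument one freely bounds $\|f\|_{L^d(B^+_R(x_0'))}\le\|f\|_{L^d(\mathcal{V})}$ and $\int_{B^+_R(x_0')}B(|Du|)\,dx\le\int_{\mathcal{V}}B(|Du|)\,dx$; moreover $\widetilde B(\|h\|_{L^\infty})$ is controlled by $\widetilde B(\|h\|_{C^{0,\alpha}})$, so every quantity appearing below is covered by the list in the statement.

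First I would fix $x_0'\in B^0_{R_0/2}$, set the non-decreasing function $\phi(r):=\int_{B^+_r(x_0')}B(|Du|)\,dx$, and split, for $0<r\le R\le R_0/2$, by the quasi-triangle inequality \eqref{triangle},
\[
\phi(r)\le C\int_{B^+_r(x_0')}B(|Du-Du_0|)\,dx+C\int_{B^+_r(x_0')}B(|Du_0|)\,dx .
\]
The second integral is handled by the decay estimate \eqref{neu000}, which yields $\int_{B^+_r(x_0')}B(|Du_0|)\,dx\le C(r/R)^n\int_{B^+_R(x_0')}B(|Du_0|)\,dx+C\bigl(1+\widetilde B(\|h\|_{L^\infty})\bigr)r^n$, followed by the energy bound \eqref{lkj:energia} on $B^+_R(x_0')$ to replace $\int_{B^+_R(x_0')}B(|Du_0|)\,dx$ by $C(1+\|h\|_{L^\infty})\int_{B^+_R(x_0')}[B(|Du|)+1]\,dx+C\widetilde B(\|h\|_{L^\infty})R^n$. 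The first integral is bounded by the comparison estimate \eqref{comp:NEufin}. Collecting these bounds, using $R\le 1$ and $\mu<n$ so that $R^n,R^{n+\theta}\le R^\mu$, and absorbing the factor $R^\theta\le R_0^\theta$ into a constant $\varepsilon$, one reaches an inequality of the form
\[
\phi(r)\le A\Bigl[\Bigl(\tfrac rR\Bigr)^{n}+\varepsilon\Bigr]\phi(R)+B_0\,R^\mu ,\qquad 0<r\le R\le R_0/2 ,
\]
with $A=C\bigl(n,\l,\L,\L_{\mathrm{h}},i_a,s_a,\alpha,L_\Omega,\|\phi\|_{C^{1,\alpha}}\bigr)\,(1+\|h\|_{C^{0,\alpha}})$, $\varepsilon=C\bigl(1+\|h\|_{C^{0,\alpha}}+\|f\|_{L^d(\mathcal{V})}\bigr)R_0^\theta$, where $\theta=\theta(n,d,\alpha)\in(0,1/2)$ is the exponent of \eqref{comp:NEufin}, and $B_0=C_\mu\bigl(1+\widetilde B(\|h\|_{C^{0,\alpha}})+\|f\|_{L^d(\mathcal{V})}+R_0^{-\mu}\int_{\mathcal{V}}B(|Du|)\,dx\bigr)$.

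Finally I would apply the iteration Lemma~\ref{lem:iteration} with exponents $n>\mu$: imposing $R_0\le R_\mu$, where $R_\mu$ is chosen so that $\varepsilon<\varepsilon_0=\bigl(\tfrac{1}{2A}\bigr)^{2n/(n-\mu)}$, gives $\phi(r)\le C\bigl[R_0^{-\mu}\phi(R_0/2)+B_0\bigr]r^\mu$ for all $0<r\le R_0/2$, which is \eqref{est:morreyneu} after using $\phi(R_0/2)\le\int_{\mathcal{V}}B(|Du|)\,dx$; tracking the dependence of $R_\mu$ through this choice gives exactly the parameters listed in the statement. I do not expect a genuine analytic obstacle here, since the substantial work—existence and quantitative estimates for $u_0$, the comparison estimate, and the homogeneous Neumann regularity—has already been carried out in Propositions~\ref{prop:existmixed}--\ref{prop:comparisonNEU} and in Theorem~\ref{thm:homneu}. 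The only point requiring care is purely bookkeeping: one must check that every error term produced along the way scales at least like the volume $|B^+_R|\approx R^n$, hence like $R^\mu$ since $R\le 1$ and $\mu<n$, so that it can be absorbed into the $B_0\,R^\mu$ term required by Lemma~\ref{lem:iteration}.
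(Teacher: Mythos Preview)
Your proposal is correct and follows essentially the same route as the paper's proof: split via the quasi-triangle inequality, apply the decay estimate \eqref{neu000} together with the energy bound \eqref{lkj:energia} to control the $u_0$ term, use the comparison estimate \eqref{comp:NEufin} for the difference, and then invoke the iteration Lemma~\ref{lem:iteration}. The only cosmetic difference is that the paper keeps the factor $(1+\widetilde B(\|h\|_{C^{0,\alpha}}))$ attached to the $(r/R)^n$ coefficient rather than folding it into a single constant $A$ as you do, but this is purely a matter of bookkeeping.
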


\begin{proof}
We follow the proof of~\eqref{est:morreydir}, with the necessary modifications due to the different boundary condition. Hence we compute
   {\small
\begin{equation*}
    \begin{split}
        \int_{B^+_r(x'_0)}&B\big(|Du| \big)\,dx \stackrel{\eqref{triangle}}{\leq} C\,\int_{B_r^+(x'_0)}B\big(|Du-Du_0| \big)\,dx+C\,\int_{B^+_r(x'_0)} B\big( |Du_0|\big)\,dx
        \\
        \stackrel{\eqref{neu000}}{\leq} &\,C\,\int_{B_R^+(x'_0)}B\big(|Du-Du_0| \big)\,dx
        \\
        &+C'\Big(\frac{r}{R} \Big)^n\,\int_{B^+_R(x'_0)} B\big( |Du_0|\big)\,dx+C'\,\Big(\widetilde{B}\big(\|h\|_{C^{0,\alpha}}\big)+1\Big)\,r^n
        \\
        \stackrel{\eqref{lkj:energia},\eqref{comp:NEufin}}{\leq}  &C''\,\bigg(\Big(\frac{r}{R} \Big)^n\big(1+\widetilde{B}(\|h\|_{C^{0,\alpha}})\big)+\Big(1+\|h\|_{C^{0,\alpha}}+ \|f\|_{L^d(\mathcal{V}))}\Big)\,R^\theta\bigg)\,\int_{B^+_R(x'_0)}B\big(|Du|\big)\,dx 
        \\
        &+C''\,\Big(1+ \widetilde{B}(\|h\|_{C^{0,\alpha}})+\|f\|_{L^d(\mathcal{V})}\Big)\,R^n\,,
    \end{split}
\end{equation*}}
with $C,C',C''>0$ depending on $n,\l,\L,\L_\mathrm{h},i_a,s_a,\alpha,L_\Omega,\|\phi\|_{C^{1,\alpha}}$, where we also estimated $\|h\|_{C^{0,\alpha}}\leq C(i_a,s_a)\,\widetilde{B}\big
(\|h\|_{C^{0,\alpha}} \big)+1$, stemming from \eqref{simple} with $B$ replaced by $\widetilde{B}$. Equation \eqref{est:morreyneu} thus follows via an application of Lemma \ref{lem:iteration}.
\end{proof}

We now have all the ingredients to prove the boundary regularity for co-normal problems.

\begin{proof}[Proof of Theorem \ref{thm:neu}]
    We first prove the gradient regularity of solutions $u\in W^{1,B}(B^+_{R_0})$ to \eqref{eq:halfBneu}.
    We fix $\bar\mu=\bar\mu(n,d,\alpha)=n-\theta/2$, with $\theta\in (0,1)$ given by Proposition \ref{prop:comparisonNEU}, and from Lemma \ref{lem:morreyneu} we determine
   
    \[\begin{split}
    &C_{\bar \mu}=C_{\bar\mu}(n,\l,\L,\L_\mathrm{h},i_a,s_a,\alpha,L_\Omega,\|\phi\|_{C^{1,\alpha}})>0
    \\
    &R_{\bar\mu}= R_{\bar\mu}(n,\l,\L,\L_\mathrm{h},i_a,s_a,\alpha,d,L_\Omega,\|\phi\|_{C^{1,\alpha}},\|h\|_{C^{0,\alpha}},\|f\|_{L^d(\mathcal{V})})\in (0,1)
    \end{split}
    \]
    such that \eqref{est:morreyneu} holds true with $\mu=\bar \mu$. Therefore, on assuming that $R_0\leq R_{\bar \mu}$, by coupling \eqref{comp:NEufin} and \eqref{est:morreyneu}, and by also estimating $\widetilde{B}(\|h\|_{C^{0,\alpha}})\leq C(i_a,s_a)\,\big(\|h\|_{C^{0,\alpha}}+1\big)^{i_B'}$ by \eqref{B2t} (with $B$ replaced by $\widetilde{B}$ and $s_B$ replaced by $i_B'$), we obtain the improved comparison estimate
    \begin{equation}\label{imprcomp:NEu}
         \mint_{B^+_R(x'_0)} B\big(|Du-Du_0| \big) \,dx\leq \widehat{C}_0\,R^{\theta/2},
    \end{equation}
where
\[
\widehat{C}_0=\widehat{C}_0\Bigg(n,\l,\L,\L_\mathrm{h},i_a,s_a,L_\Omega,\alpha,d,\|h\|_{C^{0,\alpha}},\|\phi\|_{C^{1,\alpha}},\|f\|_{L^d(\mathcal{V})},\frac{1}{R_0^{\bar \mu}}\int_{\mathcal{V}}B\big(|Du| \big)\,dx\Bigg)>0.
\]
Repeating the same computations of \eqref{urcs} with \eqref{exc:BBNEU} and \eqref{imprcomp:NEu} in place of \eqref{excBB} and \eqref{comp:morrey}, respectively, we arrive at
\begin{equation*}
     \mint_{B^+_r(x'_0)} B\big(|Du-(Du)_{B^+_r(x'_0)}| \big) \,dx\leq C\,\widehat{C}_0\,\Big(\frac{R^{n+\theta/2}}{r^n}\Big)+C\,\Big(\frac{r}{R} \Big)^{\beta_N\,i_B} \mint_{B^+_R(x'_0)} B\big(|Du-(Du)_{B^+_R(x'_0)}| \big) \,dx,
\end{equation*}
    with $C=C(n,\l,\L,i_a,s_a,L_\Omega)>0$. Starting from this inequality, and repeating exactly the same argument of \eqref{fix:vphi}-\eqref{camp:sphere}, we deduce
    \[
    \|Du\|_{C^{0,\beta}(B^+_{R_0/2})}\leq C\Bigg(n,\l,\L,\L_\mathrm{h},i_a,s_a,d,\alpha,L_\Omega,\|f\|_{L^d(\mathcal{V})},\|h\|_{C^{0,\alpha}},\|\phi\|_{C^{0,\alpha}},R_0,\int_{\mathcal{V}}B\big(|Du| \big)\,dx \Bigg),
    \]
    with $\beta=\beta(n,\l,\L,i_a,s_a,\alpha,d,L_\Omega)\in (0,1)$. Then, by \eqref{eq:halfBneu} and Lemma \ref{lemma:diver}, we have that $u$ solves
    \begin{equation*}
    \begin{cases}
        -\mathrm{div}\big( \A_f(x,Du)\big)=0\quad &\text{in $B^+_{R_0}$}
        \\
        \A_f(x',Du)\cdot e_n+h(x')=0&\text{on $B^0_{R_0}$}
        \end{cases}
    \end{equation*}
    with $\A_f(x,\xi)=\A(x,\xi)+\mathrm{div}F$ satisfying assumption \eqref{Afdiv} (up to the necessary change of constants due to \eqref{new:coer}, namely the appearance of the factor  $L_\Omega$), so that
    Theorem \ref{thm:bddneu}, yields
    \[ \|u\|_{L^{\infty}(B^+_{R_0/2})}\leq C\bigg(n,\l,\L,i_a,s_a,d,L_\Omega,\|h\|_{L^\infty},\|f\|_{L^d(\mathcal{V})},R_0,\int_{\mathcal{V}} |u|\,dx \bigg).
    \]
Hence, the \(C^{1,\beta}\)-regularity estimates for solutions to \eqref{eq:halfBneu} are established.

Finally, these estimates imply \eqref{stimafin:Neu} by means of the flattening argument of \eqref{change:var}--\eqref{def:hat1}, combined with the same covering argument employed in Step~2 of the proof of Theorem~\ref{thm:dir} (see the discussion following \eqref{temp0:bdddir}). We omit the details.
\end{proof}

We conclude this final section with the following  
\begin{proof}[Proof of Corollary \ref{cor:neu}]
By \eqref{zero:mean}, we may use Poincar\'e inequality
    \begin{equation}\label{poinc:coroll}
        \int_\Omega |u|\,dx\leq C\big(n,\mathrm{diam}\,(\Omega)\big)\,\int_\Omega |Du|\,dx.
    \end{equation}
    We then test Equation \eqref{eq:neu2}, so that
    \begin{equation}\label{wek:teneu}
        \int_\Omega \A(x,Du)\cdot Du\,dx=\int_\Omega f\,u\,dx+\int_{\partial\Omega}h\,u\,d\mathcal{H}^{n-1};
    \end{equation}
    by means of \eqref{co:gr}, \eqref{A:hold}$_2$, \eqref{Young}, \eqref{young1} and \eqref{B=1}, we get
    \begin{equation*}
    \begin{split}
        \int_\Omega \A(x,Du)\cdot Du\,dx&\geq c\,\int_\Omega B\big(|Du| \big)\,dx-C\,\int_\Omega\A(x,0)\cdot Du\,dx
        \\
        &\geq c\,\int_\Omega B\big(|Du| \big)\,dx-C'\int_\Omega |Du|\,dx
        \\
        &\geq c'\,\int_\Omega B\big(|Du| \big)\,dx-C''|\Omega|,
        \end{split}
    \end{equation*}
    with $C,C',C''>0$ depending on $n,\l,\L,i_a,s_a$. Then, by H\"older and Sobolev inequalities (whose quantitative constant depends on $n,\mathcal{L}_\Omega$), \eqref{poinc:coroll} and \eqref{Young}, we get
    \begin{equation*}
        \begin{split}
            \Big|\int_\Omega f&\,u\,dx \Big|\leq \|f\|_{L^n(\Omega)}\,\|u\|_{L^{n'}(\Omega)}\leq C(n,\mathcal{L}_\Omega)\|f\|_{L^n(\Omega)}\bigg\{\int_\Omega |u|\,dx+\int_\Omega |Du|\,dx\bigg\}
            \\
            &\leq  C(n,\mathrm{diam}\,\Omega,\mathcal{L}_\Omega)\|f\|_{L^n(\Omega)}\,\int_\Omega |Du|\,dx\leq \delta\,\int_\Omega B\big(|Du| \big)\,dx+C_\delta,
        \end{split}
    \end{equation*}
    for all $\delta\in (0,1)$, with $C_\delta=C_\delta(n,i_a,s_a,\mathrm{diam}\,\Omega,\mathcal{L}_\Omega,\|f\|_{L^d(\Omega)},\delta)$. Next, by the trace inequality \eqref{in:trace0}, \eqref{poinc:coroll} and \eqref{Young}, we get
    \begin{equation*}
        \begin{split}
            \Big|\int_{\partial \Omega}h&\,u\,d\mathcal{H}^{n-1} \Big|\leq C(n,\mathrm{diam}\,\Omega,\mathcal{L}_\Omega)\,\|h\|_{L^\infty(\partial \Omega)}\,\bigg\{\int_\Omega |u|\,dx+\int_\Omega |Du|\,dx \bigg\}
            \\
            &\leq C'(n,\mathrm{diam}\,\Omega,\mathcal{L}_\Omega)\,\|h\|_{L^\infty(\partial \Omega)}\,\int_\Omega |Du|\,dx \leq \delta\,\int_\Omega B\big(|Du| \big)\,dx+C_\delta,
        \end{split}
    \end{equation*}
    for all $\delta \in (0,1)$, with $C_\delta=C_\delta(n,i_a,s_a,\mathrm{diam}\,\Omega,\mathcal{L}_\Omega,\|h\|_{L^\infty(\partial\Omega)},\delta)$. Inserting the content of the three inequalities above into \eqref{wek:teneu}, and choosing \[\delta=\delta(n,\l,\L,i_a,s_a,\|h\|_{L^\infty(\partial \Omega)},\|f\|_{L^d(\Omega)},\mathrm{diam}\,\Omega,\mathcal{L}_\Omega)\in (0,1)\] 
    small enough, we obtain the energy estimate
\begin{equation}
    \int_\Omega B\big(|Du| \big)\,dx\leq C\Big(n,\l,\L,i_a,s_a,\|h\|_{L^\infty(\partial \Omega)},\|f\|_{L^d(\Omega)},\mathrm{diam}\,\Omega,\mathcal{L}_\Omega \Big).
\end{equation}
   This estimate together with \eqref{poinc:coroll}, \eqref{simple} and Theorem \ref{thm:neu} yields \eqref{est:neucoroll}, that is our thesis. 
\end{proof}

\appendix

\section{Proof of Lemma \ref{lemma:interpol}}\label{appendixA}
Let $\mathcal{B}=B_{R_0}(x_0)$, and without loss of generality let us assume that $x_0=0$. For $x,y\in \mathcal{B}$, we set
\[
d(x):=\mathrm{dist}(x,\partial\mathcal{B})=R_0-|x|,\quad d(x,y):=\min\big\{d(x),d(y) \big\}.
\]
In the same spirit of \cite[Section 6.8]{GT}, for a given vector field $V=\{V^i(x)\}_{i=1}^d$, we define the weighted semi-norms
\[
[V]_\alpha^{(b)}:=\sup_{x\neq y\in \mathcal{B}}d(x,y)^{\alpha+b}\frac{|V(x)-V(y)|}{|x-y|^\alpha},\quad \alpha\in (0,1],\,b\geq 0,
\]
and for $b\geq 0$ we also set
\[
|V|_0=\sup_{\mathcal{B}}|V|,\quad
|V|^{(b)}_0:=\sup_{x\in \mathcal{B}} d^b(x)|V(x)|. \]
We start by proving the interpolation inequality
\begin{equation}\label{0inter:ineq}
    |Dv|_0^{(1)}\leq \frac{2}{\e}|v|_0+2^{1+\alpha}\e^\alpha\,[Dv]_\alpha^{(1)},
\end{equation}
which holds for all $\e\in (0,1/2)$, and for any $C^1$-function $v:\mathcal{B}\to \R$. To prove it, let $x_1\in \mathcal{B}$, and suppose that $Dv(x_1)\neq 0$. Then choose $x_2\in \mathcal{B}$ such that $x_1-x_2$ is parallel to $Dv(x_1)$, and $|x_1-x_2|=\e\,d(x_1)$, that is
\[
x_1-x_2=\e\,d(x_1)\frac{Dv(x_1)}{|Dv(x_1)|}.
\]
Therefore, as $\e\in (0,1/2)$, we have that
\[
x_2\in \overline B_{\e d(x_1)}(x_1)\subset B_{d(x_1)/2}(x_1)\subset \mathcal{B}
\]
By the mean value theorem, we may find $x_3$ belonging to the line segment generated by $x_1,x_2$, and denoted by $[x_1,x_2]$, such that
\[v(x_1)-v(x_2)=Dv(x_3)\cdot (x_1-x_2),
\] 
so that we have
\begin{equation}\label{tempo:interineq}
\begin{split}
|Dv(x_1)|&=Dv(x_1)\cdot\frac{x_1-x_2}{|x_1-x_2|}\leq Dv(x_3)\cdot \frac{(x_1-x_2)}{|x_1-x_2|}+|Dv(x_3)-Dv(x_1)|
\\
&= \frac{v(x_1)-v(x_2)}{\e\,d(x_1)}+\bigg(\frac{|Dv(x_3)-Dv(x_1)|}{|x_3-x_1|^\alpha}\,d(x_1,x_3)^{1+\alpha} \bigg)\,\frac{|x_3-x_1|^{\alpha}}{d(x_1,x_3)^{1+\alpha}}
\\
&\leq \frac{2\,|v|_0}{\e\,d(x_1)}+[Dv]_\alpha^{(1)}\,\frac{|x_3-x_1|^{\alpha}}{d(x_1,x_3)^{1+\alpha}}.
\end{split}
\end{equation}
On the other hand, as $x_3$ lies in the segment $[x_1,x_2]$, we have $|x_1-x_2|\geq |x_1-x_3|$, so that
\[
d(x_1)=\frac{|x_1-x_2|}{\e}\geq 2|x_1-x_2|\geq 2|x_1-x_3|
\]
which together with the triangle inequality implies $d(x_3)\geq d(x_1)-|x_1-x_3|\geq d(x_1)/2$. Therefore, there holds
\[
\frac{d(x_1)}{2}\leq d(x_1,x_3)\leq d(x_1),
\]
and so we have
\[
\frac{|x_1-x_3|^\alpha}{d(x_3,x_1)^{1+\alpha}}\leq 2^{1+\alpha}\frac{|x_1-x_3|^\alpha}{d(x_1)^{1+\alpha}}\leq 2^{1+\alpha}\frac{|x_2-x_1|^\alpha}{d(x_1)^{1+\alpha}}=2^{1+\alpha}\frac{\e^\alpha}{d(x_1)}
\]
By inserting this estimate into \eqref{tempo:interineq}, and multiplying the resulting inequality with $d(x_1)$ we get
\[
d(x_1)\,|Dv(x_1)|\leq \frac{2\,|v|_0}{\e}+2^{1+\alpha}\e^\alpha\,[Dv]_\alpha^{(1)}
\]
and by the arbitrariness of $x_1\in \mathcal{B}$ this implies \eqref{0inter:ineq}.

Now let $w\in C^0(\bar{\mathcal{B}})\cap C^1(\mathcal{B})$ be a function satisfying \eqref{hp:interpol}, and for fixed $L\in \R^n$ and $U\in \R$, we define 
\begin{equation}\label{def:nuovav}
    v(x):=w(x)-L\cdot x-U.
\end{equation}
Clearly, \(v\) still satisfies \eqref{hp:interpol}. 
We now show that $v$ satisfies the following inequality
\begin{equation}\label{1inter:ineq}
    [Dv]_\alpha^{(1)}\leq C(n,\alpha,c_1)\Big(|Dv|_0^{(1)}+K\,R_0^{1+\alpha} \Big).
\end{equation}
 To prove it, let us fix two points $x\neq y$ in $\mathcal{B}$, and we distinguish two cases. 
First suppose that  $|x-y|\geq \frac{d(x,y)}{2}$, in which case we have
\[
\begin{split}
d(x,y)^{1+\alpha}&\frac{|Dv(x)-Dv(y)|}{|x-y|^\alpha}\leq 2^\alpha\,d(x,y)\,|Dv(x)-Dv(y)|
\\
&\leq 2^\alpha\, d(x)|Dv(x)|+2^\alpha \,d(y)|Dv(y)|\leq 2^{\alpha+1}|Dv|_0^{(1)},
\end{split}
\]
so \eqref{1inter:ineq} holds in this case.

Let us now assume that  $|x-y|<\frac{d(x,y)}{2}$ and, without loss of generality, that $d(x,y)=d(x)$. So, we have that
\[
y\in \overline B_{|x-y|}(x)\subset B_{\frac{d(x)}{2}}(x),
\]
and thus, by using \eqref{hp:interpol} with $v$ in place of $w$, and radii $r=|x-y|$ and $\vrho=d(x)/2$, we obtain
\[\begin{split}
    |Dv(x)-Dv(y)|\leq C(n)\,\operatorname*{osc}_{B_{|x-y|}(x)} Dv\leq C'\,\frac{|x-y|^\alpha}{d(x)^\alpha}\bigg\{\operatorname*{osc}_{B_{\frac{d(x)}{2}}(x)} Dv+K\,d(x)^\alpha \bigg\},
\end{split}\]
with $C'=C'(n,\alpha,c_1)>0$. Multiplying the above inequality by $\frac{d(x,y)^{1+\alpha}}{|x-y|^\alpha}=\frac{d(x)^{1+\alpha}}{|x-y|^\alpha}$, and using that $d(x)=R_0-|x|\leq R_0$, we obtain
\begin{equation}\label{tempo:inteineq2}
\begin{split}
d(x,y)^{1+\alpha}\frac{|Dv(x)-Dv(y)|}{|x-y|^\alpha}&\leq C\,d(x)\,\operatorname*{osc}_{B_{\frac{d(x)}{2}}(x)} Dv+C\,K\,d(x)^{1+\alpha}
\\
&\leq C'\,d(x)\,\sup_{B_{\frac{d(x)}{2}}(x)} |Dv|+C\,K\,R_0^{1+\alpha}.
\end{split}
\end{equation}
On the other hand, if $z\in B_{\frac{d(x)}{2}}(x)$, we have 
\[
d(x)=R_0-|x|\geq R_0-|x-z|-|z|\geq d(z)-\frac{d(x)}{2},
\]
so that $d(z)\geq d(x)/2$ for all $z\in B_{\frac{d(x)}{2}}(x)$. This in turn implies
\[
d(x)|Dv(z)|\leq 2d(z)|Dv(z)|\leq 2\,|Dv|_0^{(1)},
\]
which together with \eqref{tempo:inteineq2} and the arbitrariness of $x\neq y\in \mathcal{B}$ yields \eqref{1inter:ineq}.

Now, by \eqref{0inter:ineq}  and \eqref{1inter:ineq}, we infer
\begin{equation*}
    \begin{split}
 |Dv|_0^{(1)}\leq \frac{2|v|_0}{\e}+2^{1+\alpha}\e^\alpha\,[Dv]_\alpha^{(1)}\leq \frac{2|v|_0}{\e}+C\,\e^\alpha\Big(|Dv|_0^{1}+K\,R_0^{1+\alpha} \Big),
    \end{split}
\end{equation*}
with $C=C(n,\alpha,c_1)$, so by choosing $\e=(2C)^{-1/\alpha}$  and reabsorbing terms, we obtain
\begin{equation}\label{aaaalmost}
    |Dv|_0^{(1)}\leq C(n,\alpha,c_1)\,\Big\{|v|_0+K\,R_0^{1+\alpha}\Big\}.
\end{equation}
In particular, if $x\in B_{R_0/2}$, we have that $d(x)=R_0-|x|\geq R_0/2$, so that 
\[
|Dv(x)|=d(x)\,\frac{|Dv(x)|}{d(x)}\leq \frac{2}{R_0}\,|Dv|_0^{(1)},\quad x\in B_{R_0/2}.
\]
By coupling this inequality with \eqref{aaaalmost} and rewriting the resulting estimate in terms of \(w\), \(L\), and \(U\) via \eqref{def:nuovav}, we finally obtain \eqref{thesis:interpol}, which is the desired conclusion.

\bigskip{}{}

 \par\noindent {\bf Acknowledgments.} The author is a postdoctoral fellow of the National Institute for Advanced Mathematics (INdAM) at the University of Florence.

\bigskip{}{}

 \par\noindent {\bf Data availability statement.} Data sharing not applicable to this article as no datasets were generated or analysed during the current study. 

\section*{Compliance with Ethical Standards}\label{conflicts}

\par\noindent
{\bf Funding}. This research was partly funded by GNAMPA   of the Italian INdAM - National Institute of High Mathematics (grant number not available).

\bigskip
\par\noindent
{\bf Conflict of Interest}. The author declares that there is no conflict of interest.

\end{document}